\title{An Additive-Noise Approximation to {K}eller--{S}egel--{D}ean--{K}awasaki Dynamics: {S}mall-Noise Results}
\author{Adrian Martini}
\affil{Institut f\"{u}r Mathematik, Technische Universit\"{a}t Berlin;\protect\\ \texttt{adrian.martini@tu-berlin.de}, ORCID iD: 0000-0001-9350-1338}
\author{Avi Mayorcas}
\affil{Department of Mathematical Sciences, University of Bath;\protect\\ \texttt{am2735@bath.ac.uk}, ORCID iD: 0000-0003-4133-9740}
\date{\today}
\definecolor{detailscolor}{RGB}{0, 153, 136} 
	\definecolor{vectorcolor}{RGB}{238, 51, 119} 
	\definecolor{rootcolor}{RGB}{0, 119, 187} 
	\definecolor{vectorcolor}{RGB}{180, 180, 180} 
	\definecolor{rootcolor}{RGB}{0, 0, 0} 
\numberwithin{equation}{section}
\newtheorem{theorem}{Theorem}[section]
\newtheorem{corollary}[theorem]{Corollary}
\newtheorem{lemma}[theorem]{Lemma}
\newtheorem{proposition}[theorem]{Proposition}
\newtheorem{definition}[theorem]{Definition} 
\theoremstyle{remark}
\newtheorem{remark}[theorem]{Remark}
\let\oldproofname=\proofname
\renewcommand{\proofname}{\rm\bf{\oldproofname}}
\newtheoremstyle{sketch} 
	{\topsep}
	{\topsep}
	{\itshape}
	{0pt}
	{\bfseries}
	{.}
	{5pt plus 1pt minus 1pt}
	{\thmname{#1}~\thmnote{#3}}
\theoremstyle{sketch}
\newtheorem*{theoremsketch}{Sketch of Theorem}
\newcommand{\dd}{\mathop{}\!\mathrm{d}\mkern0.5mu} 
\newcommand{\euler}{\mathrm{e}\mkern0.7mu} 
\newcommand{\upi}{\mathrm{i}\mkern0.7mu} 
\newcommand{\uppi}{\pi}
\newcommand{\mcB}{\mathcal{B}}
\newcommand{\mcC}{\mathcal{C}}
\newcommand{\mcD}{\mathcal{D}}
\newcommand{\mcF}{\mathcal{F}}
\newcommand{\mcH}{\mathcal{H}}
\newcommand{\mcI}{\mathcal{I}}
\newcommand{\mcR}{\mathcal{R}}
\newcommand{\mcS}{\mathcal{S}}
\newcommand{\mcT}{\mathcal{T}}
\newcommand{\mcW}{\mathcal{W}}
\newcommand{\mcX}{\mathcal{X}}
\newcommand{\mbC}{\mathbb{C}}
\newcommand{\mbE}{\mathbb{E}}
\newcommand{\mbN}{\mathbb{N}}
\newcommand{\mbP}{\mathbb{P}}
\newcommand{\mbR}{\mathbb{R}}
\newcommand{\mbT}{\mathbb{T}}
\newcommand{\mbX}{\mathbb{X}}
\newcommand{\mbZ}{\mathbb{Z}}
\newcommand{\mfB}{\mathfrak{B}}
\newcommand{\mfK}{\mathfrak{K}}
\newcommand{\mfs}{\mathfrak{s}}
\newcommand{\msA}{\mathscr{A}}
\newcommand{\msC}{\mathscr{C}}
\newcommand{\msF}{\mathscr{F}}
\newcommand{\msG}{\mathscr{G}}
\newcommand{\msH}{\mathscr{H}}
\newcommand{\msI}{\mathscr{I}}
\newcommand{\msJ}{\mathscr{J}}
\newcommand{\msL}{\mathscr{L}}
\newcommand{\msP}{\mathscr{P}}
\DeclarePairedDelimiter{\ceil}{\lceil}{\rceil}
\DeclarePairedDelimiter{\floor}{\lfloor}{\rfloor}
\DeclarePairedDelimiter{\abs}{\lvert}{\rvert}
\DeclarePairedDelimiter{\norm}{\lVert}{\rVert}
\newcommand{\inner}[2]{\langle#1,#2\rangle}
\newcommand{\supp}{\textnormal{supp}}
\newcommand{\eps}{\varepsilon}
\newcommand{\tzero}{|_{t=0}}
\newcommand{\from}{\colon}
\newcommand{\defeq}{\coloneq} 
\newcommand{\eqdef}{\eqcolon} 
\newcommand{\pa}{\mathbin{\varolessthan}}
\newcommand{\re}{\mathbin{\varodot}}
\newcommand{\vdiv}{\nabla\cdot}
\newcommand{\om}{\omega}
\renewcommand{\hat}{\widehat}
\newcommand{\rksnoise}[2]{\mcX^{#1,#2}}
\newcommand{\rdet}{\rho_{\textnormal{det}}}
\newcommand{\srdet}{\sqrt{\rdet}} 
\newcommand{\het}{\sigma} 
\newcommand{\can}{\textnormal{can}}
\newcommand{\Enh}{\textnormal{Enh}}
	\newenvironment{details}%
	{\color{detailscolor}\textbullet\textbf{Details: }\ignorespaces}%
	{\ignorespacesafterend}%
	\newenvironment{details}%
	{\comment}%
	{\endcomment}%
\newcommand{\multiquad}[1][1]{\hspace*{#1em}\ignorespaces}
\newcommand{\per}{\textnormal{per}}
\newcommand{\loc}{\textnormal{loc}}
\newcommand{\comp}{\textnormal{c}} 
\newcommand{\gm}{\mu} 
\newcommand{\nban}{B} 
\newcommand{\rep}{\mcR_{\gm}} 
\newcommand{\cm}{\mcH_{\gm}} 
\newcommand{\ban}{E} 
\newcommand{\bban}{\mathbf{\ban}} 
\newcommand{\chaos}{\mcH} 
\newcommand{\rate}{\msI}
\newcommand{\multi}{\nu} 
\newcommand{\Law}{\textnormal{Law}}
\newcommand{\sym}{\textnormal{sym}}
\DeclareMathOperator{\Sym}{Sym}
\let\oldskull\skull
\def\skull{\mathord{\oldskull}}
\newcommand{\cem}{\skull}
\newcommand{\target}{F} 
\newcommand{\sol}{\textnormal{sol}}
\newcommand{\place}{\,\cdot\,}
\newcommand{\wt}{\textnormal{wt}}
\newcommand{\weight}{\textnormal{-}\wt}
\newcommand{\test}{\phi} 
\newcommand{\hompd}{\Gamma}
\newcommand{\rem}{\varpi} 
\newcommand{\chem}{\chi} 
\newcommand{\thresh}{\chem^{*}} 
\newcommand{\Trdet}{T^{*}} 
\newcommand{\mean}[1]{\overline{#1}} 
\newcommand{\embed}{\hookrightarrow}
\newcommand{\gnsper}{\textnormal{gns-per}}
\newcommand{\pred}{\textnormal{pred}}
\newcommand{\back}[1]{\tilde{#1}} 
\newcommand{\action}{\msA} 
\newcommand{\mytextsize}{\f@size pt}
\newlength\RSu 
\newlength\RSwidth 
\newlength\RSbaseline 
		\newcommand{\Checkmark}[1]{
			\togglefalse{isoption}
			\ifnumequal{#1}{1}{\toggletrue{isoption}\raisebox{-0.7ex}{\includegraphics[page = 10, width = 0.54cm*\ratio{\mytextsize}{10.95 pt}]{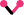}\,}}{}
			\ifnumequal{#1}{10}{\toggletrue{isoption}\raisebox{-0.7ex}{\includegraphics[page = 11, width = 0.58cm*\ratio{\mytextsize}{10.95 pt}]{diagrams_coloured.pdf}\,}}{}
			\ifnumequal{#1}{2}{\toggletrue{isoption}\raisebox{-0.7ex}{\includegraphics[page = 12, width = 0.54cm*\ratio{\mytextsize}{10.95 pt}]{diagrams_coloured.pdf}\,}}{}
			\ifnumequal{#1}{20}{\toggletrue{isoption}\raisebox{-0.7ex}{\includegraphics[page = 13, width = 0.58cm*\ratio{\mytextsize}{10.95 pt}]{diagrams_coloured.pdf}\,}}{}
			\iftoggle{isoption}{}{\PackageError{Checkmark}{Undefined option to Checkmark: #1}{}}
		}%
		\newcommand{\PreThree}[1]{
			\togglefalse{isoption}
			\ifnumequal{#1}{1}{\toggletrue{isoption}\raisebox{-0.7ex}{\includegraphics[page = 28, width = 0.54cm*\ratio{\mytextsize}{10.95 pt}]{diagrams_coloured.pdf}\,}}{}
			\ifnumequal{#1}{10}{\toggletrue{isoption}\raisebox{-0.7ex}{\includegraphics[page = 29, width = 0.58cm*\ratio{\mytextsize}{10.95 pt}]{diagrams_coloured.pdf}\,}}{}
			\ifnumequal{#1}{2}{\toggletrue{isoption}\raisebox{-0.7ex}{\includegraphics[page = 30, width = 0.54cm*\ratio{\mytextsize}{10.95 pt}]{diagrams_coloured.pdf}\,}}{}
			\ifnumequal{#1}{20}{\toggletrue{isoption}\raisebox{-0.7ex}{\includegraphics[page = 31, width = 0.58cm*\ratio{\mytextsize}{10.95 pt}]{diagrams_coloured.pdf}\,}}{}
			\iftoggle{isoption}{}{\PackageError{PreThree}{Undefined option to PreThree: #1}{}}
		}%
		\newcommand{\ti}{\mathchoice
			{\,\includegraphics[page = 32, width = 0.1cm*\ratio{\mytextsize}{10.95 pt}]{diagrams_coloured.pdf}\,}
			{\,\includegraphics[page = 32, width = 0.1cm*\ratio{\mytextsize}{10.95 pt}]{diagrams_coloured.pdf}\,}
			{\scalebox{0.8}{\,\includegraphics[page = 32, width = 0.1cm*\ratio{\mytextsize}{10.95 pt}]{diagrams_coloured.pdf}\,}}
			{\scalebox{0.64}{\,\includegraphics[page = 32, width = 0.1cm*\ratio{\mytextsize}{10.95 pt}]{diagrams_coloured.pdf}\,}}
		}%
		\newcommand{\ty}{\mathchoice
			{\,\includegraphics[page = 33, width = 0.25cm*\ratio{\mytextsize}{10.95 pt}]{diagrams_coloured.pdf}\,}
			{\,\includegraphics[page = 33, width = 0.25cm*\ratio{\mytextsize}{10.95 pt}]{diagrams_coloured.pdf}\,}
			{\scalebox{0.8}{\,\includegraphics[page = 33, width = 0.25cm*\ratio{\mytextsize}{10.95 pt}]{diagrams_coloured.pdf}\,}}
			{\scalebox{0.64}{\,\includegraphics[page = 33, width = 0.25cm*\ratio{\mytextsize}{10.95 pt}]{diagrams_coloured.pdf}\,}}
		}%
		\newcommand{\tp}{\mathchoice
			{\,\includegraphics[page = 34, width = 0.36cm*\ratio{\mytextsize}{10.95 pt}]{diagrams_coloured.pdf}\,}
			{\,\includegraphics[page = 34, width = 0.36cm*\ratio{\mytextsize}{10.95 pt}]{diagrams_coloured.pdf}\,}
			{\scalebox{0.8}{\,\includegraphics[page = 34, width = 0.36cm*\ratio{\mytextsize}{10.95 pt}]{diagrams_coloured.pdf}\,}}
			{\scalebox{0.64}{\,\includegraphics[page = 34, width = 0.36cm*\ratio{\mytextsize}{10.95 pt}]{diagrams_coloured.pdf}\,}}
		}%
		\newcommand{\tc}{\mathchoice
			{\,\includegraphics[page = 35, width = 0.36cm*\ratio{\mytextsize}{10.95 pt}]{diagrams_coloured.pdf}\,}
			{\,\includegraphics[page = 35, width = 0.36cm*\ratio{\mytextsize}{10.95 pt}]{diagrams_coloured.pdf}\,}
			{\scalebox{0.8}{\,\includegraphics[page = 35, width = 0.36cm*\ratio{\mytextsize}{10.95 pt}]{diagrams_coloured.pdf}\,}}
			{\scalebox{0.64}{\,\includegraphics[page = 35, width = 0.36cm*\ratio{\mytextsize}{10.95 pt}]{diagrams_coloured.pdf}\,}}
		}%
		\newcommand{\tl}{\mathchoice
			{\,\includegraphics[page = 36, width = 0.18cm*\ratio{\mytextsize}{10.95 pt}]{diagrams_coloured.pdf}\,}
			{\,\includegraphics[page = 36, width = 0.18cm*\ratio{\mytextsize}{10.95 pt}]{diagrams_coloured.pdf}\,}
			{\scalebox{0.8}{\,\includegraphics[page = 36, width = 0.18cm*\ratio{\mytextsize}{10.95 pt}]{diagrams_coloured.pdf}\,}}
			{\scalebox{0.64}{\,\includegraphics[page = 36, width = 0.18cm*\ratio{\mytextsize}{10.95 pt}]{diagrams_coloured.pdf}\,}}
		}%
		\newcommand{\Checkmark}[1]{
			\togglefalse{isoption}
			\ifnumequal{#1}{1}{\toggletrue{isoption}\raisebox{-0.7ex}{\includegraphics[page = 10, width = 0.54cm*\ratio{\mytextsize}{10.95 pt}]{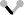}\,}}{}
			\ifnumequal{#1}{10}{\toggletrue{isoption}\raisebox{-0.7ex}{\includegraphics[page = 11, width = 0.58cm*\ratio{\mytextsize}{10.95 pt}]{diagrams_grayscale.pdf}\,}}{}
			\ifnumequal{#1}{2}{\toggletrue{isoption}\raisebox{-0.7ex}{\includegraphics[page = 12, width = 0.54cm*\ratio{\mytextsize}{10.95 pt}]{diagrams_grayscale.pdf}\,}}{}
			\ifnumequal{#1}{20}{\toggletrue{isoption}\raisebox{-0.7ex}{\includegraphics[page = 13, width = 0.58cm*\ratio{\mytextsize}{10.95 pt}]{diagrams_grayscale.pdf}\,}}{}
			\iftoggle{isoption}{}{\PackageError{Checkmark}{Undefined option to Checkmark: #1}{}}
		}%
		\newcommand{\PreThree}[1]{
			\togglefalse{isoption}
			\ifnumequal{#1}{1}{\toggletrue{isoption}\raisebox{-0.7ex}{\includegraphics[page = 28, width = 0.54cm*\ratio{\mytextsize}{10.95 pt}]{diagrams_grayscale.pdf}\,}}{}
			\ifnumequal{#1}{10}{\toggletrue{isoption}\raisebox{-0.7ex}{\includegraphics[page = 29, width = 0.58cm*\ratio{\mytextsize}{10.95 pt}]{diagrams_grayscale.pdf}\,}}{}
			\ifnumequal{#1}{2}{\toggletrue{isoption}\raisebox{-0.7ex}{\includegraphics[page = 30, width = 0.54cm*\ratio{\mytextsize}{10.95 pt}]{diagrams_grayscale.pdf}\,}}{}
			\ifnumequal{#1}{20}{\toggletrue{isoption}\raisebox{-0.7ex}{\includegraphics[page = 31, width = 0.58cm*\ratio{\mytextsize}{10.95 pt}]{diagrams_grayscale.pdf}\,}}{}
			\iftoggle{isoption}{}{\PackageError{PreThree}{Undefined option to PreThree: #1}{}}
		}%
		\newcommand{\ti}{\mathchoice
			{\,\includegraphics[page = 32, width = 0.1cm*\ratio{\mytextsize}{10.95 pt}]{diagrams_grayscale.pdf}\,}
			{\,\includegraphics[page = 32, width = 0.1cm*\ratio{\mytextsize}{10.95 pt}]{diagrams_grayscale.pdf}\,}
			{\scalebox{0.8}{\,\includegraphics[page = 32, width = 0.1cm*\ratio{\mytextsize}{10.95 pt}]{diagrams_grayscale.pdf}\,}}
			{\scalebox{0.64}{\,\includegraphics[page = 32, width = 0.1cm*\ratio{\mytextsize}{10.95 pt}]{diagrams_grayscale.pdf}\,}}
		}%
		\newcommand{\ty}{\mathchoice
			{\,\includegraphics[page = 33, width = 0.25cm*\ratio{\mytextsize}{10.95 pt}]{diagrams_grayscale.pdf}\,}
			{\,\includegraphics[page = 33, width = 0.25cm*\ratio{\mytextsize}{10.95 pt}]{diagrams_grayscale.pdf}\,}
			{\scalebox{0.8}{\,\includegraphics[page = 33, width = 0.25cm*\ratio{\mytextsize}{10.95 pt}]{diagrams_grayscale.pdf}\,}}
			{\scalebox{0.64}{\,\includegraphics[page = 33, width = 0.25cm*\ratio{\mytextsize}{10.95 pt}]{diagrams_grayscale.pdf}\,}}
		}%
		\newcommand{\tp}{\mathchoice
			{\,\includegraphics[page = 34, width = 0.36cm*\ratio{\mytextsize}{10.95 pt}]{diagrams_grayscale.pdf}\,}
			{\,\includegraphics[page = 34, width = 0.36cm*\ratio{\mytextsize}{10.95 pt}]{diagrams_grayscale.pdf}\,}
			{\scalebox{0.8}{\,\includegraphics[page = 34, width = 0.36cm*\ratio{\mytextsize}{10.95 pt}]{diagrams_grayscale.pdf}\,}}
			{\scalebox{0.64}{\,\includegraphics[page = 34, width = 0.36cm*\ratio{\mytextsize}{10.95 pt}]{diagrams_grayscale.pdf}\,}}
		}%
		\newcommand{\tc}{\mathchoice
			{\,\includegraphics[page = 35, width = 0.36cm*\ratio{\mytextsize}{10.95 pt}]{diagrams_grayscale.pdf}\,}
			{\,\includegraphics[page = 35, width = 0.36cm*\ratio{\mytextsize}{10.95 pt}]{diagrams_grayscale.pdf}\,}
			{\scalebox{0.8}{\,\includegraphics[page = 35, width = 0.36cm*\ratio{\mytextsize}{10.95 pt}]{diagrams_grayscale.pdf}\,}}
			{\scalebox{0.64}{\,\includegraphics[page = 35, width = 0.36cm*\ratio{\mytextsize}{10.95 pt}]{diagrams_grayscale.pdf}\,}}
		}%
		\newcommand{\tl}{\mathchoice
			{\,\includegraphics[page = 36, width = 0.18cm*\ratio{\mytextsize}{10.95 pt}]{diagrams_grayscale.pdf}\,}
			{\,\includegraphics[page = 36, width = 0.18cm*\ratio{\mytextsize}{10.95 pt}]{diagrams_grayscale.pdf}\,}
			{\scalebox{0.8}{\,\includegraphics[page = 36, width = 0.18cm*\ratio{\mytextsize}{10.95 pt}]{diagrams_grayscale.pdf}\,}}
			{\scalebox{0.64}{\,\includegraphics[page = 36, width = 0.18cm*\ratio{\mytextsize}{10.95 pt}]{diagrams_grayscale.pdf}\,}}
		}%
	\newcommand{\Checkmark}[1]{\normalfont\textbf{Ch(#1)}}
	\newcommand{\PreThree}[1]{\normalfont\textbf{PT(#1)}}
	\newcommand{\ti}{\normalfont\textbf{ti}}
	\newcommand{\ty}{\normalfont\textbf{ty}}
	\newcommand{\tp}{\normalfont\textbf{tp}}
	\newcommand{\tc}{\normalfont\textbf{tc}}
	\newcommand{\tl}{\normalfont\textbf{tl}}
\tikzset{
	root/.style={circle, fill=rootcolor, inner sep=0ex, minimum size=1.1ex},
	dot/.style={circle, fill=black, inner sep=0ex, minimum size=0.55ex},
	rootvar/.style={circle,fill=rootcolor!10, draw=rootcolor, inner sep=0ex, minimum size=1.1ex},
	scalarkernel/.style={black, line width=\RSwidth, shorten <=0.2ex, decoration={markings,mark=at position 0.96 with {\arrow[scale=1.1,vectorcolor]{>}}}, postaction={decorate},shorten >=0.4ex},
	vectorkernel/.style={vectorcolor, line width=\RSwidth, shorten <=0.2ex, decoration={markings,mark=at position 0.96 with {\arrow[scale=1.1,vectorcolor]{>}}}, postaction={decorate},shorten >=0.4ex},
	vectorcircle/.style={circle, fill=none, draw=vectorcolor, inner sep=0ex, minimum size=0.66ex, line width=0.7\RSwidth, scale=1.15},
}%
\begin{document}

\maketitle

\begin{abstract}
	\noindent
	We study an additive-noise approximation to Keller--Segel--Dean--Kawasaki dynamics, which is proposed as an approximate model to the fluctuating hydrodynamics of chemotactically interacting particles around their mean-field limit. 
	As such, the interaction potential is given by the Green's function associated to Poisson's equation, which is singular around the origin.
	Two parameters play a key r\^{o}le in the approximation: the noise intensity $\eps$ which captures the amplitude of fluctuations (tending to zero as the effective system size tends to infinity) and the correlation length $\delta$ which represents the effective scale under consideration. 
	Let $\delta(\eps)\to0$ as $\eps\to0$.
	Under the relative scaling assumption $\lim_{\eps\to0}\eps\log(\delta(\eps)^{-1})=0$ we obtain analogues of law of large numbers and large deviation principles in irregular spaces of distributions using methods of singular stochastic partial differential equations. The same techniques also yield a central limit theorem under the relative scaling $\lim_{\eps\to0}\eps^{\sfrac{1}{2}}\log(\delta(\eps)^{-1})=0$.
	Assuming the more restrictive relative scaling $\lim_{\eps\to0}\eps^{\sfrac{1}{2}}\delta^{-\gamma-2}=0$ for some $\gamma\in(-1/2,0)$, we also obtain analogues of law of large numbers and large deviation principles in regular function spaces using a mixture of pathwise and probabilistic tools.
	We further describe consequences of these results relevant to applications of our approximation in studying continuum fluctuations of particle systems.
\end{abstract}
{%
	\small	
	\textit{Keywords}: fluctuating hydrodynamics, Dean--Kawasaki equation, Keller--Segel equation, law of large numbers, central limit theorem, large deviation principle,  singular stochastic partial differential equation;
	\newline
	\textit{MSC2020}: Primary: 82C31, 60H17. Secondary: 60L40, 60H15, 60F05, 60F10, 92C17.
}%
\microtypesetup{protrusion=false} 

\tableofcontents

\microtypesetup{protrusion=true}
\section{Introduction}\label{sec:introduction}
The goal of this work is to understand the behaviour of the so-called \emph{Keller--Segel--Dean--Kawasaki} equation,
\begin{equation}\label{eq:KSDK_intro_torus}
	\begin{cases}
		\begin{aligned}
			(\partial_{t}-\Delta)\rho&=-\chem\vdiv(\rho\nabla\Phi_{\rho})-\eps^{\sfrac{1}{2}}\vdiv(\sqrt{\rho}\boldsymbol{\xi}), &&~\text{in}~(0,T]\times\mbT^{d},\\
			-\Delta\Phi_{\rho}&=\rho-\inner{\rho}{1}_{L^{2}(\mbT^{d})},&&~\text{in}~(0,T]\times\mbT^{d},\\
			\rho\tzero&=\rho_{0},&&~\text{on}~\mbT^{d},
		\end{aligned}
	\end{cases}
\end{equation}
defined on the $d\in\mbN$-dimensional torus $\mbT^{d}=\mbR^{d}/\mbZ^{d}$,
where $\chem\in\mbR$ is the \emph{chemotactic sensitivity},
$\boldsymbol{\xi}$ denotes a vector-valued space-time white noise
and $\eps>0$ the \emph{noise intensity}. 
Assume that the initial data $\rho_{0}$ is non-negative and of unit mass.

The interest in~\eqref{eq:KSDK_intro_torus} stems from its connection to the system of weakly-interacting overdamped Langevin diffusions,
\begin{equation}\label{eq:IPS_KS_intro_torus}
	\dd X^{i}_{t}=\sqrt{2}\dd B^{i}_{t}+\frac{\chem}{N}\sum_{j=1,\, j\neq i}^{N}\nabla\msG(X^{i}_{t}-X^{j}_{t})\dd t,\qquad i=1,\ldots,N,
\end{equation} 
where $N\in\mbN$ denotes the population size, 
$(B^{i})_{i=1}^{N}$ a family of independent standard Brownian motions on $\mbT^{d}$ and $\msG$ the Green's function associated to Poisson's equation on $\mbT^{d}$ (so that $\Phi_{\rho}=\msG\ast\rho$.)  
For every $x\in[-1/2,1/2]^{d}$ one can decompose $\msG([x])=K(x)+K_{0}(x)$, where $[x]=x+\mbZ^{2}$ denotes the associated element of $\mbT^{d}$, $K$ the Green's function associated to Poisson's equation on the full space and $K_{0}$ a smooth correction to periodize $\msG$. 
In particular, it follows that $\msG$ is singular around the origin.
This combined with the singularity of $\Delta\msG$ causes the dynamics of~\eqref{eq:IPS_KS_intro_torus} to be highly non-trivial, see for example~\cite{fournier_tardy_25}.

\begin{details}
	Denote for every $x\in\mbR^{d}$ the associated element of $\mbT^{d}$ by $[x]=x+\mbZ^{d}$.
	Let $\msG$ be the Green's function associated to Poisson's equation on $\mbT^{d}$, i.e.\
	\begin{equation*}
		\msG\from\mbT^{d}\to\mbR,\qquad\msG([x])\defeq\sum_{\omega\in\mbZ^{d}\setminus\{0\}}\euler^{2\uppi\upi\inner{\omega}{x}}\frac{1}{\abs{2\uppi\omega}^{2}}.
	\end{equation*}
	It follows that formally $-\Delta\msG([x])=\sum_{\omega\in\mbZ^{d}\setminus\{0\}}\euler^{2\uppi\upi\inner{\omega}{x}}=\bigl(\sum_{z\in\mbZ^{d}}\delta_{0}(x+z)\bigr)-1$. 
	Let $K$ be the Green's function associated to Poisson's equation on the full space, that is, $-\Delta K(x)=\delta_{0}(x)$ for every $x\in\mbR^{d}$. 
	Assume $K_{0}$ is such that $-\Delta K_{0}=-1$ in $(-1/2,1/2)^{d}$ and such that $(K+K_{0})(x)=\msG([x])$ for every $x\in\partial[-1/2,1/2]^{d}$. Then, $-\Delta(K+K_{0})(x)=\delta_{0}(x)-1$ for every $x\in(-1/2,1/2)^{d}$ and by uniqueness it follows that $\msG([x])=K(x)+K_{0}(x)$ for every $x\in[-1/2,1/2]^{d}$.
\end{details}

Passing to the limit $N\to\infty$, one may expect the system~\eqref{eq:IPS_KS_intro_torus} to be a stochastic particle approximation to the solution $\rdet$ of the deterministic (parabolic-elliptic) \emph{(Patlak--)Keller--Segel} PDE of chemotaxis~\cite{patlak_53,keller_segel_70},
\begin{equation}\label{eq:rdet_intro}
	\begin{cases}
		\begin{aligned}
			(\partial_{t}-\Delta)\rdet&=-\chem\vdiv(\rdet\nabla\Phi_{\rdet}),&&~\text{in}~(0,T]\times\mbT^{d},\\
			-\Delta\Phi_{\rdet}&=\rdet-\inner{\rdet}{1}_{L^{2}(\mbT^{d})},&&~\text{in}~(0,T]\times\mbT^{d},\\
			\rdet\tzero&=\rho_{0},&&~\text{on}~\mbT^{d}.
		\end{aligned}
	\end{cases}
\end{equation}
This convergence has been verified in some cases depending on domain, dimension and $\chem$, see e.g.~\cite{fournier_jourdain_17,tardy_24,bresch_jabin_wang_23}.

On the other hand, for $\eps=2/N$, equation~\eqref{eq:KSDK_intro_torus} describes the continuum fluctuations of~\eqref{eq:IPS_KS_intro_torus} through the theory of fluctuating hydrodynamics (FHD)~\cite{dean_96,kawasaki_94, landau_lifshitz_87,spohn_91,giacomin_lebowitz_presutti_99,bouchet_gawedzki_nardini_16}, which is concerned with random fluctuations of large systems around their averaged behaviour as well as characterizing rare events of these systems. Large deviation principles (LDP) and central limit theorems (CLT) are concrete mathematical examples of such statements. Results of this type can often be leveraged to study phenomenological properties of the associated systems such as metastability, non-equilibrium phase transitions and long-range correlations, \cite{bertini_etal_15}. The theory is also related to robust simulation of large, random systems or those exhibiting small-scale unpredictability.

However, despite the numerous motivations to study~\eqref{eq:KSDK_intro_torus}, it provides significant analytic challenges. Not only is it supercritical in the sense of~\cite{hairer_14_RegStruct}, it was also shown by Konarovskyi, Lehmann and von~Renesse~\cite{konarovskyi_lehmann_vRenesse_19,konarovskyi_lehmann_vRenesse_20} that stochastic PDEs driven by \emph{Dean--Kawasaki noise} $\vdiv(\sqrt{\rho}\boldsymbol{\xi})$ are unstable. Indeed upon replacing $\msG$ by a smooth function, it has been shown in~\cite{konarovskyi_lehmann_vRenesse_20}, that a solution (in the sense of a martingale problem) exists if and only if the initial data is an empirical measure and $\eps=2/N$. Even though there are singular interactions that recover well-posedness (see e.g.\ the references in~\cite{konarovskyi_lehmann_vRenesse_20}), we do not expect the situation to be improved in the case of~\eqref{eq:KSDK_intro_torus}.

In recent years there has been great progress in finding good approximations to equations driven by Dean--Kawasaki noise~\cite{fehrman_gess_23_zero_range,dirr_fehrman_gess_20,wang_wu_zhang_24,wu_zhang_24,cornalba_fischer_23,cornalba_shardlow_23,cornalba_fischer_ingmanns_raithel_26,cornalba_fischer_25,djurdjevac_kremp_perkowski_24,djurdjevac_ji_perkowski_25}. 
However, the case of interactions that are both non-incompressible and singular beyond the Ladyzhenskaya–Prodi–Serrin condition, such as $\nabla\msG$, is still open.

Inspired by the physics literature~\cite{mahdisoltani_et_al_21}, we take a first step in this direction by considering an \emph{additive-noise approximation} $\rho^{(\eps)}_{\delta}$  given by
\begin{equation}\label{eq:rKS_intro}
	\begin{cases}
		\begin{aligned}
			(\partial_{t}-\Delta)\rho&=-\chem\vdiv(\rho\nabla\Phi_{\rho})-\eps^{\sfrac{1}{2}}\vdiv(\srdet\boldsymbol{\xi}^{\delta}), &&~\text{in}~(0,T]\times\mbT^{d},\\
			-\Delta\Phi_{\rho}&=\rho-\inner{\rho}{1}_{L^{2}(\mbT^{d})},&&~\text{in}~(0,T]\times\mbT^{d},\\
			\rho\tzero&=\rho_{0},&&~\text{on}~\mbT^{d},
		\end{aligned}
	\end{cases}
\end{equation}
where $\delta>0$ denotes the \emph{correlation length},
$\psi_{\delta}$ a smooth spatial mollifier of scale $\delta$,
$\boldsymbol{\xi}^{\delta}\defeq\psi_{\delta}\ast\boldsymbol{\xi}$ 
and $\rdet$ the solution to the deterministic Keller--Segel equation~\eqref{eq:rdet_intro}. 

While the additive-noise structure of~\eqref{eq:rKS_intro} does not satisfy a natural fluctuation-dissipation relation, i.e.\ it is no longer a gradient flow, we advocate it for two reasons. Firstly, as we will show in this paper, it provides the correct Gaussian fluctuations around the mean-field limit and secondly, at the time of writing, it has not been possible to incorporate truly non-linear noise, in the style of~\cite{fehrman_gess_24,fehrman_gess_23_zero_range,wang_wu_zhang_24,wu_zhang_24}, along with singular non-local interactions beyond the Ladyzhenskaya--Prodi--Serrin threshold.
\subsection{Main Results}
In the following, due to reasons of scaling sub-criticality~\cite{hairer_14_RegStruct}, we restrict our attention to two dimensions. In this situation, it is known for large $\chem$ that the solution $\rdet$ to the deterministic Keller--Segel equation can blow up in $L^{2}(\mbT^{2})$ in finite time, see e.g.~\cite[Thm.~8.1]{kiselev_xu_16_suppression}. Denote this deterministic blow-up time by $\Trdet$ and choose the time horizon $T<\Trdet$, so that the additive noise of~\eqref{eq:rKS_intro} is well-defined on $[0,T]$. 
Let $\mcC^{\alpha+1}(\mbT^{2})$ be the space of H\"{o}lder--Besov distributions of regularity $\alpha+1<-1$ (for the definition see Subsection~\ref{subsec:notation}.) By the regularity of space-time white noise in two dimensions, it follows that $\mcC^{\alpha+1}(\mbT^{2})$ is the natural regularity of solutions to~\eqref{eq:rKS_intro}.
However, due to the presence of the stochastic term in~\eqref{eq:rKS_intro}, it may still happen that $\rho^{(\eps)}_{\delta(\eps)}$ blows up in $\mcC^{\alpha+1}(\mbT^{2})$ before time $T$. Hence, we shall work in a space $(\mcC^{\alpha+1}(\mbT^{2}))^{\sol}_{T}$ of functions continuous in time and taking values in $\mcC^{\alpha+1}(\mbT^{2})$ that are allowed to blow up before time $T$ but once blown up are not allowed to return. For the precise details, see Subsection~\ref{subsec:notation}. We further denote by $\mcH^{\gamma}(\mbT^{2})$ the Sobolev space of regularity $\gamma\in\mbR$. The space $(\mcH^{\gamma}(\mbT^{2}))^{\sol}_{T}$ is then defined \emph{mutatis mutandis}.
\begin{theoremsketch}[{\ref{thm:LLN_rough} \textnormal{(LLN)}}]
	Let $\rho_{0}\in\mcH^{2}(\mbT^{2})$ be such that $\rho_{0}>0$ and $\inner{\rho_{0}}{1}_{L^{2}}=1$, $\rdet$ be the weak solution to~\eqref{eq:rdet_intro} with initial data $\rho_{0}$ and chemotactic sensitivity $\chem\in\mbR$, and $\Trdet$ be its maximal time of existence. Assume $\delta(\eps)\to0$ and $\eps\log(\delta(\eps)^{-1})\to0$ as $\eps\to0$. Then for all $T<\Trdet$ and $\alpha\in(-9/4,-2)$, it follows that $\rho^{(\eps)}_{\delta(\eps)}\to\rdet$ in $(\mcC^{\alpha+1}(\mbT^{2}))^{\sol}_{T}$ in probability as $\eps\to0$.
\end{theoremsketch}
\begin{proof}[Sketch of Proof.]
	Passing to the paracontrolled decomposition~\cite{gubinelli_15_GIP}, we can show that the solution map is locally Lipschitz continuous in a vector of stochastic objects $\mbX^{(\eps)}_{\delta(\eps)}$ known as the noise enhancement. Using the relative scaling $\eps\log(\delta(\eps)^{-1})\to0$ , we can deduce that $\mbX^{(\eps)}_{\delta(\eps)}\to0$ in probability as $\eps\to0$. The claim then follows by the continuous mapping theorem. For the full proof see Theorem~\ref{thm:LLN_rough}.
\end{proof}
Theorem~\ref{thm:LLN_rough} is consistent with the propagation of chaos for~\eqref{eq:IPS_KS_intro_torus} which was shown to converge to the mean-field limit~\eqref{eq:rdet_intro} for appropriate initial data and $\chem$ sufficiently small, see~\cite{fournier_jourdain_17,tardy_24,bresch_jabin_wang_19,bresch_jabin_wang_20,bresch_jabin_wang_23,decourcel_rosenzweig_serfaty_25_attractive}.

The same continuities of the solution map also allow us to establish a CLT.
\begin{theoremsketch}[{\ref{thm:CLT} \textnormal{(CLT)}}]
	Let $\rho_{0}\in\mcH^{2}(\mbT^{2})$ be such that $\rho_{0}>0$ and $\inner{\rho_{0}}{1}_{L^{2}}=1$, $\rdet$ be the weak solution to~\eqref{eq:rdet_intro} with initial data $\rho_{0}$ and chemotactic sensitivity $\chem\in\mbR$, and $\Trdet$ be its maximal time of existence. Assume $\delta(\eps)\to0$ and $\eps^{\sfrac{1}{2}}\log(\delta(\eps)^{-1})\to0$ as $\eps\to0$. Then for all $T<\Trdet$ and  $\alpha\in(-9/4,-2)$, it follows that $\eps^{-\sfrac{1}{2}}(\rho^{(\eps)}_{\delta(\eps)}-\rdet)\to v$ in $(\mcC^{\alpha+1}(\mbT^{2}))^{\sol}_{T}$ in probability as $\eps\to0$, where $v\in C_{T}\mcC^{\alpha+1}(\mbT^{2})$ is the unique mild solution to
	\begin{equation}\label{eq:genOU_intro}
		\begin{cases}
			\begin{aligned}
				(\partial_{t}-\Delta)v&=-\chem\vdiv(v\nabla\Phi_{\rdet})-\chem\vdiv(\rdet\nabla\Phi_{v})-\vdiv(\srdet\boldsymbol{\xi}),\\
				v\tzero&=0.
			\end{aligned}
		\end{cases}
	\end{equation}
	We refer to this $v$ as the generalized Ornstein--Uhlenbeck process.
\end{theoremsketch}
\begin{proof}
	For a proof see Theorem~\ref{thm:CLT}.
\end{proof}
In the repulsive case ($\chem<0$) the CLT for the particle system~\eqref{eq:IPS_KS_intro_torus} has been shown in~\cite{cecchin_nikolaev_25} with leading-order fluctuations converging to the generalized Ornstein--Uhlenbeck process~\eqref{eq:genOU_intro}. In the attractive case ($\chem>0$), a CLT for~\eqref{eq:IPS_KS_intro_torus} is to our knowledge still open. However, if~\eqref{eq:IPS_KS_intro_torus} allows a CLT (possibly for a certain range of small $\chem$), we would expect the leading-order fluctuations to converge to~\eqref{eq:genOU_intro}. 
See also~\cite{wang_zhao_zhu_23} for a CLT of the point-vortex approximation to the two-dimensional incompressible Navier--Stokes equation with leading-order fluctuations converging to a generalized Ornstein--Uhlenbeck process driven by $\nabla^{\perp}\Phi$.
Consequently, the combination of Theorems~\ref{thm:LLN_rough}~\&~\ref{thm:CLT} leads us to expect that our additive-noise approximation is precise up to first order.

Another first-order approximation is given by \emph{linear fluctuating hydrodynamics} $\rdet+\eps^{\sfrac{1}{2}}v$, which, however, is not able to model noise-induced aggregation when started from uniform initial data $\rho_{0}\equiv1$. On the other hand, it is clear that~\eqref{eq:rKS_intro} can blow up to a Dirac mass due to the fluctuations interacting with the non-linearity, even when started from uniformity.

Finally, we can also show an LDP by adapting the techniques of~\cite{friz_victoir_07,hairer_weber_15}.
\begin{theoremsketch}[{\ref{thm:LDP_rough} \textnormal{(LDP)}}]
	Let $\rho_{0}\in\mcH^{2}(\mbT^{2})$ be such that $\rho_{0}>0$ and $\inner{\rho_{0}}{1}_{L^{2}}=1$, $\rdet$ be the weak solution to~\eqref{eq:rdet_intro} with initial data $\rho_{0}$ and chemotactic sensitivity $\chem\in\mbR$, and $\Trdet$ be its maximal time of existence. Assume $\delta(\eps)\to0$ and $\eps\log(\delta(\eps)^{-1})\to0$ as $\eps\to0$. Then for all $T<\Trdet$ and $\alpha\in(-9/4,-2)$, it follows that $(\rho_{\delta(\eps)}^{(\eps)})_{\eps>0}$ satisfies an LDP in $(\mcC^{\alpha+1}(\mbT^{2}))^{\sol}_{T}$ with speed $\eps$ and good rate function
	\begin{equation}\label{eq:rate_function_rKS_intro}
		\rate(\rho)\defeq\inf\Bigl\{\frac{1}{2}\norm{h}_{L^{2}([0,T]\times\mbT^{2};\mbR^{2})}^{2}:(\partial_{t}-\Delta)\rho=-\chem\vdiv(\rho\nabla\Phi_{\rho})-\vdiv(\srdet h),~\rho\tzero=\rho_{0}\Bigr\}.
	\end{equation}
\end{theoremsketch}
\begin{proof}[Sketch of Proof.]
	An adaptation of~\cite{hairer_weber_15} combined with the relative scaling allows us to deduce an LDP for the noise enhancement $(\mbX^{(\eps)}_{\delta(\eps)})_{\eps>0}$, which we can then transfer to $(\rho^{(\eps)}_{\delta(\eps)})_{\eps>0}$ via the contraction principle. For the full proof see Theorem~\ref{thm:LDP_rough}.
\end{proof}
\begin{remark}
	Results similar to Theorem~\ref{thm:LDP_rough} were also established in~\cite{cerrai_debussche_19_Phi_2n_d,cerrai_debussche_19_SNS} for both the $\Phi^{2n}_{d}$~equation and the two-dimensional stochastic Navier--Stokes equation by combining careful energy estimates with the weak-convergence approach. We do not rule out the existence of another proof for Theorem~\ref{thm:LDP_rough} based on these techniques, but do not pursue this line of enquiry any further in this paper.
\end{remark}
The LDP for the particle system~\eqref{eq:IPS_KS_intro_torus} is to our knowledge still open. In the case of Lipschitz interactions, a first result was given by~\cite{dawson_gaertner_87}, see also~\cite{hardin_leble_saff_serfaty_18_LDP_riesz,coghi_deuschel_friz_maurelli_20,hoeksema_holding_maurelli_tse_24,chen_ge_25} for LDPs under more general assumptions on the regularity of the interaction.

After formal manipulations, using the idea that the \emph{Dean–Kawasaki noise} $\vdiv(\sqrt{\rho}\boldsymbol{\xi})$ appearing in~\eqref{eq:KSDK_intro_torus} is $\delta$-correlated with respect to the local metric tensor of the Otto–Wasserstein metric~\cite{otto_01}, one can bring the rate function of~\cite{dawson_gaertner_87} into the following form:
\begin{equation}\label{eq:rate_function_IPS_intro}
	\action(\rho)\defeq\inf\Bigl\{\frac{1}{2}\norm{h}_{L^{2}([0,T]\times\mbT^{2};\mbR^{2})}^{2}:(\partial_{t}-\Delta)\rho=-\chem\vdiv(\rho\nabla\Phi_{\rho})-\vdiv(\sqrt{\rho}h),~\rho\tzero=\rho_{0}\Bigr\}.
\end{equation}
Comparing~\eqref{eq:rate_function_rKS_intro} and~\eqref{eq:rate_function_IPS_intro}, we find that the approximation error lies in the so-called \emph{skeleton equation} appearing inside the infimum. The additive-noise approximation~\eqref{eq:rKS_intro} induces a skeleton that is additive in the Cameron--Martin element $h$, whereas the interacting particle system~\eqref{eq:IPS_KS_intro_torus} is associated to a fully nonlinear skeleton.

Let $\alpha\in(-9/4,-2)$ and denote by $T^{\cem}_{\mcC^{\alpha+1}}[\rho^{(\eps)}_{\delta(\eps)}]$ the blow-up time of $\rho^{(\eps)}_{\delta(\eps)}$ in $\mcC^{\alpha+1}(\mbT^{2})$. In the next result we establish that the probability of observing a blow-up in $\mcC^{\alpha+1}(\mbT^{2})$ before time $T<\Trdet$ is exponentially small in~$\eps^{-1}$. 
\begin{theoremsketch}[{\ref{thm:blow_up_time_estimate}}]
	Let $\rho_{0}\in\mcH^{2}(\mbT^{2})$ be such that $\rho_{0}>0$ and $\inner{\rho_{0}}{1}_{L^{2}}=1$, $\rdet$ be the weak solution to~\eqref{eq:rdet_intro} with initial data $\rho_{0}$ and chemotactic sensitivity $\chem\in\mbR$, and $\Trdet$ be its maximal time of existence. Assume $\delta(\eps)\to0$ and $\eps\log(\delta(\eps)^{-1})\to0$ as $\eps\to0$. Then for all $S\leq T<\Trdet$ and $\alpha\in(-9/4,-2)$, there exists a constant $c>0$ such that
	\begin{equation*}
		\limsup_{\eps\to0}\eps\log\mbP\Bigl(T^{\cem}_{\mcC^{\alpha+1}}[\rho^{(\eps)}_{\delta(\eps)}]\leq S\Bigr)\leq-c.
	\end{equation*}
\end{theoremsketch}
\begin{proof}[Sketch of Proof.]
	The proof follows by Theorem~\ref{thm:LDP_rough} combined with the lower semicontinuity of $\rho\mapsto T^{\cem}_{\mcC^{\alpha+1}}[\rho]$ and the continuity of the skeleton equation in the Cameron--Martin element. For the full proof see Theorem~\ref{thm:blow_up_time_estimate}.
\end{proof}
While Theorem~\ref{thm:blow_up_time_estimate} rules out the possibility of observig a blow-up with high probability, it can still occur that~\eqref{eq:rKS_intro} produces negative values, which is a major modelling error of the additive-noise approximation. To control this event, we impose a more restrictive relative scaling, $\eps^{\sfrac{1}{2}}\delta(\eps)^{-2}\to0$ as $\eps\to0$.
We denote the negative part of every $u\in L^{2}(\mbT^{2})$ by $u^{-}\defeq\min\{0,u\}$ and control $(\rho^{(\eps)}_{\delta(\eps)})^{-}$ in~$L^{2}(\mbT^{2})$.
\begin{theoremsketch}[{\ref{thm:negative_values}}]
	Let $\rho_{0}\in C(\mbT^{2})$ be such that $\rho_{0}>0$ and $\inner{\rho_{0}}{1}_{L^{2}}=1$, $\rdet$ be the weak solution to~\eqref{eq:rdet_intro} with initial data $\rho_{0}$ and chemotactic sensitivity $\chem\in\mbR$, and $\Trdet$ be its maximal time of existence. Assume $\delta(\eps)\to0$ and $\eps^{\sfrac{1}{2}}\delta(\eps)^{-2}\to0$ as $\eps\to0$. Then for all $T<\Trdet$, $L>0$ and $\lambda>0$, there exists some $\mu>0$ such that
	\begin{equation*}
		\limsup_{\eps\to0}\eps(1+\delta(\eps)^{-2})^{2}\log\mbP\Bigl(\norm{(\rho^{(\eps)}_{\delta(\eps)})^{-}}_{C_{S_{L}}L^{2}}\geq\lambda\Bigr)\lesssim-\mu^{2}\norm{\srdet}_{C_{T}L^{2}\cap L_{T}^{2}\mcH^{1}}^{-2},
	\end{equation*}
	where 
	\begin{equation*}
		S_{L}\defeq T\wedge L\wedge\inf\Bigl\{t\in[0,T]:\norm{\rho^{(\eps)}_{\delta(\eps)}(t)}_{L^{2}}>L\Bigr\}
	\end{equation*}
	denotes the first time that the $L^{2}(\mbT^{2})$-norm of $\rho^{(\eps)}_{\delta(\eps)}$ exceeds the level $L$.
\end{theoremsketch}
\begin{proof}[Sketch of Proof.]
	We first pass to the mild formulation of~\eqref{eq:rKS_intro} given by
	\begin{equation}\label{eq:rKS_intro_mild}
		\rho=P\rho_{0}-\chem\vdiv\mcI[\rho\nabla\Phi_{\rho}]-\eps^{\sfrac{1}{2}}\ti^{\delta},
	\end{equation}
	where $P_{t}\defeq\euler^{t\Delta}$ denotes the heat semigroup, $\mcI[f]_{t}\defeq\int_{0}^{t}P_{t-s}f_{s}\dd s$ the resolution of the heat equation and $\ti^{\delta}\defeq\vdiv\mcI[\srdet\boldsymbol{\xi}^{\delta}]$ the (additive) stochastic heat equation.
	The bound then follows by an infinite-dimensional sub-Gaussian upper deviation inequality on $\norm{\ti^{\delta(\eps)}}_{C_{T}L^{2}\cap L_{T}^{2}\mcH^{1}}$ (see Lemma~\ref{lem:lolli_regular_upper_deviation}), combined with the continuity of $\rho^{(\eps)}_{\delta(\eps)}$ in $\eps^{\sfrac{1}{2}}\ti^{\delta(\eps)}$ and the continuity of $u\mapsto u^{-}$ on $L^{2}(\mbT^{2})$. For the full proof see Theorem~\ref{thm:negative_values}.
\end{proof}
Under the same restrictive relative scaling, one can also show an LLN~\&~LDP in spaces of higher regularity.
\begin{theoremsketch}[{\ref{thm:LLN_regular} \textnormal{(LLN)}}]
	Let $\rho_{0}\in C(\mbT^{2})$ be such that $\rho_{0}>0$ and $\inner{\rho_{0}}{1}_{L^{2}}=1$, $\rdet$ be the weak solution to~\eqref{eq:rdet_intro} with initial data $\rho_{0}$ and chemotactic sensitivity $\chem\in\mbR$, and $\Trdet$ be its maximal time of existence. Assume $\delta(\eps)\to0$ and $\eps^{\sfrac{1}{2}}\delta(\eps)^{-\gamma-2}\to0$ as $\eps\to0$ for some $\gamma\in(-1/2,0]$. Then for all $T<\Trdet$, it follows that $\rho^{(\eps)}_{\delta(\eps)}\to\rdet$ in $(\mcH^{\gamma}(\mbT^{2}))^{\sol}_{T}$ in probability as $\eps\to0$.
\end{theoremsketch}
\begin{proof}[Sketch of Proof]
	Using the relative scaling of $(\eps,\delta(\eps))$, we can deduce that $\eps^{\sfrac{1}{2}}\ti^{\delta(\eps)}\to0$ in $C_{T}\mcH^{\gamma}(\mbT^{2})\cap L_{T}^{2}\mcH^{\gamma+1}(\mbT^{2})$ in probability as $\eps\to0$. The claim then follows by the continuous mapping theorem and the continuity of $\rho^{(\eps)}_{\delta(\eps)}$ in $\eps^{\sfrac{1}{2}}\ti^{\delta(\eps)}$. For the full proof see Theorem~\ref{thm:LLN_regular}.
\end{proof}
\begin{theoremsketch}[{\ref{thm:LDP_regular} \textnormal{(LDP)}}]
	Let $\rho_{0}\in C(\mbT^{2})$ be such that $\rho_{0}>0$ and $\inner{\rho_{0}}{1}_{L^{2}}=1$, $\rdet$ be the weak solution to~\eqref{eq:rdet_intro} with initial data $\rho_{0}$ and chemotactic sensitivity $\chem\in\mbR$, and $\Trdet$ be its maximal time of existence. Assume $\delta(\eps)\to0$ and $\eps^{\sfrac{1}{2}}\delta(\eps)^{-\gamma-2}\to0$ as $\eps\to0$ for some $\gamma\in(-1/2,0)$. Then for all $T<\Trdet$, it follows that the sequence $(\rho^{(\eps)}_{\delta(\eps)})_{\eps>0}$ satisfies an LDP in $(\mcH^{\gamma}(\mbT^{2}))^{\sol}_{T}$ with speed $\eps$ and good rate function $\rate$.
\end{theoremsketch}
\begin{proof}[Sketch of Proof]
	An application of the weak-convergence approach to large deviations, the Bou\'{e}--Dupuis formula~\cite{budhiraja_dupuis_maroulas_08} and the relative scaling yields an LDP for $(\eps^{\sfrac{1}{2}}\ti^{\delta(\eps)})_{\eps>0}$, which we can then transfer to $(\rho^{(\eps)}_{\delta(\eps)})_{\eps>0}$ via the contraction principle. For the full proof see Theorem~\ref{thm:LDP_regular}.
\end{proof}
In the present setting there is no analogue of the CLT (Theorem~\ref{thm:CLT}) since the leading-order fluctuations $\eps^{-\sfrac{1}{2}}(\rho^{(\eps)}_{\delta(\eps)}-\rdet)$ are driven by $\ti^{\delta(\eps)}$, which does not converge in spaces $\mcH^{\gamma}(\mbT^{2})$ of regularity $\gamma>-1/2$ as $\eps\to0$.
\begin{remark}[Extensions to More General Non-Local Interactions]
	We naturally expect many of the the arguments of the current paper and~\cite{martini_mayorcas_25} to generalize beyond the two-dimensional (attractive and repulsive) Coulomb potentials considered herein.
	Without detailing all possible variants of the theorems above, we highlight that the two main features we appeal to are sub-criticality and symmetries of our interaction. Concerning the former property, if one considers a more general interaction $C_{T}\mcC^{\beta}(\mbT^{d}) \ni \rho \mapsto \nabla \Psi_{\rho} \in \mcC^{\beta+\gamma}(\mbT^{d})$ bounded and linear, for $\beta, \gamma \in \mbR$, one would be required to check that the product $\rho \nabla \Psi_{\rho}$ is either classically well-defined or \emph{sub-critical} in the sense of singular SPDE, i.e.\ that it can be expected to have \emph{homogeneity} lower than that of almost sure realisations of the noise. On the other hand, symmetry of the Coulomb potential plays a r\^ole for us in showing that the required renormalisation fields diverge less quickly than one would expect from simple power counting.
	A similar phenomenon is observed in stochastic fluid equations, see for example~\cite{daprato_debussche_02}.
\end{remark}
\subsection{Discussion of Relative Scalings}\label{subsec:scaling_discussion}
The more we make use of the mollification in~\eqref{eq:rKS_intro}, the more regularity we can assume of the noise~$\boldsymbol{\xi}^{\delta}$ at the price of a faster divergence as $\delta\to0$, which in turn one then needs to compensate with the noise intensity $\eps$.
Hence, throughout this paper we consider different relative scalings between $\eps$ and $\delta$.
The general relative scaling assumes $\eps\log(\delta(\eps)^{-1})\to0$ and applies in H\"{o}lder--Besov spaces $\mcC^{\alpha+1}(\mbT^{2})$ for all $\alpha\in(-9/4,-2)$,
whereas the restrictive relative scaling imposes that $\eps^{\sfrac{1}{2}}\delta(\eps)^{-\gamma-2}\to0$ as $\eps,\delta(\eps)\to0$ for some $\gamma\in(-1/2,0]$ and applies in the Sobolev space $\mcH^{\gamma}(\mbT^{2})$.
Theorems~\ref{thm:LLN_rough},~\ref{thm:CLT},~\ref{thm:LDP_rough}~and~\ref{thm:blow_up_time_estimate} rely on the general relative scaling and are collectively referred to as the \emph{rough setting}, since they make minimal use of the mollifier in~\eqref{eq:rKS_intro}.
Theorems~\ref{thm:LLN_regular},~\ref{thm:LDP_regular}~and~\ref{thm:negative_values} rely on the restrictive relative scaling and are collectively referred to as the \emph{regular setting}. See Table~\ref{tab:relative_scaling_table} for a summary.

\begin{table}[tbph]
	\centering
	\begin{tabular}{cccc}
		\textbf{Name} & \textbf{Scaling} &  \textbf{Regularity} & \textbf{Results}\\
		\toprule
		General  & $\eps\log(\delta(\eps)^{-1})\to0$ & $\mcC^{\alpha+1}(\mbT^{2})$ & Thms.~\ref{thm:LLN_rough},~\ref{thm:CLT},~\ref{thm:LDP_rough}~\&~\ref{thm:blow_up_time_estimate} \\
		& as $\eps,\delta(\eps)\to0$ & $\alpha\in(-9/4,-2)$ & (rough setting, Sec.~\ref{sec:rough_theory}) \\
		\midrule
		Restrictive  & $\eps^{\sfrac{1}{2}}\delta(\eps)^{-\gamma-2}\to0$ & $\mcH^{\gamma}(\mbT^{2})$ & Thms.~\ref{thm:LLN_regular},~\ref{thm:LDP_regular}~\&~\ref{thm:negative_values} \\
		& as $\eps,\delta(\eps)\to0$ & $\gamma\in(-1/2,0]$ & (regular setting,  Sec.~\ref{sec:regular_theory}) \\
		\bottomrule
	\end{tabular}
	\caption{Summary of the different relative scalings used throughout this paper.}
	\label{tab:relative_scaling_table}
\end{table}

Given the motivation from fluctuating hydrodynamics (see~\eqref{eq:IPS_KS_intro_torus}) one natural scaling relation to hope for is based on the intra-particle distance between uniformly distributed particles. If it were to hold that at each $t\geq 0$ the $(X^{i}_{t})_{i=1}^N$ were uniformly i.i.d.\ distributed (or indeed uniformly separated e.g.\ on a lattice) then the average intra-particle distance in two dimensions is proportional to $N^{-\sfrac{1}{2}}$ (more generally $N^{-\sfrac{1}{d}}$). Hence, the natural scaling to aim for in this setting would be   
\begin{equation}\label{eq:relative_scaling_natural}
	\delta(N)=N^{-\sfrac{1}{2}},\quad\eps(N)=2/N\quad\implies\quad\lim_{N\to\infty}\eps(N)^{\sfrac{1}{2}}\delta(N)^{-1} =\sqrt{2}.
\end{equation}
On the other hand, for general interactions there is no reason to expect particles to be either uniformly distributed or uniformly separated at positive times. To wit in the case of the Keller--Segel particle system, for $\chem>8\uppi$ one expects all particles to concentrate at one location at some random time, see~\cite{fournier_tardy_25}. The heuristic \eqref{eq:relative_scaling_natural} is more naturally applicable to particle systems set on a lattice such as exclusion and zero-range processes~\cite{dirr_fehrman_gess_20,fehrman_gess_23_zero_range}
or for interacting diffusions with mutually repulsive potential~\cite{serfaty_23_gaussian_fluct,serfaty_17_coulomb_review,serfaty_20_coulomb_mean_field}.

 While our restrictive scaling regime, $\lim_{\eps \to 0}\eps^{\sfrac{1}{2}}\delta(\eps)^{-\gamma-2}=0$, and the natural scaling, \eqref{eq:relative_scaling_natural}, are mutually exclusive,
 the general scaling, $\lim_{\eps \to 0}\eps \log(\delta(\eps)^{-1})= 0$, is more permissive and always allows for~\eqref{eq:relative_scaling_natural}.
\paragraph{Structure of the Paper:}
In the rest of this section (Subsection~\ref{subsec:notation}) we introduce notations and conventions used throughout the paper.
In Section~\ref{sec:det_PDE} we establish the existence and regularity of the deterministic, periodic Keller--Segel equation. 
In Section~\ref{sec:regular_theory} we prove the LLN in the regular setting (Theorem~\ref{thm:LLN_regular}). In the same section we also obtain the LDP in the regular setting (Theorem~\ref{thm:LDP_regular}). As an application of the theory developed in Section~\ref{sec:regular_theory} we prove in Subsection~\ref{subsec:negative_values} that solutions to our additive-noise approximation become negative with exponentially small probability (Theorem~\ref{thm:negative_values}).
In Section~\ref{sec:rough_theory} we prove the LLN in the rough setting (Theorem~\ref{thm:LLN_rough}). In the same section we also obtain a CLT (Theorem~\ref{thm:CLT}) and the LDP in the rough setting (Theorem~\ref{thm:LDP_rough}). As an application of the theory developed in Section~\ref{sec:rough_theory} we prove in Subsection~\ref{subsec:blow_up} that solutions to our additive-noise approximation blow up with exponentially small probability (Theorem~\ref{thm:blow_up_time_estimate}).
Finally, we collect a number of necessary results in a handful of appendices. Appendix~\ref{app:toolbox} recaps properties of various function spaces. Appendix~\ref{app:periodic_wn} covers the definition and necessary properties of periodic space-time white noise. Appendices~\ref{app:freidlin_wentzell},~\ref{app:enh_driven_by_h} and~\ref{app:lolli_regular} cover necessary extensions of Friedlin--Wentzell large deviations theory to noise enhancements, analysis of the noise enhancement driven by a Cameron--Martin element and regularity of solutions to the additive stochastic heat equation relevant in our case.
\paragraph{Acknowledgements:}
We would like to express our gratitude to B.~Fehrman, B.~Gess, M.~Gubinelli, N.~Perkowski and H.~Weber for helpful discussions during the writing of this paper.
A.~Martini was supported by the Engineering and Physical Sciences Research Council Doctoral Training Partnerships [grant number EP/R513295/1], by the Lamb \& Flag Scholarship of St John's College, Oxford, and by the European Union [ERC, FluCo, grant agreement No.~101088488]. Views and opinions expressed are however those of the author(s) only and do not necessarily reflect those of the European Union or of the European Research Council. Neither the European Union nor the granting authority can be held responsible for them.
A.~Mayorcas was supported by the DFG research unit FOR2402 and through an extended research invitation by F.~Flandoli to SNS Pisa supported by ERC Advanced Grant no.~101053472.
\subsection{Notations and Conventions}\label{subsec:notation}
We write $\mbN$ for the natural numbers excluding zero, $\mbN_{0}\defeq\mbN\cup\{0\}$ and $\mbN_{-1}\defeq\mbN_{0}\cup\{-1\}$. We define the two-dimensional torus by $\mbT^{2}\defeq\mbR^{2}/\mbZ^{2}$. Throughout, $\abs{\place}$ will indicate the norm $\abs{x}=\bigl(\sum_{i=1}^{2}\lvert x_{i}\rvert^{2}\bigr)^{1/2}$ on $\mbR^{2}$ or the induced metric on $\mbT^{2}$, depending on the context.
For $r>0$ we use the notation $B(0,r)\defeq\{x\in\mbR^{2}:\abs{x}<r\}$. From now on we will write $\inner{a}{b}$ to denote the inner product on any Hilbert space which we either specify or leave clear from the context. For $k,n\in\mbN$ we denote by $C^{k}(\mbT^{2};\mbR^{n})$ (resp.\ $C^{\infty}(\mbT^{2};\mbR^{n})$) the space of $k$-times continuously differentiable (resp.\ smooth) functions on the torus with values in $\mbR^{n}$. We write $\mcS'(\mbT^{2};\mbR)$ for the dual of $C^{\infty}(\mbT^{2};\mbR)$ and $\mcS'(\mbT^{2};\mbR^{n})$ for vectors of elements in $\mcS'(\mbT^{2};\mbR)$. Denote by $\mcS'_{\per}(\mbR^{2};\mbR^{n})$ the space of periodic distributions and (with an abuse of notation) $\mcS'_{\per}(\mbR\times\mbR^{2};\mbR^{n})$ the space of tempered distributions that are periodic in the space variable. Let $\mcS'(\mbR\times\mbT^{2};\mbR^{n})$ be the space of tempered distributions on $\mbR\times\mbT^{2}$.

For $f\in C^\infty(\mbT^2;\mbR)$ (resp.\ complex sequences $(\zeta(\om))_{\om\in\mbZ^{2}}$ with $\overline{\zeta(-\om)}=\zeta(\om)$ that decay faster than any polynomial) we define its Fourier transform (resp.\ inverse Fourier transform) by the expression,
\begin{equation*}
	\msF f(\omega)\defeq\int_{\mbT^2}\euler^{-2\uppi\upi\inner{\om}{x}}f(x)\dd x,\qquad \msF^{-1} \zeta(x)\defeq\sum_{\om\in\mbZ^2}\euler^{2\uppi\upi\inner{\om}{x}}\zeta(\om).
\end{equation*} 
This is extended componentwise to vector-valued functions, by density to $f\in L^p(\mbT^2;\mbR^n)$ for $p\in [1,\infty)$ and by duality to $f\in \mcS'(\mbT^2;\mbR^n)$. Where convenient we use the shorthand $\hat{f}(\omega)\defeq\msF f(\omega)$.

We define the Bessel potential space $\mcH^{\alpha}(\mbT^{2};\mbR^{n})$ of regularity $\alpha\in\mbR$ as the space of periodic distributions $u\in\mcS'(\mbT^{2};\mbR^{n})$ such that
\begin{equation}\label{eq:Bessel_norm}
	\norm{u}_{\mcH^{\alpha}}\defeq\Bigl(\sum_{\om\in\mbZ^{2}}(1+\abs{2\uppi\om}^{2})^{\alpha}\abs{\hat{u}(\om)}^{2}\Bigr)^{1/2}<\infty.
\end{equation}
We denote the mean by $\mean{u}\defeq\inner{u}{1}_{L^{2}(\mbT^{2})}$ for each $u\in L^{2}(\mbT^{2};\mbR)=\mcH^{0}(\mbT^{2};\mbR)$.

We often work in the scale of Besov and H\"{o}lder--Besov spaces whose definitions we summarize. For their basic properties, see e.g.~\cite[App.~A]{martini_mayorcas_25} or the references therein. We let:
\begin{equation*}
	\begin{split}
		&\varrho_{-1},\varrho_{0}\in C^{\infty}(\mbR^{2};[0,1])~\text{be radially symmetric and such that}\\
		&\supp(\varrho_{-1})\subset B(0,1/2),\qquad\supp(\varrho_{0})\subset\{x\in\mbR^{2}:9/32 \leq\abs{x}\leq 1\},\\
		&\sum_{k=-1}^{\infty}\varrho_{k}(x)=1\quad\text{for all}~x\in\mbR^{2}\quad\text{where}~\varrho_{k}(x)\defeq\varrho_{0}(2^{-k}x)\quad\text{for each}~k\in\mbN.
	\end{split}
\end{equation*}
This defines a dyadic partition of unity. Given $k\geq-1$ we write $\Delta_{k}u\defeq\msF^{-1}(\varrho_{k}\msF u)$ for the associated Littlewood--Paley block and given $\alpha\in\mbR$, $p,q\in [1,\infty]$, we define the Besov-norm $\norm{u}_{\mcB^{\alpha}_{p,q}(\mbT^2;\mbR^{n})}\defeq\norm{(2^{k\alpha}\norm{\Delta_{k}u}_{L^{p}(\mbT^{2};\mbR^{n})})_{k\in\mbN_{-1}}}_{\ell^{q}}$ for $u\in\mcS'(\mbT^{2};\mbR^{n})$. We use $\mcB^\alpha_{p,q}(\mbT^2;\mbR^n)$ to denote the completion of $C^{\infty}(\mbT^{2};\mbR^{n})$ under $\norm{\place}_{\mcB^\alpha_{p,q}(\mbT^2;\mbR^{n})}$ and the shorthand $\mcC^{\alpha}(\mbT^2;\mbR^n)\defeq\mcB^\alpha_{\infty,\infty}(\mbT^{2};\mbR^{n})$. The space $\mcB_{2,2}^{\alpha}(\mbT^{2};\mbR^{n})$ coincides with $\mcH^{\alpha}(\mbT^{2};\mbR^{n})$.
\begin{details}
	See \cite[Thm.~23.2]{vanzuijlen_22}.
\end{details}
For $\alpha\in\mbR$ and $p,q\in [1,\infty]$ we use the notation $\mcB^{\alpha-}_{p,q}(\mbT^{2})\defeq\cap_{\alpha'<\alpha}\mcB^{\alpha'}_{p,q}(\mbT^{2})$. When the context is clear we will remove the target space so as to lighten notation.

For $u,v\in C^{\infty}(\mbT^{2};\mbR)$ we define the paraproduct $\pa$ and resonant product $\re$ by
\begin{equation*}
	u\pa v\defeq\sum_{k\geq-1}\sum_{l=-1}^{k-2}\Delta_{l}u\Delta_{k}v,\qquad u\re v\defeq\sum_{\substack{k,l\in\mbN_{-1}\\\abs{k-l}\leq1}}\Delta_{l}u\Delta_{k}v
\end{equation*}
and extend them to the scale of Besov distributions by Bony's lemma~\cite[Thm.~2.82~\&~Thm.~2.85]{bahouri_chemin_danchin_11} and generalizations thereof.

We define the action of the heat semigroup on $f \in L^1(\mbT^2;\mbR)$ by the flow,
\begin{equation*}
	[0,\infty)\ni t\mapsto P_{t} f \defeq\msF^{-1}(\euler^{-t\abs{2\uppi\place}^{2}}\hat{f}(\place))=\msH_t\ast f
\end{equation*}
where the convolution against the heat kernel $\msH_{t}$ on $\mbT^{2}$ is defined by
\begin{equation*}
	\msH_{t}\ast f=\int_{\mbT^{2}}\msH_{t}(\place-y)f(y)\dd y=\int_{(0,1)^{2}}\msH_{t}^{\per}(\place-y)f^{\per}(y)\dd y
\end{equation*}
with $f^{\per}\from\mbR^{2}\to\mbR$ the periodization of $f\from\mbT^{2}\to\mbR$ and $\msH^{\per}_{t}$ given by
\begin{equation*}
	\begin{split}
		\msH_{t}^{\per}(x)&\defeq\frac{1}{4\uppi t}\sum_{n\in\mbZ^2}\euler^{-\frac{\abs{x-n}^2}{4t}}=\sum_{\om\in\mbZ^{2}}\euler^{2\uppi\upi\inner{\om}{x}}\euler^{-t\abs{2\uppi\om}^2}\qquad\text{for}~t>0,\\
		\msH_{0}^{\per}&\defeq\sum_{n\in\mbZ^{2}}\delta_{n},\qquad\text{and}\qquad\msH_{t}^{\per}(x)\defeq0\qquad\text{for}~t<0.
	\end{split}
\end{equation*}
For $f\from[0,T]\times\mbT^{2}\to \mbR$, we define the resolution of the heat equation as
\begin{equation*}
	\mcI[f]_{t}\defeq\int_{0}^{t}P_{t-s}f_s\dd s=\int_{0}^{t}\msH_{t-s}\ast f_s\dd s\qquad\text{where}~t\in[0,T].
\end{equation*}
We define the solution to the elliptic equation $-\Delta\Phi_{f}=f-\mean{f}$ by
\begin{equation*}
	\Phi_{f}\defeq\msG\ast f,\qquad\text{where}\quad f\in\mcS'(\mbT^{2};\mbR)\quad\text{and}
\end{equation*}
\begin{equation*}
	\msG(x)\defeq\sum_{\om\in\mbZ^{2}\setminus\{0\}}\euler^{2\uppi\upi\inner{\om}{x}}\frac{1}{\abs{2\uppi\om}^{2}},\qquad(\msG\ast f)(x)=\sum_{\om\in\mbZ^{2}\setminus\{0\}}\euler^{2\uppi\upi\inner{\om}{x}}\frac{1}{\abs{2\uppi\om}^{2}}\hat{f}(\om).
\end{equation*}
We denote for every $\om=(\om^{1},\om^{2})\in\mbZ^{2}$, $t\in\mbR$ and $j=1,2$ the Fourier multipliers $H^{j}_{t}(\om)\defeq2\uppi\upi\om^{j}\exp(-t\abs{2\uppi\om}^{2})\mathds{1}_{t\geq0}$ and $G^{j}(\om)\defeq2\uppi\upi\om^{j}\abs{2\uppi\om}^{-2}\mathds{1}_{\om\neq0}$.

Given a Banach space $E$ and an interval $I\subseteq[0,\infty)$ we write $C_{I}E\defeq C(I;E)$ for the space of continuous maps $f\from I\to E$. For compact $I$ we equip $C_{I}E$ with the norm $\norm{f}_{C_{I}E}\defeq\sup_{t\in I} \norm{f_t}_{E}$. For $\kappa\in(0,1)$, we define $C^{\kappa}_{I}E\defeq C^{\kappa}(I;E)$ as the completion of $C^{\infty}(I;E)$ under the norm $\norm{f}_{C_{I}^{\kappa}E}\defeq\norm{f}_{C_{I}E}+\sup_{s\neq t\in I}\frac{\norm{f_{t}-f_{s}}_{E}}{\abs{t-s}^{\kappa}}$, where $C^{\infty}(I;E)$ denotes the space of smooth functions in the interior of $I$, which, along with all of their derivatives, can be continuously extended to functions in $C_{I}E$. For $T>0$, we use the shorthand $C_{T}E=C_{[0,T]}E$, $C_{T}^{\kappa} E=C_{[0,T]}^{\kappa}E$ and understand $C_{T}^{0}E=C_{T}E$.
%
%
Given $\eta\geq0$, we denote $f_{\eta\weight}(t)\defeq(1\wedge t)^{\eta}f_{t}$ for each $f\from(0,T]\to E$ and $t\in(0,T]$. We define the Banach space
\begin{equation*}
	C_{\eta;T}E\defeq\Big\{f\from(0,T]\to E:~f_{\eta\weight}(0)\defeq\lim_{t\to0}f_{\eta\weight}(t)\in E,~f_{\eta\weight}\in C_{(0,T]}E\Big\}
\end{equation*}
equipped with the norm
\begin{equation*}
	\norm{f}_{C_{\eta;T}E}\defeq\sup_{t\in(0,T]}(1\wedge t)^{\eta}\norm{f_{t}}_{E}
\end{equation*}
and refer to $\eta$ as the weight at $0$. We also make use of the notation $\msL_{T}^{\kappa}\mcB_{p,q}^{\alpha}(\mbT^{2})\defeq C_{T}^{\kappa}\mcB_{p,q}^{\alpha-2\kappa}(\mbT^{2})\cap C_{T}\mcB_{p,q}^{\alpha}(\mbT^{2})$ to denote an interpolation space, on which we define
\begin{equation*}
	\norm{\place}_{\msL_{T}^{\kappa}\mcB_{p,q}^{\alpha}}=\max\{\norm{\place}_{C_{T}^{\kappa}\mcB_{p,q}^{\alpha-2\kappa}},\norm{\place}_{C_{T}\mcB_{p,q}^{\alpha}}\}.
\end{equation*}
We further understand $\msL^{0}_{T}\mcC^{\alpha}(\mbT^{2})=C_{T}\mcC^{\alpha}(\mbT^{2})$. See Subsection~\ref{subsec:function_spaces} for properties of those spaces.

Let $T>0$, $\alpha\in(-5/2,-2)$ and $\kappa\in(0,1/2)$. To carry out the rough analysis of Section~\ref{sec:rough_theory}, we adapt the pathwise approach developed in~\cite{martini_mayorcas_25}. These methods rely on the following non-linear space of enhanced rough noises $\rksnoise{\alpha}{\kappa}_{T}$, defined as the closure of the subset
\begin{equation*}
	\begin{split}
		&\Bigl\{\Theta(v,f):(v,f)\in\msL^{\kappa}_{T}\mcC^{\alpha+2}(\mbT^{2};\mbR)\times\msL^{\kappa}_{T}\mcC^{2\alpha+5}(\mbT^{2};\mbR),~v_{0}=f_{0}=0\Bigr\}\\
		&\qquad\subset\msL^{\kappa}_{T}\mcC^{\alpha+1}(\mbT^{2};\mbR)\times\msL^{\kappa}_{T}\mcC^{2\alpha+4}(\mbT^{2};\mbR)\times\msL^{\kappa}_{T}\mcC^{3\alpha+6}(\mbT^{2};\mbR^{2})\times\msL^{\kappa}_{T}\mcC^{2\alpha+4}(\mbT^{2};\mbR^{2\times2}),
	\end{split}
\end{equation*}
where the map
\begin{align*}
	\Theta\from&(\msL^{\kappa}_{T}\mcC^{\alpha+2}(\mbT^{2};\mbR)\times\msL^{\kappa}_{T}\mcC^{2\alpha+5}(\mbT^{2};\mbR))\\
	&\to\msL^{\kappa}_{T}\mcC^{\alpha+1}(\mbT^{2};\mbR)\times\msL^{\kappa}_{T}\mcC^{2\alpha+4}(\mbT^{2};\mbR)\times\msL^{\kappa}_{T}\mcC^{3\alpha+6}(\mbT^{2};\mbR^{2})\times\msL^{\kappa}_{T}\mcC^{2\alpha+4}(\mbT^{2};\mbR^{2\times2}),\\
	&(v,f)\mapsto\Theta(v,f),
\end{align*}
is given by 
\begin{align*}
	Y&\defeq\vdiv\mcI[{v}\nabla\Phi_{v}]-f,\\
	\Theta(v,f)&\defeq(v,Y,Y\re\nabla\Phi_{v}+\nabla\Phi_{Y}\re v,\nabla \mcI[v]\re\nabla\Phi_{v}+\nabla^2\mcI[\Phi_{v}]\re v)
\end{align*}
with resonant products acting component-wise. We denote a generic element of this closure by $\mbX=(\ti,\ty,\tp,\tc)\in\rksnoise{\alpha}{\kappa}_{T}$ and equip it with the metric induced by the norm
\begin{equation*}
	\norm{\mbX}_{\rksnoise{\alpha}{\kappa}_{T}}\defeq\max\{\norm{\ti}_{\msL^{\kappa}_{T}\mcC^{\alpha+1}},\norm{\ty}_{\msL^{\kappa}_{T}\mcC^{2\alpha+4}},\norm{\tp}_{\msL^{\kappa}_{T}\mcC^{3\alpha+6}},\norm{\tc}_{\msL^{\kappa}_{T}\mcC^{2\alpha+4}}\}.
\end{equation*}
Given a normed vector space $(\target,\norm{\place}_{\target})$ and a cemetery state $\cem$, we define $\target^{\cem}\defeq\target\cup\{\cem\}$ equipped with the topology generated by the open balls of $\target$ and sets of the form $\{u:\norm{u}_{\target}>R\}\cup\{\cem\}$ for all $R>0$. We define the space $\target^{\sol}_{T}$ by 
\begin{equation*}
	\target^{\sol}_{T}\defeq\Bigl\{f\in C([0,T];\target^{\cem}):f|_{(T^{\cem}_{\target}[f],T]}\equiv\cem\Bigr\},\quad\text{where}\quad T^{\cem}_{\target}[f]\defeq T\wedge\inf\{t\in[0,T]:f(t)=\cem\},
\end{equation*}
which consists of the continuous functions on $[0,T]$ with values in $\target^{\cem}$ that cannot return from the cemetery state. There exists a metric $D_{T}^{\target}$ on $\target^{\sol}_{T}$ such that $D_{T}^{\target}(f_{n},f)\to0$ if and only if for every $L>0$,
\begin{equation}\label{eq:FsolT_metric}
	\lim_{n\to\infty}\sup_{t\in[0,T_{L}]}\norm{f_{n}(t)-f(t)}_{\target}=0,
\end{equation}
where
\begin{equation*}
	T_{L}\defeq T\wedge L\wedge\inf\{t\in[0,T]:\norm{f_{n}(t)}_{\target}>L~\text{or}~\norm{f(t)}_{\target}>L\},
\end{equation*}
cf.~\cite[Lem.~1.2]{chandra_chevyrev_hairer_shen_22}~\&~\cite[Lem.~3.18]{martini_mayorcas_25}.

Let $C^{2}_{1}((0,T]\times\mbT^{2})$ be the space of functions $f\in C((0,T]\times\mbT^{2})$ such that $\partial_{t}f$, $\partial_{x_{i}}f$ and $\partial_{x_{i}}\partial_{x_{j}}f$ exist in $C((0,T)\times\mbT^{2})$ and can be extended to functions in $C((0,T]\times\mbT^{2})$ for all $i,j\in\{1,2\}$.

We do not distinguish between the spaces $L^{2}([0,T]^{n}\times\mbT^{2n}\times\{1,2\}^{n};\mbR)$, $L^{2}([0,T]^{n}\times\mbT^{2n};\mbR^{2n})$ and $L^{2}([0,T]^{n};L^{2}(\mbT^{2n};\mbR^{2n}))$, which are identical up to isometric isomorphisms.

We further denote $L^{2}_{T}\mcH^{\alpha}(\mbT^{2})\defeq L^{2}([0,T];\mcH^{\alpha}(\mbT^{2};\mbR))$ and $W^{1,2}_{T}\mcH^{-1}(\mbT^{2})=W^{1,2}([0,T];\mcH^{-1}(\mbT^{2}))$ the space of functions in $L^{2}_{T}\mcH^{-1}(\mbT^{2})$ with weak derivative (in time) belonging to $L^{2}_{T}\mcH^{-1}(\mbT^{2})$, cf.~\cite[Subsec.~5.9.2]{evans_98_PDE}. On the intersection $C_{T}\mcH^{\alpha}(\mbT^{2})\cap L_{T}^{2}\mcH^{\alpha+1}(\mbT^{2})$ we define the norm $\norm{\place}_{C_{T}\mcH^{\alpha}\cap L_{T}^{2}\mcH^{\alpha+1}}\defeq\max\{\norm{\place}_{C_{T}\mcH^{\alpha}},\norm{\place}_{L_{T}^{2}\mcH^{\alpha+1}}\}$.

We write $\lesssim$ to indicate that an inequality holds up to a constant depending on quantities that we do not keep track of or are fixed throughout. When we do wish to emphasise the dependence on certain quantities $\alpha$, $p$, $d$, we either write $\lesssim_{\alpha,p,d}$ or define $C\defeq C(\alpha,p,d)>0$ and write $\leq C$.

Let $(\Omega,\mcF,\mbP)$ be a probability space that is large enough to support two independent space-time white noises $\xi^{1},\xi^{2}$ on $[0,T]\times\mbT^{2}$ as specified in Subsection~\ref{subsec:existence_and_regularity}. Given $\boldsymbol{\xi}\defeq(\xi^{1},\xi^{2})$, we can define a family of complex Brownian motions by $W^{j}(t,\om)\defeq\boldsymbol{\xi}(\mathds{1}_{[0,t]}(\place)\otimes\euler^{-2\uppi\upi\inner{\om}{\place}}\otimes\delta_{j,\place})$ where $t\geq0$, $\om\in\mbZ^{2}$ and $j=1,2$, see Lemma~\ref{lem:space_time_white_noise_yields_complex_BM}. This probability space will be fixed, so that whenever a property holds almost surely, it will do so with respect to $\mbP$.

Let $n\in\mbN$, $T>0$ and $[0,T]^{n}_{>}\defeq\{(u_1,\ldots,u_n)\in[0,T]^{n}:u_{1}>u_{2}>\ldots>u_{n}\}$. We define the iterated It\^{o} integral, acting on $\phi\in L^{2}([0,T]^{n}_{>}\times\mbT^{2n};\mbR^{2n})$, by
\begin{equation}\label{eq:def_iterated_Ito_Fourier}
	I^{n}(\phi)\defeq\sum_{\om_{1},\ldots,\om_{n}\in\mbZ^{2}}\sum_{j_{1},\ldots,j_{n}=1}^{2}\int_{0}^{T}\dd W^{j_{1}}(u_{1},\om_{1})\ldots\int_{0}^{u_{n-1}}\dd W^{j_{n}}(u_{n},\om_{n})\hat{\phi}(u_{1},-\om_{1},j_{1},\ldots,u_{n},-\om_{n},j_{n}),
\end{equation}
where
\begin{equation*}
	\hat{\phi}(u_{1},\om_{1},j_{1},\ldots,u_{n},\om_{n},j_{n})\defeq\int_{(\mbT^{2})^{n}}\euler^{-2\uppi\upi(\inner{\om_{1}}{x_{1}}+\ldots+\inner{\om_{n}}{x_{n}})}h(u_{1},x_{1},j_{1},\ldots,u_{n},x_{n},j_{n})\dd x_{1}\ldots\dd x_{n}.
\end{equation*}
For $n=0$, it is conventional to set $I^{0}\defeq\textnormal{id}_{\mbR}$.

We define the cut-off 
\begin{equation}\label{eq:def_cut_off}
	\begin{split}
		&\varphi\in C^{\infty}(\mbR^{2})~\text{to be of compact support},\\
		&\supp(\varphi)\subset B(0,1),~\text{even}~\text{and such that}~\varphi(0)=1.
	\end{split}
\end{equation}
Given $\varphi$ satisfying~\eqref{eq:def_cut_off}, we define a sequence of mollifiers $(\psi_{\delta})_{\delta>0}$ on $\mbT^{2}$ by
\begin{equation}\label{eq:def_mollifiers}
	\psi_{\delta}(x)\defeq\sum_{\om\in\mbZ^{2}}\euler^{2\uppi\upi\inner{\om}{x}}\varphi(\delta\om).
\end{equation}
\section{Analysis of the Deterministic Keller--Segel Equation}\label{sec:det_PDE}
In this section we show the existence, uniqueness and regularity of weak solutions $f$ to the deterministic Keller--Segel equation,
\begin{equation}\label{eq:Determ_KS}
	\begin{cases}
		\begin{aligned}
			(\partial_{t}-\Delta)f&=-\chem\vdiv(f\nabla\Phi_{f}),&&~\text{in}~(0,T]\times\mbT^{2},\\
			-\Delta\Phi_{f}&=f-\mean{f}, &&~\text{in}~(0,T]\times\mbT^{2},\\
			f\tzero&=f_{0}, &&~\text{on}~\mbT^{2},
		\end{aligned}
	\end{cases}
\end{equation}
where we denote the mean of each $u\in L^{2}(\mbT^{2})$ by $\mean{u}=\inner{u}{1}_{L^{2}}$. We further establish the regularity of $\sqrt{f}$ in a suitable Bessel potential space (Theorem~\ref{thm:Determ_KS_Well_Posedness} \& Lemma~\ref{lem:regularity_srdet}).

Let us first define what we understand to be a weak solution to~\eqref{eq:Determ_KS}.
\begin{definition}\label{def:Determ_KS_weak_solution}
	Let $f_{0}\in L^{2}(\mbT^{2})$, $\chem\in\mbR$ and $T>0$. Then, $f\in C_{T}L^{2}(\mbT^{2})\cap L_{T}^{2}\mcH^{1}(\mbT^{2})$ is a weak solution to~\eqref{eq:Determ_KS} on $[0,T]$ if for all $t\in[0,T]$ and $\psi\in\mcH^{1}(\mbT^{2})$,
	\begin{equation*}
		\inner{f_{t}}{\psi}_{L^{2}}=\inner{f_{0}}{\psi}_{L^{2}}+\chem\int_{0}^{t}\inner{f_{s}\nabla\Phi_{f_{s}}}{\nabla\psi}_{L^{2}}\dd s.
	\end{equation*}
\end{definition}
\subsection{Existence and Regularity}\label{subsec:Determ_KS_existence_regularity}
We assume that the initial data $f_{0}$ lies in $L^{2}(\mbT^{2})$, is non-negative and of unit mass. We refer to~\cite{perthame_04, hillen_painter_09_users, horstmann_03_KS_I,horstmann_04_KS_II} and Remark~\ref{rem:sharp_apriori} for a wider ranging survey of the derivation and properties of the Cauchy problem~\eqref{eq:Determ_KS} and related PDE.

Our main emphasis is to recall that the global well-posedness of~\eqref{eq:Determ_KS} depends on the sign and size of $\chem$. For small $\chem$ we can employ the periodic Gagliardo--Nirenberg--Sobolev inequality~\eqref{eq:GNS_Per} to show that the solution will be globally well-posed (cf.\ Theorem~\ref{thm:Determ_KS_Well_Posedness}), whereas for large $\chem$ it is known that the solution may blow up in finite time~\cite[Thm.~8.1]{kiselev_xu_16_suppression}.

In stating the main result of this subsection (Theorem~\ref{thm:Determ_KS_Well_Posedness}) we use the notation $C_{\gnsper}(2,3,1)$ to denote the optimal constant such that the periodic Gagliardo–Nirenberg–Sobolev inequality~\eqref{eq:GNS_Per} holds, which is properly elucidated in Subsection~\ref{subsec:bounds} below.
\begin{theorem}\label{thm:Determ_KS_Well_Posedness}
	Let $f_{0}\in L^{2}(\mbT^{2})$ be such that $f_{0}\geq0$ and $\mean{f_{0}}=1$, and $\chem\in\mbR$. Then the following hold:
	\begin{enumerate}
		\item\label{it:Local_Weak_Sol}
		There exists a $\Trdet=\Trdet(\chem,\norm{f_{0}}_{L^{2}})\in(0,\infty]$ and a unique map $f\from[0,\Trdet)\to L^{2}(\mbT^{2})$ such that $f\in C_{T}L^{2}(\mbT^{2})\cap L^{2}_{T}\mcH^{1}(\mbT^{2})$ for every $T\in(0,\Trdet)$, $f\tzero=f_{0}$ and $f$ solves~\eqref{eq:Determ_KS} in the weak sense on $[0,T]$ (see Definition~\ref{def:Determ_KS_weak_solution}). Furthermore for each $t\in[0,\Trdet)$ this solution is non-negative almost everywhere and $\norm{f_{t}}_{L^{1}}=1$. Finally $\Trdet$ is such that either $\Trdet=\infty$ or $\lim_{t\nearrow \Trdet}\norm{f_{t}}_{L^{2}}=\infty$, and we refer to it as the maximal time of existence (for $f$ in $L^{2}(\mbT^{2})$.)
		\item\label{it:Global_Weak_Sol}
		If $\chem\leq\thresh\defeq C_{\gnsper}(2,3,1)^{-3}$, then one has $\Trdet=\infty$.
		\item\label{it:Regular_Weak_Sol}
		It holds that $f\in C^{2}_{1}((0,T]\times\mbT^{2})$ for every $T\in(0,\Trdet)$. Further if $f_{0}\in\mcH^{\gamma}(\mbT^{2})$ for some $\gamma>0$, then $f\in C_{T}\mcH^{\gamma}(\mbT^{2})\cap L_{T}^{2}\mcH^{\gamma+1}(\mbT^{2})$.
	\end{enumerate}
\end{theorem}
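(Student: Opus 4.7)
The plan is to pass to the mild formulation
\[
f_t = P_t f_0 - \chem \int_0^t P_{t-s} \vdiv(f_s \nabla \Phi_{f_s}) \dd s
\]
and run a contraction mapping argument in $X_T \defeq C_T L^2(\mbT^2) \cap L^2_T \mcH^1(\mbT^2)$. The key ingredients are the heat-semigroup smoothing $\norm{P_t \vdiv g}_{L^2} \lesssim t^{-1/2 - (1/r - 1/2)} \norm{g}_{L^r}$, the elliptic gain $\norm{\nabla \Phi_f}_{\mcH^1} \lesssim \norm{f}_{L^2}$, and the two-dimensional Sobolev embedding $\mcH^1(\mbT^2) \hookrightarrow L^p(\mbT^2)$ valid for every $p < \infty$. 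These estimates combine to place $f \nabla \Phi_f$ in some $L^r$-in-space with $r \in (1,2)$ close enough to $2$ that the resulting time-singularity at $s=t$ is integrable, so Picard iteration yields a unique local mild solution on an interval depending only on $\norm{f_0}_{L^2}$. Concatenation gives the blow-up alternative $\Trdet = \infty$ or $\norm{f_t}_{L^2} \to \infty$ as $t \nearrow \Trdet$, and mass conservation follows by testing with the constant $1$ since the divergence integrates to zero on $\mbT^2$.

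For non-negativity, I would approximate $f_0 \in L^2$ by $f_0^n \in C^\infty(\mbT^2)$ with $f_0^n \geq 0$, $\mean{f_0^n} = 1$ and $f_0^n \to f_0$ in $L^2$. The higher-regularity item, applied with some $\gamma > 1$, produces classical solutions $f^n$ on compact subintervals of their existence time, and a standard parabolic maximum/comparison principle applied to the advection--reaction form
\[
\partial_t f^n - \Delta f^n + \chem \nabla \Phi_{f^n} \cdot \nabla f^n = \chem f^n (f^n - 1)
\]
preserves non-negativity. The Lipschitz dependence of the solution map on the initial datum within the Picard argument transfers non-negativity to $f$ in the limit. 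The higher-regularity claim itself proceeds by a bootstrap: starting from $f \in C_T L^2 \cap L^2_T \mcH^1$, iterating the mild formula while exploiting the elliptic and Sobolev gains together with the parabolic smoothing encoded in the time-weighted norms $C_{\eta;T}\mcH^\alpha$ produces $f \in C^2_1((0,T]\times\mbT^2)$; when $f_0 \in \mcH^\gamma$ with $\gamma > 0$, rerunning the fixed-point scheme directly in $C_T \mcH^\gamma \cap L^2_T \mcH^{\gamma+1}$ propagates that regularity.

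Global existence under $\chem \leq \thresh$ then follows from an $L^2$ energy identity. Testing with $f$ and using $-\Delta \Phi_f = f - \mean{f}$ together with two integrations by parts and mass conservation yields
\[
\tfrac{1}{2} \tfrac{\dd}{\dd t} \norm{f}_{L^2}^2 + \norm{\nabla f}_{L^2}^2 = \tfrac{\chem}{2} \int_{\mbT^2} f^2 (f - 1) \dd x.
\]
For $\chem \leq 0$ the right-hand side is non-positive since $f \geq 0$ and $\mean{f} = 1$, giving an immediate a priori bound. For $0 < \chem \leq \thresh$, the periodic Gagliardo--Nirenberg--Sobolev inequality bounds $\norm{f}_{L^3}^3$ by $C_{\gnsper}(2,3,1)^3 \norm{\nabla f}_{L^2}^2 \norm{f}_{L^1}$ plus lower-order terms; since $\norm{f}_{L^1} = 1$ the cubic contribution is absorbed into $\norm{\nabla f}_{L^2}^2$ with a non-negative remainder precisely when $\chem C_{\gnsper}(2,3,1)^3 \leq 1$, i.e.\ $\chem \leq \thresh$, and Gronwall closes the estimate. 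The main obstacle throughout is the borderline subcriticality of the drift $\chem \nabla \Phi_f \cdot \nabla f$ in two dimensions: at the threshold the gradient dissipation only barely absorbs the nonlinear contribution, so the sharp value of the periodic Gagliardo--Nirenberg constant is essential to the argument.
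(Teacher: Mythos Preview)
Your overall plan is sound and arrives at the same conclusions, but you take a different route from the paper for local existence and non-negativity. The paper obtains Claim~\ref{it:Local_Weak_Sol} by a Galerkin approximation together with the a~priori energy identity
\[
\norm{f_t}_{L^2}^2 = \norm{f_0}_{L^2}^2 - 2\int_0^t \norm{\nabla f_s}_{L^2}^2\,\dd s + \chem\int_0^t \norm{f_s}_{L^3}^3\,\dd s - \chem\int_0^t \norm{f_s}_{L^2}^2\,\dd s,
\]
which directly yields $C_T L^2 \cap L^2_T \mcH^1$ control and uniqueness by Gronwall; you instead run a Picard iteration in the mild formulation. Both are standard and lead to the same result, though the Galerkin route gives the $L^2_T\mcH^1$ bound for free from the energy identity, whereas in the mild approach you will need either maximal regularity or a separate estimate to close in that norm. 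For non-negativity the paper tests against (a smooth approximation of) the negative part $f_t^-$ rather than invoking a maximum principle on regularised solutions; again both are legitimate. The regularity bootstrap in Claim~\ref{it:Regular_Weak_Sol} is handled the same way in both.

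One point in Claim~\ref{it:Global_Weak_Sol} needs tightening. The periodic Gagliardo--Nirenberg--Sobolev inequality with optimal constant $C_{\gnsper}(2,3,1)$ is stated for \emph{mean-zero} functions, so you cannot apply it to $f$ directly and still claim the sharp threshold $\thresh = C_{\gnsper}(2,3,1)^{-3}$. The paper passes to the mean-free part: using $\mean{f_t}\equiv 1$ one has the companion identity
\[
\norm{f_t-1}_{L^2}^2 = \norm{f_0-1}_{L^2}^2 - 2\int_0^t \norm{\nabla f_s}_{L^2}^2\,\dd s + \chem\int_0^t \norm{f_s-1}_{L^3}^3\,\dd s + 2\chem\int_0^t \norm{f_s-1}_{L^2}^2\,\dd s,
\]
and then $\norm{f_s-1}_{L^3}^3 \leq C_{\gnsper}(2,3,1)^3\,\norm{\nabla f_s}_{L^2}^2\,\norm{f_s-1}_{L^1} \leq 2\,C_{\gnsper}(2,3,1)^3\,\norm{\nabla f_s}_{L^2}^2$ absorbs into the dissipation precisely when $\chem \leq \thresh$, after which Gronwall closes. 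Your phrase ``plus lower-order terms'' does not quite capture this; without the reduction to $f-1$ the sharp constant is lost.
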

\begin{proof}
	Since these statements are mostly amalgams of known results we only provide a sketched proof with references for full details.
	
	The proof of Claim~\ref{it:Local_Weak_Sol} is carried out through a Galerkin approximation and is classical. In the following we give a brief sketch: Testing the equation against itself, we obtain the a priori energy identity
	\begin{equation}\label{eq:energy_identity_1}
		\norm{f_{t}}^{2}_{L^{2}}=\norm{f_{0}}^{2}_{L^{2}}-2\int_{0}^{t}\norm{\nabla f_{s}}^{2}_{L^{2}}\dd s+\chem\int_{0}^{t}\norm{f_{s}}^{3}_{L^{3}}\dd s-\chem\int_{0}^{t}\norm{f_{s}}^{2}_{L^{2}}\dd s.
	\end{equation}
	We identify
	\begin{equation*}
		\norm{f_{s}}_{L^{3}}^{3}-\norm{f_{s}}_{L^{2}}^{2}=\inner{f_{s}-\mean{f_{s}}}{f_{s}^{2}}_{L^{2}}
	\end{equation*}
	and bound for some $C>0$ by the Cauchy--Schwarz inequality, the Sobolev embedding $\mcH^{1/2}(\mbT^{2})\embed L^{4}(\mbT^{2})$ (cf.~\eqref{eq:Sobolev_embedding_I}) and real interpolation,
	\begin{equation*}
		\abs{\inner{f_{s}-\mean{f_{s}}}{f_{s}^{2}}_{L^{2}}}\leq\norm{f_{s}-\mean{f_{s}}}_{L^{2}}\norm{f_{s}}_{L^{4}}^{2}\leq\norm{f_{s}}_{L^{2}}\norm{f_{s}}_{L^{4}}^{2}\leq C\norm{f_{s}}_{L^{2}}\norm{f_{s}}_{\mcH^{1/2}}^{2}\leq C\norm{f_{s}}_{L^{2}}^{2}\norm{f_{s}}_{\mcH^{1}}.
	\end{equation*}
	Therefore, it follows by~\eqref{eq:energy_identity_1} and an application of Young's inequality that for each $\vartheta>0$,
	\begin{equation*}
		\norm{f_{t}}^{2}_{L^{2}}\leq\norm{f_{0}}^{2}_{L^{2}}-2\int_{0}^{t}\norm{\nabla f_{s}}^{2}_{L^{2}}\dd s+\frac{C^{2}\chem^{2}}{2\vartheta}\int_{0}^{t}\norm{f_{s}}_{L^{2}}^{4}\dd s+\frac{\vartheta}{2}\int_{0}^{t}\norm{f_{s}}_{\mcH^{1}}^{2}\dd s.
	\end{equation*}
	Using the identity $\norm{f_{s}}_{\mcH^{1}}^{2}=\norm{f_{s}}_{L^{2}}^{2}+\norm{\nabla f_{s}}_{L^{2}}^{2}$, we obtain
	\begin{equation*}
		\norm{f_{t}}^{2}_{L^{2}}\leq\norm{f_{0}}^{2}_{L^{2}}-\Bigl(2-\frac{\vartheta}{2}\Bigr)\int_{0}^{t}\norm{\nabla f_{s}}^{2}_{L^{2}}\dd s+\frac{C^{2}\chem^{2}}{2\vartheta}\int_{0}^{t}\norm{f_{s}}_{L^{2}}^{4}\dd s+\frac{\vartheta}{2}\int_{0}^{t}\norm{f_{s}}_{L^{2}}^{2}\dd s.
	\end{equation*}
	We can then choose $\vartheta=2$ to deduce
	\begin{equation*}
		\norm{f_{t}}^{2}_{L^{2}}\leq\norm{f_{0}}^{2}_{L^{2}}-\int_{0}^{t}\norm{\nabla f_{s}}^{2}_{L^{2}}\dd s+\frac{C^{2}\chem^{2}}{4}\int_{0}^{t}\norm{f_{s}}_{L^{2}}^{4}\dd s+\int_{0}^{t}\norm{f_{s}}_{L^{2}}^{2}\dd s.
	\end{equation*}
	A comparison argument to the ODE $\Dot{x}(t)=C^{2}\chem^{2}x(t)^{2}/4+x(t)$ shows that there exists a time horizon $\Trdet$ such that $t\mapsto\norm{f_{t}}_{L^{2}}^{2}$ and $t\mapsto\int_{0}^{t}\norm{\nabla f_{s}}^{2}_{L^{2}}\dd s$ are locally bounded on $[0,\Trdet)$.
	
	To prove uniqueness, let $g$ be another weak solution with $g_{0}=f_{0}$. By a similar calculation, one finds a priori that
	\begin{equation*}
		\begin{split}
			\frac{1}{2}\frac{\dd}{\dd t}\norm{f_{t}-g_{t}}^{2}_{L^{2}}+\norm{\nabla(f_{t}-g_{t})}_{L^{2}}^{2}&=\chem\Bigl\langle f_{t}\nabla\Phi_{f_{t}-g_{t}}+(f_{t}-g_{t})\nabla\Phi_{g_{t}},\nabla(f_{t}-g_{t})\Bigr\rangle_{L^{2}}\\
			&\leq\abs{\chem}\Bigl(\norm{f_{t}\nabla\Phi_{f_{t}-g_{t}}}_{L^{2}}+\norm{(f_{t}-g_{t})\nabla\Phi_{g_{t}}}_{L^{2}}\Bigr)\norm{\nabla(f_{t}-g_{t})}_{L^{2}}\\
			&\lesssim\abs{\chem}\Bigl(\norm{f_{t}}_{\mcH^{1}}\norm{\nabla\Phi_{f_{t}-g_{t}}}_{\mcH^{1}}+\norm{f_{t}-g_{t}}_{L^{2}}\norm{\nabla\Phi_{g_{t}}}_{\mcH^{2}}\Bigr)\norm{\nabla(f_{t}-g_{t})}_{L^{2}}\\
			&\lesssim\abs{\chem}\Bigl(\norm{f_{t}}_{\mcH^{1}}+\norm{g_{t}}_{\mcH^{1}}\Bigr)\norm{f_{t}-g_{t}}_{L^{2}}\norm{\nabla(f_{t}-g_{t})}_{L^{2}},
		\end{split}
	\end{equation*}
	where in the second inequality we used that $uv\in L^{2}(\mbT^{2})$ for each $(u,v)\in\mcH^{1}(\mbT^{2})\times\mcH^{1}(\mbT^{2})$ or $(u,v)\in L^{2}(\mbT^{2})\times\mcH^{2}(\mbT^{2})$, see~\cite[Thm.~7.4]{behzadan_holst_21}. A combination of Young's and Gronwall's inequalities then implies that there exists some $C>0$ such that
	\begin{equation*}
		\norm{f_{t}-g_{t}}_{L^{2}}^{2}\leq\norm{f_{0}-g_{0}}_{L^{2}}^{2}\exp\Bigl(C\chem^{2}\int_{0}^{t}\norm{f_{s}}_{\mcH^{1}}^{2}+\norm{g_{s}}_{\mcH^{1}}^{2}\dd s\Bigr),
	\end{equation*}
	from which uniqueness follows on the interval $[0,\Trdet)$.
	\begin{details}
		\paragraph{Detailed Proof of Theorem~\ref{thm:Determ_KS_Well_Posedness}.}
		Our goal is to solve~\eqref{eq:Determ_KS} with the variational approach of~\cite{liu_roeckner_13_local_global}. We work with the Gelfand triple $\mcH^{\gamma+1}(\mbT^{2})\subset\mcH^{\gamma}(\mbT^{2})\subset\mcH^{\gamma-1}(\mbT^{2})$ for $\gamma\geq0$; in other words, if we explicit the Riesz isomorphism, $\mcH^{\gamma+1}(\mbT^{2})\subset\mcH^{\gamma}(\mbT^{2})\stackrel{(1-\Delta)^{\gamma}}{\longleftrightarrow}\mcH^{-\gamma}(\mbT^{2})\subset\mcH^{-\gamma-1}(\mbT^{2})$.
		
		In the following, we denote by $\inner{u}{v}_{\mcH^{-\gamma-1},\mcH^{\gamma+1}}$ the duality pairing between $u\in\mcH^{-\gamma-1}(\mbT^{2})$ and  $v\in\mcH^{\gamma+1}(\mbT^{2})$. For $u\in\mcH^{\gamma-1}(\mbT^{2})$, we further denote by $\inner{u}{v}_{\mcH^{\gamma-1},\mcH^{\gamma+1}}$ the pairing $\inner{u}{v}_{\mcH^{\gamma-1},\mcH^{\gamma+1}}=\inner{(1-\Delta)^{\gamma}u}{v}_{\mcH^{-\gamma-1},\mcH^{\gamma+1}}$. We also denote by $\inner{\place}{\place}_{\mcH^{\gamma}}$ the inner product on $\mcH^{\gamma}(\mbT^{2})$.
		
		An application of Riesz's representation theorem yields for all $u\in\mcH^{\gamma-1}(\mbT^{2})$ and $v\in\mcH^{\gamma+1}(\mbT^{2})$,
		\begin{equation*}
			\inner{u}{v}_{\mcH^{\gamma-1},\mcH^{\gamma+1}}=\inner{(1-\Delta)^{\gamma}u}{v}_{\mcH^{-\gamma-1},\mcH^{\gamma+1}}=\inner{(1-\Delta)^{-\frac{\gamma+1}{2}}(1-\Delta)^{\gamma}u}{(1-\Delta)^{\frac{\gamma+1}{2}}v}_{L^{2}}.
		\end{equation*}
		If further $u\in\mcH^{\gamma}(\mbT^{2})$, then by integration by parts,
		\begin{equation*}
			\inner{u}{v}_{\mcH^{\gamma-1},\mcH^{\gamma+1}}=\inner{(1-\Delta)^{\gamma/2}u}{(1-\Delta)^{\gamma/2}v}_{L^{2}}=\inner{u}{v}_{\mcH^{\gamma}}.
		\end{equation*}
		\begin{lemma}\label{lem:IBP_dual_pairing}
			Let $u\in \mcH^{\gamma}(\mbT^{2};\mbR^{2})$ and $v\in\mcH^{\gamma+1}(\mbT^{2};\mbR)$, then
			\begin{equation*}
				\inner{\vdiv u}{v}_{\mcH^{\gamma-1},\mcH^{\gamma+1}}=-\inner{u}{\nabla v}_{\mcH^{\gamma}}.
			\end{equation*}
		\end{lemma}
		\begin{proof}
			Let $(u_{n})_{n\in\mbN}$ be such that $u_{n}\in C^{\infty}(\mbT^{2};\mbR^{2})$ and $u_{n}\to u\in\mcH^{\gamma}(\mbT^{2};\mbR^{2})$. It follows by integration by parts
			\begin{equation*}
				\inner{\vdiv u_{n}}{v}_{\mcH^{\gamma-1},\mcH^{\gamma+1}}=\inner{\vdiv u_{n}}{v}_{\mcH^{\gamma}}=-\inner{u_{n}}{\nabla v}_{\mcH^{\gamma}}.
			\end{equation*}
			Both sides are linear and bounded,
			\begin{equation*}
				\begin{split}
					&\abs{\inner{\vdiv u_{n}}{v}_{\mcH^{\gamma-1},\mcH^{\gamma+1}}}\leq\norm{\vdiv u_{n}}_{\mcH^{\gamma-1}}\norm{v}_{\mcH^{\gamma+1}}\lesssim\norm{u_{n}}_{\mcH^{\gamma}}\norm{v}_{\mcH^{\gamma+1}},\\
					&\abs{\inner{u_{n}}{\nabla v}_{\mcH^{\gamma}}}\leq\norm{u_{n}}_{\mcH^{\gamma}}\norm{\nabla v}_{\mcH^{\gamma}}\lesssim\norm{u_{n}}_{\mcH^{\gamma}}\norm{v}_{\mcH^{\gamma+1}},
				\end{split}
			\end{equation*}
			hence, we can pass to the limit, which yields the claim.
		\end{proof}
		We show the existence of a weak solution to~\eqref{eq:Determ_KS} by an application of~\cite[Thm.~1.1]{liu_roeckner_13_local_global}, which in turn relies on a Galerkin approximation. First we show that the operator driving the dynamics of~\eqref{eq:Determ_KS} satisfies~\cite[(H1)--(H4)]{liu_roeckner_13_local_global}, i.e.\ that it is hemi-continuous, locally monotone, generalized coercive and that it satisfies a growth condition.
		\begin{definition}\label{def:operator_properties}
			Let $\gamma\geq0$ and $T>0$, we say that a map $A\from[0,T]\times\mcH^{\gamma+1}(\mbT^{2})\to\mcH^{\gamma-1}(\mbT^{2})$ is hemi-continuous, locally monotone, generalized coercive and that it satisfies a growth condition, if there exists a positive function $f\in L^{1}([0,T];\mbR)$ such that the following hold:
			\begin{itemize}
				\item Hemi-continuity: For all $t\in[0,T]$ and $u,v,w\in\mcH^{\gamma+1}(\mbT^{2})$, it holds that the map $\mbR\ni\theta\mapsto\inner{A(t,u+\theta w)}{v}_{\mcH^{\gamma-1},\mcH^{\gamma+1}}\in\mbR$ is continuous.
				\item Local monotonicity: For all $t\in[0,T]$ and $u,v\in\mcH^{\gamma+1}(\mbT^{2})$, it holds that
				\begin{equation*}
					\inner{A(t,u)-A(t,v)}{u-v}_{\mcH^{\gamma-1},\mcH^{\gamma+1}}\lesssim(f(t)+\norm{u}_{\mcH^{\gamma+1}}^{2}+\norm{v}_{\mcH^{\gamma+1}}^{2})\norm{u-v}_{\mcH^{\gamma}}^{2}.
				\end{equation*}
				\item Generalized coercivity: For all $t\in[0,T]$ and $u\in\mcH^{\gamma+1}(\mbT^{2})$, it holds that
				\begin{equation*}
					\inner{A(t,u)}{u}_{\mcH^{\gamma-1},\mcH^{\gamma+1}}\lesssim-\norm{u}_{\mcH^{\gamma+1}}^{2}+g(\norm{u}_{\mcH^{\gamma}}^{2})+f(t),
				\end{equation*} 
				where $g\from[0,\infty)\to[0,\infty)$ is a non-decreasing, continuous function.
				\item Growth: For all $t\in[0,T]$ and $u\in\mcH^{\gamma+1}(\mbT^{2})$, it holds that
				\begin{equation*}
					\norm{A(t,u)}_{\mcH^{\gamma-1}}\lesssim(f(t)^{1/2}+\norm{u}_{\mcH^{\gamma+1}})(1+\norm{u}_{\mcH^{\gamma}}).
				\end{equation*}
			\end{itemize}
		\end{definition}
		\begin{lemma}\label{lem:operator_properties}
			Let $T>0$, $\chem\in\mbR$ and $h\in L_{T}^{2}\mcH^{\gamma}(\mbT^{2};\mbR^{2})$, the map $A\from[0,T]\times\mcH^{\gamma+1}(\mbT^{2})\to\mcH^{\gamma-1}(\mbT^{2})$ given by $A(t,u)\defeq\Delta u-\chem\vdiv(u\nabla\Phi_{u})-\vdiv h_{t}$ is hemi-continuous, locally monotone, generalized coercive and satisfies a growth condition.
		\end{lemma}
		\begin{proof}
			We show each property separately.
			
			\emph{Hemi-continuity:} Let $t\in[0,T]$, $u,v,w\in\mcH^{\gamma+1}(\mbT^{2})$ and $\theta_{1},\theta_{2}\in\mbR$. We obtain
			\begin{equation*}
				\begin{split}
					&\inner{A(t,u+\theta_{1}w)-A(t,u+\theta_{2}w)}{v}_{\mcH^{\gamma-1},\mcH^{\gamma+1}}\\
					&=-(\theta_{1}-\theta_{2})\inner{\nabla w}{\nabla v}_{\mcH^{\gamma}}+\chem\inner{(u+\theta_{1}w)\nabla\Phi_{u+\theta_{1}w}-(u+\theta_{2}w)\nabla\Phi_{u+\theta_{2}w}}{\nabla v}_{\mcH^{\gamma}}\\
					&=-(\theta_{1}-\theta_{2})\inner{\nabla w}{\nabla v}_{\mcH^{\gamma}}+\chem(\theta_{1}-\theta_{2})\inner{u\nabla\Phi_{w}}{\nabla v}_{\mcH^{\gamma}}\\
					&\quad+\chem(\theta_{1}-\theta_{2})\inner{w\nabla\Phi_{u}}{\nabla v}_{\mcH^{\gamma}}+\chem(\theta_{1}^{2}-\theta_{2}^{2})\inner{w\nabla\Phi_{w}}{\nabla v}_{\mcH^{\gamma}}.
				\end{split}
			\end{equation*}
			Using that $u,v,w\in\mcH^{\gamma+1}(\mbT^{2})$, we bound
			\begin{equation*}
				\begin{split}
					\inner{\nabla w}{\nabla v}_{\mcH^{\gamma}}&\leq\norm{\nabla w}_{\mcH^{\gamma}}\norm{\nabla v}_{\mcH^{\gamma}}\lesssim\norm{w}_{\mcH^{\gamma+1}}\norm{v}_{\mcH^{\gamma+1}},\\
					\inner{u\nabla\Phi_{w}}{\nabla v}_{\mcH^{\gamma}}&\leq\norm{u\nabla\Phi_{w}}_{\mcH^{\gamma}}\norm{\nabla v}_{\mcH^{\gamma}}\lesssim\norm{u\nabla\Phi_{w}}_{\mcH^{\gamma}}\norm{v}_{\mcH^{\gamma+1}}.
				\end{split}
			\end{equation*}
			To control the term $\norm{u\nabla\Phi_{w}}_{\mcH^{\gamma}}$, we use the product estimate~\eqref{eq:product_estimate_Sobolev_I} which yields
			\begin{equation*}
				\inner{u\nabla\Phi_{w}}{\nabla v}_{\mcH^{\gamma}}\lesssim\norm{u}_{\mcH^{\gamma+1}}\norm{\nabla\Phi_{w}}_{\mcH^{\gamma+1}}\norm{v}_{\mcH^{\gamma+1}}\lesssim\norm{u}_{\mcH^{\gamma+1}}\norm{w}_{\mcH^{\gamma}}\norm{v}_{\mcH^{\gamma+1}}.
			\end{equation*}
			The remaining terms can be bounded similarly. This shows that the map $\mbR\ni\theta\mapsto\inner{A(t,u+\theta w)}{v}_{\mcH^{\gamma-1},\mcH^{\gamma+1}}$ is locally Lipschitz continuous, hence continuous. This yields the hemi-continuity of $A$.
			
			\emph{Local monotonicity:} Let $t\in[0,T]$ and $u,v\in\mcH^{\gamma+1}(\mbT^{2})$, we obtain by the Cauchy--Schwarz inequality,
			\begin{equation*}
				\begin{split}
					\inner{A(t,u)-A(t,v)}{u-v}_{\mcH^{\gamma-1},\mcH^{\gamma+1}}&=-\norm{\nabla(u-v)}_{\mcH^{\gamma}}^{2}+\chem\inner{u\nabla\Phi_{u}-v\nabla\Phi_{v}}{\nabla(u-v)}_{\mcH^{\gamma}}\\
					&\leq-\norm{\nabla(u-v)}_{\mcH^{\gamma}}^{2}+\abs{\chem}\norm{\nabla(u-v)}_{\mcH^{\gamma}}\norm{u\nabla\Phi_{u}-v\nabla\Phi_{v}}_{\mcH^{\gamma}}
				\end{split}
			\end{equation*}
			and by the Young inequality for all $\vartheta>0$,
			\begin{equation*}
				\begin{split}
					&\abs{\chem}\norm{\nabla(u-v)}_{\mcH^{\gamma}}\norm{u\nabla\Phi_{u}-v\nabla\Phi_{v}}_{\mcH^{\gamma}}\leq\frac{\chem^{2}\vartheta}{2}\norm{\nabla(u-v)}_{\mcH^{\gamma}}^{2}+\frac{1}{2\vartheta}\norm{u\nabla\Phi_{u}-v\nabla\Phi_{v}}_{\mcH^{\gamma}}^{2}.
				\end{split}
			\end{equation*}
			Therefore,
			\begin{equation*}
				\inner{A(t,u)-A(t,v)}{u-v}_{\mcH^{\gamma-1},\mcH^{\gamma+1}}+\Bigl(1-\frac{\chem^{2}\vartheta}{2}\Bigr)\norm{\nabla(u-v)}_{\mcH^{\gamma}}^{2}\leq\frac{1}{2\vartheta}\norm{u\nabla\Phi_{u}-v\nabla\Phi_{v}}_{\mcH^{\gamma}}^{2}.
			\end{equation*}
			We use the triangle inequality to bound
			\begin{equation*}
				\norm{u\nabla\Phi_{u}-v\nabla\Phi_{v}}_{\mcH^{\gamma}}\leq\norm{u\nabla\Phi_{u-v}}_{\mcH^{\gamma}}+\norm{(u-v)\nabla\Phi_{v}}_{\mcH^{\gamma}}.
			\end{equation*}
			We can now apply the product estimate~\eqref{eq:product_estimate_Sobolev_I} to control the first summand
			\begin{equation*}
				\norm{u\nabla\Phi_{u-v}}_{\mcH^{\gamma}}\lesssim\norm{u}_{\mcH^{\gamma+1}}\norm{\nabla\Phi_{u-v}}_{\mcH^{\gamma+1}}\lesssim\norm{u}_{\mcH^{\gamma+1}}\norm{u-v}_{\mcH^{\gamma}}.
			\end{equation*}
			To control the second summand, we use the product estimate~\eqref{eq:product_estimate_Sobolev_II} which yields
			\begin{equation*}
				\norm{(u-v)\nabla\Phi_{v}}_{\mcH^{\gamma}}\leq\norm{u-v}_{\mcH^{\gamma}}\norm{\nabla\Phi_{v}}_{\mcH^{\gamma+2}}\lesssim\norm{u-v}_{\mcH^{\gamma}}\norm{v}_{\mcH^{\gamma+1}}.
			\end{equation*}
			Therefore,
			\begin{equation*}
				\inner{A(t,u)-A(t,v)}{u-v}_{\mcH^{\gamma-1},\mcH^{\gamma+1}}+\Bigl(1-\frac{\chem^{2}\vartheta}{2}\Bigr)\norm{\nabla(u-v)}_{\mcH^{\gamma}}^{2}\lesssim\frac{1}{\vartheta}(\norm{u}_{\mcH^{\gamma+1}}^{2}+\norm{v}_{\mcH^{\gamma+1}}^{2})\norm{u-v}_{\mcH^{\gamma}}^{2}.
			\end{equation*}
			Upon choosing $\vartheta\leq2/\chem^{2}$, this yields the local monotonicity of $A$.
			
			\emph{Generalized coercivity:} To show the generalized coercivity of $A$, we distinguish the cases $\gamma=0$ and $\gamma>0$. First let us assume $\gamma=0$. Let $t\in[0,T]$ and $u\in\mcH^{1}(\mbT^{2})$, integrating by parts we obtain 
			\begin{equation}\label{eq:coercivity_IBP}
				\begin{split}
					&\inner{A(t,u)}{u}_{\mcH^{-1},\mcH^{1}}-\inner{h_{t}}{\nabla u}_{L^{2}}\\
					&\qquad=-\norm{\nabla u}_{L^{2}}^{2}+\chem\inner{u\nabla\Phi_{u}}{\nabla u}_{L^{2}}=-\norm{\nabla u}_{L^{2}}^{2}+\frac{\chem}{2}\inner{\nabla\Phi_{u}}{\nabla u^{2}}_{L^{2}}\\
					&\qquad=-\norm{\nabla u}_{L^{2}}^{2}-\frac{\chem}{2}\inner{\Delta\Phi_{u}}{u^{2}}_{L^{2}}=-\norm{\nabla u}_{L^{2}}^{2}+\frac{\chem}{2}\inner{u-\mean{u}}{u^{2}}_{L^{2}}.
				\end{split}
			\end{equation}
			We apply the Cauchy--Schwarz inequality to bound
			\begin{equation*}
				\abs{\inner{u-\mean{u}}{u^{2}}_{L^{2}}}\leq\norm{u-\mean{u}}_{L^{2}}\norm{u^{2}}_{L^{2}}.
			\end{equation*}			
			To estimate the first factor, we use
			\begin{equation*}
				\norm{u-\mean{u}}_{L^{2}}^{2}=\norm{u}_{L^{2}}^{2}-\mean{u}^{2}\leq\norm{u}_{L^{2}}^{2}.
			\end{equation*}
			To estimate the second factor, we use the Sobolev embedding $\mcH^{1/2}(\mbT^{2})\embed L^{4}(\mbT^{2})$ (cf.~\eqref{eq:Sobolev_embedding_I}) followed by real interpolation to bound
			\begin{equation*}
				\norm{u^{2}}_{L^{2}}=\norm{u}_{L^{4}}^{2}\lesssim\norm{u}_{\mcH^{1/2}}^{2}\leq\norm{u}_{\mcH^{1}}\norm{u}_{L^{2}}.
			\end{equation*}
			Real interpolation, in this instance, is just another application of the Cauchy--Schwarz inequality:
			\begin{equation*}
				\norm{u}_{\mcH^{1/2}}^{2}=\sum_{\om\in\mbZ^{2}}(1+\abs{2\uppi\om}^{2})^{1/2}\abs{\hat{u}(\om)}\abs{\hat{u}(\om)}\leq\norm{u}_{\mcH^{1}}\norm{u}_{L^{2}}.
			\end{equation*}
			To summarize, it follows by~\eqref{eq:coercivity_IBP} that there exists a constant $C>0$ such that
			\begin{equation*}
				\inner{A(t,u)}{u}_{\mcH^{-1},\mcH^{1}}-\inner{h_{t}}{\nabla u}_{L^{2}}\leq-\norm{\nabla u}_{L^{2}}^{2}+C\abs{\chem}\norm{u}_{L^{2}}^{2}\norm{u}_{\mcH^{1}}.
			\end{equation*}
			An application of the Cauchy--Schwarz inequality combined with Young's inequality yields for all $\vartheta>0$,
			\begin{equation*}
				\inner{A(t,u)}{u}_{\mcH^{-1},\mcH^{1}}+\Bigl(1-\frac{\vartheta}{2}\Bigr)\norm{\nabla u}_{L^{2}}^{2}\leq C\abs{\chem}\norm{u}_{L^{2}}^{2}\norm{u}_{\mcH^{1}}+\frac{1}{2\vartheta}\norm{h_{t}}_{L^{2}}^{2}.
			\end{equation*}
			Using that
			\begin{equation*}
				\begin{split}
					\norm{u}_{\mcH^{1}}^{2}&=\sum_{\om\in\mbZ^{2}}(1+\abs{2\uppi\om}^{2})\abs{\hat{u}(\om)}^{2}=\norm{u}_{L^{2}}^{2}+\sum_{\om\in\mbZ^{2}}\sum_{j=1}^{2}\abs{2\uppi\om^{j}}^{2}\abs{\hat{u}(\om)}^{2}\\
					&=\norm{u}_{L^{2}}^{2}+\sum_{j=1}^{2}\norm{\partial_{j}u}_{L^{2}}^{2}=\norm{u}_{L^{2}}^{2}+\norm{\nabla u}_{L^{2}}^{2},
				\end{split}
			\end{equation*}
			we obtain
			\begin{equation*}
				\inner{A(t,u)}{u}_{\mcH^{-1},\mcH^{1}}+\Bigl(1-\frac{\vartheta}{2}\Bigr)\norm{u}_{\mcH^{1}}^{2}\leq C\abs{\chem}\norm{u}_{L^{2}}^{2}\norm{u}_{\mcH^{1}}+\frac{1}{2\vartheta}\norm{h_{t}}_{L^{2}}^{2}+\Bigl(1-\frac{\vartheta}{2}\Bigr)\norm{u}_{L^{2}}^{2}.
			\end{equation*}
			Another application of Young's inequality yields
			\begin{equation*}
				\inner{A(t,u)}{u}_{\mcH^{-1},\mcH^{1}}+(1-\vartheta)\norm{u}_{\mcH^{1}}^{2}\leq\frac{C^{2}\chem^{2}}{2\vartheta}\norm{u}_{L^{2}}^{4}+\frac{1}{2\vartheta}\norm{h_{t}}_{L^{2}}^{2}+\Bigl(1-\frac{\vartheta}{2}\Bigr)\norm{u}_{L^{2}}^{2}.
			\end{equation*}
			Choosing $\vartheta<1$, we obtain the generalized coercivity of $A$ with $g(x)=\chem^{2}x^{2}+x$ and $f(t)=\norm{h_{t}}_{L^{2}}^{2}$.
			
			Next, let us assume $\gamma>0$. We obtain as in~\eqref{eq:coercivity_IBP},
			\begin{equation*}
				\inner{A(t,u)}{u}_{\mcH^{\gamma-1},\mcH^{\gamma+1}}-\inner{h_{t}}{\nabla u}_{\mcH^{\gamma}}=-\norm{\nabla u}_{\mcH^{\gamma}}^{2}+\chem\inner{u\nabla\Phi_{u}}{\nabla u}_{\mcH^{\gamma}}.
			\end{equation*}
			An application of the Cauchy--Schwarz inequality yields
			\begin{equation*}
				\abs{\inner{u\nabla\Phi_{u}}{\nabla u}_{\mcH^{\gamma}}}\leq\norm{u\nabla\Phi_{u}}_{\mcH^{\gamma}}\norm{\nabla u}_{\mcH^{\gamma}}.
			\end{equation*}
			To control the term $\norm{u\nabla\Phi_{u}}_{\mcH^{\gamma}}$, we use the product estimate~\eqref{eq:product_estimate_Sobolev_III} to bound
			\begin{equation*}
				\norm{u\nabla\Phi_{u}}_{\mcH^{\gamma}}\lesssim\norm{u}_{\mcH^{\gamma}}\norm{\nabla\Phi_{u}}_{\mcH^{\gamma+1}}\lesssim\norm{u}_{\mcH^{\gamma}}^{2}.
			\end{equation*}
			Therefore, there exists a constant $C>0$ such that
			\begin{equation*}
				\inner{A(t,u)}{u}_{\mcH^{\gamma-1},\mcH^{\gamma+1}}-\inner{h_{t}}{\nabla u}_{\mcH^{\gamma}}\leq-\norm{\nabla u}_{\mcH^{\gamma}}^{2}+C\abs{\chem}\norm{u}_{\mcH^{\gamma}}^{2}\norm{\nabla u}_{\mcH^{\gamma}}.
			\end{equation*}
			and we can apply the Cauchy--Schwarz inequality combined with Young's inequality to show that for all $\vartheta>0$,
			\begin{equation*}
				\inner{A(t,u)}{u}_{\mcH^{\gamma-1},\mcH^{\gamma+1}}+(1-\vartheta)\norm{\nabla u}_{\mcH^{\gamma}}^{2}\leq \frac{C^{2}\chem^{2}}{2\vartheta}\norm{u}_{\mcH^{\gamma}}^{4}+\frac{1}{2\vartheta}\norm{h_{t}}_{\mcH^{\gamma}}^{2}.
			\end{equation*}
			Using that 
			\begin{equation*}
				\norm{\nabla u}_{\mcH^{\gamma}}^{2}=\sum_{j=1}^{2}\sum_{\om\in\mbZ^{2}}\abs{2\uppi\om^{j}}^{2}\abs{\hat{u}(\om)}^{2}=\sum_{\om\in\mbZ^{2}}\abs{2\uppi\om}^{2}\abs{\hat{u}(\om)}^{2}=\norm{u}_{\mcH^{\gamma+1}}^{2}-\norm{u}_{\mcH^{\gamma}}^{2},
			\end{equation*}
			we obtain
			\begin{equation*}
				\inner{A(t,u)}{u}_{\mcH^{\gamma-1},\mcH^{\gamma+1}}+(1-\vartheta)\norm{u}_{\mcH^{\gamma+1}}^{2}\leq \frac{C^{2}\chem^{2}}{2\vartheta}\norm{u}_{\mcH^{\gamma}}^{4}+\frac{1}{2\vartheta}\norm{h_{t}}_{\mcH^{\gamma}}^{2}+(1-\vartheta)\norm{u}_{\mcH^{\gamma}}^{2}.
			\end{equation*}
			Choosing $\vartheta<1$, we obtain the generalized coercivity of $A$ with $g(x)=\chem^{2}x^{2}+x$ and $f(t)=\norm{h_{t}}_{\mcH^{\gamma}}^{2}$.
			
			\emph{Growth:} Let $t\in[0,T]$ and $u\in\mcH^{\gamma+1}(\mbT^{2})$, we obtain by the product estimate~\eqref{eq:product_estimate_Sobolev_I}
			\begin{equation*}
				\begin{split}
					\norm{A(t,u)}_{\mcH^{\gamma-1}}&=\norm{\Delta u-\chem\vdiv(u\nabla\Phi_{u})-\vdiv h_{t}}_{\mcH^{\gamma-1}}\lesssim\norm{u}_{\mcH^{\gamma+1}}+\abs{\chem}\norm{u\nabla\Phi_{u}}_{\mcH^{\gamma}}+\norm{h_{t}}_{\mcH^{\gamma}}\\
					&\lesssim\norm{u}_{\mcH^{\gamma+1}}+\abs{\chem}\norm{u}_{\mcH^{\gamma+1}}\norm{\nabla\Phi_{u}}_{\mcH^{\gamma+1}}+\norm{h_{t}}_{\mcH^{\gamma}}\leq(\norm{h_{t}}_{\mcH^{\gamma}}+\norm{u}_{\mcH^{\gamma+1}})(1+\abs{\chem}\norm{u}_{\mcH^{\gamma}}).
				\end{split}
			\end{equation*}
			This yields a growth condition on $A$.
			
			Hence, $A$ satisfies the properties given in Definition~\ref{def:operator_properties}.
		\end{proof}
		We can now apply~\cite[Thm.~1.1]{liu_roeckner_13_local_global} to construct a local weak solution to~\eqref{eq:Determ_KS}:
		\begin{proposition}\label{prop:Determ_KS_local_weak_existence}
			For each $\chem\in\mbR$, $\gamma\geq0$ and $f_{0}\in\mcH^{\gamma}(\mbT^{2})$, there exists some $\bar{T}>0$ such that~\eqref{eq:Determ_KS} has a weak solution on $[0,\bar{T}]$, i.e.\ there exists a unique $f\in C_{\bar{T}}\mcH^{\gamma}(\mbT^{2})\cap L^{2}_{\bar{T}}\mcH^{\gamma+1}(\mbT^{2})\cap W^{1,2}_{\bar{T}}\mcH^{\gamma-1}(\mbT^{2})$ that satisfies for each $t\in[0,\bar{T}]$ and $\test\in\mcH^{\gamma+1}(\mbT^{2})$,
			\begin{equation*}
				\inner{f_{t}}{\test}_{\mcH^{\gamma}}-\inner{f_{0}}{\test}_{\mcH^{\gamma}}=\int_{0}^{t}\inner{\Delta f_{s}-\chem\vdiv(f_{s}\nabla\Phi_{f_{s}})}{\test}_{\mcH^{\gamma-1},\mcH^{\gamma+1}}\dd s.
			\end{equation*}
			The local time of existence $\bar{T}$ depends only on $\gamma$, $\chem$ and $\norm{f_{0}}_{\mcH^{\gamma}}$.
		\end{proposition}
		\begin{proof}
			An application of Lemma~\ref{lem:operator_properties} with $\het=0$ shows that the operator $A\from\mcH^{\gamma+1}(\mbT^{2})\to\mcH^{\gamma-1}(\mbT^{2})$ defined by $u\mapsto A(u)\defeq\Delta u-\chem\vdiv(u\nabla\Phi_{u})$ is hemi-continuous, locally monotone, generalized coercive and satisfies a growth condition. The claim then follows by~\cite[Thm.~1.1]{liu_roeckner_13_local_global}, where we used that the embedding $\mcH^{\gamma+1}(\mbT^{2})\subset\mcH^{\gamma}(\mbT^{2})$ is compact (Lemma~\ref{lem:Besov_compact}). It follows by~\cite[Rem.~1.2]{liu_roeckner_13_local_global} that $\bar{T}$ only depends on $\gamma$, $\chem$ and $\norm{f_{0}}_{\mcH^{\gamma}}$.
		\end{proof}
		Having constructed a local weak solution, we can re-start~\eqref{eq:Determ_KS} to establish a maximal time of existence.
		\begin{theorem}\label{thm:Determ_KS_maximal}
			For each $\chem\in\mbR$, $\gamma\geq0$ and $f_{0}\in\mcH^{\gamma}(\mbT^{2})$, there exists a $\Trdet\in(0,\infty]$ and a unique weak solution $f\from[0,\Trdet)\to\mcH^{\gamma}(\mbT^{2})$ to~\eqref{eq:Determ_KS} with initial data $f_{0}$ and chemotactic sensitivity $\chem$ such that $f\in C_{T}\mcH^{\gamma}(\mbT^{2})\cap L^{2}_{T}\mcH^{\gamma+1}(\mbT^{2})\cap W^{1,2}_{T}\mcH^{\gamma-1}(\mbT^{2})$ for every $T\in(0,\Trdet)$. The time of existence $\Trdet$ depends only on $\chem$ and $\norm{f_{0}}_{L^{2}}$; in particular it is independent of $\gamma$ and coincides with the maximal time of existence in $L^{2}(\mbT^{2})$. It holds that either $\Trdet=\infty$ or $\lim_{t\nearrow\Trdet}\norm{f_{t}}_{L^{2}}=\infty$.
		\end{theorem}
		\begin{proof}
			Let $\chem\in\mbR$, $\gamma\geq0$ and $f_{0}\in\mcH^{\gamma}(\mbT^{2})$, we define the time horizon
			\begin{equation*}
				\begin{split}
					\Trdet_{\gamma}\defeq\sup\Bigl\{T>0:~&\text{there exists a weak solution}\\
					&f\in C_{T}\mcH^{\gamma}(\mbT^{2})\cap L^{2}_{T}\mcH^{\gamma+1}(\mbT^{2})\cap W^{1,2}_{T}\mcH^{\gamma-1}(\mbT^{2})\text{ to~\eqref{eq:Determ_KS}}\Bigr\},
				\end{split}
			\end{equation*}
			which is well-defined and non-zero by Proposition~\ref{prop:Determ_KS_local_weak_existence}. We first show that either 
			\begin{equation*}
				\Trdet_{\gamma}=\infty\quad\text{or}\quad\lim_{t\nearrow\Trdet_{\gamma}}\norm{f_{t}}_{\mcH^{\gamma}}=\infty.
			\end{equation*}
			Assume $\Trdet_{\gamma}<\infty$ and that there exists a sequence $(t_{n})_{n\in\mbN}$ such that $t_{n}\to\Trdet_{\gamma}$ and $R\defeq\sup_{n\in\mbN}\norm{f_{t_{n}}}_{\mcH^{\gamma}}<\infty$. Let $n\in\mbN$ be such that $t_{n}<\Trdet<t_{n}+\bar{T}(\chem,R)$, where $\bar{T}=\bar{T}(\chem,R)$ is as in Proposition~\ref{prop:Determ_KS_local_weak_existence}. We can then re-start~\eqref{eq:Determ_KS} from $f_{t_{n}}$ and run it until time $t_{n}+\bar{T}(\chem,R)$, contradicting the definition of $\Trdet_{\gamma}$.
			
			Next we show that $\Trdet_{\gamma}$ depends only on $\chem$ and $\norm{f_{0}}_{L^{2}}$. Using that $\norm{f_{t}}_{L^{2}}\leq\norm{f_{t}}_{\mcH^{\gamma}}$ for every $t\leq\Trdet_{\gamma}$, we obtain $\Trdet_{\gamma}\leq\Trdet_{0}$. Furthermore, it follows by a bootstrapping argument (cf.~\eqref{eq:bootstrap_IC}--\eqref{eq:bootstrap_advection_II} below), that for every $T<\Trdet_{0}$ it holds that $f\in C((0,T];\mcH^{\gamma}(\mbT^{2}))$, which implies $\Trdet_{0}\leq\Trdet_{\gamma}$. Therefore $\Trdet_{\gamma}=\Trdet_{0}\eqdef\Trdet$ which yields the claim.
		\end{proof}
	\end{details}
	\begin{details}
		We define three notions of weak solution:
		\begin{enumerate}
			\item $f\in C_{T}\mcH^{\gamma}(\mbT^{2})\cap L_{T}^{2}\mcH^{\gamma+1}(\mbT^{2})\cap W^{1,2}_{T}\mcH^{\gamma-1}(\mbT^{2})$ s.t.\ for a.e.\ $t\in [0,T]$ and all $\test\in\mcH^{\gamma+1}(\mbT^{2})$,
			\begin{equation}\label{eq:Weak_Sol_Def_1}
				\inner{\partial_{t}f_{t}}{\test}_{\mcH^{\gamma-1},\mcH^{\gamma+1}}=\inner{\Delta f_{t}-\chem\vdiv (f_{t}\nabla\Phi_{f_{t}})}{\test}_{\mcH^{\gamma-1},\mcH^{\gamma+1}},
			\end{equation}
			\item $f\in C_{T}\mcH^{\gamma}(\mbT^{2})\cap L_{T}^{2}\mcH^{\gamma+1}(\mbT^{2})$ s.t.\ for a.e.\ $t\in [0,T]$ and all $\test\in\mcH^{\gamma+1}(\mbT^{2})$,
			\begin{equation}\label{eq:Weak_Sol_Def_2}
				\inner{f_{t}}{\test}_{\mcH^{\gamma}}-\inner{f_{0}}{\test}_{\mcH^{\gamma}}=\int_{0}^{t}\inner{\Delta f_{s}-\chem\vdiv(f_{s}\nabla\Phi_{f_{s}})}{\test}_{\mcH^{\gamma-1},\mcH^{\gamma+1}}\dd s,
			\end{equation}
			\item $f\in C_{T}\mcH^{\gamma}(\mbT^{2})\cap L_{T}^{2}\mcH^{\gamma+1}(\mbT^{2})$ s.t.\ for a.e.\ $t\in[0,T]$ and all $\test\in L^{2}_{T}\mcH^{\gamma+1}(\mbT^{2})\cap W^{1,2}_{T}\mcH^{\gamma-1}(\mbT^{2})$,
			\begin{equation}\label{eq:Weak_Sol_Def_3}
				\inner{f_{t}}{\test_{t}}_{\mcH^{\gamma}}-\inner{f_{0}}{\test_{0}}_{\mcH^{\gamma}}=\int_{0}^{t}\inner{\partial_{s}\test_{s}}{f_{s}}_{\mcH^{\gamma-1},\mcH^{\gamma+1}}+\inner{\Delta f_{s}-\chem\vdiv(f_{s}\nabla\Phi_{f_{s}})}{\test_{s}}_{\mcH^{\gamma-1},\mcH^{\gamma+1}}\dd s.
			\end{equation}
		\end{enumerate}
		We will show that these three definitions are in fact equivalent. The point of doing these definitions is that the Galerkin solutions is of the form~\eqref{eq:Weak_Sol_Def_2}, see Proposition~\ref{prop:Determ_KS_local_weak_existence}, however, the energy estimate will make use of~\eqref{eq:Weak_Sol_Def_3} and the identity~\eqref{eq:Weak_Sol_Def_1} shows that weak solutions are in the class $W^{1,2}((0,T);\mcH^{\gamma-1}(\mbT^{2}))$.
		
		The equivalence between~\eqref{eq:Weak_Sol_Def_2} and~\eqref{eq:Weak_Sol_Def_3} when $\test$ is not a function of time is clear; to show that~\eqref{eq:Weak_Sol_Def_2} implies~\eqref{eq:Weak_Sol_Def_3} when $\test$ is time-dependent one leverages the equivalence between~\eqref{eq:Weak_Sol_Def_2} and~\eqref{eq:Weak_Sol_Def_1} followed by an application of~\cite[Subsec.~5.9.2,~Proof~of~Thm.~3]{evans_98_PDE} to move from~\eqref{eq:Weak_Sol_Def_1} to~\eqref{eq:Weak_Sol_Def_3} using the identity
		\begin{equation*}
			\frac{\dd}{\dd t}\inner{f_{t}}{\test_{t}}_{\mcH^{\gamma}}=\frac{1}{2}\frac{\dd}{\dd t}(\norm{f_{t}+\test_{t}}_{\mcH^{\gamma}}^{2}-\norm{f_{t}}_{\mcH^{\gamma}}^{2}-\norm{\test_{t}}_{\mcH^{\gamma}}^{2})=\inner{\partial_{t}\test_{t}}{f_{t}}_{\mcH^{\gamma-1},\mcH^{\gamma+1}}+\inner{\partial_{t}f_{t}}{\test_{t}}_{\mcH^{\gamma-1},\mcH^{\gamma+1}}.
		\end{equation*}
		The equivalence between~\eqref{eq:Weak_Sol_Def_2} and~\eqref{eq:Weak_Sol_Def_1} is easy in one direction by the identity,
		\begin{equation*}
			f_{t}-f_{0}=\int_{0}^{t}\partial_{s}f_{s}\dd s,
		\end{equation*}
		which holds in a functional sense with the right hand side interpreted as a Bochner integral~\cite[Subsec.~5.9.2,~Thm.~2]{evans_98_PDE}. The opposite direction can be shown using the Lebesgue differentiation theorem: Let $\test\in\mcH^{\gamma+1}(\mbT^{2})$, we obtain by~\eqref{eq:Weak_Sol_Def_2} for a.e.\ $t\in[0,T]$,
		\begin{equation*}
			\begin{split}
				\lim_{h\to0}\frac{\inner{f_{t+h}}{\test}_{\mcH^{\gamma}}-\inner{f_{t}}{\test}_{\mcH^{\gamma}}}{h}&=\lim_{h\to0}\frac{1}{h}\int_{t}^{t+h}\inner{\Delta 	f_{s}-\chem\vdiv(f_{s}\nabla\Phi_{f_{s}})}{\test}_{\mcH^{\gamma-1},\mcH^{\gamma+1}}\dd s\\
				&=\inner{\Delta f_{t}-\chem\vdiv(f_{t}\nabla\Phi_{f_{t}})}{\test}_{\mcH^{\gamma-1},\mcH^{\gamma+1}}.
			\end{split}
		\end{equation*}
		Let $\theta\in C_{c}^{\infty}((0,T);\mbR)$ and assume that $h>0$ is such that $\supp(\theta)\subset[0,T-h]$, then
		\begin{equation*}
			\begin{split}
				&\int_{0}^{T}\inner{f_{t}}{\test}_{\mcH^{\gamma}}\frac{\theta(t-h)-\theta(t)}{h}\dd t\\
				&=\int_{0}^{T}\frac{\inner{f_{t+h}}{\test}_{\mcH^{\gamma}}-\inner{f_{t}}{\test}_{\mcH^{\gamma}}}{h}\theta(t)\dd t+\frac{1}{h}\int_{-h}^{0}\inner{f_{t+h}}{\test}_{\mcH^{\gamma}}\theta(t)\dd t-\frac{1}{h}\int_{T-h}^{T}\inner{f_{t+h}}{\test}_{\mcH^{\gamma}}\theta(t)\dd t\\
				&=\int_{0}^{T}\frac{\inner{f_{t+h}}{\test}_{\mcH^{\gamma}}-\inner{f_{t}}{\test}_{\mcH^{\gamma}}}{h}\theta(t)\dd t.
			\end{split}
		\end{equation*}
		Dividing the above by $h$ and taking the limit $h\to0$, we obtain
		\begin{equation*}
			\begin{split}
				\int_{0}^{T}\inner{f_{t}}{\test}_{\mcH^{\gamma}}\partial_{t}\theta(t)\dd t&=\int_{0}^{T}\inner{f_{t}}{\test}_{\mcH^{\gamma}}\lim_{h\to0}\frac{\theta(t)-\theta(t-h)}{h}\dd t\\
				&=-\int_{0}^{T}\inner{\Delta f_{t}-\chem\vdiv(f_{t}\nabla\Phi_{f_{t}})}{\test}_{\mcH^{\gamma-1},\mcH^{\gamma+1}}\theta(t)\dd t.
			\end{split}
		\end{equation*}
		This shows that $\inner{f}{\test}_{\mcH^{\gamma-1},\mcH^{\gamma+1}}=\inner{f}{\test}_{\mcH^{\gamma}}$ is weakly differentiable with weak derivative
		\begin{equation*}
			\partial_{t}\inner{f_{t}}{\test}_{\mcH^{\gamma-1},\mcH^{\gamma+1}}=\inner{\Delta f_{t}-\chem\vdiv(f_{t}\nabla\Phi_{f_{t}})}{\test}_{\mcH^{\gamma-1},\mcH^{\gamma+1}}.
		\end{equation*}
		Using that the Bochner integral commutes with linear maps~\cite[Chpt.~X,~Thm.~2.11]{amann_escher_09}, we obtain for all $\phi\in\mcH^{\gamma+1}(\mbT^{2})$,
		\begin{equation*}
			\Bigl\langle\int_{0}^{T}f_{t}\partial_{t}\theta(t)\dd t,\phi\Bigr\rangle_{\mcH^{\gamma-1},\mcH^{\gamma+1}}=-\Bigl\langle\int_{0}^{T}(\Delta f_{t}-\chem\vdiv(f_{t}\nabla\Phi_{f_{t}}))\theta(t)\dd t,\phi\Bigr\rangle_{\mcH^{\gamma-1},\mcH^{\gamma+1}}.
		\end{equation*}
		Since $\mcH^{\gamma+1}(\mbT^{2})$ separates points of $\mcH^{\gamma-1}(\mbT^{2})$, it follows that
		\begin{equation*}
			\int_{0}^{T}f_{t}\partial_{t}\theta(t)\dd t=-\int_{0}^{T}(\Delta f_{t}-\chem\vdiv(f_{t}\nabla\Phi_{f_{t}}))\theta(t)\dd t;
		\end{equation*}
		hence $f$ is weakly differentiable with weak derivative $\partial_{t}f_{t}=\Delta f_{t}-\chem\vdiv(f_{t}\nabla\Phi_{f_{t}})$. This yields the implication~\eqref{eq:Weak_Sol_Def_2} to~\eqref{eq:Weak_Sol_Def_1}.
	\end{details}
	
	Taking the constant function $1$ as a test function shows that the mean of the solution is preserved. The almost everywhere non-negativity can be shown by testing with (a smooth approximant to) the negative part of the solution $f_{t}^{-}\defeq\min\{0,f_{t}\}$ and taking limits by the dominated convergence theorem. For a similar proof in the two-dimensional plane, see~\cite[Prop.~3.4]{mayorcas_tomasevic_22}.
	\begin{details}
		\paragraph{Proof that $\norm{f_{0}^{-}}_{L^{2}(\mbT^{2})}=0$ implies $\norm{f_{t}^{-}}_{L^{2}(\mbT^{2})}=0$.}
		First let us assume that $f_{0}\in\mcH^{1}(\mbT^{2})$, it follows by Theorem~\ref{thm:Determ_KS_maximal} that there exists a weak solution $f\in C_{T}\mcH^{1}(\mbT^{2})\cap L_{T}^{2}\mcH^{2}(\mbT^{2})\cap W^{1,2}_{T}L^{2}(\mbT^{2})$ to~\eqref{eq:Determ_KS} for all $T<\Trdet$, where $\Trdet$ denotes the blow-up time in $L^{2}(\mbT^{2})$.
		
		An application of~\cite[Ex.~4.15~(b)]{arendt_kreuter_18} yields that $f^{-}\in W^{1,2}_{T}L^{2}(\mbT^{2})$ with $\partial_{t}f^{-}_{t}=\partial_{t}f_{t}\mathds{1}_{\{x\in\mbT^{2}:f_{t}(x)<0\}}$. Using the linearity and boundedness of the embedding $L^{2}(\mbT^{2})\subset\mcH^{-1}(\mbT^{2})$ as well as the definition of the Bochner--Lebesgue spaces~\cite[Chpt.~X,~Sec.~4]{amann_escher_09}, it follows that $f^{-}\in W^{1,2}((0,T);\mcH^{-1}(\mbT^{2}))$. Similarly, it also follows that $f\in C_{T}L^{2}(\mbT^{2})\cap L_{T}^{2}\mcH^{1}(\mbT^{2})$. Using that $x\mapsto x^{-}$ is Lipschitz continuous on $\mbR$, i.e.\
		\begin{equation}\label{eq:negative_part_Lipschitz}
			\begin{split}
				\abs{x^{-}-y^{-}}
				&=
				\begin{cases}
					\begin{aligned}
						&\abs{x-y}\quad&\text{if}~x<0,~y<0,\\
						&\abs{x}\leq\abs{x-y}\quad&\text{if}~x<0,~y\geq0,\\
						&0\leq\abs{x-y}\quad&\text{if}~x\geq0,~y\geq0,
					\end{aligned}
				\end{cases}\\
				&\leq\abs{x-y},
			\end{split}
		\end{equation}
		we obtain $\norm{u^{-}-v^{-}}_{L^{2}}^{2}\leq\norm{u-v}_{L^{2}}^{2}$; hence $u\mapsto u^{-}$ is Lipschitz continuous on $L^{2}(\mbT^{2})$, which yields $f^{-}\in C_{T}L^{2}(\mbT^{2})$. Using that $u^{-}\in\mcH^{1}(\mbT^{2})$ and $\nabla u^{-}=\mathds{1}_{u<0}\nabla u$ for each $u\in\mcH^{1}(\mbT^{2})$ (cf.~\cite[Ex.~4.15~(a)]{arendt_kreuter_18}), we obtain
		\begin{equation*}
			\norm{u^{-}}_{\mcH^{1}}^{2}\lesssim\norm{u^{-}}_{L^{2}}^{2}+\norm{\nabla u^{-}}_{L^{2}}^{2}\leq\norm{u}_{L^{2}}^{2}+\norm{\nabla u}_{L^{2}}^{2}\lesssim\norm{u}_{\mcH^{1}}^{2},
		\end{equation*}
		hence $f^{-}\in L^{2}_{T}\mcH^{1}(\mbT^{2})$. All in all, we showed that $f^{-}\in C_{T}L^{2}(\mbT^{2})\cap L_{T}^{2}\mcH^{1}(\mbT^{2})\cap W^{1,2}_{T}\mcH^{-1}(\mbT^{2})$.
		
		An application of~\eqref{eq:Weak_Sol_Def_3} with $\gamma=0$ and $\test=f^{-}$ followed by integration by parts (Lemma~\ref{lem:IBP_dual_pairing}) yields for a.e.\ $t\in[0,T]$,
		\begin{equation*}
			\norm{f_{t}^{-}}_{L^{2}}^{2}-\norm{f_{0}^{-}}_{L^{2}}^{2}=\int_{0}^{t}\inner{\partial_{s}f_{s}^{-}}{f_{s}}_{\mcH^{-1},\mcH^{1}}-\inner{\nabla f_{s}-\chem f_{s}\nabla\Phi_{f_{s}}}{{\nabla f_{s}^{-}}}_{L^{2}}\dd s.
		\end{equation*}
		To analyse the first term, we use that $f^{-}\in W^{1,2}_{T}L^{2}(\mbT^{2})$ to identify
		\begin{equation*}
			\begin{split}
				\inner{\partial_{s}f_{s}^{-}}{f_{s}}_{\mcH^{-1},\mcH^{1}}&=\inner{\partial_{s}f_{s}^{-}}{f_{s}}_{L^{2}}=\inner{\partial_{s}f_{s}}{\mathds{1}_{f_{s}<0}f_{s}}_{L^{2}}=\inner{\Delta f_{s}-\chem\vdiv(f_{s}\nabla\Phi_{f_{s}})}{f_{s}^{-}}_{L^{2}}\\
				&=-\inner{\nabla f_{s}-\chem f_{s}\nabla\Phi_{f_{s}}}{\nabla f_{s}^{-}}_{L^{2}}=-\inner{\nabla f_{s}-\chem f_{s}\nabla\Phi_{f_{s}}}{\mathds{1}_{f_{s}<0}\nabla f_{s}}_{L^{2}},
			\end{split}
		\end{equation*}
		which yields
		\begin{equation*}
			\begin{split}
				\norm{f_{t}^{-}}_{L^{2}}^{2}-\norm{f_{0}^{-}}_{L^{2}}^{2}&=-2\int_{0}^{t}\int_{\{x\in\mbT^{2}:f_{s}(x)<0\}}\abs{\nabla f_{s}(x)}^{2}\dd x\dd s \\
				&\quad+2\chem\int_{0}^{t}\int_{\{x\in\mbT^{2}:f_{s}(x)<0\}}f_{s}(x)\nabla\Phi_{f_{s}(x)}\nabla f_{s}(x)\dd x\dd s.
			\end{split}
		\end{equation*}
		In the second summand, we apply Young's inequality to bound for every $\vartheta>0$,
		\begin{equation*}
			\begin{split}
				2\chem\int_{\{x\in\mbT^{2}:f_{s}(x)<0\}}f_{s}(x)\nabla\Phi_{f_{s}(x)}\nabla f_{s}(x)\dd x&\leq\frac{\chem^{2}}{\vartheta}\int_{\{x\in\mbT^{2}:f_{s}(x)<0\}}\abs{\nabla f_{s}(x)}^{2}\dd x\\
				&\quad+\vartheta\int_{\{x\in\mbT^{2}:f_{s}(x)<0\}}\abs{f_{s}(x)\nabla\Phi_{f_{s}(x)}}^{2}\dd x,
			\end{split}
		\end{equation*}
		which, if we choose $\vartheta=\chem^{2}$, yields
		\begin{equation*}
			\begin{split}
				\norm{f_{t}^{-}}_{L^{2}}^{2}-\norm{f_{0}^{-}}_{L^{2}}^{2}&\leq-\int_{0}^{t}\int_{\{x\in\mbT^{2}:f_{s}(x)<0\}}\abs{\nabla f_{s}(x)}^{2}\dd x\dd s \\
				&\quad+\chem^{2}\int_{0}^{t}\int_{\{x\in\mbT^{2}:f_{s}(x)<0\}}\abs{f_{s}(x)\nabla\Phi_{f_{s}(x)}}^{2}\dd x\dd s.
			\end{split}
		\end{equation*}
		Sobolev's embedding $\mcH^{2}(\mbT^{2})\embed L^{\infty}(\mbT^{2})$ (cf.~\eqref{eq:Sobolev_embedding_II}) yields for some $C>0$,
		\begin{equation*}
			\abs{f_{s}(x)\nabla\Phi_{f_{s}(x)}}^{2}\leq\abs{f_{s}(x)}^{2}\norm{\nabla\Phi_{f_{s}}}_{L^{\infty}}^{2}\leq C\abs{f_{s}(x)}^{2}\norm{f_{s}}_{\mcH^{1}}^{2},
		\end{equation*}
		which implies
		\begin{equation*}
			\norm{f_{t}^{-}}_{L^{2}}^{2}-\norm{f_{0}^{-}}_{L^{2}}^{2}\leq-\int_{0}^{t}\int_{\{x\in\mbT^{2}:f_{s}(x)<0\}}\abs{\nabla f_{s}(x)}^{2}\dd x\dd s+C\chem^{2}\int_{0}^{t}\norm{f_{s}^{-}}_{L^{2}}^{2}\norm{f_{s}}_{\mcH^{1}}^{2}\dd s.
		\end{equation*}
		Assume $\norm{f_{0}^{-}}_{L^{2}}=0$, Gronwall's inequality then implies $\norm{f_{t}^{-}}_{L^{2}}=0$ for all $t\in[0,T]$.
		
		To prove the claim for $f_{0}\in L^{2}(\mbT^{2})$, let $(f_{0}^{n})_{n\in\mbN}$ be a sequence such that $f_{0}^{n}\in\mcH^{1}(\mbT^{2})$ for every $n\in\mbN$ and $f_{0}^{n}\to f_{0}\in L^{2}(\mbT^{2})$ as $n\to\infty$. Let $(f^{n})_{n\in\mbN}$ be the associated sequence of solutions to~\eqref{eq:Determ_KS} and let $f$ be the solution to~\eqref{eq:Determ_KS} with initial data $f_{0}$. Using that $(\norm{f_{0}^{n}}_{L^{2}})_{n\in\mbN}$ is convergent, hence bounded, we can use~\cite[Rem.~1.2]{liu_roeckner_13_local_global}, to conclude that there exists some $\bar{T}>0$ such that both $(f^{n})_{n\in\mbN}$ and $f$ exist on $[0,\bar{T}]$. Further, by~\cite[Lem.~2.4]{liu_roeckner_13_local_global}, we can control $\norm{f^{n}}_{L^{2}_{\bar{T}}\mcH^{1}}$ uniformly in $n\in\mbN$; hence,~\cite[Thm.~1.2]{liu_roeckner_13_local_global} implies $f^{n}\to f$ in $C_{\bar{T}}L^{2}(\mbT^{2})$. Denote the maximal times of existence in $L^{2}(\mbT^{2})$ by $(\Trdet[f^{n}])_{n\in\mbN}$ and $\Trdet[f]$. It follows that $\Trdet$ is lower semicontinuous (cf.~Lemma~\ref{lem:blow_up_lsc}), i.e.\ $\Trdet[f]\leq\liminf_{n\to\infty}\Trdet[f^{n}]$. Therefore, for every $T<\Trdet[f]$ there exists some $N\in\mbN$ such that $T<\Trdet[f^{n}]$ and $\norm{f^{n}}_{C_{T}L^{2}}<\infty$ uniformly for all $n\geq N$. Iterating the local convergence above, it follows that $f^{n}\to f$ in $C_{T}L^{2}(\mbT^{2})$ for all $T<\Trdet[f]$. In particular, for every $t\in[0,T]$,
		\begin{equation*}
			\norm{f_{t}}_{L^{2}}=\lim_{n\to\infty}\norm{f^{n}_{t}}_{L^{2}}=0.
		\end{equation*}
	\end{details}
	A stronger notion of positivity of solutions given initial data that are continuous and bounded away from $0$ is shown in Lemma~\ref{lem:Determ_KS_positivity}.
	
	To prove Claim~\ref{it:Global_Weak_Sol} we use that for $T<\Trdet$ and weak solutions $f\in C_{T}L^{2}(\mbT^{2})\cap L^{2}_{T}\mcH^{1}(\mbT^{2})$ one has for each $t\leq T$ the identity~\eqref{eq:energy_identity_1}.
	\begin{details}
		To show~\eqref{eq:energy_identity_1}, it suffices to apply~\eqref{eq:Weak_Sol_Def_3} with $\gamma=0$ and $\test=f$ followed by integration by parts (Lemma~\ref{lem:IBP_dual_pairing}) to deduce
		\begin{equation*}
			\norm{f_{t}}_{L^{2}}^{2}-\norm{f_{0}}_{L^{2}}^{2}=-2\int_{0}^{t}\inner{\nabla f_{s}-\chem f_{s}\nabla\Phi_{f_{s}}}{\nabla f_{s}}_{L^{2}}\dd s,
		\end{equation*}
		which we can further simplify by using the identity
		\begin{equation*}
			\inner{f_{s}\nabla\Phi_{f_{s}}}{\nabla f_{s}}_{L^{2}}=\frac{1}{2}\inner{\nabla\Phi_{f_{s}}}{\nabla f_{s}^{2}}_{L^{2}}=-\frac{1}{2}\inner{\Delta\Phi_{f_{s}}}{f_{s}^{2}}_{L^{2}}=\frac{1}{2}\norm{f_{s}}_{L^{3}}^{3}-\frac{1}{2}\norm{f_{s}}_{L^{2}}^{2}
		\end{equation*}
		to obtain
		\begin{equation*}
			\norm{f_{t}}_{L^{2}}^{2}-\norm{f_{0}}_{L^{2}}^{2}=-2\int_{0}^{t}\norm{\nabla f_{s}}_{L^{2}}^{2}\dd s+\chem\int_{0}^{t}\norm{f_{s}}_{L^{3}}^{3}\dd s-\chem\int_{0}^{t}\norm{f_{s}}_{L^{2}}^{2}\dd s,
		\end{equation*}
		which yields~\eqref{eq:energy_identity_1}.
	\end{details}
	
	We first note that if $\chem\leq0$ then by Gronwall's inequality applied to~\eqref{eq:energy_identity_1} we have a direct bound for $\norm{f_{t}}^{2}_{L^{2}}$ which is sufficient to show that $\norm{f_{t}}_{L^{2}}<\infty$ for all $t<\infty$.
	\begin{details}
		Using that $\chem\leq0$, we obtain from~\eqref{eq:energy_identity_1},
		\begin{equation*}
			\norm{f_{t}}^{2}_{L^{2}}\leq\norm{f_{0}}^{2}_{L^{2}}-\chem\int_{0}^{t}\norm{f_{s}}^{2}_{L^{2}}\dd s.
		\end{equation*}
		An application of Gronwall's inequality yields the explicit estimate
		\begin{equation*}
			\norm{f_{t}}^2_{L^{2}}\leq\norm{f_{0}}^2_{L^{2}}\exp(-\chem t).
		\end{equation*}
	\end{details}
	
	In the case $\chem>0$, we need an energy identity for the re-centred solution $f_{t}-\mean{f_{t}}$. Noting that $\mean{f_{t}}\equiv1$, we obtain
	\begin{equation*}
		\norm{f_{t}-\mean{f_{t}}}_{L^{2}}^{2}=\norm{f_{t}}_{L^{2}}^{2}-1
	\end{equation*}
	and
	\begin{equation*}
		\norm{f_{t}}_{L^{3}}^{3}-\norm{f_{t}}_{L^{2}}^{2}=\norm{f_{t}-\mean{f_{t}}}_{L^{3}}^{3}+2\norm{f_{t}-\mean{f_{t}}}_{L^{2}}^{2}.
	\end{equation*}
	Hence, we can deduce from~\eqref{eq:energy_identity_1}, 
	\begin{equation*}
		\norm{f_{t}-\mean{f_{t}}}^{2}_{L^{2}}+1=\norm{f_{0}-\mean{f_{0}}}^{2}_{L^{2}}+1-2\int_{0}^{t}\norm{\nabla f_{s}}^{2}_{L^{2}}\dd s+\chem\int_{0}^{t}\norm{f_{s}-\mean{f_{s}}}^{3}_{L^{3}}\dd s+2\chem\int_{0}^{t}\norm{f_{s}-\mean{f_{s}}}^{2}_{L^{2}}\dd s,
	\end{equation*}
	which yields
	\begin{equation}\label{eq:energy_identity_2}
		\norm{f_{t}-\mean{f_{t}}}^{2}_{L^{2}}=\norm{f_{0}-\mean{f_{0}}}^{2}_{L^{2}}-2\int_{0}^{t}\norm{\nabla f_{s}}^{2}_{L^{2}}\dd s+\chem\int_{0}^{t}\norm{f_{s}-\mean{f_{s}}}^{3}_{L^{3}}\dd s+2\chem\int_{0}^{t}\norm{f_{s}-\mean{f_{s}}}^{2}_{L^{2}}\dd s.
	\end{equation}
	An application of the periodic Gagliardo--Nirenberg--Sobolev inequality~\eqref{eq:GNS_Per} with $d=2$, $q=3$ and $r=1$ yields
	\begin{equation*}
		\norm{f_{s}-\mean{f_{s}}}^{3}_{L^{3}}\leq C_{\gnsper}(2,3,1)^{3}\norm{\nabla f_{s}}^{2}_{L^{2}}\norm{f_{s}-\mean{f_{s}}}_{L^{1}}\leq2C_{\gnsper}(2,3,1)^{3}\norm{\nabla f_{s}}^{2}_{L^{2}},
	\end{equation*}
	where in the second step we used Minkowski's inequality to estimate $\norm{f_{s}-\mean{f_{s}}}_{L^{1}}\leq2$. 
	\begin{details}
		This bound seems to be reasonable sharp. Indeed, if $f_{s}$ approximates a rectangle of width $h$ and height $1/h$, then $\mean{f_{s}}=1$ and $\norm{f-\mean{f}_{s}}_{L^{1}}=(1-h)+h(\frac{1}{h}-1)=2-2h\to2$ as $h\to0$.
	\end{details}
	Inserting this into~\eqref{eq:energy_identity_2} we find the inequality
	\begin{equation*}
		\norm{f_{t}-\mean{f_{t}}}^{2}_{L^{2}}\leq\norm{f_{0}-\mean{f_{0}}}^{2}_{L^{2}}-2\Bigl(1-\chem C_{\gnsper}(2,3,1)^{3}\Bigr)\int_{0}^{t}\norm{\nabla f_{s}}^{2}_{L^{2}}\dd s+2\chem\int_{0}^{t}\norm{f_{s}-\mean{f_{s}}}^{2}_{L^{2}}\dd s.
	\end{equation*}
	If $\chem\leq\thresh\defeq C_{\gnsper}(2,3,1)^{-3}$ then 
	\begin{equation*}
		\norm{f_{t}-\mean{f_{t}}}^{2}_{L^{2}}\leq\norm{f_{0}-\mean{f_{0}}}^{2}_{L^{2}}+2\chem\int_{0}^{t}\norm{f_{s}-\mean{f_{s}}}^{2}_{L^{2}}\dd s
	\end{equation*}
	and we can apply Gronwall's inequality to deduce
	\begin{equation*}
		\norm{f_{t}-\mean{f_{t}}}^{2}_{L^{2}}\leq\norm{f_{0}-\mean{f_{0}}}^{2}_{L^{2}}\exp(2\chem t),
	\end{equation*}
	which implies $\Trdet=\infty$.
	\begin{details}
		where we used that $\norm{f_{t}}_{L^{2}}^{2}=\norm{f_{t}-\mean{f_{t}}}_{L^{2}}^{2}+1$.
	\end{details}
	
	To prove Claim~\ref{it:Regular_Weak_Sol} we first use the fact that weak solutions to~\eqref{eq:Determ_KS} are mild, 
	\begin{equation}\label{eq:Determ_KS_mild}
		f=Pf_{0}-\chem\vdiv\mcI[f\nabla\Phi_{f}],
	\end{equation}
	which can be shown by testing the equation against the time-reversed heat kernel.
	\begin{details}
		\paragraph{Proof that the weak solution is a mild solution.}
		We adapt~\cite[Proof of Prop.~2.4, Step~3]{blath_hammer_nie_23}.
		Let $\varphi\in C^{\infty}(\mbR^{2})$ be of compact support, even and such that $\varphi(0)=1$. For every $\delta>0$ we define $\psi_{\delta}$ as in Subsection~\ref{subsec:notation}.
		
		Let $T<\Trdet$ and define for $t\in[0,T]$,
		\begin{equation*}
			\test^{\delta}_{t}(x)\defeq\sum_{\om\in\mbZ^{2}}\euler^{2\uppi\upi\inner{\om}{x}}\varphi(\delta\om)\euler^{-\abs{T-t}\abs{2\uppi\om}^{2}}=(\psi_{\delta}\ast\msH_{T-t})(x).
		\end{equation*}
		This test function lies in $L^{2}_{T}\mcH^{1}(\mbT^{2})$, since
		\begin{equation*}
			\begin{split}
				\int_{0}^{T}\norm{\test^{\delta}_{t}}_{\mcH^{1}}^{2}\dd t&=\sum_{\om\in\mbZ^{2}}(1+\abs{2\uppi\om}^{2})\varphi(\delta\om)^{2}\int_{0}^{T}\euler^{-2\abs{T-t}\abs{2\uppi\om}^{2}}\dd t\\
				&=T+\frac{1}{2}\sum_{\om\in\mbZ^{2}\setminus\{0\}}\varphi(\delta\om)^{2}(1+\abs{2\uppi\om}^{2})\abs{2\uppi\om}^{-2}(1-\euler^{-2T\abs{2\uppi\om}^{2}})<\infty.
			\end{split}
		\end{equation*}
		Its derivative is given by
		\begin{equation*}
			\partial_{t}\test^{\delta}_{t}(x)=\sum_{\om\in\mbZ^{2}\setminus\{0\}}\euler^{2\uppi\upi\inner{\om}{x}}\varphi(\delta\om)\abs{2\uppi\om}^{2}\euler^{-\abs{T-t}\abs{2\uppi\om}^{2}}
		\end{equation*}
		and lies in $L^{2}_{T}\mcH^{-1}(\mbT^{2})$, since
		\begin{equation*}
			\begin{split}
				\int_{0}^{T}\norm{\partial_{t}\test^{\delta}_{t}}_{\mcH^{-1}}^{2}\dd t&=\sum_{\om\in\mbZ^{2}\setminus\{0\}}(1+\abs{2\uppi\om}^{2})^{-1}\varphi(\delta\om)^{2}\abs{2\uppi\om}^{4}\int_{0}^{T}\euler^{-2\abs{T-t}\abs{2\uppi\om}^{2}}\dd t\\
				&=\frac{1}{2}\sum_{\om\in\mbZ^{2}\setminus\{0\}}\varphi(\delta\om)^{2}(1+\abs{2\uppi\om}^{2})^{-1}\abs{2\uppi\om}^{2}(1-\euler^{-2T\abs{2\uppi\om}^{2}})<\infty;
			\end{split}
		\end{equation*}
		hence $\test^{\delta}\in W^{1,2}_{T}\mcH^{-1}(\mbT^{2})$.
		
		For a given $x\in\mbT^{2}$, we apply~\eqref{eq:Weak_Sol_Def_3} with $\gamma=0$ and $\test^{\delta}(x-\place)$ to obtain
		\begin{equation*}
			\begin{split}
				&\inner{f_{T}}{\test^{\delta}_{T}(x-\place)}_{L^{2}}-\inner{f_{0}}{\test^{\delta}_{0}(x-\place)}_{L^{2}}\\
				&=\int_{0}^{T}\inner{\partial_{s}\test^{\delta}_{s}(x-\place)}{f_{s}}_{\mcH^{-1},\mcH^{1}}\dd s+\int_{0}^{T}\inner{\Delta f_{s}-\chem\vdiv(f_{s}\nabla\Phi_{f_{s}})}{\test^{\delta}_{s}(x-\place)}_{\mcH^{-1},\mcH^{1}}\dd s.
			\end{split}
		\end{equation*}
		Note that $(\partial_{s}+\Delta)\test^{\delta}_{s}=0$, hence integration by parts (Lemma~\ref{lem:IBP_dual_pairing}) yields
		\begin{equation*}
			\inner{\partial_{s}\test^{\delta}_{s}(x-\place)}{f_{s}}_{\mcH^{-1},\mcH^{1}}=-\inner{\Delta\test^{\delta}_{s}(x-\place)}{f_{s}}_{\mcH^{-1},\mcH^{1}}=-\inner{\nabla\test^{\delta}_{s}(x-\place)}{\nabla f_{s}}_{L^{2}},
		\end{equation*}
		which implies
		\begin{equation}\label{eq:weak_to_mild_mollified}
			\inner{f_{T}}{\test^{\delta}_{T}(x-\place)}_{L^{2}}-\inner{f_{0}}{\test^{\delta}_{0}(x-\place)}_{L^{2}}=-\chem\int_{0}^{T}\inner{\vdiv(f_{s}\nabla\Phi_{f_{s}})}{\test^{\delta}_{s}(x-\place)}_{\mcH^{-1},\mcH^{1}}\dd s.
		\end{equation}
		Using that $\test^{\delta}_{t}(x)=(\psi_{\delta}\ast\msH_{T-t})(x)$, we obtain
		\begin{equation*}
			\inner{f_{T}}{\test^{\delta}_{T}(x-\place)}_{L^{2}}=(\psi_{\delta}\ast f_{T})(x),\quad \inner{f_{0}}{\test^{\delta}_{0}(x-\place)}_{L^{2}}=P_{T}(\psi_{\delta}\ast f_{0})(x),
		\end{equation*}
		and
		\begin{equation*}
			\int_{0}^{T}\inner{\vdiv(f_{s}\nabla\Phi_{f_{s}})}{\test^{\delta}_{s}(x-\place)}_{\mcH^{-1},\mcH^{1}}\dd s=\vdiv\mcI[\psi_{\delta}\ast(f\nabla\Phi_{f})]_{T}(x),
		\end{equation*}
		where we used~\eqref{eq:product_estimate_Sobolev_III} to ensure $f_{s}\nabla\Phi_{f_{s}}\in\mcH^{1}(\mbT^{2})$ so that $\inner{\vdiv(f_{s}\nabla\Phi_{f_{s}})}{\test^{\delta}_{s}(x-\place)}_{\mcH^{-1},\mcH^{1}}=\inner{\vdiv(f_{s}\nabla\Phi_{f_{s}})}{\test^{\delta}_{s}(x-\place)}_{L^{2}}$.
		
		To take $\delta\to0$ in~\eqref{eq:weak_to_mild_mollified}, we apply the dominated convergence theorem. By Parseval's theorem, each term in~\eqref{eq:weak_to_mild_mollified} is dominated:
		\begin{equation*}
			\norm{\psi_{\delta}\ast f_{T}-f_{T}}_{L^{2}}^{2}=\sum_{\om\in\mbZ^{2}}(\varphi(\delta\om)-1)^{2}\abs{\hat{f_{T}}(\om)}^{2}\lesssim\norm{f_{T}}_{L^{2}}^{2},
		\end{equation*}
		\begin{equation*}
			\norm{P_{T}(\psi_{\delta}\ast f_{0})-P_{T}f_{0}}_{L^{2}}^{2}=\sum_{\om\in\mbZ^{2}}\euler^{-2T\abs{2\uppi\om}^{2}}(\varphi(\delta\om)-1)^{2}\abs{\hat{f_{0}}(\om)}^{2}\lesssim\norm{P_{T}f_{0}}_{L^{2}}^{2}
		\end{equation*}
		and
		\begin{equation*}
			\begin{split}
				\norm{\vdiv\mcI[\psi_{\delta}\ast(f\nabla\Phi_{f})]_{T}-\vdiv\mcI[f\nabla\Phi_{f}]_{T}}_{L^{2}}^{2}&=\sum_{\om\in\mbZ^{2}}(1-\varphi(\delta\om))^{2}\Bigl\lvert2\uppi\upi\om\int_{0}^{T}\euler^{-\abs{T-s}\abs{2\uppi\om}^{2}}\msF(f_{s}\nabla\Phi_{f_{s}})(\om)\dd s\Bigr\rvert^{2}\\
				&\lesssim\norm{\vdiv\mcI[f\nabla\Phi_{f}]_{T}}_{L^{2}}^{2}.
			\end{split}
		\end{equation*}
		Let $\vartheta\in(0,1)$, we can control the right hand sides above by Schauder's estimate (Lemma~\ref{lem:Schauder_Sobolev}),
		\begin{equation*}
			\norm{P_{T}f_{0}}_{L^{2}}^{2}\lesssim\norm{f_{0}}_{L^{2}}^{2},\qquad\norm{\vdiv\mcI[f\nabla\Phi_{f}]}_{C_{T}L^{2}}\lesssim\norm{\mcI[f\nabla\Phi_{f}]}_{C_{T}\mcH^{1}}\lesssim(1\vee T^{\frac{1-\vartheta}{2}})\norm{f\nabla\Phi_{f}}_{C_{T}\mcH^{-\vartheta}}
		\end{equation*}
		and the product estimate~\eqref{eq:product_estimate_Sobolev_IV} which yields
		\begin{equation*}
			\norm{f\nabla\Phi_{f}}_{C_{T}\mcH^{-\vartheta}}\lesssim\norm{f}_{C_{T}L^{2}}^{2}.
		\end{equation*}
		Hence, we can pass to the limit $\delta\to0$ in~\eqref{eq:weak_to_mild_mollified} to obtain the mild equation
		\begin{equation*}
			f_{T}=P_{T}f_{0}-\chem\vdiv\mcI[f\nabla\Phi_{f}]_{T}.
		\end{equation*}
	\end{details}
	Using~\eqref{eq:Determ_KS_mild} we can bootstrap the space regularity of $f$ at positive times to establish $f\in C((0,T];\mcH^{4}(\mbT^{2}))$, which by Sobolev's embedding $\mcH^{4}(\mbT^{2})\embed C^{2}(\mbT^{2})$ is enough to deduce $\partial_{x_{i}}\partial_{x_{j}}f\in C((0,T]\times\mbT^{2})$ for every $i,j\in\{1,2\}$. Furthermore, we can use the same space regularity to deduce that the weak time derivative $\partial_{t}f$ lies in $C((0,T];\mcH^{2}(\mbT^{2}))$, which together with the embedding $\mcH^{2}(\mbT^{2})\embed C(\mbT^{2})$ (cf.~\eqref{eq:Sobolev_embedding_II}) yields $\partial_{t}f\in C((0,T]\times\mbT^{2})$. The fundamental theorem of calculus then allows us to argue that the weak time derivative coincides with the (usual) time derivative, which yields $f\in C^{2}_{1}((0,T]\times\mbT^{2})$.
	\begin{details}
		\paragraph{Proof that $f\in C^{2}_{1}((0,T]\times\mbT^{2})$.}
		Let $\eta\geq0$, $\gamma\geq0$ and $f_{0}\in\mcH^{\gamma}(\mbT^{2})$. It follows by Schauder's estimate~\cite[Lem.~A.5]{martini_mayorcas_25}, that
		\begin{equation}\label{eq:bootstrap_IC}
			\norm{Pf_{0}}_{C_{\eta;T}\mcH^{\gamma+2\eta}}\lesssim\norm{f_{0}}_{\mcH^{\gamma}},
		\end{equation}
		and in particular $Pf_{0}\in C_{\eta;T}\mcH^{\gamma+2\eta}(\mbT^{2})$, where the continuity of $[0,T]\ni t\mapsto(1\wedge t)^{\eta}P_{t}f_{0}\in\mcH^{\gamma+2\eta}(\mbT^{2})$ follows by an approximation argument as in~\cite[Lem.~A.6]{martini_mayorcas_25}. 
		
		We further obtain by Lemma~\ref{lem:Schauder_Sobolev} and the product estimate~\eqref{eq:product_estimate_Sobolev_IV} that for every $\vartheta>0$,
		\begin{equation}\label{eq:bootstrap_advection_I}
			\norm{\vdiv\mcI[f\nabla\Phi_{f}]}_{C_{T}\mcH^{1-\vartheta}}\lesssim\norm{\mcI[f\nabla\Phi_{f}]}_{C_{T}\mcH^{2-\vartheta}}\lesssim(1\vee T^{\frac{\vartheta}{4}})\norm{f\nabla\Phi_{f}}_{C_{T}\mcH^{-\vartheta/2}}\lesssim_{T}\norm{f}_{C_{T}L^{2}}^{2}
		\end{equation}
		and in particular $\vdiv\mcI[f\nabla\Phi_{f}]\in C_{T}\mcH^{1-\vartheta}(\mbT^{2})$.
		
		Let $f_{0}\in L^{2}(\mbT^{2})$, combining~\eqref{eq:bootstrap_IC} and~\eqref{eq:bootstrap_advection_I} we obtain $f\in C_{1/2;T}\mcH^{1-\vartheta}(\mbT^{2})\embed C((0,T];\mcH^{1-\vartheta}(\mbT^{2})$. Hence for every $t_{0}\in(0,T)$, it follows that $[0,T-t_{0}]\ni t\mapsto g_{t}\defeq f|_{[t_{0},T]}(t+t_{0})$ solves~\eqref{eq:Determ_KS_mild} with initial data $f_{t_{0}}\in\mcH^{1-\vartheta}(\mbT^{2})$.
		
		An application of Lemma~\ref{lem:Schauder_Sobolev} and the product estimate~\eqref{eq:product_estimate_Sobolev_III} yields for every $\gamma>0$ and $\vartheta>0$,
		\begin{equation}\label{eq:bootstrap_advection_II}
			\norm{\vdiv\mcI[g\nabla\Phi_{g}]}_{C_{T-t_{0}}\mcH^{\gamma+1-\vartheta}}\lesssim\norm{\mcI[g\nabla\Phi_{g}]}_{C_{T-t_{0}}\mcH^{\gamma+2-\vartheta}}\lesssim(1\vee (T-t_{0})^{\frac{\vartheta}{2}})\norm{g\nabla\Phi_{g}}_{C_{T-t_{0}}\mcH^{\gamma}}\lesssim_{T-t_{0}}\norm{g}_{C_{T-t_{0}}\mcH^{\gamma}}^{2}
		\end{equation}
		and in particular $\vdiv\mcI[g\nabla\Phi_{g}]\in C_{T-t_{0}}\mcH^{\gamma+1-\vartheta}(\mbT^{2})$.
		
		Taking $\vartheta<1$ and $\gamma=1-\vartheta>0$, we obtain by~\eqref{eq:bootstrap_IC} and~\eqref{eq:bootstrap_advection_II} that $g\in C_{1/2;T-t_{0}}\mcH^{2(1-\vartheta)}(\mbT^{2})\embed C((0,T-t_{0}];\mcH^{2(1-\vartheta)}(\mbT^{2}))$ and hence $f|_{[t_{0},T]}\in C((t_{0},T];\mcH^{2(1-\vartheta)}(\mbT^{2}))$.
		
		Iterating this argument on small time intervals and using that $t_{0}$ was arbitrary, we can deduce that $f\in C((0,T];\mcH^{4}(\mbT^{2}))$, which embeds into $C((0,T];C^{2}(\mbT^{2}))$ (cf.~\cite[Thm.~23.5]{vanzuijlen_22}). In particular $\partial_{x_{i}}\partial_{x_{j}}f\in C((0,T];C(\mbT^{2}))$ for every $i,j\in\{1,2\}$. To establish $\partial_{x_{i}}\partial_{x_{j}}f\in C((0,T]\times\mbT^{2})$, it suffices to apply the triangle inequality (cf.~\eqref{eq:continuity_triangle_argument} below).
		
		Using that $\partial_{t}f=\Delta f-\chem\vdiv(f\nabla\Phi_{f})$ and $f\in C((0,T];\mcH^{4}(\mbT^{2}))$, we obtain by Sobolev's embedding~\eqref{eq:Sobolev_embedding_II},
		\begin{equation*}
			\partial_{t}f\in C((0,T];\mcH^{2}(\mbT^{2}))\embed C((0,T];C(\mbT^{2})).
		\end{equation*}
		To establish $\partial_{t}f\in C((0,T]\times\mbT^{2})$, we can apply the triangle inequality to bound for every $(t,x),(s,y)\in(0,T]\times\mbT^{2}$,
		\begin{equation}\label{eq:continuity_triangle_argument}
			\begin{split}
				\abs{\partial_{t}f(t,x)-\partial_{t}f(s,y)}&\leq\abs{\partial_{t}f(t,x)-\partial_{t}f(t,y)}+\abs{\partial_{t}f(t,y)-\partial_{t}f(s,y)}\\
				&\leq\abs{\partial_{t}f(t,x)-\partial_{t}f(t,y)}+\norm{\partial_{t}f_{t}-\partial_{t}f_{s}}_{L^{\infty}},
			\end{split}
		\end{equation}
		where we may control the first summand with $\partial_{t}f_{t}\in C(\mbT^{2})$ and the second summand with $\partial_{t}f\in C((0,T];C(\mbT^{2}))$, which yields $\partial_{t}f\in C((0,T]\times\mbT^{2})$. By~\cite[Subsec.~5.9.2,~Thm.~2]{evans_98_PDE} it follows that
		\begin{equation*}
			f_{t}(x)-f_{t_{0}}(x)=\int_{t_{0}}^{t}\partial_{s}f_{s}(x)\dd s,
		\end{equation*}
		which implies by the (classical) fundamental theorem of calculus that $f(x)$ is differentiable in time for every $x\in\mbT^{2}$ with derivative $\partial_{t}f(x)$.
	\end{details}
	
	To establish $f\in C_{T}\mcH^{\gamma}(\mbT^{2})\cap L_{T}^{2}\mcH^{\gamma+1}(\mbT^{2})$ for every $\gamma>0$, $f_{0}\in\mcH^{\gamma}(\mbT^{2})$ and $T<\Trdet$ , it suffices to estimate each term in the mild formulation~\eqref{eq:Determ_KS_mild} by using Schauder's estimates (Lemma~\ref{lem:Schauder_Sobolev}) combined with product estimates in Bessel potential spaces.
	\begin{details}
		\paragraph{Proof of $f\in C_{T}\mcH^{\gamma}(\mbT^{2})\cap L_{T}^{2}\mcH^{\gamma+1}(\mbT^{2})$ if $f_{0}\in\mcH^{\gamma}(\mbT^{2})$.}
		The claim is a direct consequence of Theorem~\ref{thm:Determ_KS_maximal}. Here we show that $f\in C_{T}\mcH^{\gamma}(\mbT^{2})\cap L_{T}^{2}\mcH^{\gamma+1}(\mbT^{2})$ also follows from bootstrapping the regularity $f\in C_{T}L^{2}(\mbT^{2})\cap L_{T}^{2}\mcH^{1}(\mbT^{2})$ via the mild formulation~\eqref{eq:Determ_KS_mild}.
		
		Since $f_{0}\in\mcH^{\gamma}(\mbT^{2})$, we obtain by Lemma~\ref{lem:Schauder_Sobolev} that $Pf_{0}\in C_{T}\mcH^{\gamma}(\mbT^{2})\cap L_{T}^{2}\mcH^{\gamma+1}(\mbT^{2})$. To bootstrap the advection, we use $f\in C_{T}L^{2}(\mbT^{2})$ and~\eqref{eq:bootstrap_advection_I} to deduce $\vdiv\mcI[f\nabla\Phi_{f}]\in C_{T}\mcH^{1-\vartheta}(\mbT^{2})$ for every $\vartheta\in(0,1)$, which yields $f\in C_{T}\mcH^{\gamma\wedge(1-\vartheta)}(\mbT^{2})$. If $\gamma\leq1-\vartheta$, we're done, otherwise we can iterate~\eqref{eq:bootstrap_advection_II} with $t_{0}=0$ to obtain $f\in C_{T}\mcH^{\gamma}(\mbT^{2})$.
		
		Using that $f\in C_{T}L^{2}(\mbT^{2})\cap L_{T}^{2}\mcH^{1}(\mbT^{2})$, we obtain by an application of Lemma~\ref{lem:Schauder_Sobolev} and the product estimate~\eqref{eq:product_estimate_Sobolev_IV},
		\begin{equation*}
			\norm{\vdiv\mcI[f\nabla\Phi_{f}]}_{L_{T}^{2}\mcH^{1-\vartheta}}\lesssim\norm{\mcI[f\nabla\Phi_{f}]}_{L_{T}^{2}\mcH^{2-\vartheta}}\lesssim(T+1)\norm{f\nabla\Phi_{f}}_{L_{T}^{2}\mcH^{-\vartheta}}\lesssim_{T}\norm{f}_{C_{T}L^{2}}\norm{f}_{L_{T}^{2}\mcH^{1}},
		\end{equation*}
		which implies $f\in L_{T}^{2}\mcH^{\gamma\wedge(1-\vartheta)}(\mbT^{2})$. If $\gamma\leq1-\vartheta$, we're done, otherwise we can iterate~\eqref{eq:bootstrap_advection_L2_II} (see below) to obtain $f\in L_{T}^{2}\mcH^{\gamma+1}(\mbT^{2})$.
		
		Let $\gamma'>0$, combining Lemma~\ref{lem:Schauder_Sobolev} with the product estimate~\eqref{eq:product_estimate_Sobolev_II} we deduce
		\begin{equation}\label{eq:bootstrap_advection_L2_II}
			\norm{\vdiv\mcI[f\nabla\Phi_{f}]}_{L_{T}^{2}\mcH^{\gamma'+1}}\lesssim\norm{\mcI[f\nabla\Phi_{f}]}_{L_{T}^{2}\mcH^{\gamma'+2}}\lesssim(T+1)\norm{f\nabla\Phi_{f}}_{L_{T}^{2}\mcH^{\gamma'}}\lesssim_{T}\norm{f}_{C_{T}\mcH^{\gamma'}}\norm{f}_{L_{T}^{2}\mcH^{\gamma'+1}}.
		\end{equation}
	\end{details}
	This yields Claim~\ref{it:Regular_Weak_Sol}.
\end{proof}
\begin{remark}[Sharp a priori bounds]\label{rem:sharp_apriori}
	On the whole space instead of the torus, one can make use of the sharp Gagliardo--Nirenberg--Sobolev inequality found in~\cite{weinstein_83_schrodinger} to show that the entropy is strictly non-increasing and the solution is globally well-posed provided $\chem<4\uppi(1.86225\ldots)$, see~\cite[Sec.~2.2]{blanchet_dolbeault_perthame_06}. Still on the whole space, this range can be further improved to allow for all $\chem<8\uppi$ by instead analysing the evolution of the free energy and replacing the Gagliardo--Nirenberg--Sobolev inequality with the logarithmic Hardy--Littlewood--Sobolev inequality, see~\cite{blanchet_dolbeault_perthame_06}. This result is in fact sharp. A straightforward argument, see~\cite[Sec.~2.1]{blanchet_dolbeault_perthame_06}, analysing the evolution of the second moment of solutions demonstrates that weak and very weak solutions to~\eqref{eq:Determ_KS} must blow up in finite time provided $\chem>8\uppi$ with a blow-up time converging to $+\infty$ as $\chem$ approaches $8\uppi$. See also more recent results that treat more general initial data~\cite{wei_18,fournier_tardy_22_gwp}. The picture on the torus is more complicated~\cite[Thm.~8.1]{kiselev_xu_16_suppression}; however, for our purposes we do not require a sharp result, only well-posedness of the PDE~\eqref{eq:Determ_KS} for some range of $\chem$.
\end{remark}
Next, we show that the solution to~\eqref{eq:Determ_KS} started from continuous initial data is continuous everywhere.
\begin{lemma}\label{lem:Determ_KS_continuity}
	Let $f_{0}\in C(\mbT^{2})$ be such that $f_{0}\geq0$ and $\mean{f_{0}}=1$, $f$ be the weak solution to~\eqref{eq:Determ_KS} with initial data $f_{0}$ and chemotactic sensitivity $\chem\in\mbR$, and $\Trdet$ be its maximal time of existence (see Theorem~\ref{thm:Determ_KS_Well_Posedness}). Then $f\in C([0,T]\times\mbT^{2})$ for every $T<\Trdet$ and $\Trdet$ coincides with the maximal time of existence in $C(\mbT^{2})$.
\end{lemma}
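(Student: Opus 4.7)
The claim splits naturally into two parts: joint continuity on $(0,T]\times\mbT^2$ and continuity at the boundary $t=0$; the matching of blow-up times then follows by a local extension argument.

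First, Theorem~\ref{thm:Determ_KS_Well_Posedness}~\ref{it:Regular_Weak_Sol} already furnishes $f\in C^{2}_{1}((0,T]\times\mbT^{2})$ for every $T<\Trdet$, so only continuity up to and including $t=0$ requires work. Since $\mbT^2$ is compact, it suffices to establish
\begin{equation*}
\lim_{t\to 0^{+}}\norm{f_{t}-f_{0}}_{L^{\infty}(\mbT^{2})}=0,
\end{equation*}
which together with $f_{0}\in C(\mbT^{2})$ and the interior continuity from Claim~\ref{it:Regular_Weak_Sol} gives $f\in C([0,T]\times\mbT^{2})$. I will use the mild formulation~\eqref{eq:Determ_KS_mild} to write
\begin{equation*}
\norm{f_{t}-f_{0}}_{L^{\infty}}\leq\norm{P_{t}f_{0}-f_{0}}_{L^{\infty}}+\abs{\chem}\norm{\vdiv\mcI[f\nabla\Phi_{f}]_{t}}_{L^{\infty}}.
\end{equation*}
The first summand vanishes as $t\to 0^{+}$ by uniform continuity of $f_{0}$ on $\mbT^{2}$ and standard properties of the heat semigroup on $C(\mbT^{2})$.

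The main obstacle is the nonlinear term. The key step is a short-time bootstrap from the $L^2$-regularity of Theorem~\ref{thm:Determ_KS_Well_Posedness}~\ref{it:Local_Weak_Sol} to $L^p$-regularity for some $p>2$. Since $f_{0}\in C(\mbT^{2})\subset L^{p}(\mbT^{2})$ for every $p\in[1,\infty]$, a contraction argument in $C_{T'}L^{p}(\mbT^{2})$ based on the heat-kernel bounds $\norm{P_{\tau}g}_{L^{p}}\leq\norm{g}_{L^{p}}$, $\norm{\vdiv P_{\tau}g}_{L^{p}}\lesssim\tau^{-1/2}\norm{g}_{L^{p}}$ and the elliptic estimate $\norm{\nabla\Phi_{f}}_{L^{\infty}}\lesssim\norm{f}_{L^{p}}$ (valid for $p>2$ via Sobolev embedding $W^{1,p}(\mbT^{2})\hookrightarrow L^{\infty}(\mbT^{2})$) closes on a small interval $[0,T']$ and yields $f\in C_{T'}L^{p}(\mbT^{2})$ for any $p\in(2,\infty)$. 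Choosing such a $p$, the heat-kernel bound
\begin{equation*}
\norm{\vdiv P_{\tau}(f_{s}\nabla\Phi_{f_{s}})}_{L^{\infty}}\lesssim\tau^{-1/2-1/p}\norm{f_{s}\nabla\Phi_{f_{s}}}_{L^{p}}
\end{equation*}
is integrable in $\tau=t-s$ on $(0,t)$ since $\tfrac{1}{2}+\tfrac{1}{p}<1$, and produces
\begin{equation*}
\norm{\vdiv\mcI[f\nabla\Phi_{f}]_{t}}_{L^{\infty}}\lesssim t^{1/2-1/p}\norm{f}_{C_{T'}L^{p}}\norm{\nabla\Phi_{f}}_{C_{T'}L^{\infty}}\to 0\qquad\text{as}\quad t\to 0^{+}.
\end{equation*}

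Finally, to see that $\Trdet$ coincides with the maximal time of existence in $C(\mbT^{2})$, one inclusion is immediate from $C(\mbT^{2})\hookrightarrow L^{2}(\mbT^{2})$. For the reverse, suppose $\tau<\Trdet$ and $f|_{[0,\tau)}\in C([0,\tau);C(\mbT^{2}))$; picking $t_{0}<\tau$ close to $\tau$ and restarting from $f_{t_{0}}\in C(\mbT^{2})$, the same local $L^{p}$-bootstrap as above (combined with the a priori bound $f\in L^{2}_{\tau}\mcH^{1}(\mbT^{2})$ from Theorem~\ref{thm:Determ_KS_Well_Posedness}~\ref{it:Local_Weak_Sol}, which via Sobolev embedding yields $f\in L^{2}_{\tau}L^{p}(\mbT^{2})$ for every $p<\infty$) provides a uniform $L^{\infty}$-bound on $[t_{0},\tau]$ and extends continuity past $\tau$. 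A standard open-closed argument then identifies the two maximal times.
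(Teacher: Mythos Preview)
Your approach is correct but takes a different route than the paper. The paper works directly in $L^{\infty}$: it combines the product estimate $\norm{f\nabla\Phi_{f}}_{\mcB_{\infty,1}^{-\vartheta}}\lesssim\norm{f}_{L^{\infty}}\norm{f}_{\mcH^{1}}$ with a Schauder bound to obtain
\[
\norm{f_{t}}_{C(\mbT^{2})}\leq\norm{f_{0}}_{C(\mbT^{2})}+C\abs{\chem}\norm{f}_{C_{\bar{T}}L^{2}}\int_{0}^{t}\abs{t-s}^{-\frac{1+\vartheta}{2}}\norm{f_{s}}_{C(\mbT^{2})}\dd s,
\]
then applies a fractional Gr\"onwall inequality for an a~priori bound, and finally approximates $f_{0}$ by smooth data to pass from the bound to actual membership in $C_{\bar{T}}C(\mbT^{2})$. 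Your $L^{p}$-bootstrap (using $\norm{\nabla\Phi_{f}}_{L^{\infty}}\lesssim\norm{f}_{L^{p}}$ for $p>2$) followed by the heat-kernel smoothing $\norm{\vdiv P_{\tau}g}_{L^{\infty}}\lesssim\tau^{-1/2-1/p}\norm{g}_{L^{p}}$ avoids both the Besov machinery and the fractional Gr\"onwall step, at the cost of an extra intermediate contraction argument in $C_{T'}L^{p}$. The paper's route has the advantage that the Gr\"onwall bound expresses $\norm{f}_{C_{T}C(\mbT^{2})}$ directly in terms of $\norm{f}_{C_{T}L^{2}}$, from which the coincidence of blow-up times is immediate.

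On that last point: your final paragraph is more involved than necessary. Once you have established $\norm{f_{t}-f_{0}}_{L^{\infty}}\to0$ on a short interval, the interior regularity $f\in C^{2}_{1}((0,T]\times\mbT^{2})$ from Theorem~\ref{thm:Determ_KS_Well_Posedness}~\ref{it:Regular_Weak_Sol} already covers $(0,T]$ for every $T<\Trdet$, so $f\in C([0,T]\times\mbT^{2})$ for all such $T$ without any iteration; the inequality $T^{\cem}_{C}\geq\Trdet$ then follows at once, and the reverse is the embedding you already noted. The restart argument you sketch, together with its appeal to $f\in L^{2}_{\tau}L^{p}$, is not needed (and as written it is unclear how an $L^{2}$-in-time bound alone would yield the uniform $L^{\infty}$-control you claim).
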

\begin{proof}
	Using that $f_{0}\in C(\mbT^{2})\subset L^{2}(\mbT^{2})$, we can apply Theorem~\ref{thm:Determ_KS_Well_Posedness} to find the unique weak solution $f\in C((0,T]\times\mbT^{2})$; hence it suffices to establish the continuity of $f$ at time $0$, i.e.\ on a small time interval $[0,\bar{T}]$ where $\bar{T}\in(0,T\wedge1]$. By the proof of Theorem~\ref{thm:Determ_KS_Well_Posedness} (cf.~\eqref{eq:Determ_KS_mild}), it follows that the weak solution is also mild, hence we can use Young's convolution estimate and Schauder's lemma to bound for every $\vartheta\in(0,1)$,
	 \begin{equation*}
	 	\norm{f_{t}}_{C(\mbT^{2})}\leq\norm{f_{0}}_{C(\mbT^{2})}+C\abs{\chem}\norm{f}_{C_{\bar{T}}L^{2}}\int_{0}^{t}\abs{t-s}^{-\frac{1+\vartheta}{2}}\norm{f_{s}}_{C(\mbT^{2})}\dd s,
	 \end{equation*}
	where $C$ is some constant. Denote $\beta\defeq\frac{2}{1-\vartheta}$, we can now apply the fractional Gronwall estimate~\cite[Rem.~3.7]{webb_19} to deduce
	\begin{equation*}
		\norm{f_{t}}_{C_{\bar{T}}C(\mbT^{2})}\leq\beta\norm{f_{0}}_{C(\mbT^{2})}\exp\Bigl(C_{\beta}\abs{\chem}^{\beta}\norm{f}_{C_{\bar{T}}L^{2}}^{\beta}t\Bigr),
	\end{equation*}
	where $C_{\beta}>0$ is a constant that only depends on $\beta$. This a priori estimate allows us to use an approximation argument to show that $f$ is continuous at time $0$ which implies $f\in C([0,T]\times\mbT^{2})$.
	\begin{details}
		\paragraph{Proof that $f\in C([0,T]\times\mbT^{2})$.}
		We already know by Theorem~\ref{thm:Determ_KS_Well_Posedness} that $f\in C((0,T]\times\mbT^{2})$, hence it suffices to show that $f$ is continuous at time $0$. Assume $f\in C_{\bar{T}}C(\mbT^{2})$ for some $\bar{T}\in(0,T]$, we can then use the triangle inequality to estimate for every $(0,x),(s,y)\in[0,\bar{T}]\times\mbT^{2}$,
		\begin{equation*}
			\abs{f(0,x)-f(s,y)}\leq\abs{f(0,x)-f(0,y)}+\abs{f(0,y)-f(s,y)}\leq\abs{f(0,x)-f(0,y)}+\norm{f_{0}-f_{s}}_{L^{\infty}},
		\end{equation*}
		which, using that $f_{0}\in C(\mbT^{2})$ and $f\in C_{\bar{T}}C(\mbT^{2})$, yields $f\in C([0,\bar{T}]\times\mbT^{2})$. Hence, it suffices to show $f\in C_{\bar{T}}C(\mbT^{2})$.
		
		To show $f\in C_{\bar{T}}C(\mbT^{2})$, we use the mild formulation~\eqref{eq:Determ_KS_mild}. We first derive an a priori bound on the $C_{\bar{T}}C(\mbT^{2})$-norm of $f$, which we then use to deduce continuity by an approximation argument.
		
		We can control the supremum-norm of $P_{t}f_{0}$ by
		\begin{equation*}
			\norm{P_{t}f_{0}}_{C(\mbT^{2})}\leq\norm{f_{0}}_{C(\mbT^{2})},
		\end{equation*}
		where for $t>0$ we use Young's convolution inequality on $\mbT^{2}$ (cf.~\cite[Thm.~33.7]{vanzuijlen_22}) and for $t=0$ the definition $P_{0}f_{0}=f_{0}$.
		
		The mild formulation~\eqref{eq:Determ_KS_mild} combined with the embedding $\mcB_{\infty,1}^{0}(\mbT^{2})\embed C(\mbT^{2})$ (cf.~\cite[Thm.~23.5]{vanzuijlen_22}) allows us to estimate for $t\leq\bar{T}$,
		\begin{equation*}
			\norm{f_{t}}_{C(\mbT^{2})}\leq\norm{f_{0}}_{C(\mbT^{2})}+C\abs{\chem}\int_{0}^{t}\norm{P_{t-s}(f_{s}\nabla\Phi_{f_{s}})}_{\mcB_{\infty,1}^{1}}\dd s,
		\end{equation*}
		where $C>0$ is constant that may change from line to line. Let $\vartheta>0$, we use the product estimate~\eqref{eq:product_estimate_VI} to deduce
		\begin{equation*}
			\norm{f_{s}\nabla\Phi_{f_{s}}}_{\mcB_{\infty,1}^{-\vartheta}}\lesssim\norm{f_{s}}_{C(\mbT^{2})}\norm{f_{s}}_{L^{2}},
		\end{equation*}
		which we can combine with Schauder's estimate~\cite[Lem.~A.5]{martini_mayorcas_25}, i.e.\
		\begin{equation*}
			\norm{P_{t-s}(f_{s}\nabla\Phi_{f_{s}})}_{\mcB_{\infty,1}^{1}}\lesssim(1\vee\abs{t-s}^{-\frac{1+\vartheta}{2}})\norm{f_{s}\nabla\Phi_{f_{s}}}_{\mcB_{\infty,1}^{-\vartheta}},
		\end{equation*}
		to control for every $\bar{T}\in(0,1\wedge T]$,
		\begin{equation}\label{eq:frac_Gronwall_0}
			\norm{f_{t}}_{C(\mbT^{2})}\leq\norm{f_{0}}_{C(\mbT^{2})}+C\abs{\chem}\norm{f}_{C_{\bar{T}}L^{2}}\int_{0}^{t}\abs{t-s}^{-\frac{1+\vartheta}{2}}\norm{f_{s}}_{C(\mbT^{2})}\dd s.
		\end{equation}
		The bound~\eqref{eq:frac_Gronwall_0} alone is not strong enough to exclude the possibility that $f\notin C_{\bar{T}}C(\mbT^{2})$ (indeed in that case~\eqref{eq:frac_Gronwall_0} would simply read $\infty\leq\infty$.) However, it allows us to approximate and pass to the limit.
		
		To deduce $f\in C_{\bar{T}}C(\mbT^{2})$, we use the completeness of $C_{\bar{T}}C(\mbT^{2})$ under the supremum-norm. By~\cite[Lem.~30.2]{vanzuijlen_22} we can consider each $f_{0}\in C(\mbT^{2})$ as an element of $C_{\per}(\mbR^{d})$ (modulo an isometric isomorphism.) It then follows by~\cite[App.~C.4,~Thm.~6]{evans_98_PDE} that there exist smooth, periodic $(f_{0}^{n})_{n\in\mbN}$ such that $f_{0}^{n}\to f_{0}$ in $C(\mbT^{2})$. We denote by $(f^{n})_{n\in\mbN}$ the sequence of weak solutions to~\eqref{eq:Determ_KS} with initial data $(f_{0}^{n})_{n\in\mbN}$. Using that $(\norm{f_{0}^{n}}_{L^{2}})_{n\in\mbN}$ is bounded, we can use~\cite[Rem.~1.2]{liu_roeckner_13_local_global} to conclude that there exists some $\bar{T}\in(0,1\wedge T]$ such that each $f^{n}$ exists on $[0,\bar{T}]$. Further, by~\cite[Lem.~2.4]{liu_roeckner_13_local_global}, we can control $\norm{f^{n}}_{L^{2}_{\bar{T}}\mcH^{1}}$ uniformly in $n\in\mbN$; hence,~\cite[Thm.~1.2]{liu_roeckner_13_local_global} implies $f^{n}\to f$ in $C_{\bar{T}}L^{2}(\mbT^{2})$. Using that $f_{0}^{n}\in\mcH^{2}(\mbT^{2})$, we may deduce by Theorem~\ref{thm:Determ_KS_maximal} and the Sobolev embedding~\eqref{eq:Sobolev_embedding_II} that $f^{n}\in C_{\bar{T}}\mcH^{2}(\mbT^{2})\embed C_{\bar{T}}C(\mbT^{2})$. Hence to show $f\in C_{\bar{T}}C(\mbT^{2})$, it suffices to show that $(f^{n})_{n\in\mbN}$ is a Cauchy sequence in $C_{\bar{T}}C(\mbT^{2})$.
		
		Assume $\frac{1+\vartheta}{2}<1$, i.e.\ $\vartheta<1$, and denote $\beta\defeq\frac{2}{1-\vartheta}\in(2,\infty)$. We can now use $f^{n}\in C_{\bar{T}}C(\mbT^{2})$, \eqref{eq:frac_Gronwall_0} (applied to $f^{n}$) and the fractional Gronwall estimate~\cite[Rem.~3.7]{webb_19} to deduce uniformly for every $n\in\mbN$ and $t\in[0,\bar{T}]$,
		\begin{equation*}
			\norm{f_{t}^{n}}_{C(\mbT^{2})}\leq\beta\norm{f_{0}^{n}}_{C(\mbT^{2})}\exp\Bigl(C_{\beta}\abs{\chem}^{\beta}\norm{f^{n}}_{C_{\bar{T}}L^{2}}^{\beta}t\Bigr),
		\end{equation*}
		where $C_{\beta}>0$ is a constant that only depends on $\beta$. Taking the supremum over $[0,\bar{T}]$, we arrive at
		\begin{equation}\label{eq:frac_Gronwall_I_approx}
			\norm{f^{n}}_{C_{\bar{T}}C(\mbT^{2})}\leq\beta\norm{f_{0}^{n}}_{C(\mbT^{2})}\exp\Bigl(C_{\beta}\abs{\chem}^{\beta}\norm{f^{n}}_{C_{\bar{T}}L^{2}}^{\beta}\bar{T}\Bigr).
		\end{equation}
		Using that $\norm{f^{n}}_{C_{\bar{T}}L^{2}}$ is bounded uniformly in $n\in\mbN$, we can apply~\eqref{eq:frac_Gronwall_I_approx} to control $\norm{f^{n}}_{C_{\bar{T}}C(\mbT^{2})}$ uniformly in $n\in\mbN$.
		
		Using the bilinearity of~\eqref{eq:Determ_KS_mild} and following the derivation of~\eqref{eq:frac_Gronwall_0}, we obtain
		\begin{equation}\label{eq:determ_KS_continuity_bilinear}
			\begin{split}
				\norm{f_{t}^{n}-f_{t}^{m}}_{C(\mbT^{2})}\leq\norm{f_{0}^{n}-f_{0}^{m}}_{C(\mbT^{2})}&+C\abs{\chem}\norm{f^{n}-f^{m}}_{C_{\bar{T}}L^{2}}\int_{0}^{t}\abs{t-s}^{-\frac{1+\vartheta}{2}}\norm{f_{s}^{m}}_{C(\mbT^{2})}\dd s\\
				&+C\abs{\chem}\norm{f^{n}}_{C_{\bar{T}}L^{2}}\int_{0}^{t}\abs{t-s}^{-\frac{1+\vartheta}{2}}\norm{f_{s}^{n}-f_{s}^{m}}_{C(\mbT^{2})}\dd s.
			\end{split}
		\end{equation}
		We can then apply the fractional Gronwall estimate~\cite[Rem.~3.7]{webb_19} and the convergences $f_{0}^{n}\to f\in C(\mbT^{2})$ and $f^{n}\to f\in C_{\bar{T}}L^{2}(\mbT^{2})$ to conclude that $(f^{n})_{n\in\mbN}$ is a Cauchy sequence in $C_{\bar{T}}C(\mbT^{2})$. Using the completeness of $C_{\bar{T}}C(\mbT^{2})$, it follows that $f\in C_{\bar{T}}C(\mbT^{2})$. In particular, we can now use~\eqref{eq:frac_Gronwall_0} and the fractional Gronwall estimate~\cite[Rem.~3.7]{webb_19} to deduce
		\begin{equation}\label{eq:frac_Gronwall_I}
			\norm{f}_{C_{\bar{T}}C(\mbT^{2})}\leq\beta\norm{f_{0}}_{C(\mbT^{2})}\exp\Bigl(C_{\beta}\abs{\chem}^{\beta}\norm{f}_{C_{\bar{T}}L^{2}}^{\beta}\bar{T}\Bigr).
		\end{equation}
		This yields the claim.
	\end{details}
\end{proof}
\begin{details}
	By iterating the local estimate~\eqref{eq:frac_Gronwall_I}, we can control the supremum norm of $f$ on $[0,T]\times\mbT^{2}$ for every $T<\Trdet$.
	\begin{lemma}
		Let $f_{0}\in C(\mbT^{2})$ be such that $f_{0}\geq0$ and $\mean{f_{0}}=1$, $f$ be the weak solution to~\eqref{eq:Determ_KS} with initial data $f_{0}$ and chemotactic sensitivity $\chem\in\mbR$, and $\Trdet$ be its maximal time of existence $\Trdet$ (see Theorem~\ref{thm:Determ_KS_Well_Posedness}). Let $T<\Trdet$ then for each $\beta\in(2,\infty)$ there exists a constant $C_{\beta}>0$ such that
		\begin{equation}\label{eq:Determ_KS_uniform_bound}
			\norm{f}_{C_{T}C(\mbT^{2})}\leq\beta^{T+1}\norm{f_{0}}_{C(\mbT^{2})}\exp\Bigl(C_{\beta}\abs{\chem}^{\beta}\norm{f}_{C_{T}L^{2}}^{\beta}T\Bigr).
		\end{equation}
	\end{lemma}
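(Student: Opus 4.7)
The plan is to iterate the local estimate~\eqref{eq:frac_Gronwall_I} established in the proof of Lemma~\ref{lem:Determ_KS_continuity}. That estimate has the advertised form but only holds on subintervals of length at most $1$, because in its derivation one relies on $\bar{T}\in(0,1\wedge T]$ to absorb the $(1\vee\abs{t-s}^{-(1+\vartheta)/2})$ factor from Schauder's lemma. To upgrade it to the whole time interval $[0,T]$, I partition $[0,T]$ into $n\defeq\lceil T\rceil$ consecutive subintervals $[t_{j-1},t_{j}]$ with $0=t_{0}<t_{1}<\cdots<t_{n}=T$ and $t_{j}-t_{j-1}\leq 1$.

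First I would apply Lemma~\ref{lem:Determ_KS_continuity} to conclude that $f\in C([0,T]\times\mbT^{2})$, so that in particular $f_{t_{j-1}}\in C(\mbT^{2})$ for every $j=1,\ldots,n$. Next, by time-translation invariance of~\eqref{eq:Determ_KS}, the shifted map $g^{j}_{t}\defeq f_{t_{j-1}+t}$ defined for $t\in[0,t_{j}-t_{j-1}]$ is the weak solution to~\eqref{eq:Determ_KS} with initial data $f_{t_{j-1}}$ and chemotactic sensitivity $\chem$. Since $t_{j}-t_{j-1}\leq 1$, the local estimate~\eqref{eq:frac_Gronwall_I} applies to $g^{j}$ and yields
\begin{equation*}
\sup_{t\in[t_{j-1},t_{j}]}\norm{f_{t}}_{C(\mbT^{2})}\leq\beta\norm{f_{t_{j-1}}}_{C(\mbT^{2})}\exp\Bigl(C_{\beta}\abs{\chem}^{\beta}\norm{g^{j}}_{C_{t_{j}-t_{j-1}}L^{2}}^{\beta}(t_{j}-t_{j-1})\Bigr).
\end{equation*}
Since $\norm{g^{j}}_{C_{t_{j}-t_{j-1}}L^{2}}\leq\norm{f}_{C_{T}L^{2}}$, the exponent above is bounded by $C_{\beta}\abs{\chem}^{\beta}\norm{f}_{C_{T}L^{2}}^{\beta}(t_{j}-t_{j-1})$.

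Iterating this estimate across the $n$ subintervals telescopes the bounds on $\norm{f_{t_{j-1}}}_{C}$ and produces a factor $\beta^{n}$ in front of $\norm{f_{0}}_{C(\mbT^{2})}$, while the exponentials multiply to give $\exp(C_{\beta}\abs{\chem}^{\beta}\norm{f}_{C_{T}L^{2}}^{\beta}\sum_{j=1}^{n}(t_{j}-t_{j-1}))=\exp(C_{\beta}\abs{\chem}^{\beta}\norm{f}_{C_{T}L^{2}}^{\beta}T)$. Using $n=\lceil T\rceil\leq T+1$ yields the claim. The only point that requires some care is the restart argument: one must check that the weak solution of~\eqref{eq:Determ_KS} restarted at $t_{j-1}$ from the continuous datum $f_{t_{j-1}}$ coincides with $g^{j}$, which follows from the uniqueness statement in Theorem~\ref{thm:Determ_KS_Well_Posedness}~\ref{it:Local_Weak_Sol}.
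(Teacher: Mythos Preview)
Your proposal is correct and follows essentially the same approach as the paper: both partition $[0,T]$ into $\lceil T\rceil$ subintervals of length at most $1$, apply the local estimate~\eqref{eq:frac_Gronwall_I} on each piece via the restarted equation, and telescope the resulting bounds, picking up a factor $\beta$ per step and summing the exponents to $T$. The paper writes this as a formal induction on the number of subintervals (with equal step size $T/\lceil T\rceil$), but the substance is identical.
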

	\begin{proof}
		To deduce~\eqref{eq:Determ_KS_uniform_bound}, we iterate the local-in-time estimate~\eqref{eq:frac_Gronwall_I}. Let $t_{0}<T$, we can represent the dynamics on $[t_{0},T]$ as
		\begin{equation*}
			f_{t_{0}+t}=P_{t}f_{t_{0}}-\chem\int_{0}^{t}\vdiv P_{t-s}(f_{t_{0}+s}\nabla\Phi_{f_{t_{0}+s}})\dd s,\quad\text{where}\quad t\in[0,T-t_{0}].
		\end{equation*}
		Hence, we can estimate by~\eqref{eq:frac_Gronwall_I} for every $\bar{T}\in(0,1\wedge(T-t_{0})]$,
		\begin{equation}\label{eq:frac_Gronwall_II}
			\norm{f}_{C_{[t_{0},t_{0}+\bar{T}]}C(\mbT^{2})}\leq\beta\norm{f_{t_{0}}}_{C(\mbT^{2})}\exp\Bigl(C_{\beta}\abs{\chem}^{\beta}\norm{f}_{C_{[t_{0},t_{0}+\bar{T}]}L^{2}}^{\beta}\bar{T}\Bigr).
		\end{equation}
		Let $\bar{T}=T/\ceil{T}\in(0,1\wedge T]$. We claim that for each $k\in\{1,\ldots,\ceil{T}\}$,
		\begin{equation}\label{eq:frac_Gronwall_III}
			\norm{f}_{C_{[0,k\bar{T}]}C(\mbT^{2})}\leq\beta^{k}\norm{f_{0}}_{C(\mbT^{2})}\exp\Bigl(C_{\beta}\abs{\chem}^{\beta}\norm{f}_{C_{[0,k\bar{T}]}L^{2}}^{\beta}k\bar{T}\Bigr).
		\end{equation}
		We proceed inductively. The claim~\eqref{eq:frac_Gronwall_III} with $k=1$ holds true by~\eqref{eq:frac_Gronwall_I}. For the induction step, assume~\eqref{eq:frac_Gronwall_III} holds with $k\in\{1,\ldots,\ceil{T}-1\}$. We bound by~\eqref{eq:frac_Gronwall_II} and~\eqref{eq:frac_Gronwall_III},
		\begin{equation*}
			\begin{split}
				\norm{f}_{C_{[k\bar{T},(k+1)\bar{T}]}C(\mbT^{2})}&\leq\beta\norm{f_{k\bar{T}}}_{C(\mbT^{2})}\exp\Bigl(C_{\beta}\abs{\chem}^{\beta}\norm{f}_{C_{[k\bar{T},(k+1)\bar{T}]}L^{2}}^{\beta}\bar{T}\Bigr)\\
				&\leq\beta^{k+1}\norm{f_{0}}_{C(\mbT^{2})}\exp\Bigl(C_{\beta}\abs{\chem}^{\beta}\norm{f}_{C_{[0,k\bar{T}]}L^{2}}^{\beta}k\bar{T}\Bigr)\exp\Bigl(C_{\beta}\abs{\chem}^{\beta}\norm{f}_{C_{[k\bar{T},(k+1)\bar{T}]}L^{2}}^{\beta}\bar{T}\Bigr)\\
				&\leq\beta^{k+1}\norm{f_{0}}_{C(\mbT^{2})}\exp\Bigl(C_{\beta}\abs{\chem}^{\beta}\norm{f}_{C_{[0,(k+1)\bar{T}]}L^{2}}^{\beta}(k+1)\bar{T}\Bigr).
			\end{split}
		\end{equation*}
		It then follows by the above and~\eqref{eq:frac_Gronwall_III},
		\begin{equation*}
			\begin{split}
				\norm{f}_{C_{[0,(k+1)\bar{T}]}C(\mbT^{2})}&=\max\{\norm{f}_{C_{[0,k\bar{T}]}C(\mbT^{2})},\norm{f}_{C_{[k\bar{T},(k+1)\bar{T}]}C(\mbT^{2})}\}\\
				&\leq\beta^{k+1}\norm{f_{0}}_{C(\mbT^{2})}\exp\Bigl(C_{\beta}\abs{\chem}^{\beta}\norm{f}_{C_{[0,(k+1)\bar{T}]}L^{2}}^{\beta}(k+1)\bar{T}\Bigr),
			\end{split}
		\end{equation*}
		which yields the inductive step.
		
		We can now apply~\eqref{eq:frac_Gronwall_III} with $k=\ceil{T}$ to deduce the bound
		\begin{equation*}
			\norm{f}_{C_{T}C(\mbT^{2})}\leq\beta^{T+1}\norm{f_{0}}_{C(\mbT^{2})}\exp\Bigl(C_{\beta}\abs{\chem}^{\beta}\norm{f}_{C_{T}L^{2}}^{\beta}T\Bigr),
		\end{equation*}
		which yields the claim.
	\end{proof}
\end{details}
\subsection{Boundedness Away From Zero}\label{subsec:bounded_away_from_0}
In this subsection we show that the solution to~\eqref{eq:Determ_KS} constructed in Theorem~\ref{thm:Determ_KS_Well_Posedness} is bounded away from zero uniformly in $[0,T]\times\mbT^{2}$ for every $T<\Trdet$, if its initial data is assumed to be continuous and positive.
\begin{lemma}\label{lem:Determ_KS_positivity}
	Let $f_{0}\in C(\mbT^{2})$ be such that $f_{0}>0$ and $\mean{f_{0}}=1$, $f$ be the weak solution to~\eqref{eq:Determ_KS} with initial data $f_{0}$ and chemotactic sensitivity $\chem\in\mbR$, and $\Trdet$ be its maximal time of existence (see Lemma~\ref{lem:Determ_KS_continuity}). Then for all $T<\Trdet$ there exists a $c>0$ such that $f(t,x)\geq c$ for all $(t,x)\in[0,T]\times\mbT^{2}$.
\end{lemma}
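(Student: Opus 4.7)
The strategy is a parabolic comparison argument. First I would rewrite~\eqref{eq:Determ_KS} in non-divergence form: expanding $\vdiv(f\nabla\Phi_{f})=\nabla f\cdot\nabla\Phi_{f}+f\Delta\Phi_{f}$ and using $-\Delta\Phi_{f}=f-\mean{f}=f-1$ (mass is preserved by Theorem~\ref{thm:Determ_KS_Well_Posedness}(\ref{it:Local_Weak_Sol})), the equation becomes
\begin{equation*}
	\partial_{t}f-\Delta f+\chem\,\nabla\Phi_{f}\cdot\nabla f+\chem(f-1)f=0\quad\text{on }(0,T]\times\mbT^{2}.
\end{equation*}

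By Theorem~\ref{thm:Determ_KS_Well_Posedness}(\ref{it:Regular_Weak_Sol}) we have $f\in C^{2}_{1}((0,T]\times\mbT^{2})$, so $f$ is a pointwise classical solution on this cylinder; by Lemma~\ref{lem:Determ_KS_continuity} we further have $f\in C([0,T]\times\mbT^{2})$, and hence $f$ is bounded on the closed cylinder. Standard elliptic regularity applied to $\Phi_{f}=\msG\ast f$ with continuous $f$ (for instance $\Phi_{f}\in C([0,T];C^{1,\alpha}(\mbT^{2}))$ for any $\alpha<1$) shows that $\nabla\Phi_{f}$ is bounded continuous on $[0,T]\times\mbT^{2}$. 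Consequently the drift $b\defeq\chem\nabla\Phi_{f}$ and the zeroth-order coefficient $V\defeq\chem(f-1)$ lie in $L^{\infty}([0,T]\times\mbT^{2})$; set $M\defeq\norm{V}_{L^{\infty}}$.

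The second step is to construct an explicit spatially-constant sub-solution. Since $f_{0}\in C(\mbT^{2})$ is strictly positive on the compact torus, there exists $c_{0}>0$ with $f_{0}\geq c_{0}$ pointwise. Taking $g(t)\defeq c_{0}\euler^{-Mt}$, a direct computation yields $(\partial_{t}-\Delta+b\cdot\nabla+V)g=(V-M)g\leq 0$, so $w\defeq f-g$ satisfies $w(0,\place)\geq 0$ on $\mbT^{2}$ and $(\partial_{t}-\Delta+b\cdot\nabla+V)w=(M-V)g\geq 0$ on $(0,T]\times\mbT^{2}$.

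Finally I would apply the weak parabolic maximum principle on $\mbT^{2}$---a compact manifold whose parabolic boundary reduces to $\{0\}\times\mbT^{2}$---to the classical sub-solution $-w$, using the standard exponential-in-$T$ form that accommodates bounded, sign-indefinite zeroth-order terms (cf.\ Protter--Weinberger). Since $(-w)(0,\place)\leq 0$, this forces $w\geq 0$ throughout $[0,T]\times\mbT^{2}$, yielding $f\geq c_{0}\euler^{-MT}\eqdef c>0$, as required. The only mild technical point is that $f$ is $C^{2,1}$ only for $t>0$; this is handled by first applying the maximum principle on $[\varepsilon,T]\times\mbT^{2}$ (with initial datum $f(\varepsilon,\place)$ and the analogous comparison function $c_{0}\euler^{-M(t-\varepsilon)}$ perturbed by a vanishing amount to absorb the change of initial condition), and then sending $\varepsilon\downarrow 0$ using continuity of $f$ at $t=0$ from Lemma~\ref{lem:Determ_KS_continuity}.
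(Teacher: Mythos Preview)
Your argument is correct and recovers exactly the same lower bound $f\geq c_{0}\euler^{-MT}$ as the paper, but by a different route. The paper packages the positivity as a general appendix result (Proposition~\ref{prop:diff_adv_positivity}) for linear diffusion--advection equations $(\partial_{t}-\Delta)u=\vdiv(bu)$, proved via a probabilistic Feynman--Kac representation: after rewriting in non-divergence form exactly as you do, one constructs a weak solution $(Y_{s})$ to the time-reversed SDE with drift $b$, shows that $s\mapsto\euler^{-\int_{0}^{s}V(t-r,Y_{r})\dd r}f(t-s,Y_{s})$ is a martingale, and reads off $f(t,x)=\mbE[\euler^{-A_{t}}f_{0}(Y_{t})]\geq c_{0}\euler^{-T\norm{V}_{\infty}}$. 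Your comparison with the explicit spatially-constant sub-solution is the purely PDE counterpart---more elementary in that it avoids stochastic calculus and the mild technicalities of building weak SDE solutions with merely continuous drift, while the paper's version has the advantage of being a reusable black box stated for generic $b$. One inconsequential slip: expanding $-\chem\vdiv(f\nabla\Phi_{f})$ gives $-\chem\nabla\Phi_{f}\cdot\nabla f+\chem(f-1)f$, so the zeroth-order coefficient is $V=-\chem(f-1)$ rather than $+\chem(f-1)$; since you only use $M=\norm{V}_{L^{\infty}}$, nothing in the argument changes.
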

\begin{proof}
	It follows by Theorem~\ref{thm:Determ_KS_Well_Posedness} and Lemma~\ref{lem:Determ_KS_continuity} that $f\in C^{2}_{1}((0,T]\times\mbT^{2})\cap C([0,T]\times\mbT^{2})$ is a classical solution to~\eqref{eq:Determ_KS}. Denote $b\defeq-\chem\nabla\Phi_{f}$, it then follows by elliptic regularity~\cite[Lem.~A.8]{martini_mayorcas_25} that $b\in C([0,T]\times\mbT^{2};\mbR^{2})$ and $\vdiv b=\chem(f-\mean{f})\in C([0,T]\times\mbT^{2};\mbR)$. Hence we can apply Proposition~\ref{prop:diff_adv_positivity} to deduce that there exist some $c>0$ such that $f(t,x)\geq c$ for all $(t,x)\in[0,T]\times\mbT^{2}$.
	\begin{details}
		\paragraph{Proof that $b\in C([0,T]\times\mbT^{2})$.}
		Using the embedding $C(\mbT^{2})\embed\mcC^{0}(\mbT^{2})$ (cf.~\cite[Thm.~23.5]{vanzuijlen_22}), we obtain $f\in C_{T}C(\mbT^{2})\embed C_{T}\mcC^{0}(\mbT^{2})$, which implies $b=-\chem\nabla\Phi_{f}\in C_{T}\mcC^{1}(\mbT^{2})\embed C_{T}C(\mbT^{2})$ by~\cite[Lem.~A.8]{martini_mayorcas_25}. The continuity in space-time then follows by the triangle inequality.
	\end{details}
\end{proof}
\subsection{Regularity of the Square Root}\label{subsec:regularity_srdet}
In this subsection we establish the regularity of the square root of the solution to the deterministic Keller--Segel equation~\eqref{eq:Determ_KS} (Lemma~\ref{lem:regularity_srdet}).
\begin{lemma}\label{lem:regularity_srdet}
	Let $f_{0}\in C(\mbT^{2})$ be such that $f_{0}\geq0$ and $\mean{f_{0}}=1$, $f$ be the weak solution to~\eqref{eq:Determ_KS} with initial data $f_{0}$ and chemotactic sensitivity $\chem\in\mbR$, and $\Trdet$ be its maximal time of existence (see Lemma~\ref{lem:Determ_KS_continuity}). Then for all $T<\Trdet$ it holds that $\sqrt{f}\in C([0,T]\times\mbT^{2})$. Assume further $f_{0}>0$ and $f_{0}\in\mcH^{\gamma}(\mbT^{2})\cap C(\mbT^{2})$ for some  $\gamma\geq0$, then it holds that $\sqrt{f}\in C_{\eta;T}\mcH^{\gamma+2\eta}(\mbT^{2})\cap L_{T}^{2}\mcH^{\gamma+1}(\mbT^{2})$ for every $\eta\in[0,1/2)$.
\end{lemma}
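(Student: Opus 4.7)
The first claim is immediate from composition: by Lemma~\ref{lem:Determ_KS_continuity} we have $f\in C([0,T]\times\mbT^{2})$ for all $T<\Trdet$, and Theorem~\ref{thm:Determ_KS_Well_Posedness} guarantees $f\geq0$. Since $\sqrt{\place}\from[0,\infty)\to[0,\infty)$ is continuous, composition yields $\sqrt{f}\in C([0,T]\times\mbT^{2})$.

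For the second claim, the additional hypotheses $f_{0}>0$ and $f_{0}\in C(\mbT^{2})$ together with Lemma~\ref{lem:Determ_KS_positivity} give a constant $c>0$ such that $f(t,x)\geq c$ uniformly on $[0,T]\times\mbT^{2}$. Combined with an upper bound $C\defeq\norm{f}_{C([0,T]\times\mbT^{2})}<\infty$, the function $f$ takes values in the compact interval $[c,C]\subset(0,\infty)$. On a neighbourhood of $[c,C]$ the map $\sqrt{\place}$ agrees with a smooth, bounded function $G\in C^{\infty}_{b}(\mbR)$ with all derivatives uniformly bounded, and $\sqrt{f}=G(f)$ pointwise.

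To transfer Bessel regularity from $f$ to $\sqrt{f}=G(f)$, I would invoke a Moser-type composition estimate for fractional Bessel potential spaces: since $G$ is smooth and $f$ is uniformly bounded, one has for $s\in[0,\gamma+1]$ a bound of the form $\norm{G(f(t))}_{\mcH^{s}}\lesssim\Psi(\norm{f(t)}_{L^{\infty}})(1+\norm{f(t)}_{\mcH^{s}})$ with $\Psi$ an increasing function depending on $\norm{G}_{C^{k}([c/2,2C])}$ for $k$ sufficiently large relative to $s$ (for integer $s$ this is just the chain rule and product estimates~\eqref{eq:product_estimate_Sobolev_I}--\eqref{eq:product_estimate_Sobolev_IV}; for fractional $s$ it follows from a paradifferential calculus argument, decomposing $G(f)=G'(f)\pa f+(G(f)-G'(f)\pa f)$ and controlling the remainder via Bony's paralinearization lemma). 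Applying this with $s=\gamma+1$ together with $f\in L_{T}^{2}\mcH^{\gamma+1}(\mbT^{2})$ from Theorem~\ref{thm:Determ_KS_Well_Posedness} yields $\sqrt{f}\in L_{T}^{2}\mcH^{\gamma+1}(\mbT^{2})$.

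For the weighted statement $\sqrt{f}\in C_{\eta;T}\mcH^{\gamma+2\eta}(\mbT^{2})$, I would first establish the analogous property for $f$ by refining the mild formulation~\eqref{eq:Determ_KS_mild}: the linear part $Pf_{0}$ lies in $C_{\eta;T}\mcH^{\gamma+2\eta}(\mbT^{2})$ by the Schauder estimate of~\cite[Lem.~A.5]{martini_mayorcas_22_WP}, and the nonlinear contribution $\vdiv\mcI[f\nabla\Phi_{f}]$ can be placed in the same space by combining Schauder with the product estimates already used in~\eqref{eq:bootstrap_advection_I}--\eqref{eq:bootstrap_advection_II}, iterating on short time intervals using the already-known global bound on $\norm{f}_{C_{T}\mcH^{\gamma}}$. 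Applying the composition estimate above at each time $t\in(0,T]$ with $s=\gamma+2\eta$ and multiplying by $(1\wedge t)^{\eta}$ then transfers this weighted regularity to $\sqrt{f}$, while continuity of the map $t\mapsto(1\wedge t)^{\eta}\sqrt{f_{t}}\in\mcH^{\gamma+2\eta}(\mbT^{2})$ follows from the continuity of $f$ in the corresponding weighted Bessel space together with continuity of the Nemytskii operator associated to $G$.

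The main obstacle is the composition estimate for fractional Bessel potential spaces, which, unlike the Hölder--Besov case, requires some care; once this is in hand the rest is a bootstrap on the mild formulation identical in spirit to the one carried out in the proof of Theorem~\ref{thm:Determ_KS_Well_Posedness}.
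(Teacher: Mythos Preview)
Your proposal is correct and follows essentially the same route as the paper: establish $f\in C_{\eta;T}\mcH^{\gamma+2\eta}(\mbT^{2})\cap L_{T}^{2}\mcH^{\gamma+1}(\mbT^{2})$ via the mild formulation and bootstrapping, invoke Lemma~\ref{lem:Determ_KS_positivity} for a uniform lower bound, replace $\sqrt{\place}$ by a smooth bounded function via a cut-off, and transfer regularity through a composition estimate. The paper dispatches the composition step by citing~\cite[Cor.~2.91]{bahouri_chemin_danchin_11} for the weighted space and its own Lemma~\ref{lem:chain_rule_fractional} (which in turn rests on~\cite[Thm.~2.87]{bahouri_chemin_danchin_11}) for $L_{T}^{2}\mcH^{\gamma+1}$; these are precisely the paralinearization/Moser-type results you describe, so your sketch and the paper's proof coincide in substance.
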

\begin{proof}
	Given $f_{0}\in C(\mbT^{2})$ such that $f_{0}\geq0$ and $\mean{f_{0}}=1$, we can apply Lemma~\ref{lem:Determ_KS_continuity} to construct $f\in C([0,T]\times\mbT^{2})$ for all $T<\Trdet$. Using the continuity of the square root in the non-negative reals, it follows that $\sqrt{f}\in C([0,T]\times\mbT^{2})$.
	
	If further $f_{0}\in\mcH^{\gamma}(\mbT^{2})\cap C(\mbT^{2})$ for some $\gamma\geq0$, we can apply Theorem~\ref{thm:Determ_KS_Well_Posedness} and Lemma~\ref{lem:Determ_KS_continuity} to construct $f\in C_{T}\mcH^{\gamma}(\mbT^{2})\cap L_{T}^{2}\mcH^{\gamma+1}(\mbT^{2})\cap C([0,T]\times\mbT^{2})$. Recall from~\eqref{eq:Determ_KS_mild} the mild formulation 
	\begin{equation*}
		f=Pf_{0}-\chem\vdiv\mcI[f\nabla\Phi_{f}].
	\end{equation*}
	We can bound the contribution from the initial data by Schauder's estimate~\cite[Lem.~A.5]{martini_mayorcas_25},
	\begin{equation*}
		\norm{Pf_{0}}_{C_{\eta;T}\mcH^{\gamma+2\eta}}\lesssim\norm{f_{0}}_{\mcH^{\gamma}}
	\end{equation*}
	and the contribution from the nonlinearity by Lemma~\ref{lem:Schauder_Sobolev} combined with a product estimate,
	\begin{equation*}
		\norm{\vdiv\mcI[f\nabla\Phi_{f}]}_{C_{T}\mcH^{\gamma+2\eta}}\lesssim\norm{\mcI[f\nabla\Phi_{f}]}_{C_{T}\mcH^{\gamma+1+2\eta}}\lesssim_{T}\norm{f\nabla\Phi_{f}}_{C_{T}\mcH^{\gamma}}\lesssim\norm{f}_{C_{T}\mcH^{\gamma}}^{2},\quad\text{if}~\gamma>0
	\end{equation*}
	and
	\begin{equation*}
		\norm{\vdiv\mcI[f\nabla\Phi_{f}]}_{C_{T}\mcH^{2\eta}}\lesssim\norm{\mcI[f\nabla\Phi_{f}]}_{C_{T}\mcH^{1+2\eta}}\lesssim_{T}\norm{f\nabla\Phi_{f}}_{C_{T}\mcH^{-(1-2\eta)/2}}\lesssim\norm{f}_{C_{T}L^{2}}^{2},\quad\text{if}~\gamma=0.
	\end{equation*}
	Hence, we obtain $f\in C_{\eta;T}\mcH^{\gamma+2\eta}(\mbT^{2})\cap L_{T}^{2}\mcH^{\gamma+1}(\mbT^{2})\cap C([0,T]\times\mbT^{2})$ for all $T<\Trdet$.
	If in addition $f_{0}>0$, it also follows by Lemma~\ref{lem:Determ_KS_positivity} that there exists a constant $c>0$ such that $f\geq c$ on $[0,T]\times\mbT^{2}$. To prove $\sqrt{f}\in C_{\eta;T}\mcH^{\gamma+2\eta}(\mbT^{2})$, it then suffices to apply~\cite[Cor.~2.91]{bahouri_chemin_danchin_11} combined with a smooth cut-off. To prove $\sqrt{f}\in L_{T}^{2}\mcH^{\gamma+1}(\mbT^{2})$, we apply Lemma~\ref{lem:chain_rule_fractional} with $F=\sqrt{\place}$ to deduce the existence of a constant $C\defeq C(\gamma+1, F,\norm{f}_{C_{T}L^{\infty}}, c^{-1})>0$ such that
	\begin{equation*}
		\bigl\lVert\sqrt{f}\bigr\rVert_{L_{T}^{2}\mcH^{\gamma+1}}^{2}=\int_{0}^{T}\bigl\lVert\sqrt{f_{t}}\bigr\rVert_{\mcH^{\gamma+1}}^{2}\dd t\leq\int_{0}^{T}C^{2}\norm{f_{t}}_{\mcH^{\gamma+1}}^{2}\dd t\leq C^{2}\norm{f}_{L_{T}^{2}\mcH^{\gamma+1}}^{2},
	\end{equation*}
	which yields the claim.
\end{proof}
\section{Regular Theory for Small Noise Intensities}\label{sec:regular_theory}
In this section we consider solutions $\rho^{(\eps)}_{\delta(\eps)}$ to the additive-noise approximation~\eqref{eq:rKS_intro_mild}. As discussed in Subsection~\ref{subsec:scaling_discussion}, the choice of state space affects the relative scaling between $\eps$ and $\delta(\eps)$, which becomes more restrictive as we consider increasingly regular spaces. Here we consider the regular setting, i.e.\ the Bessel potential space $\mcH^{\gamma}(\mbT^{2})$ of regularity $\gamma\in[-1,0]$ and the relative scaling $\eps^{\sfrac{1}{2}}\delta(\eps)^{-\gamma-2}\to0$ as $\eps,\delta(\eps)\to0$; for the rough setting see Section~\ref{sec:rough_theory}. 

We can deduce increasingly stronger results by decreasing the range of $\gamma$. The most general range is $\gamma\in[-1,0]$, for which we establish the well-posedness of the stochastic heat equation $\ti^{\delta}$ (Proposition~\ref{prop:lolli_regular_srdet}) and a law of large numbers for $(\eps^{\sfrac{1}{2}}\ti^{\delta(\eps)})_{\eps>0}$  (Lemma~\ref{lem:LLN_regular_lolli}). The lower bound $\gamma\geq-1$ is used to make sense of the product $\srdet\boldsymbol{\xi}^{\delta}$ that appears in $\ti^{\delta}$, see the proof of Proposition~\ref{prop:lolli_regular_srdet} for details.
The smaller range $\gamma\in(-1/2,0]$ allows us to establish the well-posedness of $\rho^{(\eps)}_{\delta(\eps)}$ in $\mcH^{\gamma}(\mbT^{2})$ (Lemma~\ref{lem:rKS_well_posedness_regular}) and a law of large numbers (Theorem~\ref{thm:LLN_regular}).
To establish large deviation principles, we require some compactness, which we achieve by assuming $\gamma<1$. As such, we obtain a large deviation principle for $(\eps^{\sfrac{1}{2}}\ti^{\delta(\eps)})_{\eps>0}$ in $\mcH^{\gamma}(\mbT^{2})$ in the range of regularities $\gamma\in[-1,0)$ (Theorem~\ref{thm:LDP_regular_lolli}) and one for $(\rho^{(\eps)}_{\delta(\eps)})_{\eps>0}$ in the range $\gamma\in(-1/2,0)$ (Theorem~\ref{thm:LDP_regular}).

Throughout this section we consider initial data $\rho_{0}\in C(\mbT^{2})$ such that $\rho_{0}>0$ and $\mean{\rho_{0}}=1$. In particular, this ensures that $\sqrt{\rdet}\in C([0,T]\times\mbT^{2})$ (see Lemma~\ref{lem:regularity_srdet}) and that $\ti^{\delta}\in C_{T}L^{2}(\mbT^{2})\cap L_{T}^{2}\mcH^{1}(\mbT^{2})$ (see Proposition~\ref{prop:lolli_regular_srdet}).
\subsection{Well-Posedness}\label{subsec:well_posedness_regular}
In this subsection we construct solutions to~\eqref{eq:rKS_intro_mild} with initial data in $C(\mbT^{2})$ taking values in the Bessel potential space $\mcH^{\gamma}(\mbT^{2})$ of regularity $\gamma\in(-1/2,0]$ (Lemma~\ref{lem:rKS_well_posedness_regular}).

Recall that $(\mcH^{\gamma}(\mbT^{2}))^{\sol}_{T}$ denotes the space of continuous functions that may blow up in finite time (see Subsection~\ref{subsec:notation}).
\begin{lemma}\label{lem:rKS_well_posedness_regular}
	Let $\rho_{0}\in C(\mbT^{2})$ be such that $\rho_{0}>0$ and $\mean{\rho_{0}}=1$, $\rdet$ be the weak solution to~\eqref{eq:Determ_KS} with initial data $\rho_{0}$ and chemotactic sensitivity $\chem\in\mbR$, and $\Trdet$ be its maximal time of existence (see Lemma~\ref{lem:Determ_KS_continuity}). Then for every $T<\Trdet$, $\eps>0$, $\delta>0$ and $\gamma\in(-1/2,0]$, there exists a unique solution $\rho^{(\eps)}_{\delta}$ to~\eqref{eq:rKS_intro_mild} such that, almost surely, $\rho^{(\eps)}_{\delta}\in(\mcH^{\gamma}(\mbT^{2}))^{\sol}_{T}$ and $\rho^{(\eps)}_{\delta}\in C_{S}\mcH^{\gamma}(\mbT^{2})\cap L_{S}^{2}\mcH^{\gamma+1}(\mbT^{2})$ for every $S<T^{\cem}_{\mcH^{\gamma}}[\rho^{(\eps)}_{\delta}]$ (where $T^{\cem}_{\mcH^{\gamma}}[\rho^{(\eps)}_{\delta}]\in(0,T]$ denotes the maximal time of existence of $\rho^{(\eps)}_{\delta}$.) In particular, the solution map is almost surely locally Lipschitz continuous in the noise $\eps^{\sfrac{1}{2}}\ti^{\delta(\eps)}\in C_{T}\mcH^{\gamma}(\mbT^{2})\cap L_{T}^{2}\mcH^{\gamma+1}(\mbT^{2})$.
\end{lemma}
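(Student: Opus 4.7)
The plan is to recast~\eqref{eq:rKS_intro_mild} as a deterministic Cauchy problem with a random forcing term, and then solve it by a standard fixed-point argument in the Banach space $X_{S}\defeq C_{S}\mcH^{\gamma}(\mbT^{2})\cap L_{S}^{2}\mcH^{\gamma+1}(\mbT^{2})$ with norm $\norm{\place}_{C_{S}\mcH^{\gamma}\cap L_{S}^{2}\mcH^{\gamma+1}}$. Since $\sqrt{\rdet}\in C([0,T]\times\mbT^{2})$ by Lemma~\ref{lem:regularity_srdet} and $\rho_{0}>0$, Proposition~\ref{prop:lolli_regular_srdet} guarantees that almost surely $\ti^{\delta}\in X_{T}$, so $N\defeq\eps^{1/2}\ti^{\delta}\in X_{T}$ pathwise. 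Writing $\rho=w-N$ with $w=P\rho_{0}-\chem\vdiv\mcI[(w-N)\nabla\Phi_{w-N}]$, the problem reduces to finding a fixed point of
\begin{equation*}
	\mcT[w]\defeq P\rho_{0}-\chem\vdiv\mcI\bigl[(w-N)\nabla\Phi_{w-N}\bigr]
\end{equation*}
in a ball of $X_{S}$ for suitably small $S>0$.

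First I would verify the key mapping and contraction estimates. By the Schauder-type bounds in Bessel potential scales (as used in the proof of Theorem~\ref{thm:Determ_KS_Well_Posedness}), $\vdiv\mcI[\place]$ maps $L_{S}^{2}\mcH^{\gamma}$ into $X_{S}$ with norm $\lesssim(1\vee S^{1/2})$. The product estimate in $\mcH^{\gamma}(\mbT^{2})$ combined with the elliptic gain $\norm{\nabla\Phi_{u}}_{\mcH^{\gamma+1}}\lesssim\norm{u}_{\mcH^{\gamma}}$ is applicable precisely because $\gamma>-1/2$, which is the sharp threshold allowing pairing of $\mcH^{\gamma}$ with $\mcH^{\gamma+1}$ into $\mcH^{\gamma}$ (through paraproducts and Bony's lemma). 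Together with Young's inequality this yields, for $w_{1},w_{2}$ in a ball of radius $R$ in $X_{S}$,
\begin{equation*}
	\norm{\mcT[w_{1}]-\mcT[w_{2}]}_{X_{S}}\leq C\,S^{\kappa}\bigl(R+\norm{N}_{X_{T}}\bigr)\norm{w_{1}-w_{2}}_{X_{S}}
\end{equation*}
for some $\kappa>0$, and an analogous bound on $\mcT[0]$. Choosing $R$ large enough relative to $\norm{P\rho_{0}}_{X_{T}}+\norm{N}_{X_{T}}$ and $S=S(\omega)>0$ small enough pathwise gives a contraction, hence a unique local solution.

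Next I would extend to the maximal time of existence. Since the local existence time depends only on $\norm{\rho(t_{0})}_{\mcH^{\gamma}}$ and on $\norm{N}_{X_{T}}$, one may restart the construction at successive times as long as the $\mcH^{\gamma}$-norm of $\rho$ remains finite. This yields a maximal time $T^{\cem}_{\mcH^{\gamma}}[\rho^{(\eps)}_{\delta}]\in(0,T]$ and, on each $[0,S]$ with $S<T^{\cem}_{\mcH^{\gamma}}[\rho^{(\eps)}_{\delta}]$, a unique $X_{S}$-solution to~\eqref{eq:rKS_intro_mild}. Embedding into $(\mcH^{\gamma}(\mbT^{2}))^{\sol}_{T}$ follows from the explosion alternative of the fixed-point construction (if $T^{\cem}_{\mcH^{\gamma}}<T$ then $\norm{\rho^{(\eps)}_{\delta}(t)}_{\mcH^{\gamma}}\to\infty$ as $t\nearrow T^{\cem}_{\mcH^{\gamma}}$), yielding the required continuous trajectory into $\mcH^{\gamma}(\mbT^{2})^{\cem}$.

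Finally, for the local Lipschitz dependence on the noise, I would compare two solutions $\rho^{(1)},\rho^{(2)}$ corresponding to noises $N^{(1)},N^{(2)}\in X_{T}$ with the same initial condition. Writing $z=\rho^{(1)}-\rho^{(2)}$, $z$ satisfies a linear mild equation with coefficients depending on $\rho^{(1)},\rho^{(2)}$ and an inhomogeneity controlled by $N^{(1)}-N^{(2)}$; the same product and Schauder estimates together with Gronwall's inequality give
\begin{equation*}
	\norm{z}_{X_{S}}\leq C(S,\norm{\rho^{(1)}}_{X_{S}},\norm{\rho^{(2)}}_{X_{S}})\norm{N^{(1)}-N^{(2)}}_{X_{T}}
\end{equation*}
for any $S$ strictly less than the minimum of the two maximal times, which is precisely the local Lipschitz continuity in the metric of $(\mcH^{\gamma}(\mbT^{2}))^{\sol}_{T}$ (cf.~\eqref{eq:FsolT_metric}). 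The main technical obstacle throughout is the endpoint nature of the product estimate as $\gamma\to-1/2$; one must be careful to exploit the full $L_{S}^{2}\mcH^{\gamma+1}$ regularity (and not merely $C_{S}\mcH^{\gamma}$) of both factors in order to close the fixed-point and difference estimates with a small factor $S^{\kappa}$.
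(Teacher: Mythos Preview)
Your overall strategy is correct and matches the paper's: treat $\eps^{1/2}\ti^{\delta}$ as a fixed element of $C_{T}\mcH^{\gamma}\cap L_{T}^{2}\mcH^{\gamma+1}$ (via Proposition~\ref{prop:lolli_regular_srdet}), run a Banach fixed-point argument for small times, iterate to a maximal solution, and read off local Lipschitz dependence from difference estimates. The Da~Prato--Debussche substitution $\rho=w-N$ is harmless but superfluous here, since the noise is additive and already lies in the solution space; the paper simply sets $\Psi(u)=P\rho_{0}-\chem\vdiv\mcI[u\nabla\Phi_{u}]-\eps^{1/2}\ti^{\delta}$ and works directly.

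There is, however, a genuine gap in your contraction step. Your product claim (``pairing $\mcH^{\gamma}$ with $\mcH^{\gamma+1}$ into $\mcH^{\gamma}$'') is not correct in $d=2$: for $\gamma\in(-1/2,0]$ the product of $u\in\mcH^{\gamma}$ and $\nabla\Phi_{v}\in\mcH^{\gamma+1}$ lands only in $\mcH^{2\gamma}$, not $\mcH^{\gamma}$. More importantly, even granting the right product bound $\norm{u\nabla\Phi_{v}}_{L_{S}^{2}\mcH^{\gamma}}\lesssim\norm{u}_{C_{S}\mcH^{\gamma}}\norm{v}_{L_{S}^{2}\mcH^{\gamma+1}}$, the Schauder estimate $\vdiv\mcI\from L_{S}^{2}\mcH^{\gamma}\to L_{S}^{2}\mcH^{\gamma+1}$ uses the full two derivatives of smoothing and therefore carries \emph{no} small factor $S^{\kappa}$. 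So you cannot close a contraction directly on the $L_{S}^{2}\mcH^{\gamma+1}$ component of $X_{S}$. The paper resolves this by introducing an auxiliary exponent $\gamma'\in(-1,\gamma)$: it first closes the fixed point in $C_{\bar{T}}\mcH^{\gamma}\cap L_{\bar{T}}^{2}\mcH^{\gamma'+1}$, where both components do acquire a small factor (namely $\bar{T}^{(1+\gamma-\vartheta)/2}$ and $\bar{T}^{\gamma-\gamma'}$ respectively), and only afterwards bootstraps the solution into $L_{\bar{T}}^{2}\mcH^{\gamma+1}$ using the a~priori bound $\norm{\vdiv\mcI[u\nabla\Phi_{u}]}_{L_{\bar{T}}^{2}\mcH^{\gamma+1}}\lesssim(\bar{T}+1)\norm{u}_{C_{\bar{T}}\mcH^{\gamma}}\norm{u}_{L_{\bar{T}}^{2}\mcH^{\gamma'+1}}$. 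Your sketch would go through once you insert this two-step $\gamma'$ device; without it, the asserted contraction on $X_{S}$ with a factor $S^{\kappa}$ does not follow from the estimates you cite.
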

\begin{proof}
	An application of Proposition~\ref{prop:lolli_regular_srdet} yields $\ti^{\delta}\in C_{T}L^{2}(\mbT^{2})\cap L_{T}^{2}\mcH^{1}(\mbT^{2})$ almost surely for each $\delta>0$. Given a realisation of the noise, $\eps>0$, $\gamma\in(-1/2,0]$ and $\bar{T}$ sufficiently small, we can construct a local solution $\rho^{(\eps)}_{\delta}$ to~\eqref{eq:rKS_intro_mild} as a fixed point in $C_{\bar{T}}\mcH^{\gamma}(\mbT^{2})\cap L_{\bar{T}}^{2}\mcH^{\gamma+1}(\mbT^{2})$. Re-starting the equation, we may then iterate the construction to obtain a solution in $(\mcH^{\gamma}(\mbT^{2}))^{\sol}_{T}$. Since the noise is additive in~\eqref{eq:rKS_intro_mild}, it follows that the solution map is continuous in $\eps^{\sfrac{1}{2}}\ti^{\delta(\eps)}\in C_{T}\mcH^{\gamma}(\mbT^{2})\cap L_{T}^{2}\mcH^{\gamma+1}(\mbT^{2})$.
	\begin{details}
		\paragraph{Detailed Construction.}
		Let $\gamma'\in(-1,\gamma)$, $\bar{T}\in(0,T]$ and define $\Psi$, acting on $u\in C_{\bar{T}}\mcH^{\gamma}(\mbT^{2})\cap L_{\bar{T}}^{2}\mcH^{\gamma'+1}(\mbT^{2})$, by
		\begin{equation*}
			\Psi(u)=P\rho_{0}-\chem\vdiv\mcI[u\nabla\Phi_{u}]-\eps^{\sfrac{1}{2}}\ti^{\delta(\eps)}.
		\end{equation*}
	We combine Lemma~\ref{lem:Schauder_Sobolev}, Claims~\ref{it:Schauder_Sobolev_I}~\&~\ref{it:Schauder_Sobolev_II}, with~\eqref{eq:product_estimate_Sobolev_V} (for $\gamma\in(-1/2,0)$) resp.~\eqref{eq:product_estimate_Sobolev_IV} (for $\gamma=0$) to bound for every $\vartheta\in(0,\gamma+1)$,
		\begin{equation}\label{eq:bound_a_priori_regular_I}
			\begin{split}
				\norm{\Psi(u)}_{C_{\bar{T}}\mcH^{\gamma}}&\lesssim\norm{\rho_{0}}_{\mcH^{\gamma}}+\abs{\chem}(\bar{T}+\bar{T}^{\frac{1+\gamma-\vartheta}{2}})\norm{u\nabla\Phi_{u}}_{C_{\bar{T}}\mcH^{2\gamma-\vartheta}}+\eps^{\sfrac{1}{2}}\norm{\ti^{\delta(\eps)}}_{C_{\bar{T}}\mcH^{\gamma}}\\
				&\lesssim\norm{\rho_{0}}_{\mcH^{\gamma}}+\abs{\chem}(\bar{T}+\bar{T}^{\frac{1+\gamma-\vartheta}{2}})\norm{u}_{C_{\bar{T}}\mcH^{\gamma}}^{2}+\eps^{\sfrac{1}{2}}\norm{\ti^{\delta(\eps)}}_{C_{\bar{T}}\mcH^{\gamma}}
			\end{split}
		\end{equation}
		and Lemma~\ref{lem:Schauder_Sobolev}, Claims~\ref{it:Schauder_Sobolev_I}~\&~\ref{it:Schauder_Sobolev_III}, with~\eqref{eq:product_estimate_Sobolev_VII} to bound
		\begin{equation}\label{eq:bound_a_priori_regular_II}
			\begin{split}
				\norm{\Psi(u)}_{L_{\bar{T}}^{2}\mcH^{\gamma'+1}}&\lesssim\norm{\rho_{0}}_{\mcH^{\gamma}}+\abs{\chem}(\bar{T}+\bar{T}^{\gamma-\gamma'})\norm{u\nabla\Phi_{u}}_{L_{\bar{T}}^{2}\mcH^{\gamma}}+\eps^{\sfrac{1}{2}}\norm{\ti^{\delta(\eps)}}_{L_{\bar{T}}^{2}\mcH^{\gamma+1}}\\
				&\lesssim\norm{\rho_{0}}_{\mcH^{\gamma}}+\abs{\chem}(\bar{T}+\bar{T}^{\gamma-\gamma'})\norm{u}_{C_{\bar{T}}\mcH^{\gamma}}\norm{u}_{L_{\bar{T}}^{2}\mcH^{\gamma'+1}}+\eps^{\sfrac{1}{2}}\norm{\ti^{\delta(\eps)}}_{L_{\bar{T}}^{2}\mcH^{\gamma+1}}.
			\end{split}
		\end{equation}
		Let $C$ be the implicit constant of~\eqref{eq:bound_a_priori_regular_I}~\&~\eqref{eq:bound_a_priori_regular_II} and choose
		\begin{equation*}
			R>C(\norm{\rho_{0}}_{\mcH^{\gamma}}+1+\eps^{\sfrac{1}{2}}\norm{\ti^{\delta(\eps)}}_{C_{T}\mcH^{\gamma}\cap L_{T}^{2}\mcH^{\gamma+1}}).
		\end{equation*}
		Define 
		\begin{equation*}
			\mfB_{R,\bar{T}}\defeq\{u\in C_{\bar{T}}\mcH^{\gamma}(\mbT^{2})\cap L_{\bar{T}}^{2}\mcH^{\gamma'+1}(\mbT^{2}):\norm{u}_{C_{\bar{T}}\mcH^{\gamma}\cap L_{\bar{T}}^{2}\mcH^{\gamma'+1}}<R\}
		\end{equation*}
		and let $\bar{T}=\bar{T}(R,\gamma,\vartheta,\gamma',\chem)\leq T$ be sufficiently small such that for all $u\in\mfB_{R,\bar{T}}$,
		\begin{equation*}
			\norm{\Psi(u)}_{C_{\bar{T}}\mcH^{\gamma}\cap L_{\bar{T}}^{2}\mcH^{\gamma'+1}}<R.
		\end{equation*}
		It follows that $\Psi$ is a self-mapping on $\mfB_{R,\bar{T}}$. To show that $\Psi$ is a contraction, let $u,v\in\mfB_{R,\bar{T}}$ and use bilinearity to bound
		\begin{equation}\label{eq:bound_a_priori_regular_bilinearity}
			\begin{split}
				\norm{\Psi(u)-\Psi(v)}_{C_{\bar{T}}\mcH^{\gamma}}&\lesssim \abs{\chem}(\bar{T}+\bar{T}^{\frac{1+\gamma-\vartheta}{2}})R\norm{u-v}_{C_{\bar{T}}\mcH^{\gamma}}\\
				\norm{\Psi(u)-\Psi(v)}_{L_{\bar{T}}^{2}\mcH^{\gamma'+1}}&\lesssim \abs{\chem}(\bar{T}+\bar{T}^{\gamma-\gamma'})R\norm{u-v}_{L_{\bar{T}}^{2}\mcH^{\gamma'+1}}+\abs{\chem}(\bar{T}+\bar{T}^{\gamma-\gamma'})R\norm{u-v}_{C_{\bar{T}}\mcH^{\gamma}}.
			\end{split}
		\end{equation}
		Choosing $\bar{T}$ sufficiently small depending on $R$, $\gamma$, $\vartheta$, $\gamma'$ and $\chem$, we obtain a contraction. Hence, by Banach's fixed-point theorem there exists a unique $\rho\in\mfB_{R,\bar{T}}$ that solves~\eqref{eq:rKS_intro_mild}. 
		
		Next we show that this fixed point is unique in $C_{\bar{T}}\mcH^{\gamma}(\mbT^{2})\cap L_{\bar{T}}^{2}\mcH^{\gamma'+1}(\mbT^{2})$. Let $\rho_{1}$ and $\rho_{2}$ be two putative solutions to~\eqref{eq:rKS_intro_mild}. It follows by~\eqref{eq:bound_a_priori_regular_bilinearity} that for $\bar{S}\leq\bar{T}$,
		\begin{equation*}
			\norm{\rho_{1}-\rho_{2}}_{C_{\bar{S}}\mcH^{\gamma}}\lesssim \abs{\chem}(\bar{S}+\bar{S}^{\frac{1+\gamma-\vartheta}{2}})(\norm{\rho_{1}}_{C_{\bar{T}}\mcH^{\gamma}}+\norm{\rho_{2}}_{C_{\bar{T}}\mcH^{\gamma}})\norm{\rho_{1}-\rho_{2}}_{C_{\bar{S}}\mcH^{\gamma}},
		\end{equation*}
		which implies for $\bar{S}$ sufficiently small that $\rho_{1}=\rho_{2}$ in $C_{\bar{S}}\mcH^{\gamma}(\mbT^{2})$. An iteration argument then yields $\rho_{1}=\rho_{2}$ in $C_{\bar{T}}\mcH^{\gamma}(\mbT^{2})$. Similarly,
		\begin{equation*}
			\begin{split}
				\norm{\rho_{1}-\rho_{2}}_{L_{\bar{S}}^{2}\mcH^{\gamma'+1}}&\lesssim\abs{\chem}(\bar{S}+\bar{S}^{\gamma-\gamma'})\norm{\rho_{1}-\rho_{2}}_{C_{\bar{T}}\mcH^{\gamma}}\norm{\rho_{1}}_{L_{\bar{T}}^{2}\mcH^{\gamma'+1}}+\abs{\chem}(\bar{S}+\bar{S}^{\gamma-\gamma'})\norm{\rho_{2}}_{C_{\bar{T}}\mcH^{\gamma}}\norm{\rho_{1}-\rho_{2}}_{L_{\bar{S}}^{2}\mcH^{\gamma'+1}}\\
				&=\abs{\chem}(\bar{S}+\bar{S}^{\gamma-\gamma'})\norm{\rho_{2}}_{C_{\bar{T}}\mcH^{\gamma}}\norm{\rho_{1}-\rho_{2}}_{L_{\bar{S}}^{2}\mcH^{\gamma'+1}},
			\end{split}
		\end{equation*}
		from which we obtain $\rho_{1}=\rho_{2}$ in $L_{\bar{T}}^{2}\mcH^{\gamma'+1}(\mbT^{2})$.
		
		Let $\rho$ be the unique solution in $C_{\bar{T}}\mcH^{\gamma}(\mbT^{2})\cap L_{\bar{T}}^{2}\mcH^{\gamma'+1}(\mbT^{2})$, to show that $\rho\in L_{\bar{T}}^{2}\mcH^{\gamma+1}(\mbT^{2})$, we use an estimate analogous to~\eqref{eq:bound_a_priori_regular_II}, 
		\begin{equation*}
			\norm{\rho}_{L_{\bar{T}}^{2}\mcH^{\gamma+1}}\lesssim\norm{\rho_{0}}_{\mcH^{\gamma}}+\abs{\chem}(\bar{T}+1)\norm{\rho}_{C_{\bar{T}}\mcH^{\gamma}}\norm{\rho}_{L_{\bar{T}}^{2}\mcH^{\gamma'+1}}+\eps^{\sfrac{1}{2}}\norm{\ti^{\delta(\eps)}}_{L_{\bar{T}}^{2}\mcH^{\gamma+1}},
		\end{equation*}
		which yields $\rho\in L_{\bar{T}}^{2}\mcH^{\gamma+1}(\mbT^{2})$.
		
		We can iterate the construction to obtain a solution in $(\mcH^{\gamma}(\mbT^{2}))^{\sol}_{T}$ such that for every $S<T^{\cem}[\rho]$ it holds that $\rho\in L_{S}^{2}\mcH^{\gamma+1}(\mbT^{2})$.
	\end{details}
\end{proof}
\subsection{Law of Large Numbers}\label{subsec:LLN_regular}
In this subsection we establish a (weak) law of large numbers for solutions $\rho^{(\eps)}_{\delta(\eps)}$ to~\eqref{eq:rKS_intro_mild} in $(\mcH^{\gamma}(\mbT^{2}))^{\sol}_{T}$ under the relative scaling $\delta(\eps)\to0$ and $\eps^{\sfrac{1}{2}}\delta(\eps)^{-\gamma-2}\to0$ as $\eps\to0$ for some $\gamma\in(-1/2,0]$ (Theorem~\ref{thm:LLN_regular}). It is convenient to first prove a law of large numbers for $(\eps^{\sfrac{1}{2}}\ti^{\delta(\eps)})_{\eps>0}$ (Lemma~\ref{lem:LLN_regular_lolli}), which we can then transfer to $(\rho^{(\eps)}_{\delta(\eps)})_{\eps>0}$ by the continuous mapping theorem (Theorem~\ref{thm:LLN_regular}).
\begin{lemma}\label{lem:LLN_regular_lolli}
	Let $\rho_{0}\in C(\mbT^{2})$ be such that $\rho_{0}>0$ and $\mean{\rho_{0}}=1$, $\rdet$ be the weak solution to~\eqref{eq:Determ_KS} with initial data $\rho_{0}$ and chemotactic sensitivity $\chem\in\mbR$, and $\Trdet$ be its maximal time of existence (see Lemma~\ref{lem:Determ_KS_continuity}). Assume $\delta(\eps)\to0$ and $\eps^{\sfrac{1}{2}}\delta(\eps)^{-\gamma-2}\to0$ as $\eps\to0$ for some $\gamma\in[-1,0]$. Then for all $T<\Trdet$, it follows that $\eps^{\sfrac{1}{2}}\ti^{\delta(\eps)}\to0$ in $C_{T}\mcH^{\gamma}(\mbT^{2})\cap L_{T}^{2}\mcH^{\gamma+1}(\mbT^{2})$ in probability as $\eps\to0$.
\end{lemma}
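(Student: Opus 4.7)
The plan is to reduce the claim to a second-moment bound on $\ti^{\delta}$ and then apply Chebyshev's inequality. Since $\ti^{\delta}=\vdiv\mcI[\srdet\boldsymbol{\xi}^{\delta}]$ is a linear functional of the noise with deterministic coefficient $\srdet$, it is a Gaussian process whose covariance is computable. Concretely, my goal is to show that there exists a constant $C=C(T,\chem,\rho_{0})>0$ (possibly containing a mild logarithmic factor in $\delta^{-1}$) such that
\begin{equation*}
	\mbE\bigl[\norm{\ti^{\delta}}_{C_{T}\mcH^{\gamma}\cap L_{T}^{2}\mcH^{\gamma+1}}^{2}\bigr]\leq C\bigl(1+\delta^{-2(\gamma+2)}\bigr).
\end{equation*}
This would be furnished by (or is essentially the content of) Proposition~\ref{prop:lolli_regular_srdet} from Appendix~\ref{app:lolli_regular}. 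Combining this bound with the assumption $\eps^{1/2}\delta(\eps)^{-\gamma-2}\to0$ and Markov's inequality immediately gives, for each $\lambda>0$,
\begin{equation*}
	\mbP\bigl(\norm{\eps^{1/2}\ti^{\delta(\eps)}}_{C_{T}\mcH^{\gamma}\cap L_{T}^{2}\mcH^{\gamma+1}}>\lambda\bigr)\leq\lambda^{-2}\eps\cdot C\bigl(1+\delta(\eps)^{-2(\gamma+2)}\bigr)\to0.
\end{equation*}

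For the moment bound itself, I would work in Fourier variables and use It\^o isometry: writing $\ti^{\delta}$ as a Wiener integral, one computes
\begin{equation*}
	\mbE\bigl[|\widehat{\ti^{\delta}}(t,\om)|^{2}\bigr]=4\uppi^{2}|\om|^{2}\int_{0}^{t}\euler^{-2(t-s)|2\uppi\om|^{2}}\sum_{\om'\in\mbZ^{2}}|\widehat{\srdet(s)}(\om-\om')|^{2}|\varphi(\delta\om')|^{2}\dd s.
\end{equation*}
Summing over $\om$ with the weight $(1+|2\uppi\om|^{2})^{\gamma+1}$, integrating in time against $|\om|^{-2}$ (from the heat kernel), and exploiting that $\supp\varphi\subset B(0,1)$ confines $\om'$ to the ball of radius $1/\delta$, one obtains, using $\norm{\srdet}_{C_{T}L^{\infty}}<\infty$ from Lemma~\ref{lem:regularity_srdet},
\begin{equation*}
	\mbE\bigl[\norm{\ti^{\delta}}_{L_{T}^{2}\mcH^{\gamma+1}}^{2}\bigr]\lesssim T\sum_{|\om|\lesssim1/\delta}(1+|\om|^{2})^{\gamma+1}\lesssim\delta^{-2(\gamma+2)},
\end{equation*}
which captures the sharp dependence on $\delta$.

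The main obstacle is upgrading this pointwise-in-time variance estimate to a uniform-in-time control $\mbE[\sup_{t\in[0,T]}\norm{\ti^{\delta}(t)}_{\mcH^{\gamma}}^{2}]$, since the supremum and the expectation do not commute. My intended route is to exploit Gaussianity: by hypercontractivity, all $L^{p}$-moments are comparable to the $L^{2}$-moment, so a standard Kolmogorov--Chentsov argument applied to $\ti^{\delta}\in C_{T}^{\kappa}\mcH^{\gamma-2\kappa}$ (for some small $\kappa\in(0,1/2)$) upgrades the pointwise variance to a $C_{T}\mcH^{\gamma}$ bound of the same order, possibly at the cost of a logarithmic factor $\log(\delta^{-1})$. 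Such a factor is harmless under the relative scaling $\eps^{1/2}\delta(\eps)^{-\gamma-2}\to0$, so the conclusion follows.
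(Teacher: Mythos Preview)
Your proposal is correct and matches the paper exactly: invoke Proposition~\ref{prop:lolli_regular_srdet} for the moment bound $\mbE[\norm{\ti^{\delta}}_{C_{T}\mcH^{\gamma}\cap L_{T}^{2}\mcH^{\gamma+1}}^{2}]^{1/2}\lesssim(1+\delta^{-\gamma-2})\norm{\srdet}_{C_{T}L^{2}\cap L_{T}^{2}\mcH^{1}}$, then apply Markov's inequality together with the relative scaling $\eps^{1/2}\delta(\eps)^{-\gamma-2}\to0$. Regarding your supplementary sketch of how that proposition is proved: the paper obtains the sharp uniform-in-time bound in $C_{T}\mcH^{\gamma}$ without any regularity loss or logarithmic correction via an It\^o-formula/BDG argument applied to the semimartingale $t\mapsto\norm{\ti^{\delta}_{t}}_{\mcH^{\gamma}}^{2}$ (Lemma~\ref{lem:lolli_regular_unif}), rather than Kolmogorov--Chentsov, so your worry about a $\log(\delta^{-1})$ factor does not materialise.
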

\begin{proof}
	An application of Proposition~\ref{prop:lolli_regular_srdet} combined with the relative scaling $\eps^{\sfrac{1}{2}}\delta(\eps)^{\gamma-2}\to0$ yields
	\begin{equation*}
		\lim_{\eps\to0}\eps^{\sfrac{1}{2}}\mbE\Bigl[\norm{\ti^{\delta}}_{C_{T}\mcH^{\gamma}\cap L_{T}^{2}\mcH^{\gamma+1}}^{2}\Bigr]^{1/2}\lesssim\norm{\srdet}_{C_{T}L^{2}\cap L_{T}^{2}\mcH^{1}}\lim_{\eps\to0}\eps^{\sfrac{1}{2}}(1+\delta(\eps)^{-\gamma-2})=0,
	\end{equation*}
	which implies convergence in probability by Markov's inequality.
\end{proof}
The following theorem demonstrates an analogue of the law of large numbers for $(\rho^{(\eps)}_{\delta(\eps)})_{\eps>0}$ in $(\mcH^{\gamma}(\mbT^{2}))^{\sol}_{T}$. Note that in contrast to Lemma~\ref{lem:LLN_regular_lolli} we have to restrict the range of $\gamma$ to $\gamma\in(-1/2,0]$ so that we have enough regularity to make sense of
the non-linearity in~\eqref{eq:rKS_intro_mild} (see also Lemma~\ref{lem:rKS_well_posedness_regular}).
\begin{theorem}\label{thm:LLN_regular}
	Let $\rho_{0}\in C(\mbT^{2})$ be such that $\rho_{0}>0$ and $\mean{\rho_{0}}=1$, $\rdet$ be the weak solution to~\eqref{eq:Determ_KS} with initial data $\rho_{0}$ and chemotactic sensitivity $\chem\in\mbR$, and $\Trdet$ be its maximal time of existence (see Lemma~\ref{lem:Determ_KS_continuity}). Assume $\delta(\eps)\to0$ and $\eps^{\sfrac{1}{2}}\delta(\eps)^{-\gamma-2}\to0$ as $\eps\to0$ for some $\gamma\in(-1/2,0]$. Then for all $T<\Trdet$, it follows that $\rho^{(\eps)}_{\delta(\eps)}\to\rdet$ in $(\mcH^{\gamma}(\mbT^{2}))^{\sol}_{T}$ in probability as $\eps\to0$.
\end{theorem}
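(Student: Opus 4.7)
The plan is to pass to the mild formulation~\eqref{eq:rKS_intro_mild}, namely $\rho = P\rho_{0} - \chem\vdiv\mcI[\rho\nabla\Phi_{\rho}] - \eps^{1/2}\ti^{\delta(\eps)}$, and view $\rho^{(\eps)}_{\delta(\eps)}$ as the image of the noise $\eps^{1/2}\ti^{\delta(\eps)}$ under the solution map constructed in Lemma~\ref{lem:rKS_well_posedness_regular}. Since $\rdet$ is the image of the zero noise, the argument reduces to two ingredients: convergence of the noise term to zero in the ambient Banach space $C_{T}\mcH^{\gamma}(\mbT^{2}) \cap L_{T}^{2}\mcH^{\gamma+1}(\mbT^{2})$, and continuity of the solution map at the zero input in the topology of $(\mcH^{\gamma}(\mbT^{2}))^{\sol}_{T}$.

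For the first ingredient, I would invoke Lemma~\ref{lem:LLN_regular_lolli}, which under the relative scaling $\eps^{1/2}\delta(\eps)^{-\gamma-2} \to 0$ yields $\eps^{1/2}\ti^{\delta(\eps)} \to 0$ in $C_{T}\mcH^{\gamma}(\mbT^{2}) \cap L_{T}^{2}\mcH^{\gamma+1}(\mbT^{2})$ in probability. This is essentially an application of the second-moment estimates for $\ti^{\delta}$ from Proposition~\ref{prop:lolli_regular_srdet} combined with Markov's inequality. For the second ingredient, I would apply the continuous mapping theorem, using that Lemma~\ref{lem:rKS_well_posedness_regular} delivers almost sure local Lipschitz continuity of the solution map with respect to the noise entering additively. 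Since $T < \Trdet$ ensures $\rdet \in C_{T}\mcH^{\gamma}(\mbT^{2})$, so in particular $\|\rdet\|_{C_{T}\mcH^{\gamma}} < \infty$, the deterministic limit never reaches the cemetery state, which makes the comparison through $D^{\mcH^{\gamma}}_{T}$ tractable.

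The main obstacle I anticipate is the bookkeeping between the genuine Banach-space convergence of the noise and the convergence in $(\mcH^{\gamma}(\mbT^{2}))^{\sol}_{T}$, whose metric~\eqref{eq:FsolT_metric} involves stopping at the level $L$ that either trajectory exceeds. Concretely, given $L > \|\rdet\|_{C_{T}\mcH^{\gamma}}$, I would combine the local Lipschitz bound with a Gronwall-type iteration (mimicking the contraction estimates derived in the proof of Lemma~\ref{lem:rKS_well_posedness_regular}) to argue that on the event $\{\eps^{1/2}\|\ti^{\delta(\eps)}\|_{C_{T}\mcH^{\gamma}\cap L_{T}^{2}\mcH^{\gamma+1}} \leq \eta\}$ for $\eta$ small enough depending on $L$, the solution $\rho^{(\eps)}_{\delta(\eps)}$ does not reach the cemetery before time $T$ and satisfies $\|\rho^{(\eps)}_{\delta(\eps)} - \rdet\|_{C_{T}\mcH^{\gamma}} \lesssim_{L} \eta$. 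Combined with Lemma~\ref{lem:LLN_regular_lolli} the probability of this event tends to one as $\eps \to 0$, which delivers convergence in $D^{\mcH^{\gamma}}_{T}$-probability. Routine restart and uniqueness arguments from Lemma~\ref{lem:rKS_well_posedness_regular} then patch this local comparison into the required convergence on the whole interval $[0,T]$.
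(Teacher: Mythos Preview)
Your proposal is correct and follows the same approach as the paper: invoke Lemma~\ref{lem:LLN_regular_lolli} for convergence of $\eps^{1/2}\ti^{\delta(\eps)}\to 0$ in probability, then apply the continuous mapping theorem via the local Lipschitz continuity of the solution map from Lemma~\ref{lem:rKS_well_posedness_regular}. The paper's proof is a two-sentence version of exactly this, leaving the bookkeeping you spell out in your third paragraph implicit in the phrase ``local Lipschitz continuity'' together with the metric structure of $(\mcH^{\gamma}(\mbT^{2}))^{\sol}_{T}$.
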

\begin{proof}
	It follows by Lemma~\ref{lem:LLN_regular_lolli} and the relative scaling $\eps^{\sfrac{1}{2}}\delta(\eps)^{-\gamma-2}\to0$ that $\eps^{\sfrac{1}{2}}\ti^{\delta(\eps)}\to0$ in $C_{T}\mcH^{\gamma}(\mbT^{2})\cap L_{T}^{2}\mcH^{\gamma+1}(\mbT^{2})$ in probability as $\eps\to0$. Using the local Lipschitz continuity of $\rho^{(\eps)}_{\delta(\eps)}$ in $\eps^{\sfrac{1}{2}}\ti^{\delta(\eps)}$ (Lemma~\ref{lem:rKS_well_posedness_regular}), it follows by the continuous mapping theorem that $\rho^{(\eps)}_{\delta(\eps)}\to\rdet$ in $(\mcH^{\gamma}(\mbT^{2}))^{\sol}_{T}$ in probability as $\eps\to0$.
\end{proof}
\subsection{Large Deviation Principle}\label{subsec:LDP_regular}
In this subsection we we apply the weak-convergence approach to large deviations~\cite{budhiraja_dupuis_maroulas_08} to establish a large deviation principle for solutions $\rho^{(\eps)}_{\delta(\eps)}$ to~\eqref{eq:rKS_intro_mild} in $(\mcH^{\gamma}(\mbT^{2}))^{\sol}_{T}$ under the relative scaling $\delta(\eps)\to0$ and $\eps^{\sfrac{1}{2}}\delta(\eps)^{-\gamma-2}\to0$ as $\eps\to0$ for some $\gamma\in(-1/2,0)$ (Theorem~\ref{thm:LDP_regular}). As in Subsection~\ref{subsec:LLN_regular}, we first consider the noise $(\eps^{\sfrac{1}{2}}\ti^{\delta(\eps)})_{\eps>0}$ driving $(\rho^{(\eps)}_{\delta(\eps)})_{\eps>0}$ (cf.\ Lemma~\ref{lem:rKS_well_posedness_regular}) for which we establish a large deviation principle (Theorem~\ref{thm:LDP_regular_lolli}) that we can then transfer to $(\rho^{(\eps)}_{\delta(\eps)})_{\eps>0}$ via the contraction principle (Theorem~\ref{thm:LDP_regular}).
\begin{theorem}\label{thm:LDP_regular_lolli}
	Let $\rho_{0}\in C(\mbT^{2})$ be such that $\rho_{0}>0$ and $\mean{\rho_{0}}=1$, $\rdet$ be the weak solution to~\eqref{eq:Determ_KS} with initial data $\rho_{0}$ and chemotactic sensitivity $\chem\in\mbR$, and $\Trdet$ be its maximal time of existence (see Lemma~\ref{lem:Determ_KS_continuity}). Assume $\delta(\eps)\to0$ and $\eps^{\sfrac{1}{2}}\delta(\eps)^{-\gamma-2}\to0$ as $\eps\to0$ for some $\gamma\in[-1,0)$. Then for all $T<\Trdet$, it follows that the sequence $(\eps^{\sfrac{1}{2}}\ti^{\delta(\eps)})_{\eps>0}$ satisfies a large deviation principle in $C_{T}\mcH^{\gamma}(\mbT^{2})\cap L_{T}^{2}\mcH^{\gamma+1}(\mbT^{2})$ with speed $\eps$ and good rate function
	\begin{equation*}
		\rate_{\ti}(s)\defeq\inf\Bigl\{\frac{1}{2}\norm{h}^{2}_{L^{2}([0,T]\times\mbT^{2};\mbR^{2})}:\ti^{h}=s\Bigr\},\qquad\ti^{h}\defeq\vdiv\mcI[\srdet h].
	\end{equation*}
	For the existence and regularity of $\ti^{h}$, see also Lemmas~\ref{lem:lolli_h}~\&~\ref{lem:lolli_h_L2H1}.
\end{theorem}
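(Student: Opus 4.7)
The plan is to apply the weak-convergence approach to large deviations of Budhiraja--Dupuis--Maroulas~\cite{budhiraja_dupuis_maroulas_08}. The random variable $\eps^{1/2}\ti^{\delta(\eps)}$ is centred Gaussian and takes values in $E\defeq C_{T}\mcH^{\gamma}\cap L_{T}^{2}\mcH^{\gamma+1}$; it is the image of the white noise $\boldsymbol{\xi}$ under the continuous linear map $G^{\delta}\from L^{2}([0,T]\times\mbT^{2};\mbR^{2})\to E$ given by $G^{\delta}h\defeq\vdiv\mcI[\srdet\,\psi_{\delta}\ast h]$. By the Bou\'e--Dupuis variational representation, establishing the LDP with speed $\eps$ and good rate function $\rate_{\ti}$ reduces to verifying two sufficient conditions: (a) for every $M<\infty$ and every family of adapted $L^{2}([0,T]\times\mbT^{2};\mbR^{2})$-valued processes $(h_{\eps})_{\eps>0}$ with $\tfrac{1}{2}\norm{h_{\eps}}_{L^{2}}^{2}\leq M$ almost surely, converging in law (in the weak $L^{2}$-topology on the ball of radius $\sqrt{2M}$) to some $h$, one has $G^{\delta(\eps)}(h_{\eps})+\eps^{1/2}\ti^{\delta(\eps)}\to\ti^{h}$ in law in $E$; and (b) the skeleton map $h\mapsto\ti^{h}=\vdiv\mcI[\srdet h]$ is continuous from that weak-$L^{2}$ ball to $E$.

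For condition (b), Schauder estimates (Lemma~\ref{lem:Schauder_Sobolev}) combined with the boundedness of $\srdet$ on $[0,T]\times\mbT^{2}$ (Lemma~\ref{lem:regularity_srdet}) yield continuity of $\vdiv\mcI[\srdet\,\cdot\,]\from L^{2}_{T}L^{2}\to L^{2}_{T}\mcH^{1}\cap W^{1,2}_{T}\mcH^{-1}$. For $\gamma<0$ the embedding $L^{2}_{T}\mcH^{1}\cap W^{1,2}_{T}\mcH^{-1}\embed E$ is compact by an Aubin--Lions argument, so the skeleton map is compact from strong $L^{2}$ to $E$ and in particular sends bounded weakly convergent sequences to strongly convergent ones. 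For condition (a), decompose
\begin{equation*}
G^{\delta(\eps)}(h_{\eps})-\ti^{h}=\vdiv\mcI\bigl[\srdet\,\psi_{\delta(\eps)}\ast(h_{\eps}-h)\bigr]+\vdiv\mcI\bigl[\srdet\,(\psi_{\delta(\eps)}\ast h-h)\bigr].
\end{equation*}
The second summand tends to $0$ in $E$ strongly because $\psi_{\delta(\eps)}\ast h\to h$ in $L^{2}_{T}L^{2}$ and the skeleton map is strongly continuous. For the first summand, the Fourier representation $\widehat{\psi_{\delta}\ast g}(\om)=\varphi(\delta\om)\hat g(\om)$ shows $\psi_{\delta}\ast\cdot$ is a contraction on $L^{2}$, so $\psi_{\delta(\eps)}\ast(h_{\eps}-h)\to 0$ weakly in $L^{2}_{T}L^{2}$; the compactness from (b) then upgrades this to strong convergence in $E$. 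Finally, the noise contribution $\eps^{1/2}\ti^{\delta(\eps)}\to 0$ in probability in $E$ by Lemma~\ref{lem:LLN_regular_lolli}, using precisely the relative scaling $\eps^{1/2}\delta(\eps)^{-\gamma-2}\to 0$. Combining these three contributions yields (a).

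Once (a) and (b) are verified, the abstract theorem of~\cite{budhiraja_dupuis_maroulas_08} delivers the LDP in $E$ with speed $\eps$ and the stated good rate function $\rate_{\ti}$. The main subtlety is transferring weak $L^{2}$ convergence of the Cameron--Martin perturbations $h_{\eps}$ through the $\eps$-dependent mollification $\psi_{\delta(\eps)}$: boundedness of the mollifier on $L^{2}$ makes the weak-to-weak step automatic, so the genuine work is the Aubin--Lions-type compactness of $\vdiv\mcI[\srdet\,\cdot\,]$ into $E$, which requires $\gamma<0$ and explains the restriction $\gamma\in[-1,0)$ in the hypothesis. This compactness, together with Lemma~\ref{lem:LLN_regular_lolli} providing the relative-scaling-dependent decay of the raw noise, are the two ingredients that together produce the desired LDP.
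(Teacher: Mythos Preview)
Your proof is correct and follows essentially the same route as the paper's: both apply the Budhiraja--Dupuis--Maroulas framework, establish weak-to-strong continuity of the skeleton map $h\mapsto\ti^{h}$ via the Aubin--Lions compact embedding $L^{2}_{T}\mcH^{1}\cap W^{1,2}_{T}\mcH^{-1}\embed C_{T}\mcH^{\gamma}\cap L^{2}_{T}\mcH^{\gamma+1}$ (which is where the restriction $\gamma<0$ enters), and use the relative scaling to kill the raw noise term. The only spot to tighten is the claim that $\psi_{\delta(\eps)}\ast(h_{\eps}-h)\rightharpoonup 0$: uniform boundedness of the mollifiers alone does not suffice when the operator varies with $\eps$, but self-adjointness of $\psi_{\delta}\ast\cdot$ together with $\psi_{\delta(\eps)}\ast\phi\to\phi$ strongly for each test function $\phi$ does the job (this is the argument the paper spells out via~\cite[Prop.~3.5]{brezis_11}).
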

\begin{proof}
	It follows from~\eqref{eq:lolli_real} that $\ti^{\delta}$ is a functional of finitely many Brownian motions with the number of Brownian drivers increasing to $\infty$ as $\delta\to0$. Hence to deduce a large deviation principle we apply~\cite[Thm.~6]{budhiraja_dupuis_maroulas_08}, which is based on an infinite-dimensional version of the Bou\'{e}--Dupuis formula.
	
	First let us introduce some notation. We define for every $M<\infty$ the closed ball
	\begin{equation*}
	 	S^{M}\defeq\{h\in L^{2}([0,T]\times\mbT^{2};\mbR^{2}):\norm{h}_{L^{2}([0,T]\times\mbT^{2};\mbR^{2})}\leq M\}
	\end{equation*}
	equipped with the weak topology, which is metrizable (resp.\ compact) by the separability (resp.\ reflexivity) of $L^{2}([0,T]\times\mbT^{2};\mbR^{2})$ (see~\cite[Thm.~3.29~\&~Thm.~3.18]{brezis_11}). We further denote
	\begin{equation*}
		L^{2}_{\pred}\defeq\Bigl\{\phi\from\Omega\times[0,T]\to L^{2}(\mbT^{2};\mbR^{2})~\text{predictable}:\norm{\phi}_{L^{2}([0,T]\times\mbT^{2};\mbR^{2})}<\infty~\text{a.s.}\Bigr\}
	\end{equation*}
	and
	\begin{equation*}
		L^{2,M}_{\pred}\defeq\{\phi\in L^{2}_{\pred}:\phi\in S^{M}~\text{a.s.}\}.
	\end{equation*}
	For every sequence $(u^{(\eps)})_{\eps>0}$ such that $u^{(\eps)}\in L^{2,M}_{\pred}$ for each $\eps>0$ and $u^{(\eps)}\rightharpoonup u$ in $S^{M}$ in law as $\eps\to0$ (i.e.\ as $S^{M}$-valued random variables), define $u^{(\eps)}_{\delta}\defeq u^{(\eps)}\ast\psi_{\delta}$, where $\psi_{\delta}$ is as in~\eqref{eq:def_mollifiers}.
	
	To apply~\cite[Thm.~6]{budhiraja_dupuis_maroulas_08} in our context, we need to show that for every $M>0$, $(u^{(\eps)})_{\eps>0}$ and $\gamma\in[-1,0)$:
	\begin{enumerate}
		\item\label{it:var_LDP_compact}
		 The set $\Gamma_{M}\defeq\{\ti^{h}:h\in S^{M}\}\subset C_{T}\mcH^{\gamma}(\mbT^{2})\cap L_{T}^{2}\mcH^{\gamma+1}(\mbT^{2})$ is compact in the strong topology induced by the norm $\norm{\place}_{C_{T}\mcH^{\gamma}\cap L_{T}^{2}\mcH^{\gamma+1}}$.
		\item\label{it:var_LDP_conv}
	 	It holds that $\eps^{\sfrac{1}{2}}\ti^{\delta(\eps)}+\ti^{u^{(\eps)}_{\delta(\eps)}}\to\ti^{u}$ in $C_{T}\mcH^{\gamma}(\mbT^{2})\cap L_{T}^{2}\mcH^{\gamma+1}(\mbT^{2})$ in law as $\eps\to0$.
	\end{enumerate}
	Let us first show~\ref{it:var_LDP_compact}. For the existence and regularity of $\ti^{h}=\vdiv\mcI[\srdet h]$, see Appendix~\ref{app:enh_driven_by_h}. The weak derivative of $\ti^{h}$ is given by $\partial_{t}\ti^{h}\defeq\Delta\ti^{h}+\vdiv(\srdet h)$, hence Lemma~\ref{lem:lolli_h_L2H1} implies for every $h\in S^{M}$ the bound
	\begin{equation}\label{eq:lolli_h_Aubin_Lions_bound}
		\max\{\norm{\ti^{h}}_{L_{T}^{2}\mcH^{1}},\norm{\partial_{t}\ti^{h}}_{L_{T}^{2}\mcH^{-1}}\}\lesssim\norm{\srdet}_{C([0,T]\times\mbT^{2})}\norm{h}_{L^{2}([0,T]\times\mbT^{2};\mbR^{2})}\leq\norm{\srdet}_{C([0,T]\times\mbT^{2})}M.
	\end{equation}
	For every $R>0$ define the set 
	\begin{equation*}
		A_{R}\defeq\Bigl\{u\in L^{2}_{T}\mcH^{1}(\mbT^{2})\cap W^{1,2}_{T}\mcH^{-1}(\mbT^{2}):\norm{u}_{L^{2}_{T}\mcH^{1}}+\norm{\partial_{t}u}_{L^{2}_{T}\mcH^{-1}}\leq R\Bigr\},
	\end{equation*}
	it then follows by~\eqref{eq:lolli_h_Aubin_Lions_bound} that $\Gamma_{M}\subset A_{R}$ for $R$ sufficiently large depending on $\norm{\srdet}_{C([0,T]\times\mbT^{2})}M$. An application of~\cite[Sec.~5.9,~Thm.~3]{evans_98_PDE} combined with the Aubin--Lions lemma yields that the set $A_{R}$ is totally bounded in $C_{T}\mcH^{\gamma}(\mbT^{2})\cap L_{T}^{2}\mcH^{\gamma+1}(\mbT^{2})$, where we used the compact embeddings $L^{2}(\mbT^{2})\embed\mcH^{\gamma}(\mbT^{2})$ and $\mcH^{1}(\mbT^{2})\embed\mcH^{\gamma+1}(\mbT^{2})$ (see Lemma~\ref{lem:Besov_compact}). Using that every subset of a totally bounded set is again totally bounded, it follows that $\Gamma_{M}$ is itself totally bounded in $C_{T}\mcH^{\gamma}(\mbT^{2})\cap L_{T}^{2}\mcH^{\gamma+1}(\mbT^{2})$. Hence to show that $\Gamma_{M}$ is compact it suffices to show that it is closed.
	\begin{details}
		A subset of  a totally bounded set is totally bounded. Further, a complete, totally bounded space is compact and closed subsets of complete metric spaces are complete.
	\end{details}
	
	Let $(\ti^{h_{n}})_{n\in\mbN}$ be a sequence such that $\ti^{h_{n}}\in\Gamma_{M}$ for every $n\in\mbN$ and $\ti^{h_{n}}\to x\in C_{T}\mcH^{\gamma}(\mbT^{2})\cap L_{T}^{2}\mcH^{\gamma+1}(\mbT^{2})$ as $n\to\infty$. Using the compactness of $S^{M}$, we can extract a subsequence of $(h_{n})_{n\in\mbN}$ (without re-labelling) such that $h_{n}\rightharpoonup h\in S^{M}$. The linear map $h\mapsto\ti^{h}\in C_{T}\mcH^{\gamma}(\mbT^{2})\cap L_{T}^{2}\mcH^{\gamma+1}(\mbT^{2})$ is weak-strong continuous by Lemma~\ref{lem:lolli_h_L2H1} and the compactness of the embedding $L_{T}^{2}\mcH^{1}(\mbT^{2})\cap W^{1,2}_{T}\mcH^{-1}(\mbT^{2})\subset C_{T}\mcH^{\gamma}(\mbT^{2})\cap L_{T}^{2}\mcH^{\gamma+1}(\mbT^{2})$.
	\begin{details}
		The linear map $h\mapsto\ti^{h}\in L_{T}^{2}\mcH^{1}(\mbT^{2})\cap W^{1,2}_{T}\mcH^{-1}(\mbT^{2})$ is strong-strong continuous by Lemma~\ref{lem:lolli_h_L2H1}, hence weak-weak continuous. Indeed, let $T\from X\to Y$ be linear and strong-strong continuous. For each $f\in Y'$, it follows that $f\circ T\from X\to\mbR$ is continuous and linear, hence $f\circ T\in X'$. Let $(x_{n})_{n\in\mbN}$ be a sequence such that $x_{n}\in X$ for every $n\in\mbN$ and $x_{n}\rightharpoonup x\in X$ as $n\to\infty$, then by the definition of weak convergence, $(f\circ T)(x_{n})\to (f\circ T)(x)$. This implies the weak-weak continuity of $T$.
		
		Furthermore, the embedding $L_{T}^{2}\mcH^{1}(\mbT^{2})\cap W^{1,2}_{T}\mcH^{-1}(\mbT^{2})\subset C_{T}\mcH^{\gamma}(\mbT^{2})\cap L_{T}^{2}\mcH^{\gamma+1}(\mbT^{2})$ is compact by~\cite[Sec.~5.9,~Thm.~3]{evans_98_PDE} and the Aubin--Lions lemma; and every compact operator is completely continuous.
	\end{details}
	Therefore $x=\ti^{h}\in\Gamma_{M}$, which shows that $\Gamma_{M}$ is closed hence compact in $C_{T}\mcH^{\gamma}(\mbT^{2})\cap L_{T}^{2}\mcH^{\gamma+1}(\mbT^{2})$, proving~\ref{it:var_LDP_compact}.
	
	Next we show~\ref{it:var_LDP_conv}. Let $M<\infty$ and $(u^{(\eps)})_{\eps>0}$ be a sequence such that $u^{(\eps)}\in L^{2,M}_{\pred}$ for every $\eps>0$ and $u^{(\eps)}\rightharpoonup u$ in $S^{M}$ in law as $\eps\to0$. Recall that $u^{(\eps)}_{\delta(\eps)}=u^{(\eps)}\ast\psi_{\delta(\eps)}$. An application of the Skorokhod representation theorem~\cite[Chpt.~3,~Thm.~1.8]{ethier_kurtz_86} yields a probability space $(\tilde{\Omega},\tilde{\mcF},\tilde{\mbP})$ that supports a sequence of random variables $(\tilde{u}^{(\eps)})_{\eps>0}$ and $\tilde{u}$ such that $\text{Law}(\tilde{u}^{(\eps)})=\text{Law}(u^{(\eps)})$ for every $\eps>0$, $\text{Law}(\tilde{u})=\text{Law}(u)$ and $\tilde{u}^{(\eps)}\rightharpoonup\tilde{u}$ in $S^{M}$ almost surely as $\eps\to0$. It then follows by classical continuity properties of the weak dual pairing~\cite[Prop.~3.5]{brezis_11} that for every $\test\in L^{2}([0,T]\times\mbT^{2};\mbR^{2})$,
	\begin{equation*}
		\inner{\test}{\tilde{u}^{(\eps)}_{\delta(\eps)}}_{L^{2}([0,T]\times\mbT^{2};\mbR^{2})}\to\inner{\test}{\tilde{u}}_{L^{2}([0,T]\times\mbT^{2};\mbR^{2})}\quad\tilde{\mbP}\text{-almost surely},
	\end{equation*}
	which can be used to show that $\tilde{u}^{(\eps)}_{\delta(\eps)}\rightharpoonup\tilde{u}$ in $S^{M}$ in law and, therefore, $u^{(\eps)}_{\delta(\eps)}\rightharpoonup u$ in $S^{M}$ in law as $\eps\to0$.
	\begin{details}
		\paragraph{Convergence in law of $u^{(\eps)}_{\delta(\eps)}$.}
		An application of the Skorokhod representation theorem~\cite[Chpt.~3,~Thm.~1.8]{ethier_kurtz_86} yields a probability space $(\tilde{\Omega},\tilde{\mcF},\tilde{\mbP})$ that supports a sequence of random variables $(\tilde{u}^{(\eps)})_{\eps>0}$ and $\tilde{u}$ such that $\text{Law}(\tilde{u}^{(\eps)})=\text{Law}(u^{(\eps)})$ for every $\eps>0$, $\text{Law}(\tilde{u})=\text{Law}(u)$ and $\tilde{u}^{(\eps)}\rightharpoonup\tilde{u}$ in $S^{M}$ almost surely as $\eps\to0$.
		
		Let $\test\in L^{2}([0,T]\times\mbT^{2};\mbR^{2})$, it follows that
		\begin{equation*}
			\inner{\test}{\tilde{u}^{(\eps)}_{\delta(\eps)}}_{L^{2}([0,T]\times\mbT^{2};\mbR^{2})}=\inner{\test\ast\psi_{\delta(\eps)}}{\tilde{u}^{(\eps)}}_{L^{2}([0,T]\times\mbT^{2};\mbR^{2})}\to\inner{\test}{\tilde{u}}_{L^{2}([0,T]\times\mbT^{2};\mbR^{2})}\quad\text{almost surely}
		\end{equation*}
		where in the first equality we applied Fubini's theorem and in the second equality~\cite[Prop.~3.5]{brezis_11}, using that $\test\ast\psi_{\delta(\eps)}\to\test$ strongly in $L^{2}([0,T]\times\mbT^{2};\mbR^{2})$ and $\tilde{u}^{(\eps)}\rightharpoonup\tilde{u}$ in $S^{M}$ almost surely.
		
		To show that $\text{Law}(u^{(\eps)}_{\delta(\eps)})\to\text{Law}(u)$ in law as $\eps\to0$, it suffices to find a convergence determining set of functions on $S^{M}$ (cf.~\cite[Chpt.~3,~Sec.~4]{ethier_kurtz_86}) such that for each element $f$,
		\begin{equation*}
			\tilde{\mbE}[f(\tilde{u}^{(\eps)}_{\delta(\eps)})]\to\tilde{\mbE}[f(\tilde{u})].
		\end{equation*}
		Let $x\in L^{2}([0,T]\times\mbT^{2};\mbR^{2})$, the collection of sets
		\begin{equation*}
			\begin{split}
				V(\test_{1},\ldots,\test_{k};\vartheta)&=\Bigl\{y\in L^{2}([0,T]\times\mbT^{2};\mbR^{2}):\abs{\inner{y-x}{\phi_{i}}_{L^{2}([0,T]\times\mbT^{2};\mbR^{2})}}<\vartheta~\text{for all}~i=1,\ldots,k\Bigr\},\\
				&\multiquad[12]\text{with}\quad k\in\mbN,\quad\phi_{1},\ldots,\phi_{k}\in L^{2}([0,T]\times\mbT^{2};\mbR^{2}),\quad\vartheta>0,
			\end{split}
		\end{equation*}
		forms a neighbourhood basis of $x$ in the weak topology of $L^{2}([0,T]\times\mbT^{2};\mbR^{2})$ (see~\cite[Prop.~3.4]{brezis_11}).
		
		Let $B(x,r)$ be the open ball of radius $r>0$ in $S^{M}$ and denote $B(x,r)^{c}=S^{M}\setminus B(x,r)$. Using that $B(x,r)$ is a neighbourhood of $x$ in $S^{M}$, there exists a neighbourhood $N$ of $x$ in $L^{2}([0,T]\times\mbT^{2};\mbR^{2})$ such that $B(x,r)=N\cap S^{M}$. It follows that there exist some $\test_{1},\ldots,\test_{k}\in L^{2}([0,T]\times\mbT^{2};\mbR^{2})$ and $\vartheta>0$ such that $V(\test_{1},\ldots,\test_{k};\vartheta)\subseteq N$, which implies $S^{M}\setminus(V(\test_{1},\ldots,\test_{k};\vartheta)\cap S^{M})\supseteq B(x,r)^{c}$. Hence for every $y\in B(x,r)^{c}$, it follows that $y\notin V(\test_{1},\ldots,\test_{k};\vartheta)$ and in particular that there exists some $\test_{i}$ such that $\abs{\inner{y-x}{\test_{i}}_{L^{2}([0,T]\times\mbT^{2};\mbR^{2})}}\geq\vartheta$.
		
		In particular for every $x\in L^{2}([0,T]\times\mbT^{2};\mbR^{2})$ and $r>0$ we can find some $\test_{1},\ldots,\test_{k}\in L^{2}([0,T]\times\mbT^{2};\mbR^{2})$ such that
		\begin{equation*}
			\inf_{y\in B(x,r)^{c}}\max_{1\leq i\leq k}\abs{\inner{x-y}{\test_{i}}_{L^{2}([0,T]\times\mbT^{2};\mbR^{2})}}>0,
		\end{equation*}
		which proves that the set
		\begin{equation*}
			L=\{\inner{\test}{\place}_{L^{2}([0,T]\times\mbT^{2};\mbR^{2})}:\test\in L^{2}([0,T]\times\mbT^{2};\mbR^{2})\}
		\end{equation*}
		is strongly separating points on $S^{M}$ in the sense of~\cite[Chpt.~3,~Sec.~4]{ethier_kurtz_86}. Using that $S^{M}$ is complete and separable, it follows by~\cite[Chpt.~3,~Thm.~4.5]{ethier_kurtz_86} that the polynomials with variables in $L$ are convergence determining.
		
		Therefore to prove that $u^{(\eps)}_{\delta(\eps)}\rightharpoonup u$ in $S^{M}$ in law, it suffices to show that for every $k\in\mbN$ and $\test\in L^{2}([0,T]\times\mbT^{2};\mbR^{2})$,
		\begin{equation*}
			\tilde{\mbE}[\abs{\inner{\test}{\tilde{u}^{(\eps)}_{\delta(\eps)}}_{L^{2}([0,T]\times\mbT^{2};\mbR^{2})}}^{k}]\to\tilde{\mbE}[\abs{\inner{\test}{\tilde{u}}_{L^{2}([0,T]\times\mbT^{2};\mbR^{2})}}^{k}].
		\end{equation*}
		We obtain the claim by using the almost sure convergence of $\inner{\test}{\tilde{u}^{(\eps)}_{\delta(\eps)}}_{L^{2}([0,T]\times\mbT^{2};\mbR^{2})}\to\inner{\test}{\tilde{u}}_{L^{2}([0,T]\times\mbT^{2};\mbR^{2})}$ and the dominated convergence theorem with dominant
		\begin{equation*}
			\abs{\inner{\test}{\tilde{u}^{(\eps)}_{\delta(\eps)}}_{L^{2}([0,T]\times\mbT^{2};\mbR^{2})}}\leq\norm{\test\ast\psi_{\delta(\eps)}}_{L^{2}([0,T]\times\mbT^{2};\mbR^{2})}\norm{\tilde{u}^{(\eps)}}_{L^{2}([0,T]\times\mbT^{2};\mbR^{2})}\lesssim M^{1/2},
		\end{equation*}
		where we applied Parseval's theorem to deduce uniformly in $\delta>0$ and $t\in[0,T]$,
		\begin{equation*}
			\norm{\test_{t}\ast\psi_{\delta}}_{L^{2}(\mbT^{2};\mbR^{2})}^{2}=\sum_{\om\in\mbZ^{2}}\varphi(\delta\om)^{2}\abs{\hat{\test_{t}}(\om)}^{2}\lesssim\norm{\varphi}_{L^{\infty}(\mbT^{2})}^{2}\norm{\test_{t}}_{L^{2}(\mbT^{2};\mbR^{2})}^{2}.
		\end{equation*}
	\end{details}
	This combined with the weak-strong continuity of $S^{M}\ni h\to\ti^{h}\in\Gamma_{M}$ and the continuous mapping theorem yields $\ti^{u^{(\eps)}_{\delta(\eps)}}\to\ti^{u}$ in $C_{T}\mcH^{\gamma}(\mbT^{2})\cap L_{T}^{2}\mcH^{\gamma+1}(\mbT^{2})$ in law.
	
	Proposition~\ref{prop:lolli_regular_srdet} and the relative scaling $\eps^{\sfrac{1}{2}}\delta(\eps)^{-\gamma-2}\to0$ yield $\eps^{\sfrac{1}{2}}\ti^{\delta(\eps)}\to0\in C_{T}\mcH^{\gamma}(\mbT^{2})\cap L_{T}^{2}\mcH^{\gamma+1}(\mbT^{2})$ in $L^{p}(\mbP)$ as $\eps\to0$ for every $p\in[1,\infty]$, which implies convergence in probability by Markov's inequality. Therefore we can deduce by Slutsky's theorem that $\eps^{\sfrac{1}{2}}\ti^{\delta(\eps)}+\ti^{u^{(\eps)}_{\delta(\eps)}}\to\ti^{u}$ in $C_{T}\mcH^{\gamma}(\mbT^{2})\cap L_{T}^{2}\mcH^{\gamma+1}(\mbT^{2})$ in law, which shows~\ref{it:var_LDP_conv}. 
	\begin{details}
		\paragraph{Proof of Slutsky's theorem.}
		Let $S$ be an arbitrary metric space. Assume $X_{n}\to X$ in $S$ in law and $Y_{n}\to c$ in probability as $n\to\infty$, where $c\in S$ is deterministic. 
		
		It follows by the convergence of $X_{n}$ to $X$ that $\mbE[f(X_{n},c)]\to\mbE[f(X,c)]$ for every bounded and uniformly continuous function $f\from S^{2}\to\mbR$. By the Portmanteau theorem~\cite[Chpt.~3,~Thm.~3.1]{ethier_kurtz_86}, this implies that $(X_{n},c)$ converges to $(X,c)$ in law.
		
		Furthermore, $\abs{(X_{n},Y_{n})-(X_{n},c)}=\abs{Y_{n}-c}$ so that $\abs{(X_{n},Y_{n})-(X_{n},c)}\to0$ in probability. We can now apply~\cite[Chpt.~3, Cor.~3.3]{ethier_kurtz_86} to deduce $(X_{n},Y_{n})\to(X,c)$ in law. The continuous mapping theorem~\cite[p.~108]{ethier_kurtz_86} then implies $X_{n}+Y_{n}\to X+c$ in law.
	\end{details}
	
	Having shown~\ref{it:var_LDP_compact} and~\ref{it:var_LDP_conv}, we can apply~\cite[Thm.~6]{budhiraja_dupuis_maroulas_08} to deduce a large deviation principle for $(\eps^{\sfrac{1}{2}}\ti^{\delta(\eps)})_{\eps>0}$ in $C_{T}\mcH^{\gamma}(\mbT^{2})\cap L_{T}^{2}\mcH^{\gamma+1}(\mbT^{2})$ with good rate function $\rate_{\ti}$.
\end{proof}
We can now apply the contraction principle and Lemma~\ref{lem:rKS_well_posedness_regular} to deduce a large deviation principle for solutions to~\eqref{eq:rKS_intro_mild} in $(\mcH^{\gamma}(\mbT^{2}))^{\sol}_{T}$ under the relative scaling $\delta(\eps)\to0$ and $\eps^{\sfrac{1}{2}}\delta(\eps)^{-\gamma-2}\to0$ as $\eps\to0$ for some $\gamma\in(-1/2,0)$
\begin{theorem}\label{thm:LDP_regular}
	Let $\rho_{0}\in C(\mbT^{2})$ be such that $\rho_{0}>0$ and $\mean{\rho_{0}}=1$, $\rdet$ be the weak solution to~\eqref{eq:Determ_KS} with initial data $\rho_{0}$ and chemotactic sensitivity $\chem\in\mbR$, and $\Trdet$ be its maximal time of existence (see Lemma~\ref{lem:Determ_KS_continuity}). Assume $\delta(\eps)\to0$ and $\eps^{\sfrac{1}{2}}\delta(\eps)^{-\gamma-2}\to0$ as $\eps\to0$ for some $\gamma\in(-1/2,0)$. Then for all $T<\Trdet$, it follows that the sequence $(\rho^{(\eps)}_{\delta(\eps)})_{\eps>0}$ satisfies a large deviation principle in $(\mcH^{\gamma}(\mbT^{2}))^{\sol}_{T}$ with speed $\eps$ and good rate function
	\begin{equation*}
		\rate(\rho)\defeq\inf\Bigl\{\frac{1}{2}\norm{h}_{L^{2}([0,T]\times\mbT^{2};\mbR^{2})}^{2}:\rho=P\rho_{0}-\chem\vdiv\mcI[\rho\nabla\Phi_{\rho}]-\nabla\cdot\mcI[\srdet h]\Bigr\}.
	\end{equation*}
\end{theorem}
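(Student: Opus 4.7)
The plan is to combine the large deviation principle for the driving noise (Theorem~\ref{thm:LDP_regular_lolli}) with the local Lipschitz continuity of the solution map (Lemma~\ref{lem:rKS_well_posedness_regular}) and conclude via the contraction principle. Precisely, I would define the solution map
\begin{equation*}
	\Psi\from C_{T}\mcH^{\gamma}(\mbT^{2})\cap L_{T}^{2}\mcH^{\gamma+1}(\mbT^{2})\to(\mcH^{\gamma}(\mbT^{2}))^{\sol}_{T},\qquad s\mapsto\rho,
\end{equation*}
where $\rho$ is the unique solution to
\begin{equation*}
	\rho=P\rho_{0}-\chem\vdiv\mcI[\rho\nabla\Phi_{\rho}]-s
\end{equation*}
provided by Lemma~\ref{lem:rKS_well_posedness_regular} (applied with $s$ in place of the stochastic input $\eps^{1/2}\ti^{\delta(\eps)}$, since the well-posedness argument only uses that $s\in C_{T}\mcH^{\gamma}\cap L_{T}^{2}\mcH^{\gamma+1}$). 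Then $\rho^{(\eps)}_{\delta(\eps)}=\Psi(\eps^{1/2}\ti^{\delta(\eps)})$.

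The core step is to verify that $\Psi$ is continuous into the cemetery-state space $(\mcH^{\gamma}(\mbT^{2}))^{\sol}_{T}$ endowed with the metric $D_{T}^{\mcH^{\gamma}}$ introduced in~\eqref{eq:FsolT_metric}. Given a convergent sequence $s_{n}\to s$ in $C_{T}\mcH^{\gamma}\cap L_{T}^{2}\mcH^{\gamma+1}$, I would fix $L>0$ and define the truncated time horizon
\begin{equation*}
	T_{L,n}\defeq T\wedge L\wedge\inf\Bigl\{t\in[0,T]:\norm{\Psi(s_{n})_{t}}_{\mcH^{\gamma}}>L~\text{or}~\norm{\Psi(s)_{t}}_{\mcH^{\gamma}}>L\Bigr\}.
\end{equation*}
On $[0,T_{L,n}]$ both $\Psi(s_{n})$ and $\Psi(s)$ remain bounded in $\mcH^{\gamma}(\mbT^{2})$; the local Lipschitz continuity in the driving input established in the proof of Lemma~\ref{lem:rKS_well_posedness_regular} (via the bilinearity identity and the Schauder/product estimates) therefore yields
\begin{equation*}
	\norm{\Psi(s_{n})-\Psi(s)}_{C_{T_{L,n}}\mcH^{\gamma}}\lesssim_{L,T,\chem}\norm{s_{n}-s}_{C_{T}\mcH^{\gamma}\cap L_{T}^{2}\mcH^{\gamma+1}},
\end{equation*}
which tends to zero as $n\to\infty$. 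This gives $D_{T}^{\mcH^{\gamma}}(\Psi(s_{n}),\Psi(s))\to0$.

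Once continuity is secured, I would apply the contraction principle~\cite[Thm.~4.2.1]{dembo_zeitouni} (see also the variant handling cemetery-state extensions in~\cite[Lem.~1.2]{chandra_chevyrev_hairer_shen_22}) to transport the LDP of Theorem~\ref{thm:LDP_regular_lolli} through $\Psi$. This yields an LDP for $(\rho^{(\eps)}_{\delta(\eps)})_{\eps>0}$ in $(\mcH^{\gamma}(\mbT^{2}))^{\sol}_{T}$ with speed $\eps$ and good rate function
\begin{equation*}
	\rate(\rho)=\inf\{\rate_{\ti}(s):\Psi(s)=\rho\}.
\end{equation*}
Unfolding the definition of $\rate_{\ti}$ from Theorem~\ref{thm:LDP_regular_lolli}, each $s$ realising the infimum in $\rate_{\ti}$ has the form $s=\vdiv\mcI[\srdet h]$ for some $h\in L^{2}([0,T]\times\mbT^{2};\mbR^{2})$, and substituting into the skeleton equation $\rho=P\rho_{0}-\chem\vdiv\mcI[\rho\nabla\Phi_{\rho}]-s$ produces the claimed representation of $\rate$.

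I expect the main obstacle to be the continuity step into the cemetery-state space: although the local Lipschitz bound from Lemma~\ref{lem:rKS_well_posedness_regular} is tailored to controlled norm regimes, one must verify that the truncation $T_{L,n}$ is well-behaved uniformly in $n$ (for instance, that $\liminf_{n}T_{L,n}\geq T_{L,\infty}$) so that the local estimate aggregates into convergence in the metric $D_{T}^{\mcH^{\gamma}}$. This is essentially a soft argument exploiting the lower semicontinuity of the blow-up time in the driving data, but it is the only step where the cemetery-state topology genuinely plays a role.
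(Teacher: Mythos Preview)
Your proposal is correct and follows essentially the same route as the paper: invoke the LDP for $(\eps^{1/2}\ti^{\delta(\eps)})_{\eps>0}$ from Theorem~\ref{thm:LDP_regular_lolli}, use the local Lipschitz continuity of the solution map from Lemma~\ref{lem:rKS_well_posedness_regular}, and conclude via the contraction principle~\cite[Thm.~4.2.1]{dembo_zeitouni_10}. The paper's proof is terser and does not spell out the cemetery-state continuity argument you sketch, but your elaboration of that point is consistent with what is implicitly required.
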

\begin{proof}
	The solution map constructed in Lemma~\ref{lem:rKS_well_posedness_regular} is locally Lipschitz continuous in the noise $\eps^{\sfrac{1}{2}}\ti^{\delta(\eps)}\in C_{T}\mcH^{\gamma}(\mbT^{2})\cap L_{T}^{2}\mcH^{\gamma+1}(\mbT^{2})$, hence we can apply Lemma~\ref{thm:LDP_regular_lolli} and the contraction principle~\cite[Thm.~4.2.1]{dembo_zeitouni_10} to deduce that $(\rho^{(\eps)}_{\delta(\eps)})_{\eps>0}$ satisfies a large deviation principle in $(\mcH^{\gamma}(\mbT^{2}))^{\sol}_{T}$ with speed $\eps$ and good rate function $\rate$.
\end{proof}
\subsection{Controlling Negative Values}\label{subsec:negative_values}
The solution $\rho^{(\eps)}_{\delta(\eps)}$ to~\eqref{eq:rKS_intro_mild} is an additive-noise approximation to the Keller--Segel--Dean--Kawasaki dynamics~\eqref{eq:KSDK_intro_torus} which models the density fluctuations of the chemotactic particle system~\eqref{eq:IPS_KS_intro_torus}. However, while densities are assumed to be non-negative, $\rho^{(\eps)}_{\delta(\eps)}$ can produce negative values due to the additive nature of the noise. The goal of this subsection is to control the probability of the event that $\rho^{(\eps)}_{\delta(\eps)}$ produces negative values.

Let $\Trdet$ be the maximal time of existence for $\rdet$ (see Lemma~\ref{lem:Determ_KS_continuity}), $T<\Trdet$, $L>0$ and denote by
\begin{equation}\label{eq:exceedance_time}
	S_{L}\defeq T\wedge L\wedge\inf\Bigl\{t\in[0,T]:\norm{\rho^{(\eps)}_{\delta(\eps)}(t)}_{L^{2}}>L\Bigr\}
\end{equation}
the first time that the $L^{2}(\mbT^{2})$-norm of $\rho^{(\eps)}_{\delta(\eps)}$ exceeds the level $L$. Denote the negative part of $u\in L^{2}(\mbT^{2})$ by $u^{-}\defeq\min\{0,u\}$. Given a tolerance $\lambda>0$ we show that the event $\{\norm{(\rho^{(\eps)}_{\delta(\eps)})^{-}}_{C_{S_{L}}L^{2}}\geq\lambda\}$ has exponentially small probability in $\eps^{-1}(1+\delta(\eps)^{-2})^{-2}$ asymptotically as $\eps\to0$.
\begin{theorem}\label{thm:negative_values}
	Let $\rho_{0}\in C(\mbT^{2})$ be such that $\rho_{0}>0$ and $\mean{\rho_{0}}=1$, $\rdet$ be the weak solution to~\eqref{eq:Determ_KS} with initial data $\rho_{0}$ and chemotactic sensitivity $\chem\in\mbR$, and $\Trdet$ be its maximal time of existence (see Lemma~\ref{lem:Determ_KS_continuity}). Assume $\delta(\eps)\to0$ and $\eps^{\sfrac{1}{2}}\delta(\eps)^{-2}\to0$ as $\eps\to0$. Then for all $T<\Trdet$, $L>0$ and $\lambda>0$, there exists some $\mu>0$ such that
	\begin{equation*}
		\limsup_{\eps\to0}\eps(1+\delta(\eps)^{-2})^{2}\log\mbP\Bigl(\norm{(\rho^{(\eps)}_{\delta(\eps)})^{-}}_{C_{S_{L}}L^{2}}\geq\lambda\Bigr)\lesssim-\mu^{2}\norm{\srdet}_{C_{T}L^{2}\cap L_{T}^{2}\mcH^{1}}^{-2}.
	\end{equation*}
\end{theorem}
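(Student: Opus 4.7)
Proof plan: the central tool is Lemma~\ref{lem:lolli_regular_upper_deviation}, which I expect to furnish a sub-Gaussian upper deviation inequality of the shape
\begin{equation*}
	\mbP\Bigl(\norm{\ti^{\delta}}_{C_{T}L^{2}\cap L_{T}^{2}\mcH^{1}}\ge M\Bigr)\lesssim\exp\!\Bigl(-c\,M^{2}\norm{\srdet}_{C_{T}L^{2}\cap L_{T}^{2}\mcH^{1}}^{-2}(1+\delta^{-2})^{-2}\Bigr)
\end{equation*}
for all $M$ above the mean of the left-hand norm. The plan is to sandwich the event $\bigl\{\norm{(\rho^{(\eps)}_{\delta(\eps)})^{-}}_{C_{S_{L}}L^{2}}\ge\lambda\bigr\}$ between two deterministic events controlled by $\norm{\ti^{\delta(\eps)}}_{C_{T}L^{2}\cap L_{T}^{2}\mcH^{1}}$, and then apply this concentration bound.

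First I would invoke Lemma~\ref{lem:Determ_KS_positivity} to obtain a constant $c_{0}>0$ with $\rdet\ge c_{0}$ on $[0,T]\times\mbT^{2}$; in particular $\rdet^{-}\equiv0$. Because $u\mapsto u^{-}$ is $1$-Lipschitz on $L^{2}(\mbT^{2})$, this yields the deterministic bound
\begin{equation*}
	\norm{(\rho^{(\eps)}_{\delta(\eps)})^{-}}_{C_{S_{L}}L^{2}}\le\norm{\rho^{(\eps)}_{\delta(\eps)}-\rdet}_{C_{S_{L}}L^{2}}.
\end{equation*}
Subtracting the mild formulation~\eqref{eq:Determ_KS_mild} for $\rdet$ from~\eqref{eq:rKS_intro_mild} and exploiting the bilinearity of the advection gives
\begin{equation*}
	\rho^{(\eps)}_{\delta(\eps)}-\rdet=-\chem\vdiv\mcI\bigl[(\rho^{(\eps)}_{\delta(\eps)}-\rdet)\nabla\Phi_{\rho^{(\eps)}_{\delta(\eps)}}+\rdet\nabla\Phi_{\rho^{(\eps)}_{\delta(\eps)}-\rdet}\bigr]-\eps^{1/2}\ti^{\delta(\eps)}.
\end{equation*}
To the two quadratic terms I would apply the same Schauder/product estimates that underlie~\eqref{eq:bound_a_priori_regular_bilinearity} in the proof of Lemma~\ref{lem:rKS_well_posedness_regular}, followed by a fractional Gr\"{o}nwall argument localised by $S_{L}$. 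Since $\norm{\rho^{(\eps)}_{\delta(\eps)}}_{C_{S_{L}}L^{2}}\le L$ by definition of $S_{L}$ and $\norm{\rdet}_{C_{T}L^{2}\cap L_{T}^{2}\mcH^{1}}<\infty$ by Theorem~\ref{thm:Determ_KS_Well_Posedness}, this should produce a finite constant $C=C(L,\chem,T,\rdet)$, independent of $\eps$ and $\delta$, such that
\begin{equation*}
	\norm{\rho^{(\eps)}_{\delta(\eps)}-\rdet}_{C_{S_{L}}L^{2}}\le C\,\eps^{1/2}\norm{\ti^{\delta(\eps)}}_{C_{T}L^{2}\cap L_{T}^{2}\mcH^{1}}.
\end{equation*}

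Setting $\mu\defeq\lambda/C$, the two displays above place $\bigl\{\norm{(\rho^{(\eps)}_{\delta(\eps)})^{-}}_{C_{S_{L}}L^{2}}\ge\lambda\bigr\}$ inside $\bigl\{\eps^{1/2}\norm{\ti^{\delta(\eps)}}_{C_{T}L^{2}\cap L_{T}^{2}\mcH^{1}}\ge\mu\bigr\}$. Applying the sub-Gaussian estimate with $M=\mu\eps^{-1/2}$, taking logarithms, multiplying by $\eps(1+\delta(\eps)^{-2})^{2}$, and passing to $\limsup_{\eps\to0}$ delivers the claimed bound; the scaling assumption $\eps^{1/2}\delta(\eps)^{-2}\to0$ ensures that $\mu\eps^{-1/2}$ eventually dominates the mean of $\norm{\ti^{\delta(\eps)}}_{C_{T}L^{2}\cap L_{T}^{2}\mcH^{1}}$ (cf.~Proposition~\ref{prop:lolli_regular_srdet}), so that the Gaussian tail regime applies.

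The main obstacle is the Gr\"{o}nwall step: the bilinear estimates~\eqref{eq:bound_a_priori_regular_bilinearity} are local in time, with each iteration requiring a sufficiently small time window. To patch them into a global Lipschitz bound valid all the way up to $S_{L}$ one must iterate on a time grid whose mesh depends only on $L$, on $\chem$, and on the parabolic norms of $\rdet$ on $[0,T]$, none of which depend on $\eps$ or $\delta$. A subsidiary issue is that closing the Gr\"{o}nwall argument also needs an $\eps,\delta$-uniform control of $\norm{\rho^{(\eps)}_{\delta(\eps)}}_{L^{2}_{S_{L}}\mcH^{1}}$; I expect to recover this by feeding the $C_{S_{L}}L^{2}$-bound into~\eqref{eq:bound_a_priori_regular_II} and absorbing the noise contribution inside the event $\bigl\{\eps^{1/2}\norm{\ti^{\delta(\eps)}}_{L_{T}^{2}\mcH^{1}}\le\mu\bigr\}$ used for the sandwich above.
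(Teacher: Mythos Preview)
Your proposal is correct and reaches the same conclusion as the paper, but the middle step differs. You and the paper agree on the first step (Lipschitz continuity of $u\mapsto u^{-}$ combined with $\rdet^{-}\equiv0$) and on the last step (the sub-Gaussian deviation inequality of Lemma~\ref{lem:lolli_regular_upper_deviation}, together with the relative scaling to ensure the Gaussian tail regime applies). Where you differ is in passing from $\norm{\rho^{(\eps)}_{\delta(\eps)}-\rdet}_{C_{S_L}L^2}\ge\lambda$ to $\eps^{1/2}\norm{\ti^{\delta(\eps)}}_{C_TL^2\cap L^2_T\mcH^1}\ge\mu$: you derive a quantitative Lipschitz bound by explicitly iterating the bilinear estimates underlying Lemma~\ref{lem:rKS_well_posedness_regular} on a time grid whose mesh depends only on $L,\chem,\rdet$; the paper instead routes the argument through the metric $D_T^{L^2}$ on the blow-up space $(L^2(\mbT^2))^{\sol}_T$ and then invokes the abstract local Lipschitz continuity of the solution map (already packaged in Lemma~\ref{lem:rKS_well_posedness_regular}) to get the event inclusion directly. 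The paper's route is cleaner---no Gr\"onwall bookkeeping to redo---but less explicit; yours yields a concrete $\mu=\lambda/C$. One simplification you can make: your ``subsidiary issue'' about controlling $\norm{\rho^{(\eps)}_{\delta(\eps)}}_{L^2_{S_L}\mcH^1}$ is unnecessary for the $C_{S_L}L^2$ estimate, since the product bound behind~\eqref{eq:bound_a_priori_regular_I} at $\gamma=0$ only uses the $C_{\bar T}L^2$ norm of both factors, which is bounded by $L+\norm{\rdet}_{C_TL^2}$ on the stopped interval.
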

\begin{proof}
	For every $L>0$ let $S_{L}$ be given by~\eqref{eq:exceedance_time}. By combining the non-negativity of $\rdet$ (Theorem~\ref{thm:Determ_KS_Well_Posedness}) with the Lipschitz continuity of $u\mapsto u^{-}$ on $L^{2}(\mbT^{2})$
	\begin{details}
		(see~\eqref{eq:negative_part_Lipschitz})
	\end{details}
	we obtain for every $S\in[0,T]$,
	\begin{equation*}
		\norm{(\rho^{(\eps)}_{\delta(\eps)})^{-}}_{C_{S}L^{2}}=\norm{(\rho^{(\eps)}_{\delta(\eps)})^{-}-\rdet^{-}}_{C_{S}L^{2}}\leq\norm{\rho^{(\eps)}_{\delta(\eps)}-\rdet}_{C_{S}L^{2}},
	\end{equation*}
	which implies for every $\lambda>0$,
	\begin{equation}\label{eq:bound_negative_Lipschitz}
		\mbP\Bigl(\norm{(\rho^{(\eps)}_{\delta(\eps)})^{-}}_{C_{S_{L}}L^{2}}\geq\lambda\Bigr)\leq\mbP\Bigl(\norm{\rho^{(\eps)}_{\delta(\eps)}-\rdet}_{C_{S_{L}}L^{2}}\geq\lambda\Bigr).
	\end{equation}
	Further, using the explicit form of the metric $D_{T}^{L^{2}}$ on $(L^{2}(\mbT^{2}))^{\sol}_{T}$ (see~Subsection~\ref{subsec:notation} and~\cite[Sec.~1.5.1]{chandra_chevyrev_hairer_shen_22}) we deduce 
	\begin{equation}\label{eq:bound_negative_Frechet}
		\mbP\Bigl(\norm{\rho^{(\eps)}_{\delta(\eps)}-\rdet}_{C_{S_{L}}L^{2}}\geq\lambda\Bigr)\leq\mbP\Bigl(D_{T}^{L^{2}}(\rho^{(\eps)}_{\delta(\eps)},\rdet)\geq\widetilde{\lambda}\Bigr)
	\end{equation}
	where $\widetilde{\lambda}>0$ depends on $\lambda$ and $\norm{\rdet}_{C_{T}L^{2}}$.
	\begin{details}
		\paragraph{Proof of~\eqref{eq:bound_negative_Frechet}.}
		Let $L\in\mbN$ and
		\begin{equation*}
			T_{L}\defeq T\wedge L\wedge\inf\Bigl\{t\in[0,T]:\norm{\rho^{(\eps)}_{\delta(\eps)}(t)}_{L^{2}}>L~\text{or}~\norm{\rdet(t)}_{L^{2}}>L\Bigr\}.
		\end{equation*}
		Using the same notation as in~\cite[Details of Subsec.~3.2]{martini_mayorcas_25}, we obtain for each $t\leq T_{L}$ that
		\begin{equation*}
			S_{\rho^{(\eps)}_{\delta(\eps)}}(t)\leq L,\qquad S_{\rdet}(t)\leq L,
		\end{equation*}
		hence
		\begin{equation*}
			\Theta_{L}(\rho^{(\eps)}_{\delta(\eps)})(t)=(1,\rho^{(\eps)}_{\delta(\eps)}(t)),\qquad\Theta_{L}(\rdet)(t)=(1,\rdet(t))
		\end{equation*}
		and
		\begin{equation*}
			\begin{split}
				\hat{d}\Bigl(\Theta_{L}(\rho^{(\eps)}_{\delta(\eps)})(t),\Theta_{L}(\rdet)(t)\Bigr)&=\abs{1-1}+(1\wedge 1)d^{\cem}\Bigl(\rho^{(\eps)}_{\delta(\eps)}(t),\rdet(t)\Bigr)\\
				&=\norm{\rho^{(\eps)}_{\delta(\eps)}(t)-\rdet(t)}_{L^{2}}\wedge\Big(\frac{1}{1+\norm{\rho^{(\eps)}_{\delta(\eps)}(t)}_{L^{2}}}+\frac{1}{1+\norm{\rdet(t)}_{L^{2}}}\Big)\\
				&\geq\norm{\rho^{(\eps)}_{\delta(\eps)}(t)-\rdet(t)}_{L^{2}}\wedge\frac{2}{1+L}.
			\end{split}
		\end{equation*}
		By the definition of $D_{T}^{L^{2}}$,
		\begin{equation}\label{eq:blow_up_metric_explicit_frechet}
			\begin{split}
				D_{T}^{L^{2}}(\rho^{(\eps)}_{\delta(\eps)},\rdet)&=\sum_{L=1}^{\infty}2^{-L}\sup_{t\in[0,T\wedge L]}\hat{d}\Bigl(\Theta_{L}(\rho^{(\eps)}_{\delta(\eps)})(t),\Theta_{L}(\rdet)(t)\Bigr)\\
				&\geq\sum_{L=1}^{\infty}2^{-L}\sup_{t\leq T_{L}}\hat{d}\Bigl(\Theta_{L}(\rho^{(\eps)}_{\delta(\eps)})(t),\Theta_{L}(\rdet)(t)\Bigr)\\
				&\geq\sum_{L=1}^{\infty}2^{-L}\sup_{t\leq T_{L}}\Bigl(\norm{\rho^{(\eps)}_{\delta(\eps)}(t)-\rdet(t)}_{L^{2}}\wedge\frac{2}{1+L}\Bigr).
			\end{split}
		\end{equation}
		Using that $x\mapsto(x\wedge\frac{2}{1+L})$ is continuous and non-decreasing on $\mbR$, we obtain
		\begin{equation*}
			\sup_{t\leq T_{L}}\Bigl(\norm{\rho^{(\eps)}_{\delta(\eps)}(t)-\rdet(t)}_{L^{2}}\wedge\frac{2}{1+L}\Bigr)=\Bigl(\norm{\rho^{(\eps)}_{\delta(\eps)}-\rdet}_{C_{T_{L}}L^{2}}\wedge\frac{2}{1+L}\Bigr),
		\end{equation*}
		and, using that $\frac{1}{L}\leq\frac{2}{1+L}\leq1$ for all $L\in\mbN$,
		\begin{equation}\label{eq:blow_up_metric_explicit_summand}
			\begin{split}
				\Bigl(\norm{\rho^{(\eps)}_{\delta(\eps)}-\rdet}_{C_{T_{L}}L^{2}}\wedge\frac{2}{1+L}\Bigr)&\geq\Bigl(\norm{\rho^{(\eps)}_{\delta(\eps)}-\rdet}_{C_{T_{L}}L^{2}}\wedge\frac{1}{L}\Bigr)\\
				&\geq\Bigl(\frac{1}{L}\norm{\rho^{(\eps)}_{\delta(\eps)}-\rdet}_{C_{T_{L}}L^{2}}\wedge\frac{1}{L}\Bigr)\\
				&=\frac{1}{L}\Bigl(\norm{\rho^{(\eps)}_{\delta(\eps)}-\rdet}_{C_{T_{L}}L^{2}}\wedge1\Bigr).
			\end{split}
		\end{equation}
		Combining~\eqref{eq:blow_up_metric_explicit_frechet} and~\eqref{eq:blow_up_metric_explicit_summand}, we obtain
		\begin{equation*}
			D_{T}^{L^{2}}(\rho^{(\eps)}_{\delta(\eps)},\rdet)\geq\sum_{L=1}^{\infty}\frac{2^{-L}}{L}\Bigl(\norm{\rho^{(\eps)}_{\delta(\eps)}-\rdet}_{C_{T_{L}}L^{2}}\wedge1\Bigr).
		\end{equation*}
		For every $L\in\mbN$ we further obtain that
		\begin{equation*}
			\frac{2^{-L}}{L}\Bigl(\norm{\rho^{(\eps)}_{\delta(\eps)}-\rdet}_{C_{T_{L}}L^{2}}\wedge 1\Bigr)\leq\sum_{L=1}^{\infty}\frac{2^{-L}}{L}\Bigl(\norm{\rho^{(\eps)}_{\delta(\eps)}-\rdet}_{C_{T_{L}}L^{2}}\wedge1\Bigr).
		\end{equation*}
		Let $\lambda>0$ and $L\in\mbN$; we bound
		\begin{equation*}
			\begin{split}
				\mbP\Bigl(\norm{\rho^{(\eps)}_{\delta(\eps)}-\rdet}_{C_{T_{L}}L^{2}}\geq\lambda\Bigr)&\leq\mbP\Bigl(\norm{\rho^{(\eps)}_{\delta(\eps)}-\rdet}_{C_{T_{L}}L^{2}}\geq(\lambda\wedge1)\Bigr)\\
				&=\mbP\Bigl((\norm{\rho^{(\eps)}_{\delta(\eps)}-\rdet}_{C_{T_{L}}L^{2}}\wedge 1)\geq(\lambda\wedge1)\Bigr)\\
				&=\mbP\Bigl(\frac{2^{-L}}{L}(\norm{\rho^{(\eps)}_{\delta(\eps)}-\rdet}_{C_{T_{L}}L^{2}}\wedge 1)\geq\frac{2^{-L}}{L}(\lambda\wedge1)\Bigr)\\
				&\leq\mbP\Bigl(D_{T}^{L^{2}}(\rho^{(\eps)}_{\delta(\eps)},\rdet)\geq\frac{2^{-L}}{L}(\lambda\wedge1)\Bigr)
			\end{split}
		\end{equation*}
		where for the first equality we used that $x\geq(\lambda\wedge1)$ if and only if $(x\wedge1)\geq(\lambda\wedge1)$. (Indeed, if $x<(\lambda\wedge1)$, then $(x\wedge 1)=x<(\lambda\wedge1)$. If $x\geq(\lambda\wedge1)$, then either $x\in[\lambda\wedge1,1)$ or $x\geq1$. If $x\in[\lambda\wedge1,1)$, then $(x\wedge1)=x\geq(\lambda\wedge1)$. If $x\geq1$, then $(x\wedge1)=1\geq(\lambda\wedge1)$.)
		
		For $L\in(0,\infty)\setminus\mbN$, we can estimate $L\leq\ceil{L}$ and
		\begin{equation*}
			\mbP\Bigl(\norm{\rho^{(\eps)}_{\delta(\eps)}-\rdet}_{C_{T_{L}}L^{2}}\geq\lambda\Bigr)\leq\mbP\Bigl(\norm{\rho^{(\eps)}_{\delta(\eps)}-\rdet}_{C_{T_{\ceil{L}}}L^{2}}\geq\lambda\Bigr)\leq\mbP\Bigl(D_{T}^{L^{2}}(\rho^{(\eps)}_{\delta(\eps)},\rdet)\geq\frac{2^{-\ceil{L}}}{\ceil{L}}(\lambda\wedge1)\Bigr).
		\end{equation*}
		If $L\geq\norm{\rdet}_{C_{T}L^{2}}$, then $S_{L}=T_{L}$ and we obtain
		\begin{equation*}
			\mbP\Bigl(\norm{\rho^{(\eps)}_{\delta(\eps)}-\rdet}_{C_{S_{L}}L^{2}}\geq\lambda\Bigr)=\mbP\Bigl(\norm{\rho^{(\eps)}_{\delta(\eps)}-\rdet}_{C_{T_{L}}L^{2}}\geq\lambda\Bigr).
		\end{equation*}
		If $L<\norm{\rdet}_{C_{T}L^{2}}\eqdef M$, we can estimate $S_{L}\leq S_{M}=T_{M}$ and
		\begin{equation*}
			\mbP\Bigl(\norm{\rho^{(\eps)}_{\delta(\eps)}-\rdet}_{C_{S_{L}}L^{2}}\geq\lambda\Bigr)\leq\mbP\Bigl(\norm{\rho^{(\eps)}_{\delta(\eps)}-\rdet}_{C_{T_{M}}L^{2}}\geq\lambda\Bigr).
		\end{equation*}
		This yields for every $L>0$ and $\lambda>0$,
		\begin{equation*}
			\mbP\Bigl(\norm{\rho^{(\eps)}_{\delta(\eps)}-\rdet}_{C_{S_{L}}L^{2}}\geq\lambda\Bigr)\leq\mbP\Bigl(D_{T}^{L^{2}}(\rho^{(\eps)}_{\delta(\eps)},\rdet)\geq\widetilde{\lambda}\Bigr)
		\end{equation*}
		where $\widetilde{\lambda}>0$ depends on $\lambda$ and $\norm{\rdet}_{C_{T}L^{2}}$. This proves~\eqref{eq:bound_negative_Frechet}.
		
	\end{details}
	Using the (local Lipschitz) continuity of the solution $\rho^{(\eps)}_{\delta(\eps)}$ in the noise $\eps^{\sfrac{1}{2}}\ti^{\delta(\eps)}$ (Lemma~\ref{lem:rKS_well_posedness_regular}), we deduce that there exists some $\mu>0$ such that
	\begin{equation*}
		\Bigl\{D_{T}^{L^{2}}(\rho^{(\eps)}_{\delta(\eps)},\rdet)\geq\widetilde{\lambda}\Bigr\}\subset\Bigl\{\eps^{\sfrac{1}{2}}\norm{\ti^{\delta(\eps)}}_{C_{T}L^{2}\cap L_{T}^{2}\mcH^{1}}\geq\mu\Bigr\}
	\end{equation*}
	and consequently,
	\begin{equation}\label{eq:bound_negative_continuity}
		\mbP\Bigl(D_{T}^{L^{2}}(\rho^{(\eps)}_{\delta(\eps)},\rdet)\geq\widetilde{\lambda}\Bigr)\leq\mbP\Bigl(\eps^{\sfrac{1}{2}}\norm{\ti^{\delta(\eps)}}_{C_{T}L^{2}\cap L_{T}^{2}\mcH^{1}}\geq\mu\Bigr).
	\end{equation}
	Combining the relative scaling $\eps^{\sfrac{1}{2}}\delta(\eps)^{-2}\to0$ with Proposition~\ref{prop:lolli_regular_srdet} we can show that there exists some $\eps_{0}>0$ such that for every $\eps<\eps_{0}$ it holds that $\eps^{\sfrac{1}{2}}\mbE[\norm{\ti^{\delta(\eps)}}_{C_{T}L^{2}\cap L_{T}^{2}\mcH^{1}}]\leq\mu/2$. 
	Denote $E\defeq C_{T}L^{2}(\mbT^{2})\cap L_{T}^{2}\mcH^{1}(\mbT^{2})$; we can now apply Lemma~\ref{lem:lolli_regular_upper_deviation} to bound for every $\eps<\eps_{0}$,
	\begin{equation}\label{eq:bound_negative_Lolli_exponential}
		\begin{split}
			&\eps(1+\delta(\eps)^{-2})^{2}\log\mbP\Bigl(\eps^{\sfrac{1}{2}}\norm{\ti^{\delta(\eps)}}_{E}\geq\mu\Bigr)\\
			&=\eps(1+\delta(\eps)^{-2})^{2}\log\mbP\Bigl(\eps^{\sfrac{1}{2}}\norm{\ti^{\delta(\eps)}}_{E}-\eps^{\sfrac{1}{2}}\mbE[\norm{\ti^{\delta(\eps)}}_{E}]\geq\mu-\eps^{\sfrac{1}{2}}\mbE[\norm{\ti^{\delta(\eps)}}_{E}]\Bigr)\\
			&=\eps(1+\delta(\eps)^{-2})^{2}\log\mbP\Bigl(\norm{\ti^{\delta(\eps)}}_{E}-\mbE[\norm{\ti^{\delta(\eps)}}_{E}]\geq\eps^{-\sfrac{1}{2}}\mu-\mbE[\norm{\ti^{\delta(\eps)}}_{E}]\Bigr)\\
			&\lesssim-\eps\Bigl(\eps^{-\sfrac{1}{2}}\mu-\mbE[\norm{\ti^{\delta(\eps)}}_{E}]\Bigr)^{2}\norm{\srdet}_{E}^{-2}\\
			&=-\Bigl(\mu-\eps^{\sfrac{1}{2}}\mbE[\norm{\ti^{\delta(\eps)}}_{E}]\Bigr)^{2}\norm{\srdet}_{E}^{-2}\\
			&\lesssim-\mu^{2}\norm{\srdet}_{C_{T}L^{2}\cap L_{T}^{2}\mcH^{1}}^{-2},
		\end{split}
	\end{equation}
	where in the application of Lemma~\ref{lem:lolli_regular_upper_deviation} we used that $\eps^{-\sfrac{1}{2}}\mu-\mbE[\norm{\ti^{\delta(\eps)}}_{E}]>0$ for every $\eps<\eps_{0}$.
	Combining~\eqref{eq:bound_negative_Lipschitz}, \eqref{eq:bound_negative_Frechet}, \eqref{eq:bound_negative_continuity} and~\eqref{eq:bound_negative_Lolli_exponential}, we arrive at
	\begin{equation*}
		\begin{split}
			&\limsup_{\eps\to0}\eps(1+\delta(\eps)^{-2})^{2}\log\mbP\Bigl(\norm{(\rho^{(\eps)}_{\delta(\eps)})^{-}}_{C_{S_{L}}L^{2}}\geq\lambda\Bigr)\lesssim-\mu^{2}\norm{\srdet}_{C_{T}L^{2}\cap L_{T}^{2}\mcH^{1}}^{-2},
		\end{split}
	\end{equation*}
	which yields the claim.
\end{proof}
\section{Rough Theory for Large Noise Intensities}\label{sec:rough_theory}
In this section we first show the well-posedness of the solution $\rho^{(\eps)}_{\delta(\eps)}$ to~\eqref{eq:rKS_intro_mild} in the H\"{o}lder--Besov space $\mcC^{\alpha+1}(\mbT^{2})$ of regularity $\alpha+1<-1$ (Theorem~\ref{thm:rKS_well_posedness_rough}), which we then use to establish a law of large numbers (Theorem~\ref{thm:LLN_rough}) and a large deviation principle (Theorem~\ref{thm:LDP_rough}) under the relative scaling $\delta(\eps)\to0$ and $\eps\log(\delta(\eps)^{-1})\to0$ as $\eps\to0$. Furthermore, we also establish a central limit theorem under the scaling $\delta(\eps)\to0$ and $\eps^{\sfrac{1}{2}}\log(\delta(\eps)^{-1})\to0$ as $\eps\to0$ (Theorem~\ref{thm:CLT}).

Throughout this section we consider initial data $\rho_{0}\in\mcH^{2}(\mbT^{2})$ such that $\rho_{0}>0$ and $\mean{\rho_{0}}=1$, which ensures that $\sqrt{\rdet}\in C_{T}\mcH^{2}(\mbT^{2})$ (see Lemma~\ref{lem:regularity_srdet}).
\subsection{Well-Posedness}\label{subsec:well_posedness_rough}
In this subsection we use the arguments of~\cite{martini_mayorcas_25} to show that~\eqref{eq:rKS_intro_mild} is well-posed in a space of H\"{o}lder--Besov distributions $\mcC^{\alpha+1}(\mbT^{2})$ of regularity $\alpha+1<-1$ (Theorem~\ref{thm:rKS_well_posedness_rough}). Our technique relies on the paracontrolled decomposition~\cite{gubinelli_15_GIP} (cf.~\eqref{eq:rKS_paracon}), which, compared to the mild formulation~\eqref{eq:rKS_intro_mild}, introduces more structure to achieve the well-posedness of~\eqref{eq:rKS_intro_mild} in a larger space.

The paracontrolled decomposition of~\eqref{eq:rKS_intro_mild} has been carried out in~\cite[Subsec.~1.2]{martini_mayorcas_25} for the equation with generic heterogeneity $\het\in C_{T}\mcH^{2}(\mbT^{2})$ driven by space-time white noise (correlation length zero; suitably renormalised). Here, we recap the decomposition for positive correlation lengths $\delta=\delta(\eps)>0$ without renormalisation and further specialize the heterogeneity to $\srdet\in C_{T}\mcH^{2}(\mbT^{2})$ (see Lemma~\ref{lem:regularity_srdet}).

We first apply the so-called Da~Prato--Debussche trick and consider the remainder $u\defeq\rho+\eps^{\sfrac{1}{2}}\ti^{\delta}$, which then satisfies the equation
\begin{equation*}
	u=P\rho_{0}-\chem\vdiv\mcI[u\nabla\Phi_{u}]+\chem\eps^{\sfrac{1}{2}}\vdiv\mcI[u\nabla\Phi_{\ti^{\delta}}]+\chem\eps^{\sfrac{1}{2}}\vdiv\mcI[\ti^{\delta}\nabla\Phi_{u}]-\chem\eps\vdiv\mcI[\ti^{\delta}\nabla\Phi_{\ti^{\delta}}].
\end{equation*}
The regularity of $u$ is consequently determined by the second-order stochastic object given by $\ty^{\delta}_{\can}\defeq\vdiv\mcI[\ti^{\delta}\nabla\Phi_{\ti^{\delta}}]$. While this improves the situation, this is not yet enough to make sense of the equation under the assumption $\ti^{\delta}\in C_{T}\mcC^{\alpha+1}(\mbT^{2})$, so we apply the Da~Prato--Debussche trick again and define the remainder $w\defeq u+\chem\eps\ty^{\delta}_{\can}$. This is still not enough and subsequent applications of the Da~Prato--Debussche trick no longer yield any improvements.
\begin{details}
	The second Da Prato--Debussche remainder satisfies
	\begin{equation*}
		\begin{split}
			w&=P\rho_{0}-\chem\vdiv\mcI[w\nabla\Phi_{w}]+\chem^{2}\eps\vdiv\mcI[w\nabla\Phi_{\ty^{\delta}_{\can}}]+\chem^{2}\eps\vdiv\mcI[\ty^{\delta}_{\can}\nabla\Phi_{w}]-\chem^{3}\eps^{2}\vdiv\mcI[\ty^{\delta}_{\can}\nabla\Phi_{\ty^{\delta}_{\can}}]\\
			&\quad+\chem\eps^{\sfrac{1}{2}}\vdiv\mcI[w\nabla\Phi_{\ti^{\delta}}]-\chem^{2}\eps^{\sfrac{3}{2}}\vdiv\mcI[\ty^{\delta}_{\can}\nabla\Phi_{\ti^{\delta}}]+\chem\eps^{\sfrac{1}{2}}\vdiv\mcI[\ti^{\delta}\nabla\Phi_{w}]-\chem^{2}\eps^{\sfrac{3}{2}}\vdiv\mcI[\ti^{\delta}\nabla\Phi_{\ty^{\delta}_{\can}}]\\
			&=P\rho_{0}-\chem\vdiv\mcI[w\nabla\Phi_{w}]+\chem^{2}\eps\vdiv\mcI[w\nabla\Phi_{\ty^{\delta}_{\can}}]+\chem^{2}\eps\vdiv\mcI[\ty^{\delta}_{\can}\nabla\Phi_{w}]-\chem^{3}\eps^{2}\vdiv\mcI[\ty^{\delta}_{\can}\nabla\Phi_{\ty^{\delta}_{\can}}]\\
			&\quad+\chem\eps^{\sfrac{1}{2}}\vdiv\mcI[w\pa\nabla\Phi_{\ti^{\delta}}]+\chem\eps^{\sfrac{1}{2}}\vdiv\mcI[\nabla\Phi_{\ti^{\delta}}\pa w]-\chem^{2}\eps^{\sfrac{3}{2}}\vdiv\mcI[\ty^{\delta}_{\can}\pa\nabla\Phi_{\ti^{\delta}}]\\
			&\quad-\chem^{2}\eps^{\sfrac{3}{2}}\vdiv\mcI[\nabla\Phi_{\ti^{\delta}}\pa \ty^{\delta}_{\can}]-\chem^{2}\eps^{\sfrac{3}{2}}\vdiv\mcI[\ty^{\delta}_{\can}\re\nabla\Phi_{\ti^{\delta}}]+\chem\eps^{\sfrac{1}{2}}\vdiv\mcI[\ti^{\delta}\nabla\Phi_{w}]\\
			&\quad-\chem^{2}\eps^{\sfrac{3}{2}}\vdiv\mcI[\ti^{\delta}\nabla\Phi_{\ty^{\delta}_{\can}}]+\chem\eps^{\sfrac{1}{2}}\vdiv\mcI[w\re\nabla\Phi_{\ti^{\delta}}].
		\end{split}
	\end{equation*}
	We further decompose
	\begin{equation*}
		\begin{split}
			&\chem\eps^{\sfrac{1}{2}}\vdiv\mcI[\ti^{\delta}\nabla\Phi_{w}]-\chem^{2}\eps^{\sfrac{3}{2}}\vdiv\mcI[\ti^{\delta}\nabla\Phi_{\ty^{\delta}_{\can}}]\\
			&=\chem\eps^{\sfrac{1}{2}}\vdiv\mcI[(\nabla\Phi_{w}-\chem\eps\nabla\Phi_{\ty^{\delta}_{\can}})\pa\ti^{\delta}]+\chem\eps^{\sfrac{1}{2}}\vdiv\mcI[\ti^{\delta}\pa\nabla\Phi_{w}]+\chem\eps^{\sfrac{1}{2}}\vdiv\mcI[\ti^{\delta}\re\nabla\Phi_{w}]\\
			&\quad-\chem^{2}\eps^{\sfrac{3}{2}}\vdiv\mcI[\ti^{\delta}\pa\nabla\Phi_{\ty^{\delta}_{\can}}]-\chem^{2}\eps^{\sfrac{3}{2}}\vdiv\mcI[\ti^{\delta}\re\nabla\Phi_{\ty^{\delta}_{\can}}],
		\end{split}
	\end{equation*}
	which yields
	\begin{equation*}
		\begin{split}
			w&=P\rho_{0}-\chem\vdiv\mcI[w\nabla\Phi_{w}]+\chem^{2}\eps\vdiv\mcI[w\nabla\Phi_{\ty^{\delta}_{\can}}]+\chem^{2}\eps\vdiv\mcI[\ty^{\delta}_{\can}\nabla\Phi_{w}]-\chem^{3}\eps^{2}\vdiv\mcI[\ty^{\delta}_{\can}\nabla\Phi_{\ty^{\delta}_{\can}}]\\
			&\quad+\chem\eps^{\sfrac{1}{2}}\vdiv\mcI[w\pa\nabla\Phi_{\ti^{\delta}}]+\chem\eps^{\sfrac{1}{2}}\vdiv\mcI[\nabla\Phi_{\ti^{\delta}}\pa w]-\chem^{2}\eps^{\sfrac{3}{2}}\vdiv\mcI[\ty^{\delta}_{\can}\pa\nabla\Phi_{\ti^{\delta}}]\\
			&\quad-\chem^{2}\eps^{\sfrac{3}{2}}\vdiv\mcI[\nabla\Phi_{\ti^{\delta}}\pa \ty^{\delta}_{\can}]-\chem^{2}\eps^{\sfrac{3}{2}}\vdiv\mcI[\ty^{\delta}_{\can}\re\nabla\Phi_{\ti^{\delta}}]-\chem^{2}\eps^{\sfrac{3}{2}}\vdiv\mcI[\ti^{\delta}\re\nabla\Phi_{\ty^{\delta}_{\can}}]\\
			&\quad+\chem\eps^{\sfrac{1}{2}}\vdiv\mcI[(\nabla\Phi_{w}-\chem\eps\nabla\Phi_{\ty^{\delta}_{\can}})\pa\ti^{\delta}]+\chem\eps^{\sfrac{1}{2}}\vdiv\mcI[\ti^{\delta}\pa\nabla\Phi_{w}]+\chem\eps^{\sfrac{1}{2}}\vdiv\mcI[\ti^{\delta}\re\nabla\Phi_{w}]\\
			&\quad-\chem^{2}\eps^{\sfrac{3}{2}}\vdiv\mcI[\ti^{\delta}\pa\nabla\Phi_{\ty^{\delta}_{\can}}]+\chem\eps^{\sfrac{1}{2}}\vdiv\mcI[w\re\nabla\Phi_{\ti^{\delta}}].
		\end{split}
	\end{equation*}
\end{details}

To proceed further, we apply the paracontrolled Ansatz of~\cite{gubinelli_15_GIP}, which allows us to make sense of~\eqref{eq:rKS_intro_mild} in a space of distributions, if we can define a vector of stochastic objects $\mbX_{\delta}^{(\eps)}=(\eps^{\sfrac{1}{2}}\ti^{\delta},\eps\ty_{\can}^{\delta},\eps^{\sfrac{3}{2}}\tp_{\can}^{\delta},\eps\tc^{\delta})$ that satisfy the relations
\begin{equation}\label{eq:enh_can}
	\begin{split}
		&\ti^{\delta}\defeq\vdiv\mcI[\srdet\boldsymbol{\xi}^{\delta}],\quad\ty_{\can}^{\delta}\defeq\vdiv\mcI[\ti^{\delta}\nabla\Phi_{\ti^{\delta}}],\\
		&\tp_{\can}^{\delta}\defeq\ty_{\can}^{\delta}\re\nabla\Phi_{\ti^{\delta}}+\nabla\Phi_{\ty_{\can}^{\delta}}\re\ti^{\delta},\quad\tc^{\delta}\defeq\nabla\mcI[\ti^{\delta}]\re\nabla\Phi_{\ti^{\delta}}+\nabla^{2}\mcI[\Phi_{\ti^{\delta}}]\re\ti^{\delta},
	\end{split}
\end{equation}
and are suitably regular. Recall that $\re$ denotes the resonant product, see e.g.\ Subsection~\ref{subsec:notation} or~\cite{gubinelli_15_GIP} for a definition. We call this vector the (canonical) \emph{enhancement}. For the existence and regularity of $\mbX^{(\eps)}_{\delta}$, see Theorem~\ref{thm:existence_enh_can}.

The fully decomposed equation and its solution take the following form (cf.~\cite[Subsec.~1.2]{martini_mayorcas_25}): We call $\boldsymbol{w}=(w,w',w^{\#})$ a paracontrolled triple to~\eqref{eq:rKS_intro_mild}, if
\begin{equation}\label{eq:rKS_paracon}
	\begin{cases}
		\begin{aligned}
			w&\defeq\chem\eps^{\sfrac{1}{2}}\vdiv\mcI[w'\pa\ti^{\delta}]+w^{\#},\quad w'\defeq\nabla\Phi_{w}-\chem\eps\nabla\Phi_{\ty^{\delta}_{\can}},\\
			w^{\#}&\defeq P\rho_0+\chem\vdiv\mcI[\Omega^{(\eps)}_{\delta}(\boldsymbol{w})],
		\end{aligned}
	\end{cases}
\end{equation}
with
\begin{equation*}
	\begin{split}
		\Omega^{(\eps)}_{\delta}(\boldsymbol{w})&\defeq-w\nabla\Phi_{w}+\chem\eps (w\nabla\Phi_{\ty^{\delta}_{\can}})+\chem\eps(\ty^{\delta}_{\can}\nabla\Phi_{w})-\chem^{2}\eps^{2}(\ty^{\delta}_{\can}\nabla\Phi_{\ty^{\delta}_{\can}})-\chem\eps^{\sfrac{3}{2}}\tp^{\delta}_{\can}\\
		&\quad-\chem\eps^{\sfrac{3}{2}}(\nabla\Phi_{\ti^{\delta}}\pa\ty^{\delta}_{\can})-\chem\eps^{\sfrac{3}{2}}(\ty^{\delta}_{\can}\pa\nabla\Phi_{\ti^{\delta}})-\chem\eps^{\sfrac{3}{2}}(\ti^{\delta}\pa\nabla\Phi_{\ty^{\delta}_{\can}})+\eps^{\sfrac{1}{2}}(w\pa\nabla\Phi_{\ti^{\delta}})\\
		&\quad+\eps^{\sfrac{1}{2}}(\nabla\Phi_{\ti^{\delta}}\pa w)+\eps^{\sfrac{1}{2}}(\ti^{\delta}\pa\nabla\Phi_{w})+\msP(\boldsymbol{w},\mbX^{(\eps)}_{\delta})
	\end{split}
\end{equation*}
and
\begin{equation*}
	\begin{split}
		\msP(\boldsymbol{w},\mbX^{(\eps)}_{\delta})&\defeq\eps^{\sfrac{1}{2}}(w\re\nabla\Phi_{\ti^{\delta}})+\eps^{\sfrac{1}{2}}(\ti^{\delta}\re\nabla\Phi_{w})\\
		&=\chem\eps\msC(w',\nabla\mcI[\ti^{\delta}],\nabla\Phi_{\ti^{\delta}})+\chem\eps\msC(w',\nabla^{2}\mcI[\Phi_{\ti^{\delta}}],\ti^{\delta})\\
		&\quad+\eps^{\sfrac{1}{2}}\Bigl(w^{\#}+\chem\eps^{\sfrac{1}{2}}\vdiv\mcI[w'\pa\ti^{\delta}]-\chem\eps^{\sfrac{1}{2}}(w'\pa\nabla\mcI[\ti^{\delta}])\Bigr)\re\nabla\Phi_{\ti^{\delta}}\\
		&\quad+\eps^{\sfrac{1}{2}}\Bigl(\nabla\Phi_{w^{\#}}+\chem\eps^{\sfrac{1}{2}}\nabla\vdiv\Phi_{\mcI[w'\pa\ti^{\delta}]}-\chem\eps^{\sfrac{1}{2}}(w'\pa\nabla^{2}\mcI[\Phi_{\ti^{\delta}}])\Bigr)\re\ti^{\delta}+\chem\eps(w'\tc^{\delta}),
	\end{split}
\end{equation*}
where $\msC(f,g,h)$ denotes the commutator $(f\pa g)\re h-f(g\re h)$ of~\cite[Lem.~2.4]{gubinelli_15_GIP}. We frequently denote $w^{(\eps)}_{\delta}=w$ to point out the dependence of $w$ on $(\eps,\delta)$. We call $\rho^{(\eps)}_{\delta}\defeq-\eps^{\sfrac{1}{2}}\ti^{\delta}-\chem\eps\ty^{\delta}_{\can}+w^{(\eps)}_{\delta}$ the paracontrolled solution to~\eqref{eq:rKS_intro_mild}. Since the correlation length is non-zero, it follows that $\rho^{(\eps)}_{\delta}$ solves~\eqref{eq:rKS_intro_mild} in the classical sense.

The next theorem recaps the existence and regularity of the noise enhancement $\mbX^{(\eps)}_{\delta}$ in $\rksnoise{\alpha}{\kappa}_{T}$ (cf.~Subsection~\ref{subsec:notation}) along with its blow-up as $\delta\to0$.
\begin{theorem}\label{thm:existence_enh_can}
	Let $\rho_{0}\in\mcH^{2}(\mbT^{2})$ be such that $\rho_{0}>0$ and $\mean{\rho_{0}}=1$, $\rdet$ be the weak solution to~\eqref{eq:Determ_KS} with initial data $\rho_{0}$ and chemotactic sensitivity $\chem\in\mbR$, $\Trdet$ be its maximal time of existence (see Theorem~\ref{thm:Determ_KS_Well_Posedness}), $T<\Trdet$, $\boldsymbol{\xi}$ be a two-dimensional vector of space-time white noises, $(\psi_{\delta})_{\delta>0}$ be a sequence of mollifiers as in~\eqref{eq:def_mollifiers}, $\boldsymbol{\xi}^{\delta}\defeq\psi_{\delta}\ast\boldsymbol{\xi}\defeq(\psi_{\delta}\ast\xi^{1},\psi_{\delta}\ast\xi^{2})$ and $\mbX_{\delta}^{(\eps)}=(\eps^{\sfrac{1}{2}}\ti^{\delta},\eps\ty_{\can}^{\delta},\eps^{\sfrac{3}{2}}\tp_{\can}^{\delta},\eps\tc^{\delta})$ be given by~\eqref{eq:enh_can} for every $\eps>0$ and $\delta>0$. Then for all $\alpha\in(-5/2,-2)$, $\kappa\in(0,1/2)$ and $p\in[1,\infty)$, it holds that
	\begin{equation*}
		\begin{split}
			&\mbE[\norm{\ti^{\delta}}_{\msL_{T}^{\kappa}\mcC^{\alpha+1}}^{p}]^{1/p}\lesssim\norm{\srdet}_{C_{T}L^{\infty}},\quad\mbE[\norm{\ty^{\delta}_{\can}}_{\msL_{T}^{\kappa}\mcC^{2\alpha+4}}^{p}]^{1/p}\lesssim(1\vee\log(\delta^{-1}))\norm{\srdet}_{C_{T}\mcH^{2}}^{2},\\
			&\mbE[\norm{\tp^{\delta}_{\can}}_{\msL_{T}^{\kappa}\mcC^{3\alpha+6}}^{p}]^{1/p}\lesssim(1\vee\log(\delta^{-1}))\norm{\srdet}_{C_{T}\mcH^{2}}^{3},\quad\mbE[\norm{\tc^{\delta}}_{\msL_{T}^{\kappa}\mcC^{2\alpha+4}}^{p}]^{1/p}\lesssim\norm{\srdet}_{C_{T}\mcH^{2}}^{2},
		\end{split}
	\end{equation*}
	and in particular $\mbX_{\delta}^{(\eps)}\in\rksnoise{\alpha}{\kappa}_{T}$ almost surely.
\end{theorem}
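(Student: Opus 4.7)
The plan is to bound each component of $\mbX^{(\eps)}_\delta$ separately, exploiting the fact that each one lives in a fixed inhomogeneous Wiener chaos with respect to the Gaussian noise $\boldsymbol{\xi}$. More precisely, $\ti^\delta$ is linear in $\boldsymbol{\xi}$; while $\ty^\delta_{\can}$, $\tp^\delta_{\can}$ and $\tc^\delta$ are quadratic, cubic and quadratic respectively (once the kernels are expanded, each is an iterated It\^o integral of the form~\eqref{eq:def_iterated_Ito_Fourier} plus, for $\ty^\delta_{\can}$, a lower-order chaos piece coming from the It\^o-to-Stratonovich contraction). By Nelson's hypercontractivity, $L^p(\mbP)$ norms of elements of a fixed Wiener chaos are equivalent to $L^2(\mbP)$ norms up to a constant depending only on $p$ and the chaos order, so it suffices to prove the claimed bounds with $p=2$.

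First I would handle $\ti^\delta$. Writing $\srdet$ time-frozen and expanding the heat resolution in Fourier variables, the Fourier multipliers yield a Gaussian field whose variance on the Littlewood--Paley block $\Delta_k$ is dominated, uniformly in $\delta\in(0,1]$, by the corresponding bound for the additive stochastic heat equation with heterogeneity $\srdet$; indeed the mollifier $\varphi(\delta\om)$ only improves convergence. A Kolmogorov-type Besov criterion (in the spirit of~\cite[Lem.~A.6]{martini_mayorcas_22_WP} and the companion spatial criterion) together with Schauder estimates then gives the bound on $\mbE[\norm{\ti^\delta}_{\msL^\kappa_T\mcC^{\alpha+1}}^p]$, uniformly in $\delta$, in terms of $\norm{\srdet}_{C_T L^\infty}$; this is essentially Proposition~\ref{prop:lolli_regular_srdet} at the H\"older--Besov scale. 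In the same way $\tc^\delta$ is controlled by the content of Appendix~\ref{app:freidlin_wentzell}: once one writes it as a second chaos, the standard resonant-product estimates combined with the heat kernel smoothing give a bound in $\mcC^{2\alpha+4}$ independent of $\delta$, with $\norm{\srdet}_{C_T\mcH^2}^2$ entering through two copies of the heterogeneity.

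The substantive steps are $\ty^\delta_{\can}$ and $\tp^\delta_{\can}$. For $\ty^\delta_{\can}$, I would project $\ti^\delta\nabla\Phi_{\ti^\delta}$ onto the second and zeroth Wiener chaoses and then apply $\vdiv\mcI$. The zeroth-chaos piece is deterministic and comes from the It\^o correction $\mbE[\ti^\delta(t,x)\nabla\Phi_{\ti^\delta}(t,x)]$; a direct Fourier-space computation of this pointwise covariance yields the logarithmic divergence $\log(\delta^{-1})$ coming from summing $\abs{2\pi\om}^{-2}(1-\euler^{-2t\abs{2\pi\om}^2})\varphi(\delta\om)^2$ against the (bounded) factor arising from $\srdet$. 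This is the step I expect to be the most delicate: the heterogeneity $\srdet(t,x)$ prevents the noise from being stationary in space, so the variance of each Littlewood--Paley block must be estimated via a pointwise kernel bound and then integrated against $\srdet$. For the second-chaos component, a similar Fourier calculation (also generating a $\log(\delta^{-1})$) paired with the Kolmogorov--Besov criterion gives the claimed bound at spatial regularity $2\alpha+4$. The object $\tp^\delta_{\can}$ decomposes into terms in the third and first Wiener chaoses via two partial Wick contractions of $\ty^\delta_{\can}$ and $\ti^\delta$; each contraction produces one factor $\log(\delta^{-1})$ times an object at the regularity one expects from $\mcC^{3\alpha+6}$, handled by the same machinery.

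Time regularity in $\msL^\kappa_T=C^\kappa_T\mcC^{\bullet-2\kappa}\cap C_T\mcC^\bullet$ follows by rewriting time increments through the smoothing of the heat semigroup: for $s<t$, split $P_{t-\place}-P_{s-\place}=(P_{t-s}-\mathrm{id})P_{s-\place}$ on $[0,s]$ plus an integral on $[s,t]$, and use $\norm{(P_{t-s}-\mathrm{id})f}_{\mcC^{\beta-2\kappa}}\lesssim(t-s)^\kappa\norm{f}_{\mcC^\beta}$ together with a Kolmogorov criterion in time applied to each fixed Wiener chaos. Combining all of the above with Nelson's hypercontractivity to upgrade from $p=2$ to all $p\in[1,\infty)$ yields the asserted moment estimates. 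Since each bound is finite almost surely and the approximating sequence obtained by further mollification sits in the defining subset of $\rksnoise{\alpha}{\kappa}_T$, $\mbX^{(\eps)}_\delta$ belongs to $\rksnoise{\alpha}{\kappa}_T$ almost surely.
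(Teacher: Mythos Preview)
Your sketch is mathematically sound and describes precisely the Wiener-chaos analysis that underlies this theorem: reduction to $p=2$ via Nelson's estimate, Fourier-side second-moment bounds for each chaos component, the logarithmic divergence coming from the zeroth-chaos contraction in $\ty^\delta_{\can}$, and Kolmogorov-type arguments for the time regularity. This is essentially the same approach as the paper, with one difference of presentation: the paper does not reproduce any of this analysis here but simply cites the companion well-posedness paper \cite{martini_mayorcas_22_WP} (specifically Thm.~2.3, Lem.~2.7, Lem.~2.21 and Lem.~2.26), where these estimates are established for a generic heterogeneity $\het\in C_T\mcH^2(\mbT^2)$, and then invokes Lemma~\ref{lem:regularity_srdet} to verify that $\srdet\in C_T\mcH^2(\mbT^2)$ under the stated hypotheses on $\rho_0$. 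Your sketch effectively unpacks the content of those cited results.

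Two small inaccuracies in your cross-references: Appendix~\ref{app:freidlin_wentzell} concerns generalized Freidlin--Wentzell large deviations and has nothing to do with the $\tc^\delta$ estimate; and Proposition~\ref{prop:lolli_regular_srdet} lives at the $L^2$/Bessel-potential scale rather than the H\"older--Besov scale needed here (the H\"older--Besov bound for $\ti^\delta$ is \cite[Lem.~2.7]{martini_mayorcas_22_WP}). Neither affects the substance of your argument.
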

\begin{proof}
	The proof follows by~\cite[Thm.~2.3, Lem.~2.7, Lem.~2.20 \& Lem.~2.25]{martini_mayorcas_25} with the particular choice of heterogeneity $\het=\srdet\in C_{T}\mcH^{2}(\mbT^{2})$ (Lemma~\ref{lem:regularity_srdet}).
\end{proof}
The next theorem establishes the well-posedness of paracontrolled solutions to~\eqref{eq:rKS_intro_mild} (cf.~\cite[Def.~3.10]{martini_mayorcas_25}).
\begin{theorem}\label{thm:rKS_well_posedness_rough}
	Let $\rho_{0}\in\mcH^{2}(\mbT^{2})$ be such that $\rho_{0}>0$  and $\mean{\rho_{0}}=1$, $\rdet$ be the weak solution to~\eqref{eq:Determ_KS} with initial data $\rho_{0}$ and chemotactic sensitivity $\chem\in\mbR$, and $\Trdet$ be its maximal time of existence (see Theorem~\ref{thm:Determ_KS_Well_Posedness}). Then for all $T<\Trdet$, $\eps>0$, $\delta>0$ and $\alpha\in(-9/4,-2)$ there exists a unique (paracontrolled) solution $\rho^{(\eps)}_{\delta}$ to~\eqref{eq:rKS_intro_mild} such that $\rho^{(\eps)}_{\delta}\in(\mcC^{\alpha+1}(\mbT^{2}))^{\sol}_{T}$ almost surely. Furthermore, the solution map is locally Lipschitz continuous in the noise enhancement $\mbX^{(\eps)}_{\delta}$.
\end{theorem}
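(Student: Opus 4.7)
The plan is to work pathwise. By Theorem~\ref{thm:existence_enh_can} the noise enhancement $\mbX^{(\eps)}_{\delta}$ lies in $\rksnoise{\alpha}{\kappa}_{T}$ almost surely, so one can fix a realisation and treat~\eqref{eq:rKS_paracon} as a deterministic fixed-point problem for the paracontrolled triple $\boldsymbol{w}=(w,w',w^{\#})$, driven by $\mbX^{(\eps)}_{\delta}$ and by the smoothed initial datum $P\rho_{0}$. Once $\boldsymbol{w}$ has been constructed, the solution is reconstructed via $\rho^{(\eps)}_{\delta}=-\eps^{1/2}\ti^{\delta}-\chem\eps\ty^{\delta}_{\can}+w$, which automatically lies in $(\mcC^{\alpha+1}(\mbT^{2}))^{\sol}_{T}$ since each of the three summands does.

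First I would set up a product Banach space with time-weighted norms at $t=0$, tuned so as to absorb the initial datum $P\rho_{0}$ originating from $\rho_{0}\in\mcH^{2}(\mbT^{2})$. The regularity indices for $(w,w',w^{\#})$ are chosen so that (i) $w$ sits strictly above $\ti^{\delta}$ in regularity, (ii) the relation $w'=\nabla\Phi_{w}-\chem\eps\nabla\Phi_{\ty^{\delta}_{\can}}$ is consistent with the regularities of $w$ and of $\ty^{\delta}_{\can}$, and (iii) the remainder $w^{\#}$ gains two derivatives over $w$ through the Schauder estimate applied to $\Omega^{(\eps)}_{\delta}(\boldsymbol{w})$. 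Using Bony's decomposition, the commutator lemma, and the paraproduct and resonant-product bounds established in~\cite{martini_mayorcas_22_WP}, every summand of $\Omega^{(\eps)}_{\delta}(\boldsymbol{w})$ and of $\msP(\boldsymbol{w},\mbX^{(\eps)}_{\delta})$ is estimated bilinearly in $\boldsymbol{w}$ and polynomially in $\norm{\mbX^{(\eps)}_{\delta}}_{\rksnoise{\alpha}{\kappa}_{T}}$. For a sufficiently small $\bar{T}$ depending only on $\norm{\mbX^{(\eps)}_{\delta}}_{\rksnoise{\alpha}{\kappa}_{T}}$ and $\norm{\rho_{0}}_{\mcH^{2}}$, the induced map is a contraction on a ball of this product space; Banach's fixed-point theorem yields a unique local paracontrolled triple, and differencing two fixed-point equations gives its local Lipschitz dependence on $\mbX^{(\eps)}_{\delta}$. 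Iterating by restarting from $(w(\bar{T}),w'(\bar{T}),w^{\#}(\bar{T}))$ one reaches either $T$ or an explosion of $\norm{\rho^{(\eps)}_{\delta}(t)}_{\mcC^{\alpha+1}}$, precisely the behaviour encoded by the cemetery-state topology~\eqref{eq:FsolT_metric} on $(\mcC^{\alpha+1}(\mbT^{2}))^{\sol}_{T}$; the Lipschitz continuity on each short interval upgrades to continuity of $\mbX^{(\eps)}_{\delta}\mapsto\rho^{(\eps)}_{\delta}$ into $(\mcC^{\alpha+1}(\mbT^{2}))^{\sol}_{T}$ in the standard way.

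The main obstacle is the regularity bookkeeping behind the range $\alpha\in(-9/4,-2)$. The upper bound $\alpha<-2$ is forced by the intrinsic regularity of the stochastic forcing $\vdiv(\srdet\boldsymbol{\xi}^{\delta})$, which sets the regularity of $\ti^{\delta}$; the lower bound $\alpha>-9/4$ is the threshold at which the resonances involving the cubic stochastic object $\tp^{\delta}_{\can}$ (of regularity $3\alpha+6$) can still be paired with a paracontrolled factor of suitably positive regularity to yield well-defined distributions, and simultaneously the threshold at which every other resonance in $\msP$ can be absorbed either into a canonical product or into one of the enhanced components $\ty^{\delta}_{\can}$, $\tp^{\delta}_{\can}$, $\tc^{\delta}$. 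Since all required estimates have already been established in~\cite[Sec.~3]{martini_mayorcas_22_WP} for an arbitrary heterogeneity $\het\in C_{T}\mcH^{2}(\mbT^{2})$, of which $\srdet$ is an instance by Lemma~\ref{lem:regularity_srdet} (using $\rho_{0}\in\mcH^{2}(\mbT^{2})$ and $\rho_{0}>0$), the present theorem follows by a direct transposition of the argument of~\cite[Thm.~3.11]{martini_mayorcas_22_WP}.
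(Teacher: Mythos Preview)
Your proposal is correct and matches the paper's approach: both reduce the statement to the well-posedness theory for generic heterogeneity $\het\in C_{T}\mcH^{2}(\mbT^{2})$ established in~\cite{martini_mayorcas_22_WP}, after invoking Theorem~\ref{thm:existence_enh_can} for the enhancement and Lemma~\ref{lem:regularity_srdet} to place $\srdet$ in $C_{T}\mcH^{2}(\mbT^{2})$. The paper's proof is terser, citing~\cite[Lem.~3.20]{martini_mayorcas_22_WP} for existence and~\cite[Lem.~3.13]{martini_mayorcas_22_WP} for the local Lipschitz continuity (rather than your~\cite[Thm.~3.11]{martini_mayorcas_22_WP}), and records the Besov embedding $\mcH^{2}(\mbT^{2})\embed\mcB_{\infty,1}^{\beta^{\#}}(\mbT^{2})$ needed to fit $\rho_{0}$ into the initial-data space used there.
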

\begin{proof}
	The proof follows by Theorem~\ref{thm:existence_enh_can} and~\cite[Lem.~3.20]{martini_mayorcas_25}, where we used that the arguments of~\cite{martini_mayorcas_25} are agnostic as to changes in $\chem\in\mbR$. For the local Lipschitz continuity in the noise enhancement, see~\cite[Lem.~3.13]{martini_mayorcas_25}.
	\begin{details}
		Let $(\alpha,\infty,1,\beta,\beta',\beta^{\#},\beta^{\#},\kappa,0)$ be a tuple of exponents satisfying~\cite[(3.1)]{martini_mayorcas_25}. It follows by Besov's embedding that $\mcH^{2}(\mbT^{2})=\mcB_{2,2}^{2}(\mbT^{2})\embed\mcB_{\infty,2}^{1}(\mbT^{2})\embed\mcB_{\infty,1}^{\beta^{\#}}(\mbT^{2})$, where we used that $\beta^{\#}<1$. The claim then follows by Theorem~\ref{thm:existence_enh_can} and~\cite[Lem.~3.20]{martini_mayorcas_25} (with $\beta_{0}=\beta^{\#}$).
	\end{details}
\end{proof}
\begin{remark}
	Following~\cite[Rem.~2.4]{martini_mayorcas_25}, it should be possible to consider more general initial data $\rho_{0}$ such that $(1-\Delta)\srdet\in C_{T}L^{1}(\mbT^{2})$ (modulo the introduction of a suitable weight at $0$ to solve~\eqref{eq:rKS_intro_mild}, cf.~\cite[Lem.~3.20]{martini_mayorcas_25}.) This would suggest taking initial data in the Bessel potential space $\mcH^{2}_{1}(\mbT^{2})$ of regularity $2$ and integrability $1$. Unfortunately, $\mcH^{2}_{1}(\mbT^{2})$ seems to be highly pathological (see~\cite[Chpt.~V,~Ex.~6.6]{stein_70} for a discussion in the full space) and it is not even clear if one can establish $\rdet\in C_{T}\mcH^{2}_{1}(\mbT^{2})$. A suitable, albeit suboptimal, replacement is given by $\rho_{0}\in\mcB_{1,1}^{2}(\mbT^{2})$, which allows us to solve $\rdet\in C_{T}\mcB_{1,1}^{2}(\mbT^{2})$. A generalization of Lemma~\ref{lem:regularity_srdet} would then imply $\srdet\in C_{T}\mcB_{1,1}^{2}(\mbT^{2})$, so that in particular $(1-\Delta)\srdet\in C_{T}\mcB_{1,1}^{0}(\mbT^{2})\embed C_{T}L^{1}(\mbT^{2})$. However, this extension doesn't seem sufficiently interesting to warrant an extensive discussion.
\end{remark}
\subsection{Law of Large Numbers}\label{subsec:LLN_rough}
In this subsection we establish a (weak) law of large numbers for solutions $\rho^{(\eps)}_{\delta(\eps)}$ to~\eqref{eq:rKS_intro_mild} in $(\mcC^{\alpha+1}(\mbT^{2}))^{\sol}_{T}$ for every $\alpha\in(-9/4,-2)$ under the relative scaling $\delta(\eps)\to0$ and $\eps\log(\delta(\eps)^{-1})\to0$ as $\eps\to0$ (Theorem~\ref{thm:LLN_rough}). See~\cite[Cor.~4.7]{gubinelli_15_GIP} and~\cite[Cor.~3.13]{catellier_chouk_18} for similar results in the literature.
\begin{theorem}\label{thm:LLN_rough}
	Let $\rho_{0}\in\mcH^{2}(\mbT^{2})$ be such that $\rho_{0}>0$ and $\mean{\rho_{0}}=1$, $\rdet$ be the weak solution to~\eqref{eq:Determ_KS} with initial data $\rho_{0}$ and chemotactic sensitivity $\chem\in\mbR$, and $\Trdet$ be its maximal time of existence (see Theorem~\ref{thm:Determ_KS_Well_Posedness}). Assume $\delta(\eps)\to0$ and $\eps\log(\delta(\eps)^{-1})\to0$ as $\eps\to0$. Then for all $T<\Trdet$ and $\alpha\in(-9/4,-2)$, it follows that $\rho^{(\eps)}_{\delta(\eps)}\to\rdet$ in $(\mcC^{\alpha+1}(\mbT^{2}))^{\sol}_{T}$ in probability as $\eps\to0$.
	%
	%
\end{theorem}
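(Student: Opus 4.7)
The plan is to leverage the paracontrolled framework of Theorem~\ref{thm:rKS_well_posedness_rough}, which expresses $\rho^{(\eps)}_{\delta(\eps)}$ as a locally Lipschitz continuous image $\Phi(\mbX^{(\eps)}_{\delta(\eps)})$ of the noise enhancement. The argument then reduces to two observations: first, that $\rdet$ is recovered as $\Phi$ evaluated at the zero enhancement, and second, that $\mbX^{(\eps)}_{\delta(\eps)} \to 0$ in $\rksnoise{\alpha}{\kappa}_T$ in probability under the prescribed scaling. The conclusion follows from the continuous mapping theorem.

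To identify the limit, we would substitute $\ti = \ty = \tp = \tc = 0$ into the paracontrolled system~\eqref{eq:rKS_paracon}. The paracontrolled term $\chem \eps^{1/2} \vdiv \mcI[w' \pa \ti^{\delta}]$ vanishes, as do the remainder $\msP(\boldsymbol{w}, \mbX)$ and every noise-dependent contribution to $\Omega^{(\eps)}_{\delta}(\boldsymbol{w})$. The system collapses to $w = w^{\#}$ with $w^{\#} = P\rho_0 - \chem \vdiv \mcI[w^{\#} \nabla \Phi_{w^{\#}}]$, which is precisely~\eqref{eq:Determ_KS_mild}. Uniqueness in Theorem~\ref{thm:Determ_KS_Well_Posedness} together with the decomposition $\rho^{(\eps)}_{\delta} = -\eps^{1/2} \ti^{\delta} - \chem\eps\,\ty^{\delta}_{\can} + w^{(\eps)}_{\delta}$ evaluated at $\mbX = 0$ then identifies $\Phi(0) = \rdet$.

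Convergence of the enhancement is an immediate consequence of Theorem~\ref{thm:existence_enh_can}. For any $p \in [1, \infty)$ the components of $\mbX^{(\eps)}_{\delta(\eps)}$ obey
\begin{equation*}
	\mbE\bigl[\norm{\eps^{1/2}\ti^{\delta(\eps)}}_{\msL^\kappa_T \mcC^{\alpha+1}}^p\bigr]^{1/p} \lesssim \eps^{1/2}, \qquad \mbE\bigl[\norm{\eps\,\ty^{\delta(\eps)}_{\can}}_{\msL^\kappa_T \mcC^{2\alpha+4}}^p\bigr]^{1/p} \lesssim \eps\bigl(1 \vee \log(\delta(\eps)^{-1})\bigr),
\end{equation*}
with analogous bounds of order $\eps^{3/2}(1 \vee \log(\delta(\eps)^{-1}))$ and $\eps$ for the third and fourth components. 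The scaling assumption forces $\eps \to 0$, and consequently $\eps^{3/2} \log(\delta(\eps)^{-1}) = \eps^{1/2} \cdot \eps \log(\delta(\eps)^{-1}) \to 0$; hence every right-hand side tends to zero. Markov's inequality then yields $\mbX^{(\eps)}_{\delta(\eps)} \to 0$ in $\rksnoise{\alpha}{\kappa}_T$ in probability.

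The main delicate point is to transfer this convergence through the solution map into $(\mcC^{\alpha+1}(\mbT^2))^{\sol}_T$, whose topology~\eqref{eq:FsolT_metric} accommodates finite-time blow-up. Since $\rdet$ exists and is bounded in $\mcC^{\alpha+1}(\mbT^2)$ throughout $[0, T]$, the local Lipschitz estimate of Theorem~\ref{thm:rKS_well_posedness_rough} guarantees that, for $\eta$ sufficiently small, on the event $\{\norm{\mbX^{(\eps)}_{\delta(\eps)}}_{\rksnoise{\alpha}{\kappa}_T} < \eta\}$ the paracontrolled solution $\rho^{(\eps)}_{\delta(\eps)}$ does not blow up before time $T$ and remains uniformly close to $\rdet$ in $C_T \mcC^{\alpha+1}(\mbT^2)$. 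Since the probability of this event tends to $1$, we obtain $\rho^{(\eps)}_{\delta(\eps)} \to \rdet$ in $(\mcC^{\alpha+1}(\mbT^2))^{\sol}_T$ in probability.
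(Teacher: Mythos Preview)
Your proposal is correct and follows essentially the same approach as the paper: use Theorem~\ref{thm:existence_enh_can} together with the scaling assumption and Markov's inequality to obtain $\mbX^{(\eps)}_{\delta(\eps)}\to 0$ in $\rksnoise{\alpha}{\kappa}_{T}$ in probability, then invoke the local Lipschitz continuity of the solution map from Theorem~\ref{thm:rKS_well_posedness_rough} and the continuous mapping theorem. Your write-up supplies more detail than the paper's terse proof---in particular the explicit identification $\Phi(0)=\rdet$ via collapse of~\eqref{eq:rKS_paracon} and the discussion of how local Lipschitz continuity near a non-exploding limit handles the $(\mcC^{\alpha+1}(\mbT^{2}))^{\sol}_{T}$ topology---but the underlying argument is the same.
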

\begin{proof}
	It follows by the relative scaling $\eps\log(\delta(\eps)^{-1})\to0$, Theorem~\ref{thm:existence_enh_can} and Markov's inequality that $\mbX^{(\eps)}_{\delta(\eps)}\to0$ in $\rksnoise{\alpha}{\kappa}_{T}$ in probability for every $\alpha\in(-5/2,-2)$ and $\kappa\in(0,1/2)$. Further for every $\alpha\in(-9/4,-2)$ we can apply Theorem~\ref{thm:rKS_well_posedness_rough} to construct the paracontrolled solution $\rho^{(\eps)}_{\delta(\eps)}$ to~\eqref{eq:rKS_intro_mild} such that $\rho^{(\eps)}_{\delta(\eps)}\in(\mcC^{\alpha+1}(\mbT^{2}))^{\sol}_{T}$ almost surely. Using that $\rho^{(\eps)}_{\delta(\eps)}$ is locally Lipschitz continuous in $\mbX^{(\eps)}_{\delta(\eps)}$ (Theorem~\ref{thm:rKS_well_posedness_rough}), we can apply the continuous mapping theorem to deduce $\rho^{(\eps)}_{\delta(\eps)}\to\rdet$ in $(\mcC^{\alpha+1}(\mbT^{2}))^{\sol}_{T}$ in probability.
\end{proof}
\subsection{Central Limit Theorem}\label{subsec:CLT}
Having established a law of large numbers (Theorem~\ref{thm:LLN_rough}), we now turn to a central limit theorem (Theorem~\ref{thm:CLT}). For this we need to specialize our relative scaling to $\delta(\eps)\to0$ and $\eps^{\sfrac{1}{2}}\log(\delta(\eps)^{-1})\to0$ as $\eps\to0$ since we lose one order of $\eps^{\sfrac{1}{2}}$ on the scale of the leading-order fluctuations $\eps^{-\sfrac{1}{2}}(\rho^{(\eps)}_{\delta(\eps)}-\rdet)$.

We first show the well-posedness of the generalized Ornstein--Uhlenbeck process, which describes the limiting dynamics of the leading-order fluctuations.
\begin{lemma}\label{lem:genOU_well_posedness}
	Let $\rho_{0}\in\mcH^{2}(\mbT^{2})$ be such that $\rho_{0}>0$ and $\mean{\rho_{0}}=1$, $\rdet$ be the weak solution to~\eqref{eq:Determ_KS} with initial data $\rho_{0}$ and chemotactic sensitivity $\chem\in\mbR$, and $\Trdet$ be its maximal time of existence (see Theorem~\ref{thm:Determ_KS_Well_Posedness}). Then for all $T<\Trdet$ and $\alpha\in(-3,-2)$ there exists a unique mild solution $v\in C_{T}\mcC^{\alpha+1}(\mbT^{2})$ to
	\begin{equation}\label{eq:genOU}
		\begin{cases}
			\begin{aligned}
				(\partial_{t}-\Delta)v&=-\chem\vdiv(v\nabla\Phi_{\rdet})-\chem\vdiv(\rdet\nabla\Phi_{v})-\vdiv(\srdet\boldsymbol{\xi}),\\
				v\tzero&=0.
			\end{aligned}
		\end{cases}
	\end{equation}
	We refer to this $v$ as the generalized Ornstein--Uhlenbeck process.
\end{lemma}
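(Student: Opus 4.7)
The plan is to exploit the linearity and additive-noise structure of~\eqref{eq:genOU} via a Da~Prato--Debussche decomposition: the only stochastic input is an additive forcing, and once it is identified as a well-behaved random distribution the remaining problem is a linear Cauchy problem with deterministic, smooth coefficients that can be solved by a fixed-point argument in a space of distributions.

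First, I would write the mild formulation of~\eqref{eq:genOU},
\begin{equation*}
	v=-\chem\vdiv\mcI[v\nabla\Phi_{\rdet}]-\chem\vdiv\mcI[\rdet\nabla\Phi_{v}]+\Upsilon,\qquad \Upsilon\defeq-\vdiv\mcI[\srdet\boldsymbol{\xi}],
\end{equation*}
so the noise enters only through the (non-mollified) stochastic convolution $\Upsilon$. The bound on $\ti^{\delta}$ in Theorem~\ref{thm:existence_enh_can} is uniform in $\delta>0$ (it depends only on $\norm{\srdet}_{C_{T}L^{\infty}}$), hence, by the same proof (or by passing to the limit $\delta\to0$ using the $L^{p}(\mbP)$ Cauchy property of $(\ti^{\delta})_{\delta>0}$, cf.~\cite[Lem.~2.7]{martini_mayorcas_22_WP}), we have $\Upsilon\in C_{T}\mcC^{\alpha+1}(\mbT^{2})$ almost surely for every $\alpha\in(-5/2,-2)$, in particular for $\alpha\in(-3,-2)$; moreover $\Upsilon\tzero=0$.

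Next, I would set up a Banach fixed point in $C_{\bar{T}}\mcC^{\alpha+1}(\mbT^{2})$ for the affine map
\begin{equation*}
	\Psi(v)\defeq-\chem\vdiv\mcI[v\nabla\Phi_{\rdet}]-\chem\vdiv\mcI[\rdet\nabla\Phi_{v}]+\Upsilon.
\end{equation*}
Since $\rho_{0}\in\mcH^{2}(\mbT^{2})$, Theorem~\ref{thm:Determ_KS_Well_Posedness}~\ref{it:Regular_Weak_Sol} gives $\rdet\in C_{T}\mcH^{2}(\mbT^{2})$, and by Sobolev embedding (Besov in 2D) and elliptic regularity this yields $\rdet\in C_{T}\mcC^{1}(\mbT^{2})$ and $\nabla\Phi_{\rdet}\in C_{T}\mcC^{2}(\mbT^{2})$. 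For $v\in\mcC^{\alpha+1}(\mbT^{2})$ with $\alpha+1\in(-2,-1)$ we have $(\alpha+1)+2>0$, so Bony's decomposition makes the product $v\nabla\Phi_{\rdet}$ well-defined in $\mcC^{\alpha+1}(\mbT^{2})$ with $\norm{v\nabla\Phi_{\rdet}}_{\mcC^{\alpha+1}}\lesssim\norm{\nabla\Phi_{\rdet}}_{\mcC^{2}}\norm{v}_{\mcC^{\alpha+1}}$. Similarly, using that $\nabla\Phi_{v}\in\mcC^{\alpha+2}$ and $\rdet\in\mcC^{1}$ one obtains $\rdet\nabla\Phi_{v}\in\mcC^{\alpha+2}(\mbT^{2})\embed\mcC^{\alpha+1}(\mbT^{2})$. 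Schauder estimates for $\vdiv\mcI$ (losing one order of regularity while gaining a small time factor $\bar{T}^{1/2}$, cf.~Schauder lemmas analogous to those invoked for~\eqref{eq:bound_a_priori_regular_I}--\eqref{eq:bound_a_priori_regular_II}) then give, for some constant $C$ depending only on $T$, $\chem$, $\alpha$ and $\norm{\rdet}_{C_{T}\mcH^{2}}$,
\begin{equation*}
	\norm{\Psi(v_{1})-\Psi(v_{2})}_{C_{\bar{T}}\mcC^{\alpha+1}}\leq C\bar{T}^{1/2}\norm{v_{1}-v_{2}}_{C_{\bar{T}}\mcC^{\alpha+1}}.
\end{equation*}
For $\bar{T}$ small enough this makes $\Psi$ a contraction on the affine subspace $\{v\in C_{\bar{T}}\mcC^{\alpha+1}:v\tzero=0\}$; Banach's fixed point theorem then delivers a unique local mild solution. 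Linearity and the bound $\norm{\rdet}_{C_{[t_{0},T]}\mcH^{2}}<\infty$ uniformly in $t_{0}$ allow one to iterate the construction on successive intervals of length $\bar{T}$, so that the solution extends to $[0,T]$ for every $T<\Trdet$ without accumulation of the blow-up time.

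The main obstacle will be to carry out the product estimates uniformly in time with the correct constants: one must ensure that the very low regularity of $v$ (and of $\nabla\Phi_{v}$) is compatible with the smoothness of the coefficients $\nabla\Phi_{\rdet}$ and $\rdet$, so the sum-of-regularities condition of Bony's lemma is satisfied strictly, and that the Schauder estimate chosen for $\vdiv\mcI$ actually returns to the starting space $\mcC^{\alpha+1}$ with a small power of $\bar{T}$. Everything else --- existence of the stochastic convolution, uniqueness, and the global extension --- is then a direct consequence of linearity and the a priori bound supplied by the contraction.
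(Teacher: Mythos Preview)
Your proposal is correct and follows essentially the same strategy as the paper: identify the stochastic convolution as an element of $C_{T}\mcC^{\alpha+1}$, then run a Banach fixed point for the resulting linear equation using Bony's product estimates (which close precisely because $(\alpha+1)+2>0$, i.e.\ $\alpha>-3$) together with Schauder estimates that produce a positive power of $\bar{T}$. The only cosmetic difference is that the paper actually carries out the Da~Prato--Debussche split $v=-\ti+u$ and solves for the smoother remainder $u$ in $\mcC^{\beta}$ with $\beta\in(-1,\alpha+2)$, whereas you keep $v$ as the unknown and treat $\Upsilon$ as an inhomogeneous forcing; since the equation is linear and the coefficients $\rdet,\nabla\Phi_{\rdet}$ are smooth, both routes go through with the same estimates.
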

\begin{proof}
	It suffices to construct a fixed point $u$ to the linear mild equation
	\begin{equation}\label{eq:genOU_Da_Prato_Debussche_remainder}
		u=-\chem\vdiv\mcI[u\nabla\Phi_{\rdet}]-\chem\vdiv\mcI[\rdet\nabla\Phi_{u}]+\chem\vdiv\mcI[\ti\nabla\Phi_{\rdet}]+\chem\vdiv\mcI[\rdet\nabla\Phi_{\ti}],
	\end{equation}
	from which we may recover a suitable solution $v$ to~\eqref{eq:genOU} via the Da~Prato--Debussche decomposition $v=-\ti+u$. In particular, the space regularity of $v$ is determined by the regularity of $\ti$ which is given by $C_{T}\mcC^{\alpha+1}(\mbT^{2})$ (cf.~\cite[Lem.~2.7]{martini_mayorcas_25}). It follows by the linearity of~\eqref{eq:genOU_Da_Prato_Debussche_remainder} that the lifetimes of $v$ and $u$ are determined by the lifetimes of their coefficients. The lower bound on $\alpha$ ensures that the products $\ti\nabla\Phi_{\rdet}$ and $\rdet\nabla\Phi_{\ti}$ appearing in~\eqref{eq:genOU_Da_Prato_Debussche_remainder} are well-posed by Bony's estimates~\cite[Lem.~2.1]{gubinelli_15_GIP}.
	\begin{details}
		\paragraph{Detailed Construction.}
		An application of Theorem~\ref{thm:Determ_KS_Well_Posedness} yields $\rdet\in C_{T}\mcH^{2}(\mbT^{2})\embed C_{T}\mcB_{\infty,2}^{1}(\mbT^{2})\embed C_{T}\mcC^{1}(\mbT^{2})$ for every $T<\Trdet$. We construct a solution to the equation
		\begin{equation}\label{eq:genOU_remainder}
			u=Pu_{0}-\chem\vdiv\mcI[u\nabla\Phi_{\rdet}]-\chem\vdiv\mcI[\rdet\nabla\Phi_{u}]+\chem\vdiv\mcI[\ti\nabla\Phi_{\rdet}]+\chem\vdiv\mcI[\rdet\nabla\Phi_{\ti}]
		\end{equation}
		in the H\"{o}lder--Besov space $\mcC^{\beta}(\mbT^{2})$ of regularity $\beta\in(-1,\alpha+2)$ with $\alpha\in(-3,-2)$. Here we consider initial data  $u_{0}\in\mcC^{\beta}(\mbT^{2})$, which will be convenient in the proof of the well-posedness of $u$ on $[0,T]$ via an iteration argument.
		
		Let $\bar{T}\in(0,1\wedge T]$ and $\kappa\in[0,1]$, we define the map $\Psi$, acting on  $\msL_{\bar{T}}^{\kappa}\mcC^{\beta}(\mbT^{2})$, by
		\begin{equation*}
			\Psi(u)\defeq Pu_{0}-\chem\vdiv\mcI[u\nabla\Phi_{\rdet}]-\chem\vdiv\mcI[\rdet\nabla\Phi_{u}]+\chem\vdiv\mcI[\ti\nabla\Phi_{\rdet}]+\chem\vdiv\mcI[\rdet\nabla\Phi_{\ti}].
		\end{equation*}
		An application of Schauder's estimate~\cite[Lem.~A.6]{martini_mayorcas_25} (using that $\alpha+\beta+2\leq\beta+1<\alpha+\beta+4$ and $\alpha+1\leq\beta+1<\alpha+3$) combined with Bony's decomposition yields
		\begin{equation}\label{eq:genOU_bound_Schauder}
			\begin{split}
				&\norm{\Psi(u)}_{\msL_{\bar{T}}^{\kappa}\mcC^{\beta}}\\
				&\lesssim\norm{u_{0}}_{\mcC^{\beta}}+\abs{\chem}(\bar{T}^{\frac{\alpha+3}{2}}\vee \bar{T}^{1-\kappa})\Big(\norm{u\nabla\Phi_{\rdet}}_{C_{\bar{T}}\mcC^{\alpha+\beta+2}}+\norm{\rdet\nabla\Phi_{u}}_{C_{\bar{T}}\mcC^{\alpha+\beta+2}}+\norm{\ti\pa\nabla\Phi_{\rdet}}_{C_{\bar{T}}\mcC^{\alpha+\beta+2}}\\
				&\multiquad[14]+\norm{\rdet\pa\nabla\Phi_{\ti}}_{C_{\bar{T}}\mcC^{\alpha+\beta+2}}+\norm{\nabla\Phi_{\ti}\pa\rdet}_{C_{\bar{T}}\mcC^{\alpha+\beta+2}}\\
				&\multiquad[14]+\norm{\rdet\re\nabla\Phi_{\ti}}_{C_{\bar{T}}\mcC^{\alpha+\beta+2}}+\norm{\ti\re\nabla\Phi_{\rdet}}_{C_{\bar{T}}\mcC^{\alpha+\beta+2}}\Big)\\
				&\quad+\abs{\chem}(\bar{T}^{1-\frac{\beta-\alpha}{2}}\vee\bar{T}^{1-\kappa})\norm{\nabla\Phi_{\rdet}\pa\ti}_{C_{\bar{T}}\mcC^{\alpha+1}}.
			\end{split}
		\end{equation}
		We further bound with Bony's estimates~\cite[Lem.~A.4]{martini_mayorcas_25}, using that $\alpha+2<0$ and $\beta\in(-2,0)$,
		\begin{equation*}
			\begin{split}
				\norm{u\nabla\Phi_{\rdet}}_{C_{\bar{T}}\mcC^{\alpha+\beta+2}}\lesssim\norm{u\nabla\Phi_{\rdet}}_{C_{\bar{T}}\mcC^{\beta}}&\leq\norm{u\pa\nabla\Phi_{\rdet}}_{C_{\bar{T}}\mcC^{\beta+2}}+\norm{\nabla\Phi_{\rdet}\pa u}_{C_{\bar{T}}\mcC^{\beta}}+\norm{u\re\nabla\Phi_{\rdet}}_{C_{\bar{T}}\mcC^{\beta+2}}\\
				&\lesssim\norm{u}_{C_{\bar{T}}\mcC^{\beta}}\norm{\rdet}_{C_{\bar{T}}\mcC^{1}};
			\end{split}
		\end{equation*}
		using that $\alpha+2<0$ and $\beta\in(-2,0)$,
		\begin{equation*}
			\begin{split}
				\norm{\rdet\nabla\Phi_{u}}_{C_{\bar{T}}\mcC^{\alpha+\beta+2}}\lesssim\norm{\rdet\nabla\Phi_{u}}_{C_{\bar{T}}\mcC^{\beta+1}}&\leq\norm{\rdet\pa\nabla\Phi_{u}}_{C_{\bar{T}}\mcC^{\beta+1}}+\norm{\nabla\Phi_{u}\pa\rdet}_{C_{\bar{T}}\mcC^{\beta+1}}+\norm{\rdet\re\nabla\Phi_{u}}_{C_{\bar{T}}\mcC^{\beta+2}}\\
				&\lesssim\norm{\rdet}_{C_{\bar{T}}\mcC^{1}}\norm{u}_{C_{\bar{T}}\mcC^{\beta}};
			\end{split}
		\end{equation*}
		using that $\alpha+2<0$ and $\beta<0$,
		\begin{equation*}
			\begin{split}
				\norm{\ti\pa\nabla\Phi_{\rdet}}_{C_{\bar{T}}\mcC^{\alpha+\beta+2}}&\lesssim\norm{\ti}_{C_{\bar{T}}\mcC^{\alpha+1}}\norm{\rdet}_{C_{\bar{T}}\mcC^{\beta}},\\
				\norm{\rdet\pa\nabla\Phi_{\ti}}_{C_{\bar{T}}\mcC^{\alpha+\beta+2}}&\lesssim\norm{\rdet}_{C_{\bar{T}}\mcC^{\beta}}\norm{\ti}_{C_{\bar{T}}\mcC^{\alpha+1}},\\
				\norm{\nabla\Phi_{\ti}\pa\rdet}_{C_{\bar{T}}\mcC^{\alpha+\beta+2}}&\lesssim\norm{\ti}_{C_{\bar{T}}\mcC^{\alpha+1}}\norm{\rdet}_{C_{\bar{T}}\mcC^{\beta}};
			\end{split}
		\end{equation*}
		using that $\alpha+3>0$,
		\begin{equation*}
			\begin{split}
				\norm{\rdet\re\nabla\Phi_{\ti}}_{C_{\bar{T}}\mcC^{\alpha+\beta+2}}&\lesssim\norm{\rdet\re\nabla\Phi_{\ti}}_{C_{\bar{T}}\mcC^{\alpha+3}}\lesssim\norm{\rdet}_{C_{\bar{T}}\mcC^{1}}\norm{\ti}_{C_{\bar{T}}\mcC^{\alpha+1}},\\
				\norm{\ti\re\nabla\Phi_{\rdet}}_{C_{\bar{T}}\mcC^{\alpha+\beta+2}}&\lesssim\norm{\ti\re\nabla\Phi_{\rdet}}_{C_{\bar{T}}\mcC^{\alpha+3}}\lesssim\norm{\ti}_{C_{\bar{T}}\mcC^{\alpha+1}}\norm{\rdet}_{C_{\bar{T}}\mcC^{1}};
			\end{split}
		\end{equation*}
		and using that $\beta+1>0$,
		\begin{equation*}
			\norm{\nabla\Phi_{\rdet}\pa\ti}_{C_{\bar{T}}\mcC^{\alpha+1}}\lesssim\norm{\rdet}_{C_{\bar{T}}\mcC^{\beta}}\norm{\ti}_{C_{\bar{T}}\mcC^{\alpha+1}}.
		\end{equation*}
		Plugging these into~\eqref{eq:genOU_bound_Schauder}, we obtain that there exists some $\theta>0$ such that
		\begin{equation}\label{eq:genOU_a_priori}
			\norm{\Psi(u)}_{\msL_{\bar{T}}^{\kappa}\mcC^{\beta}}\lesssim(\norm{u_{0}}_{\mcC^{\beta}}+\abs{\chem}+\abs{\chem}\bar{T}^{\theta}\norm{u}_{C_{\bar{T}}\mcC^{\beta}})(1+\norm{\rdet}_{C_{\bar{T}}\mcC^{1}})(1+\norm{\ti}_{C_{\bar{T}}\mcC^{\alpha+1}}).
		\end{equation}
		Let $C$ be the implicit constant of~\eqref{eq:genOU_a_priori} above and let $M_{0},M,R>0$ be constants such that
		\begin{equation*}
			\norm{u_{0}}_{\mcC^{\beta}}<M_{0},
		\end{equation*}
		\begin{equation*}
			(1+\norm{\rdet}_{C_{T}\mcC^{1}})(1+\norm{\ti}_{C_{T}\mcC^{\alpha+1}})<M
		\end{equation*}
		and
		\begin{equation*}
			C(M_{0}+2\chem)M<R.
		\end{equation*}
		Define
		\begin{equation*}
			\mfB_{R;\bar{T}}\defeq\{u\in\msL_{\bar{T}}^{\kappa}\mcC^{\beta}(\mbT^{2}):\norm{u}_{\msL_{\bar{T}}^{\kappa}\mcC^{\beta}}<R\}
		\end{equation*}
		and let $\bar{T}=\bar{T}(R,\theta)\leq1\wedge T$ be sufficiently small such that
		\begin{equation*}
			\norm{\Psi(u)}_{\msL_{\bar{T}}^{\kappa}\mcC^{\beta}}\leq C(M_{0}+\abs{\chem}+\abs{\chem}\bar{T}^{\theta}R)M<R.
		\end{equation*}
		Hence, $\Psi$ is self-mapping on $\mfB_{R;\bar{T}}$. By linearity it is also a contraction so that we can construct a unique fixed-point in $\mfB_{R;\bar{T}}$ by Banach's fixed-point theorem, which yields a solution to~\eqref{eq:genOU_remainder}.
		
		Let $u,\widetilde{u}$ be two tentative solutions to~\eqref{eq:genOU_remainder}. We obtain by the same bounds as above for each $\bar{T}\leq1\wedge T$,
		\begin{equation*}
			\norm{u-\widetilde{u}}_{\msL_{\bar{T}}^{\kappa}\mcC^{\beta}}\lesssim\abs{\chem}(\bar{T}^{\frac{\alpha+3}{2}}\vee\bar{T}^{1-\kappa})\norm{u-\widetilde{u}}_{C_{\bar{T}}\mcC^{\beta}}\norm{\rdet}_{C_{\bar{T}}\mcC^{1}}.
		\end{equation*}
		Choosing $\bar{T}$ sufficiently small depending on $\abs{\chem}$, $\alpha$, $\kappa$, $\norm{\rdet}_{C_{T}\mcC^{1}}$ (and some implicit constant), we obtain $u=\widetilde{u}$; hence the solution to~\eqref{eq:genOU_remainder} is unique in $\msL_{\bar{T}}^{\kappa}\mcC^{\beta}(\mbT^{2})$.
		
		Iterating the local existence, we can construct $u\in(\mcC^{\beta}(\mbT^{2}))^{\sol}_{T}$. Next we use the linearity of $u$ to exclude the possibility of blow-up before time $T$. Given a fixed point $u=\Psi(u)$ it follows by~\eqref{eq:genOU_a_priori} that for all $\bar{T}\in(0,1\wedge T]$,
		\begin{equation*}
			\norm{u}_{\msL_{\bar{T}}^{\kappa}\mcC^{\beta}}\lesssim(\norm{u_{0}}_{\mcC^{\beta}}+\abs{\chem}+\abs{\chem}\bar{T}^{\theta}\norm{u}_{C_{\bar{T}}\mcC^{\beta}})(1+\norm{\rdet}_{C_{T}\mcC^{1}})(1+\norm{\ti}_{C_{T}\mcC^{\alpha+1}}).
		\end{equation*}
		Choosing $\bar{T}$ sufficiently small depending on $\abs{\chem}$, $\theta$, $\norm{\rdet}_{C_{T}\mcC^{1}}$, $\norm{\ti}_{C_{T}\mcC^{\alpha+1}}$ (and some implicit constant), but independent of $\norm{u_{0}}_{\mcC^{\beta}}$, we can absorb terms to obtain the bound
		\begin{equation*}
			\norm{u}_{\msL_{\bar{T}}^{\kappa}\mcC^{\beta}}\lesssim(\norm{u_{0}}_{\mcC^{\beta}}+\abs{\chem})(1+\norm{\rdet}_{C_{T}\mcC^{1}})(1+\norm{\ti}_{C_{T}\mcC^{\alpha+1}}).
		\end{equation*}
		Assume $T^{\cem}_{\mcC^{\beta}}[u]<T$ and let $S<T^{\cem}_{\mcC^{\beta}}[u]$ be such that $S+\bar{T}>T^{\cem}_{\mcC^{\beta}}[u]$. In particular, we obtain
		\begin{equation*}
			\norm{u}_{\msL_{[S,S+\bar{T}]}^{\kappa}\mcC^{\beta}}\lesssim(\norm{u_{S}}_{\mcC^{\beta}}+\abs{\chem})(1+\norm{\rdet}_{C_{T}\mcC^{1}})(1+\norm{\ti}_{C_{T}\mcC^{\alpha+1}}),
		\end{equation*}
		which shows that $\lim_{t\nearrow T^{\cem}_{\mcC^{\beta}}[u]}\norm{u_{t}}_{\mcC^{\beta}}<\infty$, contradicting the definition of $T^{\cem}_{\mcC^{\beta}}[u]$, thereby ruling out the possibility of blow-up before time $T$. By the same argument we can also rule out the possibility of blow-up at time $T$, which implies $u\in C_{T}\mcC^{\beta}(\mbT^{2})$. Setting $v=-\ti+u$ and using that $\ti\in C_{T}\mcC^{\alpha+1}(\mbT^{2})$~\cite[Lem.~2.7]{martini_mayorcas_25}, combined with the embedding $\mcC^{\beta}(\mbT^{2})\embed\mcC^{\alpha+1}(\mbT^{2})$ (since $\alpha+1<-1<\beta$), we obtain $v\in C_{T}\mcC^{\alpha+1}(\mbT^{2})$, which yields the claim.
	\end{details}
\end{proof}
Next we show the central limit theorem for $(\rho^{(\eps)}_{\delta(\eps)})_{\eps>0}$.
\begin{theorem}\label{thm:CLT}
	Let $\rho_{0}\in\mcH^{2}(\mbT^{2})$ be such that $\rho_{0}>0$ and $\mean{\rho_{0}}=1$, $\rdet$ be the weak solution to~\eqref{eq:Determ_KS} with initial data $\rho_{0}$ and chemotactic sensitivity $\chem\in\mbR$, and $\Trdet$ be its maximal time of existence (see Theorem~\ref{thm:Determ_KS_Well_Posedness}). Assume $\delta(\eps)\to0$ and $\eps^{\sfrac{1}{2}}\log(\delta(\eps)^{-1})\to0$ as $\eps\to0$. Then for all $T<\Trdet$ and  $\alpha\in(-9/4,-2)$, it follows that $\eps^{-\sfrac{1}{2}}(\rho^{(\eps)}_{\delta(\eps)}-\rdet)\to v$ in $(\mcC^{\alpha+1}(\mbT^{2}))^{\sol}_{T}$ in probability as $\eps\to0$.
\end{theorem}
\begin{proof}
	We use the Da Prato--Debussche decompositions for $\rho^{(\eps)}_{\delta(\eps)}$ and $v$, given by $\rho^{(\eps)}_{\delta(\eps)}=-\eps^{\sfrac{1}{2}}\ti^{\delta(\eps)}-\chem\eps\ty^{\delta(\eps)}_{\can}+w^{(\eps)}_{\delta(\eps)}$ (see Subsection~\ref{subsec:well_posedness_rough}) and $v=-\ti+u$ (see the proof of Lemma~\ref{lem:genOU_well_posedness}), to deduce
	\begin{equation*}
		\eps^{-\sfrac{1}{2}}(\rho^{(\eps)}_{\delta(\eps)}-\rdet)-v=\eps^{-\sfrac{1}{2}}(w^{(\eps)}_{\delta(\eps)}-\rdet)-u-\ti^{\delta(\eps)}+\ti-\chem\eps^{\sfrac{1}{2}}\ty_{\can}^{\delta(\eps)}.
	\end{equation*}
	The convergence $\eps^{-\sfrac{1}{2}}(w^{(\eps)}_{\delta(\eps)}-\rdet)\to u$ follows by the paracontrolled decomposition, in particular the local Lipschitz continuity of the paracontrolled solution in the enhancement (Theorem~\ref{thm:rKS_well_posedness_rough}); the convergence $\ti^{\delta(\eps)}\to\ti$ follows by~\cite[Lem.~2.7]{martini_mayorcas_25}; the convergence $\eps^{\sfrac{1}{2}}\ty^{\delta(\eps)}_{\can}\to0$ follows by Theorem~\ref{thm:existence_enh_can} combined with the relative scaling $\eps^{\sfrac{1}{2}}\log(\delta(\eps)^{-1})\to0$.
	\begin{details}
		\paragraph{Detailed Proof.}
		We denote $v^{(\eps)}\defeq\eps^{-\sfrac{1}{2}}(\rho^{(\eps)}_{\delta(\eps)}-\rdet)$ for every $\eps>0$. To deduce that $(v^{(\eps)})_{\eps>0}$ convergences to $v$ in $(\mcC^{\alpha+1}(\mbT^{2}))^{\sol}_{T}$ in probability as $\eps\to0$, it suffices to show that every subsequence of $(v^{(\eps)})_{\eps>0}$ has a sub-subsequence that converges almost surely to $v$ in $(\mcC^{\alpha+1}(\mbT^{2}))^{\sol}_{T}$.
		
		It follows by Theorem~\ref{thm:existence_enh_can} and the relative scaling $\eps^{\sfrac{1}{2}}\log(\delta(\eps)^{-1})\to0$, that $\eps^{-\sfrac{1}{2}}\mbX^{(\eps)}_{\delta(\eps)}\to0$ in $\rksnoise{\alpha}{\kappa}_{T}$ in probability as $\eps\to0$. In particular, there exists a subsequence (which we do not relabel) such that almost sure convergence holds. For every $\eps>0$ let $\boldsymbol{w}=\boldsymbol{w}^{(\eps)}_{\delta(\eps)}$ be a solution to~\eqref{eq:rKS_paracon} driven by $\mbX^{(\eps)}_{\delta(\eps)}$ as constructed in Theorem~\ref{thm:rKS_well_posedness_rough}. We use the paracontrolled decomposition of $w$ and collect powers of $\eps^{\sfrac{1}{2}}$ to obtain the expansion,
		\begin{equation*}
			\begin{split}
				w&=P\rho_{0}-\chem\vdiv\mcI[w\nabla\Phi_{w}]\\
				&\quad+\chem\eps^{\sfrac{1}{2}}\Bigl(\vdiv\mcI[\nabla\Phi_{w}\pa\ti^{\delta(\eps)}]+\vdiv\mcI[w\pa\nabla\Phi_{\ti^{\delta(\eps)}}]+\vdiv\mcI[\nabla\Phi_{\ti^{\delta(\eps)}}\pa w]\\
				&\multiquad[5]+\vdiv\mcI[\ti^{\delta(\eps)}\pa\nabla\Phi_{w}]+\vdiv\mcI[w^{\#}\re\nabla\Phi_{\ti^{\delta(\eps)}}]+\vdiv\mcI[\nabla\Phi_{w^{\#}}\re\ti^{\delta(\eps)}]\Bigr)\\
				&\quad+\chem^{2}\eps^{\sfrac{1}{2}}\Bigl(\eps^{\sfrac{1}{2}}\vdiv\mcI[w\nabla\Phi_{\ty^{\delta(\eps)}_{\can}}]+\eps^{\sfrac{1}{2}}\vdiv\mcI[\ty^{\delta(\eps)}_{\can}\nabla\Phi_{w}]\Bigr)\\
				&\quad+\chem^{2}\eps\Bigl(\vdiv\mcI[\msC(w',\nabla\mcI[\ti^{\delta(\eps)}],\nabla\Phi_{\ti^{\delta(\eps)}})]+\vdiv\mcI[\msC(w',\nabla^{2}\mcI[\Phi_{\ti^{\delta(\eps)}}],\ti^{\delta(\eps)})]\\
				&\multiquad[4]+\vdiv\mcI[(\vdiv\mcI[w'\pa\ti^{\delta(\eps)}]-w'\pa\nabla\mcI[\ti^{\delta(\eps)}])\re\nabla\Phi_{\ti^{\delta(\eps)}}]\\
				&\multiquad[4]+\vdiv\mcI[(\nabla\vdiv\Phi_{\mcI[w'\pa\ti^{\delta(\eps)}]}-w'\pa\nabla^{2}\mcI[\Phi_{\ti^{\delta(\eps)}}])\re\ti^{\delta(\eps)}]+\vdiv\mcI[w'\tc^{\delta(\eps)}]\Bigr)\\
				&\quad-\chem^{2}\eps\Bigl(\eps^{\sfrac{1}{2}}\vdiv\mcI[\nabla\Phi_{\ty^{\delta(\eps)}_{\can}}\pa\ti^{\delta(\eps)}]+\eps^{\sfrac{1}{2}}\vdiv\mcI[\tp^{\delta(\eps)}_{\can}]+\eps^{\sfrac{1}{2}}\vdiv\mcI[\nabla\Phi_{\ti^{\delta(\eps)}}\pa\ty^{\delta(\eps)}_{\can}]\\
				&\multiquad[4]+\eps^{\sfrac{1}{2}}\vdiv\mcI[\ty^{\delta(\eps)}_{\can}\pa\nabla\Phi_{\ti^{\delta(\eps)}}]+\eps^{\sfrac{1}{2}}\vdiv\mcI[\ti^{\delta(\eps)}\pa\nabla\Phi_{\ty^{\delta(\eps)}_{\can}}]\Bigr)\\
				&\quad-\chem^{3}\eps\Bigl(\eps\vdiv\mcI[\ty^{\delta(\eps)}_{\can}\nabla\Phi_{\ty^{\delta(\eps)}_{\can}}]\Bigr)\\
				&\eqdef P\rho_{0}-\chem\vdiv\mcI[w\nabla\Phi_{w}]\\
				&\quad+\chem\eps^{\sfrac{1}{2}}\Bigl(\vdiv\mcI[\nabla\Phi_{w}\pa\ti^{\delta(\eps)}]+\vdiv\mcI[w\pa\nabla\Phi_{\ti^{\delta(\eps)}}]+\vdiv\mcI[\nabla\Phi_{\ti^{\delta(\eps)}}\pa w]\\
				&\multiquad[5]+\vdiv\mcI[\ti^{\delta(\eps)}\pa\nabla\Phi_{w}]+\vdiv\mcI[w^{\#}\re\nabla\Phi_{\ti^{\delta(\eps)}}]+\vdiv\mcI[\nabla\Phi_{w^{\#}}\re\ti^{\delta(\eps)}]\Bigr)\\
				&\quad+\chem^{2}\eps^{\sfrac{1}{2}}\mcR_{1}^{\eps}+\chem^{2}\eps\mcR_{2}^{\eps}-\chem^{2}\eps\mcR_{3}^{\eps}-\chem^{3}\eps\mcR_{4}^{\eps}.
			\end{split}
		\end{equation*}
		Let $u$ be a solution to the linear mild equation~\eqref{eq:genOU_remainder} with $u_{0}=0$, as constructed in the proof of Lemma~\ref{lem:genOU_well_posedness}. We expand the difference between $u$ and the leading order fluctuations of $w^{(\eps)}_{\delta(\eps)}$ to obtain
		\begin{equation*}
			\begin{split}
				&\eps^{-\sfrac{1}{2}}(w^{(\eps)}_{\delta(\eps)}-\rdet)-u\\
				&=-\chem\eps^{-\sfrac{1}{2}}\Bigl(\vdiv\mcI[w\nabla\Phi_{w}]-\vdiv\mcI[\rdet\nabla\Phi_{\rdet}]\Bigr)+\chem\Bigl(\vdiv\mcI[u\nabla\Phi_{\rdet}]+\vdiv\mcI[\rdet\nabla\Phi_{u}]\Bigr)\\
				&\quad+\chem\Bigl(\vdiv\mcI[\nabla\Phi_{w}\pa\ti^{\delta(\eps)}]+\vdiv\mcI[w\pa\nabla\Phi_{\ti^{\delta(\eps)}}]+\vdiv\mcI[\nabla\Phi_{\ti^{\delta(\eps)}}\pa w]\\
				&\multiquad[4]+\vdiv\mcI[\ti^{\delta(\eps)}\pa\nabla\Phi_{w}]+\vdiv\mcI[w^{\#}\re\nabla\Phi_{\ti^{\delta(\eps)}}]+\vdiv\mcI[\nabla\Phi_{w^{\#}}\re\ti^{\delta(\eps)}]\Bigr)\\
				&\quad-\chem\Bigl(\vdiv\mcI[\nabla\Phi_{\rdet}\pa\ti]+\vdiv\mcI[\rdet\pa\nabla\Phi_{\ti}]+\vdiv\mcI[\nabla\Phi_{\ti}\pa \rdet]\\
				&\multiquad[4]+\vdiv\mcI[\ti\pa\nabla\Phi_{\rdet}]+\vdiv\mcI[\rdet\re\nabla\Phi_{\ti}]+\vdiv\mcI[\nabla\Phi_{\rdet}\re\ti]\Bigr)\\
				&\quad+\chem^{2}\mcR_{1}^{\eps}+\chem^{2}\eps^{\sfrac{1}{2}}\mcR_{2}^{\eps}-\chem^{2}\eps^{\sfrac{1}{2}}\mcR_{3}^{\eps}-\chem^{3}\eps^{\sfrac{1}{2}}\mcR_{4}^{\eps}\\
				&\eqdef-\chem\eps^{-\sfrac{1}{2}}\Bigl(\vdiv\mcI[w\nabla\Phi_{w}]-\vdiv\mcI[\rdet\nabla\Phi_{\rdet}]\Bigr)+\chem\Bigl(\vdiv\mcI[u\nabla\Phi_{\rdet}]+\vdiv\mcI[\rdet\nabla\Phi_{u}]\Bigr)\\
				&\quad+\chem^{2}\mcR_{1}^{\eps}+\chem^{2}\eps^{\sfrac{1}{2}}\mcR_{2}^{\eps}-\chem^{2}\eps^{\sfrac{1}{2}}\mcR_{3}^{\eps}-\chem^{3}\eps^{\sfrac{1}{2}}\mcR_{4}^{\eps}+\chem\mcR_{5}^{\eps}.
			\end{split}
		\end{equation*}
		Denote $\rem^{(\eps)}\defeq\eps^{-\sfrac{1}{2}}(w^{(\eps)}_{\delta(\eps)}-\rdet)-u$, for which we obtain the equation
		\begin{equation*}
			\begin{split}
				\rem^{(\eps)}&=-\chem\vdiv\mcI[\rem^{(\eps)}\nabla\Phi_{w^{(\eps)}_{\delta(\eps)}}]-\chem\vdiv\mcI[\rdet\nabla\Phi_{\rem^{(\eps)}}]-\chem\vdiv\mcI[u\nabla\Phi_{w^{(\eps)}_{\delta(\eps)}-\rdet}]\\
				&\quad+\chem^{2}\mcR_{1}^{\eps}+\chem^{2}\eps^{\sfrac{1}{2}}\mcR_{2}^{\eps}-\chem^{2}\eps^{\sfrac{1}{2}}\mcR_{3}^{\eps}-\chem^{3}\eps^{\sfrac{1}{2}}\mcR_{4}^{\eps}+\chem\mcR_{5}^{\eps}\\
				&\eqdef-\chem\vdiv\mcI[\rem^{(\eps)}\nabla\Phi_{w^{(\eps)}_{\delta(\eps)}}]-\chem\vdiv\mcI[\rdet\nabla\Phi_{\rem^{(\eps)}}]+\mcR^{\eps}.
			\end{split}
		\end{equation*}
		Let $(\alpha,\infty,1,\beta,\beta',\beta^{\#},\beta^{\#},\kappa,0)$ be a tuple of exponents satisfying~\cite[(3.1)]{martini_mayorcas_25}. Proceeding as in the proof of~\cite[Lem.~3.7~\&~3.8]{martini_mayorcas_25} we can show that almost surely
		\begin{equation*}
			\norm{\mcR^{\eps}}_{\msL_{\bar{T}}^{\kappa}\mcC^{\beta}}\to0\quad\text{as}~\eps\to0.
		\end{equation*}
		To establish the almost sure convergence of $\rem^{(\eps)}\to0$ in $(\mcC^{\alpha+1}(\mbT^{2}))^{\sol}_{T}$, it suffices to argue on small time intervals of size $\bar{T}\in(0,1\wedge T]$. An application of Schauder's estimate~\cite[Lem.~A.6]{martini_mayorcas_25} yields
		\begin{equation*}
			\begin{split}
				\norm{\rem^{(\eps)}}_{\msL_{\bar{T}}^{\kappa}\mcC^{\beta}}&\lesssim(\bar{T}^{\frac{\alpha+3}{2}}\vee \bar{T}^{1-\kappa})\Bigl(\norm{\rem^{(\eps)}\nabla\Phi_{w^{(\eps)}_{\delta(\eps)}}}_{C_{\bar{T}}\mcC^{\alpha+\beta+2}}+\norm{\rdet\nabla\Phi_{\rem^{(\eps)}}}_{C_{\bar{T}}\mcC^{\alpha+\beta+2}}\Bigr)+\norm{\mcR^{\eps}}_{\msL_{\bar{T}}^{\kappa}\mcC^{\beta}}\\
				&\lesssim(\bar{T}^{\frac{\alpha+3}{2}}\vee \bar{T}^{1-\kappa})\Bigl(\norm{\rem^{(\eps)}}_{C_{\bar{T}}\mcC^{\beta}}\norm{w^{(\eps)}_{\delta(\eps)}}_{C_{\bar{T}}\mcC^{\beta}}+\norm{\rdet}_{C_{\bar{T}}\mcC^{\beta}}\norm{\rem^{(\eps)}}_{C_{\bar{T}}\mcC^{\beta}}\Bigr)+\norm{\mcR^{\eps}}_{\msL_{\bar{T}}^{\kappa}\mcC^{\beta}},
			\end{split}
		\end{equation*}
		which allows us to choose $\bar{T}=\bar{T}(\alpha,\kappa,\norm{w^{(\eps)}_{\delta(\eps)}}_{C_{T}\mcC^{\beta}},\norm{\rdet}_{C_{T}\mcC^{\beta}})$ sufficiently small to absorb terms and control
		\begin{equation*}
			\norm{\rem^{(\eps)}}_{\msL_{\bar{T}}^{\kappa}\mcC^{\beta}}\lesssim\norm{\mcR^{\eps}}_{\msL_{\bar{T}}^{\kappa}\mcC^{\beta}}\to0\quad\text{as}~\eps\to0.
		\end{equation*}
		Note here that $\norm{w^{(\eps)}_{\delta(\eps)}}_{C_{T}\mcC^{\beta}}$ depends only on $\norm{\rho_{0}}_{\mcB_{\infty,1}^{\beta^{\#}}}$ and $\norm{\mbX^{(\eps)}_{\delta(\eps)}}_{\rksnoise{\alpha}{\kappa}_{T}}$. In particular, it follows by Lemma~\ref{lem:blow_up_lsc} and the convergence $w^{(\eps)}_{\delta(\eps)}\to\rdet$ in $(C^{\alpha+1}(\mbT^{2}))^{\sol}_{T}$ as $\eps\to0$, that $T^{\cem}_{\mcC^{\beta}}[\rdet]\leq\liminf_{\eps\to0}T^{\cem}_{\mcC^{\beta}}[w^{(\eps)}_{\delta(\eps)}]$. Furthermore by Theorem~\ref{thm:Determ_KS_maximal} and the embedding $\mcH^{2}(\mbT^{2})\embed\mcC^{\beta}(\mbT^{2})$, it follows that $T^{\cem}_{L^{2}}[\rdet]=T^{\cem}_{\mcH^{2}}[\rdet]\leq T^{\cem}_{\mcC^{\beta}}[\rdet]$. Therefore, for every $T<\Trdet=T^{\cem}_{L^{2}}[\rdet]$ there exists a $\eps_{0}>0$ such that $T<T^{\cem}_{\mcC^{\beta}}[w^{(\eps)}_{\delta(\eps)}]$ for every $\eps>\eps_{0}$.
		
		From this we can deduce the convergence of $v^{(\eps)}$ to $v$ in $(\mcC^{\alpha+1}(\mbT^{2}))^{\sol}_{T}$ almost surely as $\eps\to0$ along a subsequence. In particular for every subsequence of $(v^{(\eps)})_{\eps>0}$ there exists a further subsequence such that $v^{(\eps)}\to v$ almost surely as $\eps\to0$, which implies convergence in probability. This yields the claim.
	\end{details}
\end{proof}
\subsection{Large Deviation Principle}\label{subsec:LDP_rough}
In this subsection we apply a generalization of Freidlin and Wentzell's theory (see Appendix~\ref{app:freidlin_wentzell} as well as~\cite{hairer_weber_15}) to establish a large deviation principle for solutions $\rho^{(\eps)}_{\delta(\eps)}$ to~\eqref{eq:rKS_intro_mild} in $(\mcC^{\alpha+1}(\mbT^{2}))^{\sol}_{T}$ for every $\alpha\in(-9/4,-2)$ under the relative scaling $\delta(\eps)\to0$ and $\eps\log(\delta(\eps)^{-1})\to0$ as $\eps\to0$ (Theorem~\ref{thm:LDP_rough}). As in Subsection~\ref{subsec:LLN_rough} we first consider the noise enhancement $(\mbX^{(\eps)}_{\delta(\eps)})_{\eps>0}$ driving $(\rho^{(\eps)}_{\delta(\eps)})_{\eps>0}$ (cf.\ Theorem~\ref{thm:rKS_well_posedness_rough}) for which we establish a large deviation principle (Theorem~\ref{thm:LDP_enhancement}) that we can then transfer to $(\rho^{(\eps)}_{\delta(\eps)})_{\eps>0}$ via the contraction principle (Theorem~\ref{thm:LDP_rough}).

Recall that $\mcC^{\alpha}_{\mfs}([0,T]\times\mbT^{2};\mbR^{2})$ denotes the space of vector-valued space-time H\"{o}lder distributions of regularity $\alpha$ equipped with the parabolic scaling $\mfs=(2,1,1)$ (see Definition~\ref{def:Cs_alpha_torus}).
\begin{theorem}\label{thm:LDP_enhancement}
	Let $\rho_{0}\in\mcH^{2}(\mbT^{2})$ be such that $\rho_{0}>0$ and $\mean{\rho_{0}}=1$, $\rdet$ be the weak solution to~\eqref{eq:Determ_KS} with initial data $\rho_{0}$ and chemotactic sensitivity $\chem\in\mbR$, and $\Trdet$ be its maximal time of existence (see Theorem~\ref{thm:Determ_KS_Well_Posedness}). Assume $\delta(\eps)\to0$ and $\eps\log(\delta(\eps)^{-1})\to0$ as $\eps\to0$. Then for all $T<\Trdet$, $\alpha\in(-5/2,-2)$ and $\kappa\in(0,1/2)$, it follows that the sequence $(\mbX^{(\eps)}_{\delta(\eps)})_{\eps>0}$ satisfies a large deviation principle in $\rksnoise{\alpha}{\kappa}_{T}$ (cf.~Subsection~\ref{subsec:notation}) with speed $\eps$ and good rate function
	\begin{equation*}
		\rate_{\Enh}(\boldsymbol{s})\defeq\inf\Bigl\{\frac{1}{2}\norm{h}^{2}_{L^{2}([0,T]\times\mbT^{2};\mbR^{2})}:\mbX^{h}=\boldsymbol{s}\Bigr\},
	\end{equation*}
	where $\mbX^{h}$ is the enhancement driven by the Cameron--Martin element $h\in L^{2}([0,T]\times\mbT^{2};\mbR^{2})$ defined as in~\eqref{eq:enhancement_h_def} (see also Lemma~\ref{lem:existence_enhancement_h}.)
\end{theorem}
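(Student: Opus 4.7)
The plan is to adapt the weak convergence / Bou\'{e}--Dupuis approach used in the regular setting (cf.\ the proof of Theorem~\ref{thm:LDP_regular_lolli}) to the nonlinear enhancement map, via the extension of~\cite[Thm.~6]{budhiraja_dupuis_maroulas_08} to noise enhancements that is developed in Appendix~\ref{app:freidlin_wentzell}. With $S^{M}$ denoting the closed $L^{2}([0,T]\times\mbT^{2};\mbR^{2})$-ball of radius $M$ equipped with the weak topology and $L^{2,M}_{\pred}$ as in the proof of Theorem~\ref{thm:LDP_regular_lolli}, it will suffice to verify two conditions for every $M < \infty$: (i) the set $\Gamma_{M} \defeq \{\mbX^{h}: h \in S^{M}\}$ is compact in $\rksnoise{\alpha}{\kappa}_{T}$; and (ii) for every sequence $(u^{(\eps)})_{\eps > 0}$ with $u^{(\eps)} \in L^{2,M}_{\pred}$ and $u^{(\eps)} \rightharpoonup u$ in $S^{M}$ in law as $\eps\to 0$, the enhancement of the shifted noise $\sqrt{\eps}\boldsymbol{\xi} + u^{(\eps)}$ mollified at scale $\delta(\eps)$ converges in law to $\mbX^{u}$ in $\rksnoise{\alpha}{\kappa}_{T}$.

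For (i) I would appeal to Appendix~\ref{app:enh_driven_by_h}, where each component of $\mbX^{h}$ is built as a continuous polynomial functional of $h$ and $\srdet$, taking values in spaces slightly better than those appearing in $\rksnoise{\alpha}{\kappa}_{T}$. Weak--strong continuity of the map $S^{M} \ni h \mapsto \mbX^{h} \in \rksnoise{\alpha}{\kappa}_{T}$ would then follow from the compact Besov embeddings and a Banach--Alaoglu argument analogous to the one used in the proof of Theorem~\ref{thm:LDP_regular_lolli}, yielding compactness of $\Gamma_{M}$ as a closed, totally bounded subset of a complete metric space.

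For (ii) the polynomial structure of the enhancement allows me to expand each component of the shifted object into a sum of three kinds of terms. Pure Gaussian terms of $n$-th chaos order carry a prefactor $\eps^{n/2}$ together with at most $1 \vee \log\delta(\eps)^{-1}$ coming from Theorem~\ref{thm:existence_enh_can}; hypercontractivity of Gaussian chaos combined with the hypothesis $\eps\log\delta(\eps)^{-1}\to 0$ forces them to $0$ in every $L^{p}(\mbP)$. Pure deterministic terms are precisely the components of $\mbX^{u^{(\eps)}_{\delta(\eps)}}$, so they converge in law to the components of $\mbX^{u}$ by (i) and the weak-in-law convergence $u^{(\eps)}_{\delta(\eps)} \rightharpoonup u$ in $S^{M}$ that was already established in the proof of Theorem~\ref{thm:LDP_regular_lolli}. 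Mixed terms contain one or more Gaussian factors together with deterministic factors and would be handled by a Cauchy--Schwarz type estimate in $L^{p}(\mbP)$, combining uniform $L^{2}$ bounds on $u^{(\eps)}$ with the small-noise bounds on the Gaussian factors.

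The main technical obstacle will lie in the mixed terms feeding into the resonant-product components $\tp^{\delta(\eps)}_{\can}$ and $\tc^{\delta(\eps)}$, since these are not a priori well-defined as products of distributions and therefore cannot be estimated by straightforward H\"{o}lder or Besov multiplication. My plan for these is to rewrite them using the paraproduct--commutator identities of~\cite[Lem.~2.4]{gubinelli_15_GIP} so as to recover the needed regularity; the surviving $\sqrt{\eps}$-prefactors should then compensate the logarithmic divergences in $\delta(\eps)^{-1}$ produced by the resonant renormalizations, and a careful accounting will confirm that the mixed contributions still vanish under the hypothesis $\eps\log\delta(\eps)^{-1} \to 0$. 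Once (i) and (ii) are in place, Slutsky's theorem will identify the limit in law of the shifted enhancement as $\mbX^{u}$, and the claimed LDP with speed $\eps$ and good rate function $\rate_{\Enh}$ will follow from the extension of~\cite[Thm.~6]{budhiraja_dupuis_maroulas_08} proved in Appendix~\ref{app:freidlin_wentzell}.
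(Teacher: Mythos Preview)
Your route and the paper's are genuinely different, and your reading of Appendix~\ref{app:freidlin_wentzell} is off. That appendix does \emph{not} extend~\cite[Thm.~6]{budhiraja_dupuis_maroulas_08}; it proves Theorem~\ref{thm:LDP_wiener_chaos}, a Hairer--Weber/Friz--Victoir style LDP for Banach-valued Wiener-chaos vectors. The paper's proof simply checks that each entry of $\mbX^{(\eps)}_{\delta(\eps)}$ sits in a finite inhomogeneous chaos, verifies the componentwise convergence~\eqref{eq:rv_convergence} from the moment bounds of Theorem~\ref{thm:existence_enh_can} together with $\eps\log\delta(\eps)^{-1}\to 0$, notes that the limit~\eqref{eq:rv_limit} has no lower-order chaos contributions, and invokes Theorem~\ref{thm:LDP_wiener_chaos}. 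The rate function is then identified via Lemma~\ref{lem:hom_part_fourier}. No shifted noise and hence no cross terms ever appear: the abstract machinery handles everything at the level of pure chaos projections.

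Your Bou\'{e}--Dupuis plan, by contrast, forces you to expand the enhancement of the shifted noise, and the mixed resonant contributions in $\tp$ and $\tc$ are more serious than you acknowledge. A concrete example: the shift produces $\eps\,\ty^{\delta}_{\can}\re\nabla\Phi_{\ti^{\delta,u}}$ inside the $\tp$-slot. Uniformly in $\delta$ one has $\ty^{\delta}_{\can}\in\mcC^{2\alpha+4}$ with $2\alpha+4<0$, while $\nabla\Phi_{\ti^{\delta,u}}$ is bounded only in $\mcH^{1}\hookrightarrow\mcC^{0-}$ (from $\ti^{u}\in C_{T}L^{2}$ via Lemma~\ref{lem:lolli_h} and Besov embedding), so the sum of regularities is negative and Bony does not apply. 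The commutator lemma~\cite[Lem.~2.4]{gubinelli_15_GIP} does not resolve this either: it lets you peel off a factor of strictly positive regularity from a paraproduct inside a resonant product, but here neither factor carries enough regularity, and $\ty^{\delta}_{\can}$ has no useful paracontrolled structure with respect to $\nabla\Phi_{\ti^{\delta,u}}$. You would need genuinely new mixed stochastic--deterministic estimates (or a different function-space setup exploiting $\ti^{u}\in L^{2}_{T}\mcH^{1}$, which does not match the $C_{T}$-type target $\msL^{\kappa}_{T}\mcC^{3\alpha+6}$). The paper's Wiener-chaos route was designed precisely to sidestep these off-diagonal resonant products.
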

\begin{proof}
	 It follows by the Wiener chaos decomposition~\cite[Subsec.~2.2]{martini_mayorcas_25}, Theorem~\ref{thm:abstract_Wiener_space} and Lemma~\ref{lem:iterated_Ito_in_hom_Wiener_chaos} that every enhancement $\mbX_{\delta(\eps)}^{(\eps)}=(\eps^{\sfrac{1}{2}}\ti^{\delta(\eps)},\eps\ty_{\can}^{\delta(\eps)},\eps^{\sfrac{3}{2}}\tp_{\can}^{\delta(\eps)},\eps\tc^{\delta(\eps)})$ is of the form~\eqref{eq:rv_direct_sum_rescaled_corr_length} with abstract Wiener space
	 \begin{equation*}
	 	\nban\defeq\mcC^{\alpha}_{\mfs}([0,T]\times\mbT^{2};\mbR^{2}),\qquad\gm\defeq\Law(\boldsymbol{\xi}),\qquad\cm\defeq L^{2}([0,T]\times\mbT^{2};\mbR^{2}),
	 \end{equation*}
	index set
	\begin{equation*}
		\mcW\defeq\{\ti,\ty,\tp,\tc\},\qquad(K_{\ti},K_{\ty},K_{\tp},K_{\tc})\defeq(1,2,3,2)
	\end{equation*}
	and target space
	\begin{equation*}
		\bban\defeq\bigoplus_{\tau\in\mcW}\ban_{\tau}\defeq\msL^{\kappa}_{T}\mcC^{\alpha+1}(\mbT^{2};\mbR)\times\msL^{\kappa}_{T}\mcC^{2\alpha+4}(\mbT^{2};\mbR)\times\msL^{\kappa}_{T}\mcC^{3\alpha+6}(\mbT^{2};\mbR^{2})\times\msL^{\kappa}_{T}\mcC^{2\alpha+4}(\mbT^{2};\mbR^{2\times2}),
	\end{equation*}
	which is separable by an application of Lemma~\ref{lem:separability_interpolation} combined with the fact that finite Cartesian products of separable spaces are again separable.
	
	\begin{details}
		\paragraph{Proof that $\mbX_{\delta(\eps)}^{(\eps)}$ is of the form~\eqref{eq:rv_direct_sum_rescaled_corr_length}.}
		As an example, let us consider $\ti^{\delta}$. Let $\multi\in\mbN_{0}^{\mbN}$ be a multi-index with finitely-many non-zero entries, it suffices to show that for all $\abs{\nu}\neq1$,
		\begin{equation*}
			\mbE[\ti^{\delta}H_{\multi}((\inner{\boldsymbol{\xi}}{e_{i}})_{i\in\mbN})]=0\qquad\text{in}\quad\msL_{T}^{\kappa}\mcC^{\alpha+1}(\mbT^{2};\mbR).
		\end{equation*}
		The $C_{T}\mcC^{\alpha+1}(\mbT^{2};\mbR)$-norm of the expectation is given by
		\begin{equation*}
			\norm{\mbE[\ti^{\delta}H_{\nu}((\inner{\boldsymbol{\xi}}{e_{i}})_{i\in\mbN})]}_{C_{T}\mcC^{\alpha+1}}=\sup_{t\in[0,T]}\sup_{q\in\mbN_{-1}}2^{q(\alpha+1)}\sup_{x\in\mbT^{2}}\abs{\mbE[\Delta_{q}\ti^{\delta}(t,x)H_{\multi}((\inner{\boldsymbol{\xi}}{e_{i}})_{i\in\mbN})]}
		\end{equation*}
		and a similar decomposition applies to the $C_{T}^{\kappa}\mcC^{\alpha+1-2\kappa}(\mbT^{2};\mbR)$-seminorm. Hence, it suffices to show for each $s,t\in[0,T]$, with $s<t$, and $x\in\mbT^{2}$, that
		\begin{equation*}
			\mbE[\Delta_{q}(\ti^{\delta}(t)-\ti^{\delta}(s))(x)H_{\multi}((\inner{\boldsymbol{\xi}}{e_{i}})_{i\in\mbN})]=0.
		\end{equation*}
		The Littlewood--Paley block $\Delta_{q}(\ti^{\delta}(t)-\ti^{\delta}(s))(x)$ is given by an iterated It\^{o} integral, see~\cite[Sec.~2]{martini_mayorcas_25}, and the same holds true for the generalized Hermite polynomial $H_{\nu}$,
		\begin{equation*}
			\sqrt{\multi!}H_{\multi}((\inner{\boldsymbol{\xi}}{e_{i}})_{i\in\mbN})=\boldsymbol{\xi}^{\otimes \abs{\nu}}\Bigl(\bigotimes_{i=1}^{\infty}e_{i}^{\otimes\multi_{i}}\Bigr),
		\end{equation*}
		see e.g.~\cite[below Thm.~1.1.2]{nualart_06} or the proof of~\cite[Prop.~1.42]{klose_msc_17}. By Lemma~\ref{lem:iterated_Ito_in_hom_Wiener_chaos} and the orthogonality of the iterated It\^{o} integrals~\cite[Sec.~1.1.2]{nualart_06} or~\cite[Def.~1.32]{klose_msc_17}, it follows that
		\begin{equation*}
			\mbE[\Delta_{q}(\ti^{\delta}(t)-\ti^{\delta}(s))(x)H_{\multi}((\inner{\boldsymbol{\xi}}{e_{i}})_{i\in\mbN})]=0
		\end{equation*}
		for every $s,t\in[0,T]$ and $x\in\mbT^{2}$. 
		
		To deduce the claim for the full enhancement $\mbX^{(\eps)}_{\delta}$, it suffices to decompose each entry of $\mbX_{\delta}^{(\eps)}$ as in~\cite[Sec.~2.2]{martini_mayorcas_25} and argue as above.
	\end{details}
	
	Using the relative scaling $\delta(\eps)\to0$ and $\eps\log(\delta(\eps)^{-1})\to0$ combined with the bounds~\cite[Lem.~2.7]{martini_mayorcas_25} (for $\eps^{\sfrac{1}{2}}\ti^{\delta(\eps)}$), \cite[Lem.~2.16~\&~Lem.~2.26]{martini_mayorcas_25} (for $\eps\ty^{\delta(\eps)}_{\can}$), \cite[Lem.~2.19, Lem.~2.24 and Lem.~2.26]{martini_mayorcas_25} (for $\eps^{\sfrac{3}{2}}\tp^{\delta(\eps)}_{\can}$) and~\cite[Lem.~2.20~\&~Lem.~2.25]{martini_mayorcas_25} (for $\eps\tc^{\delta(\eps)}$), one can show that $\mbX^{(\eps)}_{\delta(\eps)}$ converges as $\eps\to0$ in the sense of~\eqref{eq:rv_convergence} to a non-trivial limit $\mbX=\bigoplus_{\tau\in\mcW}\mbX_{\tau,K_{\tau}}$ which has no contributions from lower-order Wiener chaoses, \eqref{eq:rv_limit}.
	
	\begin{details}
		In the notation of~\cite[Subsec.~2.2]{martini_mayorcas_25},
		\begin{equation*}
			\mbX\defeq(\ti,\ty,\PreThree{10}+\PreThree{20},\Checkmark{10}+\Checkmark{20}).
		\end{equation*}
	\end{details}
	An application of Theorem~\ref{thm:LDP_wiener_chaos} then implies that $(\mbX_{\delta(\eps)}^{(\eps)})_{\eps>0}$ satisfies a large deviation principle in $\bban$ with speed $\eps$ and good rate function
	\begin{equation*}
		\rate_{\Enh}(\boldsymbol{s})\defeq\inf\Bigl\{\frac{1}{2}\norm{h}^{2}_{L^{2}([0,T]\times\mbT^{2};\mbR^{2})}:\mbX_{\hom}(h)=\boldsymbol{s}\Bigr\},
	\end{equation*}
	where (cf.~Definition~\ref{def:hom_part}) the homogeneous part $\mbX_{\hom}(h)$ is given by
	\begin{equation*}
		\mbX_{\hom}(h)=\bigoplus_{\tau\in\mcW}(\mbX_{\tau,K_{\tau}})_{\hom}=\bigoplus_{\tau\in\mcW}\int_{\nban}\mbX_{\tau,K_{\tau}}(\xi+h)\gm(\dd\xi).
	\end{equation*}
	We can further apply Lemma~\ref{lem:hom_part_fourier} to identify the homogeneous part $\mbX_{\hom}(h)$ with the enhancement $\mbX^{h}$ driven by the Cameron--Martin element $h$ (cf.~Appendix~\ref{app:enh_driven_by_h}), which yields the rate function of the claim.
	
	The large deviation principle on $\rksnoise{\alpha}{\kappa}_{T}$ then follows by~\cite[Lem.~4.1.5~(b)]{dembo_zeitouni_10}, using that $\rksnoise{\alpha}{\kappa}_{T}\subset\bban$ is closed (by definition; cf.\ Subsection~\ref{subsec:notation}) and that $\mbX^{(\eps)}_{\delta(\eps)}\in\rksnoise{\alpha}{\kappa}_{T}$ almost surely for every $\eps>0$ (Theorem~\ref{thm:existence_enh_can}).
\end{proof}
We can now apply the contraction principle and Theorem~\ref{thm:LDP_enhancement} to deduce a large deviation principle for solutions to~\eqref{eq:rKS_intro_mild} in $(\mcC^{\alpha+1}(\mbT^{2}))^{\sol}_{T}$ for every $\alpha\in(-9/4,-2)$ under the relative scaling $\delta(\eps)\to0$ and $\eps\log(\delta(\eps)^{-1})\to0$ as $\eps\to0$.
\begin{theorem}\label{thm:LDP_rough}
	Let $\rho_{0}\in\mcH^{2}(\mbT^{2})$ be such that $\rho_{0}>0$ and $\mean{\rho_{0}}=1$, $\rdet$ be the weak solution to~\eqref{eq:Determ_KS} with initial data $\rho_{0}$ and chemotactic sensitivity $\chem\in\mbR$, and $\Trdet$ be its maximal time of existence (see Theorem~\ref{thm:Determ_KS_Well_Posedness}). Assume $\delta(\eps)\to0$ and $\eps\log(\delta(\eps)^{-1})\to0$ as $\eps\to0$. Then for all $T<\Trdet$ and $\alpha\in(-9/4,-2)$, it follows that $(\rho_{\delta(\eps)}^{(\eps)})_{\eps>0}$ satisfies a large deviation principle in $(\mcC^{\alpha+1}(\mbT^{2}))^{\sol}_{T}$ with speed $\eps$ and good rate function
	\begin{equation*}
		\rate(\rho)\defeq\inf\Bigl\{\frac{1}{2}\norm{h}_{L^{2}([0,T]\times\mbT^{2};\mbR^{2})}^{2}:\rho=P\rho_{0}-\chem\nabla\cdot\mcI[\rho\nabla\Phi_{\rho}]-\nabla\cdot\mcI[\srdet h]\Bigr\}.
	\end{equation*}
\end{theorem}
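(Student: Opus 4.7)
The plan is to transfer the LDP for the noise enhancement $(\mbX^{(\eps)}_{\delta(\eps)})_{\eps>0}$ in $\rksnoise{\alpha}{\kappa}_{T}$, established in Theorem~\ref{thm:LDP_enhancement}, to the LDP for the paracontrolled solutions $(\rho^{(\eps)}_{\delta(\eps)})_{\eps>0}$ via the contraction principle. The essential topological input is the continuity of the solution map $\Phi\from\rksnoise{\alpha}{\kappa}_{T}\to(\mcC^{\alpha+1}(\mbT^{2}))^{\sol}_{T}$, sending an enhancement $\mbX$ to the paracontrolled solution of~\eqref{eq:rKS_intro_mild} with initial data $\rho_{0}$ and chemotactic sensitivity $\chem$. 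This is provided by Theorem~\ref{thm:rKS_well_posedness_rough}, which yields local Lipschitz continuity of $\Phi$.

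Concretely, I would first fix $\alpha\in(-9/4,-2)$ and $\kappa\in(0,1/2)$ as in Theorem~\ref{thm:rKS_well_posedness_rough}, and combine Theorem~\ref{thm:LDP_enhancement} with the continuous mapping theorem for LDPs (a version of~\cite[Thm.~4.2.1]{dembo_zeitouni_10} adapted to the cemetery topology of $(\mcC^{\alpha+1}(\mbT^{2}))^{\sol}_{T}$, in the spirit of~\cite{chandra_chevyrev_hairer_shen_22}). This would yield an LDP for $(\rho^{(\eps)}_{\delta(\eps)})_{\eps>0}$ in $(\mcC^{\alpha+1}(\mbT^{2}))^{\sol}_{T}$ with speed $\eps$ and good rate function
\begin{equation*}
\tilde{\rate}(\rho) \defeq \inf\bigl\{\rate_{\Enh}(\mbX) : \mbX\in\rksnoise{\alpha}{\kappa}_{T},~\Phi(\mbX)=\rho\bigr\}.
\end{equation*}
Inserting the variational expression for $\rate_{\Enh}$ from Theorem~\ref{thm:LDP_enhancement} and restricting to enhancements of the form $\mbX^{h}$ for Cameron--Martin elements $h$, this reduces to
\begin{equation*}
\tilde{\rate}(\rho) = \inf\Bigl\{\tfrac{1}{2}\norm{h}_{L^{2}([0,T]\times\mbT^{2};\mbR^{2})}^{2} : \Phi(\mbX^{h}) = \rho\Bigr\}.
\end{equation*}

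The remaining step, and the main obstacle, is to identify $\tilde{\rate}$ with the claimed $\rate$. This amounts to showing that for each $h\in L^{2}([0,T]\times\mbT^{2};\mbR^{2})$, the paracontrolled solution $\Phi(\mbX^{h})$ coincides with the unique mild solution $\rho^{h}$ of the skeleton equation $\rho=P\rho_{0}-\chem\vdiv\mcI[\rho\nabla\Phi_{\rho}]-\vdiv\mcI[\srdet h]$. For smooth $h$, the enhancement $\mbX^{h}$ has smooth components, so the paracontrolled fixed point and the classical mild fixed point (which exists on $[0,T]$ by arguments analogous to Theorem~\ref{thm:Determ_KS_Well_Posedness} and Lemma~\ref{lem:rKS_well_posedness_regular}, using $\ti^{h}=\vdiv\mcI[\srdet h]$ as the additive perturbation) agree by uniqueness of either formulation. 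For general $h\in L^{2}$, one can regularise $h$ by convolution with a mollifier, use the continuity of $h\mapsto\mbX^{h}$ developed in Appendix~\ref{app:enh_driven_by_h} together with the continuity of $\Phi$ to pass to the limit on the paracontrolled side, and use the local Lipschitz continuity from Lemma~\ref{lem:rKS_well_posedness_regular} (applied with $\gamma$ close to $0$ so that $\ti^{h}\in C_{T}\mcH^{\gamma}(\mbT^{2})\cap L_{T}^{2}\mcH^{\gamma+1}(\mbT^{2})$) to pass to the limit on the mild side.

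The hardest part will be handling this identification uniformly in $h\in L^{2}$, since the two natural function-space settings for $\rho^{h}$—the regular scale of Lemma~\ref{lem:rKS_well_posedness_regular} and the rougher H\"{o}lder--Besov scale of Theorem~\ref{thm:rKS_well_posedness_rough}—must be reconciled. The cleanest resolution is to observe that both formulations produce a continuous $\rho^{h}\in(\mcC^{\alpha+1}(\mbT^{2}))^{\sol}_{T}$ solving the same Cauchy problem with identical additive perturbation $-\vdiv\mcI[\srdet h]$, and that well-posedness of the paracontrolled formulation (Theorem~\ref{thm:rKS_well_posedness_rough}) forces the two to coincide. Potential subtleties regarding the maximal time of existence $T^{\cem}_{\mcC^{\alpha+1}}[\rho^{h}]$ are absorbed into the cemetery topology, so no additional book-keeping is required beyond what is already built into $(\mcC^{\alpha+1}(\mbT^{2}))^{\sol}_{T}$.
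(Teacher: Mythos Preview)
Your proposal is correct and follows essentially the same route as the paper: apply the contraction principle~\cite[Thm.~4.2.1]{dembo_zeitouni_10} to transfer the LDP of Theorem~\ref{thm:LDP_enhancement} along the locally Lipschitz solution map of Theorem~\ref{thm:rKS_well_posedness_rough}. The paper's proof is terser and leaves the identification $\Phi(\mbX^{h})=\rho^{h}$ implicit, whereas you spell out the approximation argument for this step; both are fine.
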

\begin{proof}
	An application of Theorem~\ref{thm:rKS_well_posedness_rough} shows that $\rho_{\delta(\eps)}^{(\eps)}\in(\mcC^{\alpha+1}(\mbT^{2}))^{\sol}_{T}$ is locally Lipschitz continuous in $\mbX^{(\eps)}_{\delta(\eps)}\in\rksnoise{\alpha}{\kappa}_{T}$, which we can combine with the contraction principle~\cite[Thm.~4.2.1]{dembo_zeitouni_10} and Theorem~\ref{thm:LDP_enhancement} to deduce the claim.
\end{proof}
As the following proposition shows, it is possible to remove the relative scaling in Theorem~\ref{thm:LDP_rough}, if the initial data $\rho_{0}$ is uniform, i.e.\ $\rho_{0}\equiv1$.
\begin{proposition}\label{prop:LDP_hom_IC}
	Let $\rho_{0}\equiv1$ and $\rdet$ be the weak solution to~\eqref{eq:Determ_KS} with initial data $\rho_{0}$ and chemotactic sensitivity $\chem\in\mbR$. Assume $\delta(\eps)\to0$ as $\eps\to0$. Then for all $T>0$ and $\alpha\in(-9/4,-2)$, it follows that $(\rho_{\delta(\eps)}^{(\eps)})_{\eps>0}$ satisfies a large deviation principle in $(\mcC^{\alpha+1}(\mbT^{2}))^{\sol}_{T}$ with speed $\eps$ and good rate function $\rate$.
\end{proposition}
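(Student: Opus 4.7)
The plan is to exploit the structural simplifications that arise when $\rho_{0}\equiv 1$. The deterministic Keller--Segel solution satisfies $\rdet\equiv 1$ (since $\Phi_{\rdet}$ is constant and the advective drift vanishes), so $\Trdet=\infty$ and $\srdet\equiv 1$. Consequently, the SPDE~\eqref{eq:rKS_intro} reduces to the constant-coefficient additive-noise equation
$$(\partial_{t}-\Delta)\rho^{(\eps)}_{\delta}=-\chem\vdiv(\rho^{(\eps)}_{\delta}\nabla\Phi_{\rho^{(\eps)}_{\delta}})-\eps^{1/2}\vdiv\boldsymbol{\xi}^{\delta},\qquad\rho^{(\eps)}_{\delta}\tzero\equiv 1.$$
I would then prove the LDP via a two-parameter approximation argument in $\delta$, with the parameter $\delta_{0}>0$ playing the role of a reference mollification scale.

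First, for each fixed $\delta_{0}>0$ the driving noise $\eps^{1/2}\vdiv\boldsymbol{\xi}^{\delta_{0}}$ is a smooth Gaussian field in space, and the solution map from the noise into $C_{T}\mcC^{\alpha+1}(\mbT^{2})$ is continuous by a classical fixed-point argument (no enhancement needed at fixed $\delta_{0}$). Hence Schilder's theorem combined with the contraction principle yields an LDP for $(\rho^{(\eps)}_{\delta_{0}})_{\eps>0}$ with speed $\eps$ and good rate function
$$\rate_{\delta_{0}}(\rho)\defeq\inf\Bigl\{\tfrac{1}{2}\norm{g}_{L^{2}([0,T]\times\mbT^{2};\mbR^{2})}^{2}:\rho=1-\chem\vdiv\mcI[\rho\nabla\Phi_{\rho}]-\vdiv\mcI[\psi_{\delta_{0}}\ast g]\Bigr\}.$$
Secondly, I would verify that $\rate_{\delta_{0}}\to\rate$ in the sense of $\Gamma$-convergence in $(\mcC^{\alpha+1}(\mbT^{2}))^{\sol}_{T}$ as $\delta_{0}\to 0$. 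This reduces to a deterministic stability property of the skeleton equation with $\srdet\equiv 1$ under mollification of the source $h\in L^{2}$, which follows from continuity estimates in the spirit of Lemma~\ref{lem:rKS_well_posedness_regular}.

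The crucial third step is to establish exponential equivalence of $(\rho^{(\eps)}_{\delta(\eps)})$ and $(\rho^{(\eps)}_{\delta_{0}})$ in the sense that for every $\eta>0$,
$$\limsup_{\delta_{0}\to 0}\limsup_{\eps\to 0}\eps\log\mbP\Bigl(D_{T}^{\mcC^{\alpha+1}}\bigl(\rho^{(\eps)}_{\delta(\eps)},\rho^{(\eps)}_{\delta_{0}}\bigr)>\eta\Bigr)=-\infty.$$
The obstruction coming from the logarithmic growth of the enhancement components $\ty^{\delta}_{\can}$ and $\tp^{\delta}_{\can}$ in Theorem~\ref{thm:existence_enh_can} is avoided because one does \emph{not} need the enhancement itself to converge to a non-trivial limit in this step; one only needs the two solutions driven by the \emph{same} underlying Brownian driver $\boldsymbol{\xi}$ to concentrate on one another at exponential rate. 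Since $\srdet\equiv 1$, their difference is driven by $\eps^{1/2}(\boldsymbol{\xi}^{\delta(\eps)}-\boldsymbol{\xi}^{\delta_{0}})$, whose Cameron--Martin norm vanishes as $\delta_{0}\to 0$ and admits sub-Gaussian concentration bounds of the type in Lemma~\ref{lem:lolli_regular_upper_deviation}. Combined with the local Lipschitz dependence of the paracontrolled solution on its noise enhancement (Theorem~\ref{thm:rKS_well_posedness_rough}), applied at noise enhancements that differ by this vanishing perturbation, this yields exponential-scale stability without incurring the $\log(\delta^{-1})$ factor.

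Combining the LDP at fixed $\delta_{0}$, the $\Gamma$-convergence of $\rate_{\delta_{0}}$ to $\rate$, and the exponential equivalence via~\cite[Thm.~4.2.13 \& Thm.~4.2.23]{dembo_zeitouni_10} then delivers the stated LDP. The hard part will be the exponential-equivalence estimate in the third step: even though both $\rho^{(\eps)}_{\delta(\eps)}$ and $\rho^{(\eps)}_{\delta_{0}}$ individually require the full enhancement machinery, their difference must be quantitatively controlled by Lipschitz estimates against a $\delta_{0}$-shrinking perturbation that, critically, does not re-introduce the logarithmic divergence.
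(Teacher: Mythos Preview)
Your two-parameter approximation scheme is a plausible general strategy, but the crucial third step has a genuine gap, and filling it would force you back onto the paper's own argument.

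The difficulty is this: to obtain exponential equivalence via the local Lipschitz dependence of the solution on its enhancement (Theorem~\ref{thm:rKS_well_posedness_rough}), you must control the difference of the \emph{full} enhancements $\mbX^{(\eps)}_{\delta(\eps)}-\mbX^{(\eps)}_{\delta_{0}}$ in $\rksnoise{\alpha}{\kappa}_{T}$ at exponential scale in $\eps^{-1}$. You reduce this to the Gaussian difference $\eps^{1/2}(\boldsymbol{\xi}^{\delta(\eps)}-\boldsymbol{\xi}^{\delta_{0}})$ and invoke sub-Gaussian concentration (Lemma~\ref{lem:lolli_regular_upper_deviation}), but this only handles the first-order component $\eps^{1/2}(\ti^{\delta(\eps)}-\ti^{\delta_{0}})$. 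The second- and third-order components $\eps(\ty^{\delta(\eps)}_{\can}-\ty^{\delta_{0}}_{\can})$ and $\eps^{3/2}(\tp^{\delta(\eps)}_{\can}-\tp^{\delta_{0}}_{\can})$ are nonlinear in the noise and do \emph{not} reduce to a Cameron--Martin perturbation; their $L^{p}$-bounds a priori carry the factor $\log(\delta(\eps)^{-1})$ from Theorem~\ref{thm:existence_enh_can}, which re-introduces exactly the scaling restriction you are trying to avoid. Your sentence ``this yields exponential-scale stability without incurring the $\log(\delta^{-1})$ factor'' is therefore unjustified as written.

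The paper's proof bypasses all of this with a single structural observation: when $\srdet\equiv 1$, the companion paper \cite[Rem.~2.29]{martini_mayorcas_22_WP} shows that $\ty^{\delta}_{\can}$ and $\tp^{\delta}_{\can}$ are bounded \emph{uniformly} in $\delta$ --- the logarithmic divergence is an artefact of non-constant heterogeneity and simply does not occur here. Once this is known, the proof of Theorem~\ref{thm:LDP_enhancement} (hence Theorem~\ref{thm:LDP_rough}) goes through verbatim for any $\delta(\eps)\to 0$, with no need for a two-parameter approximation, $\Gamma$-convergence of rate functions, or exponential equivalence. In other words, the fact you would need to rescue your third step is precisely the fact that renders the whole scheme unnecessary.
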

\begin{proof}
	The deterministic Keller--Segel equation~\eqref{eq:Determ_KS} preserves uniform initial data, that is, $\rho_{0}\equiv1$ yields $\rdet\equiv1$ globally.
	\begin{details}
		By uniqueness, it suffices to show that $\rdet\equiv1$ solves~\eqref{eq:Determ_KS}. Indeed, it holds that
		\begin{equation*}
			0=\partial_{t}\rdet=\Delta\rdet-\chem\nabla\rdet\cdot\nabla\Phi_{\rdet}-\chem\rdet\Delta\Phi_{\rdet}=0,
		\end{equation*}
		where we used $\Delta\rdet\equiv0$, $\nabla\rdet\equiv0$ and $-\Delta\Phi_{\rdet}=\rdet-\inner{\rdet}{1}_{L^{2}}=0$ for $\rdet\equiv1$.
	\end{details}
	In particular, $\srdet\equiv1$ which by~\cite[Rem.~2.28]{martini_mayorcas_25} implies that for every $\delta\geq0$ and $p\in[1,\infty)$,
	\begin{equation}\label{eq:enhancement_bounds_homogeneous}
		\mbE[\norm{\ty^{\delta}_{\can}}_{\msL_{T}^{\kappa}\mcC^{2\alpha+4}}^{p}]^{1/p}\lesssim1,\qquad\mbE[\norm{\tp^{\delta}_{\can}}_{\msL_{T}^{\kappa}\mcC^{3\alpha+6}}^{p}]^{1/p}\lesssim1.
	\end{equation}
	The relative scaling of Theorem~\ref{thm:LDP_enhancement} (and, consequently, Theorem~\ref{thm:LDP_rough}) was chosen such that $\mbX^{(\eps)}_{\delta(\eps)}$ converges as $\eps\to0$ in the sense of~\eqref{eq:rv_convergence} to a limit that has no contributions from lower-order Wiener chaoses, \eqref{eq:rv_limit}. As demonstrated by~\eqref{eq:enhancement_bounds_homogeneous}, in the uniform case there is no such divergence as $\delta\to0$, hence we can choose $\delta(\eps)\to0$ arbitrarily.
\end{proof}
\begin{remark}
	In contrast to Proposition~\ref{prop:LDP_hom_IC}, which concerns the large deviations in the rough setting, one cannot improve the relative scaling in the regular setting (Theorem~\ref{thm:LDP_regular}) by choosing uniform initial data.
\end{remark}
\subsection{Controlling Blow-Ups}\label{subsec:blow_up}
In this subsection we apply the large deviation principle (Theorem~\ref{thm:LDP_rough}) to control the probability that $\rho^{(\eps)}_{\delta(\eps)}$ blows up before time $T<\Trdet$ (Theorem~\ref{thm:blow_up_time_estimate}). In particular, we show that the probability of observing a blow-up is asymptotically exponentially small in $\eps^{-1}$, a result that was inspired by~\cite[Rem.~4.6]{hairer_weber_15}.

We combine Lemma~\ref{lem:blow_up_lsc} and Theorem~\ref{thm:LDP_rough} to deduce a provisional large deviations upper bound for $(T^{\cem}_{\mcC^{\alpha+1}}[\rho^{(\eps)}_{\delta(\eps)}])_{\eps>0}$ for $\alpha\in(-9/4,-2)$, which will then be applied to prove the exponential unlikelihood of observing a blow-up in Theorem~\ref{thm:blow_up_time_estimate} below.
\begin{lemma}\label{lem:LDP_blow_up}
	Let $\rho_{0}\in\mcH^{2}(\mbT^{2})$ be such that $\rho_{0}>0$ and $\mean{\rho_{0}}=1$, $\rdet$ be the weak solution to~\eqref{eq:Determ_KS} with initial data $\rho_{0}$ and chemotactic sensitivity $\chem\in\mbR$, and $\Trdet$ be its maximal time of existence (see Theorem~\ref{thm:Determ_KS_Well_Posedness}). Assume $\delta(\eps)\to0$ and $\eps\log(\delta(\eps)^{-1})\to0$ as $\eps\to0$. Then for all $T<\Trdet$ and $\alpha\in(-9/4,-2)$, it follows that $(T^{\cem}_{\mcC^{\alpha+1}}[\rho^{(\eps)}_{\delta(\eps)}])_{\eps>0}$ satisfies for every $S\in[0,T]$,
	\begin{equation*}
		\limsup_{\eps\to0}\eps\log\mbP\Bigl(T^{\cem}_{\mcC^{\alpha+1}}[\rho^{(\eps)}_{\delta(\eps)}]\leq S\Bigr)\leq-\inf\Bigl\{\frac{1}{2}\norm{h}^{2}_{L^{2}([0,T]\times\mbT^{2};\mbR^{2})}:T^{\cem}_{\mcC^{\alpha+1}}[\rho^{h}]\leq S\Bigr\},
	\end{equation*}
	where $\rho^{h}$ is the solution to the mild equation $\rho^{h}\defeq P\rho_{0}-\chem\vdiv\mcI[\rho^{h}\nabla\Phi_{\rho^{h}}]-\nabla\cdot\mcI[\srdet h]$.
\end{lemma}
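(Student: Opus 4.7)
The plan is to reduce the assertion to a direct application of the large-deviations upper bound of Theorem~\ref{thm:LDP_rough} on the sublevel set
\[
A_{S}\defeq\Bigl\{f\in(\mcC^{\alpha+1}(\mbT^{2}))^{\sol}_{T}:T^{\cem}_{\mcC^{\alpha+1}}[f]\leq S\Bigr\},
\]
combined with an identification of $\inf_{\rho\in A_{S}}\rate(\rho)$ with the infimum on the right-hand side.

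First I would verify that $A_{S}$ is closed in $(\mcC^{\alpha+1}(\mbT^{2}))^{\sol}_{T}$. This is immediate from the lower semicontinuity of the blow-up time provided by Lemma~\ref{lem:blow_up_lsc}: if $(f_{n})_{n\in\mbN}\subset A_{S}$ converges to $f$ in $(\mcC^{\alpha+1}(\mbT^{2}))^{\sol}_{T}$, then
\[
T^{\cem}_{\mcC^{\alpha+1}}[f]\leq\liminf_{n\to\infty}T^{\cem}_{\mcC^{\alpha+1}}[f_{n}]\leq S,
\]
so that $f\in A_{S}$. Noting the tautology $\{T^{\cem}_{\mcC^{\alpha+1}}[\rho^{(\eps)}_{\delta(\eps)}]\leq S\}=\{\rho^{(\eps)}_{\delta(\eps)}\in A_{S}\}$, Theorem~\ref{thm:LDP_rough} together with the upper-bound half of the definition of a large deviation principle on the closed set $A_{S}$ then yields
\[
\limsup_{\eps\to0}\eps\log\mbP\Bigl(T^{\cem}_{\mcC^{\alpha+1}}[\rho^{(\eps)}_{\delta(\eps)}]\leq S\Bigr)\leq-\inf_{\rho\in A_{S}}\rate(\rho).
\]

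It remains to identify $\inf_{\rho\in A_{S}}\rate(\rho)$ with the claimed right-hand side by exchanging infima. Unfolding the definition of $\rate$ given in Theorem~\ref{thm:LDP_rough} and recalling that $\rho^{h}$ is, by definition, the unique solution to the skeleton equation associated to $h$, one has
\begin{align*}
\inf_{\rho\in A_{S}}\rate(\rho)
&=\inf_{\rho\in A_{S}}\inf\Bigl\{\tfrac{1}{2}\norm{h}_{L^{2}([0,T]\times\mbT^{2};\mbR^{2})}^{2}:\rho=\rho^{h}\Bigr\}\\
&=\inf\Bigl\{\tfrac{1}{2}\norm{h}_{L^{2}([0,T]\times\mbT^{2};\mbR^{2})}^{2}:\rho^{h}\in A_{S}\Bigr\}\\
&=\inf\Bigl\{\tfrac{1}{2}\norm{h}_{L^{2}([0,T]\times\mbT^{2};\mbR^{2})}^{2}:T^{\cem}_{\mcC^{\alpha+1}}[\rho^{h}]\leq S\Bigr\},
\end{align*}
and combining with the LDP upper bound gives the assertion.

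The only substantive content hidden in this argument is the lower semicontinuity of $f\mapsto T^{\cem}_{\mcC^{\alpha+1}}[f]$, which is encapsulated in Lemma~\ref{lem:blow_up_lsc}; once this is granted, the conclusion is a routine combination of the upper-bound half of the LDP with the contraction-principle form of the rate function $\rate$.
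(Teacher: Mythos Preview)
Your proposal is correct and follows essentially the same approach as the paper: both use the lower semicontinuity of the blow-up time (Lemma~\ref{lem:blow_up_lsc}) to conclude that the sublevel set $A_{S}$ is closed, apply the LDP upper bound from Theorem~\ref{thm:LDP_rough} on this closed set, and then unfold the definition of $\rate$ by exchanging the two infima.
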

\begin{proof}
	It follows by the lower semicontinuity of $T^{\cem}_{\mcC^{\alpha+1}}$ (Lemma~\ref{lem:blow_up_lsc}) that the sublevel sets $\{\rho\in(\mcC^{\alpha+1}(\mbT^{2}))^{\sol}_{T}:T^{\cem}_{\mcC^{\alpha+1}}[\rho]\leq S\}$ are closed in $(\mcC^{\alpha+1}(\mbT^{2}))^{\sol}_{T}$, hence we can deduce from the large deviation principle of $(\rho^{(\eps)}_{\delta(\eps)})_{\eps>0}$ (Theorem~\ref{thm:LDP_rough}) the upper bound
	\begin{equation*}
		\limsup_{\eps\to0}\eps\log\mbP\Bigl(T^{\cem}_{\mcC^{\alpha+1}}[\rho^{(\eps)}_{\delta(\eps)}]\leq S\Bigr)\leq-\inf_{\substack{\rho\in(\mcC^{\alpha+1}(\mbT^{2}))^{\sol}_{T}\\T^{\cem}_{\mcC^{\alpha+1}}[\rho]\leq S}}\rate(\rho).
	\end{equation*}
	We can simplify the right-hand side 
	\begin{equation*}
		\begin{split}
			&\inf_{\substack{\rho\in(\mcC^{\alpha+1}(\mbT^{2}))^{\sol}_{T}\\T^{\cem}_{\mcC^{\alpha+1}}[\rho]\leq S}}\rate(\rho)=\inf_{\substack{\rho\in(\mcC^{\alpha+1}(\mbT^{2}))^{\sol}_{T}\\T^{\cem}_{\mcC^{\alpha+1}}[\rho]\leq S}}\inf\Bigl\{\frac{1}{2}\norm{h}^{2}_{L^{2}([0,T]\times\mbT^{2};\mbR^{2})}:\rho=P\rho_{0}-\chem\vdiv\mcI[\rho\nabla\Phi_{\rho}]-\vdiv\mcI[\srdet h]\Bigr\}\\
			&\qquad=\inf\Bigl\{\frac{1}{2}\norm{h}^{2}_{L^{2}([0,T]\times\mbT^{2};\mbR^{2})}:T^{\cem}_{\mcC^{\alpha+1}}[\rho^{h}]\leq S,~\rho^{h}= P\rho_{0}-\chem\vdiv\mcI[\rho^{h}\nabla\Phi_{\rho^{h}}]-\vdiv\mcI[\srdet h]\Bigr\},
		\end{split}
	\end{equation*}
	which yields the claim.
\end{proof}
We can now use Lemma~\ref{lem:LDP_blow_up} to deduce that solutions $\rho^{(\eps)}_{\delta(\eps)}$ to~\eqref{eq:rKS_intro_mild} blow up before time $T<\Trdet$ with exponentially small probability in $\eps$.
\begin{theorem}\label{thm:blow_up_time_estimate}
	Let $\rho_{0}\in\mcH^{2}(\mbT^{2})$ be such that $\rho_{0}>0$ and $\mean{\rho_{0}}=1$, $\rdet$ be the weak solution to~\eqref{eq:Determ_KS} with initial data $\rho_{0}$ and chemotactic sensitivity $\chem\in\mbR$, and $\Trdet$ be its maximal time of existence (see Theorem~\ref{thm:Determ_KS_Well_Posedness}). Assume $\delta(\eps)\to0$ and $\eps\log(\delta(\eps)^{-1})\to0$ as $\eps\to0$. Then for all $S\leq T<\Trdet$ and $\alpha\in(-9/4,-2)$, there exists a constant $c>0$ such that
	\begin{equation*}
		\limsup_{\eps\to0}\eps\log\mbP\Bigl(T^{\cem}_{\mcC^{\alpha+1}}[\rho^{(\eps)}_{\delta(\eps)}]\leq S\Bigr)\leq-c.
	\end{equation*}
\end{theorem}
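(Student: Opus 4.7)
The plan is to start from Lemma~\ref{lem:LDP_blow_up}, which already gives
\begin{equation*}
\limsup_{\eps\to0}\eps\log\mbP\Bigl(T^{\cem}_{\mcC^{\alpha+1}}[\rho^{(\eps)}_{\delta(\eps)}]\leq S\Bigr)\leq-c,\qquad c\defeq\inf\Bigl\{\tfrac{1}{2}\norm{h}_{L^{2}([0,T]\times\mbT^{2};\mbR^{2})}^{2}:T^{\cem}_{\mcC^{\alpha+1}}[\rho^{h}]\leq S\Bigr\},
\end{equation*}
so the content of the theorem collapses to showing $c>0$. (The statement is only informative for $S<T$, as $T^{\cem}_{\mcC^{\alpha+1}}[\rho]\leq T$ holds by definition on $(\mcC^{\alpha+1}(\mbT^{2}))^{\sol}_{T}$.) I would first observe that, when $h\equiv0$, the skeleton equation reduces to the mild form of the deterministic Keller--Segel equation, so by uniqueness $\rho^{0}=\rdet$; combined with $\rho_{0}\in\mcH^{2}(\mbT^{2})$ and Theorem~\ref{thm:Determ_KS_Well_Posedness} this yields $\rdet\in C_{T}\mcH^{2}(\mbT^{2})\embed C_{T}\mcC^{\alpha+1}(\mbT^{2})$, and hence $T^{\cem}_{\mcC^{\alpha+1}}[\rdet]=T>S$.

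Given this, the natural route is a proof by contradiction. Assuming $c=0$, I would extract a sequence $(h_{n})_{n\in\mbN}$ with $\norm{h_{n}}_{L^{2}}\to0$ and $T^{\cem}_{\mcC^{\alpha+1}}[\rho^{h_{n}}]\leq S$. Continuity of the deterministic Cameron--Martin enhancement $h\mapsto\mbX^{h}$ in $\rksnoise{\alpha}{\kappa}_{T}$ at $h=0$ (which is encoded in the multilinear bounds of Appendix~\ref{app:enh_driven_by_h}) combined with the local Lipschitz dependence of the paracontrolled solution on its noise enhancement established in Theorem~\ref{thm:rKS_well_posedness_rough} would yield $\rho^{h_{n}}\to\rdet$ in $(\mcC^{\alpha+1}(\mbT^{2}))^{\sol}_{T}$. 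Lower semicontinuity of the lifetime functional (Lemma~\ref{lem:blow_up_lsc}) then forces
\begin{equation*}
T=T^{\cem}_{\mcC^{\alpha+1}}[\rdet]\leq\liminf_{n\to\infty}T^{\cem}_{\mcC^{\alpha+1}}[\rho^{h_{n}}]\leq S<T,
\end{equation*}
a contradiction. Hence $c>0$ and the theorem follows.

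The technical crux is the continuity of $h\mapsto\mbX^{h}$ at the origin in the enhancement topology: each component of $\mbX^{h}$ is a multilinear functional of $h$, obtained as the homogeneous part identified in the proof of Theorem~\ref{thm:LDP_enhancement}, and one must verify that the paraproduct, resonant and commutator estimates used to build $\rksnoise{\alpha}{\kappa}_{T}$ deliver norms that are polynomial in $\norm{h}_{L^{2}}$ and therefore vanish as $h\to0$. Given the smoothing provided by $\mcI$ and the deterministic (pathwise) nature of $\mbX^{h}$, these estimates are essentially the same ones that appear in the Wiener-chaos analysis behind Theorem~\ref{thm:LDP_enhancement}, now applied in their classical multilinear form, so no genuinely new analytic input is required.
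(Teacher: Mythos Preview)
Your proof is correct and follows essentially the same approach as the paper: both use Lemma~\ref{lem:LDP_blow_up} to reduce to showing the infimum is strictly positive, then combine the lower semicontinuity of $T^{\cem}_{\mcC^{\alpha+1}}$ (Lemma~\ref{lem:blow_up_lsc}), the local Lipschitz continuity of the solution map in the enhancement (Theorem~\ref{thm:rKS_well_posedness_rough}), and the polynomial bound $\norm{\mbX^{h}}_{\rksnoise{\alpha}{\kappa}_{T}}\lesssim\norm{h}_{L^{2}}(1\vee\norm{h}_{L^{2}}^{2})$ from Lemma~\ref{lem:existence_enhancement_h}. The only difference is packaging: you argue by contradiction via a minimising sequence, whereas the paper runs the contrapositive directly (small $\norm{h}$ $\Rightarrow$ small $\norm{\mbX^{h}}$ $\Rightarrow$ $\rho^{h}$ close to $\rdet$ $\Rightarrow$ $T^{\cem}[\rho^{h}]>S$), which avoids the sequence extraction but is otherwise identical in content.
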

\begin{proof}
	Assume we are given constant $c>0$ with the following property: All $h\in L^{2}([0,T]\times\mbT^{2};\mbR^{2})$ such that $T^{\cem}_{\mcC^{\alpha+1}}[\rho^{h}]\leq S$ satisfy $\norm{h}^{2}_{L^{2}([0,T]\times\mbT^{2};\mbR^{2})}\geq2c$. We can then apply Lemma~\ref{lem:LDP_blow_up} to deduce
	\begin{equation*}
		\limsup_{\eps\to0}\eps\log\mbP\Bigl(T^{\cem}_{\mcC^{\alpha+1}}[\rho^{(\eps)}_{\delta(\eps)}]\leq S\Bigr)\leq-\inf\Bigl\{\frac{1}{2}\norm{h}^{2}_{L^{2}([0,T]\times\mbT^{2};\mbR^{2})}:T^{\cem}_{\mcC^{\alpha+1}}[\rho^{h}]\leq S\Bigr\}\leq-c,
	\end{equation*}
	which yields the claim. Hence it suffices to find such a $c$.
	
	Using the lower semicontinuity of $T^{\cem}_{\mcC^{\alpha+1}}$ (Lemma~\ref{lem:blow_up_lsc}), it follows that for all $S\leq T$ there exists a constant $\nu>0$ such that $D_{T}^{\mcC^{\alpha+1}}(\rho,\rdet)<\nu$ implies $S<T^{\cem}_{\mcC^{\alpha+1}}[\rho]$. 
	\begin{details}
		The Besov embedding $L^{2}(\mbT^{2})=\mcB_{2,2}^{0}(\mbT^{2})\embed\mcC^{-1}(\mbT^{2})\embed\mcC^{\alpha+1}(\mbT^{2})$ yields $S\leq T<\Trdet\leq T^{\cem}_{\mcC^{\alpha+1}}[\rdet]$.
	\end{details}
	Using the local Lipschitz continuity of the solution map in the noise enhancement (Theorem~\ref{thm:rKS_well_posedness_rough}), we deduce that for every $\nu>0$ there exists an $R>0$ such that $\norm{\mbX^{h}}_{\rksnoise{\alpha}{\kappa}_{T}}<R$ implies $D_{T}^{\mcC^{\alpha+1}}(\rho^{h},\rdet)<\nu$, where $\mbX^{h}$ denotes the enhancement driven by the Cameron--Martin element $h$ (cf.\ Lemma~\ref{lem:existence_enhancement_h}). Hence, combining both we deduce that for every $S\leq T$ there exists an $R>0$ such that $\norm{\mbX^{h}}_{\rksnoise{\alpha}{\kappa}_{T}}<R$ implies $S<T^{\cem}_{\mcC^{\alpha+1}}[\rho^{h}]$. 
	
	By an application of Lemma~\ref{lem:existence_enhancement_h}, we can find for every $R>0$ a constant $c>0$ such that $\norm{h}^{2}_{L^{2}([0,T]\times\mbT^{2};\mbR^{2})}<2c$ implies $\norm{\mbX^{h}}_{\rksnoise{\alpha}{\kappa}_{T}}<R$.
	\begin{details}
		By Lemma~\ref{lem:existence_enhancement_h}, we can bound for all $h\in L^{2}([0,T]\times\mbT^{2};\mbR^{2})$,
		\begin{equation*}
			\norm{\mbX^{h}}_{\rksnoise{\alpha}{\kappa}_{T}}\lesssim\norm{\srdet}_{C_{T}L^{\infty}}\norm{h}_{L^{2}([0,T]\times\mbT^{2};\mbR^{2})}(1\vee\norm{\srdet}^{2}_{C_{T}L^{\infty}}\norm{h}^{2}_{L^{2}([0,T]\times\mbT^{2};\mbR^{2})}).
		\end{equation*}
		Hence for all $R>0$, we can find some $c>0$ such that $\norm{h}^{2}_{L^{2}([0,T]\times\mbT^{2};\mbR^{2})}<2c$ implies $\norm{\mbX^{h}}_{\rksnoise{\alpha}{\kappa}_{T}}<R$.
	\end{details}
	Consequently, for every $S\leq T$ we can find a constant $c>0$ such that $\norm{h}^{2}_{L^{2}([0,T]\times\mbT^{2};\mbR^{2})}<2c$ implies $S<T^{\cem}_{\mcC^{\alpha+1}}[\rho^{h}]$, which yields the claim.
\end{proof}
\appendix
\section{Analytic Toolbox}\label{app:toolbox}
In this appendix we collect general, useful results which are applied throughout. In particular, the results of this appendix are agnostic as to the dimension $d\in\mbN$, which we keep arbitrary but fixed throughout. The definitions introduced in Subsection~\ref{subsec:notation} generalize to $d$ dimensions \emph{mutatis mutandis}.
\subsection{Bounds and Embeddings}\label{subsec:bounds}
We apply the following Sobolev and Gagliardo--Nirenberg--Sobolev inequalities in the two-dimensional case to prove Theorem~\ref{thm:Determ_KS_Well_Posedness}.
\begin{lemma}[Sobolev's inequality]\label{lem:Sobolev_inequality}
	The following hold:
	\begin{enumerate}
		\item Let $s>0$ and $2<q<\infty$ be such that $\frac{s}{d}\geq\frac{1}{2}-\frac{1}{q}$; then $\mcH^{s}(\mbT^{d})\embed L^{q}(\mbT^{d})$ and
		\begin{equation}\label{eq:Sobolev_embedding_I}
			\norm{u}_{L^{q}(\mbT^{d})}\lesssim\norm{u}_{\mcH^{s}(\mbT^{d})}.
		\end{equation}
		\item Let $s>d/2$; then $\mcH^{s}(\mbT^{d})\embed C(\mbT^{d})$ and
		\begin{equation}\label{eq:Sobolev_embedding_II}
			\norm{u}_{L^{\infty}(\mbT^{d})}\lesssim\norm{u}_{\mcH^{s}(\mbT^{d})}.
		\end{equation}
	\end{enumerate}
\end{lemma}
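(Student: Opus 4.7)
The plan is to reduce both embeddings to elementary Fourier-analytic computations based on the Littlewood--Paley decomposition from Subsection~\ref{subsec:notation} and Bernstein's inequality, treating the two claims independently.

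I would begin with the continuous embedding~\eqref{eq:Sobolev_embedding_II}, which is the cleaner of the two. Assuming $s > d/2$, Cauchy--Schwarz applied to the Fourier series of $u \in C^{\infty}(\mbT^{d})$ yields
\begin{equation*}
\norm{u}_{L^{\infty}(\mbT^{d})} \leq \sum_{\om \in \mbZ^{d}} \abs{\hat{u}(\om)} \leq \Bigl(\sum_{\om \in \mbZ^{d}}(1+\abs{2\uppi\om}^{2})^{-s}\Bigr)^{1/2} \norm{u}_{\mcH^{s}(\mbT^{d})},
\end{equation*}
and the prefactor is finite precisely because $s > d/2$. Since the Fourier partial sums simultaneously converge uniformly to a continuous representative, the bound extends from $C^{\infty}(\mbT^{d})$ to all of $\mcH^{s}(\mbT^{d})$ by density, delivering~\eqref{eq:Sobolev_embedding_II}.

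For the $L^{q}$ embedding~\eqref{eq:Sobolev_embedding_I} I would route through Besov spaces, exploiting the identification $\mcH^{s}(\mbT^{d}) = \mcB^{s}_{2,2}(\mbT^{d})$. Bernstein's inequality gives $\norm{\Delta_{k} u}_{L^{q}} \lesssim 2^{kd(\frac{1}{2} - \frac{1}{q})}\norm{\Delta_{k} u}_{L^{2}}$ for every $k \geq -1$ and $2 \leq q < \infty$, and multiplication by $2^{k(s - d(\frac{1}{2} - \frac{1}{q}))}$ followed by summation in $\ell^{2}$ over $k$ produces the embedding $\mcB^{s}_{2,2}(\mbT^{d}) \embed \mcB^{s - d(\frac{1}{2} - \frac{1}{q})}_{q,2}(\mbT^{d})$. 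Under the hypothesis $\frac{s}{d} \geq \frac{1}{2} - \frac{1}{q}$ the target regularity is non-negative, so monotonicity in the regularity index lands us in $\mcB^{0}_{q,2}(\mbT^{d})$, and it then remains to establish $\mcB^{0}_{q,2}(\mbT^{d}) \embed L^{q}(\mbT^{d})$.

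This last step is the only non-formal one. It follows from the Littlewood--Paley square-function characterization of $L^{q}$ valid for $1 < q < \infty$, combined with Minkowski's inequality for $q \geq 2$ applied to $\norm{(\sum_{k} \abs{\Delta_{k} u}^{2})^{1/2}}_{L^{q}} \leq (\sum_{k}\norm{\Delta_{k} u}_{L^{q}}^{2})^{1/2}$. Since the square-function theorem on $\mbT^{d}$ is standard (see e.g.~\cite{bahouri_chemin_danchin_11} for the $\mbR^{d}$ counterpart, which transfers verbatim to the torus via the shared Littlewood--Paley calculus), I would simply invoke it rather than reproduce the underlying Mikhlin multiplier argument.
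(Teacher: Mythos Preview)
Your argument is correct. The paper does not actually prove this lemma; it simply cites \cite[Cor.~1.2~\&~Subsec.~2.1]{benyi_oh_13} and moves on. Your proposal therefore supplies strictly more than the paper does: a direct Cauchy--Schwarz argument for~\eqref{eq:Sobolev_embedding_II} and a Besov-space route via Bernstein's inequality and the square-function embedding $\mcB^{0}_{q,2}(\mbT^{d})\embed L^{q}(\mbT^{d})$ for~\eqref{eq:Sobolev_embedding_I}, both of which are standard and handle the endpoint $s=d(\tfrac{1}{2}-\tfrac{1}{q})$ correctly. The only external input you rely on is the Littlewood--Paley square-function theorem on $\mbT^{d}$, which is indeed classical and no heavier than what the cited reference invokes.
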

\begin{proof}
	See~\cite[Cor.~1.2~\&~Subsec.~2.1]{benyi_oh_13}.
\end{proof}
A combination of Sobolev’s inequality (Lemma~\ref{lem:Sobolev_inequality}) with the Gagliardo--Nirenberg interpolation estimate~\cite{brezis_mironescu_18} yields the following Gagliardo--Nirenberg--Sobolev inequality, see~\cite{brezis_mironescu_19}. 
\begin{lemma}\label{lem:GNS}
	Let $q,r\in[1,\infty]$; then given $u\in\mcH^{1}(\mbT^{d})\cap L^{r}(\mbT^{d})$ such that $\mean{u}=0$ and $\theta\in(0,1)$ satisfying
	\begin{equation*}
		\theta\Bigl(\frac{1}{2}-\frac{1}{d}\Bigl)+\frac{1-\theta}{r}=\frac{1}{q},
	\end{equation*}
	there exists a constant $C\defeq C(d,q,r)>0$ such that
	\begin{equation}\label{eq:GNS_Per}
		\norm{u}_{L^{q}(\mbT^{d})}\leq C\norm{\nabla u}^{\theta}_{L^{2}(\mbT^{d})}\norm{u}^{1-\theta}_{L^{r}(\mbT^{d})}.
	\end{equation}
	We define $C_{\gnsper}(d,q,r)$ to be the optimal constant such that~\eqref{eq:GNS_Per} holds.
\end{lemma}
\begin{proof}
	The result follows by an application of~\cite[Cor.~1]{brezis_mironescu_19} upon passing between $(0,1)^2$ and $\mbT^{2}$. 
\end{proof}
\begin{details}
	\begin{proof}
		Let $u\in\mcH^{1}(\mbT^{d})\cap L^{r}(\mbT^{d})$. Then upon passing to the full space, $u$ is a periodic element of $W^{1,2}_{\loc}(\mbR^{d})\cap L^{r}_{\loc}(\mbR^{d})$, where the subspace of periodic functions in $W^{1,2}_{\loc}(\mbR^{d})$ (resp.\ $L^{r}_{\loc}(\mbR^{d})$) is equipped with the norm $\norm{\place}_{W^{1,2}((0,1)^{d})}$ (resp.\ $\norm{\place}_{L^{r}((0,1)^{d})}$).
		(Periodic in the sense of Definition~\ref{def:periodic_distribution}, see~\cite[Sec.~2.1]{temam_95} and~\cite[Lem.~30.9]{vanzuijlen_22}.)
		
		Let $q,r\in[1,\infty]$ and $\theta\in(0,1)$ satisfy~\eqref{eq:GNS_def_theta}. We apply the Gagliardo--Nirenberg--Sobolev inequality on $(0,1)^{d}$ (see e.g.~\cite[Cor.~1]{brezis_mironescu_19}) to deduce
		\begin{equation*}
			\norm{u}_{L^{q}((0,1)^{d})}\lesssim\norm{u}_{W^{1,2}((0,1)^{d})}^{\theta}\norm{u}_{L^{r}((0,1)^{d})}^{1-\theta}.
		\end{equation*}
		To apply~\cite[Cor.~1]{brezis_mironescu_19}, we need to exclude the exceptions~\cite[Cor.~1, 1.]{brezis_mironescu_19} and~\cite[Cor.~1, 2.]{brezis_mironescu_19}. Exception 1 is already excluded since $p_{2}=2\neq1$ (in the notation of~\cite{brezis_mironescu_19}.) Excluding exception 2 is equivalent to assuming $d\neq2$ or $q<\infty$ or $r<\infty$. Hence assume $d=2$, then the assumption $\theta=1-\frac{r}{q}\in(0,1)$ already implies $r<q<\infty$, which is stronger than exception 2.
		
		Passing back to the torus, we obtain
		\begin{equation*}
			\norm{u}_{L^{q}(\mbT^{d})}\lesssim\norm{u}_{\mcH^{1}(\mbT^{d})}^{\theta}\norm{u}_{L^{r}(\mbT^{d})}^{1-\theta}.
		\end{equation*}
		Using that $u$ is mean-free, the claim follows from the Poincar\'{e} inequality on $\mbT^{d}$.
		\begin{equation*}
			\norm{u}_{\mcH^{1}(\mbT^{d})}^{2}=\sum_{\om\in\mbZ^{d}\setminus\{0\}}(1+\abs{2\uppi\om}^{2})\abs{\hat{u}(\om)}^{2}\leq2\sum_{\om\in\mbZ^{d}\setminus\{0\}}\abs{2\uppi\om}^{2}\abs{\hat{u}(\om)}^{2}=2\norm{\nabla u}_{L^{2}(\mbT^{d})}^{2}
		\end{equation*}
	\end{proof}
\end{details}
\begin{details}
	The following product estimates are applied throughout Section~\ref{sec:det_PDE} and~\ref{sec:regular_theory}.
	\begin{lemma}\label{lem:product_estimates}
		It holds that:
		\begin{enumerate}
			\item Let $\gamma\geq0$, $f\in\mcH^{\gamma+1}(\mbT^{2})$ and $g\in\mcH^{\gamma+1}(\mbT^{2})$, then
			\begin{equation}\label{eq:product_estimate_Sobolev_I}
				\norm{fg}_{\mcH^{\gamma}}\lesssim\norm{f}_{\mcH^{\gamma+1}}\norm{g}_{\mcH^{\gamma+1}}.
			\end{equation}
			\item Let $\gamma>-1$, $f\in\mcH^{\gamma}(\mbT^{2})$ and $g\in\mcH^{\gamma+2}(\mbT^{2})$, then
			\begin{equation}\label{eq:product_estimate_Sobolev_II}
				\norm{fg}_{\mcH^{\gamma}}\lesssim\norm{f}_{\mcH^{\gamma}}\norm{g}_{\mcH^{\gamma+2}}.
			\end{equation}
			\item Let $\gamma>0$, $f\in\mcH^{\gamma}(\mbT^{2})$ and $g\in\mcH^{\gamma+1}(\mbT^{2})$, then
			\begin{equation}\label{eq:product_estimate_Sobolev_III}
				\norm{fg}_{\mcH^{\gamma}}\lesssim\norm{f}_{\mcH^{\gamma}}\norm{g}_{\mcH^{\gamma+1}}.
			\end{equation}
			\item Let $\vartheta>0$, $f\in L^{2}(\mbT^{2})$ and $g\in\mcH^{1}(\mbT^{2})$, then
			\begin{equation}\label{eq:product_estimate_Sobolev_IV}
				\norm{fg}_{\mcH^{-\vartheta}}\lesssim\norm{f}_{L^{2}}\norm{g}_{\mcH^{1}}.
			\end{equation}
			\item Let $\gamma\in(-1/2,0)$, $f\in\mcH^{\gamma}(\mbT^{2})$ and $g\in\mcH^{\gamma+1}(\mbT^{2})$, then
			\begin{equation}\label{eq:product_estimate_Sobolev_V}
				\norm{fg}_{\mcH^{2\gamma}}\lesssim\norm{f}_{\mcH^{\gamma}}\norm{g}_{\mcH^{\gamma+1}}.
			\end{equation}
			\item Let $\vartheta>0$, $f\in L^{\infty}(\mbT^{2})$ and $g\in\mcH^{1}(\mbT^{2})$, then
			\begin{equation}\label{eq:product_estimate_VI}
				\norm{fg}_{\mcB_{\infty,1}^{-\vartheta}}\lesssim\norm{f}_{L^{\infty}}\norm{g}_{\mcH^{1}}
			\end{equation}
			\item Let $\gamma\in(-1,0]$, $\gamma'\in(-1,\gamma)$, $f\in\mcH^{\gamma}(\mbT^{2})$ and $g\in\mcH^{\gamma'+2}(\mbT^{2})$, then
			\begin{equation}\label{eq:product_estimate_Sobolev_VII}
				\norm{fg}_{\mcH^{\gamma}}\lesssim\norm{f}_{\mcH^{\gamma}}\norm{g}_{\mcH^{\gamma'+2}}.
			\end{equation}
		\end{enumerate}
	\end{lemma}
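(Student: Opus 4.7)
The plan is to treat all seven bounds uniformly via Bony's paraproduct decomposition $fg = f \pa g + f \re g + g \pa f$, and then allocate regularities in the scale $\mcB^{s}_{p,q}(\mbT^{2})$ using the standard paraproduct / resonant estimates together with the Sobolev embeddings of Lemma~\ref{lem:Sobolev_inequality}. The key ingredients, all of which can be found in~\cite[Chap.~2]{bahouri_chemin_danchin_11} or recalled in~\cite[App.~A]{martini_mayorcas_22_WP}, are: (i) the paraproduct estimate
$\norm{f\pa g}_{\mcB^{\beta}_{p,q}}\lesssim\norm{f}_{L^{\infty}}\norm{g}_{\mcB^{\beta}_{p,q}}$ (and its negative-regularity variant $\norm{f\pa g}_{\mcB^{\alpha+\beta}_{p,q}}\lesssim\norm{f}_{\mcC^{\alpha}}\norm{g}_{\mcB^{\beta}_{p,q}}$ for $\alpha<0$); (ii) the resonant estimate $\norm{f\re g}_{\mcB^{\alpha+\beta}_{p,q}}\lesssim\norm{f}_{\mcB^{\alpha}_{p_1,q_1}}\norm{g}_{\mcB^{\beta}_{p_2,q_2}}$ valid whenever $\alpha+\beta>0$ and $1/p=1/p_1+1/p_2$; together with the identification $\mcH^{\gamma}(\mbT^{2})=\mcB^{\gamma}_{2,2}(\mbT^{2})$.

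For each of the seven bounds I would verify that the allocation of exponents is admissible. Concretely: for~\eqref{eq:product_estimate_Sobolev_III} one uses that $\mcH^{\gamma+1}\hookrightarrow L^{\infty}$ when $\gamma>0$ (Lemma~\ref{lem:Sobolev_inequality}), so that $\mcH^{\gamma+1}$ is an algebra and the bound is immediate; for~\eqref{eq:product_estimate_Sobolev_I} with $\gamma>0$ one invokes the same algebra property and the continuous embedding $\mcH^{\gamma+1}\embed\mcH^{\gamma}$, while for $\gamma=0$ one reduces by Cauchy--Schwarz to $\norm{fg}_{L^{2}}\leq\norm{f}_{L^{4}}\norm{g}_{L^{4}}$ and closes using the Sobolev embedding $\mcH^{1/2}\embed L^{4}$ from~\eqref{eq:Sobolev_embedding_I}. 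For~\eqref{eq:product_estimate_Sobolev_II} and~\eqref{eq:product_estimate_Sobolev_VII} the high regularity factor is placed in $L^{\infty}$ (via $\mcH^{\gamma+2}\embed L^{\infty}$, $\mcH^{\gamma'+2}\embed L^{\infty}$ in $d=2$), the paraproducts $f\pa g$ and $g\pa f$ are controlled directly by (i), and the resonant product is legal because $\gamma+(\gamma+2)>0$ (respectively $\gamma+(\gamma'+2)>0$). For~\eqref{eq:product_estimate_Sobolev_IV} and~\eqref{eq:product_estimate_VI} the target has strictly negative regularity, so one has the extra room $-\vartheta<0$ to absorb the small loss coming from the embedding $\mcH^{1}\embed L^{p}$ for $p<\infty$ (needed because $\mcH^{1}\not\embed L^{\infty}$ in $d=2$); each of the three pieces of Bony's decomposition is then estimated by (i)--(ii).

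The main obstacle is~\eqref{eq:product_estimate_Sobolev_V}, where both factors have low regularity and the target $2\gamma$ is also negative. The two low--high paraproducts $f\pa g$ and $g\pa f$ can be controlled by the negative-regularity variant of (i) after embedding $\mcH^{\gamma}\embed\mcC^{\gamma-1}$ (which costs one derivative but is affordable since $\gamma+1>1/2$); the delicate piece is the resonant product $f\re g$, which lies in $\mcB^{2\gamma+1}_{p,q}$ only under the constraint $\gamma+(\gamma+1)>0$, i.e.~precisely $\gamma>-1/2$. This is where the lower bound on $\gamma$ enters, and one must pick the integrability exponents $(p_1,q_1,p_2,q_2)$ with care so that $1/p_1+1/p_2=1/2$ and the resulting norms are controlled by $\norm{f}_{\mcH^{\gamma}}$ and $\norm{g}_{\mcH^{\gamma+1}}$ via Besov embeddings. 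Once this critical case is handled, all remaining estimates follow the same pattern with strictly easier bookkeeping, and assembling the three pieces of Bony's decomposition for each of the seven bounds completes the proof.
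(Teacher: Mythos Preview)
Your proposal is correct and takes essentially the same approach as the paper: Bony's decomposition $fg=f\pa g+g\pa f+f\re g$ followed by paraproduct/resonant estimates and Besov embeddings, with the constraint $\gamma>-1/2$ in~\eqref{eq:product_estimate_Sobolev_V} arising exactly from the resonant condition $\gamma+(\gamma+1)>0$. The paper carries out the same scheme but gives explicit exponent allocations for each of the three pieces in every case (citing paraproduct estimates from \cite{vanzuijlen_22}), whereas you substitute the algebra property of $\mcH^{\gamma+1}$ (via $\mcH^{\gamma+1}\embed L^{\infty}$ for $\gamma>0$) for~\eqref{eq:product_estimate_Sobolev_I} and~\eqref{eq:product_estimate_Sobolev_III} and treat $\gamma=0$ in~\eqref{eq:product_estimate_Sobolev_I} by H\"older and $\mcH^{1/2}\embed L^{4}$; both routes are valid. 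One small point of care: in~\eqref{eq:product_estimate_Sobolev_II}, \eqref{eq:product_estimate_Sobolev_IV} and~\eqref{eq:product_estimate_Sobolev_VII} the piece $f\pa g$ (low-frequency factor $f$ with possibly negative regularity) cannot be handled by the $L^{\infty}$ version of (i) but needs either the negative-regularity variant or the mixed-integrability paraproduct estimate $\norm{f\pa g}_{\mcB^{\beta}_{p,q}}\lesssim\norm{f}_{L^{p_{1}}}\norm{g}_{\mcB^{\beta}_{p_{2},q}}$ with $1/p=1/p_{1}+1/p_{2}$; you have the former listed under (i), so this is only a matter of being precise about which variant applies where.
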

	\begin{proof}
		Let $\gamma\geq0$, the estimate~\eqref{eq:product_estimate_Sobolev_I} follows from~\cite[Thm.~7.4]{behzadan_holst_21} using the embedding $\mcH^{\gamma}(\mbT^{2})\embed W^{\gamma,2}((0,1)^{2})$ (see~\cite[Sec.~2.1]{temam_95} and~\cite[Prop.~1.4]{benyi_oh_13}); or alternatively from the paraproduct estimates~\cite[Thm.~27.5~\&~27.10]{vanzuijlen_22},
		\begin{equation*}
			\begin{split}
				&\norm{f\pa g}_{\mcB_{2,2}^{\gamma}}\lesssim\norm{f}_{\mcB_{\infty,\infty}^{-1}}\norm{g}_{\mcB_{2,2}^{\gamma+1}}\lesssim\norm{f}_{\mcB_{\infty,2}^{\gamma}}\norm{g}_{\mcB_{2,2}^{\gamma+1}}\lesssim\norm{f}_{\mcB_{2,2}^{\gamma+1}}\norm{g}_{\mcB_{2,2}^{\gamma+1}},\\
				&\norm{f\re g}_{\mcB_{2,2}^{\gamma}}\lesssim\norm{f\re g}_{\mcB_{2,2}^{2\gamma+1}}\lesssim\norm{f\re g}_{\mcB_{1,1}^{2\gamma+2}}\lesssim\norm{f}_{\mcB_{2,2}^{\gamma+1}}\norm{g}_{\mcB_{2,2}^{\gamma+1}}.
			\end{split}
		\end{equation*}
		Let $\gamma>-1$, the estimate~\eqref{eq:product_estimate_Sobolev_II} follows from the paraproduct estimates~\cite[Thm.~27.5~\&~27.10]{vanzuijlen_22},
		\begin{equation*}
			\begin{split}
				&\norm{f\pa g}_{\mcB_{2,2}^{\gamma}}\lesssim\norm{f}_{\mcB_{\infty,\infty}^{-2}}\norm{g}_{\mcB_{2,2}^{\gamma+2}}\lesssim\norm{f}_{\mcB_{\infty,2}^{\gamma-1}}\norm{g}_{\mcB_{2,2}^{\gamma+2}}\lesssim\norm{f}_{\mcB_{2,2}^{\gamma}}\norm{g}_{\mcB_{2,2}^{\gamma+2}},\\
				&\norm{g\pa f}_{\mcB_{2,2}^{\gamma}}\lesssim\norm{g}_{L^{\infty}}\norm{f}_{\mcB_{2,2}^{\gamma}}\lesssim\norm{g}_{\mcB_{\infty,1}^{0}}\norm{f}_{\mcB_{2,2}^{\gamma}}\lesssim\norm{g}_{\mcB_{\infty,2}^{\gamma+1}}\norm{f}_{\mcB_{2,2}^{\gamma}}\lesssim\norm{g}_{\mcB_{2,2}^{\gamma+2}}\norm{f}_{\mcB_{2,2}^{\gamma}},\\
				&\norm{f\re g}_{\mcB_{2,2}^{\gamma}}\lesssim\norm{f\re g}_{\mcB_{2,2}^{2\gamma+1}}\lesssim\norm{f\re g}_{\mcB_{1,1}^{2\gamma+2}}\lesssim\norm{f}_{\mcB_{2,2}^{\gamma}}\norm{g}_{\mcB_{2,2}^{\gamma+2}}.
			\end{split}
		\end{equation*}
		Let $\gamma>0$, the estimate~\eqref{eq:product_estimate_Sobolev_III} follows from~\cite[Thm.~7.4]{behzadan_holst_21}; or alternatively from the paraproduct estimates~\cite[Thm.~27.5~\&~27.10]{vanzuijlen_22},
		\begin{equation*}
			\begin{split}
				&\norm{f\pa g}_{\mcB_{2,2}^{\gamma}}\lesssim\norm{f}_{L^{2}}\norm{g}_{\mcB_{\infty,2}^{\gamma}}\lesssim\norm{f}_{\mcB_{2,2}^{\gamma}}\norm{g}_{\mcB_{2,2}^{\gamma+1}},\\
				&\norm{g\pa f}_{\mcB_{2,2}^{\gamma}}\lesssim\norm{g}_{L^{\infty}}\norm{f}_{\mcB_{2,2}^{\gamma}}\lesssim\norm{g}_{\mcB_{\infty,1}^{0}}\norm{f}_{\mcB_{2,2}^{\gamma}}\lesssim\norm{g}_{\mcB_{\infty,2}^{\gamma}}\norm{f}_{\mcB_{2,2}^{\gamma}}\lesssim\norm{g}_{\mcB_{2,2}^{\gamma+1}}\norm{f}_{\mcB_{2,2}^{\gamma}},\\
				&\norm{f\re g}_{\mcB_{2,2}^{\gamma}}\lesssim\norm{f\re g}_{\mcB_{2,1}^{2\gamma}}\lesssim\norm{f}_{\mcB_{2,2}^{\gamma}}\norm{g}_{\mcB_{\infty,2}^{\gamma}}\lesssim\norm{f}_{\mcB_{2,2}^{\gamma}}\norm{g}_{\mcB_{2,2}^{\gamma+1}}.
			\end{split}
		\end{equation*}
		Let $\vartheta>0$, the estimate~\eqref{eq:product_estimate_Sobolev_IV} follows from the paraproduct estimates~\cite[Thm.~27.5~\&~27.10]{vanzuijlen_22},
		\begin{equation*}
			\begin{split}
				&\norm{f\pa g}_{\mcB_{2,2}^{-\vartheta}}\lesssim\norm{f\pa g}_{\mcB_{2,2}^{0}}\lesssim\norm{f\pa g}_{\mcB_{1,2}^{1}}\lesssim\norm{f}_{L^{2}}\norm{g}_{\mcB_{2,2}^{1}},\\
				&\norm{g\pa f}_{\mcB_{2,2}^{-\vartheta}}\lesssim\norm{g}_{\mcB_{\infty,\infty}^{-\vartheta}}\norm{f}_{\mcB_{2,2}^{0}}\lesssim\norm{g}_{\mcB_{\infty,2}^{0}}\norm{f}_{\mcB_{2,2}^{0}}\lesssim\norm{g}_{\mcB_{2,2}^{1}}\norm{f}_{\mcB_{2,2}^{0}},\\
				&\norm{f\re g}_{\mcB_{2,2}^{0}}\lesssim\norm{f\re g}_{\mcB_{1,1}^{1}}\lesssim\norm{f}_{\mcB_{2,2}^{0}}\norm{g}_{\mcB_{2,2}^{1}}.
			\end{split}
		\end{equation*}
		Let $\gamma\in(-1/2,0)$, the estimate~\eqref{eq:product_estimate_Sobolev_V} follows by the paraproduct estimates~\cite[Thm.~27.5~\&~27.10]{vanzuijlen_22},
		\begin{equation*}
			\begin{split}
				&\norm{f\pa g}_{\mcB_{2,2}^{2\gamma}}\lesssim\norm{f\pa g}_{\mcB_{1,1}^{2\gamma+1}}\lesssim\norm{f}_{\mcB_{2,2}^{\gamma}}\norm{g}_{\mcB_{2,2}^{\gamma+1}},\\
				&\norm{g\pa f}_{\mcB_{2,2}^{2\gamma}}\lesssim\norm{g\pa f}_{\mcB_{2,1}^{2\gamma}}\lesssim\norm{g}_{\mcB_{\infty,2}^{\gamma}}\norm{f}_{\mcB_{2,2}^{\gamma}}\lesssim\norm{g}_{\mcB_{2,2}^{\gamma+1}}\norm{f}_{\mcB_{2,2}^{\gamma}},\\
				&\norm{f\re g}_{\mcB_{2,2}^{2\gamma}}\lesssim\norm{f\re g}_{\mcB_{1,1}^{2\gamma+1}}\lesssim\norm{f}_{\mcB_{2,2}^{\gamma}}\norm{g}_{\mcB_{2,2}^{\gamma+1}}.
			\end{split}
		\end{equation*}
		Let $\vartheta>0$, the estimate~\eqref{eq:product_estimate_VI} follows from the paraproduct estimates~\cite[Thm.~27.5~\&~27.10]{vanzuijlen_22},
		\begin{equation*}
			\begin{split}
				&\norm{f\pa g}_{\mcB_{\infty,1}^{-\vartheta}}\lesssim\norm{f\pa g}_{\mcB_{\infty,2}^{0}}\lesssim\norm{f\pa f}_{\mcB_{2,2}^{1}}\lesssim\norm{f}_{L^{\infty}}\norm{g}_{\mcB_{2,2}^{1}},\\
				&\norm{g\pa f}_{\mcB_{\infty,1}^{-\vartheta}}\lesssim\norm{g}_{\mcB_{\infty,1}^{-\vartheta}}\norm{f}_{\mcB_{\infty,\infty}^{0}}\lesssim\norm{g}_{\mcB_{\infty,2}^{0}}\norm{f}_{\mcB_{\infty,\infty}^{0}}\lesssim\norm{g}_{\mcB_{2,2}^{1}}\norm{f}_{L^{\infty}},\\
				&\norm{f\re g}_{\mcB_{\infty,1}^{-\vartheta}}\lesssim\norm{f\re g}_{\mcB_{\infty,2}^{0}}\lesssim\norm{f\re g}_{\mcB_{2,2}^{1}}\lesssim\norm{f}_{\mcB_{\infty,\infty}^{0}}\norm{g}_{\mcB_{2,2}^{1}}\lesssim\norm{f}_{L^{\infty}}\norm{g}_{\mcB_{2,2}^{1}}.
			\end{split}
		\end{equation*}
		Let $\gamma\in(-1,0]$ and $\gamma'\in(-1,\gamma)$, the estimate~\eqref{eq:product_estimate_Sobolev_VII} follows from the paraproduct estimates~\cite[Thm.~27.5~\&~27.10]{vanzuijlen_22},
		\begin{equation*}
			\begin{split}
				&\norm{f\pa g}_{\mcB_{2,2}^{\gamma}}\lesssim\norm{f\pa g}_{\mcB_{2,2}^{\gamma+\gamma'+1}}\lesssim\norm{f\pa g}_{\mcB_{1,2}^{\gamma+\gamma'+2}}\lesssim\norm{f}_{\mcB_{2,2}^{\gamma}}\norm{g}_{\mcB_{2,2}^{\gamma'+2}},\\
				&\norm{g\pa f}_{\mcB_{2,2}^{\gamma}}\lesssim\norm{g}_{L^{\infty}}\norm{f}_{\mcB_{2,2}^{\gamma}}\lesssim\norm{g}_{\mcB_{\infty,1}^{0}}\norm{f}_{\mcB_{2,2}^{\gamma}}\lesssim\norm{g}_{\mcB_{\infty,2}^{\gamma'+1}}\norm{f}_{\mcB_{2,2}^{\gamma}}\lesssim\norm{g}_{\mcB_{2,2}^{\gamma'+2}}\norm{f}_{\mcB_{2,2}^{\gamma}},\\
				&\norm{f\re g}_{\mcB_{2,2}^{\gamma}}\lesssim\norm{f\re g}_{\mcB_{2,1}^{\gamma+\gamma'+1}}\lesssim\norm{f\re g}_{\mcB_{1,1}^{\gamma+\gamma'+2}}\lesssim\norm{f}_{\mcB_{2,2}^{\gamma}}\norm{g}_{\mcB_{2,2}^{\gamma'+2}}.
			\end{split}
		\end{equation*}
		This yields the claim.
	\end{proof}
\end{details}
Recall from Subsection~\ref{subsec:notation} the notation for the heat semigroup,
\begin{equation*}
		P_{t} f=\msF^{-1}(\euler^{-t\abs{2\uppi\place}^{2}}\hat{f}(\place)),\qquad \text{for every}~t\geq0,\,f\in\mcS'(\mbT^{2})
\end{equation*}
and the resolution of the heat equation,
\begin{equation*}
	\mcI[f]_{t}=\int_{0}^{t}P_{t-s}f_s\dd s,\qquad \text{for every}~T>0,\,t\in[0,T],\,f\from[0,T]\to\mcS'(\mbT^{2}).
\end{equation*}
The following Schauder estimates are applied in the proof of Lemma~\ref{lem:rKS_well_posedness_regular}.
\begin{lemma}\label{lem:Schauder_Sobolev}
	Let $T>0$ and $\gamma\in\mbR$, then the following hold:
	\begin{enumerate}
		\item\label{it:Schauder_Sobolev_I}
		The map $P\from\mcH^{\gamma}(\mbT^{d})\to C_{T}\mcH^{\gamma}(\mbT^{d})\cap L_{T}^{2}\mcH^{\gamma+1}(\mbT^{d})$ is continuous and
		\begin{equation*}
			\norm{Pf}_{C_{T}\mcH^{\gamma}\cap L_{T}^{2}\mcH^{\gamma+1}}\lesssim_{T}\norm{f}_{\mcH^{\gamma}}.
		\end{equation*}
		\item\label{it:Schauder_Sobolev_II}
		If $\gamma'\leq\gamma<\gamma'+2$, then the map $\mcI\from C_{T}\mcH^{\gamma'}(\mbT^{d})\to C_{T}\mcH^{\gamma}(\mbT^{d})$ is continuous and
		\begin{equation*}
			\norm{\mcI[f]}_{C_{T}\mcH^{\gamma}}\lesssim(T+T^{1-\frac{\gamma-\gamma'}{2}})\norm{f}_{C_{T}\mcH^{\gamma'}}.
		\end{equation*}
		\item\label{it:Schauder_Sobolev_III}
		If $\gamma'\leq\gamma\leq\gamma'+2$, then the map $\mcI\from L_{T}^{2}\mcH^{\gamma'}(\mbT^{d})\to L_{T}^{2}\mcH^{\gamma}(\mbT^{d})$ is continuous and
		\begin{equation*}
			\norm{\mcI[f]}_{L_{T}^{2}\mcH^{\gamma}}\lesssim(T+T^{1-\frac{\gamma-\gamma'}{2}})\norm{f}_{L_{T}^{2}\mcH^{\gamma'}}.
		\end{equation*}
	\end{enumerate}
\end{lemma}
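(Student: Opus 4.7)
All three claims will follow from direct Fourier-side computations, exploiting that the heat semigroup acts diagonally as $\widehat{P_{t}f}(\omega)=\euler^{-t\abs{2\uppi\om}^{2}}\hat{f}(\om)$ and using the definition~\eqref{eq:Bessel_norm} of the Bessel norm. For Claim~\ref{it:Schauder_Sobolev_I}, the elementary bound $\abs{\euler^{-t\abs{2\uppi\om}^{2}}}\leq1$ gives at once $\norm{P_{t}f}_{\mcH^{\gamma}}\leq\norm{f}_{\mcH^{\gamma}}$, and strong continuity in time follows by dominated convergence on the Fourier side. For the $L^{2}$-in-time bound, splitting the sum into the zero mode (which contributes $T\abs{\hat{f}(0)}^{2}$) and the non-zero modes, the Fubini computation
\begin{equation*}
\int_{0}^{T}\euler^{-2t\abs{2\uppi\om}^{2}}\dd t=\frac{1-\euler^{-2T\abs{2\uppi\om}^{2}}}{2\abs{2\uppi\om}^{2}}\leq\frac{1}{2\abs{2\uppi\om}^{2}},\qquad \om\neq0,
\end{equation*}
furnishes one extra derivative, yielding the $L_{T}^{2}\mcH^{\gamma+1}$ estimate.

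For Claim~\ref{it:Schauder_Sobolev_II}, the starting point is the operator-norm bound $\norm{P_{t}}_{\mcH^{\gamma'}\to\mcH^{\gamma}}\lesssim1+t^{-(\gamma-\gamma')/2}$, obtained by noting that $(1+\abs{2\uppi\om}^{2})^{\gamma-\gamma'}\euler^{-2t\abs{2\uppi\om}^{2}}\lesssim 1+t^{-(\gamma-\gamma')}$ uniformly in $\om$; the non-integer power is controlled via $\sup_{x\geq0}x^{\gamma-\gamma'}\euler^{-2tx}\lesssim t^{-(\gamma-\gamma')}$. Integrating this against $\norm{f_{s}}_{\mcH^{\gamma'}}$ on $(0,t)$ uses $\gamma-\gamma'<2$ to keep the singularity integrable and produces the bound $\norm{\mcI[f]_{t}}_{\mcH^{\gamma}}\lesssim(T+T^{1-(\gamma-\gamma')/2})\norm{f}_{C_{T}\mcH^{\gamma'}}$. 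Continuity of $t\mapsto\mcI[f]_{t}$ in $\mcH^{\gamma}$ follows by a standard density argument combined with the continuity established in Claim~\ref{it:Schauder_Sobolev_I}.

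For Claim~\ref{it:Schauder_Sobolev_III}, I would use Young's convolution inequality in time: writing $K_{\om}(t)\defeq\euler^{-t\abs{2\uppi\om}^{2}}\mathds{1}_{t\geq0}$, one has $\widehat{\mcI[f]_{t}}(\om)=(K_{\om}\ast\hat{f}(\place,\om))(t)$, so
\begin{equation*}
\norm{\widehat{\mcI[f]}(\place,\om)}_{L_{T}^{2}}\leq\norm{K_{\om}}_{L^{1}(0,T)}\norm{\hat{f}(\place,\om)}_{L_{T}^{2}}.
\end{equation*}
The key observation is the interpolation bound $\norm{K_{\om}}_{L^{1}(0,T)}\leq\min(T,\abs{2\uppi\om}^{-2})\leq T^{1-(\gamma-\gamma')/2}\abs{2\uppi\om}^{-(\gamma-\gamma')}$ valid for $\gamma-\gamma'\in[0,2]$ (using $\min(x,y)\leq x^{1-a}y^{a}$ with $a=(\gamma-\gamma')/2$). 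Multiplying by $(1+\abs{2\uppi\om}^{2})^{\gamma}$ and noting that $(1+\abs{2\uppi\om}^{2})^{\gamma}\abs{2\uppi\om}^{-2(\gamma-\gamma')}\lesssim(1+\abs{2\uppi\om}^{2})^{\gamma'}$ for $\om\neq0$, then treating the zero mode separately via Cauchy--Schwarz (which yields a $T^{2}$ factor), summing over $\om$ gives the stated bound.

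The argument is essentially routine Fourier analysis on the torus, and I do not anticipate any real obstacle. The only point requiring minor care is isolating the zero Fourier mode in each estimate and choosing the interpolation exponent in Claim~\ref{it:Schauder_Sobolev_III} so that the $T^{1-(\gamma-\gamma')/2}$ scaling matches the statement; everything else reduces to standard heat kernel bookkeeping.
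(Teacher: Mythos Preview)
Your proposal is correct and matches the paper's approach essentially line for line: direct Fourier-side bounds with the zero-mode split for Claim~\ref{it:Schauder_Sobolev_I}, the pointwise smoothing estimate $\norm{P_{t}}_{\mcH^{\gamma'}\to\mcH^{\gamma}}\lesssim1\vee t^{-(\gamma-\gamma')/2}$ integrated in time for Claim~\ref{it:Schauder_Sobolev_II}, and Young's convolution inequality in time combined with the interpolation $\min(T,\abs{2\uppi\om}^{-2})\leq T^{1-(\gamma-\gamma')/2}\abs{2\uppi\om}^{-(\gamma-\gamma')}$ for Claim~\ref{it:Schauder_Sobolev_III}. The paper's only additional wrinkle is that it establishes the continuity statements (i.e.\ that $Pf$ and $\mcI[f]$ actually land in $C_{T}\mcH^{\gamma}$ rather than merely $L^{\infty}_{T}\mcH^{\gamma}$) via approximation by smooth data and quantitative H\"older-in-time bounds, whereas you invoke dominated convergence and density more informally---both are fine here.
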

\begin{proof}
	To show that $P$ and $\mcI$ are continuous maps into $C_{T}\mcH^{\gamma}(\mbT^{d})$, it suffices to argue as in the proof of~\cite[Lem.~A.6]{martini_mayorcas_25}. To establish their regularities in $L_{T}^{2}\mcH^{\gamma+1}(\mbT^{d})$ and $L_{T}^{2}\mcH^{\gamma}(\mbT^{d})$ respectively, one can use the Fourier transform (in space), Young's convolution inequality and interpolation.
	\begin{details}
		\paragraph{Proof of Claim~\ref{it:Schauder_Sobolev_I}.}
		Let $f\in\mcH^{\gamma}(\mbT^{d})$, we obtain by an application of~\cite[Lem.~A.5]{martini_mayorcas_25},
		\begin{equation*}
			\norm{Pf}_{C_{T}\mcH^{\gamma}}\lesssim\norm{f}_{\mcH^{\gamma}}.
		\end{equation*}
		Hence to show that $P\from\mcH^{\gamma}(\mbT^{d})\to C_{T}\mcH^{\gamma}(\mbT^{d})$ is continuous, it suffices to establish that $Pf\in C_{T}\mcH^{\gamma}(\mbT^{d})$. Using that the smooth functions are dense in $\mcH^{\gamma}(\mbT^{d})$, there exists a sequence $(f_{n})_{n\in\mbN}$ of approximations such that $f_{n}\in C^{\infty}(\mbT^{d})$ for every $n\in\mbN$ and $f_{n}\to f\in\mcH^{\gamma}(\mbT^{d})$ as $n\to\infty$. Let $\kappa\in[0,1]$, it follows by~\cite[Lem.~A.5]{martini_mayorcas_25} for every $s<t\in[0,T]$, 
		\begin{equation*}
			\norm{P_{s}f_{n}-P_{t}f_{n}}_{\mcH^{\gamma}}=\norm{P_{s}(1-P_{t-s})f_{n}}_{\mcH^{\gamma}}\lesssim\norm{(1-P_{t-s})f_{n}}_{\mcH^{\gamma}}\lesssim\abs{t-s}^{\kappa}\norm{f_{n}}_{\mcH^{\gamma+2\kappa}},
		\end{equation*}
		which yields
		\begin{equation*}
			\norm{Pf_{n}}_{C_{T}^{\kappa}\mcH^{\gamma}}\lesssim\norm{f_{n}}_{\mcH^{\gamma+2\kappa}}.
		\end{equation*}
		Therefore, $Pf_{n}\in C_{T}\mcH^{\gamma}(\mbT^{d})$ and $Pf_{n}\to Pf\in C_{T}\mcH^{\gamma}(\mbT^{d})$, from which we obtain the continuity of $Pf$ by the completeness of $C_{T}\mcH^{\gamma}(\mbT^{d})$.
		
		To establish $Pf\in L^{2}_{T}\mcH^{\gamma+1}(\mbT^{d})$, we estimate using the Fourier transform,
		\begin{equation*}
			\begin{split}
				\norm{Pf}_{L^{2}_{T}\mcH^{\gamma+1}}^{2}&=\int_{0}^{T}\sum_{\om\in\mbZ^{d}}(1+\abs{2\uppi\om}^{2})^{\gamma+1}\euler^{-2t\abs{2\uppi\om}^{2}}\abs{\hat{f}(\om)}^{2}\dd t\\
				&=T\abs{\hat{f}(0)}^{2}+\sum_{\om\in\mbZ^{d}\setminus\{0\}}(1+\abs{2\uppi\om}^{2})^{\gamma+1}\abs{\hat{f}(\om)}^{2}\int_{0}^{T}\euler^{-2t\abs{2\uppi\om}^{2}}\dd t\\
				&\lesssim T\abs{\hat{f}(0)}^{2}+\sum_{\om\in\mbZ^{d}\setminus\{0\}}(1+\abs{2\uppi\om}^{2})^{\gamma}\abs{\hat{f}(\om)}^{2}\\
				&\lesssim(1+T)\norm{f}_{\mcH^{\gamma}}^{2}.
			\end{split}
		\end{equation*}
		\paragraph{Proof of Claim~\ref{it:Schauder_Sobolev_II}.}
		Let $\gamma'\leq\gamma<\gamma'+2$, we estimate for $t\in[0,T]$ by~\cite[Lem.~A.5]{martini_mayorcas_25},
		\begin{equation*}
			\begin{split}
				\norm{\mcI[f]_{t}}_{\mcH^{\gamma}}=\int_{0}^{t}\norm{P_{t-s}f(s)}_{\mcH^{\gamma}}\dd s&\lesssim\int_{0}^{t}(1\vee \abs{t-s}^{-\frac{\gamma-\gamma'}{2}})\norm{f(s)}_{\mcH^{\gamma'}}\dd s\\
				&\lesssim\norm{f}_{C_{T}\mcH^{\gamma'}}\int_{0}^{t}(1\vee \abs{t-s}^{-\frac{\gamma-\gamma'}{2}})\dd s.
			\end{split}
		\end{equation*}
		The singularity is integrable since $\gamma<\gamma'+2$, i.e.\ if $t\in[0,1]$,
		\begin{equation*}
			\int_{0}^{t}(1\vee \abs{t-s}^{-\frac{\gamma-\gamma'}{2}})\dd s=\int_{0}^{t}\abs{t-s}^{-\frac{\gamma-\gamma'}{2}}\dd s=t^{1-\frac{\gamma-\gamma'}{2}}\int_{0}^{1}\abs{1-s}^{-\frac{\gamma-\gamma'}{2}}\dd s\lesssim t^{1-\frac{\gamma-\gamma'}{2}}
		\end{equation*}
		and if $t\geq1$,
		\begin{equation*}
			\int_{0}^{t}(1\vee \abs{t-s}^{-\frac{\gamma-\gamma'}{2}})\dd s=\int_{0}^{1}\abs{t-s}^{-\frac{\gamma-\gamma'}{2}}\dd s+\int_{1}^{t}\dd s\lesssim t^{1-\frac{\gamma-\gamma'}{2}}+t.
		\end{equation*}
		Therefore, we obtain
		\begin{equation*}
			\norm{\mcI[f]}_{C_{T}\mcH^{\gamma}}\lesssim(T+T^{1-\frac{\gamma-\gamma'}{2}})\norm{f}_{C_{T}\mcH^{\gamma'}}.
		\end{equation*}
		Hence to show that $\mcI\from C_{T}\mcH^{\gamma'}(\mbT^{d})\to C_{T}\mcH^{\gamma}(\mbT^{d})$ is continuous, it suffices to establish that $\mcI[f]\in C_{T}\mcH^{\gamma}(\mbT^{d})$. Using the density of $C^{\infty}(\mbT^{d})\subset\mcH^{\gamma'}(\mbT^{d})$ and a partition of unity in time, we can find a sequence $(f_{n})_{n\in\mbN}$ such that $f_{n}\in C_{T}C^{\infty}(\mbT^{d})$ for every $n\in\mbN$ and $f_{n}\to f\in C_{T}\mcH^{\gamma'}(\mbT^{d})$ as $n\to\infty$. By the completeness of $C_{T}\mcH^{\gamma}(\mbT^{d})$, it suffices to show that $\mcI[f_{n}]\in C_{T}\mcH^{\gamma}(\mbT^{d})$ for each $n\in\mbN$.
		
		We decompose $\mcI[f_{n}]_{t}-\mcI[f_{n}]_{s}=(P_{t-s}-1)\mcI[f_{n}]_{s}+\int_{s}^{t}P_{t-r}f_{n}(r)\dd r$ and estimate both terms separately. Let $\kappa\in[0,1]$, using that $f_{n}\in C_{T}C^{\infty}(\mbT^{d})\subset C_{T}\mcH^{\gamma'+2\kappa}(\mbT^{d})$, we obtain by~\cite[Lem.~A.5]{martini_mayorcas_25},
		\begin{equation*}
			\norm{(P_{t-s}-1)\mcI[f_{n}]_{s}}_{\mcH^{\gamma}}\lesssim\abs{t-s}^{\kappa}\norm{\mcI[f_{n}]_{s}}_{\mcH^{\gamma+2\kappa}}\lesssim_{T}\abs{t-s}^{\kappa}\norm{f_{n}}_{C_{T}\mcH^{\gamma'+2\kappa}}
		\end{equation*}
		and if we further assume $2\kappa\leq\gamma-\gamma'$, then
		\begin{equation*}
			\Bigl\lVert\int_{s}^{t}P_{t-r}f_{n}(r)\dd r\Bigr\rVert_{\mcH^{\gamma}}\leq\int_{s}^{t}\norm{P_{t-r}f_{n}(r)}_{\mcH^{\gamma}}\dd r\lesssim\norm{f_{n}}_{C_{T}\mcH^{\gamma'+2\kappa}}\int_{s}^{t}(1\vee\abs{t-r}^{-\frac{\gamma-(\gamma'+2\kappa)}{2}})\dd r.
		\end{equation*}
		Assume $\abs{t-s}<1$, we can integrate the singularity by
		\begin{equation*}
			\int_{s}^{t}(1\vee\abs{t-r}^{-\frac{\gamma-(\gamma'+2\kappa)}{2}})\dd r=\int_{s}^{t}\abs{t-r}^{-\frac{\gamma-(\gamma'+2\kappa)}{2}}\dd r\lesssim\abs{t-s}^{\kappa}\int_{s}^{t}\abs{t-r}^{-\frac{\gamma-\gamma'}{2}}\dd r\lesssim T^{1-\frac{\gamma-\gamma'}{2}}\abs{t-s}^{\kappa},
		\end{equation*}
		which implies
		\begin{equation*}
			\norm{\mcI[f_{n}]_{t}-\mcI[f_{n}]_{s}}_{\mcH^{\gamma}}\lesssim_{T}\abs{t-s}^{\kappa}\norm{f_{n}}_{C_{T}\mcH^{\gamma'+2\kappa}}.
		\end{equation*}
		If $\abs{t-s}\geq1$, we can instead estimate directly by the triangle inequality,
		\begin{equation*}
			\norm{\mcI[f_{n}]_{t}-\mcI[f_{n}]_{s}}_{\mcH^{\gamma}}\leq2\norm{\mcI[f_{n}]}_{C_{T}\mcH^{\gamma+2\kappa}}\lesssim_{T}\norm{f_{n}}_{C_{T}\mcH^{\gamma'+2\kappa}}\leq\abs{t-s}^{\kappa}\norm{f_{n}}_{C_{T}\mcH^{\gamma'+2\kappa}}.
		\end{equation*}
		All in all, we obtain for each $n\in\mbN$, $\kappa\in[0,1\wedge(\gamma-\gamma')/2]$ and $s<t$,
		\begin{equation*}
			\norm{\mcI[f_{n}]_{t}-\mcI[f_{n}]_{s}}_{\mcH^{\gamma}}\lesssim_{T}\abs{t-s}^{\kappa}\norm{f_{n}}_{C_{T}\mcH^{\gamma'+2\kappa}},
		\end{equation*}
		which implies $\mcI[f_{n}]\in C_{T}\mcH^{\gamma}(\mbT^{d})$. This yields Claim~\ref{it:Schauder_Sobolev_II}.
		\paragraph{Proof of Claim~\ref{it:Schauder_Sobolev_III}.}
		Let $\gamma'\leq\gamma\leq\gamma'+2$, we estimate using the Fourier transform, Young's convolution estimate and interpolation,
		\begin{equation*}
			\begin{split}
				\norm{\mcI[f]}_{L_{T}^{2}\mcH^{\gamma}}^{2}&=\sum_{\om\in\mbZ^{d}}(1+\abs{2\uppi\om}^{2})^{\gamma}\int_{0}^{T}\Bigl\lvert\int_{0}^{t}\euler^{-\abs{t-s}\abs{2\uppi\om}^{2}}\hat{f}(s,\om)\dd s\Bigr\rvert^{2}\dd t\\
				&\leq\sum_{\om\in\mbZ^{d}}(1+\abs{2\uppi\om}^{2})^{\gamma}\Bigl\lvert\int_{0}^{T}\euler^{-t\abs{2\uppi\om}^{2}}\dd t\Bigr\rvert^{2}\int_{0}^{T}\abs{\hat{f}(t,\om)}^{2}\dd t\\
				&=T^{2}\int_{0}^{T}\abs{\hat{f}(t,0)}^{2}\dd t+\sum_{\om\in\mbZ^{d}\setminus\{0\}}(1+\abs{2\uppi\om}^{2})^{\gamma}\Bigl\lvert\int_{0}^{T}\euler^{-t\abs{2\uppi\om}^{2}}\dd t\Bigr\rvert^{2}\int_{0}^{T}\abs{\hat{f}(t,\om)}^{2}\dd t\\
				&=T^{2}\int_{0}^{T}\abs{\hat{f}(t,0)}^{2}\dd t+\sum_{\om\in\mbZ^{d}\setminus\{0\}}(1+\abs{2\uppi\om}^{2})^{\gamma}\abs{2\uppi\om}^{-4}(1-\euler^{-T\abs{2\uppi\om}^{2}})^{2}\int_{0}^{T}\abs{\hat{f}(t,\om)}^{2}\dd t\\
				&\leq T^{2}\int_{0}^{T}\abs{\hat{f}(t,0)}^{2}\dd t+T^{2+\gamma'-\gamma}\sum_{\om\in\mbZ^{d}\setminus\{0\}}(1+\abs{2\uppi\om}^{2})^{\gamma'}\int_{0}^{T}\abs{\hat{f}(t,\om)}^{2}\dd t\\
				&\lesssim(T^{2}+T^{2+\gamma'-\gamma})\norm{f}_{L_{T}^{2}\mcH^{\gamma'}}^{2}.
			\end{split}
		\end{equation*}
		This yields Claim~\ref{it:Schauder_Sobolev_III}.
	\end{details}
\end{proof}
Let $u\in C^{2}_{1}((0,T]\times\mbT^{d})\cap C([0,T]\times\mbT^{d})$ be the classical solution (if it exists) to the general diffusion-advection equation
\begin{equation}\label{eq:Gen_Advec_Diffusion}
	\begin{cases}
		\begin{aligned}
			(\partial_{t}-\Delta)u&=\vdiv(bu),\quad&&\text{in}~(0,T]\times\mbT^{d},\\
			u\tzero&=u_{0},\quad&&\text{on}~\mbT^{d},
		\end{aligned}
	\end{cases}
\end{equation}
with positive initial data $u_{0}\in C(\mbT^{d})$ and advection $b\in C([0,T]\times\mbT^{d};\mbR^{d})$ satisfying $\vdiv b\in C([0,T]\times\mbT^{d};\mbR)$. In the next proposition we show that $u$ stays bounded away from zero, which is used to prove Lemma~\ref{lem:Determ_KS_positivity} (where existence of a solution is guaranteed.)
\begin{proposition}\label{prop:diff_adv_positivity}
	Let $u_{0}\in C(\mbT^{d})$ be such that $u_{0}>0$ and $b\in C([0,T]\times\mbT^{d};\mbR^{d})$ be such that $\vdiv b\in C([0,T]\times\mbT^{d};\mbR)$. Let $u\in C^{2}_{1}((0,T]\times\mbT^{d})\cap C([0,T]\times\mbT^{d})$ be any classical solution to~\eqref{eq:Gen_Advec_Diffusion}. Then there exists a constant $c>0$ such that $u(t,x)\geq c$ for all $(t,x)\in[0,T]\times\mbT^{d}$.
\end{proposition}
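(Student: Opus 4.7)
The approach is to compare $u$ against an explicit exponentially decaying barrier and invoke a parabolic minimum principle; the torus has no spatial boundary, so only the initial layer $t=0$ plays the role of the parabolic boundary.

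Set $c_{0}:=\min_{x\in\mbT^{d}}u_{0}(x)>0$, which is strictly positive by continuity of $u_{0}$ and compactness of $\mbT^{d}$, and $K_{0}:=\|\vdiv b\|_{C([0,T]\times\mbT^{d})}<\infty$. Fix any constant $K>K_{0}$ and consider the barrier $\phi(t):=c_{0}\euler^{-Kt}$. The goal is to show $u\geq\phi$ on $[0,T]\times\mbT^{d}$, which immediately yields the quantitative bound $u\geq c_{0}\euler^{-KT}>0$.

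Since $u\in C^{2}_{1}((0,T]\times\mbT^{d})$ is a classical solution, expand $\vdiv(bu)=b\cdot\nabla u+u\vdiv b$, so that $\partial_{t}u=\Delta u+b\cdot\nabla u+u\vdiv b$ on $(0,T]\times\mbT^{d}$. Setting $w:=u-\phi$ and using $\partial_{t}\phi=-K\phi$, a direct computation gives
\begin{equation*}
\partial_{t}w-\Delta w-b\cdot\nabla w-w\vdiv b=\phi(K+\vdiv b)\geq c_{0}\euler^{-KT}(K-K_{0})>0
\quad\text{on }(0,T]\times\mbT^{d},
\end{equation*}
while $w(0,\cdot)=u_{0}-c_{0}\geq0$ on $\mbT^{d}$. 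The zero-order coefficient $-\vdiv b$ in this operator has the wrong sign for a direct minimum-principle argument, so I would remove it by the standard exponential rescaling: let $M>K_{0}$ and $\tilde{w}:=\euler^{-Mt}w$. A short calculation, using $\partial_{t}w=M\euler^{Mt}\tilde{w}+\euler^{Mt}\partial_{t}\tilde{w}$, shows
\begin{equation*}
\partial_{t}\tilde{w}-\Delta\tilde{w}-b\cdot\nabla\tilde{w}+(M-\vdiv b)\tilde{w}=\euler^{-Mt}\phi(K+\vdiv b)>0\quad\text{on }(0,T]\times\mbT^{d},
\end{equation*}
where now the zero-order coefficient satisfies $M-\vdiv b\geq M-K_{0}>0$.

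It remains to conclude $\tilde{w}\geq0$ by the pointwise minimum principle. The function $\tilde{w}$ is continuous on the compact set $[0,T]\times\mbT^{d}$, hence attains its minimum at some $(t^{\star},x^{\star})$. If $t^{\star}=0$, then $\tilde{w}(t^{\star},x^{\star})=u_{0}(x^{\star})-c_{0}\geq0$. If $t^{\star}>0$, then $\tilde{w}\in C^{2}_{1}$ near $(t^{\star},x^{\star})$ (including at $t^{\star}=T$, since $\partial_{t}\tilde{w}$ extends continuously up to $t=T$), and the usual first- and second-derivative tests give $\partial_{t}\tilde{w}(t^{\star},x^{\star})\leq0$, $\nabla\tilde{w}(t^{\star},x^{\star})=0$ and $\Delta\tilde{w}(t^{\star},x^{\star})\geq0$. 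Plugging this into the displayed strict inequality yields $(M-\vdiv b(t^{\star},x^{\star}))\tilde{w}(t^{\star},x^{\star})>0$, which forces $\tilde{w}(t^{\star},x^{\star})>0$ since $M-\vdiv b>0$. In either case $\min\tilde{w}\geq0$, hence $w\geq0$ and $u\geq\phi\geq c_{0}\euler^{-KT}=:c$, as required.

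The main point to watch is that the solution is only $C^{2}_{1}$ down to $t=0^{+}$ and merely continuous at $t=0$: this is why the minimum is sought on the compact set $[0,T]\times\mbT^{d}$ and the derivative tests are applied only at points with $t^{\star}>0$, with the case $t^{\star}=0$ handled by the explicit initial bound $w(0,\cdot)\geq0$. The strict positivity of the source term $\phi(K+\vdiv b)$, guaranteed by the choice $K>K_{0}$, removes the need for any further $\varepsilon$-perturbation in time.
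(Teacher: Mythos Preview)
Your proof is correct and takes a genuinely different route from the paper's. The paper proves the proposition via a Feynman--Kac representation: it passes to the full space, time-reverses, constructs a weak solution $(Y_{s})_{s\geq0}$ to the SDE $\dd Y_{s}=\back{b}(s,Y_{s})\dd s+\sqrt{2}\dd B_{s}$, and uses It\^{o}'s formula to show that $X_{s}=\euler^{-A_{s}}\back{u}(s,Y_{s})$ is a martingale, yielding the representation $u(t,x)=\mbE[\euler^{-\int_{0}^{t}\back{V}(r,Y_{r})\dd r}u_{0}(Y_{t})]$ with $V=-\vdiv b$, from which the bound $u\geq c_{0}\euler^{-T\norm{V}_{C}}$ is immediate. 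Your argument instead stays entirely within PDE theory: a barrier $\phi(t)=c_{0}\euler^{-Kt}$ with $K>\norm{\vdiv b}_{C}$, an exponential rescaling to make the zero-order coefficient nonnegative, and the pointwise minimum principle on the boundaryless torus. Both arguments produce the same quantitative lower bound. Your approach is more elementary in that it avoids stochastic analysis (construction of weak SDE solutions, localization, dominated convergence for the martingale property), while the paper's approach gives an explicit probabilistic representation of the solution that may be of independent interest. Your handling of the limited regularity at $t=0$ (derivative tests only for $t^{\star}>0$, direct initial bound otherwise) and at $t=T$ (continuous extension of $\partial_{t}$) matches exactly what the definition of $C^{2}_{1}((0,T]\times\mbT^{d})$ provides.
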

\begin{proof}
	Using the product rule and passing between the torus and the full space,
	\begin{details}
		\cite[Lem.~30.2]{vanzuijlen_22}
	\end{details}
	we find that $u\in C^{2}_{1}((0,T]\times\mbR^{d})\cap C([0,T]\times\mbR^{d})$ is a periodic solution to
	\begin{equation*}
		\begin{cases}
			\begin{aligned}
				\partial_{t}u&=\Delta u-uV+b\cdot\nabla u,\quad&&\text{in}~(0,T]\times\mbR^{d},\\
				u\tzero&=u_{0},\quad&&\text{on}~\mbR^{d},
			\end{aligned}
		\end{cases}
	\end{equation*}
	where we denote $V\defeq-\vdiv b$ and abuse notation to identify functions on $\mbT^{d}$ with periodic functions on $\mbR^{d}$. Using the assumption $\vdiv b\in C([0,T]\times\mbT^{d})$, it follows that $V\in C([0,T]\times\mbR^{d})$ and $\norm{V}_{C([0,T]\times\mbR^{d})}<\infty$.
	
	Let $t\in[0,T]$ and for every $v\from[0,t]\to\mbR$ define the time reversal $\back{v}(s)\defeq v(t-s)$. It follows by~\cite[Chpt.~V,~Thm.~23.5~\&~Thm.~20.1]{rogers_williams_00_vol2} that for every $x\in\mbR^{d}$ there exists a weak solution $((Y_{s})_{s\geq0}, (\mcF_{s})_{s\geq0})$ to the SDE,
	\begin{equation*}
		\begin{cases}
			\begin{aligned}
				\dd Y_{s}&=\back{b}(s,Y_{s})\mathds{1}_{s\leq t}\dd s+\sqrt{2}\dd B_{s},\\
				Y_{0}&=x,
			\end{aligned}
		\end{cases}
	\end{equation*}
	where we used $b\in C([0,T]\times\mbT^{d};\mbR^{d})$ to ensure that $(s,y)\mapsto\back{b}(s,y)\mathds{1}_{s\leq t}$ is a previsible path functional in the sense of~\cite[Chpt.~V,~Def.~8.3]{rogers_williams_00_vol2} and bounded in a neighbourhood of time $t$. By an application of~\cite[Chpt.~8,~Thm.~1.7~\&~Chpt.~4,~Cor.~4.3]{ethier_kurtz_86}, this solution is unique in law.
	
	We denote $A_{s}\defeq\int_{0}^{s}\back{V}(r,Y_{r})\dd r$ and $X_{s}\defeq\euler^{-A_{s}}\back{u}(s,Y_{s})$, where $\back{u}(s)\defeq u(t-s)$ solves
	\begin{equation*}
		\begin{cases}
			\begin{aligned}
				-\partial_{s}\back{u}&=\Delta\back{u}-\back{u}\back{V}+\back{b}\cdot\nabla\back{u},\quad&&\text{in}~[0,t)\times\mbR^{d},\\
				\back{u}|_{s=t}&=u_{0},\quad&&\text{on}~\mbR^{d}.
			\end{aligned}
		\end{cases}
	\end{equation*}
	It then follows by It\^{o}'s formula (cf.~\cite[Chpt.~3,~Thm.~3.6]{karatzas_shreve_91}) that for every $s<t$,
	\begin{equation*}
		\begin{split}
			\dd X_{s}&=-\euler^{-A_{s}}\back{u}(s,Y_{s})\back{V}(s,Y_{s})\dd s+\euler^{-A_{s}}\partial_{s}\back{u}(s,Y_{s})\dd s+\euler^{-A_{s}}\nabla\back{u}(s,Y_{s})\cdot\back{b}(s,y_{s})\dd s\\
			&\quad+\sqrt{2}\euler^{-A_{s}}\nabla\back{u}(s,Y_{s})\cdot\dd B_{s}+\euler^{-A_{s}}\Delta\back{u}(s,Y_{s})\dd s\\
			&=\sqrt{2}\euler^{-A_{s}}\nabla\back{u}(s,Y_{s})\cdot\dd B_{s},
		\end{split}
	\end{equation*}
	which implies that $(X_{s})_{s\in[0,t)}$ is a continuous local martingale up to time $t$.
	
	Let $(T_{k})_{k\in\mbN}$ be a localizing sequence, i.e.\ each $T_{k}$ is a stopping time, $T_{k}<T_{k+1}$, $t=\sup_{k\in\mbN}T_{k}$ and $(X_{s\wedge T_{k}})_{s\geq0}$ is a martingale for every $k\in\mbN$. It follows that $X_{s\wedge T_{k}}\to X_{s\wedge t}$ almost surely as $k\to\infty$. Furthermore,
	\begin{equation*}
		\mbE\Bigl[\sup_{s\in[0,t]}\abs{X_{s}}\Bigr]\leq\norm{u}_{C([0,t]\times\mbT^{d})}\euler^{t\norm{V}_{C([0,t]\times\mbT^{d})}}<\infty
	\end{equation*}
	and each $\abs{X_{s\wedge T_{k}}}$ is dominated by $\sup_{s\in[0,t]}\abs{X_{s}}$, which is integrable by the above. Therefore we obtain by the dominated convergence theorem for all $0\leq r<s<\infty$,
	\begin{equation*}
		\mbE[X_{s\wedge t}\mid\mathcal{F}_{r}]\leftarrow\mbE[X_{s\wedge T_{k}}\mid\mathcal{F}_{r}]=X_{r\wedge T_{k}}\to X_{r\wedge t},
	\end{equation*}
	so that $(X_{s\wedge t})_{s\geq0}$ is a martingale.
	
	We can then apply the martingale property to deduce
	\begin{equation*}
		u(t,x)=\back{u}(0,x)=\mbE[X_{0}]=\mbE[X_{t}]=\mbE[\euler^{-A_{t}}u(0,Y_{t})]=\mbE\Bigl[\euler^{-\int_{0}^{t}\back{V}(r,Y_{r})\dd r}u_{0}(Y_{t})\Bigr].
	\end{equation*}
	Using that $\mbT^{d}$ is compact, it follows by the extreme value theorem that there exists a constant $c_{0}>0$ such that $u_{0}>c_{0}$. Therefore for all $(t,x)\in[0,T]\times\mbR^{d}$,
	\begin{equation*}
		u(t,x)=\mbE\Bigl[\euler^{-\int_{0}^{t}\back{V}(r,Y_{r})\dd r}u_{0}(Y_{t})\Bigr]>c_{0}\euler^{-t\norm{V}_{C([0,t]\times\mbT^{d})}}\geq c_{0}\euler^{-T\norm{V}_{C([0,T]\times\mbT^{d})}}>0,
	\end{equation*}
	which yields the claim.
\end{proof}
We use the following chain rule in Bessel potential spaces in the proof of Lemma~\ref{lem:regularity_srdet}.
\begin{lemma}\label{lem:chain_rule_fractional}
	Let $F\in C^{\infty}(\mbR\setminus\{0\})$, $\gamma\geq0$, $c>0$ and $u\in\mcH^{\gamma}(\mbT^{d})\cap L^{\infty}(\mbT^{d})$ be such that $u\geq c$. Then there exists a constant $C(\gamma,F,\norm{u}_{L^{\infty}}, c^{-1})\geq0$ satisfying
	\begin{equation*}
		\norm{F(u)}_{\mcH^{\gamma}}\leq C(\gamma,F,\norm{u}_{L^{\infty}}, c^{-1})\norm{u}_{\mcH^{\gamma}}.
	\end{equation*}
	For all $\gamma$ and $F$, the function $(x,y)\mapsto C(\gamma,F,x, y)$ can be chosen to be non-decreasing.
\end{lemma}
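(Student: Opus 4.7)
The plan is to reduce the claim to a standard composition estimate in Besov spaces by first extending $F$ to a smooth function on all of $\mbR$ that vanishes at the origin. Since $u$ takes values in the compact interval $[c,\norm{u}_{L^{\infty}}]$ which is bounded away from $0$, the behaviour of $F$ outside a neighbourhood of this interval is irrelevant.

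Concretely, I would fix a smooth cut-off $\chi\in C^{\infty}(\mbR;[0,1])$ with $\chi\equiv1$ on $[c/2,\norm{u}_{L^{\infty}}+1]$ and $\supp(\chi)\subset[c/4,\norm{u}_{L^{\infty}}+2]$, chosen so that $\norm{\chi^{(k)}}_{L^{\infty}}\lesssim_{k} c^{-k}(1+\norm{u}_{L^{\infty}})^{k}$ for each $k\in\mbN_{0}$. Defining $\widetilde{F}(s)\defeq\chi(s)F(s)$ for $s\neq0$ and $\widetilde{F}(0)\defeq 0$, the fact that $\chi$ vanishes near the origin ensures $\widetilde{F}\in C^{\infty}(\mbR)$. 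Moreover, since $c\leq u(x)\leq\norm{u}_{L^{\infty}}$ pointwise, we have $\widetilde{F}(u)=F(u)$ and $\widetilde{F}(0)=0$, while all derivatives $\widetilde{F}^{(k)}$ are bounded by constants $\widetilde{C}_{k}(F,\norm{u}_{L^{\infty}},c^{-1})$ that depend monotonically on $\norm{u}_{L^{\infty}}$ and $c^{-1}$.

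Next, using the identification $\mcH^{\gamma}(\mbT^{d})=\mcB^{\gamma}_{2,2}(\mbT^{d})$ (see Subsection~\ref{subsec:notation}), I invoke the standard composition estimate for Besov spaces (for example~\cite[Thm.~2.87, Cor.~2.91]{bahouri_chemin_danchin_11} or the Runst--Sickel composition theorem): for any $G\in C^{\infty}(\mbR)$ with $G(0)=0$, any $\gamma\geq 0$ and any $v\in\mcB^{\gamma}_{2,2}(\mbT^{d})\cap L^{\infty}(\mbT^{d})$,
\begin{equation*}
\norm{G(v)}_{\mcB^{\gamma}_{2,2}}\leq C(\gamma,G',\ldots,G^{(\lceil\gamma\rceil+1)},\norm{v}_{L^{\infty}})\norm{v}_{\mcB^{\gamma}_{2,2}}.
\end{equation*}
Applying this with $G=\widetilde{F}$ and $v=u$ yields $\norm{F(u)}_{\mcH^{\gamma}}=\norm{\widetilde{F}(u)}_{\mcB^{\gamma}_{2,2}}\leq C\norm{u}_{\mcB^{\gamma}_{2,2}}$, where $C$ collects the bounds on the derivatives of $\widetilde{F}$ and thus depends only on $\gamma$, $F$, $\norm{u}_{L^{\infty}}$ and $c^{-1}$. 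Monotonicity of $C$ in $(\norm{u}_{L^{\infty}},c^{-1})$ is obtained by taking suprema of the derivative bounds over larger intervals and smaller values of $c$, which can only increase the constant.

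The main obstacle is not the cut-off construction, which is routine, but rather the citation and correct statement of the composition estimate with the right dependence of the constants on the derivatives of $\widetilde{F}$ up to the order $\lceil\gamma\rceil+1$ (and, consequently, on $c^{-1}$). For $\gamma\in[0,1]$ the estimate is elementary and can be proved directly via paraproduct decomposition and the mean-value theorem; for larger $\gamma$ one has to iterate this argument or appeal to~\cite[Cor.~2.91]{bahouri_chemin_danchin_11}, being careful that the required regularity of $\widetilde{F}$ and the monotonic dependence of the constant on $\norm{u}_{L^{\infty}}$ are both satisfied.
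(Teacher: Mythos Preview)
Your proposal is correct and follows essentially the same approach as the paper: both use a smooth cut-off supported away from the origin to replace $F$ by some $\widetilde{F}\in C^{\infty}(\mbR)$ with $\widetilde{F}(0)=0$, and then invoke the composition estimate~\cite[Thm.~2.87/Cor.~2.91]{bahouri_chemin_danchin_11} on $\mcB^{\gamma}_{2,2}$. The paper treats the case $\gamma=0$ separately via the mean-value theorem, but this is a minor presentational difference.
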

\begin{proof}
	Since $u$ is bounded away from $0$, we may use a smooth cut-off to assume without limitation of generality that $F$ vanishes at $0$ and that $F'$ is locally bounded. 
	\begin{details}
		\paragraph{Construction of the smooth cut-off.}
		Let $\phi\in C_{c}^{\infty}(\mbR)$ be even and such that $\supp(\phi)\subset[-1,1]$. Define $\phi_{\delta}(x)\defeq\delta^{-1}\phi(\delta^{-1}x)$ for every $\delta>0$. We define the smooth cut-off
		\begin{equation*}
			\theta_{\delta}(x)\defeq\int_{-\infty}^{\infty}\phi_{\delta}(x-y)\mathds{1}_{[3c/4,\infty)}(y)\dd y=\int_{3c/4}^{\infty}\phi_{\delta}(x-y)\dd y,
		\end{equation*}
		whose support is contained in $[3c/4-\delta,\infty)$. If we choose $\delta\leq c/4$, then $\supp(\theta_{\delta})\subset[c/2,\infty)$ and furthermore $\theta_{\delta}(x)=1$ for every $x\in[c,\infty)$, since $\supp(\phi_{\delta}(\place-x))\subset[x-\delta,x+\delta]\subset[3c/4,\infty)$. 
		
		Let $F_{c}(x)\defeq F(x)\theta_{c/4}(x)$, whose support is contained in $[c/2,\infty)$. Using that $u\geq c$ on $\mbT^{d}$ and $\theta_{c/4}\equiv1$ on $[c,\infty)$, we obtain $F(u(x))=F_{c}(u(x))$ for every $x\in\mbT^{d}$. Furthermore $F_{c}'=F'\theta_{c/4}+F\phi_{c/4}(\place-3c/4)$ is locally bounded.
		
	\end{details}
	If $\gamma=0$, this yields the square integrability of $F(u)$.
	\begin{details}
		\paragraph{Square integrability of $F(u)$.}
		Using that $F_{c}(0)=0$ and the local boundedness of  the derivative $F_{c}'$, we may estimate by the mean-value theorem,
		\begin{equation*}
			\norm{F(u)}_{L^{2}}^{2}=\int_{\mbT^{d}}\abs{F_{c}(u(x))}^{2}\dd x=\int_{\mbT^{d}}\abs{F_{c}(u(x))-F_{c}(0)}^{2}\dd x\leq\sup_{x\in(0,\norm{u}_{L^{\infty}})}\abs{F_{c}'(x)}^{2}\int_{\mbT^{d}}\abs{u(x)}^{2}\dd x.
		\end{equation*}
		We denote $C(0,F,\norm{u}_{L^{\infty}},c^{-1})\defeq\sup_{x\in(0,\norm{u}_{L^{\infty}})}\abs{F_{c}'(x)}$, which yields the claim for $\gamma=0$.
		
	\end{details}
	If $\gamma>0$ we may apply~\cite[Thm.~2.87]{bahouri_chemin_danchin_11} to deduce the claim, see also~\cite[Lem.~B.2]{constantin_etal_20}.
	\begin{details}
		\paragraph{Proof for $\gamma>0$.}
		By an application of~\cite[Thm.~2.87]{bahouri_chemin_danchin_11} there exists a constant $C(\gamma,F_{c}',\norm{u}_{L^{\infty}})\geq0$ such that
		\begin{equation*}
			\norm{F(u)}_{\mcH^{\gamma}}\leq C(\gamma,F_{c}',\norm{u}_{L^{\infty}})\norm{u}_{\mcH^{\gamma}}.
		\end{equation*}
		We denote $C(\gamma,F,\norm{u}_{L^{\infty}},c^{-1})\defeq C(\gamma,F_{c}',\norm{u}_{L^{\infty}})$, which yields the claim for $\gamma>0$.
	\end{details}
\end{proof}
Recall that $T^{\cem}_{\target}[f]$ denotes the blow-up time of $f\in\target^{\sol}_{T}$ (see Subsection~\ref{subsec:notation}). We show that the map $T^{\cem}_{\target}\from\target^{\sol}_{T}\to[0,T]$ is lower semicontinuous.
\begin{lemma}\label{lem:blow_up_lsc}
	The map $T^{\cem}_{\target}\from\target^{\sol}_{T}\to[0,T]$ is lower semicontinuous.
\end{lemma}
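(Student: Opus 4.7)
The plan is to argue by contradiction, exploiting the explicit description of convergence in $\target^{\sol}_{T}$ given by \eqref{eq:FsolT_metric}. Suppose, with a view to contradiction, that there exists a sequence $f_{n}\to f$ in $\target^{\sol}_{T}$ such that $\liminf_{n\to\infty}T^{\cem}_{\target}[f_{n}]<T^{\cem}_{\target}[f]$. After extracting a subsequence (which I shall not relabel), I may choose a time $t^{*}\in\mbR$ with
\begin{equation*}
	\limsup_{n\to\infty}T^{\cem}_{\target}[f_{n}]<t^{*}<T^{\cem}_{\target}[f],
\end{equation*}
so that $T^{\cem}_{\target}[f_{n}]<t^{*}$ for all $n$ sufficiently large, while $f$ stays in $\target$ on $[0,t^{*}]$. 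By continuity of $s\mapsto\norm{f(s)}_{\target}$ on the compact interval $[0,t^{*}]$, the quantity $M\defeq\sup_{s\in[0,t^{*}]}\norm{f(s)}_{\target}$ is finite. I then fix a level $L>M$ large enough that also $L>t^{*}$ (so that the truncation $T\wedge L$ appearing in the definition of $T_{L}$ plays no role, since $t^{*}<T$ automatically).

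Next I use the blow-up of $f_{n}$ before time $t^{*}$ to locate a first hitting time of the level $L$. Since $f_{n}\in\target^{\sol}_{T}$ and $T^{\cem}_{\target}[f_{n}]<t^{*}$, the function $s\mapsto\norm{f_{n}(s)}_{\target}$ is continuous on $[0,T^{\cem}_{\target}[f_{n}])$ with $\lim_{s\nearrow T^{\cem}_{\target}[f_{n}]}\norm{f_{n}(s)}_{\target}=\infty$, while for $n$ large enough (using the convergence $f_{n}(0)\to f(0)$ in $\target$) one has $\norm{f_{n}(0)}_{\target}<L$. By the intermediate value theorem, there exists
\begin{equation*}
	s_{n}\defeq\inf\{s\in[0,T^{\cem}_{\target}[f_{n}]):\norm{f_{n}(s)}_{\target}>L\}<T^{\cem}_{\target}[f_{n}]<t^{*},
\end{equation*}
and continuity gives $\norm{f_{n}(s_{n})}_{\target}=L$. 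Moreover, since $\norm{f(s)}_{\target}\leq M<L$ for all $s\in[0,t^{*}]$, the time $T_{L}$ in \eqref{eq:FsolT_metric} (corresponding to the pair $(f_{n},f)$) satisfies $T_{L}=s_{n}$ for all large $n$.

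Finally I derive the contradiction. The convergence $f_{n}\to f$ in $\target^{\sol}_{T}$ together with \eqref{eq:FsolT_metric} applied with this choice of $L$ gives $\sup_{s\in[0,s_{n}]}\norm{f_{n}(s)-f(s)}_{\target}\to0$, and in particular $\norm{f_{n}(s_{n})-f(s_{n})}_{\target}\to0$. But the reverse triangle inequality forces
\begin{equation*}
	\norm{f_{n}(s_{n})-f(s_{n})}_{\target}\geq\norm{f_{n}(s_{n})}_{\target}-\norm{f(s_{n})}_{\target}\geq L-M>0,
\end{equation*}
which is the desired contradiction and proves the lower semicontinuity. The only genuinely delicate point is keeping track of the truncation time $T_{L}$ appearing in the metric: one has to pick $L$ strictly larger than the a priori bound $M$ for $f$ on $[0,t^{*}]$ so that the infimum defining $T_{L}$ is realised by $f_{n}$ rather than by $f$, and this is precisely what allows the contradiction to bite.
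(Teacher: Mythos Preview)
Your proof is correct and relies on the same core idea as the paper's: if $f_n$ blows up strictly before $T^{\cem}_{\target}[f]$, then $f_n$ must hit a level $L>\sup_{[0,t^{*}]}\norm{f}_{\target}$ while $f$ stays below it, which contradicts the uniform convergence on $[0,T_L]$ encoded in~\eqref{eq:FsolT_metric}. The paper organises the argument slightly differently by first introducing auxiliary exceedance times $T^{L}_{\target}[f]\defeq T\wedge L\wedge\inf\{t:\norm{f(t)}_{\target}\geq L\}$, proving each $T^{L}_{\target}$ is lower semicontinuous via closed sublevel sets (extracting a convergent subsequence of hitting times and evaluating at the limit), and then concluding via $T^{\cem}_{\target}=\sup_{L>0}T^{L}_{\target}$; your direct contradiction is marginally cleaner since it avoids this decomposition and the subsequence extraction, evaluating instead at each $s_n$.
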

\begin{proof}
	We define for every $L>0$ and $f\in\target^{\sol}_{T}$ the exceedance time $T^{L}_{\target}[f]\defeq T\wedge L\wedge\inf\{t\in[0,T]:\norm{f(t)}_{\target}\geq L\}$. Let $S\in[0,T]$, we first show that the sublevel set
	\begin{equation*}
		(T^{L}_{\target})^{-1}([0,S])\defeq\{f\in\target^{\sol}_{T}:T^{L}_{\target}[f]\leq S\}
	\end{equation*}
	is closed in $\target^{\sol}_{T}$, which is equivalent to the lower semicontinuity of $T^{L}_{\target}$. 
	
	The case $S\geq T\wedge L$ is trivial, since $(T^{L}_{\target})^{-1}([0,T\wedge L])=\target^{\sol}_{T}$, hence assume $S<T\wedge L$. Let $(f_{n})_{n\in\mbN}$ be a converging sequence in $(T^{L}_{\target})^{-1}([0,S])$ with limit $f\in\target^{\sol}_{T}$. Our aim is to show that $f\in(T^{L}_{\target})^{-1}([0,S])$, so let's assume for a contradiction that $T^{L}_{\target}[f]>S$. The sequence of exceedance times $(T^{L}_{\target}[f_{n}])_{n\in\mbN}$ is a sequence of real numbers in $[0,S]$, hence by the compactness of $[0,S]$ we can extract a subsequence that converges (modulo re-labelling) to a limit $\lim_{n\to\infty}T^{L}_{\target}[f_{n}]=S^{*}\in[0,S]$. Using the continuity of the norm, we obtain
	\begin{equation*}
		\norm{f(S^{*})}_{\target}=\lim_{n\to\infty}\norm{f(T^{L}_{\target}[f_{n}])}_{\target}\geq\liminf_{n\to\infty}\norm{f_{n}(T^{L}_{\target}[f_{n}])}_{\target}-\limsup_{n\to\infty}\norm{f(T^{L}_{\target}[f_{n}])-f_{n}(T^{L}_{\target}[f_{n}])}_{\target}=L,
	\end{equation*}
	where we used $f_{n}\in(T^{L}_{\target})^{-1}([0,S])$ and $S<T\wedge L$  to deduce $\norm{f_{n}(T^{L}_{\target}[f_{n}])}_{\target}=L$ and the definition of the metric $D_{T}^{\target}$ on $\target^{\sol}_{T}$ (cf.~\eqref{eq:FsolT_metric}) to ensure that the second term vanishes.
	\begin{details}
		In particular, we used that 
		\begin{equation*}
			\begin{split}
				T_{L}&\defeq T\wedge L\wedge\inf\{t\in[0,T]:\norm{f_{n}(t)}_{\target}>L~\text{or}~\norm{f(t)}_{\target}>L\}\\
				&\geq T\wedge L\wedge\inf\{t\in[0,T]:\norm{f_{n}(t)}_{\target}\geq L~\text{or}~\norm{f(t)}_{\target}\geq L\}=T^{L}_{\target}[f_{n}]\wedge T^{L}_{\target}[f]=T^{L}_{\target}[f_{n}],
			\end{split}
		\end{equation*}
		which follows by $f_{n}\in(T^{L}_{\target})^{-1}([0,S])$ and the assumption $T^{L}_{\target}[f]>S$.
	\end{details}
	
	Therefore, $\norm{f(S^{*})}_{\target}\geq L$ and by the definition of $T^{L}_{\target}[f]$, we obtain $T^{L}_{\target}[f]\leq S^{*}\leq S$. It follows that the limit $f\in\target^{\sol}_{T}$ satisfies $T^{L}_{\target}[f]\leq S$, hence $f\in(T^{L}_{\target})^{-1}([0,S])$, which proves the lower semicontinuity of $T^{L}_{\target}\from\target^{\sol}_{T}\to[0,T]$.
	
	To deduce the lower semicontinuity of the blow-up time $T^{\cem}_{\target}\from\target^{\sol}_{T}\to[0,T]$, we use that $T^{\cem}_{\target}[f]=\sup_{L>0}T^{L}_{\target}[f]$ and the general fact that lower semicontinuity is preserved under taking suprema.
\end{proof}
\subsection{Analysis of Function Spaces}\label{subsec:function_spaces}
In this appendix we show that each interpolation space $\msL_{T}^{\kappa}\mcB_{p,q}^{\alpha}(\mbT^{d})$ and each blow-up space $\target^{\sol}_{T}$ (see Subsection~\ref{subsec:notation}) is separable (Lemma~\ref{lem:separability_interpolation} and Lemma~\ref{lem:FsolT_separability}). We then define a space of space-time H\"{o}lder distributions $\mcC_{\mfs}^{\alpha}([0,T]\times\mbT^{d})$ (Definition~\ref{def:Cs_alpha_torus}) and also establish its separability (Lemma~\ref{lem:Cs_alpha_torus_separable}).

Let $p,q\in[1,\infty]$ and $\alpha\in\mbR$. Recall (see Subsection~\ref{subsec:notation}) that the Besov space $\mcB_{p,q}^{\alpha}(\mbT^{d})$ is defined as the closure of $C^{\infty}(\mbT^{d})$ under the Besov norm $u\mapsto\norm{u}_{\mcB_{p,q}^{\alpha}}\defeq\norm{(2^{k\alpha}\norm{\Delta_{k}u}_{L^{p}})_{k\in\mbN_{-1}}}_{\ell^{q}}$. We first show the separability of each $\mcB_{p,q}^{\alpha}(\mbT^{d})$. 
\begin{lemma}\label{lem:Besov_separable}
	Let $p,q\in[1,\infty]$ and $\alpha\in\mbR$. It holds that 
	\begin{equation}\label{eq:Besov_characterization}
		\mcB_{p,q}^{\alpha}(\mbT^{d})=\Bigl\{u\in\mcS'(\mbT^{d}):\norm{u}_{\mcB_{p,q}^{\alpha}}<\infty,\lim_{j\to\infty}2^{j\alpha}\norm{\Delta_{j}u}_{L^{p}}=0\Bigr\}
	\end{equation}
	and each $\mcB_{p,q}^{\alpha}(\mbT^{d})$ is separable. 
\end{lemma}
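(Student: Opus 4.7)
My plan is to establish the characterization~\eqref{eq:Besov_characterization} first, and then deduce separability through the countable family of trigonometric polynomials with rational coefficients.

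For the first inclusion in~\eqref{eq:Besov_characterization}, let $u \in \mcB_{p,q}^{\alpha}(\mbT^{d})$ with approximants $u_{n} \in C^{\infty}(\mbT^{d})$ such that $\norm{u - u_{n}}_{\mcB_{p,q}^{\alpha}} \to 0$. Smoothness of $u_{n}$ gives rapidly decaying Fourier coefficients, hence $2^{j\alpha}\norm{\Delta_{j}u_{n}}_{L^{p}} \to 0$ as $j \to \infty$. Combined with the triangle inequality and the embedding $\ell^{q} \subset \ell^{\infty}$,
\begin{equation*}
\limsup_{j\to\infty} 2^{j\alpha}\norm{\Delta_{j}u}_{L^{p}} \leq \limsup_{j\to\infty} 2^{j\alpha}\norm{\Delta_{j}(u-u_{n})}_{L^{p}} \leq \norm{u-u_{n}}_{\mcB_{p,q}^{\alpha}},
\end{equation*}
which may be made arbitrarily small. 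For the reverse inclusion, I would use the Littlewood--Paley truncation $u_{N} \defeq \sum_{j=-1}^{N}\Delta_{j}u$. Since $\varrho_{j}$ is compactly supported on $\mbR^{d}$ and $\hat{u}$ lives on $\mbZ^{d}$, each $\Delta_{j}u$ is a trigonometric polynomial, and hence so is $u_{N}$. Invoking the (almost-)orthogonality $\Delta_{k}\Delta_{j}u \neq 0$ only when $\abs{k-j} \leq 1$, the block $\Delta_{k}(u-u_{N})$ vanishes for $k \leq N-1$ and equals $\Delta_{k}u$ up to a bounded perturbation for $k \geq N$. Thus
\begin{equation*}
\norm{u - u_{N}}_{\mcB_{p,q}^{\alpha}}^{q} \lesssim \sum_{k > N-1} 2^{kq\alpha}\norm{\Delta_{k}u}_{L^{p}}^{q} \qquad (q < \infty),
\end{equation*}
which is the tail of a convergent sum since $\norm{u}_{\mcB_{p,q}^{\alpha}} < \infty$; for $q = \infty$ the corresponding supremum tends to zero \emph{precisely by virtue of} the standing assumption $2^{k\alpha}\norm{\Delta_{k}u}_{L^{p}} \to 0$. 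Hence $u_{N} \to u$ in $\mcB_{p,q}^{\alpha}(\mbT^{d})$ and $u$ lies in the closure of $C^{\infty}(\mbT^{d})$.

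For separability, consider the countable set $\mcT_{\mbQ}$ of trigonometric polynomials with coefficients in $\mbQ + \upi\mbQ$ supported on finite subsets of $\mbZ^{d}$. Given $u \in \mcB_{p,q}^{\alpha}(\mbT^{d})$ and $\eps > 0$, pick $N$ so that $\norm{u - u_{N}}_{\mcB_{p,q}^{\alpha}} < \eps/2$ via the truncation above. Since $u_{N}$ is a trigonometric polynomial supported on a finite set $\Lambda \subset \mbZ^{d}$, the map $\mbC^{\abs{\Lambda}} \ni (c_{\om})_{\om \in \Lambda} \mapsto \norm{\sum_{\om \in \Lambda} c_{\om}\euler^{2\uppi\upi\inner{\om}{\place}}}_{\mcB_{p,q}^{\alpha}}$ is continuous (in fact equivalent to any norm on $\mbC^{\abs{\Lambda}}$), so approximating each $c_{\om}$ by a point in $\mbQ + \upi\mbQ$ yields $v \in \mcT_{\mbQ}$ with $\norm{u_{N}- v}_{\mcB_{p,q}^{\alpha}} < \eps/2$.

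The delicate step is the converse inclusion of~\eqref{eq:Besov_characterization} in the case $q = \infty$: without the vanishing condition $2^{j\alpha}\norm{\Delta_{j}u}_{L^{p}} \to 0$ one obtains a strictly larger space than the closure of $C^{\infty}$ (paralleling the classical distinction between little-H\"{o}lder and H\"{o}lder spaces), and this is exactly what the extra hypothesis is designed to rule out. Once this subtlety is handled the remaining arguments reduce to standard functional-analytic manipulations.
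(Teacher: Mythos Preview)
Your proof of the characterization~\eqref{eq:Besov_characterization} is correct and essentially matches the paper's: both directions use smooth approximants for the forward inclusion and the Littlewood--Paley truncation $u_{N}=\sum_{j\leq N}\Delta_{j}u$ for the converse, with the $q=\infty$ case relying on the vanishing hypothesis exactly as you note. For separability your route diverges from the paper's. You exploit that each $u_{N}$ is a trigonometric polynomial and approximate its finitely many Fourier coefficients by Gaussian rationals, reducing to continuity of a finite-dimensional norm. The paper instead fixes a countable dense subset $D$ of $L^{p}(\mbT^{d})$ (or $C(\mbT^{d})$ when $p=\infty$) and takes as its countable dense set $H=\{\sum_{j=-1}^{J}\Delta_{j}h_{j}:J\in\mbN_{-1},\,h_{j}\in D\}$, controlling $\norm{\Delta_{j}u-\Delta_{j}h_{j}}_{L^{p}}$ via Young's convolution inequality against $\msF^{-1}\varrho_{j}$. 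Your argument is more elementary and leans on the discreteness of the torus spectrum; the paper's is slightly more abstract and keeps the Littlewood--Paley machinery in the foreground. Both are valid.
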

\begin{proof}
	We start by proving the characterization~\eqref{eq:Besov_characterization}.
	
	Assume $u\in\mcB_{p,q}^{\alpha}(\mbT^{d})$, we need to show that $\lim_{j\to\infty}2^{j\alpha}\norm{\Delta_{j}u}_{L^{p}}=0$. By definition, there exist a sequence $(u_{n})_{n\in\mbN}$ such that $u_{n}\in C^{\infty}(\mbT^{d})$ for every $n\in\mbN$ and $u_{n}\to u\in\mcB_{p,q}^{\alpha}(\mbT^{d})$ as $n\to\infty$. It follows that for each $j\in\mbN_{-1}$,
	\begin{equation*}
		2^{j\alpha}\norm{\Delta_j u}_{L^p}\leq2^{j\alpha}\norm{\Delta_j u-\Delta_ju_n}_{L^p}+2^{j\alpha}\norm{\Delta_j u_n}_{L^p}\leq \norm{u-u_n}_{\mcB_{p,q}^{\alpha}}+2^{j\alpha}\norm{\Delta_j u_n}_{L^p}.
	\end{equation*}
	Letting $j\to\infty$, we obtain $\limsup_{j\to\infty}2^{j\alpha}\norm{\Delta_j u}_{L^p}\leq\norm{u-u_n}_{\mcB_{p,q}^{\alpha}}$, subsequently letting $n\to\infty$ yields $\lim_{j\to\infty}2^{j\alpha}\norm{\Delta_j u}_{L^p}=0$.
	
	Conversely assume that $u\in\mcS'(\mbT^{d})$ satisfies $\norm{u}_{\mcB_{p,q}^{\alpha}}<\infty$ and $\lim_{j\to\infty}2^{j\alpha}\norm{\Delta_{j}u}_{L^{p}}=0$. We need to show that there exists a sequence $(u_{n})_{n\in\mbN}$ such that $u_{n}\in C^{\infty}(\mbT^{d})$ for every $n\in\mbN$ and $\norm{u-u_{n}}_{\mcB_{p,q}^{\alpha}}\to0$ as $n\to\infty$. We define $u_{n}\defeq\sum_{j=-1}^{n-1}\Delta_{j}u\in C^{\infty}(\mbT^{d})$ and apply~\cite[Lem.~A.3]{gubinelli_15_GIP}:
	\begin{details}
		(rather~\cite[Thm.~21.18]{vanzuijlen_22})
	\end{details}
	For $q=\infty$ we deduce $\norm{u-u_{n}}_{\mcB_{p,\infty}^{\alpha}}\lesssim\sup_{j\geq n}(2^{j\alpha}\norm{\Delta_{j}u}_{L^{p}})$, which vanishes as $n\to0$, since $\lim_{j\to\infty}2^{j\alpha}\norm{\Delta_{j}u}_{L^p}=0$.
	\begin{details}
		$\lim_{n\to\infty}\sup_{j\geq n}(2^{j\alpha}\norm{\Delta_{j}u}_{L^{p}})=\limsup_{n\to\infty}2^{n\alpha}\norm{\Delta_{n}u}_{L^{p}}=\lim_{j\to\infty}2^{j\alpha}\norm{\Delta_{j}u}_{L^{p}}=0$.
	\end{details}
	For $q<\infty$ we deduce $\norm{u-u_{n}}_{\mcB_{p,q}^{\alpha}}^{q}\lesssim\sum_{j=n}^{\infty}2^{jq\alpha}\norm{\Delta_{j}u}_{L^{p}}^{q}$, which vanishes as $n\to0$, since $\norm{u}_{\mcB_{p,q}^{\alpha}}<\infty$. This yields the characterization~\eqref{eq:Besov_characterization}.
	
	Next we show the separability of each $\mcB_{p,q}^{\alpha}(\mbT^{d})$. If $p<\infty$, let $D$ be a countable, dense subset of $L^{p}(\mbT^{d})$; and if $p=\infty$, let $D$ be a countable, dense subset of $C(\mbT^{d})$. We define the countable set 
	\begin{equation*}
		H\defeq\Bigl\{\sum_{j=-1}^{J}\Delta_jh_j:J\in\mbN_{-1},~h_j\in D,~j=1,\ldots,J\Bigr\}
	\end{equation*}
	and show that $H$ is dense in $C^{\infty}(\mbT^{d})\subset\mcB_{p,q}^{\alpha}(\mbT^{d})$. 
	
	Assume first $q=\infty$. Let $u\in C^{\infty}(\mbT^{d})$, then for each $\eps>0$ there exists a $J\in\mbN_{-1}$ such that $2^{j\alpha}\norm{\Delta_{j}u}_{L^{p}}<\eps$ for every $j>J$. For $j=-1,\ldots,J$, let $h_{j}\in D$ be such that $\norm{u-h_{j}}_{L^{p}}<\eps2^{-j\alpha}$, which combined with Poisson's summation formula and Young's convolution inequality yields
	\begin{equation*}
		\norm{\Delta_ju-\Delta_jh_{j}}_{L^p}\leq(\norm{\mathscr{F}^{-1}_{\mbR^d}\varrho_{0}}_{L^1}\vee\norm{\mathscr{F}^{-1}_{\mbR^d}\varrho_{-1}}_{L^1})\norm{u-h_{j}}_{L^p}\lesssim\eps2^{-j\alpha}.
	\end{equation*}
	It then follows that
	\begin{equation*}
		\Bigl\lVert u-\sum_{j=-1}^{J}\Delta_j h_{j}\Bigr\rVert_{\mcB_{p,\infty}^{\alpha}}\lesssim\sup_{j=-1,\ldots,J}\Bigl(2^{j\alpha}\norm{\Delta_{j}u-\Delta_{j}h_{j}}_{L^{p}}\Bigr)\vee\sup_{j=J+1,\ldots,\infty}\Bigl(2^{j\alpha}\norm{\Delta_{j}u}_{L^{p}}\Bigr)\lesssim\eps,
	\end{equation*}
	which shows that $H$ is dense in $\mcB_{p,\infty}^{\alpha}(\mbT^{d})$.
	
	Assume next $q<\infty$. Let $u\in C^{\infty}(\mbT^{d})$, then for each $\eps>0$ there exists a $J\in\mbN_{-1}$ such that $2^{j\alpha}\norm{\Delta_ju}_{L^p}<\eps2^{-j}$ for every $j>J$. For $j=-1,\ldots,J$, let $h_{j}\in D$ be such that $\norm{u-h_{j}}_{L^p}<\eps2^{-j(\alpha+1)}$, which combined with Poisson's summation formula and Young's convolution inequality yields
	\begin{equation*}
		\norm{\Delta_ju-\Delta_jh_j}_{L^p}\leq(\norm{\mathscr{F}^{-1}_{\mbR^{d}}\varrho_{0}}_{L^1}\vee\norm{\mathscr{F}^{-1}_{\mbR^{d}}\varrho_{-1}}_{L^1})\norm{u-h_j}_{L^p}\lesssim\eps2^{-j(\alpha+1)}.
	\end{equation*}
	It then follows that
	\begin{equation*}
		\Bigl\lVert u-\sum_{j=-1}^{J}\Delta_jh_j\Bigr\rVert_{\mcB_{p,q}^{\alpha}}^{q}\lesssim\sum_{j=-1}^{J}2^{jq\alpha}\norm{\Delta_{j}u-\Delta_{j}h_{j}}_{L^{p}}^{q}+\sum_{j=J+1}^{\infty}2^{jq\alpha}\norm{\Delta_{j}u}_{L^{p}}^{q}\lesssim\eps^{q}\sum_{j=-1}^{\infty}2^{-jq}\lesssim\eps^{q},
	\end{equation*}
	which shows that $H$ is dense in $\mcB_{p,q}^{\alpha}(\mbT^{d})$. This yields the claim.
\end{proof}
The next lemma presents a natural criterion that allows us to find elements of $\mcB_{p,q}^{\alpha}(\mbT^{d})$.
\begin{lemma}\label{lem:Besov_criterion}
	Let $p,q\in[1,\infty]$ and $\alpha<\alpha'\in\mbR$. 
	\begin{itemize}
		\item If $q\in[1,\infty)$, then each $u\in\mcS'(\mbT^{d})$ such that $\norm{u}_{\mcB_{p,q}^{\alpha}}<\infty$ is an element of $\mcB_{p,q}^{\alpha}(\mbT^{d})$.
		\item If $q=\infty$, then each $u\in\mcS'(\mbT^{d})$ such that $\norm{u}_{\mcB_{p,\infty}^{\alpha'}}<\infty$ is an element of $\mcB_{p,\infty}^{\alpha}(\mbT^{d})$.
	\end{itemize}
\end{lemma}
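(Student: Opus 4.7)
My plan is to reduce everything to the intrinsic characterization of $\mcB_{p,q}^{\alpha}(\mbT^{d})$ provided by~\eqref{eq:Besov_characterization} in Lemma~\ref{lem:Besov_separable}: a tempered distribution $u\in\mcS'(\mbT^{d})$ lies in $\mcB_{p,q}^{\alpha}(\mbT^{d})$ if and only if $\norm{u}_{\mcB_{p,q}^{\alpha}}<\infty$ and $2^{j\alpha}\norm{\Delta_{j}u}_{L^{p}}\to0$ as $j\to\infty$. Hence, in each of the two cases, it is enough to verify finiteness of the relevant Besov norm and decay of the Littlewood--Paley blocks along the dyadic scale.

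For the first case, fix $q\in[1,\infty)$ and assume $\norm{u}_{\mcB_{p,q}^{\alpha}}<\infty$. Then the non-negative series $\sum_{j\geq-1}\bigl(2^{j\alpha}\norm{\Delta_{j}u}_{L^{p}}\bigr)^{q}$ converges, so its general term vanishes, giving $\lim_{j\to\infty}2^{j\alpha}\norm{\Delta_{j}u}_{L^{p}}=0$ and hence $u\in\mcB_{p,q}^{\alpha}(\mbT^{d})$ by~\eqref{eq:Besov_characterization}.

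For the second case, fix $q=\infty$ and $\alpha<\alpha'$ and assume $C\defeq\norm{u}_{\mcB_{p,\infty}^{\alpha'}}<\infty$. Then $\norm{\Delta_{j}u}_{L^{p}}\leq C2^{-j\alpha'}$ for every $j\in\mbN_{-1}$, which upon multiplying by $2^{j\alpha}$ yields the bound $2^{j\alpha}\norm{\Delta_{j}u}_{L^{p}}\leq C2^{j(\alpha-\alpha')}$. Since $\alpha-\alpha'<0$, this simultaneously shows $\norm{u}_{\mcB_{p,\infty}^{\alpha}}\leq C\sup_{j\geq-1}2^{j(\alpha-\alpha')}<\infty$ and $\lim_{j\to\infty}2^{j\alpha}\norm{\Delta_{j}u}_{L^{p}}=0$, so~\eqref{eq:Besov_characterization} places $u$ in $\mcB_{p,\infty}^{\alpha}(\mbT^{d})$.

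There is essentially no obstacle here; the proof is a direct reading of~\eqref{eq:Besov_characterization}. The only subtlety worth flagging is that for $q=\infty$ one cannot conclude decay of the Littlewood--Paley blocks from finiteness of $\norm{u}_{\mcB_{p,\infty}^{\alpha}}$ alone (which is why the statement is asymmetric in $q$); the strict regularity gap $\alpha<\alpha'$ is exactly what supplies the summable decay that the $\ell^{\infty}$-norm does not.
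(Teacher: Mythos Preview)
Your proof is correct and follows essentially the same approach as the paper: both reduce to the characterization~\eqref{eq:Besov_characterization} from Lemma~\ref{lem:Besov_separable}, using that for $q<\infty$ finiteness of the $\ell^{q}$-norm forces the general term to vanish, and for $q=\infty$ the strict gap $\alpha<\alpha'$ gives the decay $2^{j\alpha}\norm{\Delta_{j}u}_{L^{p}}\leq2^{j(\alpha-\alpha')}\norm{u}_{\mcB_{p,\infty}^{\alpha'}}\to0$. Your closing remark on the asymmetry in $q$ is a nice addition not present in the paper.
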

\begin{proof}
	Let $p,q\in[1,\infty]$ and $\alpha\in\mbR$. To prove the first claim, let $q<\infty$ and $u\in\mcS'(\mbT^{d})$ be such that $\norm{u}_{\mcB_{p,q}^{\alpha}}<\infty$. It follows immediately that $\lim_{j\to\infty}2^{j\alpha}\norm{\Delta_{j}u}_{L^{p}}=0$, hence $u\in\mcB_{p,q}^{\alpha}(\mbT^{d})$ by Lemma~\ref{lem:Besov_separable}. To prove the second claim, let $q=\infty$, $\alpha<\alpha'\in\mbR$ and $u\in\mcS'(\mbT^{d})$ be such that $\norm{u}_{\mcB_{p,\infty}^{\alpha'}}<\infty$. It follows that $\norm{u}_{\mcB_{p,\infty}^{\alpha}}<\infty$ and
	\begin{equation*}
		2^{j\alpha}\norm{\Delta_{j}u}_{L^p}\leq2^{j(\alpha-\alpha')}\norm{u}_{\mcB_{p,\infty}^{\alpha'}}\to0\quad\text{as}~j\to\infty,
	\end{equation*}
	hence $u\in\mcB_{p,\infty}^{\alpha}(\mbT^{d})$ by Lemma~\ref{lem:Besov_separable}. This yields the claim.
\end{proof}
The next lemma shows that Besov spaces are compactly embedded upon passing to a larger space.
\begin{lemma}\label{lem:Besov_compact}
	Let $p,q\in[1,\infty]$ and $\alpha'<\alpha\in\mbR$. It follows that the embedding $\mcB_{p,q}^{\alpha}(\mbT^{d})\embed\mcB_{p,q}^{\alpha'}(\mbT^{d})$ is compact.
\end{lemma}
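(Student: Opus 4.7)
My plan is to show that every bounded sequence in $\mcB_{p,q}^{\alpha}(\mbT^{d})$ admits a subsequence converging in $\mcB_{p,q}^{\alpha'}(\mbT^{d})$. The continuous embedding is trivial from $2^{j\alpha'}\leq 2^{j\alpha}$ for $j\geq-1$, so I would focus entirely on the compactness statement. Let $(u_{n})_{n\in\mbN}$ satisfy $\sup_{n}\norm{u_{n}}_{\mcB_{p,q}^{\alpha}}\leq M$.

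The key observation is that on $\mbT^{d}$ the Fourier spectrum is the lattice $\mbZ^{d}$, so for each fixed $j\in\mbN_{-1}$ the block $\Delta_{j}u$ is supported on the finite set $\{\om\in\mbZ^{d}:\varrho_{j}(\om)\neq 0\}$, i.e.\ $\Delta_{j}u$ lies in a \emph{finite-dimensional} subspace $V_{j}\subset L^{p}(\mbT^{d})$. Since $\norm{\Delta_{j}u_{n}}_{L^{p}}\leq 2^{-j\alpha}M$ uniformly in $n$, the Heine--Borel theorem applied in each $V_{j}$ gives, via a standard diagonal argument over $j$, a subsequence (which I shall not relabel) and elements $v_{j}\in V_{j}$ with $\Delta_{j}u_{n}\to v_{j}$ in $L^{p}(\mbT^{d})$ as $n\to\infty$ for every $j\in\mbN_{-1}$.

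Next I would set $v\defeq\sum_{j\geq-1}v_{j}$ (interpreted as a tempered distribution on $\mbT^{d}$; the partial sums converge since $\norm{v_{j}}_{L^{p}}\leq 2^{-j\alpha}M$), observe that $\Delta_{k}v=v_{k}$ thanks to the almost-disjoint support of the $\varrho_{j}$, and verify $\norm{v}_{\mcB_{p,q}^{\alpha}}\leq M$ via Fatou's lemma. The core step is then the tail estimate: given $\eps>0$, using $\alpha'<\alpha$ I would choose $J$ so large that
\begin{equation*}
2^{q}M^{q}\sum_{j>J}2^{jq(\alpha'-\alpha)}<\eps^{q}/2
\end{equation*}
(respectively $2M\cdot 2^{J(\alpha'-\alpha)}<\eps/2$ when $q=\infty$), which bounds the high-frequency contribution to $\norm{u_{n}-v}_{\mcB_{p,q}^{\alpha'}}$ uniformly in $n$ by the triangle inequality. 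For the remaining finitely many blocks $j=-1,\ldots,J$, the $L^{p}$-convergence $\Delta_{j}u_{n}\to v_{j}$ is applied directly to control $\sum_{j=-1}^{J}2^{jq\alpha'}\norm{\Delta_{j}u_{n}-v_{j}}_{L^{p}}^{q}$ for $n$ large. Combining gives $\norm{u_{n}-v}_{\mcB_{p,q}^{\alpha'}}<\eps$, and Lemma~\ref{lem:Besov_criterion} ensures $v\in\mcB_{p,q}^{\alpha'}(\mbT^{d})$ (using the gap $\alpha>\alpha'$ to handle the $q=\infty$ case).

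The only genuinely subtle point I anticipate is distinguishing the cases $q<\infty$ and $q=\infty$ in both the tail estimate and the membership of the limit $v$: for $q=\infty$ the tail becomes a supremum rather than a sum, and membership in $\mcB_{p,\infty}^{\alpha'}(\mbT^{d})$ must be obtained through the comparison $\norm{v}_{\mcB_{p,\infty}^{\alpha}}\leq M$ together with the second clause of Lemma~\ref{lem:Besov_criterion}. Everything else is bookkeeping.
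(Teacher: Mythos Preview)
Your argument is correct and takes a genuinely different route from the paper. The paper invokes the Fatou property of Besov spaces (\cite[Thm.~2.72]{bahouri_chemin_danchin_11}) to first extract a subsequence converging in $\mcS'(\mbT^{d})$ to some $u$, and then shows $\norm{\Delta_{j}(u_{n}-u)}_{L^{p}}\to0$ for each fixed $j$ via Bernstein's inequality and dominated convergence; the tail is handled by the same $2^{j(\alpha'-\alpha)}$ gap you use. Your approach instead exploits the torus structure directly: each block $\Delta_{j}$ has finite Fourier support, so Heine--Borel in the finite-dimensional $V_{j}$ gives blockwise $L^{p}$-limits without any appeal to the Fatou property or to Bernstein. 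This is more elementary and self-contained for $\mbT^{d}$; the paper's argument is closer in spirit to the $\mbR^{d}$ case and reuses existing machinery.

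One small slip: the identity $\Delta_{k}v=v_{k}$ is not literally true, since $\varrho_{k}$ is not identically $1$ on $\supp(\varrho_{k})$ and adjacent blocks overlap. What you actually have is that each $\Delta_{j}u_{n}-v_{j}$ is Fourier-supported in an annulus of scale $2^{j}$, so the estimate $\norm{u_{n}-v}_{\mcB_{p,q}^{\alpha'}}\lesssim\norm{(2^{j\alpha'}\norm{\Delta_{j}u_{n}-v_{j}}_{L^{p}})_{j}}_{\ell^{q}}$ follows from the standard characterisation lemma (cited in the paper as \cite[Lem.~A.3]{gubinelli_15_GIP}). This repairs the argument without further change.
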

\begin{proof}
	The proof follows by similar arguments as in~\cite[Thm.~2.94]{bahouri_chemin_danchin_11}.
\end{proof}
\begin{details}
	\begin{proof}
		Let $(u_{n})_{n\in\mbN}$ be a bounded sequence in $\mcB_{p,q}^{\alpha}(\mbT^{d})$, to prove the claim it suffices to show that it has a subsequence which converges in $\mcB_{p,q}^{\alpha'}(\mbT^{d})$.
		
		We follow the proof of~\cite[Thm.~2.94]{bahouri_chemin_danchin_11}. By the Fatou property of Besov spaces~\cite[Thm.~2.72]{bahouri_chemin_danchin_11}, there exists a subsequence (which we do not re-label) such that $u_{n}\to u$ in $\mcS'(\mbT^{d})$ and
		\begin{equation*}
			\norm{u}_{\mcB_{p,q}^{\alpha}}\leq\liminf_{n\to\infty}\norm{u_{n}}_{\mcB_{p,q}^{\alpha}}.
		\end{equation*}
		Denote $v_{n}\defeq u_{n}-u$, so that it suffices to show $\lim_{n\to\infty}\norm{v_{n}}_{\mcB_{p,q}^{\alpha'}}=0$. First let us assume $q<\infty$, we estimate for every $K>0$,
		\begin{equation}\label{eq:compact_embedding_cut_off}
			\norm{v_{n}}_{\mcB_{p,q}^{\alpha'}}^{q}=\sum_{j\in\mbN_{-1}}2^{jq\alpha'}\norm{\Delta_{j}v_{n}}_{L^{p}}^{q}\leq\sum_{\substack{j\in\mbN_{-1}\\2^{j}\leq K}}2^{jq\alpha'}\norm{\Delta_{j}v_{n}}_{L^{p}}^{q}+K^{q(\alpha'-\alpha)}\sum_{\substack{j\in\mbN_{-1}\\K<2^{j}}}2^{jq\alpha}\norm{\Delta_{j}v_{n}}_{L^{p}}^{q}.
		\end{equation}
		To control the second summand in~\eqref{eq:compact_embedding_cut_off}, we can use that $(\norm{v_{n}}_{\mcB_{p,q}^{\alpha}})_{n\in\mbN}$ is bounded (by the boundedness of $(\norm{u_{n}}_{\mcB_{p,q}^{\alpha}})_{n\in\mbN}$), to choose for every $\eps>0$ the constant $K$ sufficiently large such that uniformly in $n\in\mbN$,
		\begin{equation*}
			K^{q(\alpha'-\alpha)}\sum_{\substack{j\in\mbN_{-1}\\K<2^{j}}}2^{jq\alpha}\norm{\Delta_{j}v_{n}}_{L^{p}}^{q}\leq K^{q(\alpha'-\alpha)}\norm{v_{n}}_{\mcB_{p,q}^{\alpha}}^{q}<\eps^{q}.
		\end{equation*}
		Given a $K$ independent of $n$, we can then let $n\to\infty$ in the first summand of~\eqref{eq:compact_embedding_cut_off} to deduce
		\begin{equation*}
			\lim_{n\to\infty}\sum_{\substack{j\in\mbN_{-1}\\2^{j}\leq K}}2^{jq\alpha'}\norm{\Delta_{j}v_{n}}_{L^{p}}^{q}=\sum_{\substack{j\in\mbN_{-1}\\2^{j}\leq K}}2^{jq\alpha'}\lim_{n\to\infty}\norm{\Delta_{j}v_{n}}_{L^{p}}^{q}.
		\end{equation*}
		Next we show $\lim_{n\to\infty}\norm{\Delta_{j}v_{n}}_{L^{p}}=0$ for every $j\in\mbN_{-1}$. Let us first consider the case $j\in\mbN$. Let $\msF$ and $\msF^{-1}$ be the Fourier (resp.\ inverse Fourier) transform on $\mbT^{d}$. It follows by~\cite[Thm.~34.4]{vanzuijlen_22} that $\msF(\msF^{-1}\rho_{j}\ast v_{n})(\om)=\varrho_{j}(\om)\hat{v_{n}}(\om)$, where the convolution is given by $(\msF^{-1}\varrho_{j}\ast v_{n})(x)=v_{n}(\msF^{-1}\varrho_{j}(x-\place))$. Hence, we obtain $\Delta_{j}v_{n}(x)=(\msF^{-1}\varrho_{j}\ast v_{n})(x)=v_{n}(\msF^{-1}\varrho_{j}(x-\place))$ and for every $x\in\mbT^{d}$, the convergence $v_{n}\to0$ in $\mcS'(\mbT^{d})$ implies $\lim_{n\to\infty}\Delta_{j}v_{n}(x)=0$. By an application of Bernstein's inequality~\cite[Thm~35.13]{vanzuijlen_22} (where we use that $j\in\mbN$), we can control the $L^{p}(\mbT^{d})$-norm of $\Delta_{j}v_{n}$ by the $L^{1}(\mbT^{d})$-norm (especially for $p=\infty$),
		\begin{equation*}
			\norm{\Delta_{j}v_{n}}_{L^{p}}\lesssim\norm{\Delta_{j}v_{n}}_{L^{1}}
		\end{equation*}
		and it suffices to consider the case $p=1$. It follows by~\cite[Thm.~35.14]{vanzuijlen_22} that there exists some $m\in\mbN_{0}$ such that
		\begin{equation*}
			\abs{\Delta_{j}v_{n}(x)}=\abs{v_{n}(\msF^{-1}\varrho_{j}(x-\place))}\lesssim\norm{v_{n}}_{\mcB_{p,q}^{\alpha}}\norm{\msF^{-1}\varrho_{j}(x-\place)}_{C^{m}(\mbT^{d})}.
		\end{equation*}
		Using that the sequence $(\norm{v_{n}}_{\mcB_{p,q}^{\alpha}})_{n\in\mbN}$ is bounded and that the $C^{m}(\mbT^{d})$-norm is translation invariant, we can bound uniformly in $n\in\mbN$ and $x\in\mbT^{d}$,
		\begin{equation*}
			\abs{\Delta_{j}v_{n}(x)}\lesssim1.
		\end{equation*}
		Using that $\mbT^{d}$ is compact, this yields a dominant of $\Delta_{j}v_{n}$, hence an application of the dominated convergence theorem implies $\lim_{n\to\infty}\norm{\Delta_{j}v_{n}}_{L^{1}}=0$.
			
		For the case $j=-1$, note that $\Delta_{-1}v_{n}(x)=\inner{v_{n}}{1}_{L^{2}}$, which follows by our definition of $\varrho_{-1}$ and by the spectrum being discrete. Consequently, $\norm{\Delta_{-1}v_{n}}_{L^{p}}=\inner{v_{n}}{1}_{L^{2}}\to0$, where we used that $\Delta_{-1}v_{n}(x)$ is independent of $x\in\mbT^{d}$ followed by the convergence $v_{n}\to0$ in $\mcS'(\mbT^{d})$.
		
		Hence, passing to the limit in~\eqref{eq:compact_embedding_cut_off} we obtain $\lim_{n\to\infty}\norm{v_{n}}_{\mcB_{p,q}^{\alpha'}}<\eps$ for every $\eps>0$, which yields the claim for $q<\infty$.
		
		Next assume $q=\infty$.
		Letting $K>0$, we decompose
		\begin{equation*}
			\norm{v_{n}}_{B_{p,\infty}^{\alpha'}}=\sup_{\substack{j\in\mbN_{-1}\\2^{j}\leq K}}(2^{j\alpha'}\norm{\Delta_{j}v_{n}}_{L^{p}})\vee K^{\alpha'-\alpha}\sup_{\substack{j\in\mbN_{-1}\\K<2^{j}}}(2^{j\alpha}\norm{\Delta_{j}v_{n}}_{L^{p}}).
		\end{equation*}
		For every $\eps>0$ we can find $K>0$ sufficiently large such that uniformly in $n\in\mbN$,
		\begin{equation*}
			K^{\alpha'-\alpha}\sup_{\substack{j\in\mbN_{-1}\\K<2^{j}}}(2^{j\alpha}\norm{\Delta_{j}v_{n}}_{L^{p}})\leq K^{\alpha'-\alpha}\norm{v_{n}}_{\mcB_{p,\infty}^{\alpha}}<\eps.
		\end{equation*}
		Given a $K$ independent of $n$, we can let $n\to\infty$ in the first term to deduce
		\begin{equation*}
			\lim_{n\to\infty}\sup_{\substack{j\in\mbN_{-1}\\2^{j}\leq K}}(2^{j\alpha'}\norm{\Delta_{j}v_{n}}_{L^{p}})=\sup_{\substack{j\in\mbN_{-1}\\2^{j}\leq K}}(2^{j\alpha'}\lim_{n\to\infty}\norm{\Delta_{j}v_{n}}_{L^{p}})=0,
		\end{equation*}
		where we used that limits and maxima over finite sets commute. Hence, passing to the limit, we obtain $\lim_{n\to\infty}\norm{v_{n}}_{\mcB_{p,\infty}^{\alpha'}}<\eps$ for every $\eps>0$, which yields the claim for $q=\infty$.
	\end{proof}
\end{details}
Recall that the space $C_{T}^{\kappa}\mcB_{p,q}^{\alpha}(\mbT^{d})$ is defined as the completion of $C^{\infty}_{T}\mcB_{p,q}^{\alpha}(\mbT^{d})$ under the usual H\"{o}lder norm for $\kappa\in(0,1)$ and that $C_{T}^{0}\mcB_{p,q}^{\alpha}(\mbT^{d})\defeq C_{T}\mcB_{p,q}^{\alpha}(\mbT^{d})$ (cf.\ Subsection~\ref{subsec:notation}). Next we show that each $C_{T}^{\kappa}\mcB_{p,q}^{\alpha}(\mbT^{d})$ is separable.
\begin{lemma}\label{lem:little_Hoelder_separable}
	Let $T>0$, $p,q\in[1,\infty]$, $\alpha\in\mbR$ and $\kappa\in[0,1)$. It holds that $C_{T}^{\kappa}\mcB_{p,q}^{\alpha}(\mbT^{d})$ consists of those $f\from[0,T]\to\mcB_{p,q}^{\alpha}(\mbT^{d})$ such that
	\begin{equation}\label{eq:little_Hoelder_characterisation}
		\norm{f}_{C_{T}^{\kappa}\mcB_{p,q}^{\alpha}}<\infty\quad\text{and}\quad\lim_{r\to0}\sup\biggl\{\frac{\norm{f(t)-f(s)}_{\mcB_{p,q}^{\alpha}}}{\abs{t-s}^{\kappa}}:s\neq t\in [0,T],~\abs{t-s}<r\biggr\}=0
	\end{equation} 
	and each $C_{T}^{\kappa}\mcB_{p,q}^{\alpha}(\mbT^{d})$ is separable.
\end{lemma}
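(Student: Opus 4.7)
The strategy is to first establish the characterisation~\eqref{eq:little_Hoelder_characterisation} and then deduce separability via piecewise affine interpolation. The case $\kappa=0$ is immediate: the second condition in~\eqref{eq:little_Hoelder_characterisation} is vacuous, $C_T^0\mcB_{p,q}^\alpha = C_T\mcB_{p,q}^\alpha$, and separability of $C([0,T];E)$ for separable $E$ (provided by Lemma~\ref{lem:Besov_separable}) is standard, so I fix $\kappa \in (0,1)$ for the remainder.

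For the inclusion of $C_T^\kappa\mcB_{p,q}^\alpha$ into the little-Hölder space, every $g \in C^\infty_T\mcB_{p,q}^\alpha$ satisfies
\begin{equation*}
\frac{\norm{g(t)-g(s)}_{\mcB_{p,q}^\alpha}}{\abs{t-s}^\kappa} \leq \abs{t-s}^{1-\kappa}\norm{\partial_t g}_{C_T\mcB_{p,q}^\alpha},
\end{equation*}
so smooth paths have vanishing Hölder modulus. Denoting the supremum in~\eqref{eq:little_Hoelder_characterisation} by $\om_f(r)$, the inequality $\om_f(r) \leq \om_{f_n}(r) + \norm{f-f_n}_{C_T^\kappa\mcB_{p,q}^\alpha}$ for any approximating sequence $f_n\to f$ yields the property for $f$ after sending $r\to 0$ then $n\to\infty$. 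For the reverse inclusion, I extend an arbitrary $f$ satisfying~\eqref{eq:little_Hoelder_characterisation} by constant values on $\{t<0\}$ and $\{t>T\}$ to a path $\tilde f \in C(\mbR;\mcB_{p,q}^\alpha)$ with the same Hölder constant and modulus, and mollify in time, $f_\eps \defeq \phi_\eps * \tilde f$. I then split: for $\abs{t-s}\geq r$ the ratio $\norm{(f_\eps-f)(t)-(f_\eps-f)(s)}_{\mcB_{p,q}^\alpha}/\abs{t-s}^\kappa$ is controlled by $2\norm{f_\eps-f}_{C_T\mcB_{p,q}^\alpha}r^{-\kappa}$, while for $\abs{t-s}<r$ the identity $f_\eps(t)-f_\eps(s)=\int\phi_\eps(u)(\tilde f(t-u)-\tilde f(s-u))\,du$ combined with the common shift bounds the ratio by $2\om_f(r)$. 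Sending $\eps \to 0$ then $r \to 0$ yields $f_\eps \to f$ in $C_T^\kappa\mcB_{p,q}^\alpha$.

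For separability I exploit this characterisation. Let $D \subset \mcB_{p,q}^\alpha(\mbT^d)$ be a countable dense subset (Lemma~\ref{lem:Besov_separable}) and define $\mcA$ to be the countable collection of paths on $[0,T]$ that are piecewise affine on a dyadic partition $\{kT2^{-n}\}_{k=0}^{2^n}$ with nodal values in $D$. Given $f$ in the little-Hölder space and tolerance $\eps>0$, I choose $n$ such that $\om_f(T2^{-n})<\eps$ and let $g$ be the piecewise affine interpolant of $f$ at the dyadic nodes. A direct case analysis on whether two times lie in the same subinterval (where control is via the vanishing modulus) or in different subintervals (where control is via the $C^0$-difference combined with $\abs{t-s}^\kappa \geq (T2^{-n})^\kappa$) bounds $\norm{f-g}_{C_T^\kappa\mcB_{p,q}^\alpha} \lesssim \om_f(T2^{-n})$. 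Replacing each $f(t_k)$ by a sufficiently close $y_k \in D$ perturbs $g$ within $C_T^\kappa\mcB_{p,q}^\alpha$ by an arbitrarily small amount, since a piecewise affine path with nodal deviations has $C_T^\kappa\mcB_{p,q}^\alpha$-norm controlled by $(T2^{-n})^{-\kappa}\max_k\norm{y_k-f(t_k)}_{\mcB_{p,q}^\alpha}$, so $\mcA$ is dense.

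The main technical point is verifying that the time mollification indeed produces elements of $C^\infty_T\mcB_{p,q}^\alpha$ (not merely of $C^\infty_T\mcS'(\mbT^d)$) and interacts correctly with the Hölder norm via the shift identity. Both rely on $\mcB_{p,q}^\alpha(\mbT^d)$ being a Banach space and $\tilde f$ being a continuous $\mcB_{p,q}^\alpha$-valued path, so that $\phi_\eps \ast \tilde f$ is well-defined as a Bochner integral and inherits smoothness in time from $\phi_\eps$.
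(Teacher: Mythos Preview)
Your proof is correct in spirit and the characterisation argument is essentially identical to the paper's: both extend by constants, convolve in time, and use the same $\abs{t-s} \lessgtr r$ split. The only difference is that the paper uses the polynomial kernel $\phi_n(r) = n(1-r^2/n^2)^{n^4}\uppi^{-1/2}$ rather than a standard mollifier, so that each $f_n$ is a genuine polynomial in $t$ with $\mcB_{p,q}^\alpha$-valued coefficients. This is where the approaches diverge on separability: the paper recycles these polynomial approximants and replaces each coefficient by an element of the countable dense subset from Lemma~\ref{lem:Besov_separable}, whereas you build a separate countable family via dyadic piecewise-affine interpolation. Both work; the paper's route is more economical (one approximation scheme does double duty) while yours is arguably more self-contained.

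One slip in your piecewise-affine step: the claim that $\abs{t-s}^\kappa \geq (T2^{-n})^\kappa$ whenever $s,t$ lie in \emph{different} subintervals is false for adjacent subintervals, where $\abs{t-s}$ can be arbitrarily small. The correct split is by $\abs{t-s} \lessgtr h \defeq T2^{-n}$: for $\abs{t-s} < h$ with $s,t$ in adjacent intervals, insert the shared node $t_k$ (where $f-g$ vanishes) and use $\abs{t-t_k}^\kappa + \abs{t_k-s}^\kappa \leq 2\abs{t-s}^\kappa$ together with your same-interval bound; for $\abs{t-s} \geq h$, your $C^0$-argument goes through since $\norm{f-g}_{C_T} \lesssim \om_f(h)h^\kappa$. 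With this repair the estimate $\norm{f-g}_{C_T^\kappa\mcB_{p,q}^\alpha} \lesssim \om_f(h)$ holds as claimed.
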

\begin{proof}
	We first prove that $C_{T}^{\kappa}\mcB_{p,q}^{\alpha}(\mbT^{d})$ is characterized by~\eqref{eq:little_Hoelder_characterisation}, that is, each $f\in C_{T}^{\kappa}\mcB_{p,q}^{\alpha}(\mbT^{d})$ satisfies~\eqref{eq:little_Hoelder_characterisation} and each $f$ satisfying~\eqref{eq:little_Hoelder_characterisation} lies in $C_{T}^{\kappa}\mcB_{p,q}^{\alpha}(\mbT^{d})$.
	
	Assume $f\in C_{T}^{\kappa}\mcB_{p,q}^{\alpha}(\mbT^{d})$, we need to show that~\eqref{eq:little_Hoelder_characterisation} holds. It is clear that $\norm{f}_{C_{T}^{\kappa}\mcB_{p,q}^{\alpha}}<\infty$ and it suffices to prove the second condition of~\eqref{eq:little_Hoelder_characterisation}. By definition, there exists a sequence $(f_{n})_{n\in\mbN}$ such that $f_{n}\in C^{\infty}_{T}\mcB_{p,q}^{\alpha}(\mbT^{d})$ for every $n\in\mbN$ and $\lim_{n\to\infty}\norm{f_{n}-f}_{C_{T}^{\kappa}\mcB_{p,q}^{\alpha}}=0$. Let $r>0$ and $s\neq t\in[0,T]$ such that $\abs{t-s}<r$, then
	\begin{equation*}
		\frac{\norm{f(t)-f(s)}_{\mcB_{p,q}^{\alpha}}}{\abs{t-s}^{\kappa}}\leq\norm{f-f_{n}}_{C_{T}^{\kappa}\mcB_{p,q}^{\alpha}}+\frac{\norm{f_{n}(t)-f_{n}(s)}_{\mcB_{p,q}^{\alpha}}}{\abs{t-s}^{\kappa}},
	\end{equation*}
	which yields 
	\begin{equation*}
		\lim_{r\to0}\sup\biggl\{\frac{\norm{f(t)-f(s)}_{\mcB_{p,q}^{\alpha}}}{\abs{t-s}^{\kappa}}:s\neq t\in [0,T],~\abs{t-s}<r\biggr\}=\norm{f-f_{n}}_{C_{T}^{\kappa}\mcB_{p,q}^{\alpha}}.
	\end{equation*} 
	Since $n\in\mbN$ is arbitrary, we can pass to the limit to show that each $f\in C_{T}^{\kappa}\mcB_{p,q}^{\alpha}(\mbT^{d})$ satisfies~\eqref{eq:little_Hoelder_characterisation}.
	
	Next we need to show that each $f\from[0,T]\to\mcB_{p,q}^{\alpha}(\mbT^{d})$ satisfying~\eqref{eq:little_Hoelder_characterisation} lies in $C_{T}^{\kappa}\mcB_{p,q}^{\alpha}(\mbT^{d})$, i.e.\ that it can be approximated by a sequence $(f_{n})_{n\in\mbN}$ such that $f_{n}\in C_{T}^{\infty}\mcB_{p,q}^{\alpha}(\mbT^{d})$ for every $n\in\mbN$. We extend $f$ from $[0,T]$ to $\mbR$ by setting $f(t)=f(0)$ for $t<0$ and $f(t)=f(T)$ for $t>T$. We define
	\begin{equation*}
		\phi_{n}(r)\defeq n\biggl(1-\frac{\abs{r}^{2}}{n^{2}}\biggr)^{n^{4}}\uppi^{-1/2}
	\end{equation*}
	and construct the polynomials
	\begin{equation}\label{eq:polynomial_approx}
		f_n(t)\defeq\int_{\mbR}f(r)\phi_n(t-r)\dd r=\int_{\mbR}f(t-r)\phi_n(r)\dd r,
	\end{equation}
	which converge to $f$ in $C_{T}\mcB_{p,q}^{\alpha}(\mbT^{d})$ as $n\to\infty$, see~\cite[Appendix,~Prop.~7.1]{ethier_kurtz_86} for the case of real-valued functions. The extension to Banach space-valued functions follows by the discussion in~\cite[p.~177]{amann_escher_09}.
	
	\begin{details}
		To show that $f_{n}$ has $\mcB_{p,q}^{\alpha}(\mbT^{d})$-valued coefficients, we distinguish the cases $q<\infty$ and $q=\infty$. Let us first consider the $0$th-order coefficient. If $q<\infty$, then it suffices to show that $\norm{f_{n}(0)}_{\mcB_{p,q}^{\alpha}}<\infty$, which  follows by Minkowski's inequality,
		\begin{equation*}
			\norm{f_{n}(0)}_{\mcB_{p,q}^{\alpha}}\leq\int_{\mbR}\norm{f(r)}_{\mcB_{p,q}^{\alpha}}\abs{\phi_{n}(-r)}\dd r\lesssim\norm{f}_{C_{T}\mcB_{p,q}^{\alpha}}<\infty.
		\end{equation*}
		If $q=\infty$, then we need to show that $\lim_{j\to\infty}2^{j\alpha}\norm{\Delta_{j}f_{n}(0)}_{L^{p}}=0$, which follows by the dominated convergence theorem,
		\begin{equation*}
			\lim_{j\to\infty}2^{j\alpha}\norm{\Delta_{j}f_{n}(0)}_{L^{p}}\leq\int_{[0,T]}\lim_{j\to\infty}2^{j\alpha}\norm{\Delta_{j}f(r)}_{L^{p}}\abs{\phi_{n}(-r)}\dd r=0.
		\end{equation*}
		This proves that the $0$th-order coefficient lies in $\mcB_{p,q}^{\alpha}(\mbT^{d})$. For higher-order coefficients, it suffices to consider derivates at $t=0$.
	\end{details}
	
	Next we show that $f_{n}\to f$ in $C_{T}^{\kappa}\mcB_{p,q}^{\alpha}(\mbT^{d})$, where we argue as in~\cite{hajlasz_19}. Let $\eps>0$ be arbitrary, by the definition of $C_{T}^{\kappa}\mcB_{p,q}^{\alpha}(\mbT^{d})$ we can find some $\tau>0$ such that if $s\neq t\in [0,T]$ and $\abs{t-s}<\tau$, then 
	\begin{equation}\label{eq:little_Hoelder_characterisation_application}
		\norm{f(t)-f(s)}_{\mcB_{p,q}^{\alpha}}<\frac{1}{2}\eps\abs{t-s}^{\kappa},
	\end{equation}
	which also extends from $[0,T]$ to $\mbR$ by our definition of $f$.
	\begin{details}
		Assume $s<T<t$, then
		\begin{equation*}
			\norm{f(t)-f(s)}_{\mcB_{p,q}^{\alpha}}=\norm{f(T)-f(s)}_{\mcC^{\alpha}}<\frac{1}{2}\eps\abs{T-s}^{\kappa}<\frac{1}{2}\eps\abs{t-s}^{\kappa}.
		\end{equation*}
		Assume $s<0<t$, then
		\begin{equation*}
			\norm{f(t)-f(s)}_{\mcB_{p,q}^{\alpha}}=\norm{f(t)-f(0)}_{\mcC^{\alpha}}<\frac{1}{2}\eps t^{\kappa}<\frac{1}{2}\eps\abs{t-s}^{\kappa}.
		\end{equation*}
	\end{details}
	For every $n\geq T$ it follows by~\cite[Appendix, (7.3)]{ethier_kurtz_86} that $\int_{[-T,T]}\abs{\phi_{n}(r)}\dd r\leq1$,
	\begin{details}
		By definition, it is clear that $\phi_{n}(r)\geq0$ for $r\in[-n,n]$. Further, by~\cite[Appendix, (7.3)]{ethier_kurtz_86},
		\begin{equation*}
			\begin{split}
				\int_{[-n,n]}\abs{\phi_{n}(r)}\dd r&=\int_{[-n,n]}\phi_{n}(r)\euler^{\abs{rn}^{2}}\euler^{-\abs{rn}^{2}}\dd r=\int_{[-n^{2},n^{2}]}\Bigl(1-\frac{\abs{u}^{2}}{n^{4}}\Bigr)^{n^{4}}\euler^{\abs{u}^{2}}\uppi^{-1/2}\euler^{-\abs{u}^{2}}\dd u\\
				&\leq\int_{[-n^{2},n^{2}]}\uppi^{-1/2}\euler^{-\abs{u}^{2}}\dd u\leq1.
			\end{split}
		\end{equation*}
		Hence for $n\geq T$,
		\begin{equation*}
			\int_{[-T,T]}\abs{\phi_{n}(r)}\dd r\leq\int_{[-n,n]}\abs{\phi_{n}(r)}\dd r\leq1.
		\end{equation*}
	\end{details}
	which combined with~\eqref{eq:little_Hoelder_characterisation_application} and the definition of $f_{n}$ yields
	\begin{align*}
		\norm{f_n(t)-f_n(s)}_{\mcB_{p,q}^{\alpha}}\leq\int_{[-T,T]}\norm{f(t-r)-f(s-r)}_{\mcB_{p,q}^{\alpha}}\abs{\phi_n(r)}\dd r\leq\frac{1}{2}\eps\abs{t-s}^{\kappa}.
	\end{align*}
	\begin{details}
		We can restrict the domain of integration to $[-T,T]$, by the following argument: If $r>T$, then $t-r<t-T\leq0$ and $s-r<s-T\leq0$ so that $f(t-r)-f(s-r)=f(0)-f(0)=0$. If $r<-T$, then $t-r>t+T\geq T$ and $s-r>s+T\geq T$ so that $f(T)-f(T)=0$.
		
	\end{details}
	Consequently we obtain for all $\abs{t-s}<\tau$, that
	\begin{equation*}
		\norm{f(t)-f(s)-f_n(t)+f_n(s)}_{\mcB_{p,q}^{\alpha}}<\eps\abs{t-s}^{\kappa}.
	\end{equation*}
	Let $n$ be sufficiently large such that $\norm{f-f_n}_{C_T\mcB_{p,q}^{\alpha}}<\eps\tau^{\kappa}/2$. Assume $\abs{t-s}>\tau$, then
	\begin{equation*}
		\norm{f(t)-f(s)-f_n(t)+f_n(s)}_{\mcB_{p,q}^{\alpha}}\leq 2\norm{f-f_n}_{C_T\mcB_{p,q}^{\alpha}}<\eps\tau^{\kappa}<\eps\abs{t-s}^{\kappa}.
	\end{equation*}
	This implies that for all $s\neq t\in[0,T]$ and $n$ sufficiently large,
	\begin{equation*}
		\norm{(f-f_{n})(t)-(f-f_{n})(s)}_{\mcB_{p,q}^{\alpha}}<\eps\abs{t-s}^{\kappa}.
	\end{equation*}
	Since $\eps>0$ was arbitrary, we obtain $\lim_{n\to\infty}\norm{f-f_{n}}_{C_{T}^{\kappa}\mcB_{p,q}^{\alpha}}=0$, which shows that each $f\from[0,T]\to\mcB_{p,q}^{\alpha}(\mbT^{d})$ satisfying~\eqref{eq:little_Hoelder_characterisation} can be approximated by a polynomial. In particular, $f\in C_{T}^{\kappa}\mcB_{p,q}^{\alpha}(\mbT^{d})$, which proves that~\eqref{eq:little_Hoelder_characterisation} characterises the space $C_{T}^{\kappa}\mcB_{p,q}^{\alpha}(\mbT^{d})$.
	
	To show that $C_{T}^{\kappa}\mcB_{p,q}^{\alpha}(\mbT^{d})$ is separable, it suffices to approximate every $f\in C_{T}^{\kappa}\mcB_{p,q}^{\alpha}(\mbT^{d})$ by a polynomial with coefficients in a countable, dense subset of $\mcB_{p,q}^{\alpha}(\mbT^{d})$, which exists by~\eqref{eq:polynomial_approx} and the separability of $\mcB_{p,q}^{\alpha}(\mbT^{d})$ (Lemma~\ref{lem:Besov_separable}). This yields the claim.
	\begin{details}
		\paragraph{Construction of the countable, dense subset.}
		Let $f\in C_{T}^{\kappa}\mcB_{p,q}^{\alpha}(\mbT^{d})$, by~\eqref{eq:polynomial_approx} we can find a sequence of polynomials $(f_{n})_{n\in\mbN}$ with coefficients in $\mcB_{p,q}^{\alpha}(\mbT^{d})$ such that $f_{n}\to f$ in $C_{T}^{\kappa}\mcB_{p,q}^{\alpha}(\mbT^{d})$ as $n\to\infty$. Hence, it suffices to approximate every polynomial with coefficients in $\mcB_{p,q}^{\alpha}(\mbT^{d})$ by a polynomial with coefficients in a countable, dense subset $D\subset\mcB_{p,q}^{\alpha}(\mbT^{d})$, which exists by the separability of $\mcB_{p,q}^{\alpha}(\mbT^{d})$ (Lemma~\ref{lem:Besov_separable}). By the triangle inequality, it further suffices to approximate the monomial $t^{k}u$ for each $k\in\mbN_{0}$ and $u\in\mcB_{p,q}^{\alpha}(\mbT^{d})$. Let $(u_{n})_{n\in\mbN}$ be a sequence such that $u_{n}\in D$ for every $n\in\mbN$ and $u_{n}\to u\in\mcB_{p,q}^{\alpha}(\mbT^{d})$ as $n\to\infty$. We obtain
		\begin{equation*}
			\sup_{t\in[0,T]}\norm{t^{k}u-t^{k}u_{n}}_{\mcB_{p,q}^{\alpha}}\leq T^{k}\norm{u-u_{n}}_{\mcB_{p,q}^{\alpha}}\to0\quad\text{as}~n\to\infty
		\end{equation*}
		and for $k\neq0$,
		\begin{equation*}
			\begin{split}
				\sup_{s\neq t\in[0,T]}\frac{\norm{t^{k}u-t^{k}u_{n}-(s^{k}u-s^{k}u_{n})}_{\mcB_{p,q}^{\alpha}}}{\abs{t-s}^{\kappa}}&=\sup_{s\neq t\in[0,T]}\frac{\norm{(t^{k}-s^{k})u-(t^{k}-s^{k})u_{n}}_{\mcB_{p,q}^{\alpha}}}{\abs{t-s}^{\kappa}}\\
				&=\sup_{s\neq t\in[0,T]}\frac{\abs{t^{k}-s^{k}}}{\abs{t-s}^{\kappa}}\norm{u-u_{n}}_{\mcB_{p,q}^{\alpha}}\\
				&\leq k T^{k-\kappa}\norm{u-u_{n}}_{\mcB_{p,q}^{\alpha}}\to0\qquad\text{as}~n\to\infty.
			\end{split}
		\end{equation*}
		Hence, we can approximate every monomial with coefficients in $\mcB_{p,q}^{\alpha}(\mbT^{d})$ by a monomial with coefficients in $D$, which by the argument above yields the separability of $C_{T}^{\kappa}\mcB_{p,q}^{\alpha}(\mbT^{d})$.
	\end{details}
\end{proof}
The next lemma presents a natural criterion that allows us to find elements of $C_{T}^{\kappa}\mcB_{p,q}^{\alpha}(\mbT^{d})$.
\begin{lemma}\label{lem:little_Hoelder_criterion}
	Let $T>0$, $p,q\in[1,\infty]$, $\alpha<\alpha'\in\mbR$ and $0\leq\kappa<\kappa'<1$.
	\begin{itemize}
		\item If $q\in[1,\infty)$, then each $f\from[0,T]\to\mcS'(\mbT^{d})$ such that $\norm{f}_{C_{T}^{\kappa'}\mcB_{p,q}^{\alpha}}<\infty$ is an element of $C_{T}^{\kappa}\mcB_{p,q}^{\alpha}(\mbT^{d})$.
		\item If $q=\infty$, then each $f\from[0,T]\to\mcS'(\mbT^{d})$ such that $\norm{f}_{C_{T}^{\kappa'}\mcB_{p,\infty}^{\alpha'}}<\infty$ is an element of $C_{T}^{\kappa}\mcB_{p,q}^{\alpha}(\mbT^{d})$.
	\end{itemize}
\end{lemma}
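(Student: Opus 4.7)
The plan is to invoke the characterisation of the little-Hölder space established in Lemma~\ref{lem:little_Hoelder_separable}, which states that $f\in C_{T}^{\kappa}\mcB_{p,q}^{\alpha}(\mbT^{d})$ if and only if $f\from[0,T]\to\mcB_{p,q}^{\alpha}(\mbT^{d})$ satisfies~\eqref{eq:little_Hoelder_characterisation}. Consequently, after checking that $f(t)$ takes values in $\mcB_{p,q}^{\alpha}(\mbT^{d})$, the only remaining task is to verify the limit as $r\to0$ of the $\kappa$-H\"{o}lder seminorm restricted to pairs with $\abs{t-s}<r$.

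For the case $q\in[1,\infty)$, the hypothesis $\norm{f}_{C_{T}^{\kappa'}\mcB_{p,q}^{\alpha}}<\infty$ immediately gives $\norm{f(t)}_{\mcB_{p,q}^{\alpha}}<\infty$ for each $t\in[0,T]$, so by the first part of Lemma~\ref{lem:Besov_criterion} we have $f(t)\in\mcB_{p,q}^{\alpha}(\mbT^{d})$. The little-Hölder condition then follows from the elementary interpolation estimate
\begin{equation*}
\frac{\norm{f(t)-f(s)}_{\mcB_{p,q}^{\alpha}}}{\abs{t-s}^{\kappa}}=\abs{t-s}^{\kappa'-\kappa}\frac{\norm{f(t)-f(s)}_{\mcB_{p,q}^{\alpha}}}{\abs{t-s}^{\kappa'}}\leq\abs{t-s}^{\kappa'-\kappa}\norm{f}_{C_{T}^{\kappa'}\mcB_{p,q}^{\alpha}},
\end{equation*}
whose right-hand side vanishes uniformly as $\abs{t-s}\to0$ because $\kappa<\kappa'$.

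For the case $q=\infty$, the hypothesis $\norm{f}_{C_{T}^{\kappa'}\mcB_{p,\infty}^{\alpha'}}<\infty$ only yields $\norm{f(t)}_{\mcB_{p,\infty}^{\alpha'}}<\infty$, so we now invoke the second part of Lemma~\ref{lem:Besov_criterion} (using $\alpha<\alpha'$) to conclude $f(t)\in\mcB_{p,\infty}^{\alpha}(\mbT^{d})$; in particular $f(t)-f(s)\in\mcB_{p,\infty}^{\alpha}(\mbT^{d})$, so the seminorm in~\eqref{eq:little_Hoelder_characterisation} makes sense. To verify the little-Hölder condition, we combine the trivial Besov embedding $\mcB_{p,\infty}^{\alpha'}(\mbT^{d})\embed\mcB_{p,\infty}^{\alpha}(\mbT^{d})$ with the temporal regularity at level $\kappa'$,
\begin{equation*}
\frac{\norm{f(t)-f(s)}_{\mcB_{p,\infty}^{\alpha}}}{\abs{t-s}^{\kappa}}\lesssim\frac{\norm{f(t)-f(s)}_{\mcB_{p,\infty}^{\alpha'}}}{\abs{t-s}^{\kappa}}\leq\abs{t-s}^{\kappa'-\kappa}\norm{f}_{C_{T}^{\kappa'}\mcB_{p,\infty}^{\alpha'}},
\end{equation*}
which again tends to zero uniformly as $\abs{t-s}\to0$. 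The conclusion then follows from Lemma~\ref{lem:little_Hoelder_separable}. No serious obstacle arises here; the only subtlety to keep in mind is that in the $q=\infty$ case we must genuinely trade spatial regularity (via $\alpha<\alpha'$) to land inside the Besov space $\mcB_{p,\infty}^{\alpha}(\mbT^{d})$ proper, rather than merely controlling its norm.
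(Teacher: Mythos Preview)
Your proposal is correct and follows essentially the same approach as the paper: both arguments invoke Lemma~\ref{lem:Besov_criterion} to ensure that $f$ is $\mcB_{p,q}^{\alpha}(\mbT^{d})$-valued, then verify the little-H\"older condition~\eqref{eq:little_Hoelder_characterisation} via the factor $\abs{t-s}^{\kappa'-\kappa}$, and conclude using the characterisation from Lemma~\ref{lem:little_Hoelder_separable}. Your treatment of the $q=\infty$ case is slightly more explicit in spelling out the Besov embedding $\mcB_{p,\infty}^{\alpha'}\embed\mcB_{p,\infty}^{\alpha}$, but the underlying argument is identical.
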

\begin{proof}
	Let $T>0$, $p,q\in[1,\infty]$, $\alpha\in\mbR$ and $0\leq\kappa<\kappa'<1$. To prove the first claim, let $q<\infty$ and $f\from[0,T]\to\mcS'(\mbT^{d})$ be such that $\norm{f}_{C_{T}^{\kappa'}\mcB_{p,q}^{\alpha}}<\infty$. By Lemma~\ref{lem:Besov_criterion} it follows that $f\from[0,T]\to\mcB_{p,q}^{\alpha}(\mbT^{d})$. Let $r>0$ and $s\neq t\in[0,T]$ with $\abs{t-s}<r$, we obtain
	\begin{equation*}
		\frac{\norm{f(t)-f(s)}_{\mcB_{p,q}^{\alpha}}}{\abs{t-s}^{\kappa}}\lesssim\abs{t-s}^{\kappa'-\kappa}\frac{\norm{f(t)-f(s)}_{\mcB_{p,q}^{\alpha}}}{\abs{t-s}^{\kappa'}}\leq r^{\kappa'-\kappa}\norm{f}_{C_{T}^{\kappa'}\mcB_{p,q}^{\alpha}}\to0\quad\text{as}~r\to0,
	\end{equation*}
	hence $f$ satisfies~\eqref{eq:little_Hoelder_characterisation}, which implies $f\in C_{T}^{\kappa}\mcB_{p,q}^{\alpha}(\mbT^{d})$ by Lemma~\ref{lem:little_Hoelder_criterion}. The second claim follows by the same argument, using that $\norm{f}_{C_{T}^{\kappa'}\mcB_{p,\infty}^{\alpha'}}<\infty$ implies $f\from[0,T]\to\mcB_{p,\infty}^{\alpha}(\mbT^{d})$ by Lemma~\ref{lem:Besov_criterion}.
\end{proof}
Recall the definition $\msL_{T}^{\kappa}\mcB_{p,q}^{\alpha}(\mbT^{d})\defeq C_{T}^{\kappa}\mcB_{p,q}^{\alpha-2\kappa}(\mbT^{d})\cap C_{T}\mcB_{p,q}^{\alpha}(\mbT^{d})$ (see Subsection~\ref{subsec:notation}). Next we show that each $\msL_{T}^{\kappa}\mcB_{p,q}^{\alpha}(\mbT^{d})$ is separable.
\begin{lemma}\label{lem:separability_interpolation}
	Let $T>0$, $\alpha\in\mbR$ and $\kappa\in(0,1)$, then $\msL_{T}^{\kappa}\mcB_{p,q}^{\alpha}(\mbT^{d})=C_{T}^{\kappa}\mcB_{p,q}^{\alpha-2\kappa}(\mbT^{d})\cap C_{T}\mcB_{p,q}^{\alpha}(\mbT^{d})$ is separable.
\end{lemma}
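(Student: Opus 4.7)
The plan is to realise $\msL_{T}^{\kappa}\mcB_{p,q}^{\alpha}(\mbT^{d})$ as an isometric subspace of a product of two spaces we already know to be separable, and then invoke the fact that any subspace of a separable metric space is separable.

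First, I would invoke Lemma~\ref{lem:little_Hoelder_separable} twice to conclude that both $C_{T}^{\kappa}\mcB_{p,q}^{\alpha-2\kappa}(\mbT^{d})$ and $C_{T}\mcB_{p,q}^{\alpha}(\mbT^{d})=C_{T}^{0}\mcB_{p,q}^{\alpha}(\mbT^{d})$ are separable Banach spaces (the lemma covers both the H\"older exponent $\kappa\in(0,1)$ and the case $\kappa=0$). Then I would equip the Cartesian product
\begin{equation*}
	\mcX\defeq C_{T}^{\kappa}\mcB_{p,q}^{\alpha-2\kappa}(\mbT^{d})\times C_{T}\mcB_{p,q}^{\alpha}(\mbT^{d})
\end{equation*}
with the max norm $\norm{(f,g)}_{\mcX}\defeq\max\{\norm{f}_{C_{T}^{\kappa}\mcB_{p,q}^{\alpha-2\kappa}},\norm{g}_{C_{T}\mcB_{p,q}^{\alpha}}\}$. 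As a finite product of separable metric spaces, $\mcX$ is itself separable (a countable dense subset is obtained by taking Cartesian products of countable dense subsets in each factor).

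Next, I would introduce the diagonal map
\begin{equation*}
	\iota\from\msL_{T}^{\kappa}\mcB_{p,q}^{\alpha}(\mbT^{d})\to\mcX,\qquad \iota(f)\defeq(f,f),
\end{equation*}
which is well-defined by the very definition of $\msL_{T}^{\kappa}\mcB_{p,q}^{\alpha}(\mbT^{d})$. By the definition of the norm on $\msL_{T}^{\kappa}\mcB_{p,q}^{\alpha}(\mbT^{d})$ given in Subsection~\ref{subsec:notation}, $\iota$ is an isometry onto its image.

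Finally, I would use the general fact that any subspace of a separable metric space is separable (a countable dense subset of the subspace can be extracted from a countable base of the ambient space). Applied to the isometric image $\iota(\msL_{T}^{\kappa}\mcB_{p,q}^{\alpha}(\mbT^{d}))\subset\mcX$, this yields the separability of $\iota(\msL_{T}^{\kappa}\mcB_{p,q}^{\alpha}(\mbT^{d}))$, and hence, by isometry, of $\msL_{T}^{\kappa}\mcB_{p,q}^{\alpha}(\mbT^{d})$ itself. There is no substantive obstacle here: everything reduces to the separability of the two constituent spaces, which is already established.
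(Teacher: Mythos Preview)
Your proposal is correct and follows essentially the same approach as the paper: embed $\msL_{T}^{\kappa}\mcB_{p,q}^{\alpha}(\mbT^{d})$ isometrically as the diagonal in the product $C_{T}^{\kappa}\mcB_{p,q}^{\alpha-2\kappa}(\mbT^{d})\times C_{T}\mcB_{p,q}^{\alpha}(\mbT^{d})$, use Lemma~\ref{lem:little_Hoelder_separable} to get separability of the product, and conclude via the fact that subspaces of separable metric spaces are separable.
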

\begin{proof}
	The proof is classical: There exists an isometric isomorphism that maps $\msL_{T}^{\kappa}\mcB_{p,q}^{\alpha}(\mbT^{d})=C_{T}^{\kappa}\mcB_{p,q}^{\alpha-2\kappa}(\mbT^{d})\cap C_{T}\mcB_{p,q}^{\alpha}(\mbT^{d})$ to the diagonal $\{(f,f):f\in\msL_{T}^{\kappa}\mcB_{p,q}^{\alpha}(\mbT^{d})\}\subset C_{T}^{\kappa}\mcB_{p,q}^{\alpha-2\kappa}(\mbT^{d})\times C_{T}\mcB_{p,q}^{\alpha}(\mbT^{d})$. The product space $C_{T}^{\kappa}\mcB_{p,q}^{\alpha-2\kappa}(\mbT^{d})\times C_{T}\mcB_{p,q}^{\alpha}(\mbT^{d})$ is a separable metric space by Lemma~\ref{lem:little_Hoelder_separable}, which implies that the diagonal is a separable subspace. Using that isometric isomorphisms preserve separability, it follows that each $\msL_{T}^{\kappa}\mcB_{p,q}^{\alpha}(\mbT^{d})$ is separable.
\end{proof}
Let $\target$ be a separable, normed vector space and define $\target^{\sol}_{T}$ as in Subsection~\ref{subsec:notation}. In the next lemma we show that $\target^{\sol}_{T}$ is separable.
%
%
\begin{lemma}\label{lem:FsolT_separability}
	Let $T>0$ and $\target$ be a separable, normed vector space, then $\target^{\sol}_{T}$ is separable.
\end{lemma}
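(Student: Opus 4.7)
The plan is to exhibit a countable dense subset $\msD \subset \target^{\sol}_{T}$ and verify density through the convergence characterisation~\eqref{eq:FsolT_metric}. By separability of $\target$, fix a countable dense subset $D \subset \target$ containing $0$. I define $\msD$ as the union of two countable families of functions $g\from[0,T]\to\target^{\cem}$: family (A) consists of piecewise affine paths with break points in $[0,T]\cap\mbQ$ and values in $D$, which lie in $C([0,T];\target)\subset\target^{\sol}_{T}$; family (B) consists of functions that are piecewise affine on some $[0,t_{n}]$ (rational break points, $D$-values ending at $u_{n}$), continued on $[t_{n},q)$ by the explicit profile $g(t)\defeq u_{n}+(t-t_{n})\,v/(q-t)$ with $q\in(t_{n},T]\cap\mbQ$ and $v\in D\setminus\{0\}$, and set equal to $\cem$ on $[q,T]$. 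Each element of family (B) is continuous on $[0,q)$ into $\target$ and satisfies $\norm{g(t)}_{\target}\to\infty$ as $t\to q^{-}$, hence extends to a continuous map $[0,T]\to\target^{\cem}$; by construction it belongs to $\target^{\sol}_{T}$ with blow-up time $q$. Both families are parameterised by finitely many rationals and finitely many elements of $D$, so $\msD$ is countable.

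To prove density I show every $f\in\target^{\sol}_{T}$ is the $D_{T}^{\target}$-limit of a sequence in $\msD$, distinguishing whether $f$ blows up. If $T^{\cem}_{\target}[f]=T$ and $f\in C([0,T];\target)$, then $f$ is uniformly continuous and bounded, so standard mesh refinement plus density of $D$ in $\target$ produces approximants from family (A) converging uniformly to $f$, which trivially implies $D_{T}^{\target}$-convergence. If $T^{\cem}_{\target}[f]<T$ or $f$ blows up as $t\nearrow T$, pick rationals $t_{n}^{*}\nearrow T^{\cem}_{\target}[f]$ and $q_{n}\in(t_{n}^{*},T^{\cem}_{\target}[f]]\cap\mbQ$ with $q_{n}\to T^{\cem}_{\target}[f]$; uniformly approximate $f|_{[0,t_{n}^{*}]}$ to within $1/n$ by piecewise affine functions with rational break points and $D$-values (possible by continuity and boundedness of $f$ on $[0,t_{n}^{*}]$), and extend via the blow-up profile of family (B) to obtain $g_{n}\in\msD$.

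To verify $g_{n}\to f$ in $\target^{\sol}_{T}$, fix $L>0$. Since $f$ blows up at $T^{\cem}_{\target}[f]$, the hitting time $\sigma_{L}\defeq\inf\{t\in[0,T]:\norm{f(t)}_{\target}>L\}$ satisfies $\sigma_{L}<T^{\cem}_{\target}[f]$, so for all $n$ sufficiently large $t_{n}^{*}>\sigma_{L}$ and $h_{n}$ is $1/n$-close to $f$ on $[0,\sigma_{L}]$; the corresponding hitting time $\sigma_{L}^{(n)}\defeq\inf\{t\in[0,T]:\norm{g_{n}(t)}_{\target}>L\}$ then satisfies $\abs{\sigma_{L}^{(n)}-\sigma_{L}}\to0$ by uniform convergence on $[0,t_{n}^{*}]$ and continuity of $f$, yielding $T_{L}(g_{n},f)\to\sigma_{L}$ and $\sup_{t\in[0,T_{L}(g_{n},f)]}\norm{g_{n}(t)-f(t)}_{\target}\to0$ as required.

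The principal obstacle is reconciling two competing requirements: elements of $\msD$ must be genuine members of $\target^{\sol}_{T}$ (in particular, continuous into $\target^{\cem}$ at their blow-up time, which demands $\norm{g}_{\target}\to\infty$), while simultaneously providing the $L$-truncated uniform approximation for every $L>0$. The resolution comes from two observations working in tandem: the explicit rational profile $v/(q-t)$ provides the simplest possible continuous blow-up compatible with countability, and the truncation by $T_{L}$ in~\eqref{eq:FsolT_metric} means we never need to match $f$'s blow-up shape precisely — we only need our approximants to eventually blow up at rational times close to $T^{\cem}_{\target}[f]$, reducing the task to classical uniform approximation of a bounded continuous $\target$-valued path.
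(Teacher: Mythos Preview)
Your proof is correct in substance but takes a completely different route from the paper. The paper's argument is a two-line citation: it identifies $\target^{\sol}_{T}$ with a subspace of the metric space $\target^{\sol}$ constructed in \cite[Sec.~1.5.1]{chandra_chevyrev_hairer_shen_22}, where separability is already established, and then uses that subspaces of separable metric spaces are separable. Your approach is a direct, self-contained construction of a countable dense set via piecewise affine paths with explicit rational blow-up profiles. The advantage of your argument is that it avoids an external reference and makes transparent exactly which elements of $\target^{\sol}_{T}$ are needed for density; the advantage of the paper's argument is brevity.

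One minor clean-up: the claim that $\abs{\sigma_{L}^{(n)}-\sigma_{L}}\to0$ is neither obviously true (hitting times need not be stable under uniform approximation without a crossing condition) nor needed. What you actually use is only that $T_{L}(g_{n},f)\leq\sigma_{L}$, which follows directly from the definition of $T_{L}$ since $f$ exceeds level $L$ at time $\sigma_{L}$. Since $\sigma_{L}<T^{\cem}_{\target}[f]$ and $t_{n}^{*}\nearrow T^{\cem}_{\target}[f]$, for large $n$ one has $[0,T_{L}]\subset[0,\sigma_{L}]\subset[0,t_{n}^{*}]$, on which $g_{n}$ is $1/n$-close to $f$ by construction; this already gives the required $\sup_{[0,T_{L}]}\norm{g_{n}-f}_{\target}\to0$ without any statement about $\sigma_{L}^{(n)}$. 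Also note the typo ``$h_{n}$'' should read ``$g_{n}$''.
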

\begin{proof}
	The space $\target^{\sol}_{T}$ can be identified with a subspace of the metric space $\target^{\sol}$ defined in~\cite[Sec.~1.5.1]{chandra_chevyrev_hairer_shen_22}. As discussed in~\cite[Sec.~1.5.1]{chandra_chevyrev_hairer_shen_22}, $\target^{\sol}$ is a separable metric space, hence $\target^{\sol}_{T}$ is a separable subspace.
\end{proof}
Next we define the space of periodic space-time H\"{o}lder distributions (Definition~\ref{def:Cs_alpha_periodic}; see also Definition~\ref{def:Cs_alpha_torus} for the analogous notion on the torus), which we use in Subsection~\ref{subsec:existence_and_regularity} to quantify the regularity of space-time white noise (Theorem~\ref{thm:existence_stWN}).

Denote by $\mfs=(\mfs_{i})_{i=0}^{d}=(2,1,\ldots,1)$ the parabolic scaling of $\mbR\times\mbR^{d}$ with norm $\abs{\mfs}=\sum_{i=0}^{d}\mfs_{i}=2+d$.
\begin{definition}[{\cite[(2.10)~\&~(2.13)]{hairer_14_RegStruct}}]
	Let $z=(t,x)\in\mbR\times\mbR^{d}$ and $z'=(t',x')\in\mbR\times\mbR^{d}$, we define the parabolic distance
	\begin{equation*}
		d_{\mfs}(z,z')\defeq\abs{t-t'}^{1/\mfs_{0}}+\sum_{i=1}^{d}\abs{x_{i}-x'_{i}}^{1/\mfs_{i}}
	\end{equation*}
	and denote the parabolic unit ball by $B_{\mfs}(0,1)\defeq\{z\in\mbR\times\mbR^{d}:\abs{z}_{\mfs}<1\}$, where
	\begin{equation*}
		\abs{z}_{\mfs}\defeq\abs{t}^{1/\mfs_{0}}+\sum_{i=1}^{d}\abs{x_{i}}^{1/\mfs_{i}}
	\end{equation*}
	(by a slight abuse of notation, since $\abs{\place}_{\mfs}$ is not homogeneous, hence fails to be a norm.) We define the scaling of a function by $\lambda\in(0,1]$ around the point $z$ via
	\begin{equation*}
		\mcS_{\mfs,z}^{\lambda}f(s,y)\defeq\lambda^{-\abs{\mfs}}f(\lambda^{-\mfs_{0}}(s-t),\lambda^{-\mfs_{1}}(y_{1}-x_{1}),\ldots,\lambda^{-\mfs_{d}}(y_{d}-x_{d})),\qquad f\from\mbR\times\mbR^{d}\to\mbR.
	\end{equation*}
\end{definition}
We can now define the space $\mcC_{\mfs,\per}^{\alpha}([0,T]\times\mbR^{d})$ of periodic distributions of H\"{o}lder regularity $\alpha<0$.
\begin{definition}\label{def:Cs_alpha_periodic}
	Let $\alpha<0$ and $r_{\alpha}\defeq\ceil{-\alpha}$, we define for each compact $\mfK\subset\mbR\times\mbR^{d}$ the seminorm
	\begin{equation*}
		\norm{u}_{\mcC_{\mfs}^{\alpha}(\mfK)}\defeq\sup_{\lambda\in(0,1]}\sup_{\eta\in \mcB_{\mfs}^{r_{\alpha}}}\sup_{z\in\mfK}\frac{\abs{\inner{u}{\mcS_{\mfs,z}^{\lambda}\eta}}}{\lambda^{\alpha}},\qquad u\in\mcS'(\mbR\times\mbR^{d}).
	\end{equation*}
	where
	\begin{equation*}
		\mcB^{r_{\alpha}}_{\mfs}\defeq\{\eta\in C^{r_{\alpha}}(\mbR\times\mbR^{d}):\supp(\eta)\subset B_{\mfs}(0,1),~\norm{\eta}_{C^{r_{\alpha}}(\mbR\times\mbR^{d})}\leq1\},
	\end{equation*}
	and
	\begin{equation*}
		\norm{\eta}_{C^{r_{\alpha}}(\mbR\times\mbR^{d})}\defeq\sup_{\substack{\multi\in\mbN_{0}^{1+d}\\\abs{\multi}\leq r_{\alpha}}}\sup_{z\in\mbR\times\mbR^{d}}\abs{\partial^{\multi}\eta(z)}.
	\end{equation*}
	We define $\mcC_{\mfs,\per}^{\alpha}([0,T]\times\mbR^{d})\subset\mcS'_{\per}(\mbR\times\mbR^{d})$ as the completion of $C^{\infty}([0,T]\times[0,1]^{d})$ under the norm $\norm{\place}_{\mcC^{\alpha}_{\mfs}([0,T]\times[0,1]^{d})}$, where we extend each element of $C^{\infty}([0,T]\times[0,1]^{d})$ to $\mbR\times\mbR^{d}$ periodically in space and by $0$ in time.
\end{definition}
\begin{details}
	From now on we identify each $u\in C^{\infty}([0,T]\times[0,1]^{d})$ with its extension to $\mbR\times\mbR^{d}$.
	
	To show that the space $\mcC_{\mfs,\per}^{\alpha}([0,T]\times\mbR^{d})$ is well-defined, we first prove that $\norm{\place}_{\mcC^{\alpha}_{\mfs}([0,T]\times[0,1]^{d})}$ is a norm on (extended) elements of $C^{\infty}([0,T]\times[0,1]^{d})$.
	\begin{lemma}
		Let $u\in C^{\infty}([0,T]\times[0,1]^{d})$ be such that $\norm{u}_{\mcC^{\alpha}_{\mfs}([0,T]\times[0,1]^{d})}=0$, then $u\equiv0$ in $\mbR\times\mbR^{d}$.
	\end{lemma}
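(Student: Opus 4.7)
The plan is to unpack the vanishing of the seminorm to obtain that $\inner{u}{\mcS^{\lambda}_{\mfs,z}\eta} = 0$ for every admissible choice of $\lambda$, $\eta$, and $z$, and then exploit the fact that, as $\lambda \to 0$, the family $\mcS^{\lambda}_{\mfs,z}\eta$ behaves as an approximation to $\delta_{z}$ (up to a multiplicative constant $\int \eta$). Since $u$ is smooth on $[0,T]\times[0,1]^{d}$, a direct computation of the limit recovers the pointwise value $u(z)$ and forces it to vanish.

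Concretely, the hypothesis $\norm{u}_{\mcC^{\alpha}_{\mfs}([0,T]\times[0,1]^{d})} = 0$ combined with the definition immediately gives $\inner{u}{\mcS^{\lambda}_{\mfs,z}\eta} = 0$ for all $\lambda \in (0,1]$, $\eta \in \mcB^{r_{\alpha}}_{\mfs}$, and $z \in [0,T]\times[0,1]^{d}$. I would then fix $z = (t,x)$ in the interior of $[0,T]\times[0,1]^{d}$ and perform the parabolic change of variables $s' = \lambda^{-\mfs_{0}}(s-t)$, $y'_{i} = \lambda^{-\mfs_{i}}(y_{i}-x_{i})$ to rewrite
\begin{equation*}
	\inner{u}{\mcS^{\lambda}_{\mfs,z}\eta} = \int_{B_{\mfs}(0,1)} u(t + \lambda^{\mfs_{0}} s', x_{1} + \lambda^{\mfs_{1}} y'_{1}, \ldots, x_{d} + \lambda^{\mfs_{d}} y'_{d}) \, \eta(s', y'_{1}, \ldots, y'_{d}) \, \dd s' \dd y'.
\end{equation*}
For $\lambda$ sufficiently small, the support of the integrand lies inside $[0,T]\times[0,1]^{d}$, where $u$ is smooth and in particular continuous and bounded. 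Dominated convergence then yields
\begin{equation*}
	\lim_{\lambda \to 0} \inner{u}{\mcS^{\lambda}_{\mfs,z}\eta} = u(t,x) \int_{B_{\mfs}(0,1)} \eta.
\end{equation*}

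It remains to choose an $\eta \in \mcB^{r_{\alpha}}_{\mfs}$ with $\int \eta \neq 0$; any suitably normalised, non-negative smooth bump function compactly supported in $B_{\mfs}(0,1)$ and with $\norm{\eta}_{C^{r_{\alpha}}(\mbR \times \mbR^{d})} \leq 1$ will do. With such a test function, the previous identity forces $u(t,x) = 0$ for all $(t,x)$ in the interior of $[0,T]\times[0,1]^{d}$. The continuity of $u$ then propagates this vanishing to the closed parabolic slab, and the prescribed periodic-in-space, zero-in-time extension produces the zero distribution on all of $\mbR \times \mbR^{d}$.

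The only mild subtlety — and the step to handle with the most care — is ensuring that $\mcS^{\lambda}_{\mfs,z}\eta$ really is admissible as a test in the seminorm for the $z$ under consideration (requiring $z \in [0,T]\times[0,1]^{d}$, which is guaranteed), and checking that a suitable $\eta$ with $\int \eta \neq 0$ can be normalised into the class $\mcB^{r_{\alpha}}_{\mfs}$; both are routine, so no genuine obstacle is expected.
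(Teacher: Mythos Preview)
Your proposal is correct and follows essentially the same approach as the paper: the paper's proof simply asserts that $\abs{\inner{u}{\mcS^{\lambda}_{\mfs,z}\eta}}=0$ for all admissible $z,\lambda,\eta$ ``together with the continuity of $u$'' forces $u\equiv0$ on $[0,T]\times[0,1]^{d}$, and then invokes periodicity. You have spelled out the implicit approximation-to-identity step (change of variables, dominated convergence, choice of $\eta$ with $\int\eta\neq0$) that the paper leaves to the reader, and your handling of boundary points via continuity from the interior is if anything cleaner than working directly at the boundary.
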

	\begin{proof}
		Let $z\in[0,T]\times[0,1]^{d}$, $\lambda\leq1$ and $\eta\in\mcB_{\mfs}^{r_{\alpha}}$ be arbitrary. We obtain by assumption $\abs{\inner{u}{\mcS^{\lambda}_{\mfs,z}\eta}}=0$, which together with the continuity of $u$ implies that $u\equiv0$ in $[0,T]\times[0,1]^{d}$. By the periodicity of $u$, we can deduce $u\equiv0$ in $[0,T]\times\mbR^{d}$, hence $u\equiv0$.
	\end{proof}
	The completion of $C^{\infty}([0,T]\times[0,1]^{d})$ under $\norm{\place}_{\mcC^{\alpha}_{\mfs}([0,T]\times[0,1]^{d})}$ is a Banach space that consists of equivalence classes of Cauchy sequences. To identify each equivalence class with an element of $\mcS_{\per}'(\mbR\times\mbR^{d})$, we show that convergence under $\norm{\place}_{\mcC^{\alpha}_{\mfs}([0,T]\times[0,1]^{d})}$ implies weak*-convergence in $\mcS'(\mbR\times\mbR^{d})$.
	\begin{lemma}\label{lem:duality}
		Let $u\in C^{\infty}([0,T]\times[0,1]^{d})$ and $\test\in\mcS(\mbR\times\mbR^{d})$, then for every $\alpha<0$,
		\begin{equation*}
			\inner{u}{\test}_{L^{2}(\mbR\times\mbR^{d})}\lesssim\norm{u}_{\mcC^{\alpha}_{\mfs}([0,T]\times[0,1]^{d})}\Bigl\lVert\sum_{z\in\mbZ^{d}}\mcT_{z}\test\Bigr\rVert_{C^{r_{\alpha}}((-1,T+1)\times(-1,1)^{d})},
		\end{equation*}
		where $\norm{\sum_{z\in\mbZ^{d}}\mcT_{z}\test}_{C^{r_{\alpha}}((-1,T+1)\times(-1,1)^{d})}<\infty$ by~\cite[(29.2)]{vanzuijlen_22}.
	\end{lemma}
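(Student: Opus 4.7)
The plan is to reduce the pairing on $\mbR\times\mbR^{d}$ to a pairing on a fundamental domain by exploiting the spatial periodicity of $u$, and then use a finite partition of unity to localize the resulting pairing at scale $\lambda=1$ in the $\mcC^{\alpha}_{\mfs}$-seminorm. The key observation is that although the $\mcC^{\alpha}_{\mfs}$-seminorm is formulated in terms of test functions supported in parabolic balls of radius at most $1$, the domain $[0,T]\times[0,1]^{d}$ is compact and hence can be covered by finitely many such balls.

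First I would rewrite, using that $u$ vanishes outside $[0,T]$ in time and has period $1$ in each spatial coordinate,
\begin{equation*}
\inner{u}{\test}_{L^{2}(\mbR\times\mbR^{d})}
=\int_{0}^{T}\!\!\int_{[0,1]^{d}}u(t,x)\,\widetilde{\test}(t,x)\dd x\dd t,\qquad\widetilde{\test}\defeq\sum_{z\in\mbZ^{d}}\mcT_{z}\test,
\end{equation*}
so that the problem reduces to bounding a pairing of $u$ against the smooth, spatially periodic function $\widetilde{\test}$. I would then fix (once and for all, depending only on $T$, $d$, and $r_{\alpha}$) a finite partition of unity $(\chi_{i})_{i=1}^{N}\subset C^{\infty}_{c}(\mbR\times\mbR^{d})$ such that $\sum_{i}\chi_{i}\equiv1$ on $[0,T]\times[0,1]^{d}$ and each $\chi_{i}$ is supported in a parabolic ball $B_{\mfs}(z_{i},1)$ with $z_{i}\in[0,T]\times[0,1]^{d}$; this exists by compactness.

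Writing $\inner{u}{\widetilde{\test}}=\sum_{i=1}^{N}\inner{u}{\chi_{i}\widetilde{\test}}$ and setting $\eta_{i}(\place)\defeq(\chi_{i}\widetilde{\test})(\place+z_{i})$, the support of each $\eta_{i}$ lies in $B_{\mfs}(0,1)$, and I would use the Leibniz rule to estimate
\begin{equation*}
\norm{\eta_{i}}_{C^{r_{\alpha}}}\lesssim\norm{\chi_{i}}_{C^{r_{\alpha}}}\,\norm{\widetilde{\test}}_{C^{r_{\alpha}}(B_{\mfs}(z_{i},1))}\lesssim\norm{\widetilde{\test}}_{C^{r_{\alpha}}((-1,T+1)\times(-1,1)^{d})},
\end{equation*}
where the last inequality uses that $B_{\mfs}(z_{i},1)\subset(-1,T+1)\times(z_{i}-\mathbf{1},z_{i}+\mathbf{1})$ together with the $1$-periodicity of $\widetilde{\test}$ in space. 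Identifying $\chi_{i}\widetilde{\test}=\mcS_{\mfs,z_{i}}^{1}\eta_{i}$ and applying the defining seminorm to the normalized test function $\eta_{i}/\norm{\eta_{i}}_{C^{r_{\alpha}}}\in\mcB^{r_{\alpha}}_{\mfs}$ with $\lambda=1$ yields $\abs{\inner{u}{\chi_{i}\widetilde{\test}}}\leq\norm{\eta_{i}}_{C^{r_{\alpha}}}\norm{u}_{\mcC^{\alpha}_{\mfs}([0,T]\times[0,1]^{d})}$. Summing over the finitely many $i$ gives the claimed estimate with an implicit constant $N\max_{i}\norm{\chi_{i}}_{C^{r_{\alpha}}}$ depending only on $T$, $d$, and $r_{\alpha}$.

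No step is really hard; the one point that requires some care is the geometric bookkeeping in passing from $B_{\mfs}(z_{i},1)$ to the fixed reference set $(-1,T+1)\times(-1,1)^{d}$ in the final estimate, which I would handle via periodicity as indicated above. The convergence $\norm{\sum_{z\in\mbZ^{d}}\mcT_{z}\test}_{C^{r_{\alpha}}((-1,T+1)\times(-1,1)^{d})}<\infty$ appearing in the statement is already noted by the authors as a consequence of Schwartz decay together with \cite[(29.2)]{vanzuijlen_22}, so no additional work is needed there.
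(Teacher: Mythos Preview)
Your high-level strategy (periodize, localize by a finite scale-$1$ partition of unity, Leibniz, apply the seminorm with $\lambda=1$) is the same as the paper's. However, there is a genuine gap at the step where you pass from the integral over the fundamental domain to pairings controlled by the seminorm.

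After periodizing you correctly obtain $\inner{u}{\test}_{L^{2}(\mbR\times\mbR^{d})}=\int_{[0,T]\times[0,1]^{d}}u\,\widetilde{\test}$. You then write this as $\sum_{i}\inner{u}{\chi_{i}\widetilde{\test}}$ and bound each term via the seminorm. But the seminorm $\norm{u}_{\mcC^{\alpha}_{\mfs}([0,T]\times[0,1]^{d})}$ controls the \emph{full} distributional pairing $\inner{u}{\chi_{i}\widetilde{\test}}_{L^{2}(\mbR\times\mbR^{d})}$, with $u$ extended periodically in space to all of $\mbR^{d}$. This is \emph{not} equal to $\int_{[0,T]\times[0,1]^{d}}u\,\chi_{i}\widetilde{\test}$: whenever $B_{\mfs}(z_{i},1)$ protrudes from $[0,1]^{d}$ in a spatial direction, the full pairing picks up contributions from periodic copies of $u$ in neighbouring cells. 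Consequently the identity $\int_{[0,T]\times[0,1]^{d}}u\,\widetilde{\test}=\sum_{i}\inner{u}{\chi_{i}\widetilde{\test}}_{L^{2}(\mbR\times\mbR^{d})}$ that you implicitly use is false (a smooth partition of unity cannot reproduce the indicator of $[0,1]^{d}$), and the bound you write for each summand does not apply to the quantities you actually have.

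The remedy is to insert one more step before the partition of unity: choose a smooth $\chi$ with $\sum_{z\in\mbZ^{d}}\mcT_{z}\chi\equiv1$ and $\supp\chi\subset(-1,1)^{d}$, so that periodicity of $u\widetilde{\test}$ in space gives $\int_{[0,T]\times[0,1]^{d}}u\,\widetilde{\test}=\inner{u}{\chi\widetilde{\test}}_{L^{2}(\mbR\times\mbR^{d})}$, a genuine full-space pairing against a compactly supported test function. Only then apply the partition of unity to $\chi\widetilde{\test}$. This is exactly the paper's route; there is then the secondary bookkeeping issue that the centers of the partition of unity may fall outside $[0,T]\times[0,1]^{d}$, which the paper handles by lattice translation in space and an additional time cutoff $\varphi$ near $t=0$ and $t=T$.
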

	\begin{proof}
		We denote for each $y\in\mbR\times\mbR^{d}$ and $\lambda\in(0,1]$,
		\begin{equation*}
			\mcB_{\mfs,y}^{r_{\alpha},\lambda}\defeq\{\mcS_{\mfs,y}^{\lambda}\eta:\eta\in\mcB_{\mfs}^{r_{\alpha}}\}\quad\text{and}\quad B_{\mfs}(y,\lambda)\defeq\{z\in\mbR\times\mbR^{d}:\abs{z-y}_{\mfs}<\lambda\}.
		\end{equation*}
		Let $u\in C^{\infty}([0,T]\times[0,1]^{d})$ and $\test\in\mcS(\mbR\times\mbR^{d})$, combining $\supp(u)\subset[0,T]\times\mbR^{d}$ with~\cite[Lem.~29.3~\&~Thm.~29.4]{vanzuijlen_22} there exists a smooth cut-off $\chi\in C_{\comp}^{\infty}(\mbR\times\mbR^{d};[0,1])$ with $\supp(\chi)\subset[-1,T+1]\times[-1,1]^{d}$ such that
		\begin{equation}\label{eq:periodicity_cutoff}
			\inner{u}{\test}_{L^{2}(\mbR\times\mbR^{d})}=\Bigl\langle u,\chi\sum_{z\in\mbZ^{d}}\mcT_{z}\test\Bigr\rangle_{L^{2}(\mbR\times\mbR^{d})},
		\end{equation}
		where $\mcT_{z}$ denotes the translation $\mcT_{z}\test(x)=\test(x-z)$. Hence without limitation of generality we may assume that $\supp(\test)\subset[-1,T+1]\times[-1,1]^{d}$.
		
		Next we decompose $\test\from[-1,T+1]\times[-1,1]^{d}\to\mbR$ into a sum of (rescaled) test functions in $\mcB_{\mfs}^{r_{\alpha}}$. By the proof of~\cite[Lem.~14.13]{friz_hairer_20} there exists a lattice $\Lambda\subset\mbR\times\mbR^{d}$ and a partition of unity $(\chi_{y})_{y\in\Lambda}$ such that each $\chi_{y}$ is an element of $\mcB_{\mfs,y}^{r_{\alpha},1}$. We obtain
		\begin{equation}\label{eq:partition_of_unity}
			\inner{u}{\test}_{L^{2}(\mbR\times\mbR^{d})}=\sum_{\substack{y\in\Lambda\\ B_{\mfs}(y,1)\cap[0,T]\times[-1,1]^{d}\neq\emptyset}}\inner{u}{\chi_{y}\test}_{L^{2}(\mbR\times\mbR^{d})},
		\end{equation}
		where we used that $\supp(u)\cap\supp(\test)\subset[0,T]\times[-1,1]^{d}$ to reduce the sum. Define $\mfK=[0,T]\times[-1,1]^{d}$ and denote by $\overline{\mfK}_{1}$ the $1$-fattening of $\mfK$, i.e.\ the closure of $\mfK+B_{\mfs}(0,1)$. The restriction $B_{\mfs}(y,1)\cap\mfK\neq\emptyset$ in~\eqref{eq:partition_of_unity} implies that $y\in\overline{\mfK}_{1}$. Consequently, the sum over $y$ is finite and every $y$ is an element of $\Lambda\cap\overline{\mfK}_{1}$. We obtain by Leibniz' rule uniformly for every $y\in\Lambda\cap\overline{\mfK}_{1}$,
		\begin{equation}\label{eq:cut_off_estimate}
			\norm{\chi_{y}\test}_{C^{r_{\alpha}}(\mbR\times\mbR^{d})}\lesssim\norm{\chi_{y}}_{C^{r_{\alpha}}(\mbR\times\mbR^{d})}\norm{\test}_{C^{r_{\alpha}}((-1,T+1)\times(-1,1)^{d})}\leq\norm{\test}_{C^{r_{\alpha}}((-1,T+1)\times(-1,1)^{d})}
		\end{equation}
		so that, upon rescaling $\test$ by a multiple of $\norm{\test}_{C^{r_{\alpha}}((-1,T+1)\times(-1,1)^{d})}^{-1}$ if necessary, we may assume $\norm{\chi_{y}\test}_{C^{r_{\alpha}}(\mbR\times\mbR^{d})}\leq1$, which implies $\chi_{y}\test\in\mcB_{\mfs,y}^{r_{\alpha},1}$.
		
		So far, the right hand side of~\eqref{eq:partition_of_unity} runs over $y\in\Lambda$ such that $y\in\overline{\mfK}_{1}$. In the next step, we control~\eqref{eq:partition_of_unity} by a sum over $y\in\Lambda$ such that $y\in[0,T]\times[0,1]^{d}$.
		
		Assume that $y=(t,x)\in\Lambda\cap\overline{\mfK}_{1}$. Using the periodicity of $u$,
		\begin{equation*}
			\inner{u}{\chi_{y}\test}_{L^{2}(\mbR\times\mbR^{d})}=\inner{u}{\mcT_{-\floor{x}}\chi_{y}\test}_{L^{2}(\mbR\times\mbR^{d})}
		\end{equation*}
		where $x-\floor{x}\in[0,1]^{d}$. Hence, we can undo the fattening of the space component.
		
		Assume $y=(t,x)\in\Lambda\cap\overline{\mfK}_{1}$ is such that $t<0$ and define $y_{0}=(0,x)$ (resp.\ $y_{T}=(T,x)$ if $t>T$). Then there exists a smooth cut-off function $\varphi\from\mbR\times\mfK\to[0,1]$ such that $\varphi\equiv1$ on $[0,T]\times\mfK$ and $\supp(\varphi\chi_{y}\test)\subset B_{\mfs}(y_{0},1)$ for all such $y$ (resp.\ $\supp(\varphi\chi_{y}\test)\subset B_{\mfs}(y_{T},1)$ if $t>T$), see Figure~\ref{fig:cut_off_construction}. By Leibniz' rule
		\begin{equation*}
			\norm{\varphi\chi_{y}\test}_{C^{r_{\alpha}}(\mbR\times\mbR^{d})}\lesssim\norm{\varphi}_{C^{r_{\alpha}}(\mbR\times\mfK)}\norm{\chi_{y}\test}_{C^{r_{\alpha}}(\mbR\times\mbR^{d})}
		\end{equation*}
		and upon rescaling $\test$ further, if necessary, we may assume $\norm{\varphi\chi_{y}\test}_{C^{r_{\alpha}}(\mbR\times\mbR^{d})}\leq1$ and $\varphi\chi_{y}\test\in\mcB_{\mfs,y_{0}}^{r_{\alpha},1}$ for all $y=(t,x)\in\Lambda\cap\overline{\mfK}_{1}$ such that $t<0$ (resp.\ $\varphi\chi_{y}\test\in\mcB_{\mfs,y_{T}}^{r_{\alpha},1}$ for all $y=(t,x)\in\Lambda\cap\overline{\mfK}_{1}$ such that $t>T$.) Using that $u$ is $0$ on $[0,T]^{c}$ and $\varphi$ is $1$ on $[0,T]$, we obtain for such $y$,
		\begin{equation*}
			\inner{u}{\chi_{y}\test}_{L^{2}(\mbR\times\mbR^{d})}=\inner{u}{\varphi\chi_{y}\test}_{L^{2}(\mbR\times\mbR^{d})}
		\end{equation*}
		and we may assume that all $y=(t,x)$ in~\eqref{eq:partition_of_unity} satisfy $t\in[0,T]$.
		
		\begin{figure}[tbph!]
			\centering
			\begin{tikzpicture}
				\node at (0,0) [] (KBL) {$0$}; 
				\node at (0,4) [] (KTL) {$T$}; 
				\node at (4,0) [] (KBR) {$1$}; 
				\node at (4,4) [] (KTR) {}; 
				\node at (2,2) [] (K) {$\mfK$};
				\node at (0,-1) [] (K1BL) {}; 
				\node at (0,5) [] (K1TL) {}; 
				\node at (4,-1) [] (K1BR) {}; 
				\node at (4,5) [] (K1TR) {}; 
				\node at (-1,0) [] (K2BL) {}; 
				\node at (-1,4) [] (K2TL) {}; 
				\node at (5,0) [] (K2BR) {}; 
				\node at (5,4) [] (K2TR) {};
				\node at (4.5,2) [] (K1) {$\overline{\mfK}_{1}$};
				\node at (0,0) [] (S1L) {}; 
				\node at (1,1) [] (S1T) {}; 
				\node at (2,0) [] (S1R) {}; 
				\node at (1,-1) [] (S1B) {};
				\node at (1,0)[](){$y_{0}^{*}$};
				\node at (0,-0.5) [] (S2L) {}; 
				\node at (1,0.5) [] (S2T) {}; 
				\node at (2,-0.5) [] (S2R) {}; 
				\node at (1,-1.5) [] (S2B) {};
				\node at (1,-0.5) [] () {$y^{*}$};
				\node at (2,0) [] (S3L) {}; 
				\node at (3,1) [] (S3T) {}; 
				\node at (4,0) [] (S3R) {}; 
				\node at (3,-1) [] (S3B) {};
				\node at (3,0)[](){$y_{0}$};
				\node at (2,-1) [] (S4L) {}; 
				\node at (3,0) [] (S4T) {}; 
				\node at (4,-1) [] (S4R) {}; 
				\node at (3,-2) [] (S4B) {};
				\node at (3,-1) [] () {$y$};
				\node at (0,-0.25) [] (cut_off_BL) {};
				\node at (4,-0.25) [] (cut_off_BR) {};
				\node at (0,4.5) [] (cut_off_TL) {};
				\node at (4,4.5) [] (cut_off_TR) {};
				\node at (2,4.25) [] () {$\supp(\varphi)$};
				\draw (KBL) to (KTL); 
				\draw (KBL) to (KBR); 
				\draw (KTL) to (KTR.center); 
				\draw (KBR.center) to (KTR.center); 
				\draw (K1BL.center) to (K1BR.center); 
				\draw (K1TL.center) to (K1TR.center);
				\draw (K2BL.center) to (K2TL.center); 
				\draw (K2BR.center) to (K2TR.center); 
				\draw[bend left=45] (K1TL.center) to (K2TL.center);
				\draw[bend right=45] (K1BL.center) to (K2BL.center);
				\draw[bend right=45] (K1TR.center) to (K2TR.center);
				\draw[bend left=45] (K1BR.center) to (K2BR.center);
				\draw[bend left=45] (S1T.center) to (S1L.center);
				\draw[bend right=45] (S1T.center) to (S1R.center);
				\draw[bend right=45] (S1B.center) to (S1L.center);
				\draw[bend left=45] (S1B.center) to (S1R.center);
				\draw[bend left=45] (S2T.center) to (S2L.center);
				\draw[bend right=45] (S2T.center) to (S2R.center);
				\draw[bend right=45] (S2B.center) to (S2L.center);
				\draw[bend left=45] (S2B.center) to (S2R.center);
				\draw[bend left=45] (S3T.center) to (S3L.center);
				\draw[bend right=45] (S3T.center) to (S3R.center);
				\draw[bend right=45] (S3B.center) to (S3L.center);
				\draw[bend left=45] (S3B.center) to (S3R.center);
				\draw[bend left=45] (S4T.center) to (S4L.center);
				\draw[bend right=45] (S4T.center) to (S4R.center);
				\draw[bend right=45] (S4B.center) to (S4L.center);
				\draw[bend left=45] (S4B.center) to (S4R.center);
				\draw[dashed] (cut_off_BL.center) to (cut_off_BR.center); 
				\draw[dashed] (cut_off_TL.center) to (cut_off_TR.center); 
				\draw[dashed] (cut_off_BL.center) to (cut_off_TL.center); 
				\draw[dashed] (cut_off_BR.center) to (cut_off_TR.center); 
			\end{tikzpicture}
			\caption{Construction of the cut-off $\varphi$ in the proof of Lemma~\ref{lem:duality}. Let $y^{*}=(x^{*},t^{*})\in\Lambda\cap\overline{\mfK}_{1}$ be such that $t^{*}<0$ and $t\leq t^{*}$ for every $y=(x,t)\in\Lambda\cap\overline{\mfK}_{1}\cap(-\infty,0)\times\mbR^{d}$. The dashed region then shows the support of a suitable $\varphi$ satisfying $\supp(\varphi)\subset[t^{*}/2,\infty)\times\mfK$. An analogous argument further yields an upper bound (in time) on the support.}
			\label{fig:cut_off_construction}
		\end{figure}
		
		By the argument above, we may control for all $y\in\Lambda\cap\overline{\mfK}_{1}$,
		\begin{equation*}
			\inner{u}{\chi_{y}\test}_{L^{2}(\mbR\times\mbR^{d})}\leq\sup_{\lambda\in(0,1]}\sup_{\eta\in\mcB_{\mfs}^{r_{\alpha}}}\sup_{y\in[0,T]\times[0,1]^{d}}\frac{\abs{\inner{u}{\mcS_{\mfs,y}^{\lambda}\eta}}}{\lambda^{\alpha}}=\norm{u}_{\mcC^{\alpha}_{\mfs}([0,T]\times[0,1]^{d})}.
		\end{equation*}
		\emph{To summarize:} Let $\test\in\mcS(\mbR\times\mbR^{d})$ be arbitrary. Let $\chi$ be a smooth cut-off such that $\supp(\chi)\subset[-1,T+1]\times[-1,1]^{d}$ as in~\cite[Lem.~29.3]{vanzuijlen_22} and let $(\chi_{y})_{y\in\Lambda}$ be as in the proof of~\cite[Lem.~14.13]{friz_hairer_20}. Using~\eqref{eq:periodicity_cutoff},
		\begin{equation*}
			\inner{u}{\test}_{L^{2}(\mbR\times\mbR^{d})}=C\Bigl\lVert\chi\sum_{z\in\mbZ^{d}}\mcT_{z}\test\Bigr\rVert_{C^{r_{\alpha}}((-1,T+1)\times(-1,1)^{d})}\Bigl\langle u,\frac{\chi\sum_{z\in\mbZ^{d}}\mcT_{z}\test}{C\norm{\chi\sum_{z\in\mbZ^{2}}\mcT_{z}\test}_{C^{r_{\alpha}}((-1,T+1)\times(-1,1)^{d})}}\Bigr\rangle_{L^{2}(\mbR\times\mbR^{d})},
		\end{equation*}
		where $C>0$ is sufficiently large (cf.~\eqref{eq:cut_off_estimate}) such that for all $y\in\Lambda\cap\overline{\mfK}_{1}$,
		\begin{equation*}
			\frac{\norm{\chi_{y}\chi\sum_{z\in\mbZ^{d}}\mcT_{z}\test}_{C^{r_{\alpha}}(\mbR\times\mbR^{d})}}{C\norm{\chi\sum_{z\in\mbZ^{d}}\mcT_{z}\test}_{C^{r_{\alpha}}((-1,T+1)\times(-1,1)^{d})}}\leq 1.
		\end{equation*}
		We may decompose as in the proof of~\cite[Lem.~14.13]{friz_hairer_20},
		\begin{equation*}
			\begin{split}
				&\inner{u}{\test}_{L^{2}(\mbR\times\mbR^{d})}\\
				&=C\Bigl\lVert\chi\sum_{z\in\mbZ^{d}}\mcT_{z}\test\Bigr\rVert_{C^{r_{\alpha}}((-1,T+1)\times(-1,1)^{d})}\sum_{y\in\Lambda\cap\overline{\mfK}_{1}}\Bigl\langle u,\chi_{y}\frac{\chi\sum_{z\in\mbZ^{d}}\mcT_{z}\test}{C\norm{\chi\sum_{z\in\mbZ^{d}}\mcT_{z}\test}_{C^{r_{\alpha}}((-1,T+1)\times(-1,1)^{d})}}\Bigr\rangle_{L^{2}(\mbR\times\mbR^{d})}.
			\end{split}
		\end{equation*}
		We use the periodicity of $u$ and smooth cut-offs in time to estimate
		\begin{equation*}
			\sum_{y\in\Lambda\cap\overline{\mfK}_{1}}\Bigl\langle u,\chi_{y}\frac{\chi\sum_{z\in\mbZ^{d}}\mcT_{z}\test}{C\norm{\chi\sum_{z\in\mbZ^{d}}\mcT_{z}\test}_{C^{r_{\alpha}}((-1,T+1)\times(-1,1)^{d})}}\Bigr\rangle_{L^{2}(\mbR\times\mbR^{d})}\lesssim\norm{u}_{\mcC^{\alpha}_{\mfs}([0,T]\times[0,1]^{d})}.
		\end{equation*}
		It follows that
		\begin{equation*}
			\begin{split}
				\inner{u}{\test}_{L^{2}(\mbR\times\mbR^{d})}&\lesssim\Bigl\lVert\chi\sum_{z\in\mbZ^{d}}\mcT_{z}\test\Bigr\rVert_{C^{r_{\alpha}}((-1,T+1)\times(-1,1)^{d})}\norm{u}_{\mcC^{\alpha}_{\mfs}([0,T]\times[0,1]^{d})}\\
				&\lesssim\norm{\chi}_{C^{r_{\alpha}}((-1,1)^{d})}\Bigl\lVert\sum_{z\in\mbZ^{d}}\mcT_{z}\test\Bigr\rVert_{C^{r_{\alpha}}((-1,T+1)\times(-1,1)^{d})}\norm{u}_{\mcC^{\alpha}_{\mfs}([0,T]\times[0,1]^{d})},
			\end{split}
		\end{equation*}
		which yields the claim.
	\end{proof}
	\begin{corollary}
		Let $\alpha<0$ and $(u_{n})_{n\in\mbN}$ be a Cauchy sequence under $\norm{\place}_{\mcC_{\mfs}^{\alpha}([0,T]\times[0,1]^{d})}$ of elements $u_{n}$ in $C^{\infty}([0,T]\times[0,1]^{d})$. It then follows that there exists a tempered distribution $u$ such that $u_{n}\to u$ in $\mcS'(\mbR\times\mbR^{d})$ under the weak*-topology as $n\to\infty$.
	\end{corollary}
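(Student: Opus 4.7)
The plan is to exploit the duality estimate of Lemma~\ref{lem:duality} to first define a candidate limit pointwise on Schwartz test functions, then to verify its continuity with respect to the Schwartz topology.

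First I would fix an arbitrary $\test\in\mcS(\mbR\times\mbR^{d})$ and show that $(\inner{u_{n}}{\test}_{L^{2}(\mbR\times\mbR^{d})})_{n\in\mbN}$ is Cauchy in $\mbR$. Indeed, applying Lemma~\ref{lem:duality} to the difference $u_{n}-u_{m}\in C^{\infty}([0,T]\times[0,1]^{d})$ (which is itself a valid test element for the lemma) yields
\begin{equation*}
\abs{\inner{u_{n}-u_{m}}{\test}_{L^{2}(\mbR\times\mbR^{d})}}\lesssim\norm{u_{n}-u_{m}}_{\mcC^{\alpha}_{\mfs}([0,T]\times[0,1]^{d})}\Bigl\lVert\sum_{z\in\mbZ^{d}}\mcT_{z}\test\Bigr\rVert_{C^{r_{\alpha}}((-1,T+1)\times(-1,1)^{d})},
\end{equation*}
and the first factor tends to $0$ by the Cauchy assumption. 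I would therefore define $u(\test)\defeq\lim_{n\to\infty}\inner{u_{n}}{\test}_{L^{2}(\mbR\times\mbR^{d})}$ as a candidate for the limit functional. Linearity of $u$ in $\test$ is inherited from the $L^{2}$-pairing and the linearity of limits.

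Next I would establish that $u$ is a tempered distribution, i.e.\ continuous on $\mcS(\mbR\times\mbR^{d})$. Since a Cauchy sequence in the seminorm $\norm{\place}_{\mcC^{\alpha}_{\mfs}([0,T]\times[0,1]^{d})}$ is bounded, there exists $M<\infty$ with $\norm{u_{n}}_{\mcC^{\alpha}_{\mfs}([0,T]\times[0,1]^{d})}\leq M$ for all $n$. Applying Lemma~\ref{lem:duality} to each $u_{n}$ and passing to the limit gives
\begin{equation*}
\abs{u(\test)}\lesssim M\Bigl\lVert\sum_{z\in\mbZ^{d}}\mcT_{z}\test\Bigr\rVert_{C^{r_{\alpha}}((-1,T+1)\times(-1,1)^{d})}.
\end{equation*}
The remaining task is to control the periodized $C^{r_{\alpha}}$-norm by a Schwartz seminorm of $\test$; this is routine and follows from differentiating under the sum and the rapid decay of $\test$: for any $\abs{\multi}\leq r_{\alpha}$ and any $N>d$,
\begin{equation*}
\sup_{(-1,T+1)\times(-1,1)^{d}}\Bigl\lvert\partial^{\multi}\sum_{z\in\mbZ^{d}}\mcT_{z}\test\Bigr\rvert\leq\sum_{z\in\mbZ^{d}}\sup_{w\in(-1,T+1)\times(-1-z,1-z)}\abs{\partial^{\multi}\test(w)}\lesssim_{N}\sup_{w\in\mbR\times\mbR^{d}}(1+\abs{w})^{N}\abs{\partial^{\multi}\test(w)},
\end{equation*}
which is a Schwartz seminorm. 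This yields continuity of $u$ on $\mcS(\mbR\times\mbR^{d})$ and therefore $u\in\mcS'(\mbR\times\mbR^{d})$.

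Finally, the convergence $u_{n}\to u$ in the weak-$*$ topology of $\mcS'(\mbR\times\mbR^{d})$ is immediate from the pointwise definition $u(\test)=\lim_{n\to\infty}\inner{u_{n}}{\test}_{L^{2}(\mbR\times\mbR^{d})}$. No step should present substantive difficulty; the only place requiring a small amount of care is checking that the periodization bound is indeed bounded by a Schwartz seminorm, which is the mild obstacle but is handled by the decay estimate above.
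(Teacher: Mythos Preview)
Your proof is correct and follows the same core strategy as the paper: apply Lemma~\ref{lem:duality} to $u_{n}-u_{m}$ to obtain that $(\inner{u_{n}}{\test})_{n}$ is Cauchy in $\mbR$ for each Schwartz test function, then identify the limit as a tempered distribution. The only difference is in the final step: the paper invokes the weak*-sequential completeness of $\mcS'(\mbR\times\mbR^{d})$ (citing \cite[Thm.~15.10]{vanzuijlen_22}) to conclude directly that the pointwise limit defines an element of $\mcS'$, whereas you verify continuity by hand, bounding the periodized $C^{r_{\alpha}}$-norm by a Schwartz seminorm. Your route is slightly more elementary and self-contained; the paper's is shorter but relies on an external result.
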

	\begin{proof}
		Let $(u_{n})_{n\in\mbN}$ be a sequence that is Cauchy in $\norm{\place}_{\mcC_{\mfs}^{\alpha}([0,T]\times[0,1]^{d})}$ and such that $u_{n}\in C^{\infty}([0,T]\times[0,1]^{d})$ for every $n\in\mbN$. As a consequence of~\cite[Lem.~2.9~\&~Thm.~29.4]{vanzuijlen_22}, every element $u_{n}$, extended to $\mbR\times\mbR^{d}$ periodically in space and by $0$ in time, of the approximating sequence is an element of $\mcS'(\mbR\times\mbR^{d})$.
		
		Let $\test\in\mcS(\mbR\times\mbR^{d})$, it then follows by Lemma~\ref{lem:duality} that $(\inner{u_{n}}{\test}_{L^{2}(\mbR\times\mbR^{d})})_{n\in\mbN}$ induces a Cauchy sequence in $\mbR$. Using that $\mcS'(\mbR\times\mbR^{d})$ is weak*-sequentially complete~\cite[Thm.~15.10]{vanzuijlen_22}, we can conclude that there exists an $u\in\mcS'(\mbR\times\mbR^{d})$ such that $\inner{u}{\test}=\lim_{n\to\infty}\inner{u_{n}}{\test}_{L^{2}(\mbR\times\mbR^{d})}$.
	\end{proof}
	Next we show that each element of $\mcC_{\mfs,\per}^{\alpha}([0,T]\times\mbR^{d})$ is periodic in space and compactly supported in time (Lemma~\ref{lem:Cs_alpha_periodic_and_compact_support_in_time}). Let us first recap the definitions of a periodic distribution (Definition~\ref{def:periodic_distribution}) and of a distribution's support (Definition~\ref{def:supp_distr}).
	\begin{definition}[{\cite[Sec.~29--31]{vanzuijlen_22}}]\label{def:periodic_distribution}
		Define for each $z\in\mbZ^{d}$ the shift operator $\mcT_{z}f(x)\defeq f(x-z)$ on functions $f\from\mbR^{d}\to\mbR$. We call a function $f$ periodic if $\mcT_{z}f=f$ for each $z\in\mbZ^{d}$. Define $\mcT_{z}$ on $u\in\mcD'(\mbR^{d})$ by $\mcT_{z}u(\test)=u(\mcT_{-z}\test)$ for all $\test\in\mcD(\mbR^{d})$. We call a distribution $u\in\mcD'(\mbR^{d})$ periodic, if $\mcT_{z}u=u$ for each $z\in\mbZ^{d}$.
	\end{definition}
	\begin{definition}[{\cite[Def.~5.1~\&~Def.~15.3]{vanzuijlen_22}}]\label{def:supp_distr}
		We call a closed subset $A\subset\mbR\times\mbR^{d}$ a \emph{carrier} of $u\in\mcS'(\mbR\times\mbR^{d})$, if $u(\test)=0$ for every $\test\in\mcD(\mbR\times\mbR^{d})$ such that $\supp(\test)\cap A=\emptyset$. We define the support of $u$, denoted by $\supp(u)$, to be the smallest carrier of $u$ with respect to set inclusion.
	\end{definition}
	\begin{lemma}\label{lem:Cs_alpha_periodic_and_compact_support_in_time}
		Let $\alpha<0$ and $u\in\mcC_{\mfs,\per}^{\alpha}([0,T]\times\mbR^{d})$, then $\supp(u)\subset[0,T]\times\mbR^{d}$ and $u$ is periodic in space.
	\end{lemma}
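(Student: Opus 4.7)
The plan is to prove both claims by passing to the limit along a Cauchy approximating sequence $(u_{n})_{n\in\mbN}$ with $u_{n}\in C^{\infty}([0,T]\times[0,1]^{d})$ extended to $\mbR\times\mbR^{d}$ periodically in space and by zero in time. By the construction underlying the definition of $\mcC_{\mfs,\per}^{\alpha}([0,T]\times\mbR^{d})$ (together with the preceding duality estimate and the weak-$*$ sequential completeness of $\mcS'(\mbR\times\mbR^{d})$), such a sequence produces a unique tempered distribution $u\in\mcS'(\mbR\times\mbR^{d})$, identified with the given element of $\mcC_{\mfs,\per}^{\alpha}([0,T]\times\mbR^{d})$, such that $\inner{u_{n}}{\test}_{L^{2}(\mbR\times\mbR^{d})}\to u(\test)$ for every $\test\in\mcS(\mbR\times\mbR^{d})$. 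Both assertions will follow by transferring the corresponding property of each $u_{n}$ across this weak-$*$ limit.

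For the support statement, I take any $\test\in\mcD(\mbR\times\mbR^{d})\subset\mcS(\mbR\times\mbR^{d})$ with $\supp(\test)\cap([0,T]\times\mbR^{d})=\emptyset$. Since each $u_{n}$ vanishes outside $[0,T]\times\mbR^{d}$ by construction, $\inner{u_{n}}{\test}_{L^{2}(\mbR\times\mbR^{d})}=0$ for every $n\in\mbN$, and passing to the weak-$*$ limit yields $u(\test)=0$. This exhibits $[0,T]\times\mbR^{d}$ as a closed carrier of $u$, and since the support of a distribution is the smallest such carrier, $\supp(u)\subset[0,T]\times\mbR^{d}$.

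For the periodicity statement, I fix $z\in\mbZ^{d}$ and an arbitrary $\test\in\mcS(\mbR\times\mbR^{d})$. The space-time shift $\mcT_{(0,-z)}\test$ remains in $\mcS(\mbR\times\mbR^{d})$, and since each $u_{n}$ is periodic in the spatial variable by construction, an elementary change of variables gives $\inner{u_{n}}{\mcT_{(0,-z)}\test}_{L^{2}(\mbR\times\mbR^{d})}=\inner{u_{n}}{\test}_{L^{2}(\mbR\times\mbR^{d})}$ for every $n\in\mbN$. Passing to the limit on both sides produces $u(\mcT_{(0,-z)}\test)=u(\test)$, i.e., $\mcT_{(0,z)}u=u$ in $\mcS'(\mbR\times\mbR^{d})$, which is the required spatial periodicity. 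No substantive obstacle remains: once one has verified that the seminorm $\norm{\place}_{\mcC_{\mfs}^{\alpha}([0,T]\times[0,1]^{d})}$ controls weak-$*$ testing against Schwartz functions (the only nontrivial technical input, already furnished by the duality estimate used to make the completion well-defined), both claims reduce to bookkeeping along the approximating sequence.
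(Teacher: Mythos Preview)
Your proposal is correct and essentially identical to the paper's proof: both argue by taking a smooth approximating sequence $(u_n)$, using the duality estimate (the paper's Lemma~\ref{lem:duality}) together with weak-$*$ sequential completeness of $\mcS'$ to pass to the limit, and then transferring the support and periodicity properties of each $u_n$ to $u$.
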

	\begin{proof}
		By Definition~\ref{def:supp_distr}, $\supp(u)$ is the smallest carrier of $u$. Hence for the first claim it suffices to show that $\inner{u}{\test}=0$ for every $\test\in\mcD(\mbR\times\mbR^{d})$ such that $\supp(\test)\cap[0,T]\times\mbR^{d}=\emptyset$. 
		
		Let $(u_{n})_{n\in\mbN}$ be a sequence such that $u_{n}\in C^{\infty}([0,T]\times[0,1]^{d})$ for every $n\in\mbN$ and $\norm{u-u_{n}}_{\mcC^{\alpha}_{\mfs}([0,T]\times[0,1]^{d})}\to0$ as $n\to\infty$ and let $\test\in\mcD(\mbR\times\mbR^{d})$ be such that $\supp(\test)\cap[0,T]\times\mbR^{d}=\emptyset$. We extend each $u_{n}$ to $\mbR\times\mbR^{d}$ periodically in space and by $0$ in time. We obtain by Lemma~\ref{lem:duality} and the weak*-sequential completeness of $\mcS'(\mbR\times\mbR^{d})$,
		\begin{equation*}
			\inner{u}{\test}=\lim_{n\to\infty}\inner{u_{n}}{\test}_{L^{2}(\mbR\times\mbR^{d})}=0.
		\end{equation*}
		It follows that $\inner{u}{\test}=0$ for every $\test\in\mcD(\mbR\times\mbR^{d})$ such that $\supp(\test)\cap[0,T]\times\mbR^{d}=\emptyset$, which implies $\supp(u)\subset[0,T]\times\mbR^{d}$.
		
		For the second claim, let $\test\in\mcD(\mbR\times\mbR^{d})$ and $z\in\mbZ^{d}$. We obtain
		\begin{equation*}
			\inner{u}{\mcT_{z}\test}=\lim_{n\to\infty}\inner{u_{n}}{\mcT_{z}\test}_{L^{2}(\mbR\times\mbR^{d})}=\lim_{n\to\infty}\inner{u_{n}}{\test}_{L^{2}(\mbR\times\mbR^{d})}=\inner{u}{\test},
		\end{equation*}
		which proves that $u$ is periodic in space.
	\end{proof}
\end{details}
\begin{lemma}\label{lem:Cs_alpha_periodic_separable}
	For every $\alpha<0$ it follows that the normed vector space $\mcC_{\mfs,\per}^{\alpha}([0,T]\times\mbR^{d})$ is complete and separable.
\end{lemma}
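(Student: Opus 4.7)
The plan is to exploit the fact that $\mcC_{\mfs,\per}^{\alpha}([0,T]\times\mbR^{d})$ is by definition the completion of $C^{\infty}([0,T]\times[0,1]^{d})$ under the seminorm $\norm{\place}_{\mcC^{\alpha}_{\mfs}([0,T]\times[0,1]^{d})}$ (with the implicit periodic-in-space, zero-in-time extension). Completeness is then automatic: the completion of a normed vector space under a norm is by construction a Banach space.

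For separability, the key observation is that the seminorm $\norm{\place}_{\mcC^{\alpha}_{\mfs}([0,T]\times[0,1]^{d})}$ is controlled by the uniform norm on $(\text{extended})$ continuous functions. Indeed, for $u \in C^{\infty}([0,T]\times[0,1]^{d})$, $\eta\in\mcB^{r_{\alpha}}_{\mfs}$, $\lambda\in(0,1]$ and $z\in[0,T]\times[0,1]^{d}$, a change of variables yields $\norm{\mcS_{\mfs,z}^{\lambda}\eta}_{L^{1}(\mbR\times\mbR^{d})}=\norm{\eta}_{L^{1}(\mbR\times\mbR^{d})}$, which is bounded by the volume of $B_{\mfs}(0,1)$ since $\norm{\eta}_{L^{\infty}}\leq1$. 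Combined with $\alpha<0$, which gives $\lambda^{-\alpha}\leq1$ for $\lambda\in(0,1]$, we obtain
\begin{equation*}
    \frac{\abs{\inner{u}{\mcS_{\mfs,z}^{\lambda}\eta}}}{\lambda^{\alpha}}\leq\norm{u}_{L^{\infty}([0,T]\times[0,1]^{d})}\norm{\eta}_{L^{1}}\lesssim\norm{u}_{C([0,T]\times[0,1]^{d})}.
\end{equation*}
Taking the suprema yields $\norm{u}_{\mcC^{\alpha}_{\mfs}([0,T]\times[0,1]^{d})}\lesssim\norm{u}_{C([0,T]\times[0,1]^{d})}$.

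It then suffices to exhibit a countable subset of $C^{\infty}([0,T]\times[0,1]^{d})$ that is dense in the uniform norm; for example, polynomials with rational coefficients (restricted to $[0,T]\times[0,1]^{d}$) form such a set by the Stone--Weierstrass theorem. By the comparison above, this countable set is also dense under $\norm{\place}_{\mcC^{\alpha}_{\mfs}([0,T]\times[0,1]^{d})}$ in $C^{\infty}([0,T]\times[0,1]^{d})$, which, by the definition of the completion, is itself dense in $\mcC_{\mfs,\per}^{\alpha}([0,T]\times\mbR^{d})$. A three-$\epsilon$ argument then shows that the rational polynomials are dense in $\mcC_{\mfs,\per}^{\alpha}([0,T]\times\mbR^{d})$, proving separability. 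There is no serious obstacle here: both claims reduce to the structural fact that a normed space is complete and separable whenever it is defined as the completion of a space admitting a countable subset dense in a dominating norm.
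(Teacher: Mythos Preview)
Your proof is correct and follows essentially the same route as the paper: completeness is immediate from the definition as a completion, and separability is obtained by bounding $\norm{\place}_{\mcC^{\alpha}_{\mfs}([0,T]\times[0,1]^{d})}$ by the uniform norm (using $\norm{\mcS_{\mfs,z}^{\lambda}\eta}_{L^{1}}=\norm{\eta}_{L^{1}}$ and $\alpha<0$), then invoking Stone--Weierstrass to produce rational-coefficient polynomials as a countable dense set. The paper phrases the $L^{\infty}$ bound over the $1$-fattening $\overline{\mfK}_{1}$ of $[0,T]\times[0,1]^{d}$, but since the periodic-in-space, zero-in-time extension satisfies $\norm{u}_{L^{\infty}(\overline{\mfK}_{1})}=\norm{u}_{L^{\infty}([0,T]\times[0,1]^{d})}$, this is the same estimate you wrote down.
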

\begin{proof}
	Completeness is clear since $\mcC_{\mfs,\per}^{\alpha}([0,T]\times\mbR^{d})$ is defined as a completion. To prove separability it suffices to show the existence of a countable subset of $\mcC_{\mfs,\per}^{\alpha}([0,T]\times\mbR^{d})$ which can approximate every $u\in C^{\infty}([0,T]\times[0,1]^{d})$ (extended to $\mbR\times\mbR^{d}$ periodically in space and by $0$ in time) under the norm $\norm{\place}_{\mcC^{\alpha}_{\mfs}([0,T]\times[0,1]^{d})}$.
	
	An application of the Stone--Weierstrass theorem shows that for each $\eps>0$ there exists a polynomial $p\from[0,T]\times[0,1]^{d}\to\mbR$ such that $\norm{u-p}_{L^{\infty}([0,T]\times[0,1]^{d})}\leq\eps$. We extend this polynomial to $\mbR\times\mbR^{d}$ periodically in space and by $0$ in time; in particular it still holds that $\norm{u-p}_{L^{\infty}(\overline{\mfK}_{1})}\leq\eps$, where $\mfK=[0,T]\times[0,1]^{d}$ and $\overline{\mfK}_{1}$ denotes the $1$-fattening of $\mfK$, that is, the closure of $\mfK+B_{\mfs}(0,1)$. We can now control the $\mcC^{\alpha}_{\mfs}(\mfK)$-norm of $u-p$ by its $L^{\infty}(\overline{\mfK}_{1})$-norm, since for every $\lambda\in(0,1]$, $\eta\in\mcB^{r_{\alpha}}_{\mfs}$ and $z\in\mfK$,
	\begin{equation*}
		\abs{\inner{u-p}{\mcS_{\mfs,z}^{\lambda}\eta}}\leq\norm{u-p}_{L^{\infty}(\overline{\mfK}_{1})}\norm{\eta}_{L^{1}(\mbR\times\mbR^{d};\mbR)}\lesssim\norm{u-p}_{L^{\infty}(\overline{\mfK}_{1})}\leq\eps,
	\end{equation*}
	which implies
	\begin{equation*}
		\norm{u-p}_{\mcC^{\alpha}_{\mfs}(\mfK)}=\sup_{\lambda\in(0,1]}\sup_{\eta\in\mcB_{\mfs}^{r_{\alpha}}}\sup_{z\in\mfK}\frac{\abs{\inner{u-p}{\mcS_{\mfs,z}^{\lambda}\eta}}}{\lambda^{\alpha}}\lesssim\eps,
	\end{equation*}
	showing that the (suitably extended) polynomials are dense in $C^{\infty}([0,T]\times[0,1]^{d})$ under the norm $\norm{\place}_{\mcC^{\alpha}_{\mfs}(\mfK)}$. A similar argument then implies that the (suitably extended) polynomials with rational coefficients are dense as well, which yields the separability of $\mcC_{\mfs,\per}^{\alpha}([0,T]\times\mbR^{d})$.
	\begin{details}
		Using the density of the rationals in the reals, we can find a polynomial $q$ with rational coefficients such that $\norm{p-q}_{L^{\infty}([0,T]\times[0,1]^{d})}\leq\eps$. As above, this implies $\norm{p-q}_{\mcC^{\alpha}_{\mfs}(\mfK)}\lesssim\eps$.
	\end{details}
\end{proof}
We define the space $\mcC^{\alpha}_{\mfs}([0,T]\times\mbT^{d})$ as those distributions in $\mcS'(\mbR\times\mbT^{d})$, whose periodization lies in $\mcC^{\alpha}_{\mfs,\per}([0,T]\times\mbR^{d})$.
\begin{definition}\label{def:Cs_alpha_torus}
	Let $\hompd\from\mcS'(\mbR\times\mbT^{d})\to\mcS'_{\per}(\mbR\times\mbR^{d})$ be a linear homeomorphism (which exists by Lemma~\ref{lem:homeom_per} below.) We define for every $\alpha<0$,
	\begin{equation*}
		\mcC_{\mfs}^{\alpha}([0,T]\times\mbT^{d})\defeq\{u\in\mcS'(\mbR\times\mbT^{d}):\hompd(u)\in\mcC_{\mfs,\per}^{\alpha}([0,T]\times\mbR^{d})\}
	\end{equation*}
	and equip $\mcC_{\mfs}^{\alpha}([0,T]\times\mbT^{d})$ with the natural norm that turns $\hompd$ into a linear isometric isomorphism.
\end{definition}
\begin{lemma}\label{lem:Cs_alpha_torus_separable}
	For every $\alpha<0$ it follows that $\mcC_{\mfs}^{\alpha}([0,T]\times\mbT^{d})$ is a separable Banach space.
\end{lemma}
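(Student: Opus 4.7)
The plan is to transfer the separability and completeness of $\mcC_{\mfs,\per}^{\alpha}([0,T]\times\mbR^{d})$, established in Lemma~\ref{lem:Cs_alpha_periodic_separable}, to $\mcC_{\mfs}^{\alpha}([0,T]\times\mbT^{d})$ through the linear homeomorphism $\hompd\from\mcS'(\mbR\times\mbT^{d})\to\mcS'_{\per}(\mbR\times\mbR^{d})$ used in Definition~\ref{def:Cs_alpha_torus}.

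More precisely, I would first observe that, by construction, the norm on $\mcC_{\mfs}^{\alpha}([0,T]\times\mbT^{d})$ is defined as $\norm{u}_{\mcC_{\mfs}^{\alpha}([0,T]\times\mbT^{d})}\defeq\norm{\hompd(u)}_{\mcC_{\mfs,\per}^{\alpha}([0,T]\times\mbR^{d})}$, so that the restriction of $\hompd$ to $\mcC_{\mfs}^{\alpha}([0,T]\times\mbT^{d})$ is tautologically a surjective linear isometry onto $\mcC_{\mfs,\per}^{\alpha}([0,T]\times\mbR^{d})$. Since Lemma~\ref{lem:Cs_alpha_periodic_separable} asserts that the target is a separable Banach space, and both completeness (via pullback of Cauchy sequences) and separability (via the image of a countable dense set) are preserved by linear isometric isomorphisms, the claim follows immediately.

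I do not anticipate any serious obstacle here: the only non-trivial input is Lemma~\ref{lem:Cs_alpha_periodic_separable}, which has already been proved. The one point that might merit a brief remark is the invocation of Lemma~\ref{lem:homeom_per} guaranteeing the existence of the homeomorphism $\hompd$, but since the norm on $\mcC_{\mfs}^{\alpha}([0,T]\times\mbT^{d})$ is defined precisely so as to render $\hompd$ an isometry, the transfer of both topological properties is automatic. The entire proof should therefore fit in essentially one sentence, referencing Definition~\ref{def:Cs_alpha_torus} and Lemma~\ref{lem:Cs_alpha_periodic_separable}.
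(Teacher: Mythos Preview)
Your proposal is correct and matches the paper's proof essentially verbatim: the paper simply notes that completeness and separability are preserved under isometric isomorphisms and invokes Lemma~\ref{lem:Cs_alpha_periodic_separable}.
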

\begin{proof}
	Completeness and separability are preserved under isometric isomorphisms; hence the claim follows by Lemma~\ref{lem:Cs_alpha_periodic_separable}.
\end{proof}
The following lemma is classical, here we follow the presentation contained in the unpublished notes~\cite{vanzuijlen_22}.
\begin{lemma}[{\cite[Lem.~31.3]{vanzuijlen_22}}]\label{lem:homeom_per}
	Let $\Psi\from\mcS(\mbR^{d})\to C^{\infty}_{\per}(\mbR^{d})$ be given by
	\begin{equation*}
		\Psi(\test)\defeq\sum_{z\in\mbZ^{d}}\mcT_{z}\test,\qquad\mcT_{z}\test(x)\defeq\test(x-z)
	\end{equation*}
	and $\gamma\from C^{\infty}_{\per}(\mbR^{d})\to C^{\infty}(\mbT^{d})$ be the linear homeomorphism
	\begin{equation*}
		\gamma(\test)([x])\defeq\test(x),\qquad[x]\defeq x+\mbZ^{d}\in\mbT^{d}.
	\end{equation*}
	Then $\hompd\from\mcS'(\mbT^{d})\to\mcS'_{\per}(\mbR^{d})$ given by
	\begin{equation*}
		\inner{\hompd(u)}{\test}=\inner{u}{\gamma(\Psi(\test))}
	\end{equation*}
	is a linear homeomorphism with inverse
	\begin{equation*}
		\hompd^{-1}\from\mcS'_{\per}(\mbR^{d})\to\mcS'(\mbT^{d}),\qquad\inner{\hompd^{-1}(u)}{\test}=\inner{u}{\chi\gamma^{-1}(\test)},
	\end{equation*}
	where $\chi\in\mcS(\mbR^{d})$ is such that $(\mcT_{z}\chi)_{z\in\mbZ^{d}}$ forms a partition of unity (which exists by~\cite[Lem.~29.3]{vanzuijlen_22}.) The above extends to $\mcS'_{\per}(\mbR\times\mbR^{d})$ and $\mcS'(\mbR\times\mbT^{d})$ \emph{mutatis mutandis}.
\end{lemma}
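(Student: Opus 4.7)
The plan is to verify sequentially that all four maps $\Psi$, $\gamma$, $\hompd$ and $\hompd^{-1}$ are well-defined and continuous in their respective topologies, and then to check the two composition identities using only the periodicity on one side and the partition-of-unity property $\sum_{z\in\mbZ^{d}}\mcT_{z}\chi\equiv 1$ on the other.

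First I would check that $\Psi$ maps $\mcS(\mbR^{d})$ continuously into $C^{\infty}_{\per}(\mbR^{d})$ (equipped with its usual Fr\'echet topology). Absolute and uniform convergence of $\sum_{z\in\mbZ^{d}}\mcT_{z}\test$ together with all of its derivatives on each compact set follows from the rapid decay of $\test$: for any multi-index $\balpha$ and any $N>d$ one has $\lvert\partial^{\balpha}\test(x-z)\rvert\lesssim(1+\abs{x-z})^{-N}$, which is summable in $z$ uniformly for $x$ in a compact set; standard Schwartz seminorm bookkeeping then gives continuity into $C^{\infty}_{\per}(\mbR^{d})$. Periodicity of $\Psi(\test)$ is immediate from a shift of summation index. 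That $\gamma$ is a linear homeomorphism is standard and covered, for instance, in~\cite[Lem.~30.2]{vanzuijlen_22}. Combining these, $\gamma\circ\Psi\from\mcS(\mbR^{d})\to C^{\infty}(\mbT^{d})$ is continuous, so $\hompd(u)\defeq u\circ\gamma\circ\Psi$ is a well-defined tempered distribution for each $u\in\mcS'(\mbT^{d})$, and the assignment $u\mapsto\hompd(u)$ is continuous with respect to the weak-$\ast$ (or strong) topologies. Periodicity of $\hompd(u)$ follows from $\Psi(\mcT_{-z}\test)=\Psi(\test)$ for $z\in\mbZ^{d}$, which is again just a reindexing.

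Next I would verify that $\hompd^{-1}$ as defined in the statement makes sense and is continuous. For $\test\in C^{\infty}(\mbT^{d})$ the product $\chi\gamma^{-1}(\test)$ lies in $\mcS(\mbR^{d})$ because $\chi$ is Schwartz and $\gamma^{-1}(\test)$ is smooth and periodic, hence bounded with all derivatives bounded; continuity from $C^{\infty}(\mbT^{d})$ to $\mcS(\mbR^{d})$ follows again from standard seminorm estimates. Hence $\hompd^{-1}(u)=u\circ(\chi\,\gamma^{-1}(\place))$ is a well-defined element of $\mcS'(\mbT^{d})$ depending continuously on $u\in\mcS'_{\per}(\mbR^{d})$.

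The crux is the two composition identities. For $u\in\mcS'(\mbT^{d})$ and $\test\in C^{\infty}(\mbT^{d})$, using that $\gamma^{-1}(\test)$ is periodic,
\begin{equation*}
\Psi(\chi\gamma^{-1}(\test))(x)=\sum_{z\in\mbZ^{d}}\chi(x-z)\gamma^{-1}(\test)(x-z)=\gamma^{-1}(\test)(x)\sum_{z\in\mbZ^{d}}\mcT_{z}\chi(x)=\gamma^{-1}(\test)(x),
\end{equation*}
so $\gamma(\Psi(\chi\gamma^{-1}(\test)))=\test$ and therefore $\hompd^{-1}\circ\hompd=\textnormal{id}$. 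For the other direction, given $u\in\mcS'_{\per}(\mbR^{d})$ and $\test\in\mcS(\mbR^{d})$, one computes
\begin{equation*}
\inner{\hompd(\hompd^{-1}(u))}{\test}=\inner{u}{\chi\cdot\Psi(\test)}=\sum_{z\in\mbZ^{d}}\inner{u}{\chi\cdot\mcT_{z}\test}=\sum_{z\in\mbZ^{d}}\inner{\mcT_{-z}u}{\mcT_{-z}\chi\cdot\test}=\Bigl\langle u,\test\sum_{z\in\mbZ^{d}}\mcT_{z}\chi\Bigr\rangle=\inner{u}{\test},
\end{equation*}
where in the second equality I exchange a series with the distributional pairing (justified by continuity of $u$ on $\mcS(\mbR^{d})$ and convergence of the partial sums of $\Psi(\test)$ to $\Psi(\test)$ in $\mcS(\mbR^{d})$, which can be checked by a direct Schwartz seminorm estimate exploiting the rapid decay of $\test$) and in the third equality I use the periodicity of $u$. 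The main obstacle in the whole argument is precisely this justification of exchanging the infinite sum with the pairing: one has to verify that $\sum_{|z|\leq N}\chi\cdot\mcT_{z}\test\to\chi\cdot\Psi(\test)$ in $\mcS(\mbR^{d})$ as $N\to\infty$, and that requires the bound $\lvert\partial^{\balpha}(\chi(x)\test(x-z))\rvert\lesssim(1+\abs{x})^{-M}(1+\abs{z})^{-M'}$ for arbitrarily large $M$, which follows from Leibniz and the Schwartz decay of both $\chi$ and $\test$. Continuity of $\hompd$ and $\hompd^{-1}$ was already established, so these identities promote them to mutually inverse linear homeomorphisms, completing the proof. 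The extension to $\mcS'_{\per}(\mbR\times\mbR^{d})$ and $\mcS'(\mbR\times\mbT^{d})$ is immediate by tensoring with the identity on the time variable.
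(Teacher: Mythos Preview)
The paper does not give its own proof of this lemma; it is stated with a citation to~\cite[Lem.~31.3]{vanzuijlen_22} and left there. Your sketch is a correct and complete outline of the standard argument: well-definedness and continuity of $\Psi$, $\gamma$, $\hompd$, $\hompd^{-1}$, followed by the two composition identities using periodicity and the partition-of-unity property of $(\mcT_{z}\chi)_{z\in\mbZ^{d}}$.

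One small wording slip: where you write ``convergence of the partial sums of $\Psi(\test)$ to $\Psi(\test)$ in $\mcS(\mbR^{d})$'', this cannot hold as stated since $\Psi(\test)$ is periodic and not in $\mcS(\mbR^{d})$. What you actually need (and correctly state two lines later) is that $\sum_{\abs{z}\leq N}\chi\cdot\mcT_{z}\test\to\chi\cdot\Psi(\test)$ in $\mcS(\mbR^{d})$, which is fine because multiplication by the Schwartz function $\chi$ restores the decay. Similarly, after applying periodicity you need the analogous convergence $\sum_{\abs{z}\leq N}(\mcT_{-z}\chi)\cdot\test\to\test$ in $\mcS(\mbR^{d})$, which follows from the same type of estimate. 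With that clarification the argument is sound.
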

\section{Space-Time White Noise}\label{app:periodic_wn}
In this appendix we show the existence and regularity of space-time white noise (Theorem~\ref{thm:existence_stWN}), construct an abstract Wiener space (Theorem~\ref{thm:abstract_Wiener_space}) and show that iterated It\^{o} integrals as in~\eqref{eq:def_iterated_Ito_Fourier} are elements of inhomogeneous Wiener chaoses (Lemma~\ref{lem:iterated_Ito_in_hom_Wiener_chaos}).
\subsection{Existence and Regularity}\label{subsec:existence_and_regularity}
We first define space-time white noise as a family of Gaussian random variables.
\begin{definition}[{\cite[Sec.~10.1]{hairer_14_RegStruct}}]\label{def:space_time_WN_torus}
	We define a space-time white noise on $[0,T]\times\mbT^{2}$ to be a family of real-valued, centred Gaussian random variables $\{\xi(\phi):\phi\in L^{2}([0,T]\times\mbT^{2};\mbR)\}$ on a probability space $(\Omega,\mcF,\mbP)$, such that $\mbE[\xi(\phi)\xi(\psi)]=\inner{\phi}{\psi}_{L^2([0,T]\times\mbT^{2};\mbR)}$ for every $\phi,\psi\in L^2([0,T]\times\mbT^{2};\mbR)$.
\end{definition}
By an application of Kolmogorov's extension theorem, there exists a probability space $(\Omega,\mcF,\mbP)$ and a family as in Definition~\ref{def:space_time_WN_torus}. Indeed, following~\cite[Ex.~1.27]{janson_97}, we may take $\Omega$ to be the algebraic dual to $L^{2}([0,T]\times\mbT^{2})$, define for all $h\in L^{2}([0,T]\times\mbT^{2})$ the functions $\xi_{h}\from\Omega\to\mbR$, $\om\mapsto\xi_{h}(\om)\defeq\om(h)$, and set  $\mcF=\sigma(\xi_{h}:h\in L^{2}([0,T]\times\mbT^{2};\mbR))$. Then there exists a unique probability measure $\mbP$ on $(\Omega,\mcF)$ such that each $\xi_{h}$ is centred Gaussian with variance $\norm{h}_{L^{2}([0,T]\times\mbT^{2};\mbR)}^{2}$.

Next we show that there exists a modification of this family that is a random distribution in $\mcS'(\mbR\times\mbT^{2};\mbR)$ and that this modification lies in the space $\mcC^{\alpha}_{\mfs}([0,T]\times\mbT^{2};\mbR)$ (see Definition~\ref{def:Cs_alpha_torus}).
\begin{theorem}\label{thm:existence_stWN}
	A space-time white noise on $[0,T]\times\mbT^{2}$ as defined in Definition~\ref{def:space_time_WN_torus} exists as a random distribution in $\mcS'(\mbR\times\mbT^{2};\mbR)$ and is almost surely an element of $\mcC^{\alpha}_{\mfs}([0,T]\times\mbT^{2};\mbR)$ for each $\alpha<-2$.
\end{theorem}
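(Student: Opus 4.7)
The plan is to realize the Gaussian family of Definition~\ref{def:space_time_WN_torus} as a random distribution via a convergent Gaussian series, then to upgrade the statement to regularity in $\mcC^\alpha_{\mfs}$ for $\alpha<-2$ by means of a Kolmogorov-type criterion. First I would fix an orthonormal basis $(e_k)_{k\in\mbN}$ of $L^2([0,T]\times\mbT^2;\mbR)$ composed of smooth functions and apply Kolmogorov's extension theorem to obtain a probability space supporting an i.i.d.\ family $(\xi(e_k))_{k\in\mbN}$ of standard Gaussians. For $\phi\in L^2([0,T]\times\mbT^2;\mbR)$ I set $\xi(\phi)\defeq\sum_{k}\inner{\phi}{e_k}_{L^2}\xi(e_k)$, a sum converging in $L^2(\Omega)$ whose law matches Definition~\ref{def:space_time_WN_torus}; extending by $\xi(\test)\defeq\xi(\test|_{[0,T]\times\mbT^2})$ gives a candidate element of $\mcS'(\mbR\times\mbT^2;\mbR)$.

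Next I would introduce smooth approximations $\xi^N$, obtained by truncating the spatial Fourier expansion of $\xi$ to frequencies $\abs{\om}\leq N$, and show that $(\xi^N)_{N\geq1}$ is Cauchy in $L^p(\Omega;\mcC^\alpha_{\mfs}([0,T]\times\mbT^2;\mbR))$ for any $\alpha<-2$ and $p$ sufficiently large. The limit then serves as the desired modification, lying in $\mcC^\alpha_{\mfs}([0,T]\times\mbT^2;\mbR)\subset\mcS'(\mbR\times\mbT^2;\mbR)$ almost surely. The core estimate is the pointwise moment bound: for each $z\in[0,T]\times[0,1]^2$, $\lambda\in(0,1]$ and test function $\eta\in\mcB^{r_\alpha}_{\mfs}$,
\begin{equation*}
	\mbE[\abs{\xi(\mcS^\lambda_{\mfs,z}\eta)}^2]=\norm{\mcS^\lambda_{\mfs,z}\eta}_{L^2([0,T]\times\mbT^2)}^2\lesssim\lambda^{-\abs{\mfs}},\qquad\abs{\mfs}=4,
\end{equation*}
which is upgraded to all even moments by Gaussian hypercontractivity, so $\mbE[\abs{\xi(\mcS^\lambda_{\mfs,z}\eta)}^{2p}]^{1/(2p)}\lesssim\lambda^{-\abs{\mfs}/2}=\lambda^{-2}$; the analogous bound for $\xi^N-\xi^M$ carries an additional factor vanishing as $N,M\to\infty$.

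To pass from this pointwise moment estimate to a global bound on $\norm{\xi}_{\mcC^\alpha_{\mfs}}$ I would invoke a wavelet characterisation of $\mcC^\alpha_{\mfs}$ (cf.~\cite[Sec.~3.2 \& Sec.~10.1]{hairer_14_RegStruct}): it suffices to test against a countable wavelet basis of compactly supported $C^{r_\alpha}$ functions, reducing the supremum over $\eta\in\mcB^{r_\alpha}_{\mfs}$ and $z$ to a discrete dyadic sum which, after choosing $p$ large, can be summed via a union bound combined with Borel--Cantelli. The main obstacle is transporting the wavelet-based regularity theory of~\cite[Chap.~10]{hairer_14_RegStruct} from the Euclidean setting to the periodic setting used here. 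Since $\mcC^\alpha_{\mfs}([0,T]\times\mbT^2;\mbR)$ is defined through its periodisation $\hompd$ (Definition~\ref{def:Cs_alpha_torus}), this amounts to applying the full-space theory to the periodised noise $\hompd(\xi)$, whose covariance retains the periodised Dirac structure on $\mbR\times\mbR^2$; the moment bounds are unaffected because, once $\lambda$ is small, the rescaled bump $\mcS^\lambda_{\mfs,z}\eta$ has support in a single periodic cell.
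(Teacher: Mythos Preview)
Your proposal is correct and follows essentially the same route as the paper: both pass to the periodised noise $\hompd(\xi\mathds{1}_{[0,T]})$ on $\mbR\times\mbR^2$ and then invoke the wavelet-based Kolmogorov criterion of~\cite[Sec.~10]{hairer_14_RegStruct}, the paper citing~\cite[Lem.~10.2]{hairer_14_RegStruct} directly while you unpack its ingredients (the moment bound $\mbE[\abs{\xi(\mcS^\lambda_{\mfs,z}\eta)}^2]\lesssim\lambda^{-\abs{\mfs}}$, hypercontractivity, and the discrete wavelet summation). Your treatment of the periodic-to-Euclidean passage via $\hompd$ is exactly what the paper does.
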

\begin{proof}
	Following~\cite[below Rem.~10.1]{hairer_14_RegStruct}, let us consider the periodically-extended noise
	\begin{equation*}
		\mcS(\mbR\times\mbR^{2};\mbR)\ni\test\mapsto\hompd(\xi\mathds{1}_{[0,T]})(\test),
	\end{equation*} 
	where $\hompd$ denotes the homeomorphism constructed in Lemma~\ref{lem:homeom_per}. For each $\alpha<-2$ it follows by~\cite[Lem.~10.2]{hairer_14_RegStruct} that $\hompd(\xi\mathds{1}_{[0,T]})$ is almost surely a random variable in $\mcC^{\alpha}_{\mfs,\per}([0,T]\times\mbR^{2};\mbR)$. In particular, by Definition~\ref{def:Cs_alpha_torus} we obtain almost surely $\xi\mathds{1}_{[0,T]}\in\mcC^{\alpha}_{\mfs}([0,T]\times\mbT^{2};\mbR)$.
\end{proof}
Given two independent space-time white noises $\xi^{1}$ and $\xi^{2}$ as in Definition~\ref{def:space_time_WN_torus} on the same probability space, we can define a vector-valued space-time white noise $\boldsymbol{\xi}=(\xi^{1},\xi^{2})$ by
\begin{equation*}
	\boldsymbol{\xi}(\test)\defeq(\xi^{1}(\test),\xi^{2}(\test)),\qquad\test\in L^{2}([0,T]\times\mbT^{2};\mbR).
\end{equation*}
With this definition it is clear how to pass from scalar-valued to vector-valued noise. In the remainder of this appendix, for the sake of notation, we only consider scalar-valued noise and omit the target space from our definitions.
\subsection{Abstract Wiener Space}\label{subsec:abstract_Wiener_space}
In the following we do not distinguish between $L^{2}([0,T]\times\mbT^{2})$ and the functions in $L^{2}(\mbR\times\mbT^{2})$ that are zero on the complement of $[0,T]$.
\begin{theorem}\label{thm:abstract_Wiener_space}
	For every $\alpha<-2$ it follows that the law of the space-time white noise on $[0,T]\times\mbT^{2}$ can be realized on the abstract Wiener space $(\nban,\cm,\mu)$ with separable Banach space $\nban\defeq\mcC_{\mfs}^{\alpha}([0,T]\times\mbT^{2})$, Cameron--Martin space $\cm=L^{2}([0,T]\times\mbT^{2})$ and Gaussian probability measure $\gm=\Law(\xi)$.
\end{theorem}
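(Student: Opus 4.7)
The plan is to verify the three defining axioms of an abstract Wiener space: (i) $\nban$ is a separable Banach space; (ii) $\cm$ is a separable Hilbert space and $\iota\from\cm\hookrightarrow\nban$ is continuous with dense range; and (iii) $\gm$ is a centered Gaussian Borel probability measure on $\nban$ whose covariance is characterized by $\cm$, i.e., for every $\ell\in\nban^{*}$ the real random variable $\ell$ on $(\nban,\gm)$ is Gaussian with variance $\norm{\iota^{*}\ell}_{\cm}^{2}$, where $\iota^{*}\from\nban^{*}\to\cm^{*}\cong\cm$ is the adjoint of $\iota$. Item (i) is Lemma~\ref{lem:Cs_alpha_torus_separable}, and $\cm=L^{2}([0,T]\times\mbT^{2})$ is a separable Hilbert space by standard theory.

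For (ii), one views each $h\in L^{2}([0,T]\times\mbT^{2})$, via periodic-in-space and zero-in-time extension, as an element of $\mcS'_{\per}(\mbR\times\mbR^{2})$; its $\mcC_{\mfs}^{\alpha}$-norm is computed on $\mfK=[0,T]\times[0,1]^{2}$ by testing against the rescaled bumps from Definition~\ref{def:Cs_alpha_torus}. A change of variables yields $\norm{\mcS_{\mfs,z}^{\lambda}\eta}_{L^{2}(\mbR\times\mbR^{2})}\lesssim\lambda^{-|\mfs|/2}=\lambda^{-2}$ uniformly for $\eta\in\mcB_{\mfs}^{r_{\alpha}}$, so Cauchy--Schwarz gives $|\inner{h}{\mcS_{\mfs,z}^{\lambda}\eta}|/\lambda^{\alpha}\lesssim\lambda^{-2-\alpha}\norm{h}_{L^{2}}\leq\norm{h}_{L^{2}}$ for $\lambda\in(0,1]$, since $\alpha<-2$. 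Taking suprema establishes continuity of $\iota$. Density follows because $C^{\infty}([0,T]\times[0,1]^{2})$ (extended as in Definition~\ref{def:Cs_alpha_periodic}) is both a subset of $\cm$ and dense in $\nban$ by construction.

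For (iii), Theorem~\ref{thm:existence_stWN} yields $\gm=\Law(\xi)$ as a Borel probability measure on $\nban$. Fix $\ell\in\nban^{*}$ and denote by $h_{\ell}\defeq\iota^{*}\ell\in\cm$ its Riesz representative, characterized by $\ell(g)=\inner{g}{h_{\ell}}_{L^{2}}$ for every $g\in\cm$. For a smooth test function $\phi\in\mcS(\mbR\times\mbT^{2})$, viewed as an element of $\nban^{*}$ via the distributional pairing, one has $\inner{\xi}{\phi}=\xi(\phi|_{[0,T]\times\mbT^{2}})$ almost surely by construction of $\xi$ as a distributional modification of the Gaussian family in Definition~\ref{def:space_time_WN_torus}; this is $\mcN(0,\norm{\phi}_{L^{2}}^{2})$-distributed. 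To pass to general $\ell\in\nban^{*}$, fix a smooth $L^{2}$-orthonormal basis $(e_{k})_{k\in\mbN}$ of $\cm$. By the $L^{2}$-isometry $\cm\ni h\mapsto\xi(h)\in L^{2}(\mbP)$, the partial sums $\sum_{k\leq N}\ell(e_{k})\xi(e_{k})=\xi(\sum_{k\leq N}\ell(e_{k})e_{k})$ converge in $L^{2}(\mbP)$ to the Gaussian $\xi(h_{\ell})\sim\mcN(0,\norm{h_{\ell}}_{L^{2}}^{2})$. Simultaneously, a Karhunen--Lo\`{e}ve-type expansion $\xi_{N}\defeq\sum_{k\leq N}\xi(e_{k})e_{k}\to\xi$ almost surely in $\nban$, together with the continuity of $\ell$, gives $\ell(\xi_{N})=\xi(\sum_{k\leq N}\ell(e_{k})e_{k})\to\ell(\xi)$ almost surely; matching the two limits yields $\ell(\xi)=\xi(h_{\ell})$ almost surely, completing (iii).

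The main obstacle is the almost-sure convergence $\xi_{N}\to\xi$ in $\nban$ of the Karhunen--Lo\`{e}ve partial sums, which requires Kolmogorov-type regularity estimates for the series analogous to those underlying Theorem~\ref{thm:existence_stWN}; an alternative route is to appeal to general results on Gaussian measures on separable Banach spaces (Fernique's theorem combined with a weak-$*$ approximation argument in $\nban^{*}$), verifying that the characteristic function $\ell\mapsto\mbE[\exp(\upi\ell(\xi))]$ of $\gm$ coincides with $\exp(-\norm{\iota^{*}\ell}_{L^{2}}^{2}/2)$ on the weak-$*$ dense family of smooth test functions and extending by continuity.
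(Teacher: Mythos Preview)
Your verification of items (i) and (ii) is correct and coincides with what the paper needs (the Cauchy--Schwarz bound for the continuous embedding $L^{2}\hookrightarrow\nban$ is exactly the paper's argument). The substantive difference lies in item (iii), and your self-identified gap is real: establishing the almost-sure convergence $\xi_{N}\to\xi$ in $\nban$ of Karhunen--Lo\`{e}ve partial sums before you know $\gm$ is Gaussian is circular if you invoke general Gaussian-measure theory, and doing it directly via Kolmogorov estimates amounts to redoing Theorem~\ref{thm:existence_stWN} with uniform control on the tails, which is more work than necessary.

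The paper avoids this entirely by working on the dual side. The continuous dense embedding $L^{2}\hookrightarrow\nban$ yields, by restriction and Riesz, an injective map $\nban'\hookrightarrow L^{2}$: every $\ell\in\nban'$ \emph{is} an $L^{2}$ function $h_{\ell}$, and Definition~\ref{def:space_time_WN_torus} immediately gives that $\ell_{\#}\gm$ is centred Gaussian with variance $\norm{h_{\ell}}_{L^{2}}^{2}$. This is Ledoux's definition of a Gaussian measure, and no approximation of $\xi$ is needed. The Cameron--Martin space is then identified not by checking your axiom (iii) directly, but via the reproducing kernel Hilbert space $\rep$ (the closure of $\nban'$ in $L^{2}(\nban,\gm)$): the paper shows $C^{\infty}_{\comp}((0,T)\times\mbT^{2})\subset\nban'$, observes that the $L^{2}(\nban,\gm)$-norm on $\nban'$ agrees with the $L^{2}([0,T]\times\mbT^{2})$-norm, concludes $\rep=L^{2}$, and then transports this to $\cm$ via Ledoux's canonical isometry $\iota(l)=\int_{\nban}x\,l(x)\,\gm(\dd x)$.

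In short: you approximate $\xi$ in $\nban$; the paper instead identifies $\nban'$ inside $L^{2}$ and works with the reproducing-kernel framework, which bypasses the convergence issue you flagged. Your alternative suggestion (characteristic functions on a weak-$*$ dense family) is closer in spirit to the paper's route but is still less direct than simply using $\nban'\subset L^{2}$.
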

\begin{proof}
	It follows by Theorem~\ref{thm:existence_stWN} that $\gm$ is a Borel probability measure on $\nban$. Hence to prove that $\gm$ is a Gaussian probability measure in the sense of~\cite[Sec.~4]{ledoux_94}, we need to show that the law of each continuous linear functional on $\nban$ is Gaussian.
	
	It follows by the Cauchy--Schwarz inequality and the assumption $\alpha<-2$, that $L^{2}([0,T]\times\mbT^{2})\subset\mcC_{\mfs}^{\alpha}([0,T]\times\mbT^{2})=\nban$, 
	\begin{details}
		\paragraph{Proof that $L^{2}([0,T]\times\mbT^{2})\subset\mcC_{\mfs}^{\alpha}([0,T]\times\mbT^{2})$.}
		It suffices to show that each $h\in L^{2}_{\per}([0,T]\times\mbT^{2})$ is an element of $\mcC_{\mfs,\per}^{\alpha}([0,T]\times\mbR^{2})$. We estimate by the Cauchy--Schwarz inequality,
		\begin{equation*}
			\begin{split}
				\norm{h}_{\mcC_{\mfs}^{\alpha}([0,T]\times[0,1]^{2})}&=\sup_{\lambda\in(0,1]}\sup_{\eta\in \mcB_{\mfs}^{r_{\alpha}}}\sup_{z\in[0,T]\times[0,1]^{2}}\frac{\abs{\inner{u}{\mcS_{\mfs,z}^{\lambda}\eta}}}{\lambda^{\alpha}}\\
				&\leq\norm{u}_{L^{2}([0,T]\times\mbR^{2})}\sup_{\lambda\in(0,1]}\sup_{\eta\in \mcB_{\mfs}^{r_{\alpha}}}\sup_{z\in[0,T]\times[0,1]^{2}}\frac{\norm{\mcS_{\mfs,z}^{\lambda}\eta}_{L^{2}(\mbR\times\mbR^{2})}}{\lambda^{\alpha}}\\
				&\lesssim\norm{u}_{L^{2}([0,T]\times\mbR^{2})}\sup_{\lambda\in(0,1]}\lambda^{-2-\alpha}<\infty.
			\end{split}
		\end{equation*}
		The claim $h\in\mcC_{\mfs,\per}^{\alpha}([0,T]\times\mbR^{2})$ then follows by considering a smooth approximation sequence.
		
	\end{details}
	which implies by standard arguments $\nban'\subset L^{2}([0,T]\times\mbT^{2})$. 
	\begin{details}
		\paragraph{Proof that $\nban'\subset L^{2}([0,T]\times\mbT^{2})$.}
		Let $l\in\nban'$, then for every $\test\in L^{2}([0,T]\times\mbT^{2})$,
		\begin{equation*}
			\abs{l(\test)}\leq\norm{l}_{\nban'}\norm{\test}_{\mcC^{\alpha}_{\mfs}([0,T]\times\mbT^{2})}\lesssim\norm{l}_{\nban'}\norm{\test}_{L^{2}([0,T]\times\mbT^{2})},
		\end{equation*}
		hence $l$ is a bounded linear functional on $L^{2}([0,T]\times\mbT^{2})$. By Riesz's representation theorem, we can identify $l\in L^{2}([0,T]\times\mbT^{2})$.
		
	\end{details}
	Consequently we obtain by Definition~\ref{def:space_time_WN_torus} that for each $l\in\nban'$, $l_{\#}\mu$ is a centred Gaussian measure on $\mbR$, which implies that $\gm$ is a Borel probability measure on $\nban$.
	
	It remains to identify the Cameron--Martin space $\cm$. To do so, we first consider the reproducing kernel Hilbert space $\rep$, i.e.\ the closure of $\nban'$ in $L^{2}(\nban,\mu)$, which is isomorphic to $\cm$. Using a duality argument, we obtain $C_{\comp}^{\infty}((0,T)\times\mbT^{2})\subset\nban'$ (where we extend each element of $C_{\comp}^{\infty}((0,T)\times\mbT^{2})$ by $0$ to $\mbR\times\mbT^{2}$).
	\begin{details}
		\paragraph{Proof that $C_{\comp}^{\infty}((0,T)\times\mbT^{2})\subset\nban'$.}
		Let $u\in\mcC^{\alpha}_{\mfs}([0,T]\times\mbT^{2})$ and $\test\in C_{\comp}^{\infty}((0,T)\times\mbT^{2})$. It follows by Lemma~\ref{lem:homeom_per},
		\begin{equation*}
			\inner{u}{\test}=\inner{\hompd^{-1}\hompd(u)}{\test}=\inner{\hompd(u)}{\chi\gamma^{-1}(\test)},
		\end{equation*}
		where $\chi\in\mcS(\mbR^{2})$ is as in~\cite[Lem.~29.3]{vanzuijlen_22}. In particular $\chi\gamma^{-1}(\test)\in\mcS(\mbR\times\mbR^{2})$ and by the definition of $\mcC^{\alpha}_{\mfs}([0,T]\times\mbT^{2})$, $\hompd(u)\in\mcC^{\alpha}_{\mfs,\per}([0,T]\times\mbR^{2})$. By the definition of $\mcC^{\alpha}_{\mfs,\per}([0,T]\times\mbR^{2})$, there exists an approximating sequence $(\hompd(u)_{n})_{n\in\mbN}$ such that $\hompd(u)_{n}\in C^{\infty}([0,T]\times[0,1]^{2})$ for every $n\in\mbN$ (extended periodically to $[0,T]\times\mbR^{2}$ and by $0$ to $\mbR\times\mbR^{2}$) and $\norm{\hompd(u)-\hompd(u)_{n}}_{\mcC^{\alpha}_{\mfs}([0,T]\times[0,1]^{2})}\to0$.
					
		It follows by Lemma~\ref{lem:duality}, the weak*-sequential completeness of $\mcS'(\mbR\times\mbR^{2})$ and the continuity of the norm $\norm{\place}_{\mcC^{\alpha}_{\mfs}([0,T]\times[0,1]^{2})}$, that
		\begin{equation*}
			\begin{split}
				\inner{\hompd(u)}{\chi\gamma^{-1}(\test)}&=\lim_{n\to\infty}\inner{\hompd(u)_{n}}{\chi\gamma^{-1}(\test)}_{L^{2}(\mbR\times\mbR^{2})}\\
				&\lesssim\lim_{n\to\infty}\norm{\hompd(u)_{n}}_{\mcC^{\alpha}_{\mfs}([0,T]\times[0,1]^{2})}\Bigl\lVert\sum_{z\in\mbZ^{2}}\mcT_{z}(\chi\gamma^{-1}(\test))\Bigr\rVert_{C^{r_{\alpha}}(\mbR\times(-1,1)^{2})}\\
				&=\norm{\hompd(u)}_{\mcC^{\alpha}_{\mfs}([0,T]\times[0,1]^{2})}\Bigl\lVert\sum_{z\in\mbZ^{2}}\mcT_{z}(\chi\gamma^{-1}(\test))\Bigr\rVert_{C^{r_{\alpha}}(\mbR\times(-1,1)^{2})}.
			\end{split}
		\end{equation*}
		Hence every $\test\in C_{\comp}^{\infty}((0,T)\times\mbT^{2})$ induces a continuous linear functional on $\mcC^{\alpha}_{\mfs}([0,T]\times\mbT^{2})$.
		
	\end{details}
	The closure of $\nban'$ in $L^{2}(\nban,\gm)$ coincides with the closure of $\nban'$ in $L^{2}([0,T]\times\mbT^{2})$ since for all $l\in\nban'\subset L^{2}([0,T]\times\mbT^{2})$ it holds that $\norm{l}_{L^{2}(\nban,\gm)}=\norm{l}_{L^{2}([0,T]\times\mbT^{2})}$. Hence, using that $C_{\comp}^{\infty}((0,T)\times\mbT^{2})\subset\nban'$, we obtain $\rep=L^{2}([0,T]\times\mbT^{2})$. 
	
	By~\cite[Sec.~4]{ledoux_94} there exists a linear isometric isomorphism
	\begin{equation*}
		\iota\from\rep\to\mcH_{\mu},\qquad\iota(l)\defeq\int_{\nban}xl(x)\gm(\dd x),
	\end{equation*}
	so that it suffices to identify each element $\iota(l)\in\cm$ with an element of $L^{2}([0,T]\times\mbT^{2})$. Let $\test\in\mcS(\mbR\times\mbT^{2})$, we obtain
	\begin{equation*}
		\inner{\iota(l)}{\test}=\int_{\nban}\test(x)l(x)\gm(\dd x)=\inner{l}{\test}_{L^{2}([0,T]\times\mbT^{2})},
	\end{equation*}
	which shows that $\iota(l)$ coincides with the tempered distribution 
	\begin{equation*}
		u_{l}\in\mcS'(\mbR\times\mbT^{2}),\qquad\mcS(\mbR\times\mbT^{2})\ni\test\mapsto u_{l}(\test)=\int_{[0,T]\times\mbT^{2}}l(z)\test(z)\dd z.
	\end{equation*}
	Using that the map $L^{2}(\mbR\times\mbT^{2})\ni l\mapsto u_{l}\in\mcS'(\mbR\times\mbT^{2})$ is an embedding,
	\begin{details}
		\cite[Thm.~15.6]{vanzuijlen_22}
	\end{details}
	we can identify $\iota(l)$ and $l$, which yields $\mcH_{\mu}=L^{2}([0,T]\times\mbT^{2})$.
\end{proof}
\subsection{Iterated It\^{o} Integrals}\label{subsec:iterated_Ito_integrals}
In this subsection we relate the iterated It\^{o} integrals defined in~\eqref{eq:def_iterated_Ito_Fourier} to our space-time white noise (Lemma~\ref{lem:space_time_white_noise_yields_complex_BM}), show that each integral yields an element of a homogenous Wiener chaos (Lemma~\ref{lem:iterated_Ito_in_hom_Wiener_chaos}) and characterize its homogeneous part (Lemma~\ref{lem:hom_part_fourier}).
\begin{lemma}\label{lem:space_time_white_noise_yields_complex_BM}
	Let $\boldsymbol{\xi}$ be a vector-valued space-time white noise on $[0,T]\times\mbT^{2}$ as in Subsection~\ref{subsec:existence_and_regularity}. Then there exists a family of complex-valued Brownian motions $(W^{j}(\place,\om))_{\om\in\mbZ^{2},j=1,2}$ (on the same underlying probability space) that satisfy $\overline{W^{j}(\place,\om)}=W^{j}(\place,-\om)$ but are otherwise independent. 
\end{lemma}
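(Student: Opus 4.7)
My plan is to take the formula already written in the paper, $W^{j}(t,\om)\defeq\boldsymbol{\xi}(\indic_{[0,t]}(\place)\otimes\euler^{-2\uppi\upi\inner{\om}{\place}}\otimes\delta_{j,\place})$, as the definition and then verify the four requirements in turn: that it makes sense, that each path is (a modification of) a continuous complex martingale, that the reality constraint $\overline{W^{j}(\place,\om)}=W^{j}(\place,-\om)$ holds, and that the family has the claimed independence structure.

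First I would extend $\xi^{j}$ from real- to complex-valued $L^{2}$ test functions by complex linearity, setting $\xi^{j}(\phi+\upi\psi)\defeq\xi^{j}(\phi)+\upi\xi^{j}(\psi)$ for real $\phi,\psi\in L^{2}([0,T]\times\mbT^{2};\mbR)$; the covariance of Definition~\ref{def:space_time_WN_torus} then yields the sesquilinear identity $\mbE[\xi^{j}(\phi)\overline{\xi^{j'}(\psi)}]=\delta_{jj'}\inner{\phi}{\psi}_{L^{2}([0,T]\times\mbT^{2};\mbC)}$ and its companion $\mbE[\xi^{j}(\phi)\xi^{j'}(\psi)]=\delta_{jj'}\int_{[0,T]\times\mbT^{2}}\phi\psi$. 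The reality constraint is then immediate from $\overline{\euler^{-2\uppi\upi\inner{\om}{\place}}}=\euler^{-2\uppi\upi\inner{-\om}{\place}}$ combined with $\overline{\xi^{j}(\phi)}=\xi^{j}(\overline{\phi})$.

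Next I would compute, for any $s,t\in[0,T]$, $\om,\om'\in\mbZ^{2}$ and $j,j'\in\{1,2\}$,
\begin{equation*}
\mbE[W^{j}(s,\om)\overline{W^{j'}(t,\om')}]=\delta_{jj'}\int_{0}^{s\wedge t}\int_{\mbT^{2}}\euler^{-2\uppi\upi\inner{\om-\om'}{x}}\dd x\dd r=\delta_{jj'}\delta_{\om,\om'}(s\wedge t),
\end{equation*}
and analogously $\mbE[W^{j}(s,\om)W^{j'}(t,\om')]=\delta_{jj'}\delta_{\om,-\om'}(s\wedge t)$. Together with the Gaussianity of the family (which is inherited from $\boldsymbol{\xi}$, as each $W^{j}(t,\om)$ is a complex linear combination of jointly Gaussian random variables), these two covariance identities mean that whenever the pairs $(j,\om)$ and $(j',\om')$ satisfy $j\neq j'$, or $j=j'$ with $\om\notin\{\om',-\om'\}$, the associated $\mbR^{4}$-valued Gaussian vector $(\Re W^{j}(s,\om),\Im W^{j}(s,\om),\Re W^{j'}(t,\om'),\Im W^{j'}(t,\om'))$ has block-diagonal covariance, hence $W^{j}(\place,\om)$ and $W^{j'}(\place,\om')$ are independent.

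Finally I would promote each $W^{j}(\place,\om)$ to a continuous process. Since $\mbE\bigl\lvert W^{j}(t,\om)-W^{j}(s,\om)\bigr\rvert^{2}=\abs{t-s}$, a standard application of Kolmogorov's continuity theorem (using higher Gaussian moments) produces a continuous modification; taking the union over the countable index set $\mbZ^{2}\times\{1,2\}$ yields a single probability-one event on which \emph{every} $W^{j}(\place,\om)$ is continuous. Using the covariances above, each such $W^{j}(\place,\om)$ (for $\om\neq0$) is a continuous, centred, complex Gaussian process with independent real and imaginary parts each of variance $t/2$ and with independent increments, i.e.\ a standard complex Brownian motion; the case $\om=0$ yields a real Brownian motion (with vanishing imaginary part), which is the only consistent resolution of the relation $\overline{W^{j}(\place,0)}=W^{j}(\place,0)$. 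The main technical nuisance is the bookkeeping to check that the joint covariance structure implies the stated independence simultaneously for all indices, but because only pairwise covariances matter for jointly Gaussian families, this reduces to the two covariance computations above.
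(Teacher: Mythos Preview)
Your proof is correct and takes essentially the same approach as the paper. The only cosmetic difference is that the paper defines the real and imaginary parts separately via the test functions $\cos(2\uppi\inner{\om}{\place})$ and $-\sin(2\uppi\inner{\om}{\place})$ and then sets $W^{j}(t,\om)=B^{j}(t,\om)+\upi\tilde{B}^{j}(t,\om)$, whereas you pass directly through the complex-linear extension of $\xi^{j}$ to the test function $\euler^{-2\uppi\upi\inner{\om}{\place}}$; since $\euler^{-2\uppi\upi\inner{\om}{x}}=\cos(2\uppi\inner{\om}{x})-\upi\sin(2\uppi\inner{\om}{x})$ these are the same object, and your covariance computations unwind to the trigonometric identities the paper records in its details environment.
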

\begin{proof}
	Let $\boldsymbol{\xi}$ be a vector-valued space-time white noise on $[0,T]\times\mbT^{2}$ as in Subsection~\ref{subsec:existence_and_regularity}. Let $t\in[0,T]$, $\om\in\mbZ^{2}$, $j=1,2$ and define
	\begin{equation*}
		B^{j}(t,\om)\defeq\boldsymbol{\xi}(\mathds{1}_{[0,t]}(\place)\otimes\cos(2\uppi\inner{\om}{\place})\otimes\delta_{j,\place}),\quad \tilde{B}^{j}(t,\om)\defeq\boldsymbol{\xi}(\mathds{1}_{[0,t]}(\place)\otimes-\sin(2\uppi\inner{\om}{\place})\otimes\delta_{j,\place}),
	\end{equation*}
	which yields a family of Brownian motions such that $\mbE[(B^{j}(t,\om))^{2}]=\mbE[(\tilde{B}^{j}(t,\om))^{2}]=t/2$, $B^{j}(t,\om)=B^{j}(t,-\om)$ and $\tilde{B}^{j}(t,\om)=-\tilde{B}^{j}(t,-\om)$ and which are otherwise independent. Using this family, we can define the complex Brownian motion $W^{j}(t,\om)\defeq B^{j}(t,\om)+\upi\tilde{B}^{j}(t,\om)$.
	\begin{details}
		
		We show that $B$ and $\tilde{B}$ are Brownian motions. Let $\om,\om'\in\mbZ^{2}$, $t,t'\in[0,T]$ and $j,j'\in\{1,2\}$. It follows by Definition~\ref{def:space_time_WN_torus} that
		\begin{equation*}
			\begin{split}
				&\mbE[B^{j}(t,\om)B^{j'}(t',\om')]=\frac{1}{2}\delta_{j,j'}(t\wedge t')(\delta_{\om,\om'}+\delta_{\om,-\om'}),\\
				&\mbE[\tilde{B}^{j}(t,\om)\tilde{B}^{j'}(t',\om')]=\frac{1}{2}\delta_{j,j'}(t\wedge t')(\delta_{\om,\om'}-\delta_{\om,-\om'}),\\
				&\mbE[B^{j}(t,\om)\tilde{B}^{j'}(t',\om')]=0,
			\end{split}
		\end{equation*}
		since
		\begin{equation}\label{eq:trigo_integrals}
			\begin{split}
				&\int_{[0,1]^{2}}\cos(2\uppi\inner{\om}{x})\cos(2\uppi\inner{\om'}{x})\dd x=\frac{1}{2}(\delta_{\om,\om'}+\delta_{\om,-\om'}),\\
				&\int_{[0,1]^{2}}\sin(2\uppi\inner{\om}{x})\sin(2\uppi\inner{\om'}{x})\dd x=\frac{1}{2}(\delta_{\om,\om'}-\delta_{\om,-\om'}),\\
				&\int_{[0,1]^{2}}\sin(2\uppi\inner{\om}{x})\cos(2\uppi\inner{\om'}{x})\dd x=0.
			\end{split}
		\end{equation}
		Hence, $(B,\tilde{B})$ is a family of Brownian motions.
		
		To show~\eqref{eq:trigo_integrals}, we can use the representation of $\cos$ and $\sin$ as complex exponentials 
		\begin{equation*}
			\cos(2\uppi\inner{\om}{x})=\frac{\euler^{2\uppi\upi\inner{\om}{x}}+\euler^{-2\uppi\upi\inner{\om}{x}}}{2},\qquad\sin(2\uppi\inner{\om}{x})=\frac{\euler^{2\uppi\upi\inner{\om}{x}}-\euler^{-2\uppi\upi\inner{\om}{x}}}{2\upi}
		\end{equation*}	
		and the orthogonality of $x\mapsto\euler^{2\uppi\upi\inner{\om}{x}}$ in $L^2(\mbT^{2};\mbC)$,
		\begin{equation*}
			\begin{split}
				\int_{[0,1]^{2}}\cos(2\uppi\inner{\om}{x})\cos(2\uppi\inner{\om'}{x})\dd x&=\frac{1}{4}(\delta_{\om,-\om'}+\delta_{\om,-\om'}+\delta_{\om,\om'}+\delta_{\om,\om'})=\frac{1}{2}(\delta_{\om,-\om'}+\delta_{\om,\om'}),\\
				\int_{[0,1]^{2}}\sin(2\uppi\inner{\om}{x})\sin(2\uppi\inner{\om'}{x})\dd x&=-\frac{1}{4}(\delta_{\om,-\om'}+\delta_{\om,-\om'}-\delta_{\om,\om'}-\delta_{\om,\om'})=\frac{1}{2}(\delta_{\om,\om'}-\delta_{\om,-\om'}),\\
				\int_{[0,1]^{2}}\cos(2\uppi\inner{\om}{x})\sin(2\uppi\inner{\om'}{x})\dd x&=\frac{1}{4\upi}(\delta_{\om,-\om'}-\delta_{\om,-\om'}+\delta_{\om,\om'}-\delta_{\om,\om'})=0.
			\end{split}
		\end{equation*}
	\end{details}
\end{proof}
Let $\Sigma_{n}(1,\ldots,n)$ denote the permutation group over $\{1,\ldots,n\}$. We call $h\in L^2([0,T]^{n}\times\mbT^{2n};\mbR^{2n})$ symmetric, if 
\begin{equation*}
	h(z_{1},\ldots, z_{n})=\Sym(h)(z_{1},\ldots, z_{n})=\frac{1}{n!}\sum_{\varsigma\in\Sigma_{n}(1,\ldots,n)}h(u_{\varsigma(1)},x_{\varsigma(1)},j_{\varsigma(1)},\ldots,u_{\varsigma(n)},x_{\varsigma(n)},j_{\varsigma(n)}),
\end{equation*}
where for all $k\in\{1,\ldots,n\}$ we denote $u_{k}\in[0,T]$, $x_{k}\in\mbT^{2}$, $j_{k}\in\{1,2\}$ and $z_{k}=(u_{k},x_{k},j_{k})$. We denote the space of symmetric, square-integrable functions by $f\in L^{2}_{\sym}([0,T]^{n}\times\mbT^{2n};\mbR^{2n})$. 
Next we show that each iterated It\^{o} integrals of a symmetric integrand is an element of a homogeneous Wiener chaos as defined in~\eqref{eq:hom_Wiener_chaos}.
\begin{lemma}\label{lem:iterated_Ito_in_hom_Wiener_chaos}
	For each $n\in\mbN$ and $f\in L^{2}_{\sym}([0,T]^{n}\times\mbT^{2n};\mbR^{2n})$, it holds that $I^{n}(f)\in\chaos^{(n)}(\Omega,\mbP;\mbR)$, where the homogeneous Wiener chaos $\chaos^{(n)}(\Omega,\mbP;\mbR)$ is defined as in~\eqref{eq:hom_Wiener_chaos}.
\end{lemma}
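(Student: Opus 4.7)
The plan is to reduce the lemma to the classical Wiener--It\^{o} chaos isometry, which identifies symmetric $L^2$ kernels with the $n$-th homogeneous Wiener chaos via multiple stochastic integrals against the underlying Gaussian noise.

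First I would rewrite the iterated It\^{o} integral~\eqref{eq:def_iterated_Ito_Fourier} in terms of a real orthonormal basis of the Cameron--Martin space $\cm=L^{2}([0,T]\times\mbT^{2};\mbR^{2})$ identified in Theorem~\ref{thm:abstract_Wiener_space}. Using the decomposition $W^{j}(t,\om)=B^{j}(t,\om)+\upi\tilde{B}^{j}(t,\om)$ from the proof of Lemma~\ref{lem:space_time_white_noise_yields_complex_BM}, together with the real trigonometric basis of $L^{2}(\mbT^{2};\mbR)$ and a chosen ONB of $L^{2}([0,T];\mbR)$, one obtains an ONB $(e_{i})_{i\in\mbN}$ of $\cm$ for which each increment $W^{j}(t,\om)-W^{j}(s,\om)$ is an explicit linear combination of the Gaussians $\boldsymbol{\xi}(e_{i})$. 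Substituting this representation into~\eqref{eq:def_iterated_Ito_Fourier} rewrites $I^{n}(f)$, for any simple symmetric kernel $f$, as an ordinary iterated stochastic integral against the real Gaussian family $(\boldsymbol{\xi}(e_{i}))_{i\in\mbN}$.

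Next I would apply the classical identity: for symmetric tensors $g=\Sym(e_{i_{1}}\otimes\cdots\otimes e_{i_{n}})$ built from the basis $(e_{i})$, the iterated real stochastic integral equals a generalized Hermite polynomial of the form $c_{\multi}\prod_{i}H_{\multi_{i}}(\boldsymbol{\xi}(e_{i}))$, where $\multi_{i}$ is the multiplicity of $i$ among $i_{1},\ldots,i_{n}$, $\sum_{i}\multi_{i}=n$, and $c_{\multi}$ is an explicit combinatorial constant. By the definition of the homogeneous Wiener chaos and Theorem~\ref{thm:abstract_Wiener_space}, all such polynomials of total degree $n$ lie in $\chaos^{(n)}(\Omega,\mbP;\mbR)$; in fact, they span it densely.

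Finally, since symmetric tensors built from finitely many basis vectors $(e_{i})$ are dense in $L^{2}_{\sym}([0,T]^{n}\times\mbT^{2n};\mbR^{2n})$ and the multiple Wiener--It\^{o} isometry $\mbE[\lvert I^{n}(f)\rvert^{2}]=n!\,\norm{f}_{L^{2}}^{2}$ makes $I^{n}$ continuous into $L^{2}(\Omega,\mbP)$, the closedness of $\chaos^{(n)}$ in $L^{2}(\Omega,\mbP)$ yields $I^{n}(f)\in\chaos^{(n)}$ for every symmetric $f$. The main technical obstacle is the bookkeeping in the first step: the reality constraint $\overline{W^{j}(\place,\om)}=W^{j}(\place,-\om)$ means the complex Brownian motions indexed by $\om$ and $-\om$ are not independent, so one must group the Fourier sum in~\eqref{eq:def_iterated_Ito_Fourier} into $\{\om,-\om\}$ pairs before identifying it with real iterated stochastic integration against $\boldsymbol{\xi}$.
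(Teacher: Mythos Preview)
Your proposal is correct and follows essentially the same route as the paper: both reduce the Fourier-defined iterated integral $I^{n}$ to the standard multiple Wiener--It\^{o} integral against the real Gaussian noise $\boldsymbol{\xi}$, then invoke the classical identification of symmetric $L^{2}$ kernels with the $n$-th homogeneous chaos (the paper cites \cite[below Thm.~1.1.2]{nualart_06} directly, whereas you spell out the Hermite-polynomial structure). The bookkeeping you flag---grouping $\{\om,-\om\}$ pairs to pass from the complex Brownian motions to a real orthonormal basis---is exactly the content of the paper's verification that $I^{n}$ and $\boldsymbol{\xi}^{\otimes n}$ agree on simple symmetric functions.
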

\begin{proof}
	Let $f\in L^{2}_{\sym}([0,T]^{n}\times\mbT^{2n};\mbR^{2n})$, it is now a consequence of~\cite[below Thm.~1.1.2]{nualart_06} that $I^{n}(f)\in\mcH^{(n)}(\Omega,\mbP;\mbR)$.
	\begin{details}
		see also~\cite[Prop.~1.42]{klose_msc_17}.
	\end{details}
	\begin{details}
		We need to show that our notion of iterated It\^{o} integral~\eqref{eq:def_iterated_Ito_Fourier} coincides with iterated It\^{o} integral used in~\cite[Sec.~1.1.2]{nualart_06} resp.~\cite[Sec.~1.2.2]{klose_msc_17}.
		\begin{definition}\label{def:iterated_Ito_real}
			Let $\boldsymbol{\xi}$ be a vector-valued space-time white noise on $[0,T]\times\mbT^{2}$ as in~\eqref{eq:vector_space_time_WN_torus}. Let $m,n\in\mbN$ and $0\leq s_{1}<\ldots<s_{m}<s_{m+1}\leq T$, $A_{k}\in\mcB(\mbT^{2})$ and $l_{k}\in\{1,2\}$ for all $k\in\{1,\ldots,m\}$. Let $a\from\{1,\ldots,m\}^{n}\to\mbR$ be such that $a_{\textbf{i}}=0$ if $i_{k}=i_{k'}$ for some $k\neq k'\in\{1,\ldots,n\}$, and such that for each permutation $\varsigma$ of $\{1,\ldots,n\}$, it holds that $a_{\textbf{i}}=a_{\varsigma(\textbf{i})}$, where $\varsigma(\textbf{i})\defeq(i_{\varsigma(1)},\ldots,i_{\varsigma(n)})$. We then define the symmetric simple function $f_{n}$ by
			\begin{equation}\label{eq:simple}
				f_{n}(z_{1},\ldots,z_{n})=\sum_{\textbf{i}\in\{1,\ldots,m\}^{n}}a_{\textbf{i}}\prod_{k=1}^{n}\mathds{1}_{[s_{i_k},s_{i_k+1})}(u_k)\mathds{1}_{A_{i_k}}(x_{k})\delta_{l_{i_{k}},j_{k}},\qquad\text{where}\quad z_{k}=(u_{k},x_{k},j_{k}).
			\end{equation}
			The iterated stochastic integral $\boldsymbol{\xi}^{\otimes n}$ is then defined on such $f_{n}$ by 
			\begin{equation*}
				\boldsymbol{\xi}^{\otimes n}(f_{n})=\sum_{\textbf{i}\in\{1,\ldots,m\}^{n}}a_{\textbf{i}}\prod_{k=1}^{n}\boldsymbol{\xi}(\mathds{1}_{[s_{i_k},s_{i_k+1})}\otimes\mathds{1}_{A_{i_k}}\otimes\delta_{l_{i_{k}}})\in L^{2}(\mbP)
			\end{equation*}
			and extended to $f\in L^{2}_{\sym}([0,T]^{n}\times\mbT^{2n};\mbR^{2n})$ by continuity.
		\end{definition}
		In contrast to the definition above, Nualart~\cite[Sec.~1.1.2]{nualart_06} considers  simple functions over $\mcB([0,T]\times\mbT^{2}\times\{1,2\})$ that are not assumed to be symmetric. Here we discuss why one may assume $f_{n}$ as in~\eqref{eq:simple}.
		
		Assume $f\in L^{2}([0,T]^{n}\times\mbT^{2n};\mbR^{2n})$, is symmetric, i.e.\ $\Sym(f)=f$, and that there exits a sequence of (not necessarily symmetric) simple functions $(g_{n})_{n\in\mbN}$ that converge towards $f$ in $L^{2}$. We define a sequence of symmetric simple functions $f_{n}=\Sym(g_{n})$ and establish with Minkowski's inequality,
		\begin{equation*}
			\norm{f_{n}-f}_{L^{2}}=\norm{\Sym(g_{n})-\Sym(f)}_{L^{2}}\leq\norm{g_{n}-f}_{L^{2}}\to0.
		\end{equation*} 
		Hence, we may always assume our simple functions to be symmetric.
		
		Next, we argue that we may take simple functions over sets of the form $[s_{i_{k}},s_{i_{k+1}})\times A_{i_{k}}\times\{l_{i_{k}}\}$. Using that 
		\begin{equation*}
			\mcB([0,T]\times\mbT^{2}\times\{1,2\})=\mcB([0,T])\otimes\mcB(\mbT^{2})\otimes\mcB(\{1,2\}),
		\end{equation*}
		it follows that $\mcB([0,T]\times\mbT^{2}\times\{1,2\})$ is generated by sets of the form $[s_{i_{k}},s_{i_{k+1}})\times A_{i_{k}}\times\{l_{i_{k}}\}$. An application of the monotone class theorem shows that any function in $L^{2}([0,T]\times\mbT^{2}\times\{1,2\})$ can be approximated by simple functions as in~\eqref{eq:simple}.
		
		Next we show that both notions of iterated It\^{o} integrals, \eqref{eq:def_iterated_Ito_Fourier} and Definition~\ref{def:iterated_Ito_real}, coincide. It suffices to show that they agree on symmetric simple functions~\eqref{eq:simple} and are continuous as maps from $L^{2}_{\sym}([0,T]^{n}\times\mbT^{2n};\mbR^{2n})$ to $L^{2}(\mbP)$. Let $f_{n}$ be as in~\eqref{eq:simple}, we compute
		\begin{equation*}
			\begin{split}
				&n!I^{n}(f_{n})\\
				&=n!\sum_{\om_1,\ldots,\om_{n}\in\mbZ^{2}}\sum_{j_1,\ldots,j_n=1}^{2}\int_{0}^{T}\dd W^{j_1}(u_1,\om_1)\ldots\int_{0}^{u_{n-1}}\dd W^{j_n}(u_n,\om_n)\hat{f_{n}}(u_1,-\om_1,j_1,\ldots,u_n,-\om_n,j_n)\\
				&=\sum_{\om_1,\ldots,\om_{n}\in\mbZ^{2}}\sum_{j_1,\ldots,j_n=1}^{2}\sum_{\textbf{i}\in\{1,\ldots,m\}^{n}}a_{\textbf{i}}\prod_{k=1}^{n}\boldsymbol{\xi}(\mathds{1}_{[s_{i_k},s_{i_k+1})}(\place)\otimes\euler^{-2\uppi\upi\inner{\om_{k}}{\place}}\otimes\delta_{j_{k,\place}})\delta_{l_{i_k},j_k}\hat{\mathds{1}_{A_{i_{k}}}}(-\om_{k})\\
				&=\sum_{\textbf{i}\in\{1,\ldots,m\}^{n}}a_{\textbf{i}}\prod_{k=1}^{n}\boldsymbol{\xi}(\mathds{1}_{[s_{i_k},s_{i_k+1})}\otimes\mathds{1}_{A_{i_{k}}}\otimes\delta_{l_{i_{k}}})=\boldsymbol{\xi}^{\otimes n}(f_n),
			\end{split}
		\end{equation*}
		where in the second equality we used the symmetry of $f_{n}$  to remove the restriction $u_{n}<u_{n-1}<\ldots<u_{1}$. Therefore, both notions coincide on simple functions. To show that $I^{n}$ is a continuous map from $L^{2}_{\sym}([0,T]^{n}\times\mbT^{2n};\mbR^{2n})$ to $L^{2}(\mbP)$, we apply It\^{o}'s isometry and Parseval's theorem,
		\begin{equation*}
			\begin{split}
				\mbE[\abs{I^{n}(f_{n})}^{2}]&=\sum_{\om_1,\ldots,\om_{n}\in\mbZ^{2}}\sum_{j_1,\ldots,j_n=1}^{2}\int_{0}^{T}\dd u_{1}\ldots\int_{0}^{u_{n-1}}\dd u_{n} \abs{\hat{f_{n}}(u_1,-\om_1,j_1,\ldots,u_n,-\om_n,j_n)}^{2}\\
				&=\int_{\mbT^{2}}\dd x_{1}\ldots\int_{\mbT^{2}}\dd x_{n}\sum_{j_1,\ldots,j_n=1}^{2}\int_{0}^{T}\dd u_{1}\ldots\int_{0}^{u_{n-1}}\dd u_{n} \abs{f_{n}(u_1,x_{1},j_1,\ldots,u_n,x_{n},j_n)}^{2}\\
				&=\frac{1}{n!}\int_{\mbT^{2}}\dd x_{1}\ldots\int_{\mbT^{2}}\dd x_{n}\sum_{j_1,\ldots,j_n=1}^{2}\int_{0}^{T}\dd u_{1}\ldots\int_{0}^{T}\dd u_{n} \abs{f_{n}(u_1,x_{1},j_1,\ldots,u_n,x_{n},j_n)}^{2},
			\end{split}
		\end{equation*}
		where in the last line we again used the symmetry of $f_{n}$.
		
		Therefore $I^{n}$ extends as a (multiple of) an isometry to $L^{2}_{\sym}([0,T]^{n}\times\mbT^{2n},\mbR^{2n})$ and both notions of iterated It\^{o} integrals, \eqref{eq:def_iterated_Ito_Fourier} and Definition~\ref{def:iterated_Ito_real}, coincide.
	\end{details}
\end{proof}
Using that each iterated It\^{o} integral has zero mean, we obtain the following result (cf.~\cite[Sec.~4]{hairer_weber_15}).
\begin{lemma}\label{lem:hom_part_fourier}
	Let $n\in\mbN$, $f\in L^{2}_{\sym}([0,T]^{n}\times\mbT^{2n};\mbR^{2n})$ and $h\in L^{2}([0,T]\times\mbT^{2};\mbR^{2})$. Then it follows that $n!\mbE[I^{n}(f)(\place+h)]=\inner{f}{h^{\otimes n}}_{L^{2}([0,T]^{n}\times\mbT^{2n};\mbR^{2n})}$.
\end{lemma}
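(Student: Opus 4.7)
My plan is to reduce, by multilinearity and a polarization-plus-density argument, to the special case $f = g^{\otimes n}$ with $g \in L^{2}([0,T]\times\mbT^{2};\mbR^{2})$, and then exploit the generating-function structure of the Hermite chaoses. Both sides of the identity depend multilinearly and continuously on $f \in L^{2}_{\sym}$ — the left-hand side by the $L^{2}$-isometry of $I^{n}$ (cf.\ the computation in Lemma~\ref{lem:iterated_Ito_in_hom_Wiener_chaos}) combined with Cauchy--Schwarz for the Cameron--Martin-shifted random variable — and the symmetric simple tensors $g^{\otimes n}$ span a dense subspace of $L^{2}_{\sym}([0,T]^{n}\times\mbT^{2n};\mbR^{2n})$ by polarization, so it suffices to verify $n!\mbE[I^{n}(g^{\otimes n})(\place + h)] = \inner{g}{h}_{L^{2}}^{n}$.

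For this special case I would introduce the stochastic exponential $\mcE(g) \defeq \exp\bigl(I^{1}(g) - \tfrac{1}{2}\norm{g}_{L^{2}}^{2}\bigr)$, whose classical chaos expansion reads $\mcE(g) = \sum_{n=0}^{\infty} \tfrac{1}{n!}I^{n}(g^{\otimes n})$ in $L^{2}(\mbP)$. The Cameron--Martin quasi-invariance on the abstract Wiener space $(\nban,\cm,\gm)$ of Theorem~\ref{thm:abstract_Wiener_space} gives, for $h \in \cm = L^{2}([0,T]\times\mbT^{2};\mbR^{2})$, the identity $I^{1}(g)(\xi + h) = I^{1}(g)(\xi) + \inner{g}{h}_{L^{2}}$ for $\gm$-a.e.\ $\xi$, hence $\mcE(g)(\place + h) = \mcE(g)\,\exp(\inner{g}{h}_{L^{2}})$. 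Taking $\gm$-expectations and using $\mbE[\mcE(g)] = 1$ yields $\mbE[\mcE(g)(\place + h)] = \exp(\inner{g}{h}_{L^{2}})$. Expanding both sides as power series (e.g.\ by replacing $g$ with $\lambda g$ and matching coefficients of $\lambda^{n}$) one reads off $\mbE[I^{n}(g^{\otimes n})(\place + h)] = \inner{g}{h}_{L^{2}}^{n} = \inner{g^{\otimes n}}{h^{\otimes n}}_{L^{2}}$, which combined with the reduction above gives the lemma.

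The principal technical obstacle is the interchange of expectation with the infinite chaos expansion of $\mcE(g)(\place + h)$: one must justify that the shifted partial sums $\sum_{n \le N} I^{n}(g^{\otimes n})(\place + h)/n!$ converge in $L^{1}(\mbP)$ so that term-by-term expectation is legitimate. This follows from Cameron--Martin quasi-invariance — under $\gm$, the law of $I^{1}(g)(\place + h)$ is a shifted Gaussian, so $\mbE[\mcE(g)(\place + h)^{2}] = \exp(2\inner{g}{h}_{L^{2}} + \norm{g}_{L^{2}}^{2}) < \infty$ — together with the orthogonality of the homogeneous chaoses applied to the partial sums. A more hands-on alternative (which avoids invoking $\mcE(g)$ altogether but is combinatorially heavier) is to compute $I^{n}(f)(\xi+h)$ directly on the Fourier side using the complex Brownian drivers $W^{j}(\place,\om)$ of Lemma~\ref{lem:space_time_white_noise_yields_complex_BM}: each driver translates by $\int_{0}^{\cdot}\hat{h}^{j}(s,\om)\dd s$, expansion yields $2^{n}$ terms indexed by subsets $S \subseteq \{1,\ldots,n\}$, all terms with $S \neq \emptyset$ vanish in expectation after a Fubini rearrangement (since each contains an inner Itô integral of an $L^{2}$-integrand, which is centred), and the surviving $S = \emptyset$ deterministic simplex integral equals $\tfrac{1}{n!}\inner{f}{h^{\otimes n}}_{L^{2}}$ after Parseval in the spatial variables and symmetrization using the symmetry of both $f$ and $h^{\otimes n}$.
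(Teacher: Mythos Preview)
Your proposal is correct. Your main route --- reduce by polarization and density to $f = g^{\otimes n}$, then read off the claim from the generating function $\mcE(\lambda g)$ via Cameron--Martin --- is a genuinely different argument from the paper's. The paper instead takes precisely what you describe as the ``more hands-on alternative'': it works directly on the Fourier side using the drifted complex Brownians of Lemma~\ref{lem:space_time_white_noise_yields_complex_BM}, carries out the $n=1$ case explicitly (expand, It\^{o} integrals are centred, apply Parseval), and then appeals to linearity together with the symmetry of $f$ to recover the $n!$ factor for general $n$, deferring the detailed combinatorics to \cite[Prop.~1.33]{klose_msc_17}. Your generating-function approach packages all orders $n$ simultaneously and avoids the subset-indexed expansion, at the cost of the interchange-of-series justification you correctly flag; the paper's direct route stays close to the explicit representation~\eqref{eq:def_iterated_Ito_Fourier} already in place and is essentially self-contained once one accepts the binomial expansion of the iterated integral under the drift shift.
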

\begin{proof}[Sketch of Proof.]
	For every $h\in L^{2}([0,T]\times\mbT^{2};\mbR^{2})$ we can apply Lemma~\ref{lem:space_time_white_noise_yields_complex_BM} to identify the shifted vector-valued space-time white noise $\boldsymbol{\xi}+h$ with a complex Brownian motion plus drift, that is, for every $t\in[0,T]$, $\om\in\mbZ^{2}$ and $j=1,2$,
	\begin{equation*}
		(\boldsymbol{\xi}+h)(\mathds{1}_{[0,t](\place)}\otimes\euler^{-2\uppi\upi\inner{\om}{\place}}\otimes\delta_{j,\place})=W^{j}(t,\om)+\int_{0}^{t}\hat{h}(s,\om,j)\dd s.
	\end{equation*}
	In the case of iterated It\^{o} integrals of order $1$ it follows that for every integrand $f\in L^{2}([0,T]\times\mbT^{2};\mbR^{2})$,
	\begin{equation*}
		\begin{split}
			\mbE[I^{1}(f)(\place+h)]&=\mbE\Bigl[\sum_{\om_{1}\in\mbZ^{2}}\sum_{j_{1}=1}^{2}\int_{0}^{\infty}\hat{f}(u_{1},-\om_{1},j_{1})(\dd W^{j_{1}}(u_{1},\om_{1})+\hat{h}(u_{1},\om_{1},j_{1})\dd u_{1})\Bigr]\\
			&=\sum_{\om_{1}\in\mbZ^{2}}\sum_{j_{1}=1}^{2}\int_{0}^{\infty}\hat{f}(u_{1},-\om_{1},j_{1})\hat{h}(u_{1},\om_{1},j_{1})\dd u_{1}\\
			&=\inner{f}{h}_{L^{2}([0,T]\times\mbT^{2};\mbR^{2})},
		\end{split}
	\end{equation*}
	where in the second equality we used that It\^{o} integrals have zero mean and in the third equality applied Parseval's theorem.
	
	The case of general $n\in\mbN$ follows by linearity, where we use the symmetry of $f$ to recover the integral over $[0,T]^{n}\times\mbT^{2n}\times\{1,2\}^{n}$ modulo a factor of $n!$. For the full proof, see~\cite[Prop.~1.33]{klose_msc_17}.
\end{proof}
\section{Generalized Freidlin--Wentzell Theory}\label{app:freidlin_wentzell}
In this appendix we generalize Freidlin and Wentzell's theory to (direct sums of) random variables taking values in Banach space-valued inhomogeneous Wiener chaoses (Theorem~\ref{thm:LDP_wiener_chaos}).

A similar program has already been carried out in~\cite[Sec.~3]{hairer_weber_15} and the main observation of the present appendix is that the results of~\cite[Sec.~3]{hairer_weber_15} carry over to situations in which the notion of convergence~\cite[(3.9)]{hairer_weber_15} is replaced by~\eqref{eq:rv_convergence} (where one retains powers of the noise intensity for lower-order Wiener chaoses), if one assumes that the limit has no contributions from lower-order Wiener chaoses~\eqref{eq:rv_limit}. It is this change that allows us to apply Theorem~\ref{thm:LDP_wiener_chaos} directly to the (canonical) enhancement constructed in Theorem~\ref{thm:existence_enh_can}, rather than going through a perturbation argument as in~\cite[Thm.~4.7]{hairer_weber_15}, which does not generically apply for us as we wish to treat cases where the white-noise induced renormalisation is strictly a distribution rather than a smooth function (or as in~\cite{hairer_weber_15} a constant.)

\begin{details}
	A perturbative argument would require us to consider the products
	\begin{equation}\label{eq:divorced_diagrams}
		\tl^{\delta}\re\nabla\Phi_{\ti},\qquad\nabla\Phi_{\tl^{\delta}}\re\ti,
	\end{equation}
	where $\tl^{\delta}$ is the \emph{specific} renormalization of~\cite[Thm.~2.3]{martini_mayorcas_25} and $\ti$ is the first coordinate of a \emph{generic} enhancement. Under the relative scaling of the law of large numbers (Theorem~\ref{thm:LLN_rough}), we obtain at most the regularities $\tl^{\delta}(t)\in\mcC^{0-}(\mbT^{2})$ and $\ti(t)\in\mcC^{-1-}(\mbT^{2})$ which are not sufficient to form those products.
	
	Compare this to
	\begin{equation*}
		\tl^{\delta}\re\nabla\Phi_{\ti^{\delta}},\qquad\nabla\Phi_{\tl^{\delta}}\re\ti^{\delta},
	\end{equation*}
	which are well-posed by~\cite[Lem.~2.26]{martini_mayorcas_25}. This suggests that the fundamental problem in~\eqref{eq:divorced_diagrams} can be explained by the fact that the diagrams are `divorced' or, in other words, that~\eqref{eq:divorced_diagrams} involves `off-diagonal entries'.
	
	This is why in Theorem~\ref{thm:LDP_enhancement} we do not use the perturbative argument of~\cite{hairer_weber_15} and instead
	establish the LDP for the canonical enhancement directly.
\end{details}

Let $(\nban,\cm,\gm)$ be an abstract Wiener space where $\nban$ denotes a real, separable Banach space and $\gm$ a centred Gaussian probability measure on $\nban$ with Cameron--Martin space $(\cm,\norm{\place}_{\cm})$. Let $\rep$ be the associated reproducing kernel Hilbert space, i.e.\ the completion of the dual space $\nban'$ in $L^{2}(\nban,\gm)$. Let $(e_{k})_{k\in\mbN}$ be an orthonormal basis in $\rep$ such that $e_{k}\in\nban'$ for every $k\in\mbN$, which exists by an application of the Gram--Schmidt procedure.
\begin{details}
	\paragraph{Existence of $(e_{k})_{k\in\mbN}$.}
	The reproducing kernel Hilbert space $\rep$ is separable as a subspace of the separable metric space $L^{2}(\nban,\gm)$ (where we used that $\nban$ is separable.) By the same argument, $\nban'\subset\rep$ is also separable; hence there exists a countable dense subset in $\nban'$ with respect to the topology induced by $\rep$. Using the density of $\nban'$ in $\rep$, it is clear that this countable subset is also dense in $\rep$. We can then apply the Gram--Schmidt procedure as in~\cite[Thm.~5.11]{brezis_11}.
\end{details}

We define the Hermite polynomials $H_{n}(x)$, for every $n\in\mbN_{0}$ and $x\in\mbR$, through the series expansion
\begin{equation*}
	\lambda\mapsto\exp\Bigl(\lambda x-\frac{1}{2}\lambda^{2}\Bigr)=\sum_{n=0}^{\infty}\frac{\lambda^{n}}{\sqrt{n!}}H_{n}(x)
\end{equation*}
and the generalized Hermite polynomials $H_{\multi}(\boldsymbol{x})$, for every multi-index $\multi\in\mbN_{0}^{\mbN}$ with finitely-many non-zero entries and $\boldsymbol{x}\in\mbR^{\mbN}$, by
\begin{equation*}
	H_{\multi}(\boldsymbol{x})\defeq\prod_{i=1}^{\infty}H_{\multi_{i}}(x_{i}),
\end{equation*}
which we extend to Banach space-valued arguments $\xi\in\nban$ via
\begin{equation*}
	H_{\multi}(\xi)\defeq H_{\multi}((\inner{\xi}{e_{i}})_{i\in\mbN}).
\end{equation*}
\begin{details}
	Our first three Hermite polynomials are given by $H_{1}(x)=x$, $H_{2}(x)=\frac{1}{\sqrt{2}}(x^2-1)$ and $H_{3}(x)=\frac{1}{\sqrt{6}}(x^{3}-3x)$.
	\paragraph{Conventions.}
	Note that the convention used in this paper differs from the ones employed in~\cite{mourrat_weber_xu_16}, \cite[Sec.~5]{ledoux_94} and~\cite{nualart_06}. To pass from our Hermite polynomials ($H_{n}$) to the ones used in~\cite{mourrat_weber_xu_16}  ($H^{\textnormal{MWX}}_{n}$, say), we need to scale $H^{\textnormal{MWX}}_{n}(x,1)=\sqrt{n!}H_{n}(x)$. To pass from our Hermite polynomials to the ones used in~\cite[Sec.~5]{ledoux_94} ($H^{\textnormal{Led}}_{n}$, say), we need to scale $H^{\textnormal{Led}}_{n}(x)=1/\sqrt{n!}H_{n}(x)$. To pass from our Hermite polynomials to the ones used~\cite{nualart_06} ($H^{\textnormal{Nua}}_{n}$, say), we need to scale $\sqrt{n}H^{\textnormal{Nua}}_{n}(x)=H_{n}(x)$.
	
\end{details}
Let $\ban$ be a real, separable Banach space, we denote by $L^{2}(\nban,\mu;\ban)$ the space of functions $\Psi\from\nban\to\ban$ that are Bochner measurable and square integrable under $\mu$. We can now define the $\ban$-valued homogeneous Wiener chaos of degree $k\in\mbN$, denoted by $\chaos^{(k)}(\nban,\mu;\ban)$, as in~\cite[(3.3)]{hairer_weber_15}:
\begin{equation}\label{eq:hom_Wiener_chaos}
	\chaos^{(k)}(\nban,\gm;\ban)\defeq\Bigl\{\Psi\in L^2(\nban,\gm;\ban):\int_{\nban}\Psi(\xi)H_{\multi}(\xi)\gm(\dd\xi)=0~\text{for all}~\abs{\multi}\neq k\Bigr\},
\end{equation}
where the absolute value of the multi-index $\nu$ is given by $\abs{\multi}\defeq\sum_{i=1}^{\infty}\multi_{i}$. We further set $\chaos^{(0)}(\nban,\gm;\ban)\defeq\ban$.
\begin{definition}\label{def:direct_sum_Banach_inh_Wiener_chaos}
	Let $\mcW$ be a finite index set. For each $\tau\in\mcW$, let $K_{\tau}$ be a natural number, $\ban_{\tau}$ be a real, separable Banach space and $\Psi_{\tau}$ be an element of the $\ban_{\tau}$-valued inhomogeneous Wiener chaos of order $K_{\tau}$, i.e.\
	\begin{equation*}
		\Psi_{\tau}\defeq\sum_{k=0}^{K_{\tau}}\Psi_{\tau,k},\qquad \Psi_{\tau,k}\in\chaos^{(k)}(\nban,\gm;\ban_{\tau}).
	\end{equation*}
	Denote $\bban=\bigoplus_{\tau\in\mcW}\ban_{\tau}$ and let $\boldsymbol{\Psi}$ be given by
	\begin{equation}\label{eq:rv_direct_sum}
		\boldsymbol{\Psi}\defeq\bigoplus_{\tau\in\mcW}\Psi_{\tau}=\bigoplus_{\tau\in\mcW}\sum_{k=0}^{K_{\tau}}\Psi_{\tau,k},\qquad\Psi_{\tau,k}\in\chaos^{(k)}(\nban,\gm;\ban_{\tau}).
	\end{equation}
	Given $\eps>0$ and a random variable $\boldsymbol{\Psi}$ of the form~\eqref{eq:rv_direct_sum}, we define the rescaling 
	\begin{equation*}
		\boldsymbol{\Psi}^{(\eps)}\defeq\bigoplus_{\tau\in\mcW}\eps^{\frac{K_{\tau}}{2}}\Psi_{\tau}.
	\end{equation*}
\end{definition}
\begin{definition}
	Let $\boldsymbol{\Psi}$ be a random variable of the form~\eqref{eq:rv_direct_sum}, $\lim_{\eps\to0}\delta(\eps)=0$ and $(\boldsymbol{\Psi}^{(\eps)}_{\delta(\eps)})_{\eps>0}$ be a sequence such that
	\begin{equation}\label{eq:rv_direct_sum_rescaled_corr_length}
		\boldsymbol{\Psi}^{(\eps)}_{\delta(\eps)}=\bigoplus_{\tau\in\mcW}\eps^{\frac{K_{\tau}}{2}}\sum_{k=0}^{K_{\tau}}\Psi_{\delta(\eps);\tau,k},\qquad\Psi_{\delta(\eps);\tau,k}\in\chaos^{(k)}(\nban,\gm;\ban_{\tau}).
	\end{equation}
	We say that $\boldsymbol{\Psi}^{(\eps)}_{\delta(\eps)}$ converges to $\boldsymbol{\Psi}$ as $\eps\to0$ if for each $\tau\in\mcW$ and $k\leq K_{\tau}$,
	\begin{equation}\label{eq:rv_convergence}
		\lim_{\eps\to0}\eps^{K_{\tau}-k}\int_{\nban}\norm{\Psi_{\delta(\eps);\tau,k}(\xi)-\Psi_{\tau,k}(\xi)}_{\ban_{\tau}}^{2}\mu(\dd\xi)=0.
	\end{equation}
\end{definition}
\begin{definition}\label{def:hom_part}
	Let $\boldsymbol{\Psi}$ be a random variable of the form~\eqref{eq:rv_direct_sum}. We define the homogeneous part $\boldsymbol{\Psi}_{\hom}\from\cm\to\bban$ of $\boldsymbol{\Psi}$ by
	\begin{equation*}
		\boldsymbol{\Psi}_{\hom}\defeq\bigoplus_{\tau\in\mcW}(\Psi_{\tau,K_{\tau}})_{\hom},
	\end{equation*}
	where for each $\tau\in\mcW$ and $h\in\mcH_{\mu}$,
	\begin{equation*}
		(\Psi_{\tau,K_{\tau}})_{\hom}(h)\defeq\int_{\nban} \Psi_{\tau,K_{\tau}}(\xi+h)\mu(\dd\xi).
	\end{equation*} 
\end{definition}
The homogeneous part $(\Psi_{\tau,K_{\tau}})_{\hom}(h)$ is well-defined by the Cameron--Martin theorem~\cite[(4.11)]{ledoux_94} and~\cite[(3.8)]{hairer_weber_15}.
\begin{details}
	See~\cite[Lem.~1.53]{klose_msc_17} for more details.
\end{details}

We can now state a large deviation principle for random variables of the form~\eqref{eq:rv_direct_sum_rescaled_corr_length} that converge in the sense of~\eqref{eq:rv_convergence} to a limit that does not have any contributions from lower-order Wiener chaoses.
\begin{theorem}\label{thm:LDP_wiener_chaos}
	Let $(\boldsymbol{\Psi}^{(\eps)}_{\delta(\eps)})_{\eps>0}$ be a family of random variables of the form~\eqref{eq:rv_direct_sum_rescaled_corr_length} converging in the sense of~\eqref{eq:rv_convergence} to a random variable $\boldsymbol{\Psi}$ such that 
	\begin{equation}\label{eq:rv_limit}
		\boldsymbol{\Psi}=\bigoplus_{\tau\in\mcW}\Psi_{\tau,K_{\tau}},\qquad\Psi_{\tau,K_{\tau}}\in\chaos^{(K_{\tau})}(\nban,\gm;\ban_{\tau}).
	\end{equation}
	It then follows that $(\boldsymbol{\Psi}_{\delta(\eps)}^{(\eps)})_{\eps>0}$ satisfies a large deviation principle in $\bban$ with speed $\eps$ and good rate function $\rate_{\boldsymbol{\Psi}}$ given by
	\begin{equation*}
		\rate_{\boldsymbol{\Psi}}(\boldsymbol{s})\defeq\inf\Bigl\{\frac{1}{2}\norm{h}_{\cm}^{2}:h\in\cm~\text{with}~\boldsymbol{\Psi}_{\hom}(h)=\boldsymbol{s}\Bigr\}.
	\end{equation*}
\end{theorem}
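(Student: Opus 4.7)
The plan is to combine Schilder's theorem for the underlying Gaussian measure with a Wiener chaos decomposition, closely following the strategy of Hairer--Weber~\cite{hairer_weber_15}. First I would split
\begin{equation*}
	\boldsymbol{\Psi}_{\delta(\eps)}^{(\eps)}=\boldsymbol{A}^{(\eps)}+\boldsymbol{B}^{(\eps)},\qquad\boldsymbol{A}^{(\eps)}\defeq\bigoplus_{\tau\in\mcW}\eps^{K_\tau/2}\Psi_{\delta(\eps);\tau,K_\tau},\qquad\boldsymbol{B}^{(\eps)}\defeq\bigoplus_{\tau\in\mcW}\eps^{K_\tau/2}\sum_{k=0}^{K_\tau-1}\Psi_{\delta(\eps);\tau,k},
\end{equation*}
isolating the top-order chaos contributions from the lower-order ones. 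The scaling hypothesis~\eqref{eq:rv_convergence} combined with the fact that the limit~\eqref{eq:rv_limit} has only top-order contributions is tailored so that $\boldsymbol{B}^{(\eps)}$ is exponentially equivalent to $0$ at speed $\eps$, while $\boldsymbol{A}^{(\eps)}$ carries the LDP.

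For $\boldsymbol{B}^{(\eps)}$, the key input is the Gaussian chaos concentration inequality (a consequence of Borell's inequality, see e.g.~\cite[Thm.~5.10]{ledoux_94}): for $X\in\chaos^{(k)}(\nban,\gm;\ban)$ with $\sigma^{2}\defeq\mbE[\norm{X}_{\ban}^{2}]$, one has $\mbP(\norm{X}_{\ban}>\lambda)\leq 2\exp(-c(\lambda/\sigma)^{2/k})$. Applied to $\Psi_{\delta(\eps);\tau,k}$ for $k<K_\tau$, the hypothesis $\mbE[\norm{\Psi_{\delta(\eps);\tau,k}}_{\ban_\tau}^{2}]^{1/2}=o(\eps^{-(K_\tau-k)/2})$ gives
\begin{equation*}
	\mbP(\norm{\eps^{K_\tau/2}\Psi_{\delta(\eps);\tau,k}}_{\ban_\tau}>\lambda)\leq 2\exp(-c\lambda^{2/k}/o(\eps)),
\end{equation*}
which decays faster than $\exp(-C/\eps)$ for every $C>0$; so $\boldsymbol{B}^{(\eps)}$ is negligible for the LDP at speed $\eps$ by the usual exponential equivalence criterion~\cite[Thm.~4.2.13]{dembo_zeitouni_10}.

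For $\boldsymbol{A}^{(\eps)}$, I would argue coordinate-by-coordinate and assemble the joint LDP over the finite set $\mcW$. Schilder's theorem yields an LDP for $\eps^{1/2}\xi$ in $\nban$ with good rate function $\tfrac12\norm{\place}_{\cm}^{2}$. For a finite Hermite polynomial chaos element $\Psi\in\chaos^{(K_\tau)}$, expanding $\Psi(\eps^{1/2}\xi)$ in chaoses gives $\eps^{K_\tau/2}\Psi(\xi)$ plus corrections carrying strictly higher powers of $\eps^{1/2}$ which are exponentially negligible by the same Borell argument; the contraction principle applied to the continuous map $h\mapsto\Psi_{\hom}(h)$ (which on polynomial chaoses coincides with $\Psi(h)$ by the Cameron--Martin formula) then yields the LDP for $\eps^{K_\tau/2}\Psi$ with good rate function $\rate_\Psi$. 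The general case is obtained by approximating $\Psi_{\tau,K_\tau}$ in $L^{2}(\nban,\gm;\ban_\tau)$ by polynomial chaos elements and passing to the limit, followed by a final exponential-equivalence step to replace $\Psi_{\tau,K_\tau}$ by $\Psi_{\delta(\eps);\tau,K_\tau}$, both steps again relying on the chaos concentration inequality.

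The main obstacle lies in these two successive approximations, since one must upgrade the qualitative $L^2$ control provided by~\eqref{eq:rv_convergence} into the super-exponential tail bound required for exponential equivalence at speed $\eps$; Gaussian chaos concentration is indispensable here, as polynomial tail bounds (Markov plus hypercontractivity with fixed integrability) are not sharp enough. Once exponential equivalence is established, closedness of the rate function $\rate_{\boldsymbol{\Psi}}$ and lower semicontinuity of its sublevel sets are standard (using the strong-weak continuity of $h\mapsto\boldsymbol{\Psi}_{\hom}(h)$ on bounded subsets of $\cm$), and the joint LDP on the direct sum $\bban$ follows because $\mcW$ is finite.
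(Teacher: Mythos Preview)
Your proposal is correct and follows essentially the same Hairer--Weber strategy as the paper: a finite-dimensional (polynomial-chaos) truncation combined with Schilder's theorem and the generalized contraction principle, together with exponential-good-approximation via Gaussian-chaos tail bounds. The only organisational difference is that you first split off the lower-order chaoses $\boldsymbol{B}^{(\eps)}$ and dispose of them directly by concentration, whereas the paper keeps them inside the $N$-truncation and shows they vanish in the limit of the approximating maps $\boldsymbol{\Phi}^{(\eps)}_{N}$; both routes rest on the same crucial observation that~\eqref{eq:rv_convergence} together with~\eqref{eq:rv_limit} forces $\eps^{K_\tau-k}\mbE[\norm{\Psi_{\delta(\eps);\tau,k}}_{\ban_\tau}^{2}]\to0$ for $k<K_\tau$, which is exactly what makes the lower-order contributions super-exponentially negligible at speed $\eps$.
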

\begin{proof}
	The key observation is that the proof of~\cite[Lem.~3.7]{hairer_weber_15} carries over if we assume~\eqref{eq:rv_convergence} and~\eqref{eq:rv_limit} instead of~\cite[(3.9)]{hairer_weber_15}. One can then adapt the strategy of~\cite[Thm.~3.5]{hairer_weber_15} to our setting.
	\begin{details}
		Let us discuss our changes to the proof of~\cite[Thm.~3.5]{hairer_weber_15}.
		
		As in~\cite[Sec.~3]{hairer_weber_15}, we decompose $\boldsymbol{\Psi}^{(\eps)}_{\delta(\eps)}$ as
		\begin{align*}
			&\boldsymbol{\Psi}^{(\eps)}_{\delta(\eps)}(\xi)=\bigoplus_{\tau\in\mcW}\eps^{K_{\tau}/2}\Psi_{\delta(\eps);\tau}(\xi),\qquad\Psi_{\delta(\eps);\tau}(\xi)=\sum_{k=0}^{K_{\tau}}\Psi_{\delta(\eps);\tau,k}(\xi),\\
			&\Psi_{\delta(\eps);\tau,k}(\xi)=\sum_{\abs{\multi}=k}y_{\multi;\delta(\eps);\tau,k}H_{\multi}(\xi),\qquad y_{\multi;\delta(\eps);\tau,k}\in\ban_{\tau}
		\end{align*}
		and consider for $N\in\mbN$ the truncated expansion
		\begin{align*}
			&\boldsymbol{\Psi}^{(\eps)}_{N;\delta(\eps)}(\xi)=\bigoplus_{\tau\in\mcW}\eps^{K_{\tau}/2}\Psi_{N;\delta(\eps);\tau}(\xi),\qquad\Psi_{N;\delta(\eps);\tau}(\xi)=\sum_{k=0}^{K_{\tau}}\Psi_{N;\delta(\eps);\tau,k}(\xi),\\
			&\Psi_{N;\delta(\eps);\tau,k}(\xi)=\sum_{\substack{\abs{\multi}=k\\\multi_{i}=0,~i>N}}y_{\multi;\delta(\eps);\tau,k}H_{\multi}(\xi).
		\end{align*}
		We further decompose the limit as
		\begin{equation*}
			\boldsymbol{\Psi}(\xi)=\bigoplus_{\tau\in\mcW}\Psi_{\tau,K_{\tau}}(\xi),\qquad\Psi_{\tau,K_{\tau}}(\xi)=\sum_{\abs{\multi}=K_{\tau}}y_{\multi;\tau,K_{\tau}}H_{\multi}(\xi),\qquad y_{\multi;\tau,K_{\tau}}\in\ban_{\tau}
		\end{equation*}
		We can now adapt~\cite[Lem.~3.7]{hairer_weber_15} to our mode of convergence~\eqref{eq:rv_convergence} towards a limit that satisfies~\eqref{eq:rv_limit}.
		\begin{lemma}\label{lem:LDP_approx}
			Let $N\in\mbN$, it follows that the random variables $(\boldsymbol{\Psi}^{(\eps)}_{N;\delta(\eps)})_{\eps>0}$ satisfy a large deviation principle in $\bban$ with speed $\eps$ and good rate function
			\begin{equation*}
				\rate_{N}(\boldsymbol{s})\defeq\inf\Bigl\{\frac{1}{2}\norm{h}_{\cm}^{2}:h\in\cm~\text{with}~\boldsymbol{\Psi}_{\hom}(h)=\boldsymbol{s}\Bigr\}.
			\end{equation*}
		\end{lemma} 
		\begin{proof}
			Let us rewrite $\boldsymbol{\Psi}^{(\eps)}_{N;\delta(\eps)}(\xi)$ so as to explicit the dependence on $\eps^{\sfrac{1}{2}}\xi$,
			\begin{align*}
				&\boldsymbol{\Psi}^{(\eps)}_{N;\delta(\eps)}(\xi)=\bigoplus_{\tau\in\mcW}\sum_{k=0}^{K_{\tau}}\Phi_{N;\tau,k}^{(\eps)}(\eps^{\sfrac{1}{2}}\xi),\qquad\Phi_{N;\tau,k}^{(\eps)}(\xi)\defeq\eps^{\frac{K_{\tau}-k}{2}}\sum_{\substack{\abs{\multi}=k\\\multi_{i}=0,\,i>N}}y_{\multi;\delta(\eps);\tau,k}\Phi_{\multi}^{(\eps)}(\xi)\eqdef\boldsymbol{\Phi}_{N}^{(\eps)}(\eps^{\sfrac{1}{2}}\xi),\\
				&\Phi_{\multi}^{(\eps)}(\xi)\defeq\prod_{i=1}^{\infty}\eps^{\frac{\multi_{i}}{2}}H_{\multi_{i}}(\eps^{-\frac{1}{2}}\inner{\xi}{e_{i}}).
			\end{align*}
			Using that each $H_{\multi_{i}}$ is a polynomial of degree $\multi_{i}$ with leading-order coefficient $1/\sqrt{\multi_{i}!}$, we obtain
			\begin{equation*}
				\Phi_{\multi}^{(\eps)}(\xi)\to\frac{1}{\sqrt{\multi!}}\prod_{i=1}^{\infty}\inner{\xi}{e_{i}}^{\multi_{i}}\eqdef\frac{1}{\sqrt{\multi!}}\xi^{\multi}\quad\text{as}~\eps\to0.
			\end{equation*}
			We now use~\eqref{eq:rv_convergence} and~\cite[(3.17)]{hairer_weber_15} to deduce for every $\tau\in\mcW$, $k\leq K_{\tau}$ and $\abs{\nu}=k$, that
			\begin{equation*}
				\eps^{\frac{K_{\tau}-k}{2}}\norm{y_{\multi;\delta(\eps);\tau;k}-y_{\multi;\tau,k}}_{E_{\tau}}\leq\eps^{\frac{K_{\tau}-k}{2}}\Bigl(\int_{\nban}\norm{\Psi_{\delta(\eps);\tau,k}(\xi)-\Psi_{\tau,k}(\xi)}_{\ban_{\tau}}^{2}\gm(\dd\xi)\Bigr)^{1/2}\to0\quad\text{as}~\eps\to0.
			\end{equation*}
			Further for all $k<K_{\tau}$, assumption~\eqref{eq:rv_limit} implies $y_{\multi;\tau,k}=0$, which combined with the above yields
			\begin{equation*}
				\eps^{\frac{K_{\tau}-k}{2}}\norm{y_{\multi;\delta(\eps);\tau;k}}_{E_{\tau}}\to0.
			\end{equation*}
			Therefore,
			\begin{equation*}
				\lim_{\eps\to0}\Phi_{N;\tau,k}^{(\eps)}(\xi)
				=
				\begin{cases}
					\begin{aligned}
						&\qquad0\qquad&&\text{for}~k<K_{\tau},\\
						&\sum_{\substack{\abs{\multi}=K_{\tau}\\\multi_{i}=0,\,i>N}}y_{\multi;\tau,K_{\tau}}\frac{1}{\sqrt{\multi!}}\xi^{\multi}\qquad&&\text{for}~k=K_{\tau}
					\end{aligned}
				\end{cases}
			\end{equation*}
			uniformly on bounded subset of $\nban$. It follows by~\cite[Lem.~1.53]{klose_msc_17}, that for every $h\in\cm$, $\lim_{\eps\to0}\Phi_{N;\tau,K_{\tau}}^{(\eps)}(\xi)=(\Psi_{N;\tau,K_{\tau}})_{\hom}(h)$, which yields
			\begin{equation*}
				\lim_{\eps\to0}\boldsymbol{\Phi}_{N}^{(\eps)}(h)=\bigoplus_{\tau\in\mcW}(\Psi_{N;\tau,K_{\tau}})_{\hom}(h)=(\boldsymbol{\Psi}_{N})_{\hom}(h).
			\end{equation*}
			The claim then follows by a combination of Schilder's theorem with the generalized contraction principle~\cite[Lem.~3.3]{hairer_weber_15}, see~\cite[Lem.~3.7]{hairer_weber_15} or~\cite[Prop.~3.15]{klose_msc_17} for more details.
		\end{proof}
		Next, let us adapt the proof of~\cite[Lem.~3.9]{hairer_weber_15} to our mode of convergence~\eqref{eq:rv_convergence} towards a limit that satisfies~\eqref{eq:rv_limit}.
		\begin{lemma}\label{lem:exp_good_approx}
			For every $\lambda>0$ it holds that
			\begin{equation*}
				\limsup_{N\to\infty}\limsup_{\eps\to0}\eps\log\gm\Bigl(\xi:\norm{\boldsymbol{\Psi}^{(\eps)}_{\delta(\eps)}(\xi)-\boldsymbol{\Psi}^{(\eps)}_{N;\delta(\eps)}(\xi)}_{\bban}\geq\lambda\Bigr)=-\infty.
			\end{equation*}
		\end{lemma}
		\begin{proof}
			Let us denote 
			\begin{equation*}
				\begin{split}
					&\Psi^{(\eps)}_{\delta(\eps);\tau}(\xi)\defeq\eps^{K_{\tau}/2}\Psi_{\delta(\eps);\tau}(\xi),\qquad\Psi_{\delta(\eps);\tau,k}^{(\eps)}(\xi)\defeq\eps^{K_{\tau}/2}\Psi_{\delta(\eps);\tau,k}(\xi),\\
					&\Psi^{(\eps)}_{N;\delta(\eps);\tau}(\xi)\defeq\eps^{K_{\tau}/2}\Psi_{N;\delta(\eps);\tau}(\xi),\qquad\Psi_{N;\delta(\eps);\tau,k}^{(\eps)}(\xi)\defeq\eps^{K_{\tau}/2}\Psi_{N;\delta(\eps);\tau,k}(\xi).
				\end{split}
			\end{equation*}
			Since $\norm{\place}_{\bban}=\sum_{\tau\in\mcW}\norm{\place}_{\ban_{\tau}}$, we may estimate
			\begin{equation*}
				\begin{split}
					\gm\Bigl(\xi:\norm{\boldsymbol{\Psi}^{(\eps)}_{\delta(\eps)}(\xi)-\boldsymbol{\Psi}^{(\eps)}_{N;\delta(\eps)}(\xi)}_{\bban}\geq\lambda\Bigr)&\leq\gm\Bigl(\xi:\norm{\Psi^{(\eps)}_{\delta(\eps);\tau}(\xi)-\Psi^{(\eps)}_{N;\delta(\eps);\tau}(\xi)}_{\ban_{\tau}}\geq\lambda/\abs{\mcW}~\text{for some}~\tau\in\mcW\Bigr)\\
					&\leq\sum_{\tau\in\mcW}\gm\Bigl(\xi:\norm{\Psi^{(\eps)}_{\delta(\eps);\tau}(\xi)-\Psi^{(\eps)}_{N;\delta(\eps);\tau}(\xi)}_{\ban_{\tau}}\geq\lambda/\abs{\mcW}\Bigr).
				\end{split}
			\end{equation*}
			Similarly we can decompose each $\Psi^{(\eps)}_{\delta(\eps);\tau}$ into its homogeneous Wiener chaoses,
			\begin{equation*}
				\begin{split}
					&\gm\Bigl(\xi:\norm{\Psi^{(\eps)}_{\delta(\eps);\tau}(\xi)-\Psi^{(\eps)}_{N;\delta(\eps);\tau}(\xi)}_{\ban_{\tau}}\geq\lambda/\abs{\mcW}\Bigr)\\
					&\leq\gm\Bigl(\xi:\sum_{k=0}^{K_{\tau}}\norm{\Psi^{(\eps)}_{\delta(\eps);\tau,k}(\xi)-\Psi^{(\eps)}_{N;\delta(\eps);\tau,k}(\xi)}_{\ban_{\tau}}\geq\lambda/\abs{\mcW}\Bigr)\\
					&\leq\sum_{k=0}^{K_{\tau}}\gm\Bigl(\xi:\norm{\Psi^{(\eps)}_{\delta(\eps);\tau,k}(\xi)-\Psi^{(\eps)}_{N;\delta(\eps);\tau,k}(\xi)}_{\ban_{\tau}}\geq\frac{\lambda}{\abs{\mcW}(K_{\tau}+1)}\Bigr).
				\end{split}
			\end{equation*}
			To summarize, we can estimate the direct sum of inhomogeneous Wiener chaoses by a sum of its components
			\begin{equation}\label{eq:dsum_inhom_chaos_upper_bound}
				\gm\Bigl(\xi:\norm{\boldsymbol{\Psi}^{(\eps)}_{\delta(\eps)}(\xi)-\boldsymbol{\Psi}^{(\eps)}_{N;\delta(\eps)}(\xi)}_{\bban}\geq\lambda\Bigr)\leq\sum_{\tau\in\mcW}\sum_{k=0}^{K_{\tau}}\gm\Bigl(\xi:\norm{\Psi^{(\eps)}_{\delta(\eps);\tau,k}(\xi)-\Psi^{(\eps)}_{N;\delta(\eps);\tau,k}(\xi)}_{\ban_{\tau}}\geq\frac{\lambda}{\abs{\mcW}(K_{\tau}+1)}\Bigr).
			\end{equation}
			We combine~\eqref{eq:dsum_inhom_chaos_upper_bound} with~\cite[Lem.~1.2.15]{dembo_zeitouni_10}, to control
			\begin{equation*}
				\begin{split}
					&\limsup_{\eps\to0}\eps\log\gm\Bigl(\xi:\norm{\boldsymbol{\Psi}^{(\eps)}_{\delta(\eps)}(\xi)-\boldsymbol{\Psi}^{(\eps)}_{N;\delta(\eps)}(\xi)}_{\bban}\geq\lambda\Bigr)\\
					&\leq\max_{\tau\in\mcW}\max_{k=0}^{K_{\tau}}\limsup_{\eps\to0}\eps\log\gm\Bigl(\xi:\norm{\Psi^{(\eps)}_{\delta(\eps);\tau,k}(\xi)-\Psi^{(\eps)}_{N;\delta(\eps);\tau,k}(\xi)}_{\ban_{\tau}}\geq\frac{\lambda}{\abs{\mcW}(K_{\tau}+1)}\Bigr),
				\end{split}
			\end{equation*}
			Using that limit superior and maxima commute, it suffices to show
			\begin{equation}\label{eq:exponentially_good_approx_separate}
				\limsup_{N\to\infty}\limsup_{\eps\to0}\eps\log\gm\Bigl(\xi:\norm{\Psi^{(\eps)}_{\delta(\eps);\tau,k}(\xi)-\Psi^{(\eps)}_{N;\delta(\eps);\tau,k}(\xi)}_{\ban_{\tau}}\geq\lambda\Bigl)=-\infty
			\end{equation}
			for each $\tau\in\mcW$ and $k\leq K_{\tau}$ separately.
			
			\emph{Case $k=K_{\tau}$.}
			It follows by the proof of~\cite[Lem.~3.9]{hairer_weber_15} that there exists some $\beta_{N;\tau,K_{\tau}}\to\infty$ as $N\to\infty$ and $C>0$, such that for every $N\in\mbN$ and $\eps<\eps_{N}$,
			\begin{equation*}
				\gm\Bigl(\xi:\norm{\Psi^{(\eps)}_{\delta(\eps);\tau,K_{\tau}}(\xi)-\Psi^{(\eps)}_{N;\delta(\eps);\tau,K_{\tau}}(\xi)}_{\ban_{\tau}}\geq\lambda\Bigr)\leq C\exp\Bigl(-\beta_{N;\tau,K_{\tau}}\lambda^{2/K_{\tau}}\eps^{-1}\Bigl),
			\end{equation*}
			where we used that~\eqref{eq:rv_convergence} coincides with~\cite[(3.9)]{hairer_weber_15} for $k=K_{\tau}$. Consequently,
			\begin{equation*}
				\limsup_{N\to\infty}\limsup_{\eps\to0}\eps\log\gm\Bigl(\xi:\norm{\Psi^{(\eps)}_{\delta(\eps);\tau,K_{\tau}}(\xi)-\Psi^{(\eps)}_{N;\delta(\eps);\tau,K_{\tau}}(\xi)}_{\ban_{\tau}}\geq\lambda\Bigr)=-\infty,
			\end{equation*}
			which yields~\eqref{eq:exponentially_good_approx_separate} for $k=K_{\tau}$.
			
			\emph{Case $k<K_{\tau}$.}
			Let $(\beta_{N;\tau,k})_{N\in\mbN}$ be a sequence such that $\beta_{N;\tau,k}\to\infty$ as $N\to\infty$. We estimate by Markov's inequality,
			\begin{equation}\label{eq:Wiener_chaos_bound_Markov}
				\begin{split}
					&\gm\Bigl(\xi:\norm{\Psi^{(\eps)}_{\delta(\eps);\tau,k}(\xi)-\Psi^{(\eps)}_{N;\delta(\eps);\tau,k}(\xi)}_{\ban_{\tau}}\geq\lambda\Bigr)\\
					&=\gm\Bigl(\xi:\exp\Bigl(\beta_{N;\tau,k}\eps^{\frac{K_{\tau}-k}{k}}\norm{\Psi_{\delta(\eps);\tau,k}(\xi)-\Psi_{N;\delta(\eps);\tau,k}(\xi)}_{\ban_{\tau}}^{2/k}\Bigr)\geq\exp\Bigl(\beta_{N;\tau,k}\eps^{\frac{K_{\tau}-k}{k}}\eps^{-\frac{K_{\tau}}{k}}\lambda^{2/k}\Bigr)\Bigr)\\
					&\leq\int_{\nban}\exp\Bigl(\beta_{N;\tau,k}\eps^{\frac{K_{\tau}-k}{k}}\norm{\Psi_{\delta(\eps);\tau,k}(\xi)-\Psi_{N;\delta(\eps);\tau,k}(\xi)}_{\ban_{\tau}}^{2/k}\Bigr)\gm(\dd\xi)\exp(-\beta_{N;\tau,k}\eps^{-1}\lambda^{2/k}).
				\end{split}
			\end{equation}
			Assume that we can find some $C>0$ and a sequence $(\eps_{N})_{N\in\mbN}$ of positive real numbers such that for every $N\in\mbN$ and $\eps<\eps_{N}$,
			\begin{equation}\label{eq:exp_moment}
				\int_{\nban}\exp\Bigl(\beta_{N;\tau,k}\eps^{\frac{K_{\tau}-k}{k}}\norm{\Psi_{\delta(\eps);\tau,k}(\xi)-\Psi_{N;\delta(\eps);\tau,k}(\xi)}_{\ban_{\tau}}^{2/k}\Bigr)\gm(\dd\xi)\leq C.
			\end{equation}
			Combining~\eqref{eq:exp_moment} and~\eqref{eq:Wiener_chaos_bound_Markov} we obtain
			\begin{equation*}
				\gm\Bigl(\xi:\norm{\Psi^{(\eps)}_{\delta(\eps);\tau,k}(\xi)-\Psi^{(\eps)}_{N;\delta(\eps);\tau,k}(\xi)}_{\ban_{\tau}}\geq\lambda\Bigr)\leq C\exp(-\beta_{N;\tau,k}\eps^{-1}\lambda^{2/k}),
			\end{equation*}
			which yields
			\begin{equation*}
				\limsup_{N\to\infty}\limsup_{\eps\to0}\eps\log\gm\Bigl(\xi:\norm{\Psi^{(\eps)}_{\delta(\eps);\tau,k}(\xi)-\Psi^{(\eps)}_{N;\delta(\eps);\tau,k}(\xi)}_{\ban_{\tau}}\geq\lambda\Bigr)=-\infty.
			\end{equation*}
			Hence to establish~\eqref{eq:exponentially_good_approx_separate} for $k<K_{\tau}$, it suffices to show~\eqref{eq:exp_moment}.
			
			To establish~\eqref{eq:exp_moment}, we use a power-series expansion to represent
			\begin{equation*}
				\begin{split}
					&\int_{\nban}\exp\Bigl(\beta_{N;\tau,k}\eps^{\frac{K_{\tau}-k}{k}}\norm{\Psi_{\delta(\eps);\tau,k}(\xi)-\Psi_{N;\delta(\eps);\tau,k}(\xi)}_{\ban_{\tau}}^{2/k}\Bigr)\mu(\dd\xi)\\
					&=\sum_{l=0}^{\infty}\frac{1}{l!}\beta_{N;\tau,k}^{l}\eps^{l\frac{K_{\tau}-k}{k}}\int_{\nban}\norm{\Psi_{\delta(\eps);\tau,k}(\xi)-\Psi_{N;\delta(\eps);\tau,k}(\xi)}_{\ban_{\tau}}^{2l/k}\mu(\dd\xi).
				\end{split}
			\end{equation*}
			For $l\leq k$, we estimate by H\"{o}lder's inequality
			\begin{equation*}
				\int_{\nban}\norm{\Psi_{\delta(\eps);\tau,k}(\xi)-\Psi_{N;\delta(\eps);\tau,k}(\xi)}_{\ban_{\tau}}^{2l/k}\mu(\dd\xi)\leq\Bigl(\int_{\nban}\norm{\Psi_{\delta(\eps);\tau,k}(\xi)-\Psi_{N;\delta(\eps);\tau,k}(\xi)}_{\ban_{\tau}}^{2}\mu(\dd\xi)\Bigr)^{l/k}
			\end{equation*}
			and for $l>k$, we bound by Nelson's estimate~\cite[Lem.~2]{friz_victoir_07},
			\begin{equation*}
				\int_{\nban}\norm{\Psi_{\delta(\eps);\tau,k}(\xi)-\Psi_{N;\delta(\eps);\tau,k}(\xi)}_{\ban_{\tau}}^{2l/k}\mu(\dd\xi)\leq\Bigl(\frac{2l}{k}-1\Bigr)^{l}\Bigl(\int_{\nban}\norm{\Psi_{\delta(\eps);\tau,k}(\xi)-\Psi_{N;\delta(\eps);\tau,k}(\xi)}_{\ban_{\tau}}^{2}\mu(\dd\xi)\Bigr)^{l/k},
			\end{equation*}
			which yields
			\begin{equation}\label{eq:power_series_exp}
				\begin{split}
					&\int_{\nban}\exp\Bigl(\beta_{N;\tau,k}\eps^{\frac{K_{\tau}-k}{k}}\norm{\Psi_{\delta(\eps);\tau,k}(\xi)-\Psi_{N;\delta(\eps);\tau,k}(\xi)}_{\ban_{\tau}}^{2/k}\Bigr)\mu(\dd\xi)\\
					&\leq\sum_{l=0}^{k}\frac{1}{l!}\beta_{N;\tau,k}^{l}\Bigl(\eps^{K_{\tau}-k}\int_{\nban}\norm{\Psi_{\delta(\eps);\tau,k}(\xi)-\Psi_{N;\delta(\eps);\tau,k}(\xi)}_{\ban_{\tau}}^{2}\gm(\dd\xi)\Bigr)^{l/k}\\
					&\quad+\sum_{l=k+1}^{\infty}\frac{1}{l!}\beta_{N;\tau,k}^{l}\Bigl(\frac{2l}{k}-1\Bigr)^{l}\Bigl(\eps^{K_{\tau}-k}\int_{\nban}\norm{\Psi_{\delta(\eps);\tau,k}(\xi)-\Psi_{N;\delta(\eps);\tau,k}(\xi)}_{\ban_{\tau}}^{2}\gm(\dd\xi)\Bigr)^{l/k}.
				\end{split}
			\end{equation}
			A combination of Jensen's inequality, the triangle inequality~\cite[Sec.~2.1]{ledoux_talagrand_11} and the tower property implies
			\begin{equation}\label{eq:exp_bound_lot_1}
				\begin{split}
					&\int_{B}\norm{\Psi_{\delta(\eps);\tau,k}(\xi)-\Psi_{N;\delta(\eps);\tau,k}(\xi)}_{\ban_{\tau}}^{2}\gm(\dd\xi)\\
					&\leq2\int_{B}\norm{\Psi_{\delta(\eps);\tau,k}(\xi)}^{2}\gm(\dd\xi)+2\int_{B}\norm{\Psi_{N;\delta(\eps);\tau,k}(\xi)}_{\ban_{\tau}}^{2}\gm(\dd\xi)\\
					&\leq4\int_{B}\norm{\Psi_{\delta(\eps);\tau,k}(\xi)}^{2}\gm(\dd\xi),
				\end{split}
			\end{equation}
			where we used that $\Psi_{N;\delta(\eps);\tau,k}$ is a conditional expectation of $\Psi_{\delta(\eps);\tau,k}$ (see~\cite[(3.4)]{hairer_weber_15}). Combining~\eqref{eq:exp_bound_lot_1} and~\eqref{eq:power_series_exp}, we obtain
			\begin{equation*}
				\begin{split}
					&\int_{\nban}\exp\Bigl(\beta_{N;\tau,k}\eps^{\frac{K_{\tau}-k}{k}}\norm{\Psi_{\delta(\eps);\tau,k}(\xi)-\Psi_{N;\delta(\eps);\tau,k}(\xi)}_{\ban_{\tau}}^{2/k}\Bigr)\gm(\dd\xi)\\
					&\leq\sum_{l=0}^{k}\frac{1}{l!}\beta_{N;\tau,k}^{l}4^{l/k}\Bigl(\eps^{K_{\tau}-k}\int_{\nban}\norm{\Psi_{\delta(\eps);\tau,k}(\xi)}_{\ban_{\tau}}^{2}\gm(\dd\xi)\Bigr)^{l/k}\\
					&\quad+\sum_{l=k+1}^{\infty}\frac{1}{l!}\beta_{N;\tau,k}^{l}\Bigl(\frac{2l}{k}-1\Bigr)^{l}4^{l/k}\Bigl(\eps^{K_{\tau}-k}\int_{\nban}\norm{\Psi_{\delta(\eps);\tau,k}(\xi)}_{\ban_{\tau}}^{2}\gm(\dd\xi)\Bigr)^{l/k}.
				\end{split}
			\end{equation*}
			Using our mode of convergence~\eqref{eq:rv_convergence} towards a limit without lower-order terms~\eqref{eq:rv_limit}, it follows that for all $N\in\mbN$ there exists some $\eps_{N}>0$ such that for all $\eps<\eps_{N}$,
			\begin{equation*}
				y\defeq\beta_{N;\tau,k}\frac{2}{k}4^{1/k}\Bigl(\eps^{K_{\tau}-k}\int_{\nban}\norm{\Psi_{\delta(\eps);\tau,k}(\xi)}_{\ban_{\tau}}^{2}\gm(\dd\xi)\Bigr)^{1/k}<\euler^{-1}.
			\end{equation*}
			The power series
			\begin{equation*}
				y\mapsto\sum_{l=1}^{\infty}\frac{1}{l!}l^{l}y^{l}
			\end{equation*}
			converges absolutely for $\abs{y}<\euler^{-1}$, which can be seen by an application of the Cauchy ratio test:
			\begin{equation*}
				\lim_{l\to\infty}\frac{l!(l+1)^{l+1}\abs{y}^{l+1}}{(l+1)!l^{l}\abs{y}^{l}}=\abs{y}\lim_{l\to\infty}\Bigl(1+\frac{1}{l}\Bigr)^{l}=\abs{y}\euler^{1}<1.
			\end{equation*}
			Therefore it follows by the dominated convergence theorem, that there exists some $C>0$ and a sequence $(\eps_{N})_{N\in\mbN}$ such that for every $N\in\mbN$ and $\eps<\eps_{N}$, 
			\begin{equation*}
				\begin{split}
					&\sum_{l=0}^{k}\frac{1}{l!}\beta_{N;\tau,k}^{l}4^{l/k}\Bigl(\eps^{K_{\tau}-k}\int_{\nban}\norm{\Psi_{\delta(\eps);\tau,k}(\xi)}_{\ban_{\tau}}^{2}\gm(\dd\xi)\Bigr)^{l/k}\\
					&\quad+\sum_{l=k+1}^{\infty}\frac{1}{l!}\beta_{N;\tau,k}^{l}\Bigl(\frac{2l}{k}-1\Bigr)^{l}4^{l/k}\Bigl(\eps^{K_{\tau}-k}\int_{\nban}\norm{\Psi_{\delta(\eps);\tau,k}(\xi)}_{\ban_{\tau}}^{2}\gm(\dd\xi)\Bigr)^{l/k}\\
					&\leq C,
				\end{split}
			\end{equation*}
			which yields~\eqref{eq:exp_moment} and hence~\eqref{eq:exponentially_good_approx_separate} for $k=K_{\tau}$.
			
			All in all, we showed~\eqref{eq:exponentially_good_approx_separate} for all $\tau\in\mcW$ and $k\leq K_{\tau}$, which yields the claim.
		\end{proof}
		A combination  of Lemma~\ref{lem:LDP_approx}, Lemma~\ref{lem:exp_good_approx} and~\cite[Thm.~4.2.16~(a)]{dembo_zeitouni_10} implies that $(\boldsymbol{\Psi}^{(\eps)}_{\delta(\eps)})_{\eps>0}$ satisfies a weak large deviation principle in $\bban$ with speed $\eps$ and rate function
		\begin{equation*}
			\msJ(\boldsymbol{s})\defeq\sup_{\lambda>0}\liminf_{N\to\infty}\inf_{\boldsymbol{r}:\norm{\boldsymbol{s}-\boldsymbol{r}}_{\bban}<\lambda}\rate_{N}(\boldsymbol{r}).
		\end{equation*}
		To conclude the proof of Theorem~\ref{thm:LDP_wiener_chaos}, we can apply~\cite[Lem.~3.8]{hairer_weber_15} (see also~\cite[Lem.~3.17]{klose_msc_17} for more details) to show $\msJ=\rate_{\boldsymbol{\Psi}}$, that $\rate_{\boldsymbol{\Psi}}$ is a good rate function and that for every closed set $\boldsymbol{C}\subseteq\bban$,
		\begin{equation*}
			\inf_{\boldsymbol{s}\in\boldsymbol{C}}\rate_{\boldsymbol{\Psi}}(s)\leq\limsup_{N\to\infty}\inf_{\boldsymbol{s}\in\boldsymbol{C}}\rate_{N}(\boldsymbol{s});
		\end{equation*}
		indeed an application of~\cite[Thm.~4.2.16~(b)]{dembo_zeitouni_10} then implies the large deviation principle for $(\boldsymbol{\Psi}^{(\eps)}_{\delta(\eps)})_{\eps>0}$ in $\bban$ with speed $\eps$ and good rate function $\rate_{\boldsymbol{\Psi}}$.
	\end{details}
\end{proof}
\section{The Enhancement Driven by a Cameron--Martin Element}\label{app:enh_driven_by_h}
In this appendix we analyse the regularity of the mild solution $\ti^{h}=\vdiv\mcI[\srdet h]$ driven by a Cameron--Martin element $h\in L^{2}([0,T]\times\mbT^{2};\mbR^{2})$ (Lemmas~\ref{lem:lolli_h}~\&~\ref{lem:lolli_h_L2H1}), which we then use to construct the enhancement $\mbX^{h}=(\ti^{h},\ty^{h},\tp^{h},\tc^{h})$ (cf.~\eqref{eq:enhancement_h_def}) appearing in Theorem~\ref{thm:LDP_enhancement} (Lemma~\ref{lem:existence_enhancement_h}).

For every $t\in[0,T]$ and $\om=(\om^{1},\om^{2})\in\mbZ^{2}$, we can represent the Fourier transform of $\ti^{h}$ by
\begin{equation}\label{eq:lolli_h_Fourier}
	\hat{\ti^{h}}(t,\om)=\sum_{j=1}^{2}\sum_{m\in\mbZ^{2}}\int_{0}^{t}H_{t-u}^{j}(\om)\hat{\srdet}(u,\om-m)\hat{h}(u,m,j)\dd u,
\end{equation}
where $H^{j}_{t-u}(\om)=2\uppi\upi\om^{j}\exp(-\abs{t-u}\abs{2\uppi\om}^{2})$ (see Subsection~\ref{subsec:notation}).

The next lemma establishes the regularity of $\ti^{h}$ as a (H\"{o}lder-)continuous function in time.
\begin{lemma}\label{lem:lolli_h}
	Let $\rho_{0}\in C(\mbT^{2})$ be such that $\rho_{0}\geq0$ and $\mean{\rho_{0}}=1$, $\rdet$ be the weak solution to~\eqref{eq:Determ_KS} with initial data $\rho_{0}$ and chemotactic sensitivity $\chem\in\mbR$, and $\Trdet$ be its maximal time of existence (see Lemma~\ref{lem:Determ_KS_continuity}). Then for all $T<\Trdet$, $h\in L^{2}([0,T]\times\mbT^{2};\mbR^{2})$ and $\kappa\in[0,1/2]$ it holds that
	\begin{equation*}
		\norm{\ti^{h}}_{C_{T}^{\kappa}\mcH^{-2\kappa}}\lesssim\norm{\srdet}_{C([0,T]\times\mbT^{2})}\norm{h}_{L^{2}([0,T]\times\mbT^{2};\mbR^{2})}
	\end{equation*}
	and in particular $\ti^{h}\in C_{T}L^{2}(\mbT^{2})$.
\end{lemma}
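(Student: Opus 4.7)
The plan is to work directly from the Fourier representation~\eqref{eq:lolli_h_Fourier} or, equivalently, the mild formula $\ti^{h}=\vdiv\mcI[\srdet h]$, in order to obtain all three pieces of information: the fixed-time $L^{2}$ bound, the continuity in time, and the $\kappa$-H\"older estimate. The boundedness of $\srdet\in C([0,T]\times\mbT^{2})$ provided by Lemma~\ref{lem:regularity_srdet} (using that $\rho_{0}\in C(\mbT^{2})$ with $\rho_{0}\geq0$, $\mean{\rho_{0}}=1$) is the key structural ingredient that lets us factor out $\norm{\srdet}_{C([0,T]\times\mbT^{2})}$ and retain the full $L^{2}$ norm of $h$.

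First I would prove the case $\kappa=0$ by duality. For $\test\in C^{\infty}(\mbT^{2})$, integration by parts gives $\langle \ti^{h}(t),\test\rangle=-\int_{0}^{t}\langle \srdet(s)h(s),\nabla P_{t-s}\test\rangle\dd s$. Bounding pointwise in time by $\norm{\srdet}_{C([0,T]\times\mbT^{2})}\norm{h(s)}_{L^{2}}\norm{\nabla P_{t-s}\test}_{L^{2}}$ and applying Cauchy--Schwarz in the time variable reduces matters to
\[
\int_{0}^{t}\norm{\nabla P_{t-s}\test}_{L^{2}}^{2}\dd s=\sum_{\om\neq 0}\tfrac{1}{2}\bigl(1-\euler^{-2t\abs{2\uppi\om}^{2}}\bigr)\abs{\hat{\test}(\om)}^{2}\leq\tfrac{1}{2}\norm{\test}_{L^{2}}^{2},
\]
computed via Parseval. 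Riesz duality then yields $\norm{\ti^{h}(t)}_{L^{2}}\lesssim\norm{\srdet}_{C([0,T]\times\mbT^{2})}\norm{h}_{L^{2}([0,T]\times\mbT^{2};\mbR^{2})}$.

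Next I would establish the $\kappa$-H\"older bound for $\kappa\in(0,1/2]$ by splitting, for $s<t$,
\[
\ti^{h}(t)-\ti^{h}(s)=(P_{t-s}-1)\ti^{h}(s)+\int_{s}^{t}\vdiv P_{t-u}(\srdet(u)h(u))\dd u.
\]
For the first summand the elementary inequality $1-\euler^{-x}\leq x^{\kappa}$ (valid on $[0,\infty)$ for $\kappa\in[0,1]$) together with Parseval gives $\norm{(P_{t-s}-1)v}_{\mcH^{-2\kappa}}\lesssim(t-s)^{\kappa}\norm{v}_{L^{2}}$, which combined with the $\kappa=0$ bound already proved controls it by $(t-s)^{\kappa}$ times the right-hand side. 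For the second summand I repeat the duality argument but now test against $\test\in\mcH^{2\kappa}(\mbT^{2})$, estimating
\[
\int_{s}^{t}\norm{\nabla P_{t-u}\test}_{L^{2}}^{2}\dd u=\sum_{\om\neq 0}\tfrac{1}{2}\bigl(1-\euler^{-2(t-s)\abs{2\uppi\om}^{2}}\bigr)\abs{\hat{\test}(\om)}^{2}\lesssim(t-s)^{2\kappa}\norm{\test}_{\mcH^{2\kappa}}^{2}
\]
by the same exponential inequality, and then applying Cauchy--Schwarz in time to pick up $\norm{h}_{L^{2}([s,t]\times\mbT^{2})}\leq\norm{h}_{L^{2}([0,T]\times\mbT^{2})}$.

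The remaining step is continuity $\ti^{h}\in C_{T}L^{2}$. The same decomposition reduces this to strong continuity of the heat semigroup on $L^{2}(\mbT^{2})$ applied to $\ti^{h}(s)\in L^{2}$, plus the $\kappa=0$ bound applied on the short interval $[s,t]$, which gives $\|\int_{s}^{t}\vdiv P_{t-u}(\srdet h)\dd u\|_{L^{2}}\lesssim\norm{\srdet}_{C}\norm{h}_{L^{2}([s,t]\times\mbT^{2})}\to 0$ by absolute continuity of the $L^{2}$-norm of $h$ as $|t-s|\to 0$. I do not expect any real obstacle here; the only mild subtlety is the uniform use of the bound $1-\euler^{-x}\leq x^{\kappa}$, which is precisely what pins the H\"older range to $\kappa\leq1/2$ (one loses one power of $|2\uppi\om|^{2}$ from the single spatial derivative, so the Sobolev loss $2\kappa$ is matched to the parabolic gain $x^{2\kappa}$ exactly when $2\kappa\leq 1$).
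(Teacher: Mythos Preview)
Your duality approach to the quantitative estimate is correct and arguably cleaner than the paper's route. The paper works directly in Fourier: after absorbing $\srdet$ into $h$ it bounds $|\hat{\ti^{h}}(t,\om)-\hat{\ti^{h}}(s,\om)|^{2}$ mode by mode via Cauchy--Schwarz, interpolates to obtain $\lesssim|t-s|^{2\kappa}|\om|^{4\kappa}\|\hat h(\cdot,\om)\|_{L^{2}_{T}}^{2}$, and then sums over $\om$. Your semigroup split $(P_{t-s}-1)\ti^{h}(s)+\int_{s}^{t}\vdiv P_{t-u}(\srdet h)\dd u$ together with the dual computation $\int_{s}^{t}\|\nabla P_{t-u}\test\|_{L^{2}}^{2}\dd u\lesssim(t-s)^{2\kappa}\|\test\|_{\mcH^{2\kappa}}^{2}$ reaches the same bound with less bookkeeping; the $\kappa=0$ case is particularly transparent.

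There is, however, a small gap in your continuity argument. The decomposition combined with strong continuity of $(P_{r})_{r\geq0}$ on $L^{2}$ gives \emph{right} continuity at each fixed $s$ (since $\ti^{h}(s)$ is then held fixed while $r=t-s\to0^{+}$), but for left continuity at a fixed $t_{0}$ you must control $(P_{t_{0}-s}-1)\ti^{h}(s)$ as $s\to t_{0}^{-}$, where the argument $\ti^{h}(s)$ itself varies. Strong continuity of the semigroup is not uniform on bounded subsets of $L^{2}$, so this step does not follow from what you have written. The paper closes this gap by approximation: it truncates $h$ to $h_{\delta}$ with compact Fourier support so that $h_{\delta}\in L^{2}_{T}\mcH^{\gamma}$ for all $\gamma\geq0$; the same increment estimate (now with $\|h_{\delta}\|_{L^{2}_{T}\mcH^{\gamma}}$ on the right) yields $\ti^{h_{\delta}}\in C^{\kappa}_{T}\mcH^{\gamma-2\kappa}\subset C_{T}L^{2}$ once $\gamma\geq2\kappa>0$, and the $\kappa=0$ bound gives $\ti^{h_{\delta}}\to\ti^{h}$ in $L^{\infty}_{T}L^{2}$, whence $\ti^{h}\in C_{T}L^{2}$ by completeness. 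Your framework accommodates exactly the same fix.
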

\begin{proof}
	An application of Lemma~\ref{lem:regularity_srdet} yields $\srdet\in C([0,T]\times\mbT^{2})$; in particular $\srdet h\in L^{2}([0,T]\times\mbT^{2};\mbR^{2})$ and upon redefining $h$, we may assume that $\ti^{h}=\vdiv\mcI[h]$.
	
	Let $\om\in\mbZ^{2}$ and $s,t\in[0,T]$ with $s<t$. We bound by Jensen's inequality and the Cauchy--Schwarz inequality,
	\begin{equation*}
		\begin{split}
			\abs{\hat{\ti^{h}}(t,\om)-\hat{\ti^{h}}(s,\om)}^{2}&\lesssim\sum_{j=1}^{2}\Bigl\lvert\int_{0}^{s}(H_{t-u}^{j}(\om)-H_{s-u}^{j}(\om))\hat{h}(u,\om,j)\dd u\Bigr\rvert^{2}+\Bigl\lvert\int_{s}^{t}H_{t-u}^{j}(\om)\hat{h}(u,\om,j)\dd u\Bigr\rvert^{2}\\
			&\leq\sum_{j=1}^{2}\Bigl(\int_{0}^{s}\abs{H_{t-u}^{j}(\om)-H_{s-u}^{j}(\om)}^{2}\dd u\Bigr)\Bigl(\int_{0}^{s}\abs{\hat{h}(u,\om,j)}^{2}\dd u\Bigr)\\
			&\multiquad[3]+\Bigl(\int_{s}^{t}\abs{H_{t-u}^{j}(\om)}^{2}\dd u\Bigr)\Bigl(\int_{s}^{t}\abs{\hat{h}(u,\om,j)}^{2}\dd u\Bigr).
		\end{split}
	\end{equation*}
	It follows by interpolation that for all $\kappa\in[0,1/2]$,
	\begin{equation}\label{eq:lolli_h_increment_bound}
		\abs{\hat{\ti^{h}}(t,\om)-\hat{\ti^{h}}(s,\om)}^{2}\lesssim\abs{t-s}^{2\kappa}\abs{\om}^{4\kappa}\norm{\hat{h}(\om)}^{2}_{L^{2}([0,T];\mbR^{2})},
	\end{equation}
	which allows us to estimate by~\eqref{eq:Bessel_norm},
	\begin{equation*}
		\norm{\ti^{h}_{t}-\ti^{h}_{s}}_{\mcH^{-2\kappa}}^{2}=\sum_{\om\in\mbZ^{2}}(1+\abs{2\uppi\om}^{2})^{-2\kappa}\abs{\hat{\ti^{h}}(t,\om)-\hat{\ti^{h}}(s,\om)}^{2}\lesssim\abs{t-s}^{2\kappa}\norm{h}_{L^{2}([0,T]\times\mbT^{2};\mbR^{2})}^{2}.
	\end{equation*}
	By the definition of the $C_{T}^{\kappa}\mcH^{-2\kappa}(\mbT^{2})$-norm, we obtain the bound
	\begin{equation*}
		\norm{\ti^{h}}_{C_{T}^{\kappa}\mcH^{-2\kappa}}\lesssim\norm{h}_{L^{2}([0,T]\times\mbT^{2};\mbR^{2})}.
	\end{equation*}
	
	To establish $\ti^{h}\in C_{T}L^{2}(\mbT^{2})$, we approximate and use the completeness of $C_{T}L^{2}(\mbT^{2})$ under the supremum-norm. Let $(h_{\delta})_{\delta>0}$ be a sequence such that $h_{\delta}\in L^{2}([0,T];\mcH^{\gamma}(\mbT^{2};\mbR^{2}))$ for every $\delta>0$ and $\gamma\geq0$, and such that $h_{\delta}\to h\in L^{2}([0,T]\times\mbT^{2};\mbR^{2})$ as $\delta\to0$.
	\begin{details}
		To construct such a sequence, define for all $\delta>0$,
		\begin{equation*}
			h_{\delta}(t,x)\defeq\sum_{\substack{\om\in\mbZ^{2}\\\abs{\om}\leq\delta^{-1}}}\euler^{2\uppi\upi\inner{\om}{x}}\hat{h}(t,\om).
		\end{equation*}
		The approximation $h_{\delta}$ has compact support in Fourier space, hence by Parseval's theorem,
		\begin{equation*}
			\norm{h_{\delta}(t)}_{\mcH^{\gamma}}^{2}=\sum_{\om\in\mbZ^{2}}(1+\abs{2\uppi\om}^{2})^{\gamma}\abs{\hat{h_{\delta}}(t,\om)}^{2}\lesssim(1+\delta^{-2})^{\gamma}\sum_{\om\in\mbZ^{2}}\abs{\hat{h}(t,\om)}^{2}=(1+\delta^{-2})^{\gamma}\norm{h(t)}_{L^{2}(\mbT^{2};\mbR^{2})}^{2},
		\end{equation*}
		which proves $h_{\delta}\in L^{2}([0,T];\mcH^{\gamma}(\mbT^{2};\mbR^{2}))$. To show $h_{\delta}\to h\in L^{2}([0,T]\times\mbT^{2};\mbR^{2})$, we estimate
		\begin{equation*}
			\norm{h(t)-h_{\delta}(t)}_{L^{2}(\mbT^{2};\mbR^{2})}^{2}=\sum_{\substack{\om\in\mbZ^{2}\\\abs{\om}>\delta^{-1}}}\abs{\hat{h}(t,\om)}^{2}\leq\norm{h(t)}_{L^{2}(\mbT^{2};\mbR^{2})}^{2}
		\end{equation*}
		and use the dominated convergence theorem.
	\end{details}
	An application of~\eqref{eq:lolli_h_increment_bound} yields for all $s,t\in[0,T]$ with $s<t$,
	\begin{equation*}
		\norm{\ti^{h_{\delta}}_{t}-\ti^{h_{\delta}}_{s}}_{\mcH^{\gamma-2\kappa}}^{2}\lesssim\abs{t-s}^{2\kappa}\sum_{\om\in\mbZ^{2}}(1+\abs{2\uppi\om}^{2})^{\gamma}\norm{\hat{h_{\delta}}(\om)}^{2}_{L^{2}([0,T];\mbR^{2})}=\abs{t-s}^{2\kappa}\norm{h_{\delta}}^{2}_{L^{2}([0,T];\mcH^{\gamma}(\mbT^{2};\mbR^{2}))},
	\end{equation*}
	hence $\norm{\ti^{h_{\delta}}}_{C_{T}^{\kappa}\mcH^{\gamma-2\kappa}}<\infty$, which, upon choosing $\gamma\geq2\kappa$, implies $\ti^{h_{\delta}}\in C_{T}L^{2}(\mbT^{2})$. Another application of~\eqref{eq:lolli_h_increment_bound} allows us to deduce 
	\begin{equation*}
		\norm{\ti^{h}-\ti^{h_{\delta}}}_{C_{T}L^{2}}^{2}\lesssim\norm{h-h_{\delta}}_{L^{2}([0,T]\times\mbT^{2};\mbR^{2})}^{2}\to0\quad\text{as}~\delta\to0
	\end{equation*}
	and using that $C_{T}L^{2}(\mbT^{2})$ is complete we obtain $\ti^{h}\in C_{T}L^{2}(\mbT^{2})$, which yields the claim.
\end{proof}
The next lemma establishes the regularity of $\ti^{h}$ as a square-integrable function in time.
\begin{lemma}\label{lem:lolli_h_L2H1}
	Let $\rho_{0}\in C(\mbT^{2})$ be such that $\rho_{0}\geq0$ and $\mean{\rho_{0}}=1$, $\rdet$ be the weak solution to~\eqref{eq:Determ_KS} with initial data $\rho_{0}$ and chemotactic sensitivity $\chem\in\mbR$, and $\Trdet$ be its maximal time of existence (see Lemma~\ref{lem:Determ_KS_continuity}). Then for all $T<\Trdet$ and $h\in L^{2}([0,T]\times\mbT^{2};\mbR^{2})$ it holds that
	\begin{equation*}
		\norm{\ti^{h}}_{L^{2}_{T}\mcH^{1}}\lesssim\norm{\srdet}_{C([0,T]\times\mbT^{2})}\norm{h}_{L^{2}([0,T]\times\mbT^{2};\mbR^{2})}.
	\end{equation*}
	and in particular $\ti^{h}\in L^{2}_{T}\mcH^{1}(\mbT^{2})$.
\end{lemma}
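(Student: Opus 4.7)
The plan is to reduce the claim to an estimate on $\vdiv\mcI[g]$ for $g\in L^{2}([0,T]\times\mbT^{2};\mbR^{2})$ via an application of Lemma~\ref{lem:regularity_srdet}. Indeed, since $\srdet\in C([0,T]\times\mbT^{2})$, the product $g\defeq\srdet h$ belongs to $L^{2}([0,T]\times\mbT^{2};\mbR^{2})$ with
\begin{equation*}
	\norm{g}_{L^{2}([0,T]\times\mbT^{2};\mbR^{2})}\leq\norm{\srdet}_{C([0,T]\times\mbT^{2})}\norm{h}_{L^{2}([0,T]\times\mbT^{2};\mbR^{2})},
\end{equation*}
and $\ti^{h}=\vdiv\mcI[g]$. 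So it suffices to prove $\norm{\vdiv\mcI[g]}_{L^{2}_{T}\mcH^{1}}\lesssim\norm{g}_{L^{2}([0,T]\times\mbT^{2};\mbR^{2})}$.

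First, I would pass to the Fourier side. Mirroring~\eqref{eq:lolli_h_Fourier}, for $\omega\in\mbZ^{2}$ the Fourier coefficient factorises as a time convolution
\begin{equation*}
	\widehat{\vdiv\mcI[g]}(t,\omega)=\sum_{j=1}^{2}\int_{0}^{t}H_{t-u}^{j}(\omega)\hat{g}(u,\omega,j)\dd u,
\end{equation*}
where $H_{t}^{j}(\omega)=2\uppi\upi\omega^{j}\euler^{-t\abs{2\uppi\omega}^{2}}\mathds{1}_{t\geq0}$. The $\omega=0$ mode vanishes identically. For $\omega\neq0$ I would apply Young's convolution inequality in the time variable, using the elementary bound
\begin{equation*}
	\norm{H^{j}(\place,\omega)}_{L^{1}([0,T])}=\frac{\abs{2\uppi\omega^{j}}}{\abs{2\uppi\omega}^{2}}(1-\euler^{-T\abs{2\uppi\omega}^{2}})\leq\frac{1}{\abs{2\uppi\omega}},
\end{equation*}
to obtain, after squaring and summing over $j$,
\begin{equation*}
	\int_{0}^{T}\abs{\widehat{\vdiv\mcI[g]}(t,\omega)}^{2}\dd t\lesssim\frac{1}{\abs{2\uppi\omega}^{2}}\sum_{j=1}^{2}\int_{0}^{T}\abs{\hat{g}(u,\omega,j)}^{2}\dd u.
\end{equation*}

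Multiplying by $(1+\abs{2\uppi\omega}^{2})$ and noting that $(1+\abs{2\uppi\omega}^{2})/\abs{2\uppi\omega}^{2}$ is bounded for $\omega\neq0$, summation over $\omega\in\mbZ^{2}\setminus\{0\}$ combined with Parseval's identity gives $\norm{\vdiv\mcI[g]}_{L^{2}_{T}\mcH^{1}}^{2}\lesssim\norm{g}_{L^{2}([0,T]\times\mbT^{2};\mbR^{2})}^{2}$, which yields the claim after substituting $g=\srdet h$. An approximation argument analogous to the one closing the proof of Lemma~\ref{lem:lolli_h} then upgrades the formal Fourier manipulations to a rigorous statement that $\ti^{h}\in L^{2}_{T}\mcH^{1}(\mbT^{2})$.

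There is no serious obstacle: the only mildly delicate point is the gain of one spatial derivative from the factor $\abs{2\uppi\omega}^{-1}$ in the time-integrated heat kernel, which exactly compensates the $\mcH^{1}$-weight. All other steps are routine applications of Young's convolution inequality and Parseval.
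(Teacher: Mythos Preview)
Your proof is correct and follows essentially the same route as the paper: both reduce to $\vdiv\mcI[g]$ by absorbing $\srdet$ into the forcing, pass to the Fourier side, and apply Young's convolution inequality in time together with Parseval. The only cosmetic difference is that the paper first pulls the factor $\abs{2\uppi\omega}^{2}$ out of the $j$-sum via Cauchy--Schwarz and then applies Young with the scalar kernel $\euler^{-t\abs{2\uppi\omega}^{2}}$, whereas you keep the $j$-dependence in $H^{j}$ and apply Young componentwise; the arithmetic is equivalent.
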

\begin{proof}
	As in the proof of Lemma~\ref{lem:lolli_h}, we may assume that $\ti^{h}=\vdiv\mcI[h]$. Using~\eqref{eq:lolli_h_Fourier}, we decompose
	\begin{equation*}
		\norm{\ti^{h}}_{L^{2}_{T}\mcH^{1}}^{2}=\int_{0}^{T}\sum_{\om\in\mbZ^{2}\setminus\{0\}}(1+\abs{2\uppi\om}^{2})\Bigl\lvert\sum_{j=1}^{2}2\uppi\upi\om^{j}\int_{0}^{t}\euler^{-\abs{t-u}\abs{2\uppi\om}^{2}}\hat{h}(u,\om,j)\dd u\Bigr\rvert^{2}\dd t.
	\end{equation*}
	We then apply the Cauchy--Schwarz inequality followed by Young's convolution inequality to estimate
	\begin{equation*}
		\begin{split}
			\norm{\ti^{h}}_{L^{2}_{T}\mcH^{1}}^{2}&\leq\int_{0}^{T}\sum_{\om\in\mbZ^{2}\setminus\{0\}}(1+\abs{2\uppi\om}^{2})\abs{2\uppi\om}^{2}\sum_{j=1}^{2}\Bigl\lvert\int_{0}^{t}\euler^{-\abs{t-u}\abs{2\uppi\om}^{2}}\hat{h}(u,\om,j)\dd u\Bigr\rvert^{2}\dd t\\
			&\leq\sum_{j=1}^{2}\sum_{\om\in\mbZ^{2}\setminus\{0\}}(1+\abs{2\uppi\om}^{2})\abs{2\uppi\om}^{2}\Bigl(\int_{0}^{T}\euler^{-t\abs{2\uppi\om}^{2}}\dd t\Bigr)^{2}\Bigl(\int_{0}^{T}\abs{\hat{h}(t,\om,j)}^{2}\dd t\Bigr)\\
			&\lesssim\sum_{j=1}^{2}\sum_{\om\in\mbZ^{2}}\int_{0}^{T}\abs{\hat{h}(t,\om,j)}^{2}\dd t=\norm{h}_{L^{2}([0,T]\times\mbT^{2};\mbR^{2})}^{2},
		\end{split}
	\end{equation*}
	which yields the claim.
\end{proof}
Let $\mbX^{h}\defeq(\ti^{h},\ty^{h},\tp^{h},\tc^{h})$ be the enhancement driven by the Cameron--Martin element $h\in L^{2}([0,T]\times\mbT^{2};\mbR^{2})$, i.e.\
\begin{equation}\label{eq:enhancement_h_def}
	\begin{split}
		&\ti^{h}\defeq\vdiv\mcI[\srdet h],\quad\ty^{h}\defeq\vdiv\mcI[\ti^{h}\nabla\Phi_{\ti^{h}}],\\
		&\tp^{h}\defeq\ty^{h}\re\nabla\Phi_{\ti^{h}}+\nabla\Phi_{\ty^{h}}\re\ti^{h},\quad\tc^{h}\defeq\nabla\mcI[\ti^{h}]\re\nabla\Phi_{\ti^{h}}+\nabla^{2}\mcI[\Phi_{\ti^{h}}]\re\ti^{h}.
	\end{split}
\end{equation}
We can construct $\mbX^{h}$ by combining Lemma~\ref{lem:lolli_h}, Schauder's estimate~\cite[Lem.~A.6]{martini_mayorcas_25} and Bony's estimate.
\begin{lemma}\label{lem:existence_enhancement_h}
	Let $\rho_{0}\in C(\mbT^{2})$ be such that $\rho_{0}\geq0$ and $\mean{\rho_{0}}=1$, $\rdet$ be the weak solution to~\eqref{eq:Determ_KS} with initial data $\rho_{0}$ and chemotactic sensitivity $\chem\in\mbR$, and $\Trdet$ be its maximal time of existence (see Lemma~\ref{lem:Determ_KS_continuity}). Then for all $T<\Trdet$, $h\in L^{2}([0,T]\times\mbT^{2};\mbR^{2})$, $\alpha<-2$ and $\kappa\in(0,1/2)$, it holds that $\mbX^{h}=(\ti^{h},\ty^{h},\tp^{h},\tc^{h})\in\rksnoise{\alpha}{\kappa}_{T}$ is well-defined as a distribution and
	\begin{equation*}
		\norm{\mbX^{h}}_{\rksnoise{\alpha}{\kappa}_{T}}\lesssim\norm{\srdet}_{C_{T}L^{\infty}}\norm{h}_{L^{2}([0,T]\times\mbT^{2};\mbR^{2})}(1\vee\norm{\srdet}^{2}_{C_{T}L^{\infty}}\norm{h}^{2}_{L^{2}([0,T]\times\mbT^{2};\mbR^{2})}).
	\end{equation*}
\end{lemma}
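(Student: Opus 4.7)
The plan is to exploit the fact that $h$ is deterministic: unlike the stochastic noise $\boldsymbol{\xi}$, which forces the enhancement to be defined through renormalisation and Wiener chaos arguments (Theorem~\ref{thm:existence_enh_can}), here all four objects $(\ti^{h},\ty^{h},\tp^{h},\tc^{h})$ can be built by classical multilinear analysis starting from the regularity of $\ti^{h}$ established in Lemmas~\ref{lem:lolli_h} and~\ref{lem:lolli_h_L2H1}.

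First I would establish the bound on $\ti^{h}$ in $\msL^{\kappa}_{T}\mcC^{\alpha+1}(\mbT^{2})$. From Lemma~\ref{lem:lolli_h} (applied with exponent $\kappa\in[0,1/2]$) we have $\ti^{h}\in C_{T}^{\kappa}\mcH^{-2\kappa}(\mbT^{2})$ together with the bound $\norm{\srdet}_{C_{T}L^{\infty}}\norm{h}_{L^{2}}$; since $\alpha<-2$ and $\kappa<1/2$, Besov embedding $\mcH^{s}(\mbT^{2})=\mcB^{s}_{2,2}\embed\mcB^{s-1}_{\infty,\infty}=\mcC^{s-1}$ yields $C_{T}^{\kappa}\mcH^{-2\kappa}\embed C_{T}^{\kappa}\mcC^{\alpha+1-2\kappa}$ and, taking $\kappa=0$, $C_{T}L^{2}\embed C_{T}\mcC^{\alpha+1}$, giving the required embedding into $\msL^{\kappa}_{T}\mcC^{\alpha+1}$ with the right scaling in $\norm{h}_{L^{2}}$.

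Next I would handle $\ty^{h}=\vdiv\mcI[\ti^{h}\nabla\Phi_{\ti^{h}}]$. Using $\ti^{h}\in C_{T}L^{2}\cap L_{T}^{2}\mcH^{1}$ (Lemmas~\ref{lem:lolli_h} and~\ref{lem:lolli_h_L2H1}) combined with elliptic regularity gives $\nabla\Phi_{\ti^{h}}\in C_{T}\mcH^{1}\cap L_{T}^{2}\mcH^{2}$. A product estimate in Bessel spaces (e.g., $\norm{uv}_{\mcH^{-\vartheta}}\lesssim\norm{u}_{L^{2}}\norm{v}_{\mcH^{1}}$) together with Schauder's estimate (Lemma~\ref{lem:Schauder_Sobolev}) then yields $\ty^{h}\in C_{T}\mcH^{1-\vartheta}$ for arbitrarily small $\vartheta>0$, and a similar computation in $C^{\kappa}_{T}$ produces the required time-Hölder regularity. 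Since $2\alpha+4<0<1-\vartheta$, this embeds into $\msL^{\kappa}_{T}\mcC^{2\alpha+4}$ and the quadratic dependence on $\norm{h}_{L^{2}}$ comes out of the bilinearity of the product, contributing the factor $\norm{\srdet}^{2}_{C_{T}L^{\infty}}\norm{h}^{2}_{L^{2}}$.

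Finally I would treat $\tp^{h}$ and $\tc^{h}$, which involve resonant products. Since $\ti^{h}$, $\nabla\Phi_{\ti^{h}}$ and (by the previous step) $\ty^{h}$ now all have positive or near-positive Hölder regularity, every resonant product appearing in the definitions is classical in the sense of Bony, so the paraproduct estimates of~\cite[Lem.~2.1]{gubinelli_15_GIP} apply unconditionally; $\tp^{h}$ picks up a factor $\norm{h}^{3}_{L^{2}}$ (multiplied by $\norm{\srdet}^{3}_{C_{T}L^{\infty}}$) by trilinearity, and $\tc^{h}$ a factor $\norm{h}^{2}_{L^{2}}$ by bilinearity. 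Collecting the four bounds and using $1\vee\norm{\srdet}^{2}_{C_{T}L^{\infty}}\norm{h}^{2}_{L^{2}}$ to absorb the highest-order terms produces the stated inequality. The main technical point is being consistent about the Hölder exponent $\kappa$ when passing from $\ti^{h}$ to the higher objects, but this is handled by Schauder estimates and interpolation exactly as in~\cite[Sec.~2]{martini_mayorcas_22_WP}; no genuinely new difficulty arises since the absence of randomness removes the need for any probabilistic estimate of Wiener chaos type.
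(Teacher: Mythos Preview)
Your proposal is correct and follows essentially the same approach as the paper: exploit the determinism of $h$ so that all products are classical, start from the regularity of $\ti^{h}$ supplied by Lemma~\ref{lem:lolli_h}, and then build $\ty^{h},\tp^{h},\tc^{h}$ by Schauder and Bony estimates, picking up one factor of $\norm{\srdet}_{C_{T}L^{\infty}}\norm{h}_{L^{2}}$ per occurrence of $\ti^{h}$.

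The minor differences are cosmetic. The paper fixes $\vartheta\in(0,1/2)$ with $3\vartheta<1-2\kappa$, sets $\alpha=-2-\vartheta$, and does all four estimates directly in the $\msL^{\kappa}_{T}$ scale, using the mixed Besov product $\mcB^{a}_{2,2}\times\mcC^{b}\to\mcB^{a+b}_{2,2}$ (so e.g.\ $\norm{\tc^{h}}_{\msL^{\kappa}_{T}\mcB_{2,2}^{1-2\vartheta}}\lesssim\norm{\nabla\mcI[\ti^{h}]}_{\msL^{\kappa}_{T}\mcC^{-\vartheta}}\norm{\nabla\Phi_{\ti^{h}}}_{\msL^{\kappa}_{T}\mcB_{2,2}^{1-\vartheta}}+\ldots$) before embedding $\mcB_{2,2}^{s}\embed\mcC^{s-1}$ at the end. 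You route more through $C_{T}L^{2}\cap L_{T}^{2}\mcH^{1}$ via Lemma~\ref{lem:lolli_h_L2H1}, which the paper does not invoke here; this gives slightly stronger intermediate bounds on $\ty^{h}$ but is not needed. One point you leave implicit: when you say the resonant products ``apply unconditionally'' for $\tc^{h}$, note that $\ti^{h}$ itself only lives in $\mcC^{-1-}$ after Besov embedding, so the resonant product $\nabla^{2}\mcI[\Phi_{\ti^{h}}]\re\ti^{h}$ still needs the mixed $\mcB_{2,2}$--$\mcC$ pairing rather than a pure $\mcC^{a}\times\mcC^{b}$ estimate; the paper makes this explicit. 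Finally, membership in the \emph{closure} $\rksnoise{\alpha}{\kappa}_{T}$ (not just the ambient product space) is obtained by approximating $h$ by smooth functions, which you should mention.
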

\begin{proof}
	Let $\vartheta\in(0,1/2)$ and $\kappa\in(0,1/2)$ be such that $3\vartheta<1-2\kappa$. We bound the $\msL_{T}^{\kappa}\mcC^{-1-\vartheta}(\mbT^{2})$-norm of $\ti^{h}$ by the Besov embedding~\cite[Lem.~A.2]{gubinelli_15_GIP} and Lemma~\ref{lem:lolli_h},
	\begin{equation*}
		\norm{\ti^{h}}_{\msL_{T}^{\kappa}\mcC^{-1-\vartheta}}\lesssim\norm{\ti^{h}}_{\msL_{T}^{\kappa}\mcB_{2,2}^{-\vartheta}}\lesssim\norm{\ti^{h}}_{\msL_{T}^{\kappa}\mcB_{2,2}^{0}}\lesssim\norm{\srdet}_{C([0,T]\times\mbT^{2})}\norm{h}_{L^{2}([0,T]\times\mbT^{2};\mbR^{2})}.
	\end{equation*}
	
	We bound the $\msL^{\kappa}_{T}\mcC^{-2\vartheta}(\mbT^{2})$-norm of $\ty^{h}$ by Schauder's estimate~\cite[Lem.~A.6]{martini_mayorcas_25}, the Besov embedding and Bony's estimate,
	\begin{details}
		(Lemma~\ref{lem:product_estimates}, \eqref{eq:product_estimate_Sobolev_V})
	\end{details}
	\begin{equation*}
		\begin{split}
			\norm{\ty^{h}}_{\msL^{\kappa}_{T}\mcC^{-2\vartheta}}&\lesssim_{T}\norm{\ti^{h}\nabla\Phi_{\ti^{h}}}_{C_{T}\mcC^{-1-2\vartheta}}\lesssim\norm{\ti^{h}\nabla\Phi_{\ti^{h}}}_{C_{T}\mcB_{2,2}^{-2\vartheta}}\lesssim\norm{\ti^{h}}_{C_{T}\mcB_{2,2}^{-\vartheta}}\norm{\nabla\Phi_{\ti^{h}}}_{C_{T}\mcB_{2,2}^{1-\vartheta}}.
		\end{split}
	\end{equation*}
	
	We bound the $\msL^{\kappa}_{T}\mcC^{-3\vartheta}(\mbT^{2})$-norm of $\tp^{h}$ by the Besov embedding and Bony's resonant product estimate,
	\begin{details}
		(cf.~\cite[Thm.~27.10]{vanzuijlen_22})
	\end{details}
	\begin{equation*}
		\norm{\tp^{h}}_{\msL^{\kappa}_{T}\mcC^{-3\vartheta}}\lesssim\norm{\tp^{h}}_{\msL^{\kappa}_{T}\mcB_{2,2}^{1-3\vartheta}}\lesssim\norm{\ty^{h}}_{\msL^{\kappa}_{T}\mcC^{-2\vartheta}}\norm{\nabla\Phi_{\ti^{h}}}_{\msL^{\kappa}_{T}\mcB_{2,2}^{1-\vartheta}}+\norm{\nabla\Phi_{\ty^{h}}}_{\msL^{\kappa}_{T}\mcC^{1-2\vartheta}}\norm{\ti^{h}}_{\msL^{\kappa}_{T}\mcB_{2,2}^{-\vartheta}}.
	\end{equation*}
	
	We bound the $\msL^{\kappa}_{T}\mcC^{-2\vartheta}(\mbT^{2})$-norm of $\tc^{h}$ by the Besov embedding, Bony's resonant product estimate and Schauder's estimate, 
	\begin{equation*}
		\begin{split}
			\norm{\tc^{h}}_{\msL^{\kappa}_{T}\mcC^{-2\vartheta}}\lesssim\norm{\tc^{h}}_{\msL^{\kappa}_{T}\mcB_{2,2}^{1-2\vartheta}}&\lesssim\norm{\nabla\mcI[\ti^{h}]}_{\msL^{\kappa}_{T}\mcC^{-\vartheta}}\norm{\nabla\Phi_{\ti^{h}}}_{\msL^{\kappa}_{T}\mcB_{2,2}^{1-\vartheta}}+\norm{\nabla^{2}\mcI[\Phi_{\ti^{h}}]}_{\msL^{\kappa}_{T}\mcC^{1-\vartheta}}\norm{\ti^{h}}_{\msL^{\kappa}_{T}\mcB_{2,2}^{-\vartheta}}\\
			&\lesssim_{T}\norm{\ti^{h}}_{C_{T}\mcC^{-1-\vartheta}}\norm{\ti^{h}}_{\msL^{\kappa}_{T}\mcB_{2,2}^{-\vartheta}}.
		\end{split}
	\end{equation*}
	
	We can now choose $\alpha=-2-\vartheta$ to obtain
	\begin{equation*}
		\norm{\mbX^{h}}_{\rksnoise{\alpha}{\kappa}_{T}}\lesssim\norm{\srdet}_{C_{T}L^{\infty}}\norm{h}_{L^{2}([0,T]\times\mbT^{2};\mbR^{2})}(1\vee\norm{\srdet}^{2}_{C_{T}L^{\infty}}\norm{h}^{2}_{L^{2}([0,T]\times\mbT^{2};\mbR^{2})}).
	\end{equation*}
	\begin{details}
		The restriction $3\vartheta<1-2\kappa$ induces the lower bound $-7+2\kappa<3\alpha$, which can be removed by using that $\norm{\mbX^{h}}_{\rksnoise{\alpha'}{\kappa}_{T}}\leq\norm{\mbX^{h}}_{\rksnoise{\alpha}{\kappa}_{T}}$ for all $\alpha'<\alpha$.
		
	\end{details}
	To show that $\mbX^{h}\in\rksnoise{\alpha}{\kappa}_{T}$ it suffices to construct a sequence of smooth approximations to $\ti^{h}$, see e.g.\ the proof of Lemma~\ref{lem:lolli_h}.
	\begin{details}
		\paragraph{Proof that $\mbX^{h}\in\rksnoise{\alpha}{\kappa}_{T}$.}
		It follows by Lemma~\ref{lem:regularity_srdet} that upon redefining $h$, we may assume that $\ti^{h}=\vdiv\mcI[h]$. Let $(\ti^{h_{\delta}})_{\delta>0}$ be as in the proof of Lemma~\ref{lem:lolli_h}, where we have shown that $\norm{\ti^{h_{\delta}}}_{\msL_{T}^{\kappa}\mcH^{\gamma}}<\infty$ for every $\delta>0$ and that $\norm{\ti^{h}-\ti^{h_{\delta}}}_{\msL_{T}^{\kappa}\mcB_{2,2}^{0}}$ as $\delta\to0$, for every $\kappa\in[0,1/2)$ and $\gamma\geq0$.
		
		Let $0<\kappa<\kappa'<1/2$ and $\gamma=1+2(\kappa'-\kappa)$, we obtain $\norm{\ti^{h_{\delta}}}_{\msL_{T}^{\kappa'}\mcH^{1+2(\kappa'-\kappa)}}<\infty$ and an application of Lemma~\ref{lem:little_Hoelder_criterion} combined with Besov's embedding yields $\ti^{h_{\delta}}\in\msL_{T}^{\kappa}\mcH^{1}(\mbT^{2})\embed\msL_{T}^{\kappa}\mcC^{0}(\mbT^{2})$.
		
		Let $\mbX^{h_{\delta}}$ be the associated enhancement. Using that $\alpha<-2$ and $\kappa<1/2$, we can find some $\alpha<\alpha'<-2$ and $0<\kappa<\kappa'<1/2$ such that $\alpha-2\kappa<\alpha'-2\kappa'$. The same derivation as above then implies $\norm{\mbX^{h_{\delta}}}_{\rksnoise{\alpha'}{\kappa'}_{T}}<\infty$, which combined with several applications of Lemma~\ref{lem:little_Hoelder_criterion} yields
		\begin{equation*}
			\mbX^{h_{\delta}}\in\msL^{\kappa}_{T}\mcC^{\alpha+1}(\mbT^{2};\mbR)\times\msL^{\kappa}_{T}\mcC^{2\alpha+4}(\mbT^{2};\mbR)\times\msL^{\kappa}_{T}\mcC^{3\alpha+6}(\mbT^{2};\mbR^{2})\times\msL^{\kappa}_{T}\mcC^{2\alpha+4}(\mbT^{2};\mbR^{4}).
		\end{equation*}
		Furthermore, the bounds above are continuous in $\norm{\ti^{h_{\delta}}}_{\msL_{T}^{\kappa}\mcB_{2,2}^{0}}$, which implies $\mbX^{h_{\delta}}\to\mbX^{h}$ as $\delta\to0$. This shows that $\mbX^{h}\in\rksnoise{\alpha}{\kappa}_{T}$.
	\end{details}
\end{proof}
\section{Regular Solutions to the Stochastic Heat Equation}\label{app:lolli_regular}
In this appendix we analyse the regularity of the mild solution $\ti^{\delta}=\vdiv\mcI[\srdet\boldsymbol{\xi}^{\delta}]$, where $\delta>0$ is the correlation length and $\boldsymbol{\xi}^{\delta}=\psi_{\delta}\ast\boldsymbol{\xi}$ denotes the mollification (in space) of a vector-valued space-time white noise $\boldsymbol{\xi}$ against a mollifier $\psi_{\delta}$ given by~\eqref{eq:def_mollifiers}. Our goal is to establish that $\ti^{\delta}\in C_{T}L^{2}(\mbT^{2})\cap L_{T}^{2}\mcH^{1}(\mbT^{2})$, if the initial data of $\rdet$ is assumed to be continuous, positive and of unit mass (Proposition~\ref{prop:lolli_regular_srdet}).

To show Proposition~\ref{prop:lolli_regular_srdet}, we first need to prove generic bounds on $\ti^{\delta}$ that depend on $\srdet$ only through its existence and regularity (cf.\ Lemmas~\ref{lem:lolli_regular}, \ref{lem:lolli_regular_L2Hgamma} and~\ref{lem:lolli_regular_unif}). Therefore, to keep the notation concise, we denote $\het=\sqrt{\rdet}$ and treat $\het$ as a generic function of a given regularity until further notice.

Let $\gamma\geq0$, $\gamma'\in[-1,\gamma)$, $\kappa\in[0,1/2)$ and $\eta\in[0,1/2)$ be such that $2\eta<\gamma-\gamma'$ and $2\eta<1-2\kappa$. Below we establish $\ti^{\delta}\in\msL^{\kappa}_{T}\mcH^{\gamma'}(\mbT^{2})$ if $\het\in C_{\eta;T}\mcH^{\gamma}(\mbT^{2})$ (Lemma~\ref{lem:lolli_regular}), $\ti^{\delta}\in L^{2}_{T}\mcH^{\gamma}(\mbT^{2})$ if $\het\in L^{2}_{T}\mcH^{\gamma}(\mbT^{2})$ (Lemma~\ref{lem:lolli_regular_L2Hgamma}) and $\ti^{\delta}\in L_{T}^{\infty}\mcH^{\gamma'}(\mbT^{2})$ if $\het\in C_{T}\mcH^{(\gamma'\vee0)}(\mbT^{2})\cap L_{T}^{2}\mcH^{\gamma'+1}(\mbT^{2})$ (Lemma~\ref{lem:lolli_regular_unif}), each almost surely. We then use all of these results to establish Proposition~\ref{prop:lolli_regular_srdet}.

The Fourier transform of a vector-valued space-time white noise is given by a family complex-valued Brownian motions $W^{j}(t,m)\defeq\boldsymbol{\xi}(\mathds{1}_{[0,t]}(\place)\otimes\euler^{-2\uppi\upi\inner{m}{\place}}\otimes\delta_{j,\place})$, where $t\geq0$, $m\in\mbZ^{2}$ and $j=1,2$ (see Lemma~\ref{lem:space_time_white_noise_yields_complex_BM}). Therefore, we can represent the Fourier transform of $\ti^{\delta}$ by
\begin{equation}\label{eq:lolli_Fourier}
	\hat{\ti^{\delta}}(t,\om)=\sum_{j=1}^{2}\sum_{m\in\mbZ^{2}}\int_{0}^{t}H^{j}_{t-u}(\om)\hat{\het}(u,\om-m)\varphi(\delta m)\dd W^{j}(u,m),\qquad t\in[0,T],\quad\om\in\mbZ^{2}
\end{equation}
see also~\cite[(2.4)]{martini_mayorcas_25}. 
\begin{details}
	We represent the integrand as
	\begin{equation*}
		H^{j}_{t-u}(\om)\hat{\het}(u,\om-m)=\int_{\mbT^{2}}\euler^{-2\uppi\upi\inner{\om}{x}}\partial_{j}P_{t-u}(\het(u,\place)\euler^{2\uppi\upi\inner{m}{\place}})(x)\dd x.
	\end{equation*}
\end{details}
Hence taking the Fourier inverse, we can deduce the real-space representation
\begin{equation}\label{eq:lolli_real}
	\ti^{\delta}(t,x)=\sum_{j=1}^{2}\sum_{m\in\mbZ^{2}}\varphi(\delta m)\int_{0}^{t}\partial_{j}P_{t-u}(\het(u,\place)\euler^{2\uppi\upi\inner{m}{\place}})(x)\dd W^{j}(u,m),\qquad x\in\mbT^{2}.
\end{equation}
Recall from~\eqref{eq:def_cut_off} that $\varphi$ is assumed to be of compact support. Hence, the number of Brownian drivers contributing to~\eqref{eq:lolli_real} is finite and tends to infinity as $\delta\to0$.

The following lemma establishes the regularity of $\ti^{\delta}$ in an interpolation space (see Subsection~\ref{subsec:notation}).
\begin{lemma}\label{lem:lolli_regular}
	Let $T>0$, $\gamma\geq0$, $\gamma'\in[-1,\gamma)$, $\kappa\in[0,1/2)$ and $\eta\in[0,1/2)$ be such that $2\eta<\gamma-\gamma'$ and $2\eta<1-2\kappa$. Then for all $\het\in C_{\eta;T}\mcH^{\gamma}(\mbT^{2})$, $\delta>0$ and $p\in[1,\infty)$ it holds that
	\begin{equation}\label{eq:lolli_regular_bound}
		\mbE[\norm{\ti^{\delta}}_{\msL_{T}^{\kappa}\mcH^{\gamma'}}^{p}]^{1/p}\lesssim_{p,T}(1+\delta^{-\gamma-1})\norm{\het}_{C_{\eta;T}\mcH^{\gamma}}
	\end{equation}
	and $\ti^{\delta}\in\msL^{\kappa}_{T}\mcH^{\gamma'}(\mbT^{2})$ almost surely.
\end{lemma}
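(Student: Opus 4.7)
The plan is to combine the Fourier representation~\eqref{eq:lolli_Fourier}, the It\^{o} isometry, Gaussian hypercontractivity, and a Kolmogorov-type continuity argument to convert $L^{2}(\mathbb{P})$ moment bounds on Fourier coefficients into the desired almost-sure pathwise regularity. Since every coordinate $\hat{\ti^{\delta}}(t,\omega)$ lies in the first Wiener chaos generated by $\boldsymbol{\xi}$, Nelson's lemma (see e.g.\ the proof of Theorem~\ref{thm:LDP_wiener_chaos}) reduces all $L^{p}(\mathbb{P})$ estimates to $L^{2}(\mathbb{P})$ ones up to a constant $c_{p}$, so the heart of the matter is a careful second-moment calculation.

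First I would use \eqref{eq:lolli_Fourier} and the It\^{o} isometry to write
\begin{equation*}
	\mathbb{E}\bigl[\lvert\hat{\ti^{\delta}}(t,\omega)\rvert^{2}\bigr]=\sum_{j=1}^{2}\sum_{m\in\mathbb{Z}^{2}}\int_{0}^{t}\lvert H^{j}_{t-u}(\omega)\rvert^{2}\lvert\hat{\het}(u,\omega-m)\rvert^{2}\varphi(\delta m)^{2}\dd u,
\end{equation*}
and control the inner sum by splitting the frequency regime using $\supp\varphi\subset B(0,1)$, so that $\varphi(\delta m)=0$ for $\lvert m\rvert\geq\delta^{-1}$. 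In the low-frequency regime $\lvert\omega\rvert\leq2\delta^{-1}$, Parseval's identity yields $\sum_{m}\varphi(\delta m)^{2}\lvert\hat{\het}(u,\omega-m)\rvert^{2}\lesssim\norm{\het(u)}_{L^{2}}^{2}\leq\norm{\het(u)}_{\mcH^{\gamma}}^{2}$. In the high-frequency regime $\lvert\omega\rvert>2\delta^{-1}$ every nonzero summand satisfies $\lvert\omega-m\rvert\geq\lvert\omega\rvert/2$, so after factoring out $(1+\lvert\omega\rvert^{2})^{-\gamma}$ what remains is bounded by $\norm{\het(u)}_{\mcH^{\gamma}}^{2}$. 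Combining this with the elementary estimate $\lvert2\pi\omega\rvert^{2}\int_{0}^{t}\euler^{-2(t-u)\lvert 2\pi\omega\rvert^{2}}(1\wedge u)^{-2\eta}\dd u\lesssim(1\wedge t)^{-2\eta}$, valid because $2\eta<1$, and weighting by $(1+\lvert 2\pi\omega\rvert^{2})^{\gamma'}$ before summing gives
\begin{equation*}
	\mathbb{E}\bigl[\norm{\ti^{\delta}_{t}}_{\mcH^{\gamma'}}^{2}\bigr]\lesssim(1\wedge t)^{-2\eta}\norm{\het}_{C_{\eta;T}\mcH^{\gamma}}^{2}\bigl(A_{1}(\delta)+A_{2}(\delta)\bigr),
\end{equation*}
where $A_{1}(\delta)=\sum_{\lvert\omega\rvert\leq 2/\delta}(1+\lvert 2\pi\omega\rvert^{2})^{\gamma'}$ and $A_{2}(\delta)=\sum_{\lvert\omega\rvert>2/\delta}(1+\lvert 2\pi\omega\rvert^{2})^{\gamma'-\gamma}$; the hypothesis $\gamma-\gamma'>2\eta\geq 0$ ensures that $A_{2}$ converges, and a direct dyadic estimate gives $A_{1}(\delta)+A_{2}(\delta)\lesssim 1+\delta^{-2(\gamma'+1)}\leq(1+\delta^{-\gamma-1})^{2}$ (using $\gamma\geq\gamma'$ and $\delta<1$).

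The time-increment estimate follows the same pattern: for $s<t$ I would decompose $\hat{\ti^{\delta}}(t,\omega)-\hat{\ti^{\delta}}(s,\omega)$ into the It\^{o} integral on $[s,t]$ plus the heat-kernel increment acting on $[0,s]$, and bound the latter using $\lvert H^{j}_{t-u}(\omega)-H^{j}_{s-u}(\omega)\rvert\lesssim\lvert t-s\rvert^{\kappa'}\lvert 2\pi\omega\rvert^{1+2\kappa'}\euler^{-c(s-u)\lvert 2\pi\omega\rvert^{2}}$ for some $\kappa<\kappa'<1/2$ chosen with $2\kappa'+2\eta<1$. The same low/high-frequency split yields
\begin{equation*}
	\mathbb{E}\bigl[\norm{\ti^{\delta}_{t}-\ti^{\delta}_{s}}_{\mcH^{\gamma'-2\kappa}}^{2}\bigr]\lesssim\lvert t-s\rvert^{2\kappa'}(1\wedge s)^{-2\eta}(1+\delta^{-\gamma-1})^{2}\norm{\het}_{C_{\eta;T}\mcH^{\gamma}}^{2}.
\end{equation*}
Hypercontractivity upgrades both bounds to the $p$-th moment with constant $c_{p}$, and a Kolmogorov continuity argument, exploiting the excess exponent $\kappa'-\kappa>0$, produces a version of $\ti^{\delta}$ that lies in $C^{\kappa}_{T}\mcH^{\gamma'-2\kappa}(\mbT^{2})\cap C_{T}\mcH^{\gamma'}(\mbT^{2})=\msL^{\kappa}_{T}\mcH^{\gamma'}(\mbT^{2})$ almost surely, with the claimed $L^{p}(\mathbb{P})$ bound. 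The case $p\in[1,2)$ follows by H\"{o}lder.

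The main technical obstacle is arranging the coupled strict inequalities $2\eta<\gamma-\gamma'$ and $2\kappa+2\eta<1$ so that (i) the $A_{2}$-tail converges and $(1\wedge u)^{-2\eta}$ is integrable in time after the heat-kernel convolution, and (ii) Kolmogorov still produces exactly the H\"{o}lder exponent $\kappa$; everything else reduces to routine Fourier-series and semigroup manipulations.
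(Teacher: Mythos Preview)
Your overall strategy---Fourier representation, It\^{o} isometry, hypercontractivity, Kolmogorov---matches the paper's, but the frequency-splitting step has a genuine gap. You claim that the tail
\[
A_{2}(\delta)=\sum_{|\omega|>2/\delta}(1+|2\pi\omega|^{2})^{\gamma'-\gamma}
\]
converges because $\gamma-\gamma'>2\eta\geq 0$. In $\mathbb{Z}^{2}$ this sum converges if and only if $\gamma-\gamma'>1$, which is \emph{not} implied by the hypotheses (take $\gamma=0$, $\gamma'=-1/2$, $\eta=\kappa=0$: all constraints hold but $A_{2}$ diverges). The underlying issue is that you spend the entire heat-kernel gain on the time integral, producing the $\omega$-uniform bound $|2\pi\omega|^{2}\int_{0}^{t}e^{-2(t-u)|2\pi\omega|^{2}}(1\wedge u)^{-2\eta}\,du\lesssim(1\wedge t)^{-2\eta}$, and leave nothing to make the $\omega$-sum converge. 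A related symptom is the factor $(1\wedge t)^{-2\eta}$ (and $(1\wedge s)^{-2\eta}$ in the increment), which blows up at the origin and obstructs a uniform Kolmogorov argument.

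The paper avoids both problems by not estimating per frequency $\omega$. After Parseval and It\^{o} it writes
\[
\mathbb{E}\bigl[\|\ti^{\delta}_{t}\|_{\mcH^{\gamma'}}^{2}\bigr]=\sum_{j,m}\varphi(\delta m)^{2}\int_{0}^{t}\bigl\|\partial_{j}P_{t-u}\bigl(\het(u,\cdot)e^{2\pi i\langle m,\cdot\rangle}\bigr)\bigr\|_{\mcH^{\gamma'}}^{2}\,du,
\]
applies the Schauder estimate to trade Sobolev regularity $\mcH^{\gamma'+1}\to\mcH^{\gamma}$ for a time singularity $(t-u)^{-(1+\gamma'-\gamma)}$, and then uses the shift bound $\|\het(u)e^{2\pi i\langle m,\cdot\rangle}\|_{\mcH^{\gamma}}^{2}\lesssim(1+|m|^{2})^{\gamma}\|\het(u)\|_{\mcH^{\gamma}}^{2}$. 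The only sum left is over the \emph{finite} set $\{m:\varphi(\delta m)\neq 0\}$, which gives the factor $(1+\delta^{-\gamma-1})^{2}$. The resulting time integral $\int_{0}^{t}(t-u)^{-(1+\gamma'-\gamma)}(1\wedge u)^{-2\eta}\,du$ is finite because $\gamma'<\gamma$ and is bounded uniformly in $t$ precisely because $2\eta\leq\gamma-\gamma'$, so no blow-up at $t=0$ appears. In short: use the heat kernel to shift the Sobolev index (Schauder) rather than merely to bound the time integral, and sum over the mollifier frequency $m$ rather than over $\omega$.
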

\begin{proof}
	Using the real-space representation~\eqref{eq:lolli_real}, we first establish for $\gamma\geq0$, $\gamma'\in[-1,\gamma)$ and $\eta\in[0,1/2)$ satisfying $2\eta\leq\gamma-\gamma'$, that 
	\begin{equation}\label{eq:lolli_regular_pntw}
		\sup_{t\in[0,T]}\mbE[\norm{\ti^{\delta}_{t}}_{\mcH^{\gamma'}}^{2}]^{1/2}\lesssim_{T}(1+\delta^{-\gamma-1})\norm{\het}_{C_{\eta;T}\mcH^{\gamma}}
	\end{equation}
	and in particular $\ti^{\delta}_{t}\in\mcH^{\gamma'}(\mbT^{2})$ almost surely for each $t\in[0,T]$. 
	
	Let $(e_{k})_{k\in\mbN}$ be an orthonormal basis of $L^2(\mbT^{2})$. Using the definition of $\mcH^{\gamma'}(\mbT^{2})$ and Parseval's identity, we obtain for every $t\in[0,T]$, 
	\begin{equation*}
		\begin{split}
			\norm{\ti^{\delta}_{t}}_{\mcH^{\gamma'}}^{2}&=\norm{(1-\Delta)^{\gamma'/2}\ti^{\delta}_{t}}_{L^{2}}^{2}\\
			&=\sum_{k=1}^{\infty}\Bigl\lvert\Bigl\langle e_k,\sum_{j=1}^{2}\sum_{m\in\mbZ^{2}}\varphi(\delta m)\int_{0}^{t}(1-\Delta)^{\gamma'/2}\partial_{j}P_{t-u}(\het(u,\place)\euler^{2\uppi\upi\inner{m}{\place}})\dd W^{j}(u,m)\Bigr\rangle_{L^{2}}\Bigr\rvert^{2}.
		\end{split}
	\end{equation*}
	An application of the stochastic Fubini theorem~\cite{veraar_12} combined with It\^{o}'s isometry yields
	\begin{equation}\label{eq:lolli_regular_Ito_isometry}
		\begin{split}
			\mbE[\norm{\ti^{\delta}_{t}}_{\mcH^{\gamma'}}^{2}]&=\sum_{k=1}^{\infty}\sum_{j=1}^{2}\sum_{m\in\mbZ^{2}}\varphi(\delta m)^{2}\int_{0}^{t}\abs{\inner{e_{k}}{(1-\Delta)^{\gamma'/2}\partial_{j}P_{t-u}(\het(u,\place)\euler^{2\uppi\upi\inner{m}{\place}})}_{L^{2}}}^{2}\dd u\\
			&=\sum_{j=1}^{2}\sum_{m\in\mbZ^{2}}\varphi(\delta m)^{2}\int_{0}^{t}\norm{(1-\Delta)^{\gamma'/2}\partial_{j}P_{t-u}(\het(u,\place)\euler^{2\uppi\upi\inner{m}{\place}})}_{L^{2}}^{2}\dd u.
		\end{split}
	\end{equation}
	\begin{details}
		\paragraph{Application of the stochastic Fubini theorem.}
		Our goal is to apply~\cite[(1.2)]{veraar_12} to show
		\begin{equation*}
			\begin{split}
				&\int_{\mbT^{2}}e_{k}(x)\int_{0}^{t}(1-\Delta)^{\gamma'/2}\partial_{j}P_{t-u}(\het(u,\place)\euler^{2\uppi\upi\inner{m}{\place}})(x)\dd W^{j}(u,m)\dd x\\
				&=\int_{0}^{t}\int_{\mbT^{2}}e_{k}(x)(1-\Delta)^{\gamma'/2}\partial_{j}P_{t-u}(\het(u,\place)\euler^{2\uppi\upi\inner{m}{\place}})(x)\dd x\dd W^{j}(u,m).
			\end{split}
		\end{equation*}
		The function $(u,x)\mapsto e_{k}(x)(1-\Delta)^{\gamma'/2}\partial_{j}P_{t-u}(\het(u,\place)\euler^{2\uppi\upi\inner{m}{\place}})(x)$ is jointly measurable as a product and integral over measurable functions. Hence, it suffices to verify condition~\cite[(1.5)]{veraar_12}, i.e.\
		\begin{equation*}
			\int_{\mbT^{2}}\Bigl(\int_{0}^{t}\abs{e_{k}(x)(1-\Delta)^{\gamma'/2}\partial_{j}P_{t-u}(\het(u,\place)\euler^{2\uppi\upi\inner{m}{\place}})(x)}^{2}\dd u\Bigr)^{1/2}\dd x<\infty.
		\end{equation*}
		Indeed, we can apply the Cauchy--Schwarz inequality and Tonelli's theorem to show
		\begin{equation*}
			\begin{split}
				&\int_{\mbT^{2}}\Bigl(\int_{0}^{t}\abs{e_{k}(x)(1-\Delta)^{\gamma'/2}\partial_{j}P_{t-u}(\het(u,\place)\euler^{2\uppi\upi\inner{m}{\place}})(x)}^{2}\dd u\Bigr)^{1/2}\dd x\\
				&=\int_{\mbT^{2}}\abs{e_{k}(x)}\Bigl(\int_{0}^{t}\abs{(1-\Delta)^{\gamma'/2}\partial_{j}P_{t-u}(\het(u,\place)\euler^{2\uppi\upi\inner{m}{\place}})(x)}^{2}\dd u\Bigr)^{1/2}\dd x\\
				&\leq\Bigl(\int_{\mbT^{2}}\abs{e_{k}(x)}^{2}\dd x\Bigr)^{1/2}\Bigl(\int_{\mbT^{2}}\int_{0}^{t}\abs{(1-\Delta)^{\gamma'/2}\partial_{j}P_{t-u}(\het(u,\place)\euler^{2\uppi\upi\inner{m}{\place}})(x)}^{2}\dd u\dd x\Bigr)^{1/2}\\
				&=\Bigl(\int_{0}^{t}\norm{(1-\Delta)^{\gamma'/2}\partial_{j}P_{t-u}(\het(u,\place)\euler^{2\uppi\upi\inner{m}{\place}})}_{L^{2}}^{2}\dd u\Bigr)^{1/2}
			\end{split}
		\end{equation*}
		and we will see below that this expression is finite.
	\end{details}
	
	We now distinguish the cases $\gamma'\in[-1,\gamma-1)$ and $\gamma'\in[\gamma-1,\gamma)$. First let us assume that $\gamma'\in[\gamma-1,\gamma)$. We can bound the integrand by an application of Schauder's estimate~\cite[Lem.~A.5]{martini_mayorcas_25},
	\begin{equation*}
		\begin{split}
			\norm{(1-\Delta)^{\gamma'/2}\partial_{j}P_{t-u}(\het(u,\place)\euler^{2\uppi\upi\inner{m}{\place}})}_{L^2}&\lesssim\norm{P_{t-u}(\het(u,\place)\euler^{2\uppi\upi\inner{m}{\place}})}_{\mcH^{\gamma'+1}}\\
			&\lesssim(1\vee\abs{t-u}^{-\frac{1+\gamma'-\gamma}{2}})\norm{\sigma(u,\place)\euler^{2\uppi\upi\inner{m}{\place}}}_{\mcH^{\gamma}},
		\end{split}
	\end{equation*}
	where we used that $\gamma\leq\gamma'+1$. We bound by a change of variables and Jensen's inequality uniformly in $u\in[0,T]$,
	\begin{equation}\label{eq:het_fourier_product_bound}
		\begin{split}
			\norm{\sigma(u,\place)\euler^{2\uppi\upi\inner{m}{\place}}}_{\mcH^{\gamma}}^{2}&=\sum_{\om\in\mbZ^{2}}(1+\abs{2\uppi\om}^{2})^{\gamma}\abs{\hat{\het}(u,\om-m)}^{2}\\
			&=\sum_{\om\in\mbZ^{2}}(1+\abs{2\uppi(\om+m)}^{2})^{\gamma}\abs{\hat{\het}(u,\om)}^{2}\\
			&\lesssim\sum_{\om\in\mbZ^{2}}(1+\abs{2\uppi\om}^{2}+\abs{2\uppi m}^{2})^{\gamma}\abs{\hat{\het}(u,\om)}^{2}\\
			&\lesssim\norm{\het(u,\place)}_{\mcH^{\gamma}}^{2}+(1+\abs{2\uppi m}^{2})^{\gamma}\norm{\het(u,\place)}_{L^{2}}^{2}\\
			&\lesssim(1+\abs{2\uppi m}^{2})^{\gamma}\norm{\het(u,\place)}_{\mcH^{\gamma}}^{2},
		\end{split}
	\end{equation}
	where the second inequality follows for $\gamma\in[0,1]$ by the sub-additivity of $x\mapsto x^{\gamma}$
	\begin{details}
		(The function is sub-additive since it is concave and satisfies $0^{\gamma}=0$.)
	\end{details}
	and for $\gamma>1$ by another application of Jensen's inequality.
	
	Consequently if $\gamma'\in[\gamma-1,\gamma)$ we may estimate uniformly in $t\in[0,T]$,
	\begin{equation}\label{eq:lolli_regular_pntw_case_I}
		\begin{split}
			\mbE[\norm{\ti^{\delta}_{t}}_{\mcH^{\gamma'}}^{2}]&\lesssim\norm{\het}_{C_{\eta;T}\mcH^{\gamma}}^{2}\sum_{m\in\mbZ^{2}}\varphi(\delta m)^{2}(1+\abs{2\uppi m}^{2})^{\gamma}\int_{0}^{t}(1\vee\abs{t-u}^{-\frac{1+\gamma'-\gamma}{2}})^{2}(1\wedge u)^{-2\eta}\dd u\\
			&\lesssim_{T}(1+\delta^{-1})^{2(\gamma+1)}\norm{\het}_{C_{\eta;T}\mcH^{\gamma}}^{2},
		\end{split}
	\end{equation}
	where we used $\gamma'<\gamma$ and $2\eta<1$ to integrate the singularity; $2\eta\leq\gamma-\gamma'$ to obtain a bound that is uniform in $t\in[0,T]$; 
	\begin{details}
		\paragraph{Integrating the singularity.}
		We distinguish the cases $t\in[0,1]$, $t\in(1,2]$ and $t\in(2,\infty)$. If $t\in[0,1]$, we estimate
		\begin{equation*}
			\begin{split}
				\int_{0}^{t}(1\vee\abs{t-u}^{-(1+\gamma'-\gamma)})(1\wedge u)^{-2\eta}\dd u&=\int_{0}^{t}\abs{t-u}^{-(1+\gamma'-\gamma)}u^{-2\eta}\dd u\\
				&=t^{\gamma-\gamma'-2\eta}\int_{0}^{1}\abs{1-u}^{-(1+\gamma'-\gamma)}u^{-2\eta}\dd u\lesssim t^{\gamma-\gamma'-2\eta}.
			\end{split}
		\end{equation*}
		If $t\in(1,2]$, we estimate
		\begin{equation*}
			\begin{split}
				\int_{0}^{t}(1\vee\abs{t-u}^{-(1+\gamma'-\gamma)})(1\wedge u)^{-2\eta}\dd u&=\int_{0}^{t-1}u^{-2\eta}\dd u+\int_{t-1}^{1}\abs{t-u}^{-(1+\gamma'-\gamma)}u^{-2\eta}\dd u\\
				&\quad+\int_{1}^{t}\abs{t-u}^{-(1+\gamma'-\gamma)}\dd u\\
				&\leq t^{1-2\eta}+t^{\gamma'-\gamma-2\eta}+t^{\gamma'-\gamma}.
			\end{split}
		\end{equation*}
		If $t\in(2,\infty)$, we estimate
		\begin{equation*}
			\begin{split}
				\int_{0}^{t}(1\vee\abs{t-u}^{-(1+\gamma'-\gamma)})(1\wedge u)^{-2\eta}\dd u&=\int_{0}^{1}u^{-2\eta}\dd u+\int_{1}^{t-1}\dd u+\int_{t-1}^{t}\abs{t-u}^{-(1+\gamma'-\gamma)}\dd u\\
				&\leq t^{1-2\eta}+t+t^{\gamma'-\gamma-2\eta}.
			\end{split}
		\end{equation*}
	\end{details}
	and $\supp(\varphi)\subset B(0,1)$ to estimate the sum over $m\in\mbZ^{2}$. 
	\begin{details}
		We use~\cite[Lem.~C.2, (C.1)]{martini_mayorcas_25} to bound
		\begin{equation*}
			\sum_{m\in\mbZ^{2}}\varphi(\delta m)^{2}(1+\abs{2\uppi m}^{2})^{\gamma}\lesssim(1+\delta^{-2})^{\gamma}\sum_{m\in\mbZ^{2}}\varphi(\delta m)\lesssim(1+\delta^{-2})^{\gamma}(1\vee\delta^{-2})\lesssim(1+\delta^{-2})^{\gamma+1}\leq(1+\delta^{-1})^{2(\gamma+1)}.
		\end{equation*}
	\end{details}
	
	For $\gamma'\in[-1,\gamma-1)$, we estimate instead
	\begin{equation*}
		\mbE[\norm{\ti^{\delta}_{t}}_{\mcH^{\gamma'}}^{2}]\lesssim_{T}(1+\delta^{-1})^{2(\gamma'+2)}\norm{\het}_{C_{\eta;T}\mcH^{\gamma'+1}}^{2}\lesssim(1+\delta^{-1})^{2(\gamma+1)}\norm{\het}_{C_{\eta;T}\mcH^{\gamma}}^{2},
	\end{equation*}
	where in the first inequality we used that $\gamma'\in[\gamma',\gamma'+1)$ and~\eqref{eq:lolli_regular_pntw_case_I} with $\gamma=\gamma'+1\geq0$.
	
	To summarise, it follows for all $\gamma\geq0$, $\gamma'\in[-1,\gamma)$ and $\eta\in[0,1/2)$ satisfying $2\eta\leq\gamma-\gamma'$, that
	\begin{equation*}
		\sup_{t\in[0,T]}\mbE[\norm{\ti^{\delta}_{t}}_{C_{T}\mcH^{\gamma'}}^{2}]\lesssim_{T}(1+\delta^{-1})^{2(\gamma+1)}\norm{\het}_{C_{\eta;T}\mcH^{\gamma}}^{2}.
	\end{equation*}
	which yields~\eqref{eq:lolli_regular_pntw} and that $\ti^{\delta}_{t}\in\mcH^{\gamma'}(\mbT^{2})$ almost surely for all $t\in[0,T]$.
	
	Next by an application of Kolmogorov's continuity criterion we establish~\eqref{eq:lolli_regular_bound} and the parabolic regularity $\ti^{\delta}\in\msL_{T}^{\kappa}\mcH^{\gamma'}(\mbT^{2})$ for arbitrary $\gamma\geq0$, $\gamma'\in[-1,\gamma)$, $\kappa\in[0,1/2)$ and $\eta\in[0,1/2)$ satisfying $2\eta<\gamma-\gamma'$ and $2\eta<1-2\kappa$. 
	
	Let $\kappa'\in(\kappa,1/2]$ be such that $2\eta\leq1-2\kappa'$.
	\begin{details}
		(which is possible since $2\eta<1-2\kappa$.)
	\end{details}
	Let $s<t\in[0,T]$; we distinguish the cases $\abs{t-s}\leq1$ and $\abs{t-s}>1$ and first assume $\abs{t-s}\leq1$. Using the representation
	\begin{details}
		(this is the Markov property, see also~\cite[(5.8)]{hairer_09}),
	\end{details}
	\begin{equation*}
		\ti^{\delta}_{t}=P_{t-s}\ti^{\delta}_{s}+\sum_{j=1}^{2}\sum_{m\in\mbZ^{2}}\varphi(\delta m)\int_{s}^{t}\partial_{j}P_{t-u}(\het(u,\place)\euler^{2\uppi\upi\inner{m}{\place}})\dd W^{j}(u,m)
	\end{equation*}
	combined with the fact that increments of $(W^{j}(\place,m))_{m\in\mbZ^{2},j\in\{1,2\}}$ over $[s,t]$ are independent of $\ti^{\delta}_{s}$, we obtain
	\begin{equation}\label{eq:lolli_regular_increment_decomposition}
		\begin{split}
			\mbE[\norm{\ti^{\delta}_{t}-\ti^{\delta}_{s}}_{\mcH^{\gamma'-2\kappa'}}^{2}]&=\mbE[\norm{P_{t-s}\ti^{\delta}_{s}-\ti^{\delta}_{s}}_{\mcH^{\gamma'-2\kappa'}}^{2}]\\
			&\quad+\sum_{j=1}^{2}\sum_{m\in\mbZ^{2}}\varphi(\delta m)^{2}\int_{s}^{t}\norm{\partial_{j}P_{t-u}(\het(u,\place)\euler^{2\uppi\upi\inner{m}{\place}}) }_{\mcH^{\gamma'-2\kappa'}}^{2}\dd u.
		\end{split}
	\end{equation}
	\begin{details}
		\paragraph{Proof of~\eqref{eq:lolli_regular_increment_decomposition}.}
		We decompose the increment into
		\begin{equation*}
			\ti^{\delta}_{t}-\ti^{\delta}_{s}=(\ti^{\delta}_{t}-P_{t-s}\ti^{\delta}_{s})+(P_{t-s}-1)\ti^{\delta}_{s}.
		\end{equation*}
		The first summand corresponds to the stochastic integral over $[s,t]$,
		\begin{equation*}
			\ti^{\delta}_{t}-P_{t-s}\ti^{\delta}_{s}=\sum_{j=1}^{2}\sum_{m\in\mbZ^{2}}\varphi(\delta m)\int_{s}^{t}\partial_{j}P_{t-u}(\het(u,\place)\euler^{2\uppi\upi\inner{m}{\place}})\dd W^{j}(u,m),
		\end{equation*}
		whereas the second summand is given by a stochastic integral over $[0,s]$.
		
		By Parseval's identity and the independence of our stochastic integrals,
		\begin{equation*}
			\begin{split}
				&\mbE[\norm{\ti^{\delta}_{t}-\ti^{\delta}_{s}}^{2}_{\mcH^{\gamma'-2\kappa'}}]=\mbE\Bigl[\sum_{k\in\mbN}\abs{\inner{e_{k}}{(1-\Delta)^{(\gamma'-2\kappa')/2}(\ti^{\delta}_{t}-\ti^{\delta}_{s})}_{L^{2}}}^{2}\Bigr]\\
				&=\sum_{k\in\mbN}\mbE[\abs{\inner{e_{k}}{(1-\Delta)^{(\gamma'-2\kappa')/2}(\ti^{\delta}_{t}-P_{t-s}\ti^{\delta}_{s}+(P_{t-s}-1)\ti^{\delta}_{s})}_{L^{2}}}^{2}]\\
				&=\sum_{k\in\mbN}\mbE[\abs{\inner{e_{k}}{(1-\Delta)^{(\gamma'-2\kappa')/2}(\ti^{\delta}_{t}-P_{t-s}\ti^{\delta}_{s})}_{L^{2}}}^{2}]+\sum_{k\in\mbN}\mbE[\abs{\inner{e_{k}}{(1-\Delta)^{(\gamma'-2\kappa')/2}(P_{t-s}-1)\ti^{\delta}_{s}}_{L^{2}}}^{2}]\\
				&=\mbE[\norm{\ti^{\delta}_{t}-P_{t-s}\ti^{\delta}_{s}}^{2}_{\mcH^{\gamma'-2\kappa'}}]+\mbE[\norm{P_{t-s}\ti^{\delta}_{s}-\ti^{\delta}_{s}}^{2}_{\mcH^{\gamma'-2\kappa'}}].
			\end{split}
		\end{equation*}
		The decomposition~\eqref{eq:lolli_regular_increment_decomposition} then follows by an application of Parseval's identity, the stochastic Fubini theorem and the It\^{o} isometry as in~\eqref{eq:lolli_regular_Ito_isometry}.
	\end{details}
	
	We bound the first summand in~\eqref{eq:lolli_regular_increment_decomposition} by Schauder's estimate~\cite[Lem.~A.5]{martini_mayorcas_25},
	\begin{equation*}
		\norm{P_{t-s}\ti^{\delta}_{s}-\ti^{\delta}_{s}}_{\mcH^{\gamma'-2\kappa'}}\lesssim\abs{t-s}^{\kappa'}\norm{\ti^{\delta}_{s}}_{\mcH^{\gamma'}},
	\end{equation*}
	where we used that $\gamma'-2\kappa'\leq\gamma'\leq\gamma'-2\kappa'+2$. Therefore by~\eqref{eq:lolli_regular_pntw}, we obtain uniformly in $s<t\in[0,T]$,
	\begin{equation}\label{eq:lolli_regular_increment_bound_I}
		\mbE[\norm{P_{t-s}\ti^{\delta}_{s}-\ti^{\delta}_{s}}_{\mcH^{\gamma'-2\kappa'}}^{2}]\lesssim\abs{t-s}^{2\kappa'}\mbE[\norm{\ti^{\delta}_{s}}_{\mcH^{\gamma'}}^{2}]\lesssim_{T}\abs{t-s}^{2\kappa'}(1+\delta^{-\gamma-1})^{2}\norm{\het}_{C_{\eta;T}\mcH^{\gamma}}^{2}.
	\end{equation}
	
	To control the second summand in~\eqref{eq:lolli_regular_increment_decomposition}, we further distinguish the cases $2\kappa'\leq1+\gamma'-\gamma$ and $2\kappa'>1+\gamma'-\gamma$. Assume $2\kappa'\leq1+\gamma'-\gamma$, we estimate the integrand as in~\eqref{eq:lolli_regular_pntw} and obtain the singularity
	\begin{equation*}
		\begin{split}
			&\sum_{j=1}^{2}\sum_{m\in\mbZ^{2}}\varphi(\delta m)^{2}\int_{s}^{t}\norm{\partial_{j}P_{t-u}(\het(u,\place)\euler^{2\uppi\upi\inner{m}{\place}}) }_{\mcH^{\gamma'-2\kappa'}}^{2}\dd u\\
			&\lesssim(1+\delta^{-1})^{2(\gamma+1)}\norm{\het}_{C_{\eta;T}\mcH^{\gamma}}^{2}\int_{s}^{t}(1\vee\abs{t-u}^{-\frac{1+\gamma'-\gamma-2\kappa'}{2}})^{2}(1\wedge u)^{-2\eta}\dd u.
		\end{split}
	\end{equation*}
	Using that $\abs{t-s}\leq1$, we may bound
	\begin{equation*}
		\int_{s}^{t}(1\vee\abs{t-u}^{-\frac{1+\gamma'-\gamma-2\kappa'}{2}})^{2}(1\wedge u)^{-2\eta}\dd u=\int_{s}^{t}\abs{t-u}^{-(1+\gamma'-\gamma-2\kappa')}(1\wedge u)^{-2\eta}\dd u\lesssim_{T}\abs{t-s}^{2\kappa'},
	\end{equation*}
	where the singularity is integrable by $\gamma'<\gamma$ and $2\eta<1$; and the bound is uniform since $2\eta\leq\gamma-\gamma'$. Therefore if $2\kappa'\leq1+\gamma'-\gamma$, we can control the second summand in~\eqref{eq:lolli_regular_increment_decomposition} by
	\begin{equation*}
		\sum_{j=1}^{2}\sum_{m\in\mbZ^{2}}\varphi(\delta m)^{2}\int_{s}^{t}\norm{\partial_{j}P_{t-u}(\het(u,\place)\euler^{2\uppi\upi\inner{m}{\place}}) }_{\mcH^{\gamma'-2\kappa'}}^{2}\dd u\lesssim_{T}(1+\delta^{-1})^{2(\gamma+1)}\norm{\het}_{C_{\eta;T}\mcH^{\gamma}}^{2}\abs{t-s}^{2\kappa'}.
	\end{equation*}
	\begin{details}
		\paragraph{Integrating the singularity.}
		We distinguish the cases $s<t\leq1$, $s\leq1<t$ and $1<s<t$. If $s<t\leq1$, we estimate
		\begin{equation*}
			\begin{split}
				\int_{s}^{t}\abs{t-u}^{-(1+\gamma'-\gamma-2\kappa')}(1\wedge u)^{-2\eta}\dd u&=\int_{s}^{t}\abs{t-u}^{-(1+\gamma'-\gamma-2\kappa')}u^{-2\eta}\dd u\\
				&\leq\abs{t-s}^{2\kappa'}\int_{0}^{t}\abs{t-u}^{-(1+\gamma'-\gamma)}u^{-2\eta}\dd u\\
				&\lesssim\abs{t-s}^{2\kappa'}t^{\gamma-\gamma'-2\eta}.
			\end{split}
		\end{equation*}
		If $s\leq1<t$, we estimate
		\begin{equation*}
			\begin{split}
				\int_{s}^{t}\abs{t-u}^{-(1+\gamma'-\gamma-2\kappa')}(1\wedge u)^{-2\eta}\dd u&=\int_{s}^{1}\abs{t-u}^{-(1+\gamma'-\gamma-2\kappa')}u^{-2\eta}\dd u+\int_{1}^{t}\abs{t-u}^{-(1+\gamma'-\gamma-2\kappa')}\dd u\\
				&\leq\abs{t-s}^{2\kappa'}\int_{0}^{t}\abs{t-u}^{-(1+\gamma'-\gamma)}u^{-2\eta}\dd u+\abs{t-s}^{2\kappa'}\int_{0}^{t}\abs{t-u}^{-(1+\gamma'-\gamma)}\dd u\\
				&\lesssim\abs{t-s}^{2\kappa'}(t^{\gamma-\gamma'-2\eta}+t^{\gamma-\gamma'}).
			\end{split}
		\end{equation*}
		If $1<s<t$, we estimate
		\begin{equation*}
			\begin{split}
				\int_{s}^{t}\abs{t-u}^{-(1+\gamma'-\gamma-2\kappa')}(1\wedge u)^{-2\eta}\dd u&=\int_{s}^{t}\abs{t-u}^{-(1+\gamma'-\gamma-2\kappa')}\dd u\\
				&\leq\abs{t-s}^{2\kappa'}\int_{0}^{t}\abs{t-u}^{-(1+\gamma'-\gamma)}\dd u\\
				&\lesssim\abs{t-s}^{2\kappa'}t^{\gamma-\gamma'}.
			\end{split}
		\end{equation*}
	\end{details}
	
	Now assume $2\kappa'>1+\gamma'-\gamma$; we may skip the application of Schauder's estimate, that is,
	\begin{equation*}
		\norm{P_{t-u}(\het(u,\place)\euler^{2\uppi\upi\inner{m}{\place}})}_{\mcH^{\gamma'-2\kappa'+1}}\lesssim\norm{P_{t-u}(\het(u,\place)\euler^{2\uppi\upi\inner{m}{\place}})}_{\mcH^{\gamma}}\lesssim\norm{\sigma(u,\place)\euler^{2\uppi\upi\inner{m}{\place}}}_{\mcH^{\gamma}},
	\end{equation*}
	to control the second summand in~\eqref{eq:lolli_regular_increment_decomposition} by the singularity
	\begin{equation*}
		\sum_{j=1}^{2}\sum_{m\in\mbZ^{2}}\varphi(\delta m)^{2}\int_{s}^{t}\norm{\partial_{j}P_{t-u}(\het(u,\place)\euler^{2\uppi\upi\inner{m}{\place}}) }_{\mcH^{\gamma'-2\kappa'}}^{2}\dd u\lesssim(1+\delta^{-1})^{2(\gamma+1)}\norm{\het}_{C_{\eta;T}\mcH^{\gamma}}^{2}\int_{s}^{t}(1\wedge u)^{-2\eta}\dd u.
	\end{equation*}
	We may bound
	\begin{equation*}
		\int_{s}^{t}(1\wedge u)^{-2\eta}\dd u\lesssim_{T}\abs{t-s}^{2\kappa'},
	\end{equation*}
	where the singularity is integrable by $2\eta<1$; and the bound is uniform by $2\eta\leq1-2\kappa'$. Therefore if $2\kappa'>1+\gamma'-\gamma$, we can control the second summand in~\eqref{eq:lolli_regular_increment_decomposition} by
	\begin{equation*}
		\sum_{j=1}^{2}\sum_{m\in\mbZ^{2}}\varphi(\delta m)^{2}\int_{s}^{t}\norm{\partial_{j}P_{t-u}(\het(u,\place)\euler^{2\uppi\upi\inner{m}{\place}}) }_{\mcH^{\gamma'-2\kappa'}}^{2}\dd u\lesssim_{T}(1+\delta^{-1})^{2(\gamma+1)}\norm{\het}_{C_{\eta;T}\mcH^{\gamma}}^{2}\abs{t-s}^{2\kappa'}.
	\end{equation*}
	\begin{details}
		\paragraph{Integrating the singularity.}
		We distinguish the cases $s<t\leq1$, $s\leq1<t$ and $1<s<t$. If $s<t\leq1$, we estimate
		\begin{equation*}
			\int_{s}^{t}(1\wedge u)^{-2\eta}\dd u=\int_{s}^{t}u^{-2\eta}\dd u=\frac{1}{1-2\eta}\frac{t^{1-2\eta}-s^{1-2\eta}}{\abs{t-s}^{1-2\eta}}\frac{\abs{t-s}^{1-2\eta}}{\abs{t-s}^{2\kappa'}}\abs{t-s}^{2\kappa'}\lesssim t^{1-2\eta-2\kappa'}\abs{t-s}^{2\kappa'}.
		\end{equation*}
		If $s\leq1<t$, we estimate
		\begin{equation*}
			\int_{s}^{t}(1\wedge u)^{-2\eta}\dd u=\int_{s}^{1}u^{-2\eta}\dd u+\int_{1}^{t}\dd u\lesssim t^{1-2\eta-2\kappa'}\abs{t-s}^{2\kappa'}+\abs{t-s}\leq(t^{1-2\eta-2\kappa'}+t^{1-2\kappa'})\abs{t-s}^{2\kappa'}.
		\end{equation*}
		If $1<s<t$, we estimate
		\begin{equation*}
			\int_{s}^{t}(1\wedge u)^{-2\eta}\dd u=\int_{s}^{1}\dd u=\abs{t-s}\leq\abs{t-s}^{1-2\kappa'}\abs{t-s}^{2\kappa'}\leq t^{1-2\kappa'}\abs{t-s}^{2\kappa'}.
		\end{equation*}
	\end{details}
	
	To summarize, for all $\kappa'\in(\kappa,1/2]$ such that $2\eta\leq1-2\kappa'$, we can bound the second summand in~\eqref{eq:lolli_regular_increment_decomposition} by
	\begin{equation}\label{eq:lolli_regular_increment_bound_II}
		\sum_{j=1}^{2}\sum_{m\in\mbZ^{2}}\varphi(\delta m)^{2}\int_{s}^{t}\norm{\partial_{j}P_{t-u}(\het(u,\place)\euler^{2\uppi\upi\inner{m}{\place}}) }_{\mcH^{\gamma'-2\kappa'}}^{2}\dd u\lesssim_{T}(1+\delta^{-1})^{2(\gamma+1)}\norm{\het}_{C_{\eta;T}\mcH^{\gamma}}^{2}\abs{t-s}^{2\kappa'}.
	\end{equation}
	Hence by~\eqref{eq:lolli_regular_increment_decomposition}, \eqref{eq:lolli_regular_increment_bound_I} and~\eqref{eq:lolli_regular_increment_bound_II}, we obtain uniformly in $\abs{t-s}\leq1$ the bound
	\begin{equation*}
		\mbE[\norm{\ti^{\delta}_{t}-\ti^{\delta}_{s}}_{\mcH^{\gamma'-2\kappa'}}^{2}]\lesssim_{T}(1+\delta^{-1})^{2(\gamma+1)}\norm{\het}_{C_{\eta;T}\mcH^{\gamma}}^{2}\abs{t-s}^{2\kappa'}.
	\end{equation*}
	
	For $\abs{t-s}>1$, we instead estimate by~\eqref{eq:lolli_regular_pntw},
	\begin{equation*}
		\mbE[\norm{\ti^{\delta}_{t}-\ti^{\delta}_{s}}_{\mcH^{\gamma'-2\kappa'}}^{2}]\lesssim\mbE[\norm{\ti^{\delta}_{t}}_{\mcH^{\gamma'-2\kappa'}}^{2}]+\mbE[\norm{\ti^{\delta}_{s}}_{\mcH^{\gamma'-2\kappa'}}^{2}]\lesssim_{T}(1+\delta^{-1})^{2(\gamma+1)}\norm{\het}_{C_{\eta;T}\mcH^{\gamma}}^{2}\abs{t-s}^{2\kappa'},
	\end{equation*}
	which yields for all $s,t\in[0,T]$ the bound
	\begin{equation*}
		\mbE[\norm{\ti^{\delta}_{t}-\ti^{\delta}_{s}}_{\mcH^{\gamma'-2\kappa'}}^{2}]\lesssim_{T}(1+\delta^{-1})^{2(\gamma+1)}\norm{\het}_{C_{\eta;T}\mcH^{\gamma}}^{2}\abs{t-s}^{2\kappa'}.
	\end{equation*}
	
	Let $p\in[1,\infty)$, to control the $L^{p}(\mbP)$-norm, we use Nelson's estimate~\cite[Lem.~2]{friz_victoir_07} and H\"{o}lder's inequality to deduce
	\begin{equation*}
		\mbE[\norm{\ti^{\delta}_{t}-\ti^{\delta}_{s}}^{p}_{\mcH^{\gamma'-2\kappa'}}]^{1/p}\lesssim_{p}\mbE[\norm{\ti^{\delta}_{t}-\ti^{\delta}_{s}}^{2}_{\mcH^{\gamma'-2\kappa'}}]^{1/2},
	\end{equation*}
	which, together with the $L^{2}(\mbP)$-bound above, yields
	\begin{equation*}
		\mbE[\norm{\ti^{\delta}_{t}-\ti^{\delta}_{s}}_{\mcH^{\gamma'-2\kappa'}}^{p}]^{1/p}\lesssim_{p,T}(1+\delta^{-\gamma-1})\norm{\het}_{C_{\eta;T}\mcH^{\gamma}}\abs{t-s}^{\kappa'}.
	\end{equation*}
	An application of Kolmogorov's continuity criterion~\cite[Thm.~A.10]{friz_victoir_10} then implies that there exists a modification (which we do not relabel) such that for all $p>1/\kappa'$ and $\kappa\in[0,\kappa'-1/p)$,
	\begin{equation}\label{eq:lolli_regular_bound_prelim}
		\mbE[\norm{\ti^{\delta}}^{p}_{C^{\kappa}_{T}\mcH^{\gamma'-2\kappa'}}]^{1/p}\lesssim_{p,T}(1+\delta^{-\gamma-1})\norm{\het}_{C_{\eta;T}\mcH^{\gamma}}.
	\end{equation}
	\begin{details}
		Where we used that $\ti^{\delta}_{0}=0$ to control the uniform norm by the H\"{o}lder seminorm.
	\end{details}
	Another application of H\"{o}lder's inequality shows that this bound continuous to hold for every $\kappa\in[0,\kappa')$ and $p\in[1,\infty)$.
	
	The next step is to generalize the result to arbitrary exponents $\kappa\in[0,1/2)$ and to avoid the loss of $2(\kappa'-\kappa)$ space regularity in~\eqref{eq:lolli_regular_bound_prelim}. Let $\gamma\geq0$, $\gamma'\in[-1,\gamma)$, $\kappa\in[0,1/2)$ and $\eta\in[0,1/2)$ be such that $2\eta<\gamma-\gamma'$ and $2\eta<1-2\kappa$. We can find $\kappa'\in(\kappa,1/2]$ such that $\gamma'+2(\kappa'-\kappa)\in[-1,\gamma)$, $2\eta\leq\gamma-(\gamma'+2(\kappa'-\kappa))$ and $2\eta\leq1-2\kappa'$. An application of~\eqref{eq:lolli_regular_bound_prelim} with these inflated exponents yields
	\begin{equation*}
		\mbE[\norm{\ti^{\delta}}^{p}_{C^{\kappa}_{T}\mcH^{\gamma'-2\kappa}}]^{1/p}=\mbE[\norm{\ti^{\delta}}^{p}_{C^{\kappa}_{T}\mcH^{\gamma'+2(\kappa'-\kappa)-2\kappa'}}]^{1/p}\lesssim_{p,T}(1+\delta^{-\gamma-1})\norm{\het}_{C_{\eta;T}\mcH^{\gamma}},
	\end{equation*}
	which proves~\eqref{eq:lolli_regular_bound}. Further, an application of Lemma~\ref{lem:little_Hoelder_criterion} shows $\ti^{\delta}\in\msL_{T}^{\kappa}\mcH^{\gamma'}(\mbT^{2})$ almost surely.
	\begin{details}
		\paragraph{Proof that $\ti^{\delta}\in\msL_{T}^{\kappa}\mcH^{\gamma'}(\mbT^{2})$ almost surely.}
		Let $\gamma\geq0$, $\gamma'\in[-1,\gamma)$, $\kappa\in[0,1/2)$ and $\eta\in[0,1/2)$ be such that $2\eta<\gamma-\gamma'$ and $2\eta<1-2\kappa$. To establish $\ti^{\delta}\in C_{T}^{\kappa}\mcH^{\gamma'-2\kappa}(\mbT^{2})$, we can find some $\kappa'\in(\kappa,1/2)$ such that $\gamma'+2(\kappa'-\kappa)\in[-1,\gamma)$, $2\eta<\gamma-(\gamma'+2(\kappa'-\kappa))$ and $2\eta<1-2\kappa'$. It then follows by~\eqref{eq:lolli_regular_bound} that
		\begin{equation*}
			\norm{\ti^{\delta}}_{C^{\kappa'}_{T}\mcH^{\gamma'-2\kappa}}=\norm{\ti^{\delta}}_{C^{\kappa'}_{T}\mcH^{\gamma'+2(\kappa'-\kappa)-2\kappa'}}<\infty\quad\text{almost surely}.
		\end{equation*}
		We can then apply Lemma~\ref{lem:little_Hoelder_criterion} to deduce $\ti^{\delta}\in C_{T}^{\kappa}\mcH^{\gamma'-2\kappa}(\mbT^{2})$ almost surely.
	\end{details}
\end{proof}
The next lemma establishes the regularity of $\ti^{\delta}$ as a square-integrable function in time.
\begin{lemma}\label{lem:lolli_regular_L2Hgamma}
	Let $T>0$, $\gamma\geq0$ and $\het\in L^{2}_{T}\mcH^{\gamma}(\mbT^{2})$. Then for all $\delta>0$ and $p\in[1,\infty)$ it holds that 
	\begin{equation*}
		\mbE[\norm{\ti^{\delta}}_{L^{2}_{T}\mcH^{\gamma}}^{p}]^{1/p}\lesssim_{p}(1+\delta^{-\gamma-1})\norm{\het}_{L^{2}_{T}\mcH^{\gamma}}.
	\end{equation*}
	and in particular $\ti^{\delta}\in L^{2}_{T}\mcH^{\gamma}(\mbT^{2})$ almost surely.
\end{lemma}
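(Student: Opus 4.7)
The plan is to mirror the strategy used for Lemma~\ref{lem:lolli_regular}, but now exploit the fact that we integrate once in time to obtain a simpler (and pointwise-singularity-free) bound. I would start from the Fourier representation~\eqref{eq:lolli_Fourier}, apply Parseval's identity, the stochastic Fubini theorem and It\^{o}'s isometry to write
\begin{equation*}
	\mbE[\norm{\ti^{\delta}}_{L_{T}^{2}\mcH^{\gamma}}^{2}]=\sum_{\om\in\mbZ^{2}}(1+\abs{2\uppi\om}^{2})^{\gamma}\sum_{j=1}^{2}\sum_{m\in\mbZ^{2}}\varphi(\delta m)^{2}\int_{0}^{T}\abs{2\uppi\om^{j}}^{2}\abs{\hat{\het}(u,\om-m)}^{2}\int_{u}^{T}\euler^{-2(t-u)\abs{2\uppi\om}^{2}}\dd t\dd u.
\end{equation*}

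The key observation is that for $\om\neq0$ the inner time integral satisfies $\int_{u}^{T}\euler^{-2(t-u)\abs{2\uppi\om}^{2}}\dd t\leq(2\abs{2\uppi\om}^{2})^{-1}$, which exactly compensates the factor $\abs{2\uppi\om^{j}}^{2}$ (summed over $j=1,2$ this yields at most $1/2$); the $\om=0$ contribution vanishes since $H^{j}_{t-u}(0)=0$. After this cancellation I would swap the order of summation, shift $\om\mapsto\om+m$, and apply the elementary bound $(1+\abs{2\uppi(\om+m)}^{2})^{\gamma}\lesssim(1+\abs{2\uppi\om}^{2})^{\gamma}+(1+\abs{2\uppi m}^{2})^{\gamma}$ used in~\eqref{eq:het_fourier_product_bound} to separate the $\het$- and mollifier-dependencies, obtaining
\begin{equation*}
	\mbE[\norm{\ti^{\delta}}_{L_{T}^{2}\mcH^{\gamma}}^{2}]\lesssim\Bigl(\sum_{m\in\mbZ^{2}}\varphi(\delta m)^{2}(1+\abs{2\uppi m}^{2})^{\gamma}\Bigr)\norm{\het}_{L_{T}^{2}\mcH^{\gamma}}^{2}.
\end{equation*}
A direct estimate using $\supp(\varphi)\subset B(0,1)$ (cf.~\cite[Lem.~C.2]{martini_mayorcas_22_WP}) yields $\sum_{m}\varphi(\delta m)^{2}(1+\abs{2\uppi m}^{2})^{\gamma}\lesssim(1+\delta^{-2})^{\gamma+1}\lesssim(1+\delta^{-\gamma-1})^{2}$, which gives the claimed $L^{2}(\mbP)$ bound.

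To promote this to an $L^{p}(\mbP)$ estimate for $p\in[1,\infty)$, I would apply Nelson's hypercontractive estimate~\cite[Lem.~2]{friz_victoir_07} to the $\mcH^{\gamma}$-valued random variable $\ti^{\delta}$, which lies in the first (inhomogeneous) Wiener chaos, in combination with H\"{o}lder's inequality exactly as in the proof of Lemma~\ref{lem:lolli_regular}. The almost sure membership $\ti^{\delta}\in L_{T}^{2}\mcH^{\gamma}(\mbT^{2})$ then follows from Markov's inequality applied to the $L^{p}(\mbP)$ bound for any $p\in[1,\infty)$. The main technical point — and the only real obstacle — is ensuring that the cancellation between $\abs{2\uppi\om^{j}}^{2}$ and the time-integrated heat factor is carried out carefully (in particular handling the zero-mode separately); beyond that, the remainder reduces to the combinatorial estimate on the mollifier sum, which is standard.
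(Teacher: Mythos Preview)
Your proposal is correct and follows essentially the same approach as the paper: both start from the Fourier representation and It\^{o}'s isometry, use the cancellation between the time-integrated heat factor $\int_{u}^{T}\euler^{-2(t-u)\abs{2\uppi\om}^{2}}\dd t$ and $\abs{2\uppi\om}^{2}$, apply the shift estimate~\eqref{eq:het_fourier_product_bound}, control the mollifier sum via~\cite[Lem.~C.2]{martini_mayorcas_22_WP}, and conclude with Nelson's estimate for general $p$. The paper evaluates the inner time integral explicitly as $\frac{1}{2\abs{2\uppi\om}^{2}}(1-\euler^{-2\abs{T-u}\abs{2\uppi\om}^{2}})$ rather than simply bounding it, but this is a cosmetic difference.
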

\begin{proof}
	We apply It\^{o}'s isometry~\eqref{eq:lolli_regular_Ito_isometry}, Parseval's theorem and Tonelli's theorem to obtain
	\begin{equation*}
		\begin{split}
			\mbE[\norm{\ti^{\delta}}_{L^{2}_{T}\mcH^{\gamma}}^{2}]&=\sum_{j=1}^{2}\sum_{m\in\mbZ^{2}}\varphi(\delta m)^{2}\int_{0}^{T}\int_{0}^{t}\norm{(1-\Delta)^{\gamma/2}\partial_{j}P_{t-u}(\het(u,\place)\euler^{2\uppi\upi\inner{m}{\place}})}_{L^{2}}^{2}\dd u\dd t\\
			&=\sum_{m\in\mbZ^{2}}\varphi(\delta m)^{2}\sum_{\om\in\mbZ^{2}}(1+\abs{2\uppi\om}^{2})^{\gamma}\abs{2\uppi\om}^{2}\int_{0}^{T}\int_{0}^{t}\euler^{-2\abs{t-u}\abs{2\uppi\om}^{2}}\abs{\hat{\het}(u,\om-m)}^{2}\dd u\dd t\\
			&=\sum_{m\in\mbZ^{2}}\varphi(\delta m)^{2}\sum_{\om\in\mbZ^{2}}(1+\abs{2\uppi\om}^{2})^{\gamma}\abs{2\uppi\om}^{2}\int_{0}^{T}\abs{\hat{\het}(u,\om-m)}^{2}\int_{u}^{T}\euler^{-2\abs{t-u}\abs{2\uppi\om}^{2}}\dd t\dd u\\
			&=\frac{1}{2}\sum_{m\in\mbZ^{2}}\varphi(\delta m)^{2}\sum_{\om\in\mbZ^{2}}(1+\abs{2\uppi\om}^{2})^{\gamma}\int_{0}^{T}\abs{\hat{\het}(u,\om-m)}^{2}(1-\euler^{-2\abs{T-u}\abs{2\uppi\om}^{2}})\dd u,
		\end{split}
	\end{equation*}
	which we can bound by
	\begin{equation*}
		\mbE[\norm{\ti^{\delta}}_{L^{2}_{T}\mcH^{\gamma}}^{2}]\lesssim\sum_{m\in\mbZ^{2}}\varphi(\delta m)^{2}\int_{0}^{T}\sum_{\om\in\mbZ^{2}}(1+\abs{2\uppi\om}^{2})^{\gamma}\abs{\hat{\het}(u,\om-m)}^{2}\dd u.
	\end{equation*}
	Using that $\gamma\geq0$, we bound the integrand as in~\eqref{eq:het_fourier_product_bound} uniformly in $u\in[0,T]$,
	\begin{equation*}
		\sum_{\om\in\mbZ^{2}}(1+\abs{2\uppi\om}^{2})^{\gamma}\abs{\hat{\het}(u,\om-m)}^{2}\lesssim(1+\abs{2\uppi m}^{2})^{\gamma}\norm{\het(u,\place)}_{\mcH^{\gamma}}^{2},
	\end{equation*}
	which yields
	\begin{equation*}
		\mbE[\norm{\ti^{\delta}}_{L^{2}_{T}\mcH^{\gamma}}^{2}]\lesssim\sum_{m\in\mbZ^{2}}\varphi(\delta m)^{2}(1+\abs{2\uppi m}^{2})^{\gamma}\int_{0}^{T}\norm{\het(u,\place)}_{\mcH^{\gamma}}^{2}\dd u\lesssim(1+\delta^{-1})^{2(\gamma+1)}\norm{\het}_{L^{2}_{T}\mcH^{\gamma}}^{2},
	\end{equation*}
	where in the last inequality we applied~\cite[Lem.~C.2, (C.1)]{martini_mayorcas_25} to control the sum over $m\in\mbZ^{2}$. The claim for arbitrary $p\in[1,\infty)$ then follows by Nelson's estimate~\cite[Lem.~2]{friz_victoir_07} and H\"{o}lder's inequality.
\end{proof}
The next lemma establishes a uniform bound on $\ti^{\delta}$ without trading space regularity for time regularity (cf.\ Lemma~\ref{lem:lolli_regular}).
\begin{lemma}\label{lem:lolli_regular_unif}
	Let $T>0$, $\gamma\in[-1,\infty)$ and $\het\in C_{T}\mcH^{\gamma\vee0}(\mbT^{2})\cap L_{T}^{2}\mcH^{\gamma+1}(\mbT^{2})$. Then for all $\delta>0$ and $p\in[1,\infty)$ it holds that
	\begin{equation*}
		\mbE[\norm{\ti^{\delta}}_{L_{T}^{\infty}\mcH^{\gamma}}^{p}]^{1/p}\lesssim_{p}(1+\delta^{-\gamma-2})\max\{\norm{\het}_{C_{T}\mcH^{\gamma\vee0}},\norm{\het}_{L_{T}^{2}\mcH^{\gamma+1}}\}.
	\end{equation*}
\end{lemma}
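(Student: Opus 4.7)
The plan is to control $\sup_{t\in[0,T]}\norm{\ti^{\delta}_{t}}_{\mcH^{\gamma}}$ Fourier-mode-by-mode, exploiting the fact that each mode $\hat{\ti^{\delta}}(\place,\om)$ satisfies a linear Ornstein--Uhlenbeck-type SDE driven by a complex $L^{2}$-martingale. Concretely, from~\eqref{eq:lolli_Fourier} I would rewrite, for $\om\in\mbZ^{2}\setminus\{0\}$,
\begin{equation*}
	\hat{\ti^{\delta}}(t,\om)=\int_{0}^{t}\euler^{-(t-u)\abs{2\uppi\om}^{2}}\dd M(u,\om),\qquad M(t,\om)\defeq\sum_{j=1}^{2}\sum_{m\in\mbZ^{2}}2\uppi\upi\om^{j}\varphi(\delta m)\int_{0}^{t}\hat{\het}(u,\om-m)\dd W^{j}(u,m).
\end{equation*}
Integration by parts then produces the pathwise bound $\sup_{t}\abs{\hat{\ti^{\delta}}(t,\om)}\leq 2\sup_{t}\abs{M(t,\om)}$, so that the Ornstein--Uhlenbeck running maximum is controlled by that of a genuine martingale. (The mode $\om=0$ vanishes identically.)

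Next, I would invoke Doob's $L^{p}$-maximal inequality together with Nelson's hypercontractivity estimate~\cite[Lem.~2]{friz_victoir_07} (applied to $M(T,\om)$, which belongs to the first Wiener chaos) to deduce for every $p\geq 2$ the mode-by-mode bound
\begin{equation*}
	\mbE\bigl[\sup_{t\in[0,T]}\abs{\hat{\ti^{\delta}}(t,\om)}^{p}\bigr]^{2/p}\lesssim_{p}\mbE[\abs{M(T,\om)}^{2}]=\abs{2\uppi\om}^{2}\sum_{m\in\mbZ^{2}}\varphi(\delta m)^{2}\int_{0}^{T}\abs{\hat{\het}(u,\om-m)}^{2}\dd u,
\end{equation*}
where the equality is It\^{o}'s isometry. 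Aggregating modes via $\sup_{t}\norm{\ti^{\delta}_{t}}_{\mcH^{\gamma}}^{2}\leq\sum_{\om}(1+\abs{2\uppi\om}^{2})^{\gamma}\sup_{t}\abs{\hat{\ti^{\delta}}(t,\om)}^{2}$ and applying Minkowski's inequality in $L^{p/2}(\mbP)$ (legitimate for $p\geq 2$) would then give the global bound
\begin{equation*}
	\mbE\bigl[\sup_{t\in[0,T]}\norm{\ti^{\delta}_{t}}_{\mcH^{\gamma}}^{p}\bigr]^{2/p}\lesssim_{p}\sum_{\om}(1+\abs{2\uppi\om}^{2})^{\gamma}\abs{2\uppi\om}^{2}\sum_{m}\varphi(\delta m)^{2}\int_{0}^{T}\abs{\hat{\het}(u,\om-m)}^{2}\dd u;
\end{equation*}
the remaining case $p\in[1,2)$ is an immediate consequence of $p=2$ by H\"{o}lder.

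To extract the correct $\delta$-dependence I would perform the Fourier shift $\om\mapsto\om+m$ and use the elementary bound $(1+\abs{2\uppi(\om+m)}^{2})^{\gamma}\abs{2\uppi(\om+m)}^{2}\leq(1+\abs{2\uppi(\om+m)}^{2})^{\gamma+1}\lesssim_{\gamma}(1+\abs{2\uppi\om}^{2})^{\gamma+1}(1+\abs{2\uppi m}^{2})^{\gamma+1}$, valid whenever $\gamma\geq -1$, to decouple the sums. The $\om$-sum then reproduces $\norm{\het}_{L_{T}^{2}\mcH^{\gamma+1}}^{2}$, while the $m$-sum is estimated via $\supp\varphi\subset B(0,1)$ and the counting argument of \cite[Lem.~C.2]{martini_mayorcas_22_WP} as $\sum_{m}\varphi(\delta m)^{2}(1+\abs{2\uppi m}^{2})^{\gamma+1}\lesssim(1+\delta^{-2})^{\gamma+2}$, which is comparable to $(1+\delta^{-\gamma-2})^{2}$. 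Since $\norm{\het}_{L_{T}^{2}\mcH^{\gamma+1}}$ is majorised by the maximum appearing in the statement, this closes the argument.

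The one step demanding genuine care is the Ornstein--Uhlenbeck integration-by-parts reduction turning each Fourier mode into a martingale amenable to Doob: standard alternatives, such as a Kolmogorov-continuity argument, would force one to trade space regularity for time regularity (cf.\ Lemma~\ref{lem:lolli_regular}) and thereby miss the $L_{T}^{\infty}\mcH^{\gamma}$ target. Once the martingale reduction is in hand, the remaining manipulations are a routine combination of Doob, Nelson, Minkowski, and the frequency-sum book-keeping already deployed in the proofs of Lemmas~\ref{lem:lolli_regular} and~\ref{lem:lolli_regular_L2Hgamma}.
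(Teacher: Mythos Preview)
Your argument is correct and in fact takes a genuinely different, more elementary route than the paper. The paper applies the complex It\^{o} formula to $\abs{\hat{\ti^{\delta}}(t,\om)}^{2}$, sums over $\om$ to obtain a semimartingale decomposition of $\norm{\ti^{\delta}_{t}}_{\mcH^{\gamma}}^{2}$, and then bounds the supremum via Burkholder--Davis--Gundy on the resulting martingale part; because that martingale involves $\ti^{\delta}$ itself, closing the estimate requires a Cauchy--Schwarz splitting that brings in $\norm{\het}_{C_{T}\mcH^{\gamma\vee0}}$ and an appeal to Lemma~\ref{lem:lolli_regular_L2Hgamma} to control $\norm{\ti^{\delta}}_{L_{T}^{2}\mcH^{\gamma+1}}$. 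Your pathwise Ornstein--Uhlenbeck reduction $\sup_{t}\abs{\hat{\ti^{\delta}}(t,\om)}\leq 2\sup_{t}\abs{M(t,\om)}$ bypasses all of this: Doob and Gaussian hypercontractivity applied mode-by-mode, followed by Minkowski in $L^{p/2}$, lead directly to the frequency sum that already appears in~\eqref{eq:lolli_regular_semimart_bound_II}. As a by-product your bound depends only on $\norm{\het}_{L_{T}^{2}\mcH^{\gamma+1}}$, so the $C_{T}\mcH^{\gamma\vee0}$ hypothesis is not actually used in your version of the estimate. The paper's energy-identity approach is closer in spirit to the variational estimates used elsewhere in Section~\ref{sec:det_PDE}, but for this particular lemma your mode-wise martingale reduction is both shorter and sharper.
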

\begin{proof}
	For all $\om\in\mbZ^{2}$ we obtain by~\eqref{eq:lolli_Fourier} the stochastic differential equation of the Fourier transform $(\hat{\ti^{\delta}}(t,\om))_{t\in[0,T]}$,
	\begin{equation*}
		\dd\hat{\ti^{\delta}}(t,\om)=-\abs{2\uppi\om}^{2}\hat{\ti^{\delta}}(t,\om)\dd t+\sum_{j=1}^{2}\sum_{m\in\mbZ^{2}}2\uppi\upi\om^{j}\hat{\het}(t,\om-m)\varphi(\delta m)\dd W^{j}(t,m),
	\end{equation*}
	whose (complex) martingale part we denote by
	\begin{equation*}
		\hat{M^{\delta}}(t,\om)\defeq\sum_{j=1}^{2}\sum_{m\in\mbZ^{2}}\int_{0}^{t}2\uppi\upi\om^{j}\hat{\het}(u,\om-m)\varphi(\delta m)\dd W^{j}(u,m).
	\end{equation*}
	The (sesquilinear) covariation of $\hat{M^{\delta}}(t,\om)$ is given by
	\begin{equation*}
		\dd[\hat{M^{\delta}}(\place,\om),\hat{M^{\delta}}(\place,\om')]_{t}=\inner{2\uppi\om}{2\uppi\om'}\sum_{m\in\mbZ^{2}}\hat{\het}(t,\om-m)\overline{\hat{\het}(t,\om'-m)}\varphi(\delta m)^{2}\dd t
	\end{equation*}
	and we denote the complex quadratic variation by $[\place]_{t}\defeq[\place,\place]_{t}$. It then follows by an application of the complex It\^{o} formula, that
	\begin{equation*}
		\begin{split}
			\dd\abs{\hat{\ti^{\delta}}(t,\om)}^{2}&=\hat{\ti^{\delta}}(t,-\om)\dd\hat{\ti^{\delta}}(t,\om)+\hat{\ti^{\delta}}(t,\om)\dd\hat{\ti^{\delta}}(t,-\om)+\dd[\hat{\ti^{\delta}}(\place,\om)]_{t}\\
			&=-2\abs{\hat{\ti^{\delta}}(t,\om)}^{2}\abs{2\uppi\om}^{2}\dd t+\hat{\ti^{\delta}}(t,-\om)\dd\hat{M^{\delta}}(t,\om)+\hat{\ti^{\delta}}(t,\om)\dd\hat{M^{\delta}}(t,-\om)+\dd[\hat{M^{\delta}}(\place,\om)]_{t},
		\end{split}
	\end{equation*}
	which, combined with~\eqref{eq:Bessel_norm}, yields
	\begin{equation}\label{eq:lolli_regular_norm_decomposition_SDE}
		\begin{split}
			\norm{\ti^{\delta}_{t}}_{\mcH^{\gamma}}^{2}&=-2\int_{0}^{t}\norm{\nabla\ti^{\delta}_{u}}_{\mcH^{\gamma}}^{2}\dd u+2\sum_{\om\in\mbZ^{2}}(1+\abs{2\uppi\om}^{2})^{\gamma}\int_{0}^{t}\hat{\ti^{\delta}}(u,-\om)\dd\hat{M^{\delta}}(u,\om)\\
			&\quad+\sum_{\om\in\mbZ^{2}}(1+\abs{2\uppi\om}^{2})^{\gamma}[\hat{M^{\delta}}(\place,\om)]_{t}.
		\end{split}
	\end{equation}
	Let $p\in[1,\infty)$, taking the supremum of~\eqref{eq:lolli_regular_norm_decomposition_SDE} we can deduce the uniform bound
	\begin{equation}\label{eq:lolli_regular_semimart_bound}
		\begin{split}
			\norm{\ti^{\delta}}_{L_{T}^{\infty}\mcH^{\gamma}}^{p}&\leq2^{p/2}\sup_{t\in[0,T]}\Bigl\lvert\sum_{\om\in\mbZ^{2}}(1+\abs{2\uppi\om}^{2})^{\gamma}\int_{0}^{t}\hat{\ti^{\delta}}(u,-\om)\dd\hat{M^{\delta}}(u,\om)\Bigr\rvert^{p/2}\\
			&\quad+\Bigl(\sum_{\om\in\mbZ^{2}}(1+\abs{2\uppi\om}^{2})^{\gamma}[\hat{M^{\delta}}(\place,\om)]_{T}\Bigr)^{p/2}.
		\end{split}
	\end{equation}
	\begin{details}
		We show that the infinite sum of stochastic integrals
		\begin{equation*}
			Z_{t}\defeq2\sum_{\om\in\mbZ^{2}}(1+\abs{2\uppi\om}^{2})^{\gamma}\int_{0}^{t}\hat{\ti^{\delta}}(u,-\om)\dd\hat{M^{\delta}}(u,\om)
		\end{equation*}
		is a complex martingale.
		
		To begin let us consider for $\om\in\mbZ^{2}$ the individual stochastic integral
		\begin{equation*}
			\int_{0}^{t}\hat{\ti^{\delta}}(u,-\om)\dd\hat{M^{\delta}}(u,\om),
		\end{equation*}
		which is a martingale, since
		\begin{equation*}
			\begin{split}
				&\mbE\Bigl[\int_{0}^{T}\abs{\hat{\ti^{\delta}}(u,\om)}^{2}\dd[\hat{M^{\delta}}(\place,\om)]_{u}\Bigr]\\
				&=\mbE\Bigl[\int_{0}^{T}\abs{\hat{\ti^{\delta}}(u,\om)}^{2}\abs{2\uppi\om}^{2}\sum_{m\in\mbZ^{2}}\abs{\hat{\het}(u,\om-m)}^{2}\varphi(\delta m)^{2}\dd u\Bigr]\\
				&\lesssim\sum_{\om\in\mbZ^{2}}(1+\abs{2\uppi\om}^{2})^{\gamma}\mbE\Bigl[\int_{0}^{T}\abs{\hat{\ti^{\delta}}(u,\om)}^{2}\abs{2\uppi\om}^{2}\sum_{m\in\mbZ^{2}}\abs{\hat{\het}(u,\om-m)}^{2}\varphi(\delta m)^{2}\dd u\Bigr]\\
				&\lesssim\mbE\Bigl[\int_{0}^{T}\norm{\nabla\ti^{\delta}_{u}}_{\mcH^{\gamma}}^{2}\norm{\het_{u}}_{L^{2}}^{2}\dd u\Bigr]\leq\norm{\het}_{C_{T}L^{2}}^{2}\mbE[\norm{\ti^{\delta}}_{L_{T}^{2}\mcH^{\gamma+1}}^{2}]
			\end{split}
		\end{equation*}
		is finite by Lemma~\ref{lem:lolli_regular_L2Hgamma}.
		
		Next for $\vartheta>0$ we consider the truncated sum,
		\begin{equation*}
			Z^{\vartheta}_{t}\defeq2\sum_{\substack{\om\in\mbZ^{2}\\\abs{\om}\leq\vartheta^{-1}}}(1+\abs{2\uppi\om}^{2})^{\gamma}\int_{0}^{t}\hat{\ti^{\delta}}(u,-\om)\dd\hat{M^{\delta}}(u,\om),
		\end{equation*}
		which is a martingale as a finite sum of martingales.
		
		To pass to the limit $\vartheta\to0$, we decompose
		\begin{equation}\label{eq:lolli_regular_unif_mart_dec}
			\begin{split}
				Z^{\vartheta}_{t}&=\sum_{\substack{\om\in\mbZ^{2}\\\abs{\om}\leq\vartheta^{-1}}}(1+\abs{2\uppi\om}^{2})^{\gamma}\abs{\hat{\ti^{\delta}}(t,\om)}^{2}+2\sum_{\substack{\om\in\mbZ^{2}\\\abs{\om}\leq\vartheta^{-1}}}(1+\abs{2\uppi\om}^{2})^{\gamma}\abs{2\uppi\om}^{2}\int_{0}^{t}\abs{\hat{\ti^{\delta}}(u,\om)}^{2}\dd u\\
				&\quad-\sum_{\substack{\om\in\mbZ^{2}\\\abs{\om}\leq\vartheta^{-1}}}(1+\abs{2\uppi\om}^{2})^{\gamma}[\hat{M^{\delta}}(\place,\om)]_{t}.
			\end{split}
		\end{equation}
		By the monotone convergence theorem applied to the sums over $\abs{\om}\leq\vartheta^{-1}$, each term in this decomposition converges almost surely as $\vartheta\to0$ to an element of $[0,\infty]$. Using $\het\in L_{T}^{2}\mcH^{\gamma+1}(\mbT^{2})$, it follows by Lemma~\ref{lem:lolli_regular_L2Hgamma} that $\ti^{\delta}\in L_{T}^{2}\mcH^{\gamma+1}(\mbT^{2})$ almost surely and
		\begin{equation*}
			\sum_{\substack{\om\in\mbZ^{2}\\\abs{\om}\leq\vartheta^{-1}}}(1+\abs{2\uppi\om}^{2})^{\gamma}\abs{2\uppi\om}^{2}\int_{0}^{t}\abs{\hat{\ti^{\delta}}(u,\om)}^{2}\dd u\to\norm{\nabla\ti^{\delta}}_{L_{t}^{2}\mcH^{\gamma}}^{2}\quad\text{as}\quad\vartheta\to0,
		\end{equation*}
		hence the second term in~\eqref{eq:lolli_regular_unif_mart_dec} is finite almost surely.
		
		Using~\eqref{eq:het_fourier_product_bound} with $\gamma+1\geq0$ and $\het\in L_{T}^{2}\mcH^{\gamma+1}(\mbT^{2})$ it further follows that
		\begin{equation*}
			\begin{split}
				\sum_{\substack{\om\in\mbZ^{2}\\\abs{\om}\leq\vartheta^{-1}}}(1+\abs{2\uppi\om}^{2})^{\gamma}[\hat{M^{\delta}}(\place,\om)]_{t}&=\sum_{\substack{\om\in\mbZ^{2}\\\abs{\om}\leq\vartheta^{-1}}}(1+\abs{2\uppi\om}^{2})^{\gamma}\abs{2\uppi\om}^{2}\sum_{m\in\mbZ^{2}}\int_{0}^{t}\abs{\hat{\het}(u,\om-m)}^{2}\varphi(\delta m)^{2}\dd u\\
				&=\sum_{m\in\mbZ^{2}}\varphi(\delta m)^{2}\int_{0}^{t}\sum_{\substack{\om\in\mbZ^{2}\\\abs{\om}\leq\vartheta^{-1}}}(1+\abs{2\uppi\om}^{2})^{\gamma}\abs{2\uppi\om}^{2}\abs{\hat{\het}(u,\om-m)}^{2}\dd u\\
				&\to\sum_{m\in\mbZ^{2}}\varphi(\delta m)^{2}\int_{0}^{t}\sum_{\om\in\mbZ^{2}}(1+\abs{2\uppi\om}^{2})^{\gamma}\abs{2\uppi\om}^{2}\abs{\hat{\het}(u,\om-m)}^{2}\dd u\\
				&\leq\sum_{m\in\mbZ^{2}}\varphi(\delta m)^{2}(1+\abs{2\uppi m}^{2})^{\gamma+1}\int_{0}^{t}\norm{\het_{u}}_{\mcH^{\gamma+1}}^{2}\dd u<\infty,
			\end{split}
		\end{equation*}
		hence the third term in~\eqref{eq:lolli_regular_unif_mart_dec} is finite almost surely.
		
		Consequently we obtain $\lim_{\vartheta\to0}Z^{\vartheta}_{t}=Z_{t}$ almost surely; and $Z_{t}<\infty$ if and only if $\norm{\ti^{\delta}_{t}}_{\mcH^{\gamma}}<\infty$.
		
		It follows by~\eqref{eq:lolli_regular_semimart_bound_I} and its derivation, that $(Z^{\vartheta}_{t})_{\vartheta>0}$ is $L^{p}(\mbP)$-bounded uniformly in $\vartheta>0$ for every $p\in[1,\infty]$ and $t\in[0,T]$. In particular $(Z^{\vartheta}_{t})_{\vartheta>0}$ is uniformly integrable and $\lim_{\vartheta\to0}Z^{\vartheta}_{t}=Z_{t}$ in $L^{1}(\mbP)$. In particular $(Z_{t})_{t\in[0,T]}$ is a martingale and $Z_{t}<\infty$ almost surely.
	\end{details}
	
	To control the first term in~\eqref{eq:lolli_regular_semimart_bound}, we use the (complex) Burkholder--Davis--Gundy inequality~\cite[Thm.~14.6]{friz_victoir_10}, several applications of the Cauchy--Schwarz inequality and~\eqref{eq:het_fourier_product_bound} to deduce for every $p\in[2,\infty)$,
	\begin{equation}\label{eq:lolli_regular_semimart_bound_I}
		\begin{split}
			&\mbE\Bigl[\sup_{t\in[0,T]}\Bigl\lvert\sum_{\om\in\mbZ^{2}}(1+\abs{2\uppi\om}^{2})^{\gamma}\int_{0}^{t}\hat{\ti^{\delta}}(u,-\om)\dd\hat{M^{\delta}}(u,\om)\Bigr\rvert^{p/2}\Bigr]\\
			&\lesssim_{p}\mbE\Bigl[\Bigl[\sum_{\om\in\mbZ^{2}}(1+\abs{2\uppi\om}^{2})^{\gamma}\int_{0}^{\place}\hat{\ti^{\delta}}(u,-\om)\dd\hat{M^{\delta}}(u,\om)\Bigr]_{T}^{p/4}\Bigr]\\
			&\leq\norm{\het}_{C_{T}\mcH^{\gamma\vee0}}^{p/2}\mbE[\norm{\ti^{\delta}}_{L_{T}^{2}\mcH^{\gamma+1}}^{p/2}](1+\delta^{-1})^{((\gamma\vee0)+1)p/2}.
		\end{split}
	\end{equation}
	\begin{details}
		\paragraph{Full derivation of~\eqref{eq:lolli_regular_semimart_bound_I}.}
		We bound by the (complex) Burkholder--Davis--Gundy inequality and several applications of the Cauchy--Schwarz inequality,
		\begin{equation*}
			\begin{split}
				&\mbE\Bigl[\sup_{t\in[0,T]}\Bigl\lvert\sum_{\om\in\mbZ^{2}}(1+\abs{2\uppi\om}^{2})^{\gamma}\int_{0}^{t}\hat{\ti^{\delta}}(u,-\om)\dd\hat{M^{\delta}}(u,\om)\Bigr\rvert^{p/2}\Bigr]\\
				&\lesssim_{p}\mbE\Bigl[\Bigl[\sum_{\om\in\mbZ^{2}}(1+\abs{2\uppi\om}^{2})^{\gamma}\int_{0}^{\place}\hat{\ti^{\delta}}(u,-\om)\dd\hat{M^{\delta}}(u,\om)\Bigr]_{T}^{p/4}\Bigr]\\
				&=\mbE\Bigl[\Bigl(\sum_{\om,\om'\in\mbZ^{2}}(1+\abs{2\uppi\om}^{2})^{\gamma}(1+\abs{2\uppi\om'}^{2})^{\gamma}\int_{0}^{T}\hat{\ti^{\delta}}(u,-\om)\overline{\hat{\ti^{\delta}}(u,-\om')}\dd[\hat{M^{\delta}}(\place,\om),\hat{M^{\delta}}(\place,\om')]_{u}\Bigr)^{p/4}\Bigr]\\
				&=\mbE\Bigl[\Bigl(\sum_{\om,\om'\in\mbZ^{2}}(1+\abs{2\uppi\om}^{2})^{\gamma}(1+\abs{2\uppi\om'}^{2})^{\gamma}\\
				&\multiquad[8]\times\int_{0}^{T}\hat{\ti^{\delta}}(u,-\om)\overline{\hat{\ti^{\delta}}(u,-\om')}\inner{2\uppi\om}{2\uppi\om'}\sum_{m\in\mbZ^{2}}\hat{\het}(u,\om-m)\overline{\hat{\het}(u,\om'-m)}\varphi(\delta m)^{2}\dd u\Bigr)^{p/4}\Bigr]\\
				&\leq\mbE\Bigl[\Bigl(\sum_{m\in\mbZ^{2}}\varphi(\delta m)^{2}\int_{0}^{T}\Bigl(\sum_{\om\in\mbZ^{2}}(1+\abs{2\uppi\om}^{2})^{\gamma}\abs{2\uppi\om}\abs{\hat{\ti^{\delta}}(u,\om)}\abs{\hat{\het}(u,\om-m)}\Bigr)^{2}\dd u\Bigr)^{p/4}\Bigr]\\
				&\leq\mbE\Bigl[\Bigl(\sum_{m\in\mbZ^{2}}\varphi(\delta m)^{2}\int_{0}^{T}\Bigl(\sum_{\om\in\mbZ^{2}}(1+\abs{2\uppi\om}^{2})^{\gamma}\abs{2\uppi\om}^{2}\abs{\hat{\ti^{\delta}}(u,\om)}^{2}\Bigr)\Bigl(\sum_{\om\in\mbZ^{2}}(1+\abs{2\uppi\om}^{2})^{\gamma}\abs{\hat{\het}(u,\om-m)}^{2}\Bigr)\dd u\Bigr)^{p/4}\Bigr].
			\end{split}
		\end{equation*}
		Assume $\gamma\geq0$, it then follows by~\eqref{eq:het_fourier_product_bound},
		\begin{equation*}
			\begin{split}
				&\mbE\Bigl[\Bigl(\sum_{m\in\mbZ^{2}}\varphi(\delta m)^{2}\int_{0}^{T}\Bigl(\sum_{\om\in\mbZ^{2}}(1+\abs{2\uppi\om}^{2})^{\gamma}\abs{2\uppi\om}^{2}\abs{\hat{\ti^{\delta}}(u,\om)}^{2}\Bigr)\Bigl(\sum_{\om\in\mbZ^{2}}(1+\abs{2\uppi\om}^{2})^{\gamma}\abs{\hat{\het}(u,\om-m)}^{2}\Bigr)\dd u\Bigr)^{p/4}\Bigr]\\
				&\leq\norm{\het}_{C_{T}\mcH^{\gamma}}^{p/2}\mbE\Bigl[\Bigl(\sum_{m\in\mbZ^{2}}\varphi(\delta m)^{2}(1+\abs{2\uppi m}^{2})^{\gamma}\int_{0}^{T}\norm{\nabla\ti^{\delta}_{u}}_{\mcH^{\gamma}}^{2}\dd u\Bigr)^{p/4}\Bigr]\\
				&\leq\norm{\het}_{C_{T}\mcH^{\gamma}}^{p/2}\mbE[\norm{\ti^{\delta}}_{L_{T}^{2}\mcH^{\gamma+1}}^{p/2}](1+\delta^{-1})^{(\gamma+1)p/2}.
			\end{split}
		\end{equation*}
		Assume $\gamma<0$, then
		\begin{equation*}
			\begin{split}
				&\mbE\Bigl[\Bigl(\sum_{m\in\mbZ^{2}}\varphi(\delta m)^{2}\int_{0}^{T}\Bigl(\sum_{\om\in\mbZ^{2}}(1+\abs{2\uppi\om}^{2})^{\gamma}\abs{2\uppi\om}^{2}\abs{\hat{\ti^{\delta}}(u,\om)}^{2}\Bigr)\Bigl(\sum_{\om\in\mbZ^{2}}(1+\abs{2\uppi\om}^{2})^{\gamma}\abs{\hat{\het}(u,\om-m)}^{2}\Bigr)\dd u\Bigr)^{p/4}\Bigr]\\
				&\leq\mbE\Bigl[\Bigl(\sum_{m\in\mbZ^{2}}\varphi(\delta m)^{2}\int_{0}^{T}\Bigl(\sum_{\om\in\mbZ^{2}}(1+\abs{2\uppi\om}^{2})^{\gamma}\abs{2\uppi\om}^{2}\abs{\hat{\ti^{\delta}}(u,\om)}^{2}\Bigr)\Bigl(\sum_{\om\in\mbZ^{2}}\abs{\hat{\het}(u,\om-m)}^{2}\Bigr)\dd u\Bigr)^{p/4}\Bigr]\\
				&\leq\norm{\het}_{C_{T}L^{2}}^{p/2}\mbE\Bigl[\Bigl(\sum_{m\in\mbZ^{2}}\varphi(\delta m)^{2}\int_{0}^{T}\norm{\nabla\ti^{\delta}_{u}}_{\mcH^{\gamma}}^{2}\dd u\Bigr)^{p/4}\Bigr]\\
				&\leq\norm{\het}_{C_{T}L^{2}}^{p/2}\mbE[\norm{\ti^{\delta}}_{L_{T}^{2}\mcH^{\gamma+1}}^{p/2}](1+\delta^{-1})^{p/2}.
			\end{split}
		\end{equation*}
	\end{details}
	
	To control the second term in~\eqref{eq:lolli_regular_semimart_bound}, we estimate by~\eqref{eq:het_fourier_product_bound} using that $\gamma+1\geq0$,
	\begin{equation}\label{eq:lolli_regular_semimart_bound_II}
		\begin{split}
			\sum_{\om\in\mbZ^{2}}(1+\abs{2\uppi\om}^{2})^{\gamma}[\hat{M^{\delta}}(\place,\om)]_{T}&=\sum_{\om\in\mbZ^{2}}(1+\abs{2\uppi\om}^{2})^{\gamma}\abs{2\uppi\om}^{2}\sum_{m\in\mbZ^{2}}\int_{0}^{T}\abs{\hat{\het}(u,\om-m)}^{2}\varphi(\delta m)^{2}\dd u\\
			&=\sum_{m\in\mbZ^{2}}\varphi(\delta m)^{2}\int_{0}^{T}\sum_{\om\in\mbZ^{2}}(1+\abs{2\uppi\om}^{2})^{\gamma}\abs{2\uppi\om}^{2}\abs{\hat{\het}(u,\om-m)}^{2}\dd u\\
			&\lesssim\sum_{m\in\mbZ^{2}}\varphi(\delta m)^{2}(1+\abs{2\uppi m}^{2})^{\gamma+1}\int_{0}^{T}\norm{\het_{u}}_{\mcH^{\gamma+1}}^{2}\dd u\\
			&=(1+\delta^{-1})^{2(\gamma+2)}\norm{\het}_{L^{2}_{T}\mcH^{\gamma+1}}^{2}.
		\end{split}
	\end{equation}
	
	Therefore it follows by~\eqref{eq:lolli_regular_semimart_bound}, \eqref{eq:lolli_regular_semimart_bound_I}, \eqref{eq:lolli_regular_semimart_bound_II} and Lemma~\ref{lem:lolli_regular_L2Hgamma} that for every $p\in[2,\infty)$,
	\begin{equation*}
		\begin{split}
			\mbE[\norm{\ti^{\delta}}_{L_{T}^{\infty}\mcH^{\gamma}}^{p}]&\lesssim_{p}(1+\delta^{-1})^{((\gamma\vee0)+1)p/2}\norm{\het}_{C_{T}\mcH^{\gamma\vee0}}^{p/2}\mbE[\norm{\ti^{\delta}}_{L_{T}^{2}\mcH^{\gamma+1}}^{p/2}]+(1+\delta^{-1})^{(\gamma+2)p}\norm{\het}_{L_{T}^{2}\mcH^{\gamma+1}}^{p}\\
			&\lesssim(1+\delta^{-1})^{(\gamma+2)p}\max\{\norm{\het}_{C_{T}\mcH^{\gamma\vee0}},\norm{\het}_{L_{T}^{2}\mcH^{\gamma+1}}\}^{p}.
		\end{split}
	\end{equation*}
	To generalize the bound to $p\in[1,\infty)$, it suffices to apply H\"{o}lder's inequality, which yields the claim.
\end{proof}
In the next proposition we specialize $\het=\srdet$, where $\rdet$ is a solution to the deterministic Keller--Segel equation~\eqref{eq:Determ_KS} with initial data $\rho_{0}$ and chemotactic sensitivity $\chem\in\mbR$ (see Lemma~\ref{lem:Determ_KS_continuity}). A combination of Lemmas~\ref{lem:lolli_regular}, \ref{lem:lolli_regular_L2Hgamma} and~\ref{lem:lolli_regular_unif} with results of Section~\ref{sec:det_PDE} yields the existence and regularity of $\ti^{\delta}=\vdiv\mcI[\srdet\boldsymbol{\xi}^{\delta}]$ in $C_{T}L^{2}(\mbT^{2})\cap L_{T}^{2}\mcH^{1}(\mbT^{2})$ if $\rho_{0}$ is assumed to be continuous, positive and of unit mass.
\begin{proposition}\label{prop:lolli_regular_srdet}
	Let $\rho_{0}\in C(\mbT^{2})$ be such that $\rho_{0}>0$ and $\mean{\rho_{0}}=1$, $\rdet$ be the weak solution to~\eqref{eq:Determ_KS} with initial data $\rho_{0}$ and chemotactic sensitivity $\chem\in\mbR$, and $\Trdet$ be its maximal time of existence (see Lemma~\ref{lem:Determ_KS_continuity}). Then for all $T<\Trdet$ and $\delta>0$ it holds that $\ti^{\delta}\defeq\vdiv\mcI[\sqrt{\rdet}\boldsymbol{\xi}^{\delta}]\in C_{T}L^{2}(\mbT^{2})\cap L_{T}^{2}\mcH^{1}(\mbT^{2})$ almost surely and we can bound for every $p\in[1,\infty)$ and $\gamma\in[-1,0]$,
	\begin{equation}\label{eq:lolli_regular_srdet_CTL2_LT2H1}
		\mbE\Bigl[\norm{\ti^{\delta}}_{C_{T}\mcH^{\gamma}\cap L^{2}_{T}\mcH^{\gamma+1}}^{p}\Bigr]^{1/p}\lesssim_{p}(1+\delta^{-\gamma-2})\norm{\srdet}_{C_{T}L^{2}\cap L_{T}^{2}\mcH^{1}}.
	\end{equation}
\end{proposition}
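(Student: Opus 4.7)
The plan is to combine the three preceding lemmas of this appendix, specialised to $\het=\srdet$, after checking that $\srdet$ has the required regularity. Since $\rho_{0}\in C(\mbT^{2})$ is positive with unit mean, Lemma~\ref{lem:Determ_KS_continuity} gives $\rdet\in C([0,T]\times\mbT^{2})$ for every $T<\Trdet$, Lemma~\ref{lem:Determ_KS_positivity} ensures $\rdet\geq c>0$ on $[0,T]\times\mbT^{2}$, and Lemma~\ref{lem:regularity_srdet} applied with $\gamma=0$ then yields $\srdet\in C([0,T]\times\mbT^{2})\cap C_{T}L^{2}\cap L_{T}^{2}\mcH^{1}$. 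Using $\gamma\vee 0=0$ for the former and the embedding $L_{T}^{2}\mcH^{1}\embed L_{T}^{2}\mcH^{\gamma+1}$, the hypotheses of both Lemmas~\ref{lem:lolli_regular_unif} and~\ref{lem:lolli_regular_L2Hgamma} are therefore satisfied for every $\gamma\in[-1,0]$.

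For the quantitative $L^{p}$-moment bound I would first invoke Lemma~\ref{lem:lolli_regular_unif} with its parameter equal to $\gamma\in[-1,0]$ to obtain $\mbE[\norm{\ti^{\delta}}_{L_{T}^{\infty}\mcH^{\gamma}}^{p}]^{1/p}\lesssim_{p}(1+\delta^{-\gamma-2})\max\{\norm{\srdet}_{C_{T}L^{2}},\norm{\srdet}_{L_{T}^{2}\mcH^{\gamma+1}}\}$, and then Lemma~\ref{lem:lolli_regular_L2Hgamma} with its parameter equal to $\gamma+1\in[0,1]$ to obtain the corresponding estimate on $\norm{\ti^{\delta}}_{L_{T}^{2}\mcH^{\gamma+1}}$. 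Both right-hand sides are controlled by $(1+\delta^{-\gamma-2})\norm{\srdet}_{C_{T}L^{2}\cap L_{T}^{2}\mcH^{1}}$, yielding the desired bound~\eqref{eq:lolli_regular_srdet_CTL2_LT2H1} in its $L^{\infty}$-in-time form.

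The remaining step is to upgrade $L_{T}^{\infty}\mcH^{\gamma}$ to $C_{T}\mcH^{\gamma}$ almost surely; it suffices to treat the endpoint $\gamma=0$, since $C_{T}L^{2}\subset C_{T}\mcH^{\gamma}$ for every $\gamma\leq 0$. Here I would exploit the mollification: since $\varphi$ is compactly supported, the real-space representation~\eqref{eq:lolli_real} reduces for fixed $\delta>0$ to a finite sum over $\{m\in\mbZ^{2}:\abs{\delta m}\leq 1\}$ of It\^{o} integrals whose integrands $\varphi(\delta m)\partial_{j}P_{t-u}(\srdet(u,\place)\euler^{2\uppi\upi\inner{m}{\place}})$ are smooth in space, jointly continuous in $(t,u)$ on $\{u<t\}$ and possess only an integrable singularity at $u=t$. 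A routine Kolmogorov continuity estimate applied to each summand (and to each Fourier coefficient within it, exploiting Gaussian decay in $\om$) yields a continuous $L^{2}$-valued modification, and a finite sum of continuous processes remains continuous, giving $\ti^{\delta}\in C([0,T]\times\mbT^{2})\subset C_{T}L^{2}$ almost surely.

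The main obstacle is precisely this endpoint continuity step: Lemma~\ref{lem:lolli_regular} applied with its $\gamma'$ taken to be $0$ would require $\srdet\in C_{\eta;T}\mcH^{\bar{\gamma}}$ with $\bar{\gamma}>2\eta$, which is incompatible with the optimal regularity $\srdet\in C_{\eta;T}\mcH^{2\eta}$ afforded by Lemma~\ref{lem:regularity_srdet} for $\rho_{0}$ only continuous, while Lemma~\ref{lem:lolli_regular_unif} supplies merely an $L^{\infty}$-in-time bound. The positivity $\delta>0$ of the correlation length is what rescues the argument, by rendering the problem effectively finite-dimensional in Fourier space and so bypassing the PDE-type obstruction at the $L^{2}$ endpoint.
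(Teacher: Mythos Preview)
Your treatment of the quantitative moment bound~\eqref{eq:lolli_regular_srdet_CTL2_LT2H1} is correct and matches the paper: invoke Lemma~\ref{lem:lolli_regular_unif} with parameter $\gamma\in[-1,0]$ and Lemma~\ref{lem:lolli_regular_L2Hgamma} with parameter $\gamma+1\in[0,1]$, after Lemma~\ref{lem:regularity_srdet} has supplied $\srdet\in C_{T}L^{2}\cap L_{T}^{2}\mcH^{1}$.

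The gap is in the continuity upgrade. The compact support of $\varphi$ only truncates the \emph{noise} frequencies $m$; the solution still carries all spatial frequencies $\om$, so nothing becomes finite-dimensional in the sense you need. For a single summand $I_{j,m}(t)=\int_{0}^{t}\partial_{j}P_{t-u}(\srdet(u,\place)\euler^{2\uppi\upi\inner{m}{\place}})\dd W^{j}(u,m)$, It\^{o}'s isometry gives
\[
\mbE\norm{I_{j,m}(t)-I_{j,m}(s)}_{L^{2}}^{2}=\int_{0}^{T}\bigl\lVert\bigl(\partial_{j}P_{t-u}\mathds{1}_{u<t}-\partial_{j}P_{s-u}\mathds{1}_{u<s}\bigr)\bigl(\srdet(u,\place)\euler^{2\uppi\upi\inner{m}{\place}}\bigr)\bigr\rVert_{L^{2}}^{2}\dd u,
\]
and with the argument only in $L^{2}$ in space one has $\norm{\partial_{j}P_{t-u}(\ldots)}_{L^{2}}^{2}\sim\abs{t-u}^{-1}$, which is \emph{not} integrable at $u=t$: your ``integrable singularity'' claim fails once squared. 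The Gaussian factor $\euler^{-(t-u)\abs{2\uppi\om}^{2}}$ offers no uniform decay in $\om$ as $u\to t$. One can still show $\mbE\norm{\ti^{\delta}_{t}-\ti^{\delta}_{s}}_{L^{2}}^{2}\to0$ by dominated convergence over $\om$, but without extra spatial regularity on $\srdet$ there is no H\"older rate, so Kolmogorov does not apply; the $L_{T}^{2}\mcH^{1}$-control of $\srdet$ bounds $\int_{s}^{t}\norm{\srdet(u)}_{\mcH^{1}}^{2}\dd u$ but again yields no rate. This is precisely the endpoint obstruction you identified, and $\delta>0$ does not circumvent it.

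The paper's remedy is an approximation in the \emph{initial data}: take smooth $\rho_{0}^{n}\to\rho_{0}$ in $C(\mbT^{2})$, so that Lemma~\ref{lem:regularity_srdet} gives $\sqrt{\rdet^{n}}\in C_{\eta;T}\mcH^{\vartheta+2\eta}$ with $\vartheta>0$; now Lemma~\ref{lem:lolli_regular} with $\gamma=\vartheta+2\eta$ and $\gamma'=0$ (so $2\eta<\gamma-\gamma'$ holds) yields $\vdiv\mcI[\sqrt{\rdet^{n}}\boldsymbol{\xi}^{\delta}]\in C_{T}L^{2}$ almost surely. One then shows $\sqrt{\rdet^{n}}\to\srdet$ in $C_{T}L^{2}\cap L_{T}^{2}\mcH^{1}$ and uses Lemma~\ref{lem:lolli_regular_unif} (which only needs this regularity) to obtain $L_{T}^{\infty}L^{2}$-convergence of the stochastic convolutions along a subsequence; completeness of $C_{T}L^{2}$ under the uniform norm delivers $\ti^{\delta}\in C_{T}L^{2}$.
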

\begin{proof}
	Let $\rho_{0}\in C(\mbT^{2})$ be such that $\rho_{0}>0$ and $\mean{\rho_{0}}=1$. An application of Lemma~\ref{lem:regularity_srdet} yields $\srdet\in L_{T}^{2}\mcH^{1}(\mbT^{2})$, which combined with Lemma~\ref{lem:lolli_regular_L2Hgamma} allows us to control for every $\gamma\in[-1,0]$,
	\begin{equation}\label{eq:lolli_regular_srdet_LT2H1}
		\mbE[\norm{\ti^{\delta}}_{L^{2}_{T}\mcH^{\gamma+1}}^{p}]^{1/p}\lesssim_{p}(1+\delta^{-\gamma-2})\norm{\srdet}_{L^{2}_{T}\mcH^{\gamma+1}}\lesssim(1+\delta^{-\gamma-2})\norm{\srdet}_{L^{2}_{T}\mcH^{1}}
	\end{equation}
	and deduce $\ti^{\delta}\in L^{2}_{T}\mcH^{\gamma+1}(\mbT^{2})$ almost surely.
	
	To establish $\ti^{\delta}\in C_{T}L^{2}(\mbT^{2})$ almost surely, we use Lemma~\ref{lem:lolli_regular_unif} and an approximation argument. By the density of the smooth functions in $C(\mbT^{2})$, we can find a sequence $(\rho_{0}^{n})_{n\in\mbN}$ such that $\rho_{0}^{n}\in C^{\infty}(\mbT^{2})$, $\rho_{0}^{n}>0$, $\mean{\rho_{0}^{n}}=1$ for every $n\in\mbN$ and $\rho_{0}^{n}\to\rho_{0}$ in $C(\mbT^{2})$ as $n\to\infty$.
	\begin{details}
		Passing to the full space, we have $\rho_{0}\in C_{\per}(\mbR^{2})$, where we abuse notation to identify functions on $\mbT^{2}$ with periodic functions on $\mbR^{d}$. An application of~\cite[App.~C, Thm.~6]{evans_98_PDE} then yields a sequence of mollifications $(\rho_{0}^{n})_{n\in\mbN}$ such that $\rho_{0}^{n}\in C^{\infty}(\mbR^{2})$ for every $n\in\mbN$ and $\rho_{0}^{n}\to\rho_{0}$ in $C([0,1]^{2})$ as $n\to\infty$. Using that the integral of a periodic function over its fundamental domain is constant under arbitrary translations, we can deduce that $\mean{\rho_{0}^{n}}=1$ for every $n\in\mbN$. Since mollifications of periodic functions remain periodic, we can pass back to the torus to obtain a suitable sequence of smooth approximations. 
	\end{details}
	Denote by $(\rdet^{n})_{n\in\mbN}$ the corresponding sequence of solutions to~\eqref{eq:Determ_KS} with initial data $(\rho_{0}^{n})_{n\in\mbN}$ and chemotactic sensitivity $\chem$. It follows by (the proof of) Theorem~\ref{thm:Determ_KS_Well_Posedness} that $\rdet^{n}\to\rdet$ in $C_{T}L^{2}(\mbT^{2})\cap L_{T}^{2}\mcH^{1}(\mbT^{2})$
	\begin{details}
		Using that $(\norm{\rho_{0}^{n}}_{L^{2}})_{n\in\mbN}$ is bounded uniformly in $n\in\mbN$, we can use~\cite[Rem.~1.2]{liu_roeckner_13_local_global}, to conclude that there exists some $\bar{T}>0$ such that both $(\rdet^{n})_{n\in\mbN}$ and $\rdet$ exist on $[0,\bar{T}]$. Further, by~\cite[Lem.~2.4]{liu_roeckner_13_local_global}, we can control $\norm{\rdet^{n}}_{L^{2}_{\bar{T}}\mcH^{1}}$ uniformly in $n\in\mbN$. Hence, the same bounds that lead to the uniqueness of solutions in Theorem~\ref{thm:Determ_KS_Well_Posedness} also yield $\rdet^{n}\to\rdet$ in $C_{\bar{T}}L^{2}(\mbT^{2})\cap L_{\bar{T}}^{2}\mcH^{1}(\mbT^{2})$.
		
		Denote the maximal times of existence in $L^{2}(\mbT^{2})$ by $(\Trdet[\rdet^{n}])_{n\in\mbN}$ and $\Trdet[\rdet]$. It follows that $\Trdet$ is lower semicontinuous (cf.~Lemma~\ref{lem:blow_up_lsc}), i.e.\ $\Trdet[\rdet]\leq\liminf_{n\to\infty}\Trdet[\rdet^{n}]$. Hence, for every $T<\Trdet[\rdet]$ there exists some $N\in\mbN$ such that $T<\Trdet[\rdet^{n}]$ for all $n\geq N$. Iterating the local convergence above, it follows that $\rdet^{n}\to\rdet$ in $C_{T}L^{2}(\mbT^{2})\cap L_{T}^{2}\mcH^{1}(\mbT^{2})$ for all $T<\Trdet[\rdet]$.
	\end{details}
	and an application of~\cite[Cor.~2.91]{bahouri_chemin_danchin_11} combined with a cut-off argument as in Lemma~\ref{lem:chain_rule_fractional} yields $\sqrt{\rdet^{n}}\to\sqrt{\rdet}\in C_{T}L^{2}(\mbT^{2})\cap L_{T}^{2}\mcH^{1}(\mbT^{2})$.
	\begin{details}
		Here we also use $\rdet^{n}\to\rdet\in C([0,T]\times\mbT^{2})$, which follows by $\rho_{0}^{n}\to\rho_{0}\in C(\mbT^{2})$ and~\eqref{eq:determ_KS_continuity_bilinear}. In particular, $\rdet^{n}$ is bounded away from zero uniformly for $n$ sufficiently large.
	\end{details}
	Using that $\rho_{0}^{n}\in C^{\infty}(\mbT^{2})\subset\mcH^{\vartheta}(\mbT^{2})$ for every $\vartheta>0$, we can apply Lemma~\ref{lem:regularity_srdet} to deduce $\sqrt{\rdet^{n}}\in C_{\eta;T}\mcH^{\vartheta+2\eta}(\mbT^{2})$ for each $\eta\in(0,1/2)$. Hence, Lemma~\ref{lem:lolli_regular} yields $\vdiv\mcI[\sqrt{\rdet^{n}}\boldsymbol{\xi}^{\delta}]\in C_{T}L^{2}(\mbT^{2})$ almost surely (where we use that $2\eta<\gamma+2\eta$) and Lemma~\ref{lem:lolli_regular_unif} yields $\vdiv\mcI[\sqrt{\rdet^{n}}\boldsymbol{\xi}^{\delta}]\to\vdiv\mcI[\sqrt{\rdet}\boldsymbol{\xi}^{\delta}]=\ti^{\delta}$ in $L_{T}^{\infty}L^{2}(\mbT^{2})$ as $n\to\infty$ almost surely along a subsequence (which we do not relabel). It follows that $\ti^{\delta}\in C_{T}L^{2}(\mbT^{2})$ almost surely by the completeness of $C_{T}L^{2}(\mbT^{2})$ under the uniform norm.
	
	Finally, another application of Lemma~\ref{lem:lolli_regular_unif} yields
	\begin{equation*}
		\begin{split}
			\mbE[\norm{\ti^{\delta}}_{C_{T}\mcH^{\gamma}}^{p}]^{1/p}&\lesssim_{p}(1+\delta^{-\gamma-2})\max\{\norm{\srdet}_{C_{T}L^{2}},\norm{\srdet}_{L_{T}^{2}\mcH^{\gamma+1}}\}\\
			&\lesssim(1+\delta^{-\gamma-2})\max\{\norm{\srdet}_{C_{T}L^{2}},\norm{\srdet}_{L_{T}^{2}\mcH^{1}}\}
		\end{split}
	\end{equation*}
	which combined with~\eqref{eq:lolli_regular_srdet_LT2H1} proves~\eqref{eq:lolli_regular_srdet_CTL2_LT2H1}.
\end{proof}
We apply the following infinite-dimensional sub-Gaussian upper deviation inequality in the proof of Theorem~\ref{thm:negative_values}.
\begin{lemma}\label{lem:lolli_regular_upper_deviation}
	Let $\rho_{0}\in C(\mbT^{2})$ be such that $\rho_{0}>0$ and $\mean{\rho_{0}}=1$, $\rdet$ be the weak solution to~\eqref{eq:Determ_KS} with initial data $\rho_{0}$ and chemotactic sensitivity $\chem\in\mbR$, and $\Trdet$ be its maximal time of existence (see Lemma~\ref{lem:Determ_KS_continuity}). Then for all $T<\Trdet$, $\delta>0$ and $\lambda\geq0$ it holds that
	\begin{equation*}
		(1+\delta^{-2})^{2}\log\mbP\Bigl(\norm{\ti^{\delta}}_{C_{T}L^{2}\cap L_{T}^{2}\mcH^{1}}-\mbE[\norm{\ti^{\delta}}_{C_{T}L^{2}\cap L_{T}^{2}\mcH^{1}}]\geq\lambda\Bigr)\lesssim-\lambda^{2}\norm{\srdet}_{C_{T}L^{2}\cap L_{T}^{2}\mcH^{1}}^{-2}.
	\end{equation*}
\end{lemma}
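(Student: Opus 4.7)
The plan is to recognise $F(\boldsymbol{\xi}) \defeq \norm{\ti^{\delta}}_{C_{T}L^{2} \cap L_{T}^{2}\mcH^{1}}$ as a measurable semi-norm of a centred Gaussian random variable taking values in the separable Banach space $E \defeq C_{T}L^{2}(\mbT^{2}) \cap L_{T}^{2}\mcH^{1}(\mbT^{2})$, and to apply the Borell--Sudakov--Tsirelson (BST) Gaussian concentration inequality. By Proposition~\ref{prop:lolli_regular_srdet} and the linearity of the stochastic integral in $\boldsymbol{\xi}$, $\ti^{\delta}$ is indeed an $E$-valued centred Gaussian random variable. The BST inequality, in its Cameron--Martin-Lipschitz formulation, then yields
\begin{equation*}
	\mbP\bigl(F - \mbE[F] \geq \lambda\bigr) \leq \exp\bigl(-\lambda^{2}/(2\sigma^{2})\bigr),
\end{equation*}
where $\sigma$ is the operator norm, from the Cameron--Martin space $\cm = L^{2}([0,T]\times\mbT^{2};\mbR^{2})$ into $E$, of the deterministic drift map $h \mapsto \Phi(h) \defeq \vdiv\mcI[\srdet (\psi_{\delta}\ast h)]$; indeed, $F$ is $\sigma$-Lipschitz along Cameron--Martin shifts since $\ti^{\delta}(\boldsymbol{\xi} + h) = \ti^{\delta}(\boldsymbol{\xi}) + \Phi(h)$, which is deterministic.

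To estimate $\sigma$, I would follow the Fourier/Schauder arguments of Lemmas~\ref{lem:lolli_h} and~\ref{lem:lolli_h_L2H1}. Young's convolution inequality applied to the heat-semigroup representation of $\vdiv\mcI[\,\cdot\,]$ gives $\norm{\vdiv\mcI[g]}_{C_{T}L^{2}} + \norm{\vdiv\mcI[g]}_{L_{T}^{2}\mcH^{1}} \lesssim \norm{g}_{L^{2}([0,T]\times\mbT^{2};\mbR^{2})}$ for every $g \in L^{2}([0,T]\times\mbT^{2};\mbR^{2})$. Specialising to $g = \srdet(\psi_{\delta}\ast h)$ and using the pointwise estimate $\norm{\srdet(t)(\psi_{\delta}\ast h)(t)}_{L^{2}} \leq \norm{\srdet(t)}_{L^{2}} \norm{(\psi_{\delta}\ast h)(t)}_{L^{\infty}}$, this becomes
\begin{equation*}
	\norm{\Phi(h)}_{E} \lesssim \norm{\srdet}_{C_{T}L^{2}} \Bigl(\int_{0}^{T} \norm{(\psi_{\delta}\ast h)(t)}_{L^{\infty}}^{2}\dd t\Bigr)^{1/2}.
\end{equation*}
A Parseval/Cauchy--Schwarz bound then gives $\norm{(\psi_{\delta}\ast h)(t)}_{L^{\infty}} \leq \norm{\psi_{\delta}}_{L^{2}} \norm{h(t)}_{L^{2}}$, and the support assumption $\supp(\varphi) \subset B(0,1)$ forces $\norm{\psi_{\delta}}_{L^{2}}^{2} = \sum_{\om \in \mbZ^{2}} \varphi(\delta\om)^{2} \lesssim 1+\delta^{-2}$. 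Combining these estimates, $\sigma \lesssim (1+\delta^{-2})^{1/2} \norm{\srdet}_{C_{T}L^{2}\cap L_{T}^{2}\mcH^{1}}$.

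Substituting into the BST estimate and using $(1+\delta^{-2}) \geq 1$, I arrive at
\begin{equation*}
	(1+\delta^{-2})^{2}\log\mbP\bigl(F - \mbE[F] \geq \lambda\bigr) \lesssim -\lambda^{2}(1+\delta^{-2})\norm{\srdet}_{C_{T}L^{2}\cap L_{T}^{2}\mcH^{1}}^{-2} \lesssim -\lambda^{2}\norm{\srdet}_{C_{T}L^{2}\cap L_{T}^{2}\mcH^{1}}^{-2},
\end{equation*}
as required. The conceptual core is identifying the Cameron--Martin Lipschitz constant of $F$ and, in particular, extracting the mollifier bound $\norm{\psi_{\delta}}_{L^{2}}^{2} \lesssim 1+\delta^{-2}$, which cleanly absorbs the entire $\delta$-dependence into the prefactor $(1+\delta^{-2})^{2}$; the application of BST for Banach-valued Gaussians is then verbatim and constitutes no obstacle.
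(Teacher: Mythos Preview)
Your argument is correct and rests on the same Gaussian concentration principle as the paper's proof; the paper invokes Ledoux's inequality \cite[(4.4)]{ledoux_94} in the weak-variance form
\[
\sigma=\sup_{\substack{\xi\in E'\\\norm{\xi}_{E'}\leq1}}\mbE[\inner{\xi}{\ti^{\delta}}^{2}]^{1/2},
\]
which for a Banach-valued Gaussian coincides with the Cameron--Martin operator norm you compute. The one substantive difference lies in how $\sigma$ is bounded: the paper takes the crude route $\sigma\leq\mbE[\norm{\ti^{\delta}}_{E}^{2}]^{1/2}$ and then quotes Proposition~\ref{prop:lolli_regular_srdet} to obtain $\sigma\lesssim(1+\delta^{-2})\norm{\srdet}_{E}$, whereas you estimate the drift map $h\mapsto\vdiv\mcI[\srdet(\psi_{\delta}\ast h)]$ directly and land on the sharper $\sigma\lesssim(1+\delta^{-2})^{1/2}\norm{\srdet}_{E}$. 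Your bound is tighter by a full power of $(1+\delta^{-2})$, which you then discard to match the stated inequality; the paper's version is quicker because it recycles an existing moment bound rather than redoing the Schauder and mollifier estimates. Either route closes the lemma.
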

\begin{proof}
	Denote $E\defeq C_{T}L^{2}(\mbT^{2})\cap L_{T}^{2}\mcH^{1}(\mbT^{2})$. An application of~\cite[(4.4)]{ledoux_94} yields for every $\lambda\geq0$,
	\begin{equation*}
		\mbP\Bigl(\norm{\ti^{\delta}}_{E}-\mbE[\norm{\ti^{\delta}}_{E}]\geq\lambda\Bigr)\leq\euler^{-\frac{\lambda^{2}}{2\sigma^{2}}}\quad\text{where}\quad\sigma\defeq\sup_{\substack{\xi\in E'\\\norm{\xi}_{E'}\leq1}}\mbE[\inner{\xi}{\ti^{\delta}}^{2}]^{1/2}
	\end{equation*}
	and Proposition~\ref{prop:lolli_regular_srdet} shows the existence of a constant $C>0$ such that
	\begin{equation*}
		\sigma\leq\mbE[\norm{\ti^{\delta}}_{E}^{2}]^{1/2}\leq C(1+\delta^{-2})\norm{\srdet}_{E}.
	\end{equation*}
	Both combined allow us to estimate
	\begin{equation*}
		\log\mbP\Bigl(\norm{\ti^{\delta}}_{E}-\mbE[\norm{\ti^{\delta}}_{E}]\geq\lambda\Bigr)\leq-\frac{1}{2}C^{-2}\lambda^{2}(1+\delta^{-2})^{-2}\norm{\srdet}_{E}^{-2},
	\end{equation*}
	which yields the claim.
\end{proof} 
\section*{Declarations}
\begin{itemize}
	\item \textbf{Data and Material:} N/A
	\item \textbf{Code:} N/A
	\item \textbf{Funding:} Please see the acknowledgements in Section~\ref{sec:introduction}.
	\item \textbf{Competing Interests:} The authors have no relevant financial or non-financial interests to disclose.
\end{itemize}
\printbibliography[heading=bibintoc,title={References}]
\end{document}